\renewenvironment{description}
  {\list{}{\labelwidth10pt\leftmargin10pt\parsep5pt
    }}
  {\endlist}
\g@addto@macro\bfseries{\boldmath}
\newtheorem{theorem}{Theorem}[section]
\newtheorem{proposition}[theorem]{Proposition}
\newtheorem{corollary}[theorem]{Corollary}
\newtheorem{example}[theorem]{Example}
\newtheorem{lemma}[theorem]{Lemma}
\newtheorem{conventions}[theorem]{Conventions}
\theoremstyle{definition}
\newtheorem{definition}[theorem]{Definition}
\numberwithin{equation}{section}
\begin{document}

\title[Nilpotency indices]{Nilpotency indices for quantum Schubert cell
algebras}

\author{Garrett Johnson}

\address{Department of Mathematics and Physics \\North Carolina Central
University\\Durham, NC 27707\\USA}

\email{gjohns62@nccu.edu}

\author{Hayk Melikyan}

\address{Department of Mathematics and Physics \\North Carolina Central
University\\Durham, NC 27707\\USA}

\email{gmelikian@nccu.edu}

\subjclass[2020]{Primary: 17B37; Secondary: 16T20}

\keywords{quantum Schubert cell algebras, nilpotent, ad-nilpotent, Weyl group,
bigrassmannian elements, weak order, nilpotency index, Engel identity}

\thanks{The authors were supported by NSF grant DMS-1900823.}

\begin{abstract}

    We study quantum analogs of $\operatorname{ad}$-nilpotency and Engel
    identities in quantum Schubert cell algebras ${\mathcal U}_q^+[w]$. For
    each pair of Lusztig root vectors, $X_\mu$ and $X_\lambda$, in ${\mathcal
    U}_q^+[w]$, where $w$ belongs to a finite Weyl group $W$ and $\mu$ precedes
    $\lambda$ with respect to a convex order on the roots in $\Delta_+ \cap
    w(\Delta_-)$, we find the smallest natural number $k$, called the
    nilpotency index, so that $\left(\operatorname{ad}_q X_\mu \right)^k$ sends
    $X_\lambda$ to $0$, where $\operatorname{ad}_q X_\mu$ is the $q$-adjoint
    map.

    We start by observing that every pair of Lusztig root vectors can be
    naturally associated to a triple $(u, i, j)$, where $u \in W$ and $i$ and
    $j$ are indices such that $\ell(s_i u s_j) = \ell(u) - 2$. In light of
    this, we define an equivalence relation, based upon the weak left and weak
    right Bruhat orders, on the set of such triples. We show this equivalence
    relation respects nilpotency indices, and that each equivalence class
    contains an element of the form $(v, r, s)$, where $v$ is either (1) a
    bigrassmannian element satisfying a certain orthogonality condition, or (2)
    the longest element of some subgroup of $W$ generated by two simple
    reflections. For each such $(v, r, s)$, we compute the associated
    nilpotency index.

\end{abstract}

\maketitle

\setcounter{tocdepth}{1}

{\let\bfseries\mdseries \tableofcontents}

\section{Introduction}

Nilpotency is a widely studied property throughout algebra. It is a key concept
needed to better understand general algebraic objects.  In the context of Lie
algebras, one of the most important and classically well known results involves
nilpotency, namely Engel's theorem: \textit{A finite dimensional Lie algebra
$\mathfrak{g}$ is nilpotent if and only if $\operatorname{ad}(X)$ is nilpotent
for all $X\in \mathfrak{g}$.} With the aim of having a criterion for
nilpotency, variants of Engel's theorem exist for other classes of algebras
\cite{Formanek, Stitzinger1, Stitzinger2}, as well as for infinite-dimensional
Lie algebras, notably nil Lie algebras of bounded index.  In particular,
Kostrikin \cite{Kostrikin1} proved every finitely generated Lie algebra over a
field of characteristic $p$ which satisfies the \textit{Engel identity}
\[
    (\operatorname{ad} X)^k = 0,
\]
where $k \leq p$ or $p = 0$, is nilpotent. Using connections between nilpotent
groups and Lie algebras satisfying the Engel identity, Zel'manov
\cite{Zelmanov1, Zelmanov2} settled the restricted Burnside problem, which at
the time had been an open problem in group theory for roughly 90 years.

\subsection{Main topics}

In this paper, our focus is on the structure of the quantum Schubert cell
algebras ${\mathcal U}_q^+[w]$, particularly on quantum analogs of
$\operatorname{ad}$-nilpotency and Engel identities for these algebras.

We aim to find the smallest $k \in \mathbb{N}$ (called the \textit{nilpotency
index}) such that
\[
    \left(\operatorname{ad}_qX_\mu\right)^k (X_\lambda) = 0,
\]
where $X_\mu$ and $X_\lambda$ are Lusztig root vectors in a quantum Schubert
cell algebra, and $\operatorname{ad}_q$ is the $q$-adjoint map.  Interestingly,
bigrassmannian Weyl group elements play an important role in this analysis.

We also introduce algorithms for finding explicit presentations of quantum
Schubert cell algebras in terms of generators and relations, which seems to be
lacking in the literature.

\subsection{Background on quantum Schubert cells}

Quantum Schubert cell algebras were defined by Lusztig \cite{L1, L} and De
Concini, Kac, and Procesi \cite{DKP}.  They have gained much interest in recent
years, and have appeared in several contexts, including cluster algebras
\cite{GLS, GY2}, Hopf algebras (coideal subalgebras and coinvariants) \cite{HK,
HS, JJ}, crystal basis theory \cite{Lusztig}, and ring theory \cite{Y}.  There
is a quantum Schubert cell algebra for every element $w$ in the Weyl group of a
symmetrizable Kac-Moody Lie algebra $\mathfrak{g}$. The algebra associated to
$w$, denoted ${\mathcal U}_q^+[w]$, is a $q$-deformation of the universal
enveloping algebra of the nilpotent Lie algebra $\mathfrak{n}_w :=
\mathfrak{n}^+ \cap w(\mathfrak{n}^-)$, where $\mathfrak{n}^\pm$ are the
nilradicals of a pair of opposite Borel subalgebras of $\mathfrak{g}$. The set
of roots of $\mathfrak{n}_w$ will be denoted by $\Delta_w$. The roots in
$\Delta_w$ will be referred to as the \textit{roots of $w$}. These are
precisely the positive roots that get sent to negative roots by the action of
$w^{-1}$.

The algebra ${\mathcal U}_q^+[w]$ is a subalgebra of the positive part of
${\mathcal U}_q(\mathfrak{g})$, and is generated by a collection of elements
$X_\beta$, called \textit{Lusztig root vectors}, indexed by the roots $\beta
\in \Delta_w$. With respect to the grading on ${\mathcal U}_q(\mathfrak{g})$ by
the root lattice $Q$, ${\mathcal U}_q^+[w]$ is a graded subalgebra,
\[
    {\mathcal U}_q^+[w] = \oplus_{\lambda \in Q} \left({\mathcal
    U}_q^+[w]\right)_\lambda,    \hspace{10mm} \left( {\mathcal U}_q^+[w]
    \right)_\lambda = {\mathcal U}_q^+[w] \cap \left( {\mathcal
    U}_q(\mathfrak{g}) \right)_\lambda.
\]
With this, the Lusztig root vector $X_\beta$ has degree $\beta$.

We note that the $X_\beta$'s are constructed using Lusztig's braid symmetries
of ${\mathcal U}_q(\mathfrak{g})$, and, as elements of ${\mathcal
U}_q(\mathfrak{g})$, depend on a chosen convex order on $\Delta_w$ (or,
equivalently, on a chosen reduced expression for $w$).  However, the algebra
${\mathcal U}_q^+[w]$, which is defined to be the subalgebra of ${\mathcal
U}_q(\mathfrak{g})$ generated by the $X_\beta$'s, does not depend on the
ordering.  In view of this, we will tacitly assume there is some fixed convex
order on $\Delta_w$ whenever we refer to a Lusztig root vector $X_\beta \in
{\mathcal U}_q^+[w]$. With each convex order on $\Delta_w$, say $\beta_1 <
\cdots < \beta_N$, there is a presentation of ${\mathcal U}_q^+[w]$ as an
iterated Ore extension over the base field $\mathbb{K}$,
\[
    {\mathcal U}_q^+[w] = \mathbb{K}[X_{\beta_1}][X_{\beta_2}; \sigma_2,
    \delta_2] \ldots [X_{\beta_N}; \sigma_N, \delta_N],
\]
and thus ordered monomials in the variables $X_{\beta_1},\cdots, X_{\beta_N}$
are a $\mathbb{K}$-basis of ${\mathcal U}_q^+[w]$.

\subsection*{Conventions and Notation}

Throughout, all algebras are defined over an arbitrary base field $\mathbb{K}$.
The deformation parameter $q \in \mathbb{K}$ is nonzero and not a root of
unity. As mentioned, quantum Schubert cell algebras ${\mathcal U}_q^+[w]$ are
certain subalgebras of ${\mathcal U}_q(\mathfrak{g})$; the underlying Lie type
of $\mathfrak{g}$ will be denoted by
\[
    X_n, \hspace{5mm} X \in \left\{A, B, C, D, E, F, G\right\},
\]
where $n > 1$ is the rank. The associated Weyl group will be denoted by
$W(X_n)$, or $W$ for short.  We use $\mathbf{I} := [1, n]$ to denote the index
set of $W$. Let $s_i$ ($i\in \mathbf{I}$) be the simple reflections in $W$, and
let $\Pi := \left\{\alpha_i\right\}_{i \in \mathbf{I}}$ be the set of simple
roots.

\subsection{Main results}

Before stating the main results, we need to define the $q$-adjoint map. First,
let $\langle \hspace{3pt}, \hspace{3pt} \rangle$ be the symmetric form on the
root lattice $Q$, normalized so that $\langle \alpha, \alpha \rangle = 2$ for
short roots $\alpha$. Define the \textit{$q$-commutator} $\left[ \cdot, \cdot
\right] : {\mathcal U}_q^+[w] \times {\mathcal U}_q^+[w] \to {\mathcal
U}_q^+[w]$ by the rule
\[
    [X, Y] := XY - q^{\langle \lambda, \mu \rangle} YX,
\]
for all homogeneous $X \in \left({\mathcal U}_q^+[w]\right)_\lambda$, and $Y
\in \left({\mathcal U}_q^+[w]\right)_\mu$, ($\lambda, \mu \in Q$), and extend
bilinearly. Define the \textit{$q$-adjoint map} $\operatorname{ad}_q :
{\mathcal U}_q^+[w] \to \operatorname{End}_\mathbb{K}\left({\mathcal
U}_q^+[w]\right)$ by the rule
\[
    \left(\operatorname{ad}_q X\right)(Y) := [X, Y]
\]
for all $X, Y \in {\mathcal U}_q^+[w]$.

Next, let $\theta$ be the highest root. Written as an integral combination of
simple roots, suppose $\theta = c_1 \alpha_1 + \cdots + c_n \alpha_n$.  Define
\begin{equation}
    \label{c_max, defn}
    c_{\operatorname{max}} : = \operatorname{max} \left( c_1 ,\cdots, c_n
    \right).
\end{equation}
Thus, $c_{\operatorname{max}} = 1$ in type $A_n$, $c_{\operatorname{max}} = 2$
in types $B_n$ ($n > 1$), $C_n$ ($n > 1$), and $D_n$ ($n > 3$),
$c_{\operatorname{max}} = 3$ in types $E_6$ and $G_2$, $c_{\operatorname{max}}
= 4$ in types $E_7$ and $F_4$, and $c_{\operatorname{max}} = 6$ in type $E_8$.

\begin{theorem}

    \label{intro, theorem 1}

    Let $c_{\operatorname{max}}$ be as defined in (\ref{c_max, defn}) above.
    Then for every pair of Lusztig root vectors, $X_\mu$ and $X_\lambda$ with
    $\mu < \lambda$, in a quantum Schubert cell algebra ${\mathcal U}_q^+[w]$,
    \[
        \left(\operatorname{ad}_q X_\mu \right)^{c_{\operatorname{max}} + 1}
        (X_\lambda) = 0.
    \]

\end{theorem}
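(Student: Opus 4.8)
The plan is to reduce, in two steps, to an explicit computation in a short list of model situations, and then to read off the bound $c_{\operatorname{max}}+1$.

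First I would isolate the structural facts behind the reduction. By the Levendorskii--Soibelman straightening identities, for $\mu<\lambda$ the $q$-commutator $[X_\mu,X_\lambda]$ lies in the span of ordered monomials in the root vectors $X_\nu$ with $\mu<\nu<\lambda$; iterating (and using that $\operatorname{ad}_qX_\mu$ is a skew-derivation and that the open interval $(\mu,\lambda)$ is convex-closed), $(\operatorname{ad}_qX_\mu)^k(X_\lambda)$ is homogeneous of degree $\lambda+k\mu$ and, for $k\ge 1$, is supported on ordered monomials whose factors are roots lying strictly between $\mu$ and $\lambda$ in the fixed convex order. Secondly, if the interval $[\mu,\lambda]$ corresponds to the contiguous subword $s_{i_a}\cdots s_{i_b}$ of a reduced word for $w$, then the inverse of the braid automorphism $T_{i_1}\cdots T_{i_{a-1}}$ carries the subalgebra generated by $\{X_\nu:\mu\le\nu\le\lambda\}$ onto the quantum Schubert cell ${\mathcal U}_q^+[u]$ with $u=s_{i_a}\cdots s_{i_b}$, sending $X_\mu$ to the generator $E_i$ ($i=i_a$) and $X_\lambda$ to the last root vector of $u$. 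Since each $T_i$ permutes weight spaces by $s_i$ and the bilinear form is $W$-invariant, $T_i$ intertwines the $q$-commutator, so it suffices to bound the least $k$ for which $(\operatorname{ad}_qE_i)^k$ annihilates the last root vector of $u$. This is the passage from the pair $(X_\mu,X_\lambda)$ to the triple $(u,i,j)$ with $\ell(s_ius_j)=\ell(u)-2$.

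Next I would invoke the equivalence relation on such triples. Since it respects nilpotency indices and every class contains a representative $(v,r,s)$ with $v$ either a bigrassmannian element satisfying the orthogonality condition or the longest element of a rank-two parabolic subgroup $\langle s_a,s_b\rangle$, it is enough to prove $(\operatorname{ad}_qE_r)^{c_{\operatorname{max}}+1}(X_{\lambda'})=0$, where $X_{\lambda'}$ is the last root vector of $v$, in these two families. In the parabolic case ${\mathcal U}_q^+[v]$ embeds in the quantum group of a rank-two subsystem of type $A_1\times A_1$, $A_2$, $B_2$, or $G_2$, and a short direct verification in each of the four types gives nilpotency index at most $c_{\operatorname{max}}+1$. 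For the bigrassmannian family I would compute $(\operatorname{ad}_qE_r)^k(X_{\lambda'})$ inside ${\mathcal U}_q^+[v]$ by repeatedly applying the Levendorskii--Soibelman relations to push one factor of $E_r$ past the monomials produced at the previous stage; the orthogonality hypothesis on $v$ keeps these straightenings tame and forces the degree $\lambda'+k\alpha_r$ to fail to be a nonnegative integer combination of the roots of $v$ lying strictly between $\alpha_r$ and $\lambda'$ once $k>c_{\operatorname{max}}$. The place where $c_{\operatorname{max}}$ enters is that every positive root is dominated by the highest root $\theta=c_1\alpha_1+\cdots+c_n\alpha_n$, so its $\alpha_r$-coefficient is at most $c_r\le c_{\operatorname{max}}$, while the combinatorics of a bigrassmannian descent limits how such factors can accumulate $\alpha_r$-weight. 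Transporting the vanishing back through the braid isomorphism and the equivalence relation completes the argument; inspecting the extremal configurations shows the constant $c_{\operatorname{max}}+1$ is sharp.

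The main obstacle should be the bigrassmannian case. The rank-two situation is an elementary finite check, and the two reductions are formal once the $(u,i,j)$ dictionary and the equivalence relation are in place, but the bigrassmannian bound is not purely degree-theoretic: the Levendorskii--Soibelman product monomials, which have no classical analogue, do contribute and can push the quantum nilpotency index strictly above the classical $\alpha_r$-string length, so one has to control which monomials actually appear. Calibrating the orthogonality condition so that it bounds the accumulated $\alpha_r$-weight by exactly $c_{\operatorname{max}}$, and no more, is the delicate point.
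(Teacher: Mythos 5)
Your two reductions—first from a pair $(X_\mu, X_\lambda)$ to a triple $(u,i,j)$ with $\ell(s_i u s_j)=\ell(u)-2$, then along the equivalence relation down to either a bigrassmannian element satisfying the orthogonality condition or the longest element of a rank-two standard parabolic—are exactly the paper's architecture, and the rank-two parabolic check (nilpotency index equal to $1-\langle\alpha_k,\alpha_p^\vee\rangle\in[1,4]$, always $\le c_{\operatorname{max}}+1$) is correct and easy. The gap is in the bigrassmannian case, and it is a real one.

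You propose to finish that case by a pure weight argument: that once $k>c_{\operatorname{max}}$ the element $\lambda'+k\alpha_r$ is not a nonnegative integer combination of the roots of $v$ lying strictly between $\alpha_r$ and $\lambda'$ in the convex order, and then to invoke the straightening bound (the analogue of Proposition~\ref{bound on nil-index}(2)). You offer no proof of this, and you acknowledge it as "the delicate point." It is not a point the paper ever establishes, even implicitly: the paper's actual argument is a finite, type-by-type, element-by-element computation of the exact nilpotency index for every $v\in BiGr_\perp^\circ(X_n)$ via explicit $q$-commutator straightening (manual for $ABCD$, computer-assisted for $E_7,E_8$), after which the bound $c_{\operatorname{max}}+1$ is simply read off the tables. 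Moreover the paper explicitly warns that the degree criterion is not sharp—Example~2.10 exhibits an element of $BiGr_\perp^\circ(E_8)$ (namely $\eta_{759}^{-1}$) for which the degree test first fails at $k=6$ while the true nilpotency index is $2$—so the degree bound is decoupled from the actual vanishing, and its precise value for a given $v$ is not governed by any combinatorial principle you state or the paper states. Your heuristic, that a root can carry $\alpha_r$-weight at most $c_r\le c_{\operatorname{max}}$, controls the $\alpha_r$-coefficient per summand but does not by itself cap the number of summands (that is governed by the coefficients of $\lambda'$ on the other simple roots, which can be as large as $c_{\operatorname{max}}$ as well); and you do not show how the orthogonality condition closes this. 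Until that implication is either proved or replaced by the paper's case-by-case computation, the argument does not establish the theorem.
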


\noindent Our results are actually stronger than the statement in Theorem
(\ref{intro, theorem 1}) because, in proving this, we effectively provide an
algorithm that takes a pair of Lusztig root vectors, $X_\mu$ and $X_\lambda$
with $\mu < \lambda$, and returns the smallest $k \in \mathbb{N}$ such that
$\left(\operatorname{ad}_q X_\mu \right)^k (X_\lambda) = 0$.

The motivation for obtaining the result in Theorem (\ref{intro, theorem 1})
came from our work in \cite{JM}, where we investigated automorphisms of
${\mathcal U}_q^+[w]$. If one considers the algebra ${\mathcal Y}_q^+[w]$
generated by $\left\{ y_\beta : \beta \in \Delta_w \right\}$ with defining
relations analogous to those in the statement in Theorem (\ref{intro, theorem
1}), except with the $X$'s replaced with $y$'s, then Theorem (\ref{intro,
theorem 1}) implies $y_\beta \mapsto X_\beta$ (for all $\beta \in \Delta_w$)
defines a surjective algebra homomorphism from ${\mathcal Y}_q^+[w]$ onto
${\mathcal U}_q^+[w]$. Hence, every automorphism of ${\mathcal U}_q^+[w]$
induces an automorphism of ${\mathcal Y}_q^+[w]$. Notice also the defining
relations of ${\mathcal Y}_q^+[w]$ resemble the $q$-Serre relations
\[
    \operatorname{ad}_q(E_i)^{1 - c_{ij}} (E_j) = 0,  \hspace{10pt} (i,j \in
    \mathbf{I} \text{ and } i \neq j),
\]
which are the defining relations for the positive part ${\mathcal
U}_q^+(\mathfrak{g})$ of ${\mathcal U}_q(\mathfrak{g})$; the automorphism
problem for ${\mathcal U}_q^+(\mathfrak{g})$ was completely settled in
\cite{Y2}.

Perhaps $\operatorname{ad}_q$-nilpotency, as stated in Theorem (\ref{intro,
theorem 1}), is not so surprising, given the connection between ${\mathcal
U}_q^+[w]$ and the Lie algebra $\mathfrak{n}_w$, known to be nilpotent.
However, the bound, $c_{max} + 1$, on the nilpotency index is intriguing.  For
comparison, for root vectors $x_\mu, x_\lambda$ in the non-$q$-deformed algebra
${\mathcal U}(\mathfrak{n}_w)$, $\left(\operatorname{ad}x_\mu\right)^{M + 1}
(x_\lambda) = 0$, where $M$ is the largest integer such that $\lambda + M\mu$
is a root. Thus, in ${\mathcal U}(\mathfrak{n}_w)$, the nilpotency index in
bounded above by the length of the longest root string ($2$ in simply-laced
types, $3$ in types $B,C,F$, and $4$ in type $G_2$).

\begin{theorem}

    \label{intro, theorem 2}

    For each Lie type $X_n$ with $n > 1$, there exists $w\in W(X_n)$ and a pair
    of Lusztig root vectors, $X_\mu$ and $X_\lambda$ with $\mu < \lambda$, in
    ${\mathcal U}_q^+[w]$ such that
    \[
        \left( \operatorname{ad}_q X_\mu \right)^{c_{\operatorname{max}}}
        (X_\lambda) \neq 0.
    \]

\end{theorem}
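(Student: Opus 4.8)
The plan is to exhibit, in each Lie type $X_n$, an explicit Weyl group element $w$ together with two roots $\mu < \lambda$ in $\Delta_w$ for which the nilpotency index of $\operatorname{ad}_q X_\mu$ on $X_\lambda$ equals exactly $c_{\operatorname{max}}$, thereby showing the bound $c_{\operatorname{max}}+1$ of Theorem \ref{intro, theorem 1} is sharp. The natural candidate is governed by the highest root $\theta$: write $\theta = c_1\alpha_1 + \cdots + c_n\alpha_n$ and pick an index $i$ with $c_i = c_{\operatorname{max}}$. Then $\mu = \alpha_i$ and $\lambda = \theta$ should be the pair, with $w$ chosen (e.g. $w = w_0$, the longest element, or a suitable parabolic longest element) so that both $\alpha_i$ and $\theta$ lie in $\Delta_w$ and so that $\alpha_i$ precedes $\theta$ in some convex order on $\Delta_w$. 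Since every positive root is $\leq \theta$ and $\theta$ is the unique maximal root, a convex order ending at $\theta$ exists, and $\alpha_i$ can be made to come first; one verifies $\ell(w_0) = |\Delta_+|$ contains all of this.

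The heart of the argument is to show $(\operatorname{ad}_q X_{\alpha_i})^{c_{\operatorname{max}}}(X_\theta) \neq 0$. First I would reduce to a statement about the non-$q$-deformed situation by a specialization/leading-term argument, or alternatively compute directly inside ${\mathcal U}_q^+(\mathfrak{g})$ using the known commutation formulas for Lusztig root vectors. The key structural fact is that, because $c_i = c_{\operatorname{max}}$, the element $\theta - c_{\operatorname{max}}\alpha_i = \sum_{j \neq i} c_j \alpha_j$ is still a nonnegative integral combination of simple roots lying below $\theta$, and in fact (this is the content of the $A_1$-string analysis attached to $\alpha_i$) the $\alpha_i$-string through the appropriate root has length exactly $c_{\operatorname{max}}$: repeatedly subtracting $\alpha_i$ from $\theta$ stays among roots for $c_{\operatorname{max}}$ steps. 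Concretely, I would track the chain $\theta,\ \theta - \alpha_i,\ \ldots,\ \theta - c_{\operatorname{max}}\alpha_i$ and argue that each successive $q$-bracket with $X_{\alpha_i}$ produces a nonzero Lusztig-root-vector term (up to a nonzero scalar and lower-order corrections), because at each stage the relevant root string has not yet terminated. Nonvanishing at the last step follows since the coefficient picked up is a product of nonzero quantum integers $[m]_q$ (here is where $q$ not being a root of unity is used), and these cannot cancel against the correction terms, which live in different $Q$-graded components.

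Alternatively, and perhaps more cleanly, I would invoke the machinery the paper has already set up: associate the pair $(X_{\alpha_i}, X_\theta)$ to its triple $(u,i,j)$, pass via the established weak-order equivalence to a representative $(v,r,s)$ with $v$ bigrassmannian or a rank-two longest element, and read off the nilpotency index from the explicit computation promised in the abstract for such representatives. In that language, Theorem \ref{intro, theorem 1} gives the upper bound $c_{\operatorname{max}}+1$, and one needs the complementary fact that for the representative attached to the simple root realizing $c_{\operatorname{max}}$ the computed index is exactly $c_{\operatorname{max}}$. I expect the main obstacle to be the bookkeeping in the explicit $q$-bracket computation — ensuring the leading coefficient at step $c_{\operatorname{max}}$ is a genuinely nonzero element of $\mathbb{K}$ and is not killed by the lower-order Ore-extension corrections $\delta_k$ — and to be type-by-type case checking (especially the exceptional types $E_7, E_8, F_4$, and $G_2$) to confirm that the chosen $w$ does place $\alpha_i$ and $\theta$ in the correct convex order with $\alpha_i < \theta$.
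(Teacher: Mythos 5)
Your second, ``invoke the machinery'' alternative is essentially what the paper does: Theorem~\ref{intro, theorem 2} is proved by carrying out the reduction of Sections~\ref{minimal elements}--\ref{section, orthogonality condition} and then computing nilpotency indices case by case for the bigrassmannian representatives, with the witnesses read off the tables (SmallRank.A.3 for $A_n$; B.2, C.2, D.2 and B.4, C.4, D.4 for $B_n$, $C_n$, $D_n$; $s_1s_2s_1s_2$ for $G_2$; $f(\kappa_2), f(\kappa_3), f(\kappa_4)$ for $F_4$; $\nu_8, \nu_9$ for $E_6$; $\zeta_{11}, \zeta_{30}, \zeta_{31}$ for $E_7$; $\eta_{62}, \eta_{173}, \ldots, \eta_{176}$ for $E_8$). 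But sketched at this level, the alternative does not amount to a proof, since the entire content lies in carrying out those computations.

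Your primary proposed route, taking $\mu = \alpha_i$ with $c_i = c_{\operatorname{max}}$ and $\lambda = \theta$ the highest root, has a genuine gap, and I do not see how to repair it. First, the direction is wrong: $\operatorname{ad}_q X_{\alpha_i}$ raises the $Q$-degree by $\alpha_i$, so $\left(\operatorname{ad}_q X_{\alpha_i}\right)^k(X_\theta)$ sits in degree $\theta + k\alpha_i$, not $\theta - k\alpha_i$; the descending chain $\theta, \theta - \alpha_i, \ldots$ you propose to track is not the object being produced. Second, since $\theta$ is the highest root, $\theta + \alpha_i$ is never a root, so the classical analogue $\operatorname{ad}(x_{\alpha_i})(x_\theta)$ already vanishes in $\mathcal{U}(\mathfrak{n}^+)$; any specialization or leading-term argument that reduces to the non-$q$-deformed setting therefore yields $0$, not a nonvanishing witness. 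Third, the root-string assertion is false: an $\alpha_i$-string has length at most $4$ (and at most $2$ in simply-laced types), whereas $c_{\operatorname{max}}$ equals $6$ in $E_8$; the paper explicitly contrasts the $q$-adjoint bound $c_{\operatorname{max}} + 1$ with the classical root-string bound precisely because the two disagree. Finally, the pair $(\alpha_i, \theta)$ fails outright already in rank $2$: in $A_2$ every convex order on $\Delta_{w_0}$ places $\theta = \alpha_1 + \alpha_2$ between the two simple roots, so the interval between $\alpha_i$ and $\theta$ is empty and Proposition~\ref{LS corollary} forces $[X_{\alpha_i}, X_\theta] = 0$, giving nilpotency index $1$ rather than the required $2$.
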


Moreover, we provide tables (Tables (\ref{table, small rank ABCD}) -
(\ref{table, E8})) that can be used to obtain explicit examples of Weyl group
elements $w$ and Lusztig root vectors, $X_\mu$ and $X_\lambda$, fulfilling the
conditions stated in Theorem (\ref{intro, theorem 2}).

\subsection{Outline and other results}

Section (\ref{section, QSC}) contains an overview of quantum Schubert cell
algebras and their properties most relevant for our purposes, as well as
algorithms to find explicit presentations of these algebras. In Section
(\ref{section, Gamma(W)}), we introduce the set
\[
    \Gamma(W) := \left\{ (w, i, j) \in W \times \mathbf{I} \times \mathbf{I} :
    \ell(s_i w s_j) = \ell(w) - 2 \right\}
\]
and show how any pair of Lusztig root vectors, say $X_\mu$ and $X_\lambda$ with
$\mu < \lambda$, can be associated to an element of $\Gamma(W)$. Conversely,
every $x \in \Gamma(W)$ can be associated to some pair ($X_\mu$, $X_\lambda$)
with $\mu < \lambda$. As it turns out, this correspondence gives each $x \in
\Gamma(W)$ a well-defined \textit{nilpotency index},
\[
    {\mathcal N}(x) := \operatorname{min} \left\{ k \in \mathbb{N} \cup \left\{
    \infty \right\} : \left( \operatorname{ad}_q X_\mu \right)^k (X_\lambda) =
    0 \right\}.
\]

For $(w, i, j) \in \Gamma(W)$, define $(w, i, j)^* := (w^{-1}, j, i)$. Observe
$x \mapsto x^*$ defines an involutive map on $\Gamma(W)$. We prove the
following curious result on a case-by-case basis, by Lie type $X_n$.

\begin{theorem}

    Let $X_n$ be a Lie type with $n > 1$, and let $x = (w, i, j) \in
    \Gamma(W(X_n))$. If $\langle \alpha_i, \alpha_i \rangle =\langle \alpha_j,
    \alpha_j \rangle$, then ${\mathcal N}(x) = {\mathcal N}(x^*)$.

\end{theorem}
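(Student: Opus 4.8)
The plan is to push the identity $\mathcal{N}(x)=\mathcal{N}(x^{*})$ down to the canonical representatives furnished by the earlier structure theorem, where it becomes a finite check against the values of $\mathcal{N}$ computed there. Write $\sim$ for the equivalence relation on $\Gamma(W)$. Its defining moves are phrased through the weak left and weak right Bruhat orders, i.e.\ through lengths of products of the underlying Weyl group element with simple reflections on the two sides, and they leave $\langle\alpha_{i},\alpha_{i}\rangle$ and $\langle\alpha_{j},\alpha_{j}\rangle$ unchanged. Since $w\mapsto w^{-1}$ preserves lengths and interchanges left with right multiplication, while $*$ additionally swaps $i$ and $j$, the involution $*$ turns each left move into a right move and conversely; hence $x\sim y$ if and only if $x^{*}\sim y^{*}$. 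As $\sim$ preserves nilpotency indices, it suffices to treat a canonical representative $(v,r,s)$, for which $(v,r,s)^{*}=(v^{-1},s,r)$ again lies in $\Gamma(W)$ since $\ell(s_{s}v^{-1}s_{r})=\ell(s_{r}vs_{s})=\ell(v)-2$.

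First I would dispose of the case where $v$ is the longest element of a rank-two standard parabolic $P$, so that $v^{-1}=v$. The hypothesis $\langle\alpha_{i},\alpha_{i}\rangle=\langle\alpha_{j},\alpha_{j}\rangle$, which by the previous paragraph forces $\langle\alpha_{r},\alpha_{r}\rangle=\langle\alpha_{s},\alpha_{s}\rangle$, rules out $P$ of type $B_{2}$ or $G_{2}$, leaving $A_{1}\times A_{1}$ and $A_{2}$. In type $A_{2}$ the only triples of $\Gamma(W)$ with underlying element $v$ are $(v,k,k)$ for $s_{k}$ a generator of $P$; in particular $r=s$, so $x^{*}=x$. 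In type $A_{1}\times A_{1}$ the only two such triples are $(v,r,s)$ and $(v,s,r)$, where $\{r,s\}$ is the index set of $P$; here the associated Lusztig root vectors are $X_{\alpha_{r}}$ and $X_{\alpha_{s}}$ with $[X_{\alpha_{r}},X_{\alpha_{s}}]=0$, so $\mathcal{N}=1$ on both sides. Thus the theorem holds for rank-two canonical representatives.

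Now suppose $v$ is bigrassmannian, with unique left descent $r$ and unique right descent $s$, satisfying the orthogonality condition. Then $v^{-1}$ is bigrassmannian with unique left descent $s$ and unique right descent $r$; the orthogonality condition is symmetric in the two distinguished simple roots, i.e.\ invariant under $w\mapsto w^{-1}$ together with $r\leftrightarrow s$, so it holds for $v^{-1}$ too, as does the equal-length hypothesis. Hence $(v^{-1},s,r)$ is the canonical triple attached to $v^{-1}$, and the theorem reduces to the statement
\[
    \mathcal{N}(v,r,s)=\mathcal{N}(v^{-1},s,r)
\]
for every bigrassmannian $v$ satisfying the orthogonality condition with $\langle\alpha_{r},\alpha_{r}\rangle=\langle\alpha_{s},\alpha_{s}\rangle$.

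I expect this last step to be the main obstacle, and I would carry it out by running through the Lie types $X_{n}$ and comparing the values of $\mathcal{N}$ on bigrassmannian canonical representatives obtained above: in each type one checks that the value is unchanged under the mirror $v\leftrightarrow v^{-1}$, $r\leftrightarrow s$ once $\langle\alpha_{r},\alpha_{r}\rangle=\langle\alpha_{s},\alpha_{s}\rangle$. There is no a priori reason for the two nilpotency indices to agree, which is exactly what makes the statement curious; indeed, without the equal-length hypothesis it already fails, for example for $v=w_{0}$ of $W(B_{2})$, where the triples $(v,1,2)$ and $(v,2,1)=(v,1,2)^{*}$ have the distinct nilpotency indices $1-c_{12}$ and $1-c_{21}$. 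Conceptually, the coincidence in the equal-length case can be traced to the reversal of the convex order in passing from $\mathcal{U}_{q}^{+}[v]$ to $\mathcal{U}_{q}^{+}[v^{-1}]$, which turns a left-nested iterated $q$-bracket of $X_{\mu}$ on $X_{\lambda}$ into a right-nested one of the corresponding root vectors; rewriting this as a left-nested bracket of the same length is precisely the step that invokes $\langle\alpha_{r},\alpha_{r}\rangle=\langle\alpha_{s},\alpha_{s}\rangle$. In practice, however, one simply reads the equality off the tables, type by type.
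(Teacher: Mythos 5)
Your argument mirrors the paper's own: reduce via the $\mathcal N$-preserving equivalence $\xleftrightarrow{\hspace{4mm}}$ to the canonical minimal representatives of Theorem~\ref{reduce by L and R} and Proposition~\ref{proposition, orthogonality condition}, use Proposition~\ref{L, R, equivalent, inverse} to see $*$ is compatible with the reduction, dispatch the rank-two case $w_0(a,b)$ directly (where the equal-length hypothesis is exactly $c_{ab}=c_{ba}$), and then verify the bigrassmannian case type by type by inspecting the computed nilpotency indices. That is precisely how the paper proceeds (see the proofs of Theorems~\ref{invariant under dual, E6}--\ref{invariant under dual, E8} and the corresponding $F_4$ statement, which reference Tables~\ref{table, small rank ABCD} and~\ref{table, general cases ABCD} for the sub-Lie-types), so the proposal is correct and follows the same route.
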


\begin{corollary}

    Suppose $X_n$ is a simply laced Lie type.  Then ${\mathcal N}(x) =
    {\mathcal N}(x^*)$ for all $x \in \Gamma(W(X_n))$.

\end{corollary}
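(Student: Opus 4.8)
The plan is to obtain this as an immediate specialization of the preceding theorem. First I would recall that a Lie type $X_n$ is simply laced precisely when its Dynkin diagram has no multiple bonds, equivalently when all roots of $\mathfrak{g}$ have a common length. Under the normalization fixed above, in which $\langle \alpha, \alpha \rangle = 2$ for short roots, this common length forces $\langle \alpha_i, \alpha_i \rangle = 2$ for \emph{every} simple root $\alpha_i$, $i \in \mathbf{I}$; in the simply-laced types $A_n$, $D_n$, $E_6$, $E_7$, $E_8$ there is simply no distinction between long and short roots.

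Next, given an arbitrary $x = (w, i, j) \in \Gamma(W(X_n))$, the indices $i$ and $j$ both lie in $\mathbf{I}$, so by the previous observation $\langle \alpha_i, \alpha_i \rangle = 2 = \langle \alpha_j, \alpha_j \rangle$. Hence the hypothesis of the theorem is automatically satisfied for this $x$, and the theorem gives ${\mathcal N}(x) = {\mathcal N}(x^*)$. Since $x \in \Gamma(W(X_n))$ was arbitrary, the corollary follows. I do not anticipate any real obstacle here: the entire content is the remark that the length-equality hypothesis of the theorem becomes vacuous in the simply-laced setting, so that the case-by-case theorem collapses to a uniform statement valid for all of $\Gamma(W(X_n))$.
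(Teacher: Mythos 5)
Your proposal is correct and coincides with the intended (and only sensible) derivation: in a simply-laced type all simple roots share the common length $\langle \alpha_i,\alpha_i\rangle = 2$, so the hypothesis $\langle\alpha_i,\alpha_i\rangle = \langle\alpha_j,\alpha_j\rangle$ of the preceding theorem holds automatically for every $(w,i,j)\in\Gamma(W(X_n))$, and the corollary follows at once. The paper gives no separate argument for the corollary, treating it exactly as you do, as an immediate specialization of the theorem.
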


In Section (\ref{section, relations on Gamma(W)}), we define some partial
orders, $\xlongrightarrow{L}$ and $\xlongrightarrow{R}$, on $\Gamma(W)$, which
are analogs to the weak left (right) Bruhat order on $W$, respectively.  Let
$\xlongrightarrow{}$ be the partial order generated by $\xlongrightarrow{L}$
and $\xlongrightarrow{R}$, and let $\xleftrightarrow{\hspace{4mm}}$ be the
equivalence relation generated by $\xlongrightarrow{}$. The most important
property with each of these relations is preservation of nilpotency indices,
\begin{center}
    ($x \xlongrightarrow{L} y$ or $x \xlongrightarrow{R} y$ or $x
    \xlongrightarrow{} y$ or $x \xleftrightarrow{\hspace{4mm}} y$) implies
    ${\mathcal N}(x) = {\mathcal N}(y)$,
\end{center}
for $x, y \in \Gamma(W)$.  An element $x\in \Gamma(W)$ is a \textit{minimal
element} if $x \xlongrightarrow{} y$ implies $x = y$.

Before stating our main result regarding minimal elements, we need to recall a
definition: $w \in W$ is \textit{bigrassmannian} if there is only one simple
reflection $s_i$ such that $\ell(s_iw) < \ell(w)$ and only one simple
reflection $s_j$ such that $\ell(ws_j) < \ell(w)$.  Bigrassmannian elements
have been studied in a variety of contexts, including Bruhat orders of Coxeter
groups, essential sets, Schubert calculus, and representation theory \cite{EH,
EL, GK, KMM, KMM2, Ko1, Ko2, Las Sch, R, RWY}.  In a forthcoming paper, we will
investigate connections between essential sets, defined in \cite{F}, and
nilpotency indices in quantum Schubert cell algebras.

\begin{theorem}

    \label{minimal elements, thm}

    The element $(w, i, j) \in \Gamma(W)$ is a minimal element if and only if
    either (1) $w$ is bigrassmannian with $\ell(w) > 1$, or (2) $w$ is the
    longest element in a subgroup of $W$ generated by two simple reflections.

\end{theorem}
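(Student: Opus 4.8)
The plan is to characterize minimal elements by translating the combinatorics of the relations $\xlongrightarrow{L}$ and $\xlongrightarrow{R}$ into statements about the left and right descent sets of $w$. Recall that for $x = (w,i,j) \in \Gamma(W)$, moving via $\xlongrightarrow{L}$ (respectively $\xlongrightarrow{R}$) should correspond to multiplying $w$ on the left (respectively right) by a simple reflection in a length-increasing way, subject to the constraint that the resulting triple still lies in $\Gamma(W)$, i.e. that the length-drop-by-$2$ condition $\ell(s_{i'} w' s_{j'}) = \ell(w') - 2$ persists. So the first step is to make precise exactly which left/right multiplications are permitted, and to record that $x$ is minimal precisely when \emph{no} such length-increasing multiplication (on either side) can be performed while staying in $\Gamma(W)$.

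Next I would analyze the obstruction to a move. Suppose $w$ is not bigrassmannian, so it has (say) two distinct left descents $s_a, s_b$ — equivalently, $\ell(s_a w), \ell(s_b w) < \ell(w)$. The idea is that then one can find a simple reflection $s_k$ with $\ell(s_k w) > \ell(w)$ such that $(s_k w, i', j)$ still lies in $\Gamma(W)$ — in other words, appending $s_k$ on the left does not destroy the length-$2$-drop condition — which exhibits a nontrivial arrow out of $x$. The combinatorial heart here is a statement about reduced words: if $w$ has at least two descents on a given side, then the ``bad'' configuration forcing $\ell(s_i w s_j) = \ell(w)-2$ (namely that both $s_i w$ and $w s_j$ shorten $w$ and moreover $s_i w s_j$ shortens $s_i w$) can be ``absorbed'' by lengthening on the side with the extra descent. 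One must also handle the symmetric right-sided case, and the mixed case where $w$ has two descents on one side but the relevant index on that side is forced. The degenerate situation is exactly $\ell(w) = 1$ (so $w = s_m$ is a single simple reflection): here $\ell(s_i s_m s_j) = -1$ is impossible, so no $(s_m, i, j)$ lies in $\Gamma(W)$ at all, which is why the bigrassmannian case requires $\ell(w) > 1$; separately, when $w$ is the longest element $w_0^{(a,b)}$ of a rank-$2$ parabolic $\langle s_a, s_b\rangle$, every left and every right multiplication by a simple reflection \emph{increases} length or leaves the parabolic, and in the latter case one checks directly the length-$2$-drop condition fails, so $x$ is minimal.

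For the converse direction, I would show that if $w$ is bigrassmannian with $\ell(w) > 1$ — with unique left descent $s_i$ and unique right descent $s_j$ — then $(w, i, j)$ is forced to be the triple (the indices are uniquely determined), and any length-increasing left multiplication by $s_k$ ($k \neq i$) produces $s_k w$ with \emph{two} left descents $s_i, s_k$; one then verifies $\ell(s_{i'} (s_k w) s_{j'}) = \ell(s_k w) - 2$ \emph{fails} for the only candidate indices, so there is no arrow out — and symmetrically on the right. The rank-$2$-parabolic longest-element case is handled as indicated above. The main obstacle I anticipate is the forward direction: proving that a \emph{second} descent on some side always yields a legal length-increasing move staying inside $\Gamma(W)$. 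This is essentially a lemma about the interaction of the Bruhat length function with two-sided multiplication — I expect it can be reduced to the Exchange/Deletion condition by looking at a reduced word for $w$ beginning (or ending) with the two descent letters, but getting the length-drop bookkeeping exactly right, uniformly across all finite types, is the delicate part. It may be cleanest to phrase everything in terms of the sets $\Delta_+ \cap w(\Delta_-)$ and inclusions thereof, using that $\xlongrightarrow{}$-moves correspond to enlarging this inversion set by one root on a prescribed ``end,'' so that minimality becomes: the inversion set cannot be enlarged at either end while keeping a distinguished pair of roots at the two ends satisfying the convexity/orthogonality condition implicit in the $\Gamma(W)$ membership.
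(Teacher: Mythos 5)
Your proposal rests on a misreading of the reduction relations, and that misreading propagates through the whole argument. By the paper's definitions \eqref{definition, L ordering} and \eqref{definition, R ordering}, a reduction $(w,i,j) \xlongrightarrow{L} (u,i',j')$ requires $u \leq_L w$, so the arrow goes from the longer element to the \emph{shorter} one; ``minimal'' in Definition (3.9) means that $x$ cannot be reduced to anything of strictly smaller length. You instead assert that ``moving via $\xlongrightarrow{L}$ should correspond to multiplying $w$ on the left by a simple reflection in a \emph{length-increasing} way,'' and you try to prove minimality by showing no legal lengthening exists. That characterizes something more like a maximal element, and it is genuinely false: the whole point of Section \ref{minimal elements} is that every $x$ reduces down a chain to a minimal element, so minimal elements typically have many longer elements $\xlongrightarrow{}$-mapping onto them. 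Your argument for the $w_0(a,b)$ case shows this most starkly — you claim ``every left and every right multiplication by a simple reflection increases length,'' but multiplying $w_0(a,b)$ on the left by $s_a$ or $s_b$ decreases length, so the sentence is false as stated.

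Even with the direction corrected, your criterion is not the right one. You reason from ``$w$ has two left descents,'' but the operative quantity in Theorem \ref{theorem, left descent reduction} is the number of left descents of $ws_j$ (and symmetrically, of $s_iw$ in Theorem \ref{theorem, right descent reduction, 1}). These are not the same: $ws_j \leq_R w$, so left descents of $ws_j$ form a possibly proper subset of those of $w$, and this discrepancy is exactly what makes the $w = w_0(a,b)$ case appear as a second family of minimal elements alongside bigrassmannian $w$. That case — $w$ having two left descents and two right descents yet no nontrivial reduction existing — is what Lemma \ref{reduction, two right descents} pins down, and your proposal essentially skips it. The paper's actual route is: characterize when a strictly shortening $\xlongrightarrow{L}$-step exists (iff $ws_j$ has $>1$ left descent), do the same for $\xlongrightarrow{R}$, then split into the ``one right descent'' case (Lemma \ref{lemma, left descent reduction, 2}, giving bigrassmannian) and the ``two right descents'' case (Lemma \ref{reduction, two right descents}, forcing $w = w_0(i,p)$). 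None of those steps survives once the direction of the arrows is reversed, so the proposal as written does not give a proof.
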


The proof of Theorem (\ref{minimal elements, thm}) is the culmination of the
theorems and lemmas in Section (\ref{minimal elements}), which provide, in
effect, an algorithm to construct a chain
\[
    x \xlongrightarrow{} \cdots \xlongrightarrow{} y,
\]
starting at any arbitrary $x \in \Gamma(W)$ and ending at a minimal element
$y$.

In light of this, we turn our attention to bigrassmannian elements $w$. For our
purposes, it suffices to assume $w$ has full support because the support of a
bigrassmannian $w$ can be identified with a connected subgraph of the
underlying Dynkin diagram. Viewing $w$ as an element of the Weyl group
associated to this subgraph, it has full support.  Section (\ref{section,
bigrassmannian elements}) contains an explicit description of the
bigrassmannian elements of full support in non-exceptional Lie types, as well
as the number of them in the exceptional types.

In Section (\ref{section, orthogonality condition}), we introduce an
orthogonality condition on bigrassmannian elements.  For a bigrassmannian $w
\in W$ and a root $\beta$, let $S_{w, \beta}$ be the set of simple roots in
$\Pi \cap w\Pi$ orthogonal to $\beta$,
\[
    S_{w, \beta} := \left\{ \alpha \in \Pi \cap w\Pi : \langle \alpha, \beta
    \rangle = 0 \right\}.
\]
Suppose $s_i$ and $s_j$ are the unique simple reflections such that $\ell(s_iw)
< \ell(w)$ and $\ell(ws_j) < \ell(w)$, respectively. We say $w$ satisfies the
\textit{orthogonality condition} if
\[
    S_{w, \alpha_i} \subseteq S_{w, w(\alpha_j)} \text{ and } S_{w^{-1},
    \alpha_j} \subseteq S_{w^{-1}, w^{-1}(\alpha_i)}.
\]
The set of such bigrassmannian elements will be denoted by $BiGr_\perp(X_n)$.

\begin{theorem}

    Suppose $(w, i, j) \in \Gamma(W(X_n))$ with $w$ bigrassmannian.  Then there
    exists $(v, r, s) \in \Gamma(W(X_n))$ such that $v \in BiGr_\perp(X_n)$ and
    $(w, i, j) \xleftrightarrow{\hspace{4mm}} (v, r, s)$.

\end{theorem}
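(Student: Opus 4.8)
The plan is to start with an arbitrary $(w,i,j) \in \Gamma(W(X_n))$ with $w$ bigrassmannian, and to produce, via a sequence of $\xlongrightarrow{L}$ and $\xlongrightarrow{R}$ moves (or their inverses, since we only need $\xleftrightarrow{\hspace{4mm}}$), a triple $(v,r,s)$ with $v \in BiGr_\perp(X_n)$. First I would reduce to the full-support case: as noted in the discussion preceding Section~(\ref{section, bigrassmannian elements}), the support of a bigrassmannian $w$ is a connected subdiagram of the Dynkin diagram, and both the orthogonality condition and membership in $\Gamma(W)$ are insensitive to passing to the parabolic subgroup on that support; so it suffices to handle $w$ of full support, where Section~(\ref{section, bigrassmannian elements}) provides an explicit list in the non-exceptional types and a census in the exceptional types. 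The simple reflections $s_i$ (left descent) and $s_j$ (right descent) are uniquely determined by the bigrassmannian $w$, so once $w$ is fixed the triple $(w,i,j)$ is forced; thus the theorem really asserts that the $\xleftrightarrow{\hspace{4mm}}$-class of each bigrassmannian full-support $w$ contains an element of $BiGr_\perp$.

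Next I would examine how the moves $\xlongrightarrow{L}$ and $\xlongrightarrow{R}$ act on a bigrassmannian element. An $\xlongrightarrow{L}$ move should correspond to left-multiplication by a simple reflection $s_k$ with $\ell(s_k w) = \ell(w) + 1$ (an ascent), provided the resulting triple still lies in $\Gamma(W)$; dually for $\xlongrightarrow{R}$. The key structural point to establish is: (a) applying such a move to a bigrassmannian element yields another bigrassmannian element (one checks the descent sets remain singletons — this uses that $w$ was bigrassmannian and a short length/inversion-set argument), and (b) the orthogonality condition fails only for $w$ in a controlled, finite sublist, and for each such $w$ there is an explicit ascent $s_k$ (on the left or right) moving it strictly "closer" to a $BiGr_\perp$ element, in the sense of a monovariant such as $\ell(w)$ or the size of the symmetric difference $S_{w,\alpha_i} \triangle S_{w,w(\alpha_j)}$. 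Running this monovariant down terminates at an element of $BiGr_\perp$.

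For the non-exceptional types this becomes a finite, type-by-type verification against the explicit parametrization of bigrassmannian full-support elements from Section~(\ref{section, bigrassmannian elements}): for each family I would compute $s_i$, $s_j$, the sets $S_{w,\alpha_i}$, $S_{w^{-1},\alpha_j}$, $S_{w,w(\alpha_j)}$, $S_{w^{-1},w^{-1}(\alpha_i)}$, and either confirm the orthogonality inclusions directly or exhibit the connecting chain. In the exceptional types, since only the cardinality of the bigrassmannian set is known a priori, I would lean on the minimal-element machinery of Section~(\ref{minimal elements}): by Theorem~(\ref{minimal elements, thm}) every $\xlongrightarrow{}$-chain from $(w,i,j)$ terminates at a minimal element, which is either bigrassmannian with $\ell > 1$ or a longest element of a rank-two parabolic; a separate check (which I would isolate as the technical heart of the argument) shows that any bigrassmannian minimal element automatically satisfies the orthogonality condition, and that the rank-two longest-element case does not arise from a bigrassmannian starting point unless that $w$ itself already lies in $BiGr_\perp$.

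The main obstacle I anticipate is part (b) above — controlling exactly which bigrassmannian elements violate orthogonality and producing a uniform monovariant that the ascent moves decrease. There is a real danger that an individual move restores one inclusion while breaking the other, so the monovariant must be chosen to see both $S_{w,\alpha_i} \subseteq S_{w,w(\alpha_j)}$ and $S_{w^{-1},\alpha_j} \subseteq S_{w^{-1},w^{-1}(\alpha_i)}$ simultaneously; establishing that such a choice exists, and that the needed ascents always keep us inside $\Gamma(W)$ and inside the bigrassmannian stratum, is where the bulk of the case analysis will live. Once that is in hand, the termination argument and the reduction to full support are routine.
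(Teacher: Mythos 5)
There is a genuine gap, and it sits exactly where you isolate ``the technical heart of the argument.'' Your claim that ``any bigrassmannian minimal element automatically satisfies the orthogonality condition'' is false, and it is the false direction. Minimal elements of the poset $(\Gamma(W), \xlongrightarrow{})$ are characterized by Theorem~(\ref{reduce by L and R}): they are exactly the $\llbracket u \rrbracket$ with $u$ bigrassmannian (or the rank-two longest-element triples). So \emph{every} bigrassmannian element yields a minimal element of $\xlongrightarrow{}$, yet most of these violate orthogonality; concretely, $\lvert BiGr^{\circ}(A_n)\rvert = n$ and every $\llbracket w_{0,i,0,k,0}\rrbracket$ is minimal under $\xlongrightarrow{}$, but $BiGr_\perp^{\circ}(A_n)=\emptyset$ for $n>3$ (Proposition~(\ref{Yn, description})). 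So running $\xlongrightarrow{}$-chains to minimality cannot possibly land you in $BiGr_\perp$, and your ``separate check'' has no chance of holding.

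The missing idea is the distinction between the partial order $\xlongrightarrow{}$ and the strictly weaker equivalence $\xleftrightarrow{\hspace{4mm}}$. The paper introduces the non-monotone relations $\xlongrightarrow{(k,L)}$ and $\xlongrightarrow{(k,R)}$ (Definitions~(\ref{definition, (k, L) reduction}), (\ref{definition, (k, R) reduction})): one passes \emph{up} to $s_k w$ (which is not bigrassmannian) along $\xlongrightarrow{L}$ or $\xlongrightarrow{R}$ and then \emph{down} to a strictly shorter $v$. Theorem~(\ref{theorem, second stage reduction}) shows exactly that a failure of orthogonality for a bigrassmannian $w$ produces such a move with $\ell(v)<\ell(w)$, and Proposition~(\ref{proposition, orthogonality condition}) shows this equivalence precisely. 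Such a move stays inside the $\xleftrightarrow{\hspace{4mm}}$-class (Theorem~(\ref{nilpotency indices, preserved})). The resulting $v$ need not be bigrassmannian, but the proof of Theorem~(\ref{theorem, second stage reduction}) guarantees $\{i\}=\mathcal{D}_L(v)$, so Lemmas~(\ref{lemma, right descent reduction, 2}) and~(\ref{lemma, left descent reduction, 2}) bring it back to a bigrassmannian element of length $\le \ell(v)<\ell(w)$. Iterating, $\ell(w)$ is a strictly decreasing monovariant, so termination is immediate, and the terminal bigrassmannian element must satisfy orthogonality, i.e.\ lie in $BiGr_\perp$. This also resolves your anticipated obstacle about one inclusion being restored while the other breaks: there is no such delicacy because length alone does the job. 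A further small inaccuracy: your point (a) that the moves preserve the bigrassmannian property step-by-step is not true — the chain genuinely visits non-bigrassmannian intermediaries — but that is harmless once you track $\ell$ and a single descent set as above.
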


The proof of this statement describes how to obtain such an element $(v, r, s)$
from a given $(w, i, j)$. Observe that if a bigrassmannian element $w$ doesn't
satisfy the orthogonality condition, this means, by definition, there is a
simple root
\[
    \alpha \in \left(S_{w, \alpha_i} \backslash S_{w, w(\alpha_j)}\right) \cup
    \left(S_{w^{-1}, \alpha_j} \backslash S_{w^{-1}, w^{-1}(\alpha_i)}\right).
\]
As it turns out, this simple root $\alpha$ plays an important role in showing
$(w, i, j)$ is equivalent (under $\xleftrightarrow{\hspace{4mm}}$) to an
element $(w^\prime, i^\prime, j^\prime)$ with $\ell(w^\prime) < \ell(w)$
(see Theorem \ref{theorem, second stage reduction}).

In Sections (\ref{section, nil-index, small rank}) and (\ref{section,
nil-index, general}), we compute the nilpotency index ${\mathcal N}(x)$
associated to each $x \in \Gamma(W(X_n))$ of the form $(v, r, s)$, where $v \in
BiGr_\perp(X_n)$ and has full support, for Lie types $A_n$, $B_n$, $C_n$, and
$D_n$. We do the same in Section (\ref{section, nilpotency indices,
exceptional}), but for the exceptional types. The calculations involve applying
commutation relations, and as such, are straightforward.  However, for some of
the exceptional types, particularly $E_7$ and $E_8$, these commutation
relations can be quite lengthy. For this reason, we use a computer to help
automate some of these calculations.  The results are summarized in data
tables: Tables (\ref{table, small rank ABCD}) and (\ref{table, general cases
ABCD}) for types $A_n$, $B_n$, $C_n$, and $D_n$, and Tables (\ref{table, G2}) -
(\ref{table, E8}) for the exceptional types.

\section{Quantum Schubert cells}

\label{section, QSC}

\subsection{Preliminaries}

Let $\mathfrak{g}$ be a finite dimensional complex simple Lie algebra of rank
$n$. Define the index set $\mathbf{I} := [1,n]$, and let $\Pi = \{\alpha_i\}_{i
\in \mathbf{I}}$ be a set of simple roots of $\mathfrak{g}$ with respect to a
fixed Cartan subalgebra $\mathfrak{h} \subset \mathfrak{g}$ such that the
labelling of the simple roots agrees with the labelling in \cite[Section
11.4]{Humphreys}.  The root system of $\mathfrak{g}$ will be denoted by
$\Delta$, and the sets of positive and negative roots will be denoted by
$\Delta_+$ and $\Delta_-$, respectively. The triangular decomposition of
$\mathfrak{g}$ will be denoted by
\[
    \mathfrak{g} = \mathfrak{n}^- \oplus \mathfrak{h} \oplus \mathfrak{n}^+,
\]
where
\[
    \mathfrak{n}^{\pm} := \bigoplus_{\alpha \in \Delta_\pm}
    \mathfrak{g}_\alpha, \hspace{10mm} \mathfrak{g}_\alpha := \left\{ x \in
    \mathfrak{g} \mid [h,x] = \alpha(h)x \text{ for all }h \in \mathfrak{h}
    \right\}.
\]

In many cases, we may wish to refer to the Lie type of $\mathfrak{g}$, and in
those situations it will be denoted by
\[
    X_n, \hspace{5mm} X \in \left\{A,B,C,D,E,F,G \right\}.
\]
Let $Q = \oplus_{i \in \mathbf{I}} \mathbb{Z}\alpha_i$ be the root lattice of
$\mathfrak{g}$, and let $\langle \,\, ,\, \rangle : Q \times Q \to \mathbb{Z}$
be a symmetric nondegenerate ad-invariant $\mathbb{Z}$-bilinear form,
normalized so that $\langle \alpha, \alpha \rangle = 2$ for short roots
$\alpha$. The length of a root $\beta$ will be denoted
\[
    \| \beta \| := \sqrt{ \langle \beta, \beta \rangle}.
\]
For each $i \in \mathbf{I}$, define the simple coroot $\alpha_i^\vee := 2
\alpha_i / \langle \alpha_i, \alpha_i \rangle$, and let
\[
    c_{ij} := \langle \alpha_i^\vee, \alpha_j \rangle = \frac{2 \langle
    \alpha_i , \alpha_j \rangle}{\langle \alpha_i, \alpha_i \rangle} \in \mathbb{Z},
    \hspace{5mm} (i,j \in \mathbf{I}).
\]
The matrix $(c_{ij})_{i,j \in \mathbf{I}}$ is the associated Cartan matrix of
$\mathfrak{g}$.

\subsection{The algebra \texorpdfstring{${\mathcal U}_q(\mathfrak{g})$}{Uq(g)}}

Let $\mathbb{K}$ be a field. For a nonzero scalar $v \in \mathbb{K}$, define
\[
        \widehat{v} := v - v^{-1},
\]
and for $k \in \mathbb{N}$, define
\[
    [k]_v := \frac{v^k - v^{-k}}{v - v^{-1}}, \hspace{10mm}
    [k]_v! := [k]_v [k - 1]_v \cdots [1]_v.
\]
Now fix a nonzero scalar $q \in \mathbb{K}$ that is not a root of unity, and
for $i \in \mathbf{I}$, let $q_i := q^{\langle \alpha_i, \alpha_i \rangle/ 2}$.
The quantized universal enveloping algebra ${\mathcal U}_q(\mathfrak{g})$ is an
associative $\mathbb{K}$-algebra with \textit{Chevalley generators}
\[
    K_\mu, E_i, F_i, \hspace{5mm} (\mu \in Q, i \in \mathbf{I}).
\]

Before stating the defining relations of ${\mathcal U}_q(\mathfrak{g})$, we
need to introduce some notation. There is a $Q$-gradation on the algebra
${\mathcal U}_q(\mathfrak{g})$,
\[
    {\mathcal U}_q(\mathfrak{g}) = \bigoplus_{\lambda \in Q} {\mathcal
    U}_q(\mathfrak{g})_\lambda, \hspace{5mm} {\mathcal U}_q(\mathfrak{g})_\mu
    := \left\{ u\in {\mathcal U}_q(\mathfrak{g}) : K_\lambda u = q^{\langle
    \lambda, \mu \rangle}uK_\lambda \text{ for all } \lambda \in Q \right\}.
\]
With respect to this grading, the Chevalley generators are homogeneous
elements. In particular,
\[
    E_i \in {\mathcal U}_q(\mathfrak{g})_{\alpha_i}, \hspace{5mm} F_i \in
    {\mathcal U}_q(\mathfrak{g})_{-\alpha_i}, \hspace{5mm} K_\mu \in {\mathcal
    U}_q(\mathfrak{g})_0, \hspace{10mm} (i\in \mathbf{I}, \mu \in Q).
\]
Next, we introduce the $q$-commutator.
\begin{definition}

    \label{def, q-commutators}

    For all $\mu, \eta \in Q$ and homogeneous elements $x \in {\mathcal
    U}_q(\mathfrak{g})_{\mu}$ and $y \in {\mathcal U}_q(\mathfrak{g})_{\eta}$,
    define the \textit{$q$-commutator}
    \[
        [x, y] := x y - q^{\langle \mu, \eta \rangle} y x.
    \]

\end{definition}

\noindent We will frequently apply the two identities of Proposition
(\ref{Jacobi}) below. Each of them is straightforward to verify from
Definition (\ref{def, q-commutators}).

\begin{proposition}

    \label{Jacobi}

    For all $x \in \left({\mathcal U}_q(\mathfrak{g})\right)_{\lambda}$, $y \in
    \left({\mathcal U}_q(\mathfrak{g})\right)_\eta$, $z \in \left({\mathcal
    U}_q(\mathfrak{g})\right)_\mu$,

    \begin{equation}
        \label{q-Jacobi identity}
        [x, [y, z]] = [[x, y], z] - q^{-\langle \eta, \mu\rangle}[[x, z], y] -
        \left(q^{\langle \eta, \mu\rangle} - q^{-\langle \eta,
        \mu\rangle}\right) [x, z]y,
    \end{equation}
    and
    \begin{equation}
        \label{q-Leibniz identity}
            [x, yz] = [x, y]z + q^{\langle \lambda, \eta\rangle}y[x,z].
    \end{equation}

\end{proposition}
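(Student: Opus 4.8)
The plan is to verify both identities by direct expansion, using nothing beyond the definition of the $q$-commutator (Definition \ref{def, q-commutators}), associativity of multiplication, and the fact that products and $q$-commutators of homogeneous elements are again homogeneous with the expected degree (e.g.\ $yz \in ({\mathcal U}_q(\mathfrak{g}))_{\eta+\mu}$, $[x,y] \in ({\mathcal U}_q(\mathfrak{g}))_{\lambda+\eta}$, $[x,z] \in ({\mathcal U}_q(\mathfrak{g}))_{\lambda+\mu}$). To keep the bookkeeping of the form values manageable, I would abbreviate $a := \langle \lambda, \eta \rangle$, $b := \langle \lambda, \mu \rangle$, $c := \langle \eta, \mu \rangle$. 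After fully expanding each side, every term is a scalar multiple of one of the six formal monomials $xyz$, $xzy$, $yxz$, $yzx$, $zxy$, $zyx$, so the identity reduces to checking that the coefficient of each of these agrees on the two sides.

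For (\ref{q-Leibniz identity}): the left side expands to $[x, yz] = x(yz) - q^{a+b}(yz)x = xyz - q^{a+b}yzx$, while the right side expands to $[x,y]z + q^{a}y[x,z] = (xyz - q^{a}yxz) + (q^{a}yxz - q^{a+b}yzx) = xyz - q^{a+b}yzx$; the two $yxz$ terms cancel and the identity follows immediately.

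For (\ref{q-Jacobi identity}): the left side expands to $x[y,z] - q^{a+b}[y,z]x = xyz - q^{c}xzy - q^{a+b}yzx + q^{a+b+c}zyx$. On the right side I would expand the three summands $[[x,y],z]$, $-q^{-c}[[x,z],y]$, and $-(q^{c} - q^{-c})[x,z]y$ separately. One finds that the $yxz$ contribution from $[[x,y],z]$ cancels the $yxz$ contribution from $-q^{-c}[[x,z],y]$; that the three contributions to the $zxy$ monomial (one from each summand) sum to zero; and that the surviving terms are exactly $xyz - q^{c}xzy - q^{a+b}yzx + q^{a+b+c}zyx$, matching the left side.

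The only point requiring care is correctly tracking the exponents of $q$ through the repeated application of $[u,v] = uv - q^{\langle \deg u, \deg v \rangle}vu$ to the composite arguments — in particular remembering that the degree appearing in the outer commutator is the \emph{sum} of the degrees of the two factors. Once the abbreviations $a,b,c$ are fixed, this is entirely mechanical, and no structural information about ${\mathcal U}_q(\mathfrak{g})$ beyond the grading is used.
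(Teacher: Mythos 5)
Your proposal is correct and matches the paper's intent: the paper states only that ``each of them is straightforward to verify from Definition~(\ref{def, q-commutators}),'' and your direct expansion into the six formal monomials, with careful tracking of the degrees of composite arguments, is exactly that verification. The cancellations you identify (the $yxz$ terms from $[[x,y],z]$ and $-q^{-\langle\eta,\mu\rangle}[[x,z],y]$, and the three $zxy$ contributions summing to zero) are the right ones, and the surviving coefficients do match the left-hand side $xyz - q^{\langle\eta,\mu\rangle}xzy - q^{\langle\lambda,\eta\rangle+\langle\lambda,\mu\rangle}yzx + q^{\langle\lambda,\eta\rangle+\langle\lambda,\mu\rangle+\langle\eta,\mu\rangle}zyx$.
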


The identities (\ref{q-Jacobi identity}) and (\ref{q-Leibniz identity}) will be
referred to as the \textit{$q$-Jacobi} and \textit{$q$-Leibniz} identities,
respectively.

\begin{definition}

    For all $\mu \in Q$ and $x \in {\mathcal U}_q(\mathfrak{g})_\mu$, let
    $\operatorname{ad}_qx:{\mathcal U}_q(\mathfrak{g}) \to {\mathcal
    U}_q(\mathfrak{g})$ be the (unique) linear map such that
    \[
        \left(\operatorname{ad}_q x\right)(y) = [x, y],
    \]
    for every homogeneous element $y \in {\mathcal U}_q(\mathfrak{g})$.

\end{definition}

The defining relations of ${\mathcal U}_q(\mathfrak{g})$ are
\begin{align}
    &K_0 = 1,
    \\
    &K_\mu K_\lambda = K_{\lambda + \mu} \hspace{5mm} (\lambda, \mu \in Q),
    \\
    &K_\mu E_i = q^{\langle \mu,  \alpha_i \rangle} E_i K_\mu \hspace{5mm} (\mu
    \in Q, i \in \mathbf{I}),
    \\
    &K_\mu F_i = q^{-\langle \mu, \alpha_i \rangle} F_i K_\mu \hspace{5mm} (\mu
    \in Q, i \in \mathbf{I}),
    \\
    &E_iF_j - F_jE_i = \delta_{ij} \frac{K_{\alpha_i} - K_{-\alpha_i}}{q_i
        - q_i^{-1}} \hspace{5mm} (i, j \in \mathbf{I}),
    \\
    &\label{q-Serre 1}(\operatorname{ad}_qE_i)^{1 - c_{ij}} (E_j) = 0
    \hspace{5mm} (i, j \in \mathbf{I} \text{ with } i \neq j),
    \\
    &\label{q-Serre 2}(\operatorname{ad}_qF_i)^{1 - c_{ij}} (F_j) = 0
    \hspace{5mm} (i, j \in \mathbf{I} \text{ with } i \neq j).
\end{align}
Relations (\ref{q-Serre 1}) and (\ref{q-Serre 2}) are called the
\textit{$q$-Serre relations}.

There is an algebra automorphism $\omega: {\mathcal U}_q(\mathfrak{g}) \to
{\mathcal U}_q(\mathfrak{g})$ such that
\begin{equation}
    \label{def, omega}
    \omega(E_i) = F_i, \hspace{5mm} \omega(F_i) = E_i, \hspace{5mm}
    \omega(K_\mu) = K_{-\mu}, \hspace{5mm} (i \in \mathbf{I}, \mu \in Q).
\end{equation}
The algebra ${\mathcal U}_q(\mathfrak{g})$ has a
triangular decomposition,
\[
    {\mathcal U}_q(\mathfrak{g}) \cong {\mathcal U}_q(\mathfrak{n}^-) \otimes
    {\mathcal U}_q(\mathfrak{h}) \otimes {\mathcal U}_q(\mathfrak{n}^+),
\]
where ${\mathcal U}_q(\mathfrak{n}^-)$, ${\mathcal U}_q(\mathfrak{h})$, and
${\mathcal U}_q(\mathfrak{n}^+)$ are the subalgebras of ${\mathcal
U}_q(\mathfrak{g})$ generated by the $F$'s, $K$'s, and $E$'s respectively.

\subsection{Lusztig symmetries of \texorpdfstring{${\mathcal
U}_q(\mathfrak{g})$}{Uq(g)}}

We will denote the simple reflections in the Weyl group $W$ of $\mathfrak{g}$
by
\[
    s_i, \hspace{5mm} (i \in \mathbf{I}).
\]
The corresponding generators of the braid group ${\mathcal B}_{\mathfrak{g}}$
of $\mathfrak{g}$ will be denoted by
\[
    T_i, \hspace{5mm} (i \in \mathbf{I}).
\]
In order to indicate the underlying Lie type, we may in some circumstances
denote the Weyl group $W$ by
\[
    W(X_n), \hspace{5mm} X \in \left\{A, B, C, D, E, F, G \right\}.
\]
In \cite[Section 37.1.3]{L}, Lusztig defines an action of the braid group
${\mathcal B}_{\mathfrak{g}}$ via algebra automorphisms on ${\mathcal
U}_q(\mathfrak{g})$.  In fact, Lusztig defines the symmetries $T_{i,1}^\prime$,
$T_{i,-1}^{\prime}$, $T_{i,1}^{\prime\prime}$, and $T_{i,-1}^{\prime\prime}$.
By \cite[Proposition 37.1.2]{L}, these are automorphisms of ${\mathcal
U}_q(\mathfrak{g})$, while by \cite[Theorem 39.4.3]{L} they satisfy the braid
relations.  For short, we will adopt the abbreviation $T_i :=
T^{\prime\prime}_{i,1}$. Lusztig's symmetries are given by the formulas
\[
    \begin{split}
        &T_i(K_\mu) = K_{s_i(\mu)},
        \\
        &T_i(E_j) =
        \begin{cases}
            - F_i K_{\alpha_i}, &
            (i = j),
            \\
            \left(\operatorname{ad}_q E_i \right)^{(-c_{ij})} (E_j), &
            (i \neq j),
            \\
        \end{cases}
        \\
        &T_i(F_j) =
        \begin{cases}
            - K_{-\alpha_i} E_i, &
            (i = j),
            \\
            \left(-q_i\right)^{-c_{ij}} \left(\operatorname{ad}_q
            F_i\right)^{(-c_{ij})}(F_j), &
            (i \neq j),
        \end{cases}
    \end{split}
\]
\noindent where, for a nonnegative integer $k$,
\[
    \left(\operatorname{ad}_q E_i \right)^{(k)} := \frac{ \left(
            \operatorname{ad}_q E_i\right)^k}{[k]_{q_i}!},
            \hspace{10mm}
    \left(\operatorname{ad}_q F_i \right)^{(k)} := \frac{ \left(
            \operatorname{ad}_q F_i\right)^k}{[k]_{q_i}!}.
\]
If $w \in W$ has a reduced expression $w = s_{i_1} \cdots s_{i_N} \in W$, we
write
\[
    T_w = T_{i_1} T_{i_2} \cdots T_{i_N}.
\]
A key property of the braid symmetries is given in the following theorem
(see e.g. \cite[Proposition 8.20]{Jantzen}).

\begin{theorem}

    \label{Jantzen, 8.20}

    If $w\in W$ and $w(\alpha_i) = \alpha_j$, then $T_w(E_i) = E_j$.

\end{theorem}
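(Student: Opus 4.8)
The plan is to argue by induction on $\ell(w)$. If $\ell(w)=0$, then $j=i$ and $T_w=\operatorname{id}$, so the claim is trivial. Suppose $\ell(w)>0$. Since $w(\alpha_i)=\alpha_j$ is a positive root we have $\ell(ws_i)>\ell(w)$, so $s_i$ is not a right descent of $w$; choose a right descent $s_k$ of $w$ (one exists, since $w\neq e$), and note $k\neq i$. Now pass to the rank-two standard parabolic $W'=\langle s_i,s_k\rangle$: let $v$ be the minimal-length representative of the coset $wW'$, and write $w=vd$ with $d\in W'$ and $\ell(w)=\ell(v)+\ell(d)$. Because $ws_k<w$ whereas $vs_i>v$ and $vs_k>v$, the factor $d$ is not the identity.

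Next I would show that $d(\alpha_i)$ is one of $\alpha_i,\alpha_k$. As a minimal coset representative, $v$ carries each positive root of the subsystem spanned by $\alpha_i$ and $\alpha_k$ to a positive root of $\Delta$; hence from $\alpha_j=w(\alpha_i)=v(d(\alpha_i))>0$ it follows that $d(\alpha_i)=a\alpha_i+b\alpha_k$ is a positive root of that subsystem, with $a$ and $b$ nonnegative integers. Comparing heights in $\alpha_j=a\,v(\alpha_i)+b\,v(\alpha_k)$, where $v(\alpha_i)$ and $v(\alpha_k)$ are positive roots of height at least $1$, forces $a+b=1$; thus $d(\alpha_i)=\alpha_l$ with $l\in\{i,k\}$, and consequently $v(\alpha_l)=\alpha_j$. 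If $v\neq e$, then $\ell(d)<\ell(w)$ and $\ell(v)<\ell(w)$, so the inductive hypothesis applies to $d$ (with datum $d(\alpha_i)=\alpha_l$) and to $v$ (with datum $v(\alpha_l)=\alpha_j$), giving $T_d(E_i)=E_l$ and $T_v(E_l)=E_j$; hence $T_w(E_i)=T_v(T_d(E_i))=E_j$.

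The only case left is $v=e$, that is, $w=d\in W'$. Then $\ell(w)$ is at most the length of the longest element of $W'$, hence at most $6$, so $w$ ranges over finitely many elements of a rank-two Weyl group subject to $w(\alpha_i)\in\{\alpha_i,\alpha_k\}$; for these I would verify $T_w(E_i)=E_l$ directly from Lusztig's formulas together with the rank-two braid relation $T_iT_kT_i\cdots=T_kT_iT_k\cdots$. The prototype is $W'=W(A_2)$ and $w=s_is_k$, where one checks $T_iT_k(E_i)=E_k$; the non-simply-laced instances (essentially the length-three and length-five reflections $s_\gamma$ with $\gamma\perp\alpha_i$, for which $w(\alpha_i)=\alpha_i$) are of the same nature, just longer.

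This rank-two verification is the single ingredient requiring explicit computation, and I regard it as the main obstacle, though only a computational one. Note first that proportionality of $T_w(E_i)$ to $E_j$ is automatic: when $ws_i>w$, the element $T_w(E_i)$ lies in ${\mathcal U}_q(\mathfrak{n}^+)$ and is homogeneous of weight $w(\alpha_i)$, while ${\mathcal U}_q(\mathfrak{n}^+)_{\alpha_j}=\mathbb{K}E_j$ because ${\mathcal U}_q(\mathfrak{n}^+)$ is generated by the $E_i$'s, each of weight $\alpha_i$. So the real point is that the proportionality constant equals $1$. For the $A_2$ prototype this is pinned down by expanding $T_iT_k(E_i)=T_i([E_k,E_i])$ via $T_i(E_k)=[E_i,E_k]$ and $T_i(E_i)=-F_iK_{\alpha_i}$, then simplifying with the defining relations — in particular with $E_iF_i-F_iE_i=(K_{\alpha_i}-K_{-\alpha_i})/(q_i-q_i^{-1})$, the commutativity of $E_k$ and $F_i$, the $K$-commutations, and the $q$-Serre relation $(\operatorname{ad}_qE_i)^2(E_k)=0$, the last being exactly what makes the stray terms cancel. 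This is Lusztig's classical computation: finite and mechanical, needing only attention to powers of $q$.
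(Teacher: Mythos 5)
The paper does not prove this statement at all: it is cited directly to Jantzen's book (Proposition 8.20), and the theorem is invoked thereafter as a known fact. So there is no in-paper argument to compare yours against; you have supplied a proof where the authors delegated. That said, your proof is correct in outline and is, I believe, essentially the same rank-two reduction that underlies the standard treatments.

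The inductive skeleton is sound: choosing a right descent $s_k$ of $w$ (which exists and differs from $s_i$ since $w(\alpha_i)>0$), decomposing $w=vd$ through the parabolic $W'=\langle s_i,s_k\rangle$ with $v$ the minimal coset representative, and then observing that $d\neq e$, $d(\alpha_i)$ positive in the subsystem, and — via the height count $1=a\,\mathrm{ht}(v(\alpha_i))+b\,\mathrm{ht}(v(\alpha_k))$ — that $d(\alpha_i)\in\{\alpha_i,\alpha_k\}$. The height argument is the right way to force this, and it correctly handles both the case $d(\alpha_i)=\alpha_i$ and $d(\alpha_i)=\alpha_k$. When $v\neq e$ both factors are shorter and the induction closes. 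When $v=e$ you are left with $w\in W'$, $\ell(w)>0$, $ws_i>w$, $w(\alpha_i)\in\{\alpha_i,\alpha_k\}$, a finite list worth recording explicitly since it shows the computation is very limited: in $A_1\times A_1$, $w=s_k$, length $1$; in $A_2$, $w=s_is_k$, length $2$; in $B_2/C_2$, $w=s_ks_is_k$, length $3$; in $G_2$, $w=s_ks_is_ks_is_k$, length $5$. (In the dihedral types of even order, $w_0=-1$ makes $s_i$ a descent, so the longest element never enters.) Each of these is a mechanical check from Lusztig's formulas plus the rank-two braid relation, as you say.

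One caveat worth flagging: your "proportionality is automatic" step invokes the companion structural fact that $T_w(E_i)\in{\mathcal U}_q(\mathfrak{n}^+)$ whenever $w(\alpha_i)>0$. That inclusion is itself part of the Jantzen 8.20 package and is not a triviality, though it is provable independently by the same kind of induction (reducing to rank one using Lusztig's formulas and the $q$-Serre relations). Using it does not make your argument circular, but if you want a self-contained treatment you should note that this inclusion is an input, or prove it in parallel — it in fact follows naturally along the same inductive path, which is presumably how Jantzen arranges it. With that noted, the only remaining obligation is carrying out the four rank-two computations; your $A_2$ sketch is the right prototype, and the $B_2$, $G_2$, and $A_1\times A_1$ cases are of the same character.
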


We claim that
\begin{equation}
    \label{orthogonality of Tw}
    T_w \left( [ x, y ] \right) = [T_w(x), T_w(y)], \hspace{5mm} \left(\mu, \nu
    \in Q, x \in \left({\mathcal U}_q(\mathfrak{g}) \right)_\mu, y \in \left(
    {\mathcal U}_q(\mathfrak{g}) \right)_\nu\right).
\end{equation}
Since $T_w$ is an algebra automorphism of ${\mathcal U}_q(\mathfrak{g})$,
\[
    T_w \left( [ x, y ] \right) = T_w (xy - q^{\langle \mu, \nu \rangle} yx) =
    T_w(x) T_w(y) - q^{\langle \mu, \nu \rangle} T_w(y) T_w(x),
\]
and (\ref{orthogonality of Tw}) follows from the fact that
\begin{equation}
    \label{Tw, homogeneous}
    T_w(x) \in {\mathcal U}_q(\mathfrak{g})_{w(\mu)}, \hspace{5mm} (\mu \in Q
    \text{ and } x \in {\mathcal U}_q(\mathfrak{g})_\mu),
\end{equation}
(and $T_w(y) \in {\mathcal U}_q(\mathfrak{g})_{w(\nu)}$) (see e.g.
\cite[Section 8.18]{Jantzen}), and $\langle w(\mu), w(\nu) \rangle = \langle
\mu, \nu \rangle$.

\subsection{Quantum Schubert cell algebras \texorpdfstring{${\mathcal
U}_q^\pm[w]$}{Uq[w]}}

For a reduced expression,
\[
    w = s_{i_1} \cdot s_{i_2} \cdots s_{i_N} \in W,
\]
define the roots
\[
    \beta_1 = \alpha_{i_1}, \beta_2=s_{i_1}\alpha_{i_2},...,\beta_N =
    s_{i_1}\cdots s_{i_{N-1}}\alpha_{i_N},
\]
and root vectors
\begin{equation}
    \label{root vectors}
    X_{\beta_1} = E_{i_1}, X_{\beta_2} = T_{s_{i_1}} E_{i_2},..., X_{\beta_N} =
    T_{s_{i_1}} \cdots T_{s_{i_{N-1}}} E_{i_N}.
\end{equation}

We will denote the set of \textit{radical roots} (or \textit{roots of $w$}) by
\[
    \Delta_w := \left\{ \beta_1, \dots, \beta_N\right\}.
\]
These roots are precisely the positive roots that get sent to negative roots by
the action of $w^{-1}$. The subalgebra of ${\mathcal U}_q(\mathfrak{g})$
generated by the root vectors $X_{\beta_1},\dots, X_{\beta_N}$ is contained in
the positive part ${\mathcal U}_q(\mathfrak{n}^+)$ (see e.g. \cite[Proposition
8.20]{Jantzen}).  This subalgebra will be denoted by ${\mathcal U}_q^+[w]$,
\[
    {\mathcal U}_q^+[w] := \langle X_{\beta_1}, \dots X_{\beta_N}
    \rangle \subseteq {\mathcal U}_q (\mathfrak{n}^+).
\]
Analogously, the subalgebra of ${\mathcal U}_q(\mathfrak{g})$ generated by the
negative root vectors
\[
    X_{-\beta_1} = F_{i_1}, X_{-\beta_2} = T_{s_{i_1}} F_{i_2},...,
    X_{-\beta_N} = T_{s_{i_1}} \cdots T_{s_{i_{N-1}}} F_{i_N}.
\]
will be denoted ${\mathcal U}_q^-[w]$. It is a subalgebra of the negative part
${\mathcal U}_q(\mathfrak{n}^-)$.

As it turns out, ${\mathcal U}_q^-[w]$ is isomorphic to ${\mathcal U}_q^+[w]$.
In particular, let $\varpi_i$ ($i \in \mathbf{I}$) be the fundamental weights.
From the identity
\[
    T_w \left(\omega \left( u \right) \right) = \left(\prod_{i \in \mathbf{I}}
    \left( -q \right)^{\langle w \mu - \mu, \varpi_i \rangle} \right) \omega
    \left( T_w \left(u \right) \right), \hspace{5mm} (\mu \in Q, u \in
    \left({\mathcal U}_q(\mathfrak{g} \right)_{\mu},
\]
(see e.g.  \cite[Eqn. 8.18.5]{Jantzen}), it follows that restricting $\omega$
to ${\mathcal U}_q^+[w]$ induces an algebra isomorphism $\omega: {\mathcal
U}_q^+[w] \xlongrightarrow{\cong} {\mathcal U}_q^-[w]$.

De Concini, Kac, and Procesi \cite[Proposition 2.2]{DKP} proved that the
algebras ${\mathcal U}_q^\pm[w]$ do not depend on the reduced expression for
$w$. Moreover, Berenstein and Greenstein \cite{AB} proved that
\[
    {\mathcal U}_q^{\pm}[w] = {\mathcal U}_q^\pm(\mathfrak{g}) \cap
    T_w\left({\mathcal U}_q^\mp(\mathfrak{g})\right)
\]
for any $\mathfrak{g}$ of finite type, and they also conjectured
\cite[Conjecture 5.3]{AB} this result holds for any symmetrizable Kac-Moody Lie
algebra $\mathfrak{g}$.  Their conjecture was later proven independently by
Kimura \cite[Theorem 1.1 (1)]{Kimura} and Tanisaki \cite[Proposition
2.10]{Tanisaki}.

Since ${\mathcal U}_q^+[w]$ and ${\mathcal U}_q^-[w]$ are isomorphic algebras,
we will focus only on ${\mathcal U}_q^+[w]$. The isomorphism $\omega: {\mathcal
U}_q^+[w] \xlongrightarrow{\cong} {\mathcal U}_q^-[w]$ be can applied to
translate every result to ${\mathcal U}_q^-[w]$.

Every quantum Schubert cell ${\mathcal U}_q^+[w]$ has a PBW basis
\[
    X_{\beta_1}^{m_1}\cdots X_{\beta_N}^{m_N}, \hspace{.4cm} m_1,...,m_N \in
    \mathbb{Z}_{\geq 0},
\]
of ordered monomials \cite[Proposition 40.2.1]{L}. Moreover, they have
presentations as quantum nilpotent algebras (also called
\textit{Cauchon-Goodearl-Letzter extensions}),
\[
    {\mathcal U}_q^+[w] = \mathbb{K}[X_{\beta_1}][X_{\beta_2}; \sigma_2,
    \delta_2] \cdots [X_{\beta_N}; \sigma_N, \delta_N],
\]
(see \cite[Lemma 2.1]{GeigerYakimov}).

For $1 < i < j \leq N$, define the
\textit{interval subalgebra}
\begin{equation}
    \label{definition, interval subalgebra}
    {\mathbf U}_{[i, j]} := \langle X_{\beta_i},X_{\beta_{i + 1}},\dots,
    X_{\beta_j} \rangle \subseteq {\mathcal U}_q^+[w]
\end{equation}
as the subalgebra generated by $X_{\beta_i}, X_{\beta_{i + 1}},\dots,
X_{\beta_j}$. Ordered monomials
\[
    X_{\beta_i}^{m_i} \cdots X_{\beta_j}^{m_j}, \hspace{5mm} m_i,\dots, m_j \in
    \mathbb{Z}_{\geq 0}
\]
form a basis of $\mathbf{U}_{[i, j]}$.  The Levendorskii-Soibelmann
straightening rule \cite[Prop. 5.5.2]{LS} tells us that for all $1 \leq i < j
\leq N$,
\begin{equation}
    \label{L-S straightening}
    [X_{\beta_i}, X_{\beta_j}] \in \mathbf{U}_{[i + 1, j - 1]} \cap {\mathcal
    U}_q(\mathfrak{g})_{\beta_i + \beta_j}.
\end{equation}
As a simple consequence of the straightening rule, we have the following
result, which we will use extensively.

\begin{proposition}

    \label{LS corollary}

    Let ${\mathcal U}_q^+[w]$ be a quantum Schubert cell algebra with Lusztig
    root vectors $X_{\beta_1},\dots, X_{\beta_N}$.  Suppose $1 \leq i < j \leq
    N$. If there fails to exist a nonnegative integral combination of roots in
    $\left\{\beta_{i+1},\dots, \beta_{j-1}\right\}$ that sum to $\beta_i +
    \beta_j$, then $[X_{\beta_i}, X_{\beta_j}] = 0$.

\end{proposition}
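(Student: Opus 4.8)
The plan is to combine the Levendorskii--Soibelmann straightening rule (\ref{L-S straightening}) with the $Q$-grading on ${\mathcal U}_q^+[w]$. By the straightening rule we know that $[X_{\beta_i}, X_{\beta_j}]$ lies in $\mathbf{U}_{[i+1, j-1]} \cap {\mathcal U}_q(\mathfrak{g})_{\beta_i + \beta_j}$. So the first step is to identify precisely which homogeneous components of $\mathbf{U}_{[i+1, j-1]}$ can be nonzero: since ordered monomials $X_{\beta_{i+1}}^{m_{i+1}} \cdots X_{\beta_{j-1}}^{m_{j-1}}$ (with $m_k \in \mathbb{Z}_{\geq 0}$) form a $\mathbb{K}$-basis of $\mathbf{U}_{[i+1,j-1]}$, and the monomial $X_{\beta_{i+1}}^{m_{i+1}} \cdots X_{\beta_{j-1}}^{m_{j-1}}$ is homogeneous of degree $\sum_{k=i+1}^{j-1} m_k \beta_k$, the graded component $\left(\mathbf{U}_{[i+1,j-1]}\right)_\gamma$ is spanned by those monomials whose degree equals $\gamma$. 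In particular, $\left(\mathbf{U}_{[i+1,j-1]}\right)_\gamma = 0$ whenever $\gamma$ is not a nonnegative integral combination of $\beta_{i+1}, \dots, \beta_{j-1}$.

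The second step is simply to apply this with $\gamma = \beta_i + \beta_j$. By hypothesis, there is no choice of nonnegative integers $m_{i+1}, \dots, m_{j-1}$ with $\sum_{k=i+1}^{j-1} m_k \beta_k = \beta_i + \beta_j$, so no basis monomial of $\mathbf{U}_{[i+1,j-1]}$ has degree $\beta_i + \beta_j$. Hence $\left(\mathbf{U}_{[i+1,j-1]}\right)_{\beta_i+\beta_j} = 0$, and therefore $\mathbf{U}_{[i+1,j-1]} \cap {\mathcal U}_q(\mathfrak{g})_{\beta_i+\beta_j} = 0$. Combined with (\ref{L-S straightening}), this forces $[X_{\beta_i}, X_{\beta_j}] = 0$, which is the claim.

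There is essentially no real obstacle here: the only thing to be a little careful about is that the $Q$-grading used in the straightening rule (the grading on ${\mathcal U}_q(\mathfrak{g})$) restricts correctly to $\mathbf{U}_{[i+1,j-1]}$ and that the PBW-type basis of the interval subalgebra consists of homogeneous elements with the expected degrees --- both of which are already recorded in the excerpt (the interval subalgebra $\mathbf{U}_{[i+1,j-1]}$ has ordered monomials as a basis, and each $X_{\beta_k}$ is homogeneous of degree $\beta_k$, so products are homogeneous with additive degrees). One should also note the edge case $j = i+1$, where the interval $\{\beta_{i+1}, \dots, \beta_{j-1}\}$ is empty: then the only nonnegative integral combination is $0 \neq \beta_i + \beta_j$, so the hypothesis is automatically satisfied and indeed $[X_{\beta_i}, X_{\beta_{i+1}}] \in \mathbb{K} \cap {\mathcal U}_q(\mathfrak{g})_{\beta_i + \beta_{i+1}} = 0$, consistent with the statement. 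So the proof is a short direct deduction from (\ref{L-S straightening}) together with the homogeneity of the PBW basis.
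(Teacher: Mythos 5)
Your proof is correct and is precisely the argument the paper has in mind: the paper states the proposition as "a simple consequence of the straightening rule" without writing out the details, and your deduction via the PBW basis and $Q$-grading of the interval subalgebra is the intended one.
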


Applying the straightening formula (\ref{L-S straightening}) in conjunction
with the $q$-Leibniz identity gives us the following more general result.

\begin{proposition}

    \label{bound on nil-index}

    Let ${\mathcal U}_q^+[w]$ be a quantum Schubert cell algebra with Lusztig
    root vectors $X_{\beta_1},\dots, X_{\beta_N}$.  Suppose $1 \leq i < j \leq
    N$.

    \begin{enumerate}

        \item Then for all $k\in \mathbb{N}$,
            \[
                \left( \operatorname{ad}_q(X_{\beta_i}) \right)^k
                \left(X_{\beta_j}\right) \in \mathbf{U}_{[i + 1, j - 1]} \cap
                \left({\mathcal U}_q^+[w]\right)_{k\beta_i + \beta_j}.
            \]

        \item
            \label{bound on nil-index, 2}
            If, for some fixed $k \in \mathbb{N}$, there fails to exist a
            nonnegative integral combination of roots in $\left\{ \beta_{i +
            1},\dots, \beta_{j - 1} \right\}$ that sum to $k\beta_i + \beta_j$,
            then
            \[
                \left( \operatorname{ad}_q(X_{\beta_i}) \right)^k
                \left(X_{\beta_j}\right) = 0.
            \]

    \end{enumerate}

\end{proposition}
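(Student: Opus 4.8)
The plan is to prove both parts by a single induction on $k$, using the Levendorskii--Soibelman straightening rule (\ref{L-S straightening}) as the base case and the $q$-Leibniz identity (\ref{q-Leibniz identity}) together with the interval-subalgebra bookkeeping to carry out the inductive step.

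For part (1), the case $k = 1$ is exactly (\ref{L-S straightening}): $[X_{\beta_i}, X_{\beta_j}] \in \mathbf{U}_{[i+1, j-1]} \cap ({\mathcal U}_q(\mathfrak{g}))_{\beta_i + \beta_j}$, and since $[X_{\beta_i}, X_{\beta_j}]$ lies in ${\mathcal U}_q^+[w]$ this is the same as $\mathbf{U}_{[i+1,j-1]} \cap ({\mathcal U}_q^+[w])_{\beta_i + \beta_j}$. For the inductive step, suppose $(\operatorname{ad}_q X_{\beta_i})^{k-1}(X_{\beta_j}) \in \mathbf{U}_{[i+1, j-1]} \cap ({\mathcal U}_q^+[w])_{(k-1)\beta_i + \beta_j}$. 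Since ordered monomials $X_{\beta_{i+1}}^{m_{i+1}} \cdots X_{\beta_{j-1}}^{m_{j-1}}$ form a basis of $\mathbf{U}_{[i+1,j-1]}$, I can write $(\operatorname{ad}_q X_{\beta_i})^{k-1}(X_{\beta_j})$ as a linear combination of such monomials (all of degree $(k-1)\beta_i + \beta_j$). Applying $\operatorname{ad}_q X_{\beta_i}$ and iterating the $q$-Leibniz identity (\ref{q-Leibniz identity}) across each monomial reduces the computation to the single commutators $[X_{\beta_i}, X_{\beta_\ell}]$ for $i+1 \leq \ell \leq j-1$; by (\ref{L-S straightening}) each of these lies in $\mathbf{U}_{[i+1, \ell-1]} \subseteq \mathbf{U}_{[i+1, j-1]}$. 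Hence $\operatorname{ad}_q X_{\beta_i}$ maps $\mathbf{U}_{[i+1, j-1]}$ into itself, so $(\operatorname{ad}_q X_{\beta_i})^k(X_{\beta_j}) \in \mathbf{U}_{[i+1,j-1]}$; and it is homogeneous of degree $k\beta_i + \beta_j$ because $\operatorname{ad}_q X_{\beta_i}$ raises the $Q$-degree by $\beta_i$. This gives the containment in $\mathbf{U}_{[i+1,j-1]} \cap ({\mathcal U}_q^+[w])_{k\beta_i + \beta_j}$.

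Part (2) is then immediate from part (1): if no nonnegative integral combination of $\{\beta_{i+1}, \dots, \beta_{j-1}\}$ sums to $k\beta_i + \beta_j$, then the graded piece $\mathbf{U}_{[i+1,j-1]} \cap ({\mathcal U}_q^+[w])_{k\beta_i + \beta_j}$ is zero, since every ordered monomial basis element $X_{\beta_{i+1}}^{m_{i+1}} \cdots X_{\beta_{j-1}}^{m_{j-1}}$ of $\mathbf{U}_{[i+1,j-1]}$ has $Q$-degree $\sum_{\ell} m_\ell \beta_\ell$, a nonnegative integral combination of $\{\beta_{i+1}, \dots, \beta_{j-1}\}$. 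So the only element of that graded component is $0$, forcing $(\operatorname{ad}_q X_{\beta_i})^k(X_{\beta_j}) = 0$.

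The one point requiring care — and the main (mild) obstacle — is justifying cleanly that $\operatorname{ad}_q X_{\beta_i}$ preserves the interval subalgebra $\mathbf{U}_{[i+1, j-1]}$: one must track, through the repeated applications of the $q$-Leibniz rule on an arbitrary ordered monomial, that every term produced stays within $\mathbf{U}_{[i+1,j-1]}$ rather than drifting into $X_{\beta_i}$ or higher generators. This is handled by the observation that $[X_{\beta_i}, X_{\beta_\ell}] \in \mathbf{U}_{[i+1, \ell-1]}$ for each $\ell$ in the range, which keeps indices strictly inside the window, and that multiplication by the $X_{\beta_\ell}$ themselves (the "passenger" factors in the $q$-Leibniz expansion) obviously stays in $\mathbf{U}_{[i+1,j-1]}$. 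Everything else is routine bookkeeping with the $Q$-grading and the PBW basis.
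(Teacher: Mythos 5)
Your proof is correct and fills in exactly the details the paper leaves implicit: the paper gives no formal proof of this proposition, remarking only that it follows from the Levendorskii–Soibelmann straightening rule together with the $q$-Leibniz identity, which is precisely the induction on $k$ you carry out. The key observation — that $[X_{\beta_i}, X_{\beta_\ell}] \in \mathbf{U}_{[i+1,\ell-1]} \subseteq \mathbf{U}_{[i+1,j-1]}$ for $\ell$ in the interior window, so $\operatorname{ad}_q X_{\beta_i}$ preserves the interval subalgebra — is the right bookkeeping step, and part (2) follows from part (1) and the $Q$-grading of the PBW basis exactly as you say.
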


We note that part (\ref{bound on nil-index, 2}) of Proposition (\ref{bound on
nil-index}) can be used to find an upper bound on the nilpotency index of
$\operatorname{ad}_q(X_{\beta_i})$ acting on $X_{\beta_j}$.

\begin{example}

    \normalfont

    Consider the reduced expression
    \[
        w = s_4 s_5 s_6 s_7 s_8 s_2 s_3 s_4 s_5 s_6 s_7 s_1 s_3 s_4 s_5 s_6 s_2
        s_4 s_5 s_3 s_4 s_1 s_2 s_3 s_4 s_5 s_6 s_7 s_8 \in W(E_8)
    \]
    in the type $E_8$ Weyl group. Here, $\ell(w) = 29$. Let $X_{\beta_1},
    \cdots, X_{\beta_{29}}$ be the Lusztig root vectors in ${\mathcal
    U}_q^+[w]$, corresponding to this reduced expression of $w$. From part
    (\ref{bound on nil-index, 2}) of Proposition (\ref{bound on nil-index})
    above, we can conclude $\left(\operatorname{ad}_q (X_{\beta_1}) \right)^6
    (X_{\beta_{29}}) = 0$, and thus $\left(\operatorname{ad}_q (X_{\beta_1})
    \right)^k (X_{\beta_{29}}) = 0$ for all $k \geq 6$.

    Also, there exists a nonnegative integral combination of roots in
    $\left\{ \beta_2, \cdots, \beta_{28} \right\}$ that sum to $5 \beta_1 +
    \beta_{29}$, namely
    \[
        \beta_2 + \beta_3 + \beta_4 + \beta_5 + 3 \beta_6 + 2 \beta_7 + 2
        \beta_{12},
    \]
    and hence no conclusion can be made, based solely on part (\ref{bound on
    nil-index, 2}) of Proposition (\ref{bound on nil-index}), as to whether or
    not $\left(\operatorname{ad}_q (X_{\beta_1}) \right)^5 (X_{\beta_{29}})$
    equals $0$.

    As it turns out, we will show in Section (\ref{section, E8}) that
    $\left(\operatorname{ad}_q (X_{\beta_1}) \right)^2 (X_{\beta_{29}}) = 0$
    and $\left(\operatorname{ad}_q (X_{\beta_1}) \right) (X_{\beta_{29}}) \neq
    0$. In other words, the nilpotency index of
    $\operatorname{ad}_q(X_{\beta_1})$ acting on $X_{\beta_{29}}$ is $2$. In
    fact, $w = \eta_{759}^{-1}$, where $\eta_1,\eta_2,\cdots \in W(E_8)$ are
    certain Weyl group elements defined in Appendix (\ref{appendix, elements
    E_n}).

\end{example}

For our purposes, the following well-known result (Proposition (\ref{as nested
E})) will be very useful for determining how to explicitly write the Lusztig
root vectors $X_\beta$ as elements of ${\mathcal U}_q(\mathfrak{n}^+)$. Our proof
provides a recursive algorithm for writing $X_\beta$ in terms of the Chevalley
generators $E_i$ ($i \in \mathbf{I}$).

For $i_1,\cdots, i_m \in \mathbf{I}$, define the \textit{nested $q$-commutator}
\begin{equation}
    \label{definition, nested q-commutator}
    \mathbf{E}_{i_1,\ldots,i_m} := [[[ \cdots [E_{i_1}, E_{i_2}],
    E_{i_3}],\cdots,], E_{i_m}]  \in {\mathcal U}_q(\mathfrak{n}^+).
\end{equation}

\begin{proposition}

    \label{as nested E}

    Let ${\mathcal E}$ be the smallest subset of ${\mathcal
    U}_q(\mathfrak{n}^+)$ such that

    \begin{enumerate}

        \item $E_i \in {\mathcal E}$ for all $i \in \mathbf{I}$,

        \item if $x, y \in {\mathcal E}$, then $[x, y] \in {\mathcal E}$,

        \item if $x \in {\mathcal E}$ and the Lie type is $B_n$, $C_n$, or
            $F_4$, then $\frac{1}{[2]_q} x \in {\mathcal E}$, and

        \item if $x \in {\mathcal E}$ and the Lie type is $G_2$, then
            $\frac{1}{[2]_q} x \in {\mathcal E}$ and $\frac{1}{[3]_q!} x \in
            {\mathcal E}$.

    \end{enumerate}

    Then $T_w(E_j) \in {\mathcal E}$ for all $w \in W$ and $j \in \mathbf{I}$
    such that $w (\alpha_j) > 0$.

\end{proposition}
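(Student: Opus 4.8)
The plan is to argue by induction on the length $\ell(w)$. The base case $\ell(w) = 0$ is trivial: $w = e$, $w(\alpha_j) = \alpha_j > 0$, and $T_w(E_j) = E_j \in {\mathcal E}$ by clause (1). For the inductive step, fix $w$ with $\ell(w) > 0$ and $j \in \mathbf{I}$ with $w(\alpha_j) > 0$, and choose a simple reflection $s_i$ with $\ell(s_i w) < \ell(w)$, so $w = s_i w'$ with $\ell(w') = \ell(w) - 1$ and $w'(\alpha_i) > 0$. Then $T_w(E_j) = T_i\bigl(T_{w'}(E_j)\bigr)$. The idea is to understand $T_{w'}(E_j)$ via the induction hypothesis and then track what $T_i$ does to it. Here one must split on whether $i = j$ or $i \neq j$, and, more importantly, on the sign of $w'(\alpha_j)$; note $w'(\alpha_j)$ could a priori be negative even though $w(\alpha_j) = s_i w'(\alpha_j) > 0$, but this forces $w'(\alpha_j) = -\alpha_i$, i.e. $w'(s_j \cdot) $ behaves in a controlled way — in fact $w'(\alpha_j) = -\alpha_i$ cannot happen when $w'(\alpha_i) > 0$ unless $i = j$, which it isn't since then $w(\alpha_j) = -\alpha_i < 0$; so in fact $w'(\alpha_j) > 0$ automatically, and the induction hypothesis applies to give $T_{w'}(E_j) \in {\mathcal E}$.

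The crux is then the following claim: if $x \in {\mathcal E}$ and $T_i(x)$ lies in ${\mathcal U}_q(\mathfrak{n}^+)$ (equivalently, if $x$ is a homogeneous element of positive degree $\gamma$ with $s_i(\gamma) > 0$), then $T_i(x) \in {\mathcal E}$. I would prove this by induction on the structure of ${\mathcal E}$ (i.e. on how $x$ is built from the closure clauses (1)–(4)). The generator case is exactly Lusztig's formula: $T_i(E_i) = -F_i K_{\alpha_i}$ is \emph{not} in $\mathfrak{n}^+$, which is consistent because $s_i(\alpha_i) = -\alpha_i < 0$; and for $j \neq i$, $T_i(E_j) = (\operatorname{ad}_q E_i)^{(-c_{ij})}(E_j)$, which is a scalar multiple of the nested $q$-commutator $\mathbf{E}_{i,\dots,i,j}$ ($-c_{ij}$ copies of $i$). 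This scalar is $1/[{-c_{ij}}]_{q_i}!$; since $-c_{ij} \in \{0,1,2,3\}$ and the factor $[2]_{q_i}$, $[3]_{q_i}$ that appear are exactly the ones permitted by clauses (3)–(4) in the relevant types (recall $c_{ij} \le -2$ only occurs in types $B,C,F,G$, and $c_{ij} = -3$ only in $G_2$, precisely where $[2]_q$, $[3]_q!$ are allowed), the result $T_i(E_j) \in {\mathcal E}$. For the closure step, since $T_i$ is an algebra automorphism, it commutes with forming $q$-commutators in the sense of (\ref{orthogonality of Tw}): $T_i([x,y]) = [T_i(x), T_i(y)]$, so if $T_i(x), T_i(y) \in {\mathcal E}$ then $T_i([x,y]) \in {\mathcal E}$ by clause (2). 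Scalar clauses (3)–(4) are preserved because $T_i$ is $\mathbb{K}$-linear and $\frac{1}{[2]_q} T_i(x) = T_i\bigl(\frac{1}{[2]_q} x\bigr)$.

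The one genuine subtlety — and the step I expect to be the main obstacle — is the bookkeeping needed to guarantee that in the structural induction we never hit the "bad" case $T_i(E_i) = -F_iK_{\alpha_i} \notin \mathfrak{n}^+$, and more generally that whenever we apply $T_i$ to a sub-piece $x'$ of $x$, that sub-piece again has degree $\gamma'$ with $s_i(\gamma') > 0$. This is where convexity of the root order enters: $x = T_{w'}(E_j)$ has degree $w'(\alpha_j) \in \Delta_+$, and $w'(\alpha_j)$ is a root of $w$ lying "after" $\alpha_i$ in the convex order associated to the reduced word $s_i s_{i_2}\cdots$; every nested $q$-commutator appearing in the recursive construction of $x$ is homogeneous of degree a subsum of roots all of which are $> \alpha_i$ in that order, hence still positive after applying $s_i$. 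Making this precise requires recalling (from the Levendorskii–Soibelmann setup, Proposition (\ref{LS corollary}) and the surrounding discussion) that the $q$-commutator brackets occurring inside $T_{w'}(E_j)$ only involve root vectors $X_\beta$ with $\alpha_i < \beta$, so that $s_i$ sends each such $\beta$, and each partial sum, to a positive root. Once this invariant is in hand the two nested inductions close cleanly, and the special scalars in clauses (3)–(4) are seen to account for exactly the denominators $[{-c_{ij}}]_{q_i}!$ that Lusztig's formulas introduce.
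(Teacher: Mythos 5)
Your proposal factors the leftmost simple reflection off $w$, writes $T_w(E_j) = T_i\bigl(T_{w'}(E_j)\bigr)$, and then tries to prove the lemma that $T_i$ preserves ${\mathcal E}$ on elements $x$ with $T_i(x) \in {\mathcal U}_q(\mathfrak{n}^+)$ by structural induction on how $x$ was built. This differs from the paper, which factors simple reflections off the \emph{right}: there, $T_w(E_j)$ is expressed directly as a nested $q$-commutator of $T_u(E_k)$'s with $\ell(u) < \ell(w)$ and $u(\alpha_k) > 0$, chosen via the right descents of $w$ and $w s_j$, so the length induction closes without ever applying $T_i$ to a potentially ``bad'' element.

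The difficulty is that your structural induction does not close. First, the parenthetical ``(equivalently, $s_i(\gamma) > 0$)'' is false: in type $A_2$ one computes that $T_1([E_1, E_2])$ has degree $s_1(\alpha_1 + \alpha_2) = \alpha_2 > 0$ yet involves $F_1$, so it does not lie in ${\mathcal U}_q(\mathfrak{n}^+)$; the condition $s_i(\gamma) > 0$ is necessary but not sufficient. Second, and more seriously, even when the hypothesis of your lemma holds for $x = [y, z]$, it generally fails for a sub-piece $y$ or $z$, so the step $T_i([y,z]) = [T_i(y), T_i(z)]$ yields a bracket of elements outside ${\mathcal U}_q(\mathfrak{n}^+)$ and hence outside ${\mathcal E}$. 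Concretely, take $w = s_i s_a$ with $a$ adjacent to $i$ and $j = i$; then $w'(\alpha_j) > 0$ and $w(\alpha_j) = \alpha_a > 0$, but the natural construction of $T_{w'}(E_i) = [E_a, E_i]$ has $E_i$ as a sub-piece, and $T_i(E_i) = -F_i K_{\alpha_i} \notin {\mathcal U}_q(\mathfrak{n}^+)$. The answer $T_{s_is_a}(E_i) = E_a$ is of course in ${\mathcal E}$, but your inner induction cannot see this: it produced $[T_i(E_a), T_i(E_i)] = [[E_i,E_a], -F_iK_{\alpha_i}]$, which happens to equal $E_a$ but is not a licensed construction under clauses (1)--(4). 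Your proposed convexity fix --- that every sub-piece has degree a subsum of roots strictly after $\alpha_i$ --- breaks precisely because the recursion bottoms out in Chevalley generators, and $E_i$ itself can and does appear. The fix would require choosing a very specific decomposition of $T_{w'}(E_j)$ and tracking a much stronger inductive invariant, which is essentially what the paper's right-descent analysis accomplishes in a cleaner way.
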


\begin{proof}

    We prove this by inducting on the length of $w$. First, if $\ell(w) = 0$,
    then $w$ is the identity element and $T_w(E_j) = E_j$.

    Now assume $\ell(w) > 0$.  Recall, an index $k\in \mathbf{I}$ is called a
    right descent of a Weyl group element $x$ if $\ell(xs_k) < \ell(x)$. The
    set of right descents of $x$ will be denoted ${\mathcal D}_R(x)$.  Since
    $j$ is not a right descent of $w$, this means $ws_j$ is not the identity
    element, and thus $ws_j$ has at least one right descent.

    Suppose first that $w s_j$ has more than one right descent. This means
    there exists some $p \in \mathbf{I}$ with $p \neq j$ such that $s_j \leq_L
    u \leq_L w s_j$, where $u$ is the longest element of the subgroup of $W$
    generated by $s_j$ and $s_p$, and $\leq_L$ is the weak left Bruhat order.
    This means there exists a reduced expression for $u$ that ends in $s_j$,
    and $\ell(u) + \ell(ws_ju^{-1}) = \ell(ws_j)$.  For the purposes here, we
    may consider any such index $p$ such that this condition is met.  However,
    to construct a systematic algorithm, we choose $p$ to be the smallest such
    index.  With this, $u s_j (\alpha_j)$ is a simple root, say $\alpha_k$,
    $\ell(w s_j u) < \ell(w)$, and $T_w(E_j) = T_{w s_j u} (E_k)$.  Observe
    also that $k \not\in {\mathcal D}_R(w s_j u)$.

    Now suppose $w s_j$ has only one right descent, namely $j$, and $\ell(w) >
    0$. Therefore $w$ has at least one right descent. Again, for the purposes
    here, we may consider any such right descent, but to be systematic, we
    focus on the smallest right descent of $w$, say $k$. There are three cases
    to consider: $\langle \alpha_k^\vee, \alpha_j \rangle = -1$, $\langle
    \alpha_k^\vee, \alpha_j \rangle = -2$, or $\langle \alpha_k^\vee, \alpha_j
    \rangle = -3$.

    If $\langle \alpha_k^\vee, \alpha_j \rangle = -1$, then, provided $j
    \not\in {\mathcal D}_R(ws_k)$, we write
    \[
        T_w(E_j) = T_{w s_k} T_{s_k} (E_j) = T_{w s_k} \left([E_k, E_j]\right)
        = [T_{w s_k} (E_k), T_{w s_k} (E_j)].
    \]
    In this situation, neither $j$ nor $k$ are right descents of $ws_k$, and
    $\ell(ws_k) < \ell(w)$.  However, if $j \in {\mathcal D}_R(ws_k)$, then the
    order of $s_js_k$ must be at least $4$. We write
    \[
        T_w(E_j) = T_{ws_ks_j} T_{s_js_k}(E_j) = T_{ws_ks_j} (\mathbf{E}_{j,k})
        = [T_{ws_ks_j}(E_j), T_{ws_ks_j}(E_k)]
    \]
    whenever $s_js_k$ has order $4$. In this case, $j,k \not\in {\mathcal
    D}_R(ws_ks_j)$, and $\ell(ws_js_k) < \ell(w)$. However, if $s_js_k$ has
    order $6$, then the underlying Lie type is $G_2$, $\alpha_k$ is the long
    simple root, and $\alpha_j$ is the short simple root. Here, there are $4$
    possibilities for $w$ (where $\ell(w) > 0$, $k \in {\mathcal D}_R(w)$, and
    $ws_j$ has only one right descent). In each of these 4 cases, we can, up to
    a scalar multiple, write $T_w(E_j)$ as nested $q$-commutators of Chevalley
    generators. In particular,
    \begin{align*}
        &T_{s_k}(E_j) = \mathbf{E}_{k,j}, \hspace{5mm} T_{s_j s_k}
        (E_j) = \frac{1}{[2]_q} [E_j, \mathbf{E}_{j,k}],
        \\
        &T_{s_k s_j s_k} (E_j) = \frac{1}{[2]_q} \mathbf{E}_{k,j,j},
        \hspace{5mm}T_{s_j s_k s_j s_k}(E_j) = \mathbf{E}_{j,k}.
    \end{align*}

    On the other hand, if $\langle \alpha_k^\vee, \alpha_j \rangle = -2$, then,
    provided $j \not\in {\mathcal D}_R(w s_k)$, we have
    \[
        T_w(E_j) = T_{ws_k} T_{s_k}(E_j) = \frac{1}{[2]_q} [T_{w s_k} (E_k),
        [T_{w s_k} (E_k), T_{w s_k} (E_j)]].
    \]
    Recall, $k$ is a right descent of $w$. Thus $k \not\in {\mathcal
    D}_R(ws_k)$ and $\ell(ws_k) < \ell(w)$. To handle the case when $j \in
    {\mathcal D}_R(ws_k)$, we write
    \[
        \begin{split}
            T_w(E_j) &= T_{ws_ks_j} T_{s_js_k}(E_j) = \frac{1}{[2]_q}
            T_{ws_ks_j} (\mathbf{E}_{j,k,k})
            \\
            &= \frac{1}{[2]_q} [[ T_{ws_ks_j}(E_j), T_{ws_ks_j} (E_k)],
            T_{ws_ks_j}(E_k)].
        \end{split}
    \]
    In this case, $\ell(ws_ks_j) < \ell(w)$ and $j,k \not\in {\mathcal
    D}_R(ws_ks_j)$.

    Finally, if $\langle \alpha_k^\vee, \alpha_j \rangle = -3$, then the
    underlying Lie type is $G_2$, $\alpha_k$ is the short simple root, and
    $\alpha_j$ is the long simple root. As in the previous case involving type
    $G_2$, there are 4 possibilities for $w$ (where $\ell(w) > 0$, $j \not\in
    {\mathcal D}_R(w)$, and $ws_j$ has only one right descent), and in each of
    these 4 cases, we can write $T_w(E_j)$ as  nested $q$-commutators of the
    Chevalley generators, up to a scalar multiple. In particular,
    \begin{align*}
        &T_{s_k}(E_j) = \frac{1}{[3]_q!} [E_k, [E_k, \mathbf{E}_{k,j}]],
        \hspace{5mm} T_{s_j s_k} (E_j) = \frac{1}{[3]_q!} [\mathbf{E}_{j,k},
        \mathbf{E}_{j,k,k}],
        \\
        &T_{s_k s_j s_k} (E_j) = \frac{1}{[3]_q!} [[E_k, \mathbf{E}_{k,j}],
        \mathbf{E}_{k,j}],
        \hspace{5mm} T_{s_j s_k s_j s_k}(E_j) = \frac{1}{[3]_q!}
        \mathbf{E}_{j,k,k,k}.
    \end{align*}

\end{proof}

\subsection{Obtaining an explicit presentation of \texorpdfstring{${\mathcal
U}_q^+[w]$}{Uq^+[w]}}

\label{section, L(i,j,r,s) and R(i,j,r,s)}

We describe two algorithms, \texttt{L(i,j,r,s)} and \texttt{R(i,j,r,s)}, that
can be used, in many cases, to find explicit presentations of quantum Schubert
cell algebras.  While these algorithms apply to any underlying Lie type $X_n$,
we will use them later (in Sections (\ref{section, F4}) and (\ref{section,
E678}) for those cases when the Lie types are $F_4$, $E_6$, $E_7$, and $E_8$ to
find explicit presentations of ${\mathcal U}_q^+[w_0]$, where $w_0$ is the
longest element of the Weyl group.  Once an explicit presentation is found, it
is a routine exercise in applying $q$-commutator straightening relations to
calculate nilpotency indices.

Consider a pair of Lusztig root vectors, $X_{\beta_i}$ and $X_{\beta_j}$ with
$i < j$, in a quantum Schubert cell algebra ${\mathcal U}_q^+[w]$. An equation
of the form
\[
    [X_{\beta_i}, X_{\beta_j}] = f(X_{\beta_1},\dots, X_{\beta_{\ell(w)}}),
\]
where the right hand side $f(X_{\beta_1},\dots, X_{\beta_{\ell(w)}})$ is a
$\mathbb{K}$-linear combination of ordered monomials in $X_{\beta_1},\dots,
X_{\beta_{\ell(w)}}$, will be called a \textit{$q$-commutator relation}.  We
will informally say the \textit{$q$-commutator relation involving
$[X_{\beta_i}, X_{\beta_j}]$ has been determined (or established, obtained,
found, et cetera.)} if it is known how to write $[X_{\beta_i}, X_{\beta_j}]$
explicitly as a linear combination of ordered monomials.  Recall the set of
ordered monomials is a basis of ${\mathcal U}_q^+[w]$.  Thus, determining the
$q$-commutator relations among all pairs of Lusztig roots vectors gives an
explicit presentation of ${\mathcal U}_q^+[w]$.

The algorithms \texttt{L(i,j,r,s)} and \texttt{R(i,j,r,s)} can each be used to
determine the $q$-commutator relation involving $[X_{\beta_i}, X_{\beta_j}]$.
Applying the first step in the algorithm \texttt{L(i,j,r,s)} or
\texttt{R(i,j,r,s)} requires that there exists a relation of the form
$X_{\beta_i} = c[X_{\beta_r}, X_{\beta_s}]$ or $X_{\beta_j} = c[X_{\beta_r},
X_{\beta_s}]$, respectively, for some nonzero $c \in \mathbb{K}$ and $r < s$.
This means certain $q$-commutator relations must first be established before
determining others.  However, once the $q$-commutator relation involving
$[X_{\beta_i}, X_{\beta_j}]$ has been obtained, we may use it to find others.

The algorithm \texttt{R(i,j,r,s)} begins by replacing $X_{\beta_j}$ with a
scalar multiple of a $q$-commutator $[X_{\beta_r}, X_{\beta_s}]$ (if such a
relation exists and has already been discovered), and then proceeds by applying
the $q$-Jacobi identity.  For instance, if $X_{\beta_j} = c[X_{\beta_r},
X_{\beta_s}]$ for some nonzero scalar $c\in \mathbb{K}$ and $r$ and $s$ with $r
< s$, then
\[
    [X_{\beta_i}, X_{\beta_j}] = c \left([[X_{\beta_i}, X_{\beta_r}],
    X_{\beta_s}] - q^{-\langle \beta_r, \beta_s \rangle} [[X_{\beta_i},
    X_{\beta_s}], X_{\beta_r}] - \widehat{q}_{r, s} [X_{\beta_i}, X_{\beta_s}]
    X_{\beta_r} \right),
\]
where $\widehat{q}_{r,s} := q^{\langle \beta_r, \beta_s \rangle} - q^{- \langle
\beta_r, \beta_s \rangle}$.

Next, if it is already known how to write the $q$-commutators $[X_{\beta_i},
X_{\beta_r}]$ and $[X_{\beta_i}, X_{\beta_s}]$ as linear combinations of
ordered monomials, say $[X_{\beta_i}, X_{\beta_r}] = \sum b_{\bf k} x^{\bf k}$
and $[X_{\beta_i}, X_{\beta_s}] = \sum c_{\bf k} x^{\bf k}$ ($b_{\bf k}, c_{\bf
k} \in \mathbb{K}$ and each $x^{\bf k}$ is an ordered monomial), then
\[
    [X_{\beta_i}, X_{\beta_j}] = c\sum_{\bf k} \left( b_{\bf k} [x^{\bf k},
    X_{\beta_s}] - q^{-\langle \beta_r, \beta_s \rangle} c_{\bf k} [x^{\bf k},
    X_{\beta_r}] +
    \widehat{q}_{r, s} c_{\bf k} x^{\bf k} X_{\beta_r} \right).
\]
Finally, use known $q$-commutator relations to reorder the variables involved
in each of the monomials $x^{\bf k} X_{\beta_s}$, $X_{\beta_s} x^{\bf k}$,
$x^{\bf k} X_{\beta_r}$, and $X_{\beta_r} x^{\bf k}$.  For short, these steps
(which depend on $i$, $j$, $r$, and $s$) will be denoted by
$\texttt{R(i,j,r,s)}$. In summary, we have the following three-step algorithm.

\vspace{2mm}

\begin{tblr}{width=0.93\textwidth, colspec={Q[l] X[l]}}
    \hline
    \SetCell[c=2]{l}{{{$\texttt{R(i,j,r,s)}$ (to determine the $q$-commutator
    relation involving $[X_{\beta_i}, X_{\beta_j}]$)}}}
    \\
    \hline
    \textit{Step 1:} &Identify an existing (i.e. known) relation of the form
    $X_{\beta_j} = c[X_{\beta_r}, X_{\beta_s}]$, where $c\in \mathbb{K}$ is
    nonzero, and make the substitution to get $[X_{\beta_i}, X_{\beta_j}] =
    c[X_{\beta_i}, [X_{\beta_r}, X_{\beta_s}]]$.
    \\
    \textit{Step 2:} &Apply the $q$-Jacobi identity to write $[X_{\beta_i},
    [X_{\beta_r}, X_{\beta_s}]]$ as a linear combination of $[[X_{\beta_i},
    X_{\beta_r}], X_{\beta_s}]$, $[[X_{\beta_i}, X_{\beta_s}], X_{\beta_r}]$,
    and $[X_{\beta_i}, X_{\beta_s}] X_{\beta_r}$.
    \\
    \textit{Step 3:} &Replace each of the $q$-commutators,
    $[X_{\beta_i}, X_{\beta_r}]$ and $[X_{\beta_i}, X_{\beta_s}]$, with a
    linear combination of ordered monomials $\sum c_{\bf k} x^{\bf k}$, then
    use other known relations, if necessary, to reorder the variables involved
    in each monomial $x^{\bf k}X_{\beta_r}$, $x^{\bf k}X_{\beta_s}$,
    $X_{\beta_r} x^{\bf k}$, and $X_{\beta_s} x^{\bf k}$.
    \\
    \hline
\end{tblr}

\noindent Rather than replace $X_{\beta_j}$ with a scalar multiple of a
$q$-commutator, we can, in some cases, do this with $X_{\beta_i}$. This gives
us a slightly different algorithm that will be denoted by
$\texttt{L(i,j,r,s)}$.

\vspace{4mm}

\begin{tblr}{width=0.93\textwidth, colspec={Q[l] X[l]}}
    \hline
    \SetCell[c=2]{l}{{{$\texttt{L(i,j,r,s)}$ (to determine the $q$-commutator
    relation involving $[X_{\beta_i}, X_{\beta_j}]$)}}}
    \\
    \hline
    \textit{Step 1:} &Identify an existing (i.e. known) relation of the form
    $X_{\beta_i} = c[X_{\beta_r}, X_{\beta_s}]$, where $c\in \mathbb{K}$ is
    nonzero, and make the substitution to get $[X_{\beta_i}, X_{\beta_j}] =
    c[[X_{\beta_r}, X_{\beta_s}], X_{\beta_j}]$.
    \\
    \textit{Step 2:} &Apply the $q$-Jacobi identity to write $[[X_{\beta_r},
    X_{\beta_s}], X_{\beta_j}]$ as a linear combination of $[X_{\beta_r},
    [X_{\beta_s}, X_{\beta_j}]]$, $[[X_{\beta_r}, X_{\beta_j}], X_{\beta_s}]$,
    and $[X_{\beta_r}, X_{\beta_j}] X_{\beta_s}$.
    \\
    \textit{Step 3:} &Replace each of the $q$-commutators, $[X_{\beta_s},
    X_{\beta_j}]$ and $[X_{\beta_r}, X_{\beta_j}]$, with a linear combination
    of ordered monomials $\sum c_{\bf k} x^{\bf k}$, then use other known
    relations, if necessary, to reorder the variables involved in each monomial
    $x^{\bf k}X_{\beta_r}$, $x^{\bf k}X_{\beta_s}$, $X_{\beta_r} x^{\bf k}$,
    and $X_{\beta_s} x^{\bf k}$.
    \\
    \hline
\end{tblr}

\vspace{4mm}

As mentioned, in order to apply the algorithms $\texttt{R(i,j,r,s)}$ and
$\texttt{L(i,j,r,s)}$, there needs to exist $q$-commutator relations of the
form $X_{\beta_j} = c[X_{\beta_r}, X_{\beta_s}]$ and $X_{\beta_i} =
c[X_{\beta_r}, X_{\beta_s}]$, respectively, where $c \in \mathbb{K}$ is a
nonzero scalar and $r < s$.  However, whenever there are sufficiently many
relations of this type, the algorithms $\texttt{R(i,j,r,s)}$ and
$\texttt{L(i,j,r,s)}$ can be used to explicitly write the defining relations
(i.e. the $q$-commutation relations among all pairs of Lusztig root vectors) in
a quantum Schubert cell algebra ${\mathcal U}_q^+[w]$.

\begin{example}

    \normalfont

    Consider the reduced expression $w = s_3s_2s_4s_3s_4s_1s_2s_3s_2 \in
    W(B_4)$. Let $x_1,\cdots, x_9$ be the corresponding Lusztig root vectors in
    ${\mathcal U}_q^+[w]$. We will illustrate how to use the algorithm
    $\texttt{R(1,5,3,9)}$ to determine the $q$-commutation relation involving
    $[x_1, x_5]$.

    The first step in $\texttt{R(1,5,3,9)}$ says to use a relation of the form
    $[x_3, x_9] = c x_5$, where $c$ is some nonzero scalar.  We need to assume
    it has already been established that, in fact, $[x_3, x_9] = x_5$. With
    this at hand, we make a substitution to get $[x_1, x_5] = [x_1, [x_3,
    x_9]]$. Next, we move on to Step 2 of $\texttt{R(1,5,3,9)}$, which tells us
    to use the $q$-Jacobi identity. This gives us
    \[
        [x_1, x_5] = [[x_1, x_3], x_9] - q^2[[x_1, x_9], x_3] + [2]_q
        \widehat{q} [x_1, x_9] x_3.
    \]
    Now we apply Step 3, which tells us to first write each of the
    $q$-commutators $[x_1, x_3]$ and $[x_1, x_9]$ as a linear combination of
    ordered monomials. For this, we need to assume that it has already been
    established that $[x_1, x_9] = x_2$. From Proposition (\ref{LS corollary}),
    $[x_1, x_3] = 0$. Hence,
    \[
        [x_1, x_5] = -q^2[x_2, x_3] + [2]_q \widehat{q} x_2 x_3.
    \]
    The last part of Step 3 tells us to reorder the variables involved in the
    monomials $x_2 x_3$ and $x_3x_2$. From Proposition (\ref{LS corollary}),
    $[x_2, x_3] = 0$, and thus $[x_1, x_5] = [2]_q \widehat{q} x_2 x_3$.

\end{example}

\section{The set \texorpdfstring{$\Gamma(W)$}{Gamma(W)}}

\label{section, Gamma(W)}

Suppose $w = s_{i_1} \cdots s_{i_N} \in W$ is a reduced expression, and let
$X_{\beta_1},\ldots, X_{\beta_N}$ be the corresponding Lusztig root vectors in
the quantum Schubert cell algebra ${\mathcal U}_q^+[w]$. For each pair $j, k$
with $1 \leq j < k \leq N$, recall our main goal is to find the smallest
natural number $p$, which we refer to as the (local) \textit{nilpotency index},
so that
\[
    \left(\operatorname{ad}_q(X_{\beta_j})\right)^p (X_{\beta_k}) = 0.
\]
This leads us to consider the $q$-commutator $[X_{\beta_j}, X_{\beta_k}]$. If
it equals $0$, the nilpotency index is $1$. Otherwise, we must consider the
nested $q$-commutators $[X_{\beta_j}, [X_{\beta_j}, X_{\beta_k}]]$,
$[X_{\beta_j}, [X_{\beta_j}, [X_{\beta_j}, X_{\beta_k}]]]$, and so on.

Our strategy to compute nilpotency indices is to apply the Lusztig symmetries
of ${\mathcal U}_q(\mathfrak{g})$ to each of $X_{\beta_j}$ and
$X_{\beta_k}$ in such a way to simplify the computations involved.

As it turns out, to a pair of Lusztig root vectors, $X_{\beta_j}$ and
$X_{\beta_k}$ with $j < k$, the data necessary to compute a nilpotency index is
the triple
\begin{equation}
    \label{w, i, j}
    (s_{i_j}\cdots s_{i_k}, i_j, i_k) \in W \times \mathbf{I} \times
    \mathbf{I}.
\end{equation}
To illustrate, we get from (\ref{orthogonality of Tw}),
\[
    [X_{\beta_j}, X_{\beta_k}] \!=\! [T_{s_{i_1} \cdots s_{i_{j-1}}} (E_{i_j}),
    T_{s_{i_1} \cdots s_{i_{k - 1}}} (E_{i_k})] \!=\! T_{s_{i_1} \cdots s_{i_{j
    - 1}}} \!\left( [E_{i_j}, T_{s_{i_j} \cdots s_{i_{k - 1}}} (E_{i_k}) ]
    \right).
\]
Thus, computing the nilpotency index is equivalent to finding the smallest
natural number $p$ such that
\[
    \left( \operatorname{ad}_q(E_{i_j}) \right)^p \left(T_{s_{i_j} \cdots
    s_{i_{k - 1}}} (E_{i _k})\right) = 0.
\]
In (\ref{w, i, j}) above, the Weyl group element $s_{i_j} \cdots s_{i_k}$
always has length greater than $1$ and it has a reduced expression that begins
with $s_{i_j}$ and ends with $s_{i_k}$.  Conversely, to any triple $(w, i, j)
\in W \times \mathbf{I} \times \mathbf{I}$, such that $\ell(w) > 1$ and such
that $w$ and has a reduced expression beginning with $s_i$ and ending with
$s_j$ (equivalently, this means $\ell(s_iws_j) = \ell(w) - 2$), we can
associate to $(w, i, j)$ a pair of Lusztig root vectors in some quantum
Schubert cell algebra. In particular, $(w, i, j)$ is associated to the pair of
Lusztig root vectors, $X_{\alpha_i}$ and $X_{ws_j(\alpha_j)}$, in the quantum
Schubert cell algebra ${\mathcal U}_q^+[w]$. In view of this, for $w \in W$,
define
\[
    \begin{split}
        &{\mathcal D}_L(w) : = \left\{ k \in \mathbf{I} : \ell(s_kw) < \ell(w)
        \right\}, \hspace{5mm}
        {\mathcal D}_R(w) : = \left\{ k \in \mathbf{I} : \ell(ws_k) < \ell(w)
        \right\}.
    \end{split}
\]
The elements in the sets ${\mathcal D}_L(w)$ and ${\mathcal D}_R(w)$ are called
\textit{left descents} and \textit{right descents} of $w$, respectively.
Define
\begin{equation}
    \label{definition, Gamma W}
    \begin{split}
        \Gamma(W) := &\left\{ (w, i, j) \in W \times \mathbf{I} \times
        \mathbf{I}: j \in {\mathcal D}_R(w) \text{ and } i \in {\mathcal D}_L(w
        s_j) \right\}
        \\
        = &\left\{ (w, i, j) \in W \times \mathbf{I} \times \mathbf{I} :
        \ell(s_iws_j) = \ell(w) - 2 \right\}.
    \end{split}
\end{equation}
The set $\Gamma(W)$ contains all possible triples $(w, i, j)$, as in (\ref{w,
i, j}), that can be associated to a pair of Lusztig root vectors in a quantum
Schubert cell algebra.

From the discussion above, the problem of computing a
nilpotency index reduces to the problem of finding the smallest $p\in
\mathbb{N}$ so that $\left(\operatorname{ad}_q(E_i)\right)^p (T_{ws_j}(E_j)) =
0$, where $(w, i, j) \in \Gamma(W)$.  This motivates the following definition.

\begin{definition}

    Let $(w, i, j) \in \Gamma(W)$. Define the \textit{nilpotency index} of $(w,
    i, j)$ as
    \begin{equation}
        \label{definition, nil-index}
        {\mathcal N}(w, i, j) := \operatorname{min}\left\{p \in \mathbb{N} \cup
        \left\{ \infty \right\}: \left( \operatorname{ad}_q(E_i) \right)^p
        \left(T_{ws_j}(E_j) \right) = 0 \right\}.
    \end{equation}

\end{definition}

For $w \in W$, we denote the \textit{support} of $w$ by
\[
    \operatorname{supp}(w) := \left\{ i \in \mathbf{I} \mid s_i \leq w \text{
    with respect to the Bruhat order} \right\}.
\]

\noindent
\begin{minipage}{\textwidth}
\begin{theorem}

    \label{theorem, nil-index, 1}

    Suppose $(w, i, j) \in \Gamma(W)$.

    \begin{enumerate}

        \item If $i \not\in \operatorname{supp}(s_i w)$ or $j \not\in
            \operatorname{supp}(w s_j)$, then ${\mathcal N}(w, i, j) = 1$.

        \item If $i = j$ and $i \not\in \operatorname{supp}(s_i w s_i)$, then
            ${\mathcal N}(w, i, i) = 2$.

    \end{enumerate}

\end{theorem}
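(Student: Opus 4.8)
The plan is to handle both parts uniformly: pick a reduced expression for $w$ adapted to the hypothesis, read off the radical roots $\beta_1<\cdots<\beta_N$ of ${\mathcal U}_q^+[w]$ (so that $\beta_1=\alpha_i$ and $\beta_N=-w(\alpha_j)$), and then feed the relevant root-sum condition into Proposition (\ref{LS corollary}) or Proposition (\ref{bound on nil-index}). Each verification will reduce to comparing a single coefficient of a root.

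For part (1), first suppose $i\notin\operatorname{supp}(s_iw)$. Then $u:=s_iw$ satisfies $w=s_iu$ (reduced) and lies in the subgroup $W_{{\mathbf I}\setminus\{i\}}$ of $W$ generated by the $s_k$ with $k\ne i$, so every root of $u$ is supported away from $i$; also $j\in{\mathcal D}_R(u)$ and $j\ne i$. Taking a reduced word for $u$ ending in $s_j$ and prepending $s_i$, the radical roots become $\beta_1=\alpha_i$ and $\beta_l=s_i(\delta_{l-1})$ for $2\le l\le N$, where $\delta_1,\dots,\delta_m$ are the roots of $u$ and $\beta_N=s_i(\delta_m)$ with $\delta_m=-u(\alpha_j)$. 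Were $\beta_1+\beta_N=\sum_{l=2}^{N-1}n_l\beta_l$ with $n_l\ge 0$, applying $s_i$ would yield $-\alpha_i+\delta_m=\sum_l n_l\delta_{l-1}$, impossible since the left side has $\alpha_i$-coefficient $-1$ while the right side has $\alpha_i$-coefficient $0$. Hence $[X_{\beta_1},X_{\beta_N}]=0$ by Proposition (\ref{LS corollary}), i.e. ${\mathcal N}(w,i,j)=1$. If instead $j\notin\operatorname{supp}(ws_j)$, one argues symmetrically with $v:=ws_j\in W_{{\mathbf I}\setminus\{j\}}$, $w=vs_j$ reduced, $i\in{\mathcal D}_L(v)$, $i\ne j$: here $\beta_1,\dots,\beta_{N-1}$ are exactly the roots of $v$ (all supported away from $j$) and $\beta_N=v(\alpha_j)$ has $\alpha_j$-coefficient $1$, so $\beta_1+\beta_N$, of $\alpha_j$-coefficient $1$, cannot be a nonnegative combination of $\beta_2,\dots,\beta_{N-1}$, and Proposition (\ref{LS corollary}) again gives the result.

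For part (2), set $u:=s_iws_i$; the hypothesis says precisely that $u\in W_{{\mathbf I}\setminus\{i\}}$, and $w=s_ius_i$ with $\ell(w)=\ell(u)+2$ (so $u\ne e$). Flanking a reduced word for $u$ with $s_i$ on both sides gives $\beta_1=\alpha_i$, $\beta_N=s_iu(\alpha_i)$, and $\beta_2,\dots,\beta_{N-1}$ equal to $s_i$ applied to the roots of $u$. For the bound ${\mathcal N}(w,i,i)\le 2$ I apply the same coefficient trick: if $2\alpha_i+\beta_N=\sum_{l=2}^{N-1}n_l\beta_l$ with $n_l\ge 0$, then applying $s_i$ exhibits $-2\alpha_i+u(\alpha_i)$, of $\alpha_i$-coefficient $-1$, as a nonnegative combination of roots of $u$, of $\alpha_i$-coefficient $0$ — absurd; hence $(\operatorname{ad}_qX_{\beta_1})^2(X_{\beta_N})=0$ by Proposition (\ref{bound on nil-index})(\ref{bound on nil-index, 2}).

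The remaining and main point is the lower bound ${\mathcal N}(w,i,i)\ge 2$, i.e. $[E_i,X_{\beta_N}]\ne 0$, where $X_{\beta_N}=T_{ws_i}(E_i)=T_{s_i}\bigl(T_u(E_i)\bigr)$. I would conjugate by the automorphism $T_{s_i}^{-1}$: using $T_{s_i}^{-1}(E_i)=-K_{-\alpha_i}F_i$ and the $K$-commutation relations, $[E_i,X_{\beta_N}]$ equals $T_{s_i}$ applied to an invertible-$K$ multiple of the ordinary commutator $[F_i,T_u(E_i)]$; so $\mathcal N(w,i,i)\ge 2$ is equivalent to $[F_i,T_u(E_i)]\ne 0$. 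Now $T_u(E_i)$ is a root vector of degree $u(\alpha_i)$, and since $u\in W_{{\mathbf I}\setminus\{i\}}$ fixes the $\alpha_i$-coefficient of every weight, this degree still has $\alpha_i$-coefficient $1$. On the other hand the centralizer of $F_i$ in ${\mathcal U}_q(\mathfrak{n}^+)$ is the subalgebra generated by $\{E_k:k\ne i\}$, which is concentrated in degrees of $\alpha_i$-coefficient $0$; therefore $T_u(E_i)$ does not centralize $F_i$, and $[E_i,X_{\beta_N}]\ne 0$. Together with the upper bound this gives ${\mathcal N}(w,i,i)=2$. The one ingredient I expect to need a citation (and the one place the argument is genuinely "quantum") is the identification of that centralizer — equivalently, that an element of ${\mathcal U}_q(\mathfrak{n}^+)$ killed by both skew-derivations associated to $F_i$ must be supported away from $i$; classically the centralizer of $f_i$ in $U(\mathfrak{n}^+)$ is strictly larger, which is exactly why, in the same combinatorial situation, the classical nilpotency index can be $1$ while the quantum one is $2$.
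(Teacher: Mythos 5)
Your argument is valid in its overall structure and takes a genuinely different route from the paper. For part (1) and the upper bound in part (2), you feed a root-coefficient contradiction (apply $s_i$ or track the $\alpha_j$-coefficient) directly into Propositions (\ref{LS corollary}) and (\ref{bound on nil-index}); the paper instead invokes the Goodearl--Yakimov primeness characterization of Lusztig root vectors in CGL extensions: $X_{\beta_k}$ is prime in ${\mathcal U}_q^+[w]$ iff the index $i_k$ appears only once in the chosen reduced word, and primeness is equivalent to $X_{\beta_k}$ $q$-commuting with every other root vector. Under that lens part (1) is immediate ($X_{\beta_1}$ or $X_{\beta_N}$ is prime), the upper bound in (2) follows because $X_{\beta_1}$ is prime in the subalgebra ${\mathcal U}_q^+[ws_i]$ so $q$-commutes with $X_{\beta_2},\dots,X_{\beta_{N-1}}$ and one applies $q$-Leibniz to the Levendorskii--Soibelmann expansion of $[X_{\beta_1},X_{\beta_N}]$, and the lower bound in (2) follows because $X_{\beta_1}$ is \emph{not} prime in ${\mathcal U}_q^+[w]$ (the index $i$ appears twice), forcing $[X_{\beta_1},X_{\beta_N}]\neq 0$ since the only remaining candidate for a nonvanishing $q$-bracket is $\ell=N$. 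Your approach is more elementary in that it avoids GY1/GY2 entirely; the paper's approach dispatches the nonvanishing with no computation at all.

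The one genuine gap in your proposal is exactly the one you flag: the lower bound in part (2) hinges on the claim that the centralizer of $F_i$ in ${\mathcal U}_q(\mathfrak{n}^+)$ is the subalgebra $\langle E_k : k\neq i\rangle$. This is not, to my knowledge, a citable one-liner in the standard references, and your proof is incomplete without it. It is true, and the argument is short: writing $F_ix - xF_i$ in terms of the skew-derivations $r_i$ and ${}_ir$, the triangular decomposition forces $r_i(x)={}_ir(x)=0$; these kernels are the orthogonal complements of $\mathfrak f\theta_i$ and $\theta_i\mathfrak f$ under Lusztig's nondegenerate pairing, and in any degree $\mu$ with positive $\alpha_i$-coefficient one has $(\mathfrak f\theta_i + \theta_i\mathfrak f)_\mu=\mathfrak f_\mu$ (since the quotient by the two-sided ideal generated by $\theta_i$ is concentrated in $\alpha_i$-degree $0$), so $(\ker r_i\cap\ker{}_ir)_\mu=0$. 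You should include this; once you do, the reduction via $T_{s_i}^{-1}$ is correct (I verified that $T_{s_i}^{-1}\bigl([E_i,X_{\beta_N}]\bigr) = K_{-\alpha_i}\bigl(T_u(E_i)F_i - F_iT_u(E_i)\bigr)$, so nonvanishing of the $q$-commutator is indeed equivalent to $T_u(E_i)\notin Z(F_i)$). Your closing remark that the classical centralizer of $f_i$ in $U(\mathfrak n^+)$ is strictly larger — e.g.\ $e_{\alpha_1+\alpha_2+\alpha_3}$ centralizes $f_2$ in type $A_3$ — is correct and does pinpoint where the computation is genuinely quantum.
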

\end{minipage}

\begin{proof}

    In proving this theorem, we invoke the results in \cite[Proposition
    2.6]{GY1}, which characterize those generators $x_i$ of a CGL extension
    $\mathbb{K}[x_1][x_2; \sigma_2, \delta_2] \cdots [x_N; \sigma_N, \delta_N]$
    that are prime. We apply these results to quantum Schubert cell algebras
    ${\mathcal U}_q^+[w]$ to determine exactly which Lusztig root vectors
    $X_\beta$ are prime. Begin with a reduced expression $s_{i_1} \cdots
    s_{i_N}$ of $w$. As usual, let $X_{\beta_1},\cdots, X_{\beta_N}$ be the
    corresponding Lusztig root vectors of ${\mathcal U}_q^+[w]$.  The results
    of \cite[Proposition 2.6]{GY1} together with \cite[(9.24)]{GY2} tell us
    that a Lusztig root vector $X_{\beta_k}$ is prime if and only if the index
    $i_k$ appears only once among $i_1,\cdots, i_N$. Furthermore,
    \cite[Proposition 2.6(c)]{GY1} implies that a Lusztig root vector
    $X_{\beta_k}$ is prime if and only if $[X_{\beta_k}, X_{\beta_\ell}] = 0$
    and $[X_{\beta_p}, X_{\beta_k}] = 0$ for all $1 \leq p < k < \ell \leq N$.

    To prove (1), we first suppose $(w, i, j) \in \Gamma(W)$ and either $i
    \not\in \operatorname{supp}(s_i w)$ or $j \notin \operatorname{supp}(w
    s_j)$. Hence, there exists a reduced expression $s_{i_1} \cdots s_{i_N}$ of
    $w$ with $i_1 = i$, $i_N = j$, and such that either the index $i \in
    \mathbf{I}$ or the index $j\in \mathbf{I}$ only appears once in the list
    $i_1,\cdots, i_N$. From the discussion in the previous paragraph, the
    $q$-commutator $[X_{\beta_1}, X_{\beta_N}]$ equals $0$. Therefore,
    ${\mathcal N}(w, i, j) = 1$.

    Next, to prove (2), assume $(w, i, i) \in \Gamma(W)$ and $i \not\in
    \operatorname{supp}(s_i w s_i)$. Hence, if $s_{i_1} \cdots s_{i_N}$ is a
    reduced expression of $w$ such that $i_1 = i_N = i$, then $i \not\in
    \left\{i_2,\cdots, i_{N-1} \right\}$. Therefore, $[X_{\beta_1},
    X_{\beta_p}] = 0$ for $1 < p < N$ because $X_{\beta_1}$ is a prime element
    in the subalgebra ${\mathcal U}_q^+[ws_i] \subseteq {\mathcal U}_q^+[w]$.
    However, since $X_{\beta_1}$ is not a prime element in ${\mathcal
    U}_q^+[w]$, we have $[X_{\beta_1}, X_{\beta_N}] \neq 0$. Hence, ${\mathcal
    N}(w, i, i) > 1$. The Levendorskii-Soibelmann straightening formulas
    (\ref{L-S straightening}) tell us that the $q$-commutator $[X_{\beta_1},
    X_{\beta_N}]$ can be written as a linear combination of monomials in the
    variables $X_{\beta_2},\cdots, X_{\beta_{N-1}}$. However, since
    $[X_{\beta_1}, X_{\beta_p}] = 0$ for all $1 < p < N$, then by applying the
    $q$-Leibniz identity (\ref{q-Leibniz identity}), we obtain $[X_{\beta_1},
    [X_{\beta_1}, X_{\beta_N}]] = 0$. Thus, ${\mathcal N}(w, i, i) = 2$.

\end{proof}

The next theorem gives the cardinality of $\Gamma(W)$. From this result, we
obtain an approximation formula, $\left| \Gamma(W) \right| \approx
\frac{n^2}{4} \left| W \right|$, where $n$ is the rank of the underlying Lie
type $X_n$.

\begin{theorem}

    \label{Gamma W, cardinality}

    Let $\mathfrak{g}$ be a Lie algebra of type $X_n$, and let $W$ be the Weyl
    group.  Let $n_s$ ($n_\ell$) be the number of short (long) simple roots,
    and let $\left| \Delta_s \right|$ ($\left| \Delta_\ell \right|$) be the
    number of short (long) roots. Then
    \[
        \frac{\left| \Gamma(W) \right|}{\left| W \right|} =
        \begin{cases}
            \frac{n^2}{4} - \frac{n_s^2}{2 \left| \Delta_s \right|} -
            \frac{n_\ell^2}{2 \left| \Delta_\ell \right|},
            &
            (\text{if $X_n$ is not simply laced}),
            \\
            n^2 \left( \frac{1}{4} - \frac{1}{2 \left| \Delta \right|} \right),
            &
            (\text{if $X_n$ is simply laced}).
        \end{cases}
    \]

\end{theorem}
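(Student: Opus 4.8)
The plan is to express $|\Gamma(W)|$ as a difference of two quantities that are each computable by summing over the index pair $(i,j)$ and exploiting the length function. The key observation is that the ``ambient'' set $\{(w,i,j) : i \in {\mathcal D}_L(w),\ j \in {\mathcal D}_R(w)\}$ is easy to count, and it overcounts $\Gamma(W)$ by exactly the triples with $w(\alpha_j) = -\alpha_i$.

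\textbf{Step 1: a disjoint decomposition.} I would first prove
\[
    \left\{(w,i,j) : i \in {\mathcal D}_L(w),\ j \in {\mathcal D}_R(w)\right\} = \Gamma(W)\ \sqcup\ \left\{(w,i,j) : w(\alpha_j) = -\alpha_i\right\},
\]
a disjoint union. For $(w,i,j)$ in the left-hand set one has $\ell(ws_j) = \ell(w)-1$, hence $\ell(s_iws_j)$ equals $\ell(w)-2$ or $\ell(w)$, the latter occurring precisely when $s_i \notin {\mathcal D}_L(ws_j)$, i.e. $s_jw^{-1}(\alpha_i) > 0$; since $w^{-1}(\alpha_i) < 0$, this forces $w^{-1}(\alpha_i) = -\alpha_j$, equivalently $w(\alpha_j) = -\alpha_i$. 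Conversely $w(\alpha_j) = -\alpha_i$ immediately gives $i \in {\mathcal D}_L(w)$, $j \in {\mathcal D}_R(w)$, and $\ell(s_iws_j) = \ell(w)$, so such triples lie in the ambient set but not in $\Gamma(W)$. Finally any $(w,i,j) \in \Gamma(W)$ satisfies $i \in {\mathcal D}_L(w)$ and $j \in {\mathcal D}_R(w)$ by parity/length considerations. This yields
\[
    |\Gamma(W)| = \#\left\{(w,i,j) : i \in {\mathcal D}_L(w),\ j \in {\mathcal D}_R(w)\right\} - \#\left\{(w,i,j) : w(\alpha_j) = -\alpha_i\right\}.
\]

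\textbf{Step 2: evaluating the two terms.} For the correction term, $\#\{w : w(\alpha_j) = -\alpha_i\}$ is $0$ unless $\alpha_i$ and $\alpha_j$ lie in the same $W$-orbit, i.e. have equal length (recall the short roots and the long roots each form a single $W$-orbit); in the equal-length case this count is the order of $\operatorname{Stab}_W(\alpha_j)$, which by orbit--stabilizer equals $|W|/|\Delta_s|$ for short simple roots and $|W|/|\Delta_\ell|$ for long ones. Summing over the $n_s^2$ short pairs and $n_\ell^2$ long pairs gives $n_s^2|W|/|\Delta_s| + n_\ell^2|W|/|\Delta_\ell|$. For the first term, I would fix $(i,j)$ and use the sign trick: set $\varepsilon_i(w) = +1$ if $s_iw > w$ and $-1$ otherwise, and $\eta_j(w)$ analogously for $ws_j$; then $4[i \in {\mathcal D}_L(w)][j \in {\mathcal D}_R(w)] = (1-\varepsilon_i(w))(1-\eta_j(w))$, and since $\sum_w \varepsilon_i(w) = \sum_w \eta_j(w) = 0$ (the involution $w \mapsto s_iw$ is fixed-point-free and sign-reversing), the count is $\tfrac14\big(|W| + \sum_w \varepsilon_i(w)\eta_j(w)\big)$. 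Pairing $w$ with $s_iw$ again: $\varepsilon_i(s_iw) = -\varepsilon_i(w)$ always, while $\eta_j(s_iw) = \eta_j(w)$ unless $w(\alpha_j) = \pm\alpha_i$, in which case it flips; so all $w$ with $w(\alpha_j) \neq \pm\alpha_i$ cancel in pairs, and a two-line check of the remaining two subcases shows each surviving $w$ contributes $+1$. Hence $\sum_w \varepsilon_i(w)\eta_j(w) = 2\,\#\{w : w(\alpha_j) = -\alpha_i\}$, so $\#\{w : i \in {\mathcal D}_L(w),\ j \in {\mathcal D}_R(w)\} = \tfrac{|W|}{4} + \tfrac12\#\{w : w(\alpha_j) = -\alpha_i\}$.

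\textbf{Step 3: assembly.} Summing the last identity over all $(i,j) \in \mathbf{I}^2$ gives the first term of Step 1 as $\tfrac{n^2}{4}|W| + \tfrac12\big(n_s^2|W|/|\Delta_s| + n_\ell^2|W|/|\Delta_\ell|\big)$; subtracting the correction term leaves $|\Gamma(W)| = \tfrac{n^2}{4}|W| - \tfrac12 n_s^2|W|/|\Delta_s| - \tfrac12 n_\ell^2|W|/|\Delta_\ell|$, and dividing by $|W|$ gives the non-simply-laced formula. In the simply-laced case one has $n_\ell = 0$, $n_s = n$, and $|\Delta_s| = |\Delta|$, which collapses the expression to $n^2\big(\tfrac14 - \tfrac{1}{2|\Delta|}\big)$. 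I expect the main obstacle to be the bookkeeping in Step 2: correctly isolating the exceptional set $\{w : w(\alpha_j) = \pm\alpha_i\}$ on which the pairing $w \leftrightarrow s_iw$ fails to cancel, and verifying that every leftover term has sign $+1$ (equivalently, that $i \in {\mathcal D}_L(w) \iff j \in {\mathcal D}_R(w)$ on that set); the orbit--stabilizer input and the final algebra are routine. A quick consistency check in types $A_2$ (giving $4$) and $B_2$ (giving $6$) confirms the formula.
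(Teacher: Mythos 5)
Your proof is correct, and it is a genuine variant of what the paper does, though both rest on the same underlying structure. The paper fixes $(i,j)$, considers the Klein $4$-group action $u.w = s_iw$, $v.w = ws_j$ on $W$, observes that orbits have size $4$ or size $2$ (the latter exactly when $w(\alpha_j) = \pm\alpha_i$), and shows that $|W_{i,j}|$ equals the number of size-$4$ orbits; the count of size-$2$ orbits then comes from orbit-stabilizer applied to the $W$-action on roots. You instead decompose the ambient set $\{(w,i,j):\, i\in{\mathcal D}_L(w),\, j\in{\mathcal D}_R(w)\}$ as $\Gamma(W)$ plus the degenerate triples with $w(\alpha_j)=-\alpha_i$, and then count the ambient set via a sign-character computation, pairing $w \leftrightarrow s_iw$ and observing that the only uncancelled contributions come from the exceptional set $w(\alpha_j)=\pm\alpha_i$, each contributing $+1$. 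Both approaches are really reading off the same orbit data: your sign sum $\sum_w \varepsilon_i(w)\eta_j(w)$ is precisely the Burnside-type quantity that distinguishes size-$2$ from size-$4$ orbits, and your two cases ($w(\alpha_j)=\alpha_i$ with product $+1$; $w(\alpha_j)=-\alpha_i$ with product $+1$) correspond to the paper's observation that size-$2$ orbit elements have $\ell(s_ixs_j)=\ell(x)$. What the paper's orbit count buys is that it makes it immediately transparent why the formula has the shape ``$|W|/4$ minus a correction''; what your version buys is a cleaner statement of the degenerate locus (the disjoint decomposition in Step 1 is a nice explicit observation not spelled out in the paper) and a more mechanical verification, since the sign trick reduces everything to checking parities rather than arguing about diamonds of lengths. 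Both require the same input from representation theory of $W$ on roots (single orbit per length, orbit-stabilizer), and both give the same intermediate identity $|W_{i,j}| = |W|/4 - |W|/(2K_j)$ when $\|\alpha_i\|=\|\alpha_j\|$ and $|W|/4$ otherwise. Your consistency checks in types $A_2$ and $B_2$ are correct.
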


\begin{proof}

    For fixed $i, j \in \mathbf{I}$, define $W_{i,j} := \left\{ w \in W :
    \ell(s_i w s_j) = \ell(w) - 2 \right\}$.  With this, we have
    \[
        \left| \Gamma (W) \right| = \sum_{i,j \in \mathbf{I}} \left| W_{i, j}
        \right|,
    \]
    and thus the statement follows by summing the cardinalities $\left| W_{i,
    j} \right|$ over all $i, j \in \mathbf{I}$.

    Define $K_j := \left| \left\{ \beta \in \Delta: \langle \beta, \beta
    \rangle = \langle \alpha_j, \alpha_j \rangle \right\} \right|$. In other
    words, $K_j$ equals the number of roots having the same length as
    $\alpha_j$. We will show $\left| W_{i,j} \right| = \frac{1}{4} \left| W
    \right| - \frac{1}{2K_j} \left| W \right|$ whenever the simple roots
    $\alpha_i$ and $\alpha_j$ have the same length, whereas $\left| W_{i,j}
    \right| = \frac{1}{4} \left| W \right|$ if $\alpha_i$ and $\alpha_j$ have
    different lengths.

    To determine the cardinality of $W_{i, j}$, consider first the action of
    the Klein $4$-group $G = \left\{1, u, v, uv \right\}$ on $W$, where $G$
    acts by the rules $u.w = s_iw$ and $v.w = ws_j$ for all $w\in W$.  Orbits
    of $W$ under the action of $G$ either have the form $\left\{ w, s_i w, w
    s_j, s_i w s_j \right\}$ or $\left\{ w, s_i w \right\}$.  The latter case
    occurs when $s_i w = w s_j$, and in this case, $uv \in G$ acts trivially on
    each element in the orbit $\left\{ w, s_i w\right\}$.  Hence, each element
    $x$ in such an orbit has the property that $\ell(s_i x s_j) = \ell(x)$.
    However, in an orbit of the form $\left\{ w, s_i w, w s_j, s_i w s_j
    \right\}$, there is exactly one element $x$ such that $\ell(s_i x s_j) =
    \ell(x) - 2$. Therefore, $\left| W_{i, j} \right|$ equals the number the
    orbits of size $4$ of $W$ under the action of $G$.

    Observe that the orbit of an element $w$ has the form $\left\{ w, s_i w
    \right\}$ if and only if $s_i w = w s_j$, and this situation occurs
    precisely when $w (\alpha_j) \in \left\{ \alpha_i, -\alpha_i \right\}$.
    Since there are $K_j$ roots in the orbit of $\alpha_j$ under the action of
    $W$, there are $\left| W \right| / K_j$ Weyl group elements that act
    trivially on $\alpha_j$.  Thus, if $\alpha_i$ and $\alpha_j$ have the same
    length, there are $2\left| W \right| / K_j$ Weyl group elements $w$ such
    that $w (\alpha_j) \in \left\{ \alpha_i, -\alpha_i \right\}$, whereas if
    $\alpha_i$ and $\alpha_j$ do not have the same length, there are no such
    elements $w$.  Therefore, there are $\left| W \right| / K_j$ orbits of $W$
    of size $2$ under the action of $G$ when $\alpha_i$ and $\alpha_j$ have the
    same length. However, if $\alpha_i$ and $\alpha_j$ do not have the same
    length, there are no orbits of size $2$.  Hence, there remains $\left| W
    \right|/4 - \left| W \right| / (2K_j)$ orbits of size $4$ of $W$ under the
    action of $G$, provided $\alpha_i$ and $\alpha_j$ have the same length, and
    $\left| W \right| / 4$ such orbits otherwise.

\end{proof}

\subsection{Some relations on \texorpdfstring{$\Gamma(W)$}{Gamma(W)}}

\label{section, relations on Gamma(W)}

Next, we define some relations on $\Gamma(W)$ and show each of them preserves
nilpotency indices. That is to say, if $x, x^\prime \in \Gamma(W)$ are related
to each other under any one of these relations, ${\mathcal N}(x) = {\mathcal
N}(x^\prime)$.

Before defining these relations, we recall the \textit{weak left Bruhat order}
$\leq_L$ on $W$, which is defined by the rule
\[
    v \leq_L w \iff \ell(v) + \ell(w v^{-1}) = \ell(w) \hspace{5mm}(v, w \in
    W).
\]
Observe that $v \leq_L w$ if and only if there exists a reduced expression for
$w$ such that the final substring is a reduced expression for $v$. Analogously,
the \textit{weak right Bruhat order} $\leq_R$ on $W$ is defined by the rule
\[
    v \leq_R w \iff \ell(v) + \ell(v^{-1} w) = \ell(w) \hspace{5mm}(v, w \in
    W).
\]

We define relations, $\xlongrightarrow{L}$ and $\xlongrightarrow{R}$, on
$\Gamma(W)$ that resemble the weak left and weak right Bruhat orders,
respectively.

\begin{definition}

    Define relations $\xlongrightarrow{\hspace{1mm} L \hspace{1mm}}$ and
    $\xlongrightarrow{\hspace{1mm} R \hspace{1mm}}$, on $\Gamma(W)$ by the
    rules
    \begin{equation}
        \label{definition, L ordering}
        (w, i, j) \xlongrightarrow{\hspace{1mm} L \hspace{1mm}} (u, i^\prime,
        j^\prime) \iff u \leq_L w, w^{-1}(\alpha_i) = u^{-1} (\alpha_{i^\prime}),
        \text{ and } j = j^\prime
    \end{equation}
    and
    \begin{equation}
        \label{definition, R ordering}
        (w, i, j) \xlongrightarrow{\hspace{1mm} R \hspace{1mm}} (u, i^\prime,
        j^\prime) \iff u \leq_R w, w(\alpha_j) = u(\alpha_{j^\prime}), \text{ and }
        i = i^\prime. \hspace{18pt}
    \end{equation}

\end{definition}
From the definitions, it can be shown that each of $\xlongrightarrow{L}$ and
$\xlongrightarrow{R}$ is a partial order.

\begin{proposition}

    The relations $\xlongrightarrow{L}$ and $\xlongrightarrow{R}$ are partial
    orders on $\Gamma(W)$.

\end{proposition}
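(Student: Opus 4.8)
The plan is to verify the three axioms of a partial order — \emph{reflexivity}, \emph{antisymmetry}, and \emph{transitivity} — for each of $\xlongrightarrow{L}$ and $\xlongrightarrow{R}$, in every case reducing to the corresponding property of the weak left (resp.\ right) Bruhat order $\leq_L$ (resp.\ $\leq_R$) on $W$, which is a standard fact. Throughout I treat $\xlongrightarrow{L}$ and $\xlongrightarrow{R}$ strictly as relations on the set $\Gamma(W)$, so that whenever a relation $x \xlongrightarrow{L} y$, or a chain of such, is asserted, all triples involved are already known to lie in $\Gamma(W)$; only the three defining conditions in (\ref{definition, L ordering}) (resp.\ (\ref{definition, R ordering})) need to be tracked.

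For $\xlongrightarrow{L}$: reflexivity of $(w,i,j)\xlongrightarrow{L}(w,i,j)$ is immediate from $w\leq_L w$, $w^{-1}(\alpha_i)=w^{-1}(\alpha_i)$, and $j=j$. For transitivity, given $(w,i,j)\xlongrightarrow{L}(u,i',j')$ and $(u,i',j')\xlongrightarrow{L}(v,i'',j'')$, I would combine $v\leq_L u\leq_L w$ (transitivity of $\leq_L$), the chained equalities $w^{-1}(\alpha_i)=u^{-1}(\alpha_{i'})=v^{-1}(\alpha_{i''})$, and $j=j'=j''$ to get $(w,i,j)\xlongrightarrow{L}(v,i'',j'')$. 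For antisymmetry, if $(w,i,j)\xlongrightarrow{L}(u,i',j')$ and $(u,i',j')\xlongrightarrow{L}(w,i,j)$, then $u\leq_L w$ and $w\leq_L u$ force $w=u$ by antisymmetry of $\leq_L$; substituting into $w^{-1}(\alpha_i)=u^{-1}(\alpha_{i'})$ and applying $w$ gives $\alpha_i=\alpha_{i'}$, whence $i=i'$ since the simple roots are pairwise distinct, and together with $j=j'$ this yields $(w,i,j)=(u,i',j')$.

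The argument for $\xlongrightarrow{R}$ is the mirror image: replace $\leq_L$ by $\leq_R$, replace $w^{-1}$ by $w$ (so that in the antisymmetry step one applies $w^{-1}$ rather than $w$ to deduce $\alpha_j=\alpha_{j'}$), and use the conditions $w(\alpha_j)=u(\alpha_{j'})$ and $i=i'$ of (\ref{definition, R ordering}) in place of their $L$-counterparts.

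I do not expect any genuine obstacle here: the entire content is that, beyond $u\leq_L w$ (resp.\ $u\leq_R w$), the two remaining conditions — a root equality and an index equality — are transitive by inspection and collapse to equality of triples under the antisymmetry hypothesis, using injectivity of $i\mapsto\alpha_i$ and invertibility of the action of $W$ on $Q$. The only point deserving a remark is that the conditions in (\ref{definition, L ordering}) and (\ref{definition, R ordering}) do not by themselves guarantee that the target triple satisfies $\ell(s_iws_j)=\ell(w)-2$; this is immaterial for the present statement, since in each of the three axioms membership in $\Gamma(W)$ is part of the hypothesis on all the triples involved, but it is something one would have to address separately if one wished to extend these relations to all of $W\times\mathbf{I}\times\mathbf{I}$.
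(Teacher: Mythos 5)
Your proof is correct, and since the paper supplies no argument beyond the remark that the claim follows ``from the definitions,'' a direct verification of reflexivity, antisymmetry, and transitivity — reducing each to the corresponding property of $\leq_L$ (resp.\ $\leq_R$) plus transitivity of equality and the injectivity of $i \mapsto \alpha_i$ — is exactly the intended route. Your closing remark about membership in $\Gamma(W)$ is also the right thing to flag: it is indeed harmless here because all three axioms quantify only over triples already assumed to lie in $\Gamma(W)$.
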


For each index $k \in \mathbf{I}$, we also define relations, $\xlongrightarrow{(k,
L)}$ and $\xlongrightarrow{(k, R)}$, on $\Gamma(W)$. These are not partial orders.

\begin{definition}

    For each $k \in \mathbf{I}$, define relations,
    $\xlongrightarrow{\hspace{1mm} (k, L) \hspace{1mm}}$ and
    $\xlongrightarrow{\hspace{1mm} (k, R) \hspace{1mm} }$, on $\Gamma(W)$ by
    the rules
    \begin{equation}
        \label{definition, (k, L) reduction}
        (w, i, j) \xlongrightarrow{\hspace{1mm} (k, L) \hspace{1mm}} (u, i^\prime,
        j^\prime) \iff \begin{tabular}{l} $\ell(u) < \ell(w)$, $(ws_k,i, j)
        \xlongrightarrow{\hspace{1mm} R \hspace{1mm}} (w, i, j)$,\\
        $(ws_k, i, j) \xlongrightarrow{\hspace{1mm} L \hspace{1mm}} (u, i^\prime,
        j^\prime)$, \text{ and } $u \leq_L w$,\end{tabular}
    \end{equation}
    and
    \begin{equation}
        \label{definition, (k, R) reduction}
        (w, i, j) \xlongrightarrow{\hspace{1mm} (k, R) \hspace{1mm}} (u, i^\prime,
        j^\prime) \iff \begin{tabular}{l} $\ell(u) < \ell(w)$, $(s_kw,i, j)
        \xlongrightarrow{\hspace{1mm} L \hspace{1mm}} (w, i, j)$,\\
        $(s_kw, i, j) \xlongrightarrow{\hspace{1mm} R \hspace{1mm}} (u, i^\prime,
        j^\prime)$, \text{ and } $u \leq_R w$.\end{tabular}
    \end{equation}

\end{definition}

We note that the union of $\xlongrightarrow{L}$ and $\xlongrightarrow{R}$ is a
reflexive and anti-symmetric relation on $\Gamma(W)$, and taking the transitive
closure of this union breaks neither reflexivity nor anti-symmetry. Hence,
there is a partial order generated by $\xlongrightarrow{L}$ and
$\xlongrightarrow{R}$. Secondly, observe that the transitive closure of the
symmetric closure of a partial order is an equivalence relation.

\begin{definition}

    \label{definition, equivalence relation}

    Let $\xlongrightarrow{}$ be the partial order generated by
    $\xlongrightarrow{L}$ and $\xlongrightarrow{R}$, and let
    $\xleftrightarrow{\hspace{4mm}}$ be the equivalence relation generated by
    $\xlongrightarrow{}$.

\end{definition}

From the discussion above, we have the following proposition.

\begin{proposition}

    $ $

    \begin{enumerate}

        \item For $x, y \in \Gamma(W)$, $x \xlongrightarrow{} y$ if and only if
            there exist finitely many elements $x_1,\cdots, x_k \in \Gamma(W)$
            such that $x = x_1$, $y = x_k$, and for each $i \in [1, k - 1]$,
            either (1) $x_i \xlongrightarrow{L} x_{i + 1}$, or (2) $x_i
            \xlongrightarrow{R} x_{i + 1}$.

        \item For $x, y \in \Gamma(W)$, $x \xleftrightarrow{\hspace{4mm}} y$ if
            and only if there exist finitely many elements $x_1, x_2,\cdots,
            x_k \in \Gamma(W)$ such that $x = x_1$, $y = x_k$, and for each $i
            \in [1, k - 1]$, either (1) $x_i \xlongrightarrow{L} x_{i + 1}$,
            (2) $x_{i + 1} \xlongrightarrow{L} x_i$, (3) $x_i
            \xlongrightarrow{R} x_{i + 1}$, or (4) $x_{i + 1}
            \xlongrightarrow{R} x_i$.

    \end{enumerate}

\end{proposition}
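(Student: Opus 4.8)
The plan is to prove both parts by directly unwinding the closure operations built into Definition (\ref{definition, equivalence relation}). Neither part requires anything beyond standard facts about reflexive and transitive closures of relations.

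For part (1): the partial order $\xlongrightarrow{}$ generated by $\xlongrightarrow{L}$ and $\xlongrightarrow{R}$ is, by definition, the smallest partial order on $\Gamma(W)$ containing both. My first step is to observe that the union $\xlongrightarrow{L} \cup \xlongrightarrow{R}$ is already reflexive, since each of $\xlongrightarrow{L}$ and $\xlongrightarrow{R}$ is a partial order and hence reflexive. Combined with the observation made just before Definition (\ref{definition, equivalence relation})---that the transitive closure of this union remains reflexive and antisymmetric, hence is an actual partial order---this forces $\xlongrightarrow{}$ to be precisely the transitive closure of $\xlongrightarrow{L} \cup \xlongrightarrow{R}$. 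I would then invoke the standard combinatorial description of transitive closure: $x \xlongrightarrow{} y$ holds if and only if there is a finite sequence $x = x_1, \dots, x_k = y$ in $\Gamma(W)$ with $x_i \xlongrightarrow{L} x_{i+1}$ or $x_i \xlongrightarrow{R} x_{i+1}$ for every $i \in [1, k-1]$, the case $k = 1$ (i.e. $x = y$) being supplied by reflexivity. This is exactly the asserted characterization.

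For part (2): by definition $\xleftrightarrow{\hspace{4mm}}$ is the smallest equivalence relation on $\Gamma(W)$ containing $\xlongrightarrow{}$. Since $\xlongrightarrow{}$ is already reflexive and transitive, I would show that this equivalence relation coincides with the transitive closure of the symmetric relation $\{(x, y) : x \xlongrightarrow{} y \text{ or } y \xlongrightarrow{} x\}$: that closure is reflexive (inherited from $\xlongrightarrow{}$), symmetric by construction, transitive by the closure, and is contained in any equivalence relation containing $\xlongrightarrow{}$, so it is the smallest such. Hence $x \xleftrightarrow{\hspace{4mm}} y$ if and only if there is a finite chain $x = z_1, \dots, z_m = y$ with, for each link, either $z_i \xlongrightarrow{} z_{i+1}$ or $z_{i+1} \xlongrightarrow{} z_i$. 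The final step is to substitute the chain description from part (1) into each link: a link $z_i \xlongrightarrow{} z_{i+1}$ expands into finitely many $\xlongrightarrow{L}$- and $\xlongrightarrow{R}$-steps read forward, while a link $z_{i+1} \xlongrightarrow{} z_i$ expands into finitely many such steps whose individual links, traversed from $z_i$ to $z_{i+1}$, become the reversed relations. Concatenating everything produces a single finite sequence $x = x_1, \dots, x_k = y$ in which each consecutive pair satisfies one of the four alternatives (1)--(4) of the statement; conversely, any such sequence witnesses $x \xleftrightarrow{\hspace{4mm}} y$, since each of the four alternatives lies in the symmetric relation above.

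I do not expect a genuine obstacle; the argument is bookkeeping with closure operations. The only points needing care are: first, noticing that $\xlongrightarrow{L} \cup \xlongrightarrow{R}$ is already reflexive, so that ``the partial order generated by'' collapses to an ordinary transitive closure (this is why antisymmetry, not reflexivity, is the substantive property, and it was already checked before Definition (\ref{definition, equivalence relation})); and second, in part (2), verifying that symmetrizing a preorder and then taking the transitive closure really yields the \emph{smallest} equivalence relation containing it, which again follows from the minimality of the reflexive and transitive closures involved. With these in place, both chain descriptions follow immediately.
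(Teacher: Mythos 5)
Your proof is correct and fills in exactly the details the paper elides: the paper offers no explicit argument, stating only that the proposition follows ``from the discussion above,'' which is precisely the observation that $\xlongrightarrow{L}\cup\xlongrightarrow{R}$ is already reflexive and antisymmetric, so $\xlongrightarrow{}$ is the transitive closure of the union and $\xleftrightarrow{\hspace{4mm}}$ is the transitive closure of its symmetrization. Your unwinding of those two closures into finite-chain descriptions is the intended argument.
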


The following theorem gives a key property regarding these relations and
nilpotency indices. In particular, nilpotency indices are preserved under each
of these relations.

\begin{theorem}

    \label{nilpotency indices, preserved}

    Suppose $x, y \in \Gamma(W)$ such that at least one of the following hold:

    \begin{enumerate}

        \item $x \xlongrightarrow{L} y$ or $x \xlongrightarrow{R} y$,

        \item $x \xlongrightarrow{(k, L)} y$ or $x \xlongrightarrow{(k, R)} y$
            for some $k \in \mathbf{I}$,

        \item $x \xlongrightarrow{} y$ or $x \xleftrightarrow{\hspace{4mm}} y$.

    \end{enumerate}

    Then ${\mathcal N}(x) = {\mathcal N}(y)$.

\end{theorem}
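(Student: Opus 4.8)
The plan is to reduce the entire statement to the two generating relations $\xlongrightarrow{L}$ and $\xlongrightarrow{R}$ of part (1). Once part (1) is in hand, items (2) and (3) are purely formal. Indeed, unwinding (\ref{definition, (k, L) reduction}): if $x \xlongrightarrow{(k, L)} y$, then $z := (ws_k, i, j)$ is an element of $\Gamma(W)$ with $z \xlongrightarrow{R} x$ and $z \xlongrightarrow{L} y$, so part (1) gives ${\mathcal N}(x) = {\mathcal N}(z) = {\mathcal N}(y)$; the case $\xlongrightarrow{(k, R)}$ is symmetric via (\ref{definition, (k, R) reduction}). Since $\xlongrightarrow{}$ and $\xleftrightarrow{\hspace{4mm}}$ are by definition generated by $\xlongrightarrow{L}$ and $\xlongrightarrow{R}$, the common value of ${\mathcal N}$ is then constant along any defining chain. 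So it remains to prove part (1).

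For the $R$-relation, suppose $(w, i, j) \xlongrightarrow{R} (u, i, j')$, i.e.\ $u \leq_R w$, $w(\alpha_j) = u(\alpha_{j'})$, and $i = i'$. Put $v := u^{-1}w$, so $w = uv$ with $\ell(w) = \ell(u) + \ell(v)$; from $w(\alpha_j) = u(\alpha_{j'})$ we get $v(\alpha_j) = \alpha_{j'}$, whence $v s_j = s_{j'} v$ and, by Theorem \ref{Jantzen, 8.20}, $T_v(E_j) = E_{j'}$. Then $ws_j = u s_{j'} v$, and a length count — using $\ell(ws_j) = \ell(w) - 1$ and $\ell(us_{j'}) = \ell(u) - 1$, which hold because both $(w, i, j)$ and $(u, i, j')$ lie in $\Gamma(W)$ — gives $\ell(us_{j'}v) = \ell(us_{j'}) + \ell(v)$, so $T_{ws_j} = T_{us_{j'}} T_v$ and therefore $T_{ws_j}(E_j) = T_{us_{j'}}(E_{j'})$. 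These two elements are literally equal, and $i = i'$, so the two $\operatorname{ad}_q$-iterates defining ${\mathcal N}$ in (\ref{definition, nil-index}) coincide for every $p$; hence ${\mathcal N}(w, i, j) = {\mathcal N}(u, i, j')$.

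For the $L$-relation, suppose $(w, i, j) \xlongrightarrow{L} (u, i', j)$, i.e.\ $u \leq_L w$, $w^{-1}(\alpha_i) = u^{-1}(\alpha_{i'})$, and $j = j'$. Put $v := wu^{-1}$, so $w = vu$ with $\ell(w) = \ell(v) + \ell(u)$. Then $v^{-1}(\alpha_i) = u\bigl(w^{-1}(\alpha_i)\bigr) = u\bigl(u^{-1}(\alpha_{i'})\bigr) = \alpha_{i'}$, so $v(\alpha_{i'}) = \alpha_i$ and $T_v(E_{i'}) = E_i$ by Theorem \ref{Jantzen, 8.20}; and the same type of length count (now using $\ell(us_j) = \ell(u) - 1$, since $(u, i', j) \in \Gamma(W)$) yields $T_{ws_j} = T_v T_{us_j}$. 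Since $T_v$ maps ${\mathcal U}_q(\mathfrak{g})_\mu$ into ${\mathcal U}_q(\mathfrak{g})_{v(\mu)}$, the bracket identity (\ref{orthogonality of Tw}) applies to $T_v$, and a one-line induction on $p$ extends it to nested iterates: $T_v\bigl((\operatorname{ad}_q x)^p(y)\bigr) = (\operatorname{ad}_q T_v(x))^p(T_v(y))$ for homogeneous $x, y$. Applying this with $x = E_{i'}$ and $y = T_{us_j}(E_j)$ gives
\[
    T_v\!\left( (\operatorname{ad}_q E_{i'})^p \bigl( T_{us_j}(E_j) \bigr) \right) = (\operatorname{ad}_q E_i)^p\!\left( T_{ws_j}(E_j) \right).
\]
As $T_v$ is an injective automorphism, the left side vanishes iff the right side does, so ${\mathcal N}(w, i, j) = {\mathcal N}(u, i', j)$.

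The only genuinely substantive point is the pair of length computations that license the factorizations $T_{ws_j} = T_{us_{j'}} T_v$ and $T_{ws_j} = T_v T_{us_j}$; each follows from the length-additivity built into $\leq_R$ (resp.\ $\leq_L$) together with the membership of both triples in $\Gamma(W)$, which forces $\ell(\cdot s_j)$ (resp.\ $\ell(\cdot s_{j'})$) to drop by exactly one on the relevant factor. Everything else — the identifications $T_v(E_{i'}) = E_i$ and $T_v(E_j) = E_{j'}$, and the passage from (\ref{orthogonality of Tw}) to nested $\operatorname{ad}_q$-iterates — is routine bookkeeping with Theorem \ref{Jantzen, 8.20} and the automorphism property of the $T_w$'s.
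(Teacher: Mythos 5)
Your proof is correct and follows essentially the same route as the paper: factor $T_{ws_j}$ via weak-order length additivity into $T_{us_{j'}} T_v$ (resp.\ $T_v T_{us_j}$), identify $T_v(E_j) = E_{j'}$ (resp.\ $T_v(E_{i'}) = E_i$) via Theorem (\ref{Jantzen, 8.20}), and intertwine the $\operatorname{ad}_q$-iterates through the Lusztig automorphism, with parts (2) and (3) then formal. Your observation that in the $R$-case $T_{ws_j}(E_j)$ and $T_{us_{j'}}(E_{j'})$ are literally equal, so no automorphism conjugation is even needed there, is a small cleanup of the paper's phrasing in that branch, but the underlying argument is identical.
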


\begin{proof}

    From the definition of the relations $\xlongrightarrow{(k, L)}$,
    $\xlongrightarrow{(k, R)}$, $\xlongrightarrow{}$, and
    $\xleftrightarrow{\hspace{4mm}}$. It suffices to prove that ${\mathcal
    N}(x) = {\mathcal N}(y)$ if $x \xlongrightarrow{L} y$ or $x
    \xlongrightarrow{R} y$.

    Assume $x \xlongrightarrow{L} y$. Suppose $x = (w, i, j)$ for some $w \in
    W$ and $i, j \in \mathbf{I}$. Thus, $y = (v, i^\prime, j)$ for some $v\in
    W$ and $i^\prime \in \mathbf{I}$. We will show that the Lusztig symmetry
    $T_{w v^{-1}}$ sends the Chevalley generator $E_{i^\prime}$ to $E_i$, and
    sends $T_{vs_j}(E_j)$ to $T_{ws_j}(E_j)$. By the definition of $x
    \xlongrightarrow{L} y$, we have $v \leq_L w$ and $w^{-1}(\alpha_i) =
    v^{-1}(\alpha_{i^\prime})$.  Therefore, $w v^{-1}(\alpha_{i^\prime}) =
    \alpha_i$, and thus, by Theorem (\ref{Jantzen, 8.20}), $T_{w v^{-1}}
    (E_{i^\prime}) = E_i$. Since $j$ is a right descent of both $w$ and $v$,
    then
    \[
        \ell(w v^{-1}) + \ell(v s_j) = \left(\ell(w) - \ell(v)\right) +
        \left(\ell(v) - 1\right) = \ell(w) - 1 = \ell(w s_j).
    \]
    Therefore, $T_{wv^{-1}} T_{v s_j} = T_{ws_j}$ and we obtain $T_{w v^{-1}}
    \left(T_{v s_j}(E_j) \right) = T_{w s_j}(E_j)$.  Since $T_{w v^{-1}}$ is an
    algebra automorphism of ${\mathcal U}_q (\mathfrak{g})$, and $T_{w
    v^{-1}}(u) \in \left( {\mathcal U}_q(\mathfrak{g})
    \right)_{wv^{-1}(\beta)}$ for all homogeneous elements $u \in \left(
    {\mathcal U}_q(\mathfrak{g}) \right)_\beta$, we obtain
    \[
        \left(\operatorname{ad}_q (E_i) \right)^p \left(T_{ws_j}(E_j) \right) =
        T_{wv^{-1}} \left( \left( \operatorname{ad}_q(E_{i^\prime}) \right)^p
        \left(T_{vs_j}(E_j) \right) \right)
    \]
    for all $p \in \mathbb{N}$. Therefore ${\mathcal N}(x) = {\mathcal N}(y)$.

    Now assume $x \xlongrightarrow{R} y$. As before, suppose $x = (w, i, j)$
    for some $w \in W$ and $i, j \in \mathbf{I}$. Thus $y = (v, i, j^\prime)$
    for some $v \in W$ and $j^\prime \in \mathbf{I}$. In this setting, $v
    \leq_R w$ and $w(\alpha_j) = v(\alpha_{j^\prime})$, and thus $v^{-1} w
    (\alpha_j) = \alpha_{j^\prime}$. From Theorem (\ref{Jantzen, 8.20}),
    $T_{v^{-1}w}(E_j) = E_{j^\prime}$. However, using $v \leq_R w$ in
    conjunction with the fact that $j$ and $j^\prime$ are right descents of $w$
    and $v$, respectively, we obtain
    \[
        \ell(v s_{j^\prime}) + \ell(v^{-1} w) = \left(\ell(v) - 1\right) +
        \left(\ell(w) - \ell(v) \right) = \ell(w) - 1 = \ell(ws_j).
    \]
    From $v^{-1 }w (\alpha_j) = \alpha_{j^\prime}$ we have $s_{j^\prime} v^{-1}
    w = v^{-1} w s_j$, and thus $(v s_{j^\prime}) \cdot (v^{-1} w) = ws_j$.
    Hence, $T_{ws_j} = T_{v s_{j^\prime}} T_{v^{-1} w}$. Therefore, $T_{ws_j}
    (E_j) = T_{v s_{j^\prime}} T_{v^{-1} w} (E_j) = T_{v
    s_{j^\prime}}(E_{j^\prime})$.  This implies
    \[
        \left(\operatorname{ad}_q (E_i) \right)^p \left(T_{ws_j}(E_j) \right) =
        T_{wv^{-1}} \left( \left( \operatorname{ad}_q(E_{i^\prime}) \right)^p
        \left(T_{vs_j}(E_j) \right) \right)
    \]
    for all $p \in \mathbb{N}$. Thus, ${\mathcal N}(x) = {\mathcal N}(y)$.

\end{proof}

\noindent Observe that if $(w, i, j)  \xlongrightarrow{*} (u, i^\prime,
j^\prime)$, where $*$ is any one of the symbols $L$, $R$, $(k, L)$, $(k, R)$
(for $k \in \mathbf{I}$), or blank, then $\ell(u) \leq \ell(w)$. In view of
this, we make the following definitions.

\begin{definition}

    $ $

    \begin{enumerate}

        \item An element $(x, y) \in \Gamma(W) \times \Gamma(W)$ belonging to
            any of the relations $\xlongrightarrow{L}$, $\xlongrightarrow{R}$,
            $\xlongrightarrow{(k, L)}$, $\xlongrightarrow{(k, R)}$ (for $k \in
            \mathbf{I}$), or $\xlongrightarrow{}$ will be called a
            \textit{reduction}.

        \item If there is a reduction $x \xlongrightarrow{*} y$ or a sequence
            of reductions,
            \[
                x \xlongrightarrow{*} \cdots \xlongrightarrow{*} y,
            \]
            where the $*$ symbol is $L$, $R$, $(k, L)$, $(k, R)$ (for $k \in
            \mathbf{I}$), or blank, and may vary from one reduction to the
            next, we say $x$ \textit{reduces} to $y$.

        \item An element $x \in \Gamma(W)$ will be called a \textit{minimal
            element} of the partially ordered set $(\Gamma(W),
            \xlongrightarrow{*})$ (where the $*$ symbol is $L$, $R$, or blank)
            if $x \xlongrightarrow{*} y$ implies $x = y$.

        \end{enumerate}

\end{definition}

\subsection{Covering relations for the posets \texorpdfstring{$(\Gamma(W),
    \xlongrightarrow{L})$}{(Gamma(W), -L->)} and \texorpdfstring{$(\Gamma(W),
        \xlongrightarrow{R})$}{(Gamma(W), -R->)}}

In this section, we give an explicit description of the covering relations in
the posets $(\Gamma(W), \xlongrightarrow{L})$ and $(\Gamma(W),
\xlongrightarrow{R})$. This will be beneficial for a couple of reasons.  First,
when proving whether or not a given property is preserved under these
relations, it suffices to consider only covering relations. For instance, if
$f: \Gamma(W) \to S$ is a function from $\Gamma(W)$ to a set $S$, and we wish
to prove $f(x) = f(y)$ whenever $x \xlongrightarrow{L} y$ (or $x
\xlongrightarrow{R} y$), it suffices to consider only those situations when $x
\xlongrightarrow{L} y$ (or $x \xlongrightarrow{R} y$) is a covering relation.
Secondly, covering relations can be used to iteratively construct the connected
component of the Hasse diagram containing any given element $x\in \Gamma(W)$.
Begin by finding all covers of $x$ as well as all elements $y$ that $x$ covers,
then iteratively repeat this process until no new elements are encountered.  Of
relevance to us is that nilpotency indices are preserved under these relations,
and thus ${\mathcal N}(x) = {\mathcal N}(y)$ whenever $x, y \in \Gamma(W)$
belong to the same connected component of the Hasse diagram.

We recall some basic definitions related to posets. As usual, for a poset
$({\mathcal P}, \geq)$ and a pair of elements $x, y \in {\mathcal P}$, we write
$x > y$ whenever $x\geq y$ and $x \neq y$. Recall, a \textit{covering relation}
$x \geq y$ is a relation such that $x > y$ and there fails to exist $z \in
{\mathcal P}$ such that $x > z > y$. In this setting, we say $x$
\textit{covers} $y$, or that $y$ \textit{is covered by} $x$.

\begin{definition}

    \label{definition, s_ab}

    For a pair of distinct indices $a, b \in \mathbf{I}$, let $w_0(a, b)$ be
    the longest element of the subgroup of $W$ generated by $s_a$ and $s_b$.

\end{definition}

\begin{proposition}

    Let $(w, i, j) \in \Gamma(W)$.

    \begin{enumerate}

        \item If there exists $t \in \mathbf{I}$ with $t \neq i$ such that
            $w_0(i, t) \leq_R ws_j$, then
            \begin{equation}
            \label{definition, elementary reduction, left}
                (w, i, j) \xlongrightarrow{L} (w_0(i, t) \cdot s_i \cdot w,
                i^\prime, j),
            \end{equation}
            where $i^\prime = t$ if the order of $s_i s_t$ is $3$, and
            $i^\prime = i$ otherwise.

        \item If there exists $t \in \mathbf{I}$ with $t \neq j$ such that
            $w_0(j, t) \leq_L s_iw$, then
            \begin{equation}
            \label{definition, elementary reduction, right}
                (w, i, j) \xlongrightarrow{R} (w \cdot s_j \cdot w_0(j, t), i,
                j^\prime),
            \end{equation}
            where $j^\prime = t$ if the order of $s_j s_t$ is $3$, and
            $j^\prime = j$ otherwise.

    \end{enumerate}

\end{proposition}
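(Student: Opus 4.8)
The plan is to verify each of the two claimed reductions directly from the definitions (\ref{definition, L ordering}) and (\ref{definition, R ordering}), together with the basic length identity for the longest element $w_0(a,b)$ of a rank-$2$ parabolic. Since the two parts are mirror images of one another under the involution $x \mapsto x^*$ (swapping left and right, $w \leftrightarrow w^{-1}$), it suffices to prove part (1) carefully; part (2) then follows by applying part (1) to $(w^{-1}, j, i)$ and taking the inverse.

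So assume $(w,i,j) \in \Gamma(W)$ and suppose $t \neq i$ satisfies $w_0(i,t) \leq_R ws_j$. Write $m$ for the order of $s_is_t$, so $\ell(w_0(i,t)) = m$, and set $v := s_i w$ and $u := w_0(i,t) \cdot s_i \cdot w$. First I would record that $u \leq_L w$: since $i \in {\mathcal D}_L(ws_j) \subseteq {\mathcal D}_L(w)$ we have $\ell(s_iw) = \ell(w)-1$, and from $w_0(i,t)^{-1} \leq_L w_0(i,t)^{-1}$ — more directly, from $w_0(i,t) \leq_R ws_j$ one gets a reduced expression for $ws_j$ beginning with a reduced word for $w_0(i,t)$, hence a reduced expression for $w$ (append $s_j$) beginning with $w_0(i,t)$, hence one beginning with $s_i$ followed by the remaining $m-1$ letters of $w_0(i,t)$; peeling off these top $m$ letters and re-attaching $s_i$ on the left shows $\ell(u) = \ell(w) - m + 1 - (\text{correction})$. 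The cleaner bookkeeping: $\ell(w_0(i,t)s_i) = m-1$ since $s_i$ is a right descent of $w_0(i,t)$, and $s_i w = v$ has length $\ell(w)-1$, and the hypothesis $w_0(i,t) \leq_R ws_j$ forces $\ell(w_0(i,t)s_i \cdot v) = \ell(w_0(i,t)s_i) + \ell(v)$ (the word $w_0(i,t)s_i$ is an initial segment, from the left, of a reduced word for $w$ after removing the leading $s_i$... ) — I would make this precise by choosing a reduced word for $ws_j$ of the form $(\text{reduced word for }w_0(i,t)) \cdot w''$, noting its last letter before $s_j$, and manipulating. The upshot is $\ell(u) = \ell(w) - m + 1 < \ell(w)$ when $m \geq 2$ (which always holds since $t \neq i$), confirming $u \leq_L w$ and in particular $\ell(u) < \ell(w)$.

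Next I would check the root condition $w^{-1}(\alpha_i) = u^{-1}(\alpha_{i'})$, where $i' = t$ if $m = 3$ and $i' = i$ otherwise. Since $u = w_0(i,t)s_i w$, we have $u^{-1}(\alpha_{i'}) = w^{-1}s_i\, w_0(i,t)^{-1}(\alpha_{i'}) = w^{-1}s_i\, w_0(i,t)(\alpha_{i'})$ (as $w_0(i,t)$ is an involution). Now $w_0(i,t)$ sends the simple system $\{\alpha_i,\alpha_t\}$ of the rank-$2$ subsystem to its negative, and a short case analysis on $m \in \{2,3,4,6\}$ shows $w_0(i,t)(\alpha_{i'}) = -\alpha_i$ exactly when $i'$ is chosen as stated (for $m=3$, $w_0 = s_is_ts_i$ swaps the roles and $w_0(\alpha_t) = -\alpha_i$; for $m$ even, $w_0$ is central in the dihedral group and $w_0(\alpha_i) = -\alpha_i$). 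Hence $s_i\,w_0(i,t)(\alpha_{i'}) = s_i(-\alpha_i) = \alpha_i$, giving $u^{-1}(\alpha_{i'}) = w^{-1}(\alpha_i)$ as required. Finally, the third component $j$ is unchanged by construction, and one must confirm $(u, i', j) \in \Gamma(W)$, i.e. that $j \in {\mathcal D}_R(u)$ and $i' \in {\mathcal D}_L(us_j)$; the former follows since the reduction from $w$ to $u$ only altered letters on the left, leaving $s_j$ as a right descent, and the latter follows from the root identity just established (the reflection $s_{\alpha_{i'}}$ corresponds under $u$ to $s_{\alpha_i}$ under $w$, and $i \in {\mathcal D}_L(ws_j)$). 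Assembling these facts gives exactly $(w,i,j) \xlongrightarrow{L} (u, i', j)$.

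The main obstacle I anticipate is the length bookkeeping in the first step — verifying that the product $w_0(i,t)\cdot s_i \cdot w$ is "length-additive" in the right way so that $\ell(u) = \ell(w) - m + 1$ and $u \leq_L w$. This requires translating the hypothesis $w_0(i,t) \leq_R ws_j$ into a statement about reduced words and carefully tracking how multiplying on the left by $w_0(i,t)s_i$ interacts with the reduced word for $s_iw$; the subtlety is that $w_0(i,t) \leq_R ws_j$ is a condition on $ws_j$ but we need length control on a product involving $w$ and left multiplication. I expect this to be handled by the standard exchange/deletion arguments, using that $s_i$ is simultaneously a left descent of $w$ and $ws_j$ and a right descent of $w_0(i,t)$. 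Once the length identity is in hand, the root computation is a routine finite check over the four dihedral orders.
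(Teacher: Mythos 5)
Your overall plan---verify the definition of $\xlongrightarrow{L}$ directly for part (1) and obtain part (2) by duality---is the mirror image of the paper's strategy (the paper proves part (2) and remarks that part (1) is similar), and your root computation $u^{-1}(\alpha_{i'}) = w^{-1}(\alpha_i)$ via the case analysis on the dihedral order $m$ is correct. However, the length bookkeeping that you yourself flag as the main obstacle is not merely incomplete: the identity you propose is false. You write that the hypothesis forces $\ell(w_0(i,t)s_i \cdot v) = \ell(w_0(i,t)s_i) + \ell(v)$, which unwinds to $\ell(w_0(i,t)w) = (m-1) + (\ell(w)-1) = \ell(w) + m - 2$, an \emph{increase} in length. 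In fact the hypothesis $w_0(i,t) \leq_R ws_j$, combined with $ws_j \leq_R w$ and transitivity of $\leq_R$, gives $w_0(i,t) \leq_R w$, which by the definition of $\leq_R$ (and $w_0(i,t)$ being an involution) yields $\ell(w_0(i,t)w) = \ell(w) - m$, a \emph{decrease}. The claimed additivity runs in the wrong direction, and the parenthetical ``initial segment'' reasoning is likewise backwards: $w_0(i,t) \leq_R w$ means a reduced word for $w$ \emph{begins} with a reduced word for $w_0(i,t)$, so left multiplication by $w_0(i,t)$ cancels that prefix rather than prepending one. The correct target $\ell(u) = \ell(w) - m + 1$ appears in your ``upshot,'' but the computation you give does not produce it.

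Once the sign is corrected there is still a genuine step remaining: one must show that $i'$ is a left ascent of $w_0(i,t)w$, so that $\ell(u) = \ell(s_{i'}\cdot w_0(i,t)w) = \ell(w) - m + 1$ and then $\ell(u) + \ell(wu^{-1}) = \ell(u) + \ell(s_iw_0(i,t)) = \ell(u) + (m-1) = \ell(w)$ certifies $u \leq_L w$. This follows because $w_0(i,t) \leq_R w$ forces $w_0(i,t)w$ to be the minimal-length element of its left $\langle s_i, s_t\rangle$-coset, so neither $i$ nor $t$ (and hence not $i'$) is a left descent of it; your proposal does not invoke this. Similarly, the check that $(u,i',j) \in \Gamma(W)$ is left at the level of remarks (``only altered letters on the left''), whereas it needs its own length arithmetic. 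For comparison, the paper's proof of the mirror part (2) does all of this through a short chain of identities: $u(\alpha_{j'}) = w(\alpha_j) < 0$ gives $j' \in \mathcal{D}_R(u)$; $us_{j'} = w \cdot w_0(j,t)$ together with $w_0(j,t) \leq_L s_iw \leq_L w$ gives $\ell(us_{j'}) = \ell(w) - \ell(w_0(j,t))$; $s_ius_{j'} = s_iw\cdot w_0(j,t)$ together with $w_0(j,t) \leq_L s_iw$ gives $\ell(s_ius_{j'}) = \ell(us_{j'}) - 1$, so $i \in \mathcal{D}_L(us_{j'})$; and $u\cdot w_0(j,t)s_j = w$ with lengths additive gives $u \leq_R w$. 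Your part-(1) argument would need the corresponding chain, and as written it does not supply it.
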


\begin{proof}

    We will prove (2). The proof of (1) is similar.  Suppose $(w, i, j) \in
    \Gamma(W)$ and $w_0(j, t) \leq_L s_iw$ for some $t \neq j$. Let $j^\prime =
    t$ if the order of $s_js_t$ is $3$, or $j^\prime = j$ otherwise. For
    brevity, define $u := ws_j \cdot w_0(j, t)$.

    First observe $s_j w_0(j, t) (\alpha_{j^\prime}) = \alpha_j$.  Hence, $u
    (\alpha_{j^\prime}) = w (\alpha_j)$.  Next, we will prove $(u, i, j^\prime)
    \in \Gamma(W)$ by showing $j^\prime$ is right descent of $u$, and $i$ is a
    left descent of $us_{j^\prime}$.  Since $j$ is a right descent of $w$, $u
    (\alpha_{j^\prime}) = w(\alpha_j) < 0$.  Therefore $j^\prime$ is a right
    descent of $u$.  Using the identity $u s_{j^\prime} = w \cdot w_0(j, t)$
    together with $w_0(j, t) \leq_L s_iw \leq_L w$ gives us
    $\ell(us_{j^\prime}) = \ell(w) - \ell(w_0(j, t))$.  Since $w_0(j, t)$ is an
    involution, $\left(s_ius_{j^\prime}\right) \cdot w_0(j, t) = s_iw$. Thus,
    $\ell(s_i u s_{j^\prime}) + \ell(w_0(j, t)) = \ell(s_iw) = \ell(w) - 1$.
    Therefore, $\ell(s_i u s_{j^\prime}) = \ell(us_{j^\prime}) - 1$.
    Equivalently, $i$ is a left descent of $u s_{j^\prime}$.  Finally, from the
    identities $u \cdot w_0(j, t) s_j = w$ and $\ell(u) + \ell(w_0(j, t) s_j) =
    \ell(u) + \ell(w_0(j, t)) - 1 = \ell(w)$, it follows that $u \leq_R w$.

\end{proof}

\begin{definition}

    The reductions in (\ref{definition, elementary reduction, left}) and
    (\ref{definition, elementary reduction, right}) will be referred to as
    \textit{elementary reductions}.

\end{definition}

We are now ready to prove that every reduction is either an elementary
reduction or a composition of elementary reductions.  That is to say, for every
reduction of the form $(w, i, j) \xlongrightarrow{L} (v, k, j)$ (or of the form
$(w, i, j) \xlongrightarrow{R} (v, i, k)$), there exist $(w_1, i_1, j), \cdots,
(w_N, i_N, j) \in \Gamma(W)$ (or $(w_1, i, j_1), \ldots, (w_N, i, j_N) \in
\Gamma(W)$) such that each reduction in the composition of reductions
\begin{equation}
    \label{sequence of elementary reductions, left}
    (w, i, j) = (w_1, i_1, j) \xlongrightarrow{L} (w_2, i_2, j)
    \xlongrightarrow{L} \cdots \xlongrightarrow{L} (w_N, i_N, j) = (v,
    k, j).
\end{equation}
or
\begin{equation}
    \label{sequence of elementary reductions, right}
    (w, i, j) = (w_1, i, j_1) \xlongrightarrow{R} (w_2, i, j_2)
    \xlongrightarrow{R} \cdots \xlongrightarrow{R} (w_N, i, j_N) = (v, i, k),
\end{equation}
respectively, is an elementary reduction.

\begin{theorem}

    \label{theorem, to elementary reductions}

    Every reduction, $x \xlongrightarrow{L} y$ or $x \xlongrightarrow{R} y$, is
    either an elementary reduction or a composition of elementary reductions as
    in (\ref{sequence of elementary reductions, left}) or (\ref{sequence of
    elementary reductions, right}), respectively.

\end{theorem}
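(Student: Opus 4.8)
The plan is to reduce to $\xlongrightarrow{L}$, and then argue by induction on $\ell(w)-\ell(v)$, peeling off one elementary reduction at a time. For the reduction to $\xlongrightarrow{L}$: the involution $x\mapsto x^*$ on $\Gamma(W)$ carries $\xlongrightarrow{L}$ to $\xlongrightarrow{R}$ and the elementary reductions (\ref{definition, elementary reduction, left}) to those in (\ref{definition, elementary reduction, right}); this is immediate from the definitions, using that $u\leq_L w \iff u^{-1}\leq_R w^{-1}$ and that $w_0(a,b)$ is an involution. So assume $(w,i,j)\xlongrightarrow{L}(v,k,j)$. If $\ell(w)=\ell(v)$ then $w=v$ and $i=k$, and there is nothing to prove; so assume $v<_L w$. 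It suffices to construct a single elementary reduction $(w,i,j)\xlongrightarrow{L}(w',i',j)$ with $v\leq_L w'$: then $(w')^{-1}(\alpha_{i'})=w^{-1}(\alpha_i)=v^{-1}(\alpha_k)$ forces $(w',i',j)\xlongrightarrow{L}(v,k,j)$, and since elementary reductions strictly decrease length (as $\ell(w_0(i,t))\geq 2$) we have $\ell(w')<\ell(w)$, so either $w'=v$ and we are done, or the inductive hypothesis factors $(w',i',j)\xlongrightarrow{L}(v,k,j)$ into elementary reductions, to which we prepend $(w,i,j)\xlongrightarrow{L}(w',i',j)$; this yields a chain of the form (\ref{sequence of elementary reductions, left}).

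To construct the step, set $\gamma := w^{-1}(\alpha_i) = v^{-1}(\alpha_k)$. Since $(w,i,j)\in\Gamma(W)$, one has $j\in{\mathcal D}_R(w)$ and $i\in{\mathcal D}_L(ws_j)$; hence $s_j(\gamma)<0$, while $\gamma\neq\pm\alpha_j$ (otherwise $ws_jw^{-1}=s_i$, so $s_iws_j=w$ and $\ell(s_iws_j)=\ell(w)$, contradicting $(w,i,j)\in\Gamma(W)$), so $\gamma<0$ and therefore $i\in{\mathcal D}_L(w)$. Since $v<_L w$ and $\ell(v)\geq 2$, also $\ell(w)\geq 3$ and $ws_j\neq s_i$. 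Now fix a reduced expression for $w$ whose terminal segment is a reduced expression for $v$. The element $s_iw$ is nontrivial, $i\notin{\mathcal D}_L(s_iw)$, and $\ell(ws_j)=\ell(w)-1\geq 2$ with $i\in{\mathcal D}_L(ws_j)$; applying to $s_iw$ (equivalently, to $ws_j$) the same local analysis used in the proof of Proposition (\ref{as nested E}), one locates an index $t\neq i$ adjacent to $i$ in the Dynkin diagram with $w_0(i,t)\leq_R ws_j$, so that the elementary reduction (\ref{definition, elementary reduction, left}) applies and produces $w':=w_0(i,t)\cdot s_i\cdot w$ together with $i'=t$ (if the order of $s_is_t$ is $3$) or $i'=i$ (otherwise). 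One then checks $v\leq_L w'$ directly from the chosen reduced expression for $w$.

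The main obstacle is this \emph{coordinated} choice of $t$: the index $t$ must simultaneously witness $w_0(i,t)\leq_R ws_j$ (what makes the elementary reduction legal, and which uses $j\in{\mathcal D}_R(w)$ together with the fact that $i$ is a left descent of $ws_j$) and keep $w'=w_0(i,t)s_iw$ weakly above $v$ on the left (which uses that the fixed reduced expression for $w$ ends in a reduced expression for $v$). I expect the verification to proceed by splitting into the finitely many possibilities for the order $m\in\{2,3,4,6\}$ of $s_is_t$ --- exactly as in Proposition (\ref{as nested E}), with the $m=6$ (type $G_2$) cases handled one at a time --- and, in each, confirming the two weak-order containments using the dihedral relations in the subgroup $\langle s_i,s_t\rangle$ together with the characterizations of $\leq_L$ and $\leq_R$ by suffixes and prefixes of reduced words. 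Once $t$ has been pinned down, the identity $w'=w_0(i,t)s_iw$, the value of $i'$, and the inequality $\ell(w')<\ell(w)$ (equivalently, $w=s_iw_0(i,t)\cdot w'$ reduced) are routine computations inside $\langle s_i,s_t\rangle$.
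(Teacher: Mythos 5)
Your overall architecture matches the paper's: reduce to one of the two relations via the involution $x\mapsto x^*$, then argue by induction on $\ell(w)-\ell(v)$, splitting off a single elementary reduction $(w,i,j)\xlongrightarrow{L}(w',i',j)$ with $v\leq_L w'$ and observing that $(w')^{-1}(\alpha_{i'})=w^{-1}(\alpha_i)=v^{-1}(\alpha_k)$ then forces $(w',i',j)\xlongrightarrow{L}(v,k,j)$. The preliminary observations ($\gamma:=w^{-1}(\alpha_i)<0$, hence $i\in{\mathcal D}_L(w)$; elementary reductions strictly drop length because $\ell(w_0(i,t))\geq 2$) are also correct. So the skeleton is right.

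The gap is exactly the one you flag as ``the main obstacle,'' and it is not resolved by what you propose. You need a $t\neq i$ with $w_0(i,t)\leq_R ws_j$ \emph{and} $v\leq_L w_0(i,t)s_iw$, and you say this should follow by fixing a reduced word for $w$ ending in one for $v$ and then running ``the same local analysis used in the proof of Proposition~(\ref{as nested E})'' plus a case split on the order of $s_is_t$. This does not work: Proposition~(\ref{as nested E})'s proof only picks out an arbitrary second descent with no reference to $v$, so it has no mechanism to guarantee the containment $v\leq_L w'$; and the case split on $m\in\{2,3,4,6\}$ presupposes that $t$ has already been chosen, so it cannot be the method by which $t$ is found. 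In general, $ws_j$ can have several left descents besides $i$, different choices of $t$ produce incomparable $w'$'s, and only the right one keeps $v$ below. (As a side issue, ``adjacent to $i$ in the Dynkin diagram'' is also wrong: a commuting pair, with $w_0(i,t)=s_is_t$ of length $2$, is a perfectly good elementary reduction.)

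The paper's proof closes this gap by working inside the complementary factor rather than inside $w$ itself. For the $\xlongrightarrow{R}$ case it sets $u:=v^{-1}w$, notes $u(\alpha_j)=\alpha_k>0$ so $j\notin{\mathcal D}_R(u)$, fixes a reduced word $s_{i_1}\cdots s_{i_N}$ for $u$, and compares the two reduced words $s_{i_1}\cdots s_{i_N}s_j$ and $s_ks_{i_1}\cdots s_{i_N}$ for $us_j=s_ku$. In any chain of braid moves connecting them, at least one move must touch the rightmost $s_j$; that move exhibits $t\neq j$ with $w_0(j,t)\leq_L us_j$, hence $w_0(j,t)s_j\leq_L u\leq_L w$, which gives both that the elementary reduction is legal and (via $v^{-1}w_2\leq_R u$, with lengths adding) that $v\leq_R w_2$. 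The key feature of this construction is that $t$ is found inside $u$, so compatibility with $v$ comes for free, rather than being something to verify afterwards. Your sketch lacks this mechanism, and without it the inductive step is not established.
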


\begin{proof}

    We will prove that every reduction of the form $x \xlongrightarrow{R} y$ is
    either an elementary reduction or a composition of elementary reductions.
    The analogous statement for reductions of the form $x \xlongrightarrow{L}
    y$ can be proved similarly.

    To begin, suppose $(w, i, j) \xlongrightarrow{R} (v, i, k)$ for some pair
    of elements, $(w, i, j)$ and $(v, i, k)$, in $\Gamma(W)$ with $w \neq v$.
    Hence $v \leq_R w$ and $w(\alpha_j) = v(\alpha_k)$.  Define $u := v^{-1}
    w$. Thus, $\ell(u) = \ell(w) - \ell(v) > 0$ and $u(\alpha_j) = \alpha_k$.
    Observe that $j$ is not a right descent of $u$ because $u(\alpha_j) > 0$.
    Let $s_{i_1} \cdots s_{i_N}$ be a reduced expression for $u$. Since $j$ is
    not a right descent of $u$, then $s_{i_1} \dots s_{i_N} s_j$ is a reduced
    expression for $us_j$. Using $u(\alpha_j) = \alpha_k$, we have $us_j = s_k
    u$.  Therefore, there exists a sequence of braid relations that can be
    performed on the reduced expression $s_{i_1} \cdots s_{i_N} s_j$ to
    transform it into $s_k s_{i_1} \cdots s_{i_N}$. In this sequence of braid
    relations, there must exist at least one that involves the rightmost $s_j$
    (appearing in the reduced expression $s_{i_1} \cdots s_{i_N} s_j$).
    Equivalently, this means there exists some $t \in \mathbf{I}$ with $t \neq
    j$ so that $w_0(j, t) \leq_L us_j$, and thus $w_0(j, t) s_j \leq_L u \leq_L w$.
    Therefore, $w \cdot s_j \cdot w_0(j, t) \leq_R w$. Next, let $j^\prime \in
    \mathbf{I}$ be defined by the rule $\alpha_j = s_j w_0(j, t)
    (\alpha_{j^\prime})$. Thus, $j^\prime \in \left\{j ,t \right\}$ and we have
    $w(\alpha_j) = w \cdot s_j \cdot w_0(j, t) (\alpha_{j^\prime})$.  Hence, $(w,
    i, j) \xlongrightarrow{R} (w_2, i, j_2)$, where $w_2 := w \cdot s_j \cdot
    w_0(j, t)$ and $j_2 := j^\prime$.

    Observe that $v^{-1} w_2 \leq_R u$. Hence, $v \leq_R w_2$. This observation
    in conjunction with $w(\alpha_j) = w_2(\alpha_{j_2}) = v(\alpha_k)$ means
    $(w_2, i, j_2) \xlongrightarrow{R} (v, i, k)$. Thus, we have a composition
    of reductions
    \[
        (w, i, j) \xlongrightarrow{R} (w_2, i, j_2) \xlongrightarrow{R} (v, i,
        k),
    \]
    where the first reduction $(w, i, j) \xlongrightarrow{R} (w_2, i, j_2)$ is
    an elementary reduction. If $w_2 \neq v$, we can repeat this entire process
    to construct yet another elementary reduction $(w_2, i, j_2)
    \xlongrightarrow{R} (w_3, i, j_3)$, giving us
    \[
        (w, i, j) \xlongrightarrow{R} (w_2, i, j_2) \xlongrightarrow{R} (w_3,
        i, j_3) \xlongrightarrow{R} (v, i, k).
    \]
    Repeating this process will eventually produce a composition of elementary
    reductions, as in (\ref{sequence of elementary reductions, right}), because
    $\ell(w) > \ell(w_2) > \ell(w_3) > \cdots$.

\end{proof}

\noindent As a consequence of Theorem (\ref{theorem, to elementary
reductions}), elementary reductions coincide with covering relations.

\begin{corollary}

    The elementary reductions (\ref{definition, elementary reduction, left})
    and (\ref{definition, elementary reduction, right}) are the covering
    relations of the posets $(\Gamma(W), \xlongrightarrow{\hspace{2mm} L
    \hspace{2mm}} )$ and $(\Gamma(W), \xlongrightarrow{\hspace{2mm} R
    \hspace{2mm}})$, respectively.

\end{corollary}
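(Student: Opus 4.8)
The plan is to establish the two inclusions separately: first, that every covering relation of $(\Gamma(W), \xlongrightarrow{R})$ is an elementary reduction of the form (\ref{definition, elementary reduction, right}); second, that every such elementary reduction is a covering relation. The corresponding statements for $(\Gamma(W), \xlongrightarrow{L})$ then follow by the symmetric argument (or by transporting along the involution $x \mapsto x^*$, which exchanges the two posets and carries left elementary reductions to right ones).

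The inclusion ``covering $\Rightarrow$ elementary'' is essentially immediate from Theorem (\ref{theorem, to elementary reductions}). If $x \xlongrightarrow{R} y$ is a covering relation then $x \neq y$, so by Theorem (\ref{theorem, to elementary reductions}) there is a composition of elementary reductions $x = (w_1, i, j_1) \xlongrightarrow{R} \cdots \xlongrightarrow{R} (w_N, i, j_N) = y$. Each elementary reduction in (\ref{definition, elementary reduction, right}) strictly decreases the length of the first coordinate, since $\ell(w \cdot s_j \cdot w_0(j,t)) = \ell(w) - \ell(w_0(j,t)) + 1 < \ell(w)$ (as $\ell(w_0(j,t)) \geq 2$), so $\ell(w_1) > \ell(w_2) > \cdots > \ell(w_N)$. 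If $N \geq 3$ then $(w_2, i, j_2)$ is an element of $\Gamma(W)$ strictly between $x$ and $y$, contradicting the covering hypothesis; hence $N = 2$ and $x \xlongrightarrow{R} y$ is itself an elementary reduction.

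For the converse, fix an elementary reduction $(w, i, j) \xlongrightarrow{R} (v, i, j')$ arising from an index $t \neq j$ as in (\ref{definition, elementary reduction, right}), so $v = w s_j w_0(j,t)$ and $u := v^{-1} w = w_0(j,t) s_j$; observe that $u$ lies in the rank-two standard parabolic $W' := \langle s_j, s_t \rangle$ and $\ell(u) = m - 1$, where $m$ is the order of $s_j s_t$. Suppose $(z, i, k) \in \Gamma(W)$ satisfies $(w, i, j) \xlongrightarrow{R} (z, i, k) \xlongrightarrow{R} (v, i, j')$. Writing $u_1 := z^{-1} w$, the definition of $\xlongrightarrow{R}$ gives, after a short length bookkeeping, $u_1 \leq_L u$ (hence $u_1 \in W'$) and $u_1(\alpha_j) = \alpha_k \in \Pi$. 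Thus it suffices to show the only $g$ with $g \leq_L u$ and $g(\alpha_j)$ simple are $g = e$ and $g = u$. Since $\ell(u) = m - 1 < m = \ell(w_0(j,t))$, the element $u$ has a unique reduced expression, alternating in $s_j, s_t$ and ending in $s_t$; so the elements $g \leq_L u$ are $e$ together with the alternating suffixes $g_1, \dots, g_{m-1} = u$ of lengths $1, \dots, m-1$, each ending in $s_t$. Every such $g_\ell$ has $g_\ell(\alpha_j) > 0$ because $g_\ell$ ends in $s_t \neq s_j$; and for $1 \leq \ell \leq m - 2$ one checks, going through the possible orders $m \in \{2, 3, 4, 6\}$, that $g_\ell(\alpha_j)$ is a \emph{non-simple} positive root of $W'$: it is not $\alpha_j$ (the stabilizer of $\alpha_j$ in $W'$ is $\{e\}$ or $\{e, u\}$ according as $m$ is odd or even, and $g_\ell$ lies in neither), and it is not $\alpha_t$ (this would force $\alpha_j$ and $\alpha_t$ to be $W'$-conjugate, which among Weyl-group dihedral orders happens only for $m = 3$, where $g_1 = s_t$ and $s_t(\alpha_j) = \alpha_j + \alpha_t \neq \alpha_t$). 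Hence no intermediate $(z, i, k)$ exists and $(w, i, j) \xlongrightarrow{R} (v, i, j')$ is a covering relation.

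The main obstacle is the last step: verifying that the length-$(m-1)$ element $u = w_0(j,t) s_j$ of the rank-two subgroup $W'$ has no proper nontrivial left divisor sending $\alpha_j$ to a simple root. This reduces to a finite check on the rank-two root system (and is trivial in the simply-laced range $m \in \{2,3\}$), but one must be careful to track which simple root $\alpha_j$ is mapped to — precisely the alternative ($j' = t$ when $m = 3$, and $j' = j$ otherwise) that is already built into the definition of the elementary reductions.
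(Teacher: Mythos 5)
Your proof is correct, and it is more complete than the paper's own treatment: the paper records the corollary simply ``as a consequence of Theorem~(\ref{theorem, to elementary reductions})'' without supplying the argument. Your first direction, covering~$\Rightarrow$~elementary, is exactly what that theorem yields via the length-decreasing observation (note $\ell(v) = \ell(w) - \ell(w_0(j,t)) + 1 < \ell(w)$), and is presumably all the authors had in mind. The converse, elementary~$\Rightarrow$~covering, is the substantive direction, and it does \emph{not} follow formally from the theorem alone: the theorem asserts that every reduction factors into elementary reductions, but by itself does not preclude a single elementary reduction from admitting a nontrivial factorization through an intermediate element. Your rank-two analysis closes precisely that gap. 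The bookkeeping giving $u_1 := z^{-1}w \leq_L u = w_0(j,t)s_j$ is right (from $v \leq_R z \leq_R w$ one gets $\ell(v^{-1}z) + \ell(u_1) = \ell(u)$ and $u = (v^{-1}z)u_1$), and the key fact is that no suffix of $u$ of length between $1$ and $m-2$ sends $\alpha_j$ to a simple root; your case check over $m \in \{2,3,4,6\}$ is the correct way to verify this. One small point worth stating explicitly, which you use tacitly: since $u_1 \in W' = \langle s_j, s_t\rangle$, the image $u_1(\alpha_j)$ lies in the root system of $W'$, and the only simple roots of $W$ that are roots of $W'$ are $\alpha_j$ and $\alpha_t$; this is what makes your two exclusions (not $\alpha_j$, not $\alpha_t$) exhaustive. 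With that spelled out, the argument is airtight, and the transport along $x \mapsto x^*$ via Proposition~(\ref{L, R, equivalent, inverse}) correctly disposes of the $\xlongrightarrow{L}$ side.
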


\subsection{Miscellaneous}

For $(w, i, j) \in \Gamma(W)$, define
\[
    (w, i, j)^* := (w^{-1}, j, i).
\]
It is a simple observation that the map given by $x \mapsto x^*$ is an
involution on $\Gamma(W)$.

\begin{definition}

    An element $x\in \Gamma(W)$ is \textit{self-dual} if $x = x^*$.

\end{definition}

\begin{proposition}

    \label{L, R, equivalent, inverse}

    Suppose $x, y \in \Gamma(W)$ and $k \in \mathbf{I}$.  Then

    \begin{enumerate}

        \label{proposition, a}

        \item $x \xlongrightarrow{L} y$ if and only if $x^* \xlongrightarrow{R}
            y^*$, and

        \item $x \xlongrightarrow{\hspace{1mm} (k, L) \hspace{1mm}} y$ if and
            only if $x^* \xlongrightarrow{\hspace{1mm} (k, R) \hspace{1mm}}
            y^*$.

        \item \label{proposition, a, 3} $x \xleftrightarrow{\hspace{4mm}} y$ if
            and only if $x^*\xleftrightarrow{\hspace{4mm}} y^*$.

    \end{enumerate}

\end{proposition}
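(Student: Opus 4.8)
The plan is to prove all three parts of Proposition~\ref{L, R, equivalent, inverse} directly from the definitions of the relations, exploiting the fact that the map $x \mapsto x^*$ interchanges left and right multiplication, interchanges $\leq_L$ and $\leq_R$, and interchanges the left/right descent conditions in the definition of $\Gamma(W)$. First I would record the elementary observations: if $w \in W$ then $\ell(w) = \ell(w^{-1})$; $v \leq_L w$ if and only if $v^{-1} \leq_R w^{-1}$ (since $\ell(v) + \ell(wv^{-1}) = \ell(w)$ rewrites as $\ell(v^{-1}) + \ell((v^{-1})^{-1}w^{-1}) = \ell(w^{-1})$); and, crucially, the condition $w^{-1}(\alpha_i) = u^{-1}(\alpha_{i'})$ appearing in the definition of $\xlongrightarrow{L}$ is literally the condition $(w^{-1})^{-1}$-wait, more precisely it is the condition "$w(\alpha_j) = u(\alpha_{j'})$ with $i=i'$" read for the inverses, which is exactly the root condition defining $\xlongrightarrow{R}$ for $x^*$ and $y^*$.

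For part (1), suppose $x = (w,i,j)$ and $y = (v, i', j)$ with $x \xlongrightarrow{L} y$, so that $v \leq_L w$, $w^{-1}(\alpha_i) = v^{-1}(\alpha_{i'})$, and $j = j'$ (here the "$j = j'$" part is the trailing-index condition). Then $x^* = (w^{-1}, j, i)$ and $y^* = (v^{-1}, j, i')$. I would check the three defining conditions of $\xlongrightarrow{R}$ for the pair $(x^*, y^*)$: (a) $v^{-1} \leq_R w^{-1}$, which follows from $v \leq_L w$ by the inversion fact above; (b) $w^{-1}(\alpha_i) = v^{-1}(\alpha_{i'})$, which is exactly the middle hypothesis of $x \xlongrightarrow{L} y$ and is exactly the root condition $u(\alpha_{j'}) = w(\alpha_j)$ required for $x^* \xlongrightarrow{R} y^*$ once we match up the index names (the "second slot" of $x^*$ is $j$, the relevant simple reflection index in $w^{-1}$ is $i$, matching $\alpha_i$); (c) the fixed-first-index condition $j = j$, which is automatic. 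The reverse implication is the same argument run with $x^{**} = x$. This shows $x \xlongrightarrow{L} y \iff x^* \xlongrightarrow{R} y^*$.

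For part (2), I would unfold the definition of $\xlongrightarrow{(k,L)}$: $(w,i,j) \xlongrightarrow{(k,L)} (u, i', j')$ means $\ell(u) < \ell(w)$, $(ws_k, i, j) \xlongrightarrow{R} (w,i,j)$, $(ws_k, i, j) \xlongrightarrow{L} (u, i', j')$, and $u \leq_L w$. Dualizing via $*$: $\ell(u^{-1}) < \ell(w^{-1})$, $(s_k w^{-1}, j, i) \xlongrightarrow{L} (w^{-1}, j, i)$ (by part (1) applied to the second clause, noting $(ws_k)^* = s_k w^{-1}$), $(s_k w^{-1}, j, i) \xlongrightarrow{R} (u^{-1}, j', i')$ (by part (1) applied to the third clause), and $u^{-1} \leq_R w^{-1}$. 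These four conditions are precisely the definition of $(w^{-1}, j, i) \xlongrightarrow{(k,R)} (u^{-1}, j', i')$, i.e. $x^* \xlongrightarrow{(k,R)} y^*$. Again the converse is symmetric.

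For part (3), recall $\xleftrightarrow{\hspace{4mm}}$ is the equivalence relation generated by $\xlongrightarrow{L}$ and $\xlongrightarrow{R}$. Since $*$ is an involution on $\Gamma(W)$ and, by parts (1) and the symmetric statement "$x \xlongrightarrow{R} y \iff x^* \xlongrightarrow{L} y^*$" (which is just part (1) read with $x,y$ replaced by $x^*, y^*$), $*$ maps each generating relation bijectively onto a generating relation of the same equivalence, it follows that $*$ induces a bijection of the equivalence classes; hence $x \xleftrightarrow{\hspace{4mm}} y \iff x^* \xleftrightarrow{\hspace{4mm}} y^*$. Concretely, given a chain $x = x_1, x_2, \ldots, x_m = y$ in which consecutive terms are related by $\xlongrightarrow{L}$, $\xlongrightarrow{R}$, or their reverses, apply $*$ termwise to obtain a chain $x^* = x_1^*, \ldots, x_m^* = y^*$ in which consecutive terms are related by the corresponding relation with $L$ and $R$ swapped (and direction preserved), which still witnesses $x^* \xleftrightarrow{\hspace{4mm}} y^*$.

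I do not anticipate a serious obstacle here; the only point requiring care is bookkeeping of which index sits in which slot when forming $x^* = (w^{-1}, j, i)$, and correctly recognizing that the root condition "$w^{-1}(\alpha_i) = u^{-1}(\alpha_{i'})$" in the definition of $\xlongrightarrow{L}$ becomes, after relabeling for the inverse triples, the root condition "$w(\alpha_j) = u(\alpha_{j'})$" in the definition of $\xlongrightarrow{R}$. Verifying the $\leq_L \leftrightarrow \leq_R$ swap under inversion is a one-line length computation, and everything else is purely formal manipulation of the definitions.
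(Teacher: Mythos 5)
Your proposal is correct and takes essentially the same route as the paper: the paper's entire proof is the single sentence ``This follows from the definition of the relations,'' and you have simply carried out that unwinding explicitly, with the right bookkeeping of how the $*$ involution swaps slots and $\leq_L$/$\leq_R$.
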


\begin{proof}

    This follows from the definition of the relations $\xlongrightarrow{L}$,
    $\xlongrightarrow{R}$, $\xlongrightarrow{\hspace{1mm} (k, L)
    \hspace{1mm}}$, $\xlongrightarrow{\hspace{1mm} (k, R) \hspace{1mm}}$, and
    $\xleftrightarrow{\hspace{4mm}}$.

\end{proof}

\begin{corollary}

    \label{corollary, a}

    Suppose $x\in \Gamma(W)$ is self-dual and $x \xleftrightarrow{\hspace{4mm}}
    y$ for some $y \in \Gamma(W)$. Then $y \xleftrightarrow{\hspace{4mm}} y^*$.

\end{corollary}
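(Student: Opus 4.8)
The plan is to deduce this directly from part (\ref{proposition, a, 3}) of Proposition (\ref{L, R, equivalent, inverse}) together with the fact that $\xleftrightarrow{\hspace{4mm}}$ is an equivalence relation. First I would unwind the hypothesis: since $x$ is self-dual, $x = x^*$, and we are given $x \xleftrightarrow{\hspace{4mm}} y$. Applying the involution-compatibility of the equivalence relation (Proposition (\ref{L, R, equivalent, inverse})(\ref{proposition, a, 3})) to the relation $x \xleftrightarrow{\hspace{4mm}} y$ yields $x^* \xleftrightarrow{\hspace{4mm}} y^*$, which, using $x = x^*$, becomes $x \xleftrightarrow{\hspace{4mm}} y^*$.

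Next I would combine the two relations $x \xleftrightarrow{\hspace{4mm}} y$ and $x \xleftrightarrow{\hspace{4mm}} y^*$. By symmetry of $\xleftrightarrow{\hspace{4mm}}$ we have $y \xleftrightarrow{\hspace{4mm}} x$, and then transitivity gives $y \xleftrightarrow{\hspace{4mm}} x \xleftrightarrow{\hspace{4mm}} y^*$, hence $y \xleftrightarrow{\hspace{4mm}} y^*$, as desired.

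There is essentially no obstacle here; the content is entirely packaged in Proposition (\ref{L, R, equivalent, inverse}), and the only thing to be careful about is invoking that $\xleftrightarrow{\hspace{4mm}}$ is genuinely an equivalence relation — but this was noted in Definition (\ref{definition, equivalence relation}) (it is defined as the equivalence relation generated by $\xlongrightarrow{}$), so symmetry and transitivity are available for free. Thus the whole argument is a two-line chain of relations.

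\begin{proof}
Since $x$ is self-dual, $x = x^*$. By hypothesis, $x \xleftrightarrow{\hspace{4mm}} y$, and so, by Proposition (\ref{L, R, equivalent, inverse})(\ref{proposition, a, 3}), $x^* \xleftrightarrow{\hspace{4mm}} y^*$. As $x = x^*$, this says $x \xleftrightarrow{\hspace{4mm}} y^*$. Because $\xleftrightarrow{\hspace{4mm}}$ is an equivalence relation, it is symmetric and transitive, so from $y \xleftrightarrow{\hspace{4mm}} x$ and $x \xleftrightarrow{\hspace{4mm}} y^*$ we conclude $y \xleftrightarrow{\hspace{4mm}} y^*$.
\end{proof}
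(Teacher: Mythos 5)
Your argument is correct and is precisely the intended one: the paper states this as an immediate corollary of Proposition (\ref{L, R, equivalent, inverse})(\ref{proposition, a, 3}), and your chain $y \xleftrightarrow{\hspace{4mm}} x = x^* \xleftrightarrow{\hspace{4mm}} y^*$ using symmetry and transitivity of the equivalence relation is exactly how it follows.
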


\begin{definition}

    For an element $(w, i, j) \in \Gamma(W)$, define the triple
    \[
        \chi(w, i, j) := \left(\langle \alpha_i, \alpha_i \rangle, \langle
        \alpha_j, \alpha_j \rangle, \langle \alpha_i, ws_j(\alpha_j)
        \rangle\right),
    \]
    where $\langle \,\,,\,\, \rangle$ is the symmetric bilinear form on the
    root lattice $Q$, normalized such that $\langle \alpha, \alpha \rangle = 2$
    for short roots $\alpha$.

\end{definition}

\begin{proposition}

    Suppose $x, y \in \Gamma(W)$ and $x \xleftrightarrow{\hspace{4mm}} y$. Then
    $\chi(x) = \chi(y)$.

\end{proposition}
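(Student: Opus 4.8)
The plan is to reduce the statement to the generating relations. By Definition (\ref{definition, equivalence relation}), $\xleftrightarrow{\hspace{4mm}}$ is the equivalence relation generated by $\xlongrightarrow{}$, which is in turn generated by $\xlongrightarrow{L}$ and $\xlongrightarrow{R}$; the chain characterization of $\xleftrightarrow{\hspace{4mm}}$ recorded just after that definition then says $x \xleftrightarrow{\hspace{4mm}} y$ exactly when there is a finite sequence $x = x_1,\dots, x_k = y$ in $\Gamma(W)$ with each consecutive pair related by $\xlongrightarrow{L}$, by $\xlongrightarrow{R}$, or by the reverse of one of these. Since the conclusion $\chi(x) = \chi(y)$ is an equality, hence symmetric and transitive, it suffices to prove $\chi(x) = \chi(y)$ whenever $x \xlongrightarrow{L} y$ or $x \xlongrightarrow{R} y$. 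The only ingredients needed are the $W$-invariance of $\langle\,,\,\rangle$ (the identity $\langle w\mu, w\nu\rangle = \langle\mu,\nu\rangle$, used in the discussion around (\ref{orthogonality of Tw})) and the identity $s_k(\alpha_k) = -\alpha_k$.

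First I would treat $x \xlongrightarrow{L} y$. Write $x = (w, i, j)$ and $y = (u, i', j)$, so $u \leq_L w$ and $w^{-1}(\alpha_i) = u^{-1}(\alpha_{i'})$. Conjugating by $w$ and by $u$ and invoking $W$-invariance gives $\langle\alpha_i,\alpha_i\rangle = \langle w^{-1}(\alpha_i), w^{-1}(\alpha_i)\rangle = \langle u^{-1}(\alpha_{i'}), u^{-1}(\alpha_{i'})\rangle = \langle\alpha_{i'},\alpha_{i'}\rangle$, which is the first coordinate of $\chi$; the second coordinate is unchanged because the last index is the same. For the third coordinate, use $s_j(\alpha_j) = -\alpha_j$ to write $w s_j(\alpha_j) = -w(\alpha_j)$ and $u s_j(\alpha_j) = -u(\alpha_j)$, and then compute
\[
    \langle\alpha_i, w s_j(\alpha_j)\rangle = -\langle\alpha_i, w(\alpha_j)\rangle = -\langle w^{-1}(\alpha_i), \alpha_j\rangle = -\langle u^{-1}(\alpha_{i'}), \alpha_j\rangle = -\langle\alpha_{i'}, u(\alpha_j)\rangle = \langle\alpha_{i'}, u s_j(\alpha_j)\rangle,
\]
again using $W$-invariance together with $w^{-1}(\alpha_i) = u^{-1}(\alpha_{i'})$. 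Hence $\chi(x) = \chi(y)$.

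The case $x \xlongrightarrow{R} y$ goes through in exactly the same fashion: writing $x = (w,i,j)$, $y = (v,i,j')$ with $v \leq_R w$ and $w(\alpha_j) = v(\alpha_{j'})$, conjugation by $w$ and $v$ gives $\langle\alpha_j,\alpha_j\rangle = \langle\alpha_{j'},\alpha_{j'}\rangle$ (the second coordinate), the first coordinate is unchanged, and $\langle\alpha_i, ws_j(\alpha_j)\rangle = -\langle\alpha_i, w(\alpha_j)\rangle = -\langle\alpha_i, v(\alpha_{j'})\rangle = \langle\alpha_i, v s_{j'}(\alpha_{j'})\rangle$ (the third coordinate). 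Alternatively, since $s_i(\alpha_i) = -\alpha_i$ one checks that $\chi(x^*)$ is obtained from $\chi(x)$ by transposing its first two coordinates, so the $R$-case follows from the $L$-case via Proposition (\ref{L, R, equivalent, inverse})(1). There is no real obstacle here; the only point that deserves care is keeping track of which of the two Weyl group elements the form is conjugated by in the third-coordinate computation, and noting that the hypothesis $w^{-1}(\alpha_i) = u^{-1}(\alpha_{i'})$ (respectively $w(\alpha_j) = v(\alpha_{j'})$) is precisely what makes the two sides agree.
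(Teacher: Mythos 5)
Your proof is correct, and it takes a genuinely different, more elementary route than the paper's. The paper's argument works at the level of the quantum group: it invokes the proof of Theorem (\ref{nilpotency indices, preserved}) to produce a Lusztig symmetry $\phi = T_z \in {\mathcal G}$ implementing the equivalence, then uses the $Q$-grading property (\ref{Tw, homogeneous}) to extract a single $z \in W$ with $z(\alpha_i) = \alpha_k$ and $zus_j(\alpha_j) = vs_p(\alpha_p)$, from which the $W$-invariance of the form gives the three coordinate equalities at once. You instead stay entirely inside the Weyl group and root lattice: you reduce to a single covering relation $\xlongrightarrow{L}$ or $\xlongrightarrow{R}$ (correctly, since equality of $\chi$ is symmetric and transitive, reversed arrows cost nothing) and then verify the three coordinates directly from the defining data ($w^{-1}(\alpha_i) = u^{-1}(\alpha_{i'})$, respectively $w(\alpha_j) = v(\alpha_{j'})$) together with $W$-invariance of $\langle\,,\,\rangle$ and $s_k(\alpha_k) = -\alpha_k$. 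The computations check out in both cases, and the observation that $\chi(x^*)$ is just $\chi(x)$ with the first two coordinates swapped (so that the $R$-case follows formally from the $L$-case via Proposition (\ref{L, R, equivalent, inverse})) is also correct and gives a clean shortcut. Your version is arguably tidier: it is self-contained, does not rely on the reader re-examining the proof of an earlier theorem to extract a composite $\phi$, and makes no reference to the quantum algebra at all -- the statement is, after all, purely about roots and the Weyl group. The paper's route has the virtue of exhibiting $\chi$ as a shadow of the underlying Lusztig-symmetry action, which is conceptually aligned with the rest of the section, but your direct root-level calculation is logically leaner.
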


\begin{proof}

    Let ${\mathcal G}$ be the group of automorphisms of ${\mathcal
    U}_q(\mathfrak{g})$ generated by the Lusztig symmetries $T_w$ ($w \in W$).
    Suppose $x = (u, i, j)$ and $y = (v, k, p)$. The proof of Theorem
    (\ref{nilpotency indices, preserved}) shows that if $x
    \xleftrightarrow{\hspace{4mm}} y$, there exists $\phi \in {\mathcal G}$
    such that $\phi(E_i) = E_k$ and $\phi(T_{us_j}(E_j)) = T_{v s_p} (E_p)$.
    Since $\phi \in {\mathcal G}$, it follows from (\ref{Tw, homogeneous}) that
    there exists $z \in W$ so that $\phi \left( {\mathcal
    U}_q(\mathfrak{g})_\lambda \right) \subseteq {\mathcal
    U}_q(\mathfrak{g})_{z(\lambda)}$ for all $\lambda \in Q$.  Therefore,
    $z(\alpha_i) = \alpha_k$ and $zus_j(\alpha_j) = vs_p(\alpha_p)$.  Hence,
    $\alpha_i$ and $\alpha_k$ lie in the same $W$-orbit, and thus $\alpha_i$
    and $\alpha_k$ have the same length. Similarly, $\alpha_j$ and $\alpha_p$
    have the same length. We also have
    \[
        \langle \alpha_i, us_j(\alpha_j) \rangle = \langle z (\alpha_i), z u
        s_j (\alpha_j) \rangle = \langle \alpha_k, vs_p (\alpha_p) \rangle.
    \]

\end{proof}

\section{Minimal elements of the poset \texorpdfstring{$(\Gamma(W),
    \xlongrightarrow{})$}{(Gamma(W), -->)}}

    \label{minimal elements}

In this section, we classify the minimal elements of the poset $(\Gamma(W),
\xlongrightarrow{})$.  We remark first that if $w \in W$ is a Weyl group
element and there is only one tuple $(i, j) \in \mathbf{I} \times \mathbf{I}$
such that $(w, i, j) \in \Gamma(W)$, we write
\begin{equation}
    \label{def, llbracket}
    \llbracket w \rrbracket
\end{equation}
as an abbreviation for $(w, i, j)$.  This shorthand is typically used in those
situations when $w \in W$ is \textit{bigrassmannian}, i.e.  $\lvert {\mathcal
D}_L(w) \rvert = \lvert {\mathcal D}_R(w) \rvert = 1$.

Recall Definition (\ref{definition, s_ab}): for distinct $a, b \in \mathbf{I}$,
$w_0(a, b)$ is the longest element of the subgroup of $W$ generated by $s_a$
and $s_b$. The main result of this section is given in the following theorem.

\begin{theorem}

    \label{reduce by L and R}

    Let $x \in \Gamma(W)$. Then $x$ is a minimal element of the partially
    ordered set $(\Gamma(W), \xlongrightarrow{})$ if and only if either

    \begin{enumerate}

        \item  $x = \llbracket u \rrbracket$ for some bigrassmannian $u \in W$,
            or

        \item $x = (w_0(p, k), p, p^\prime) $ for some $p, k \in \mathbf{I}$
            with $p \neq k$ (where $p^\prime = p$ if the order of $s_ps_k$ is
            $3$, and $p^\prime = k$ otherwise).

    \end{enumerate}
    \noindent If case (2) holds, ${\mathcal N}(x) = 1 - \langle \alpha_k,
    \alpha_p^\vee \rangle \in [1, 4]$.

\end{theorem}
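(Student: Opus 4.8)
The plan is to split the biconditional and handle the two directions separately, and to extract the value of $\mathcal{N}(x)$ in case (2) by a direct computation with $q$-Serre relations. For the \emph{reverse direction}, I would show that elements of types (1) and (2) admit no nontrivial reductions. If $x = \llbracket u \rrbracket$ with $u$ bigrassmannian, then $\mathcal{D}_L(u)$ and $\mathcal{D}_R(u)$ are singletons, so there is no index $t \neq i$ with $w_0(i,t) \leq_R us_j$ and no $t \neq j$ with $w_0(j,t) \leq_L s_iw$ (such a $w_0(i,t)$ would have two left descents $i,t$, both of which would be left descents of $us_j$, contradicting bigrassmannian-ness after tracking descents through the single reflection). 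By Theorem \ref{theorem, to elementary reductions}, every reduction factors through elementary reductions, so no reduction out of $x$ exists. If $x = (w_0(p,k), p, p')$, then $w_0(p,k)$ has exactly the two left descents $p, k$ and the two right descents $p, k$; one checks that the only elementary reductions available send $x$ to itself (the candidate $t$ forces $w_0(p,t) = w_0(p,k)$, and the resulting element is again $x$ up to relabeling $p \leftrightarrow p'$). Hence these are minimal.

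For the \emph{forward direction}, suppose $x = (w,i,j) \in \Gamma(W)$ is minimal. The key dichotomy is on $|\mathcal{D}_R(ws_j)|$ and $|\mathcal{D}_L(s_iw)|$, i.e.\ on whether $ws_j$ (resp.\ $s_iw$) has a descent other than the one "forced" by $j$ (resp.\ $i$). If $ws_j$ has a right descent $t \neq$ (the relevant one) with $w_0(i,t) \leq_R ws_j$, then the elementary reduction \eqref{definition, elementary reduction, left} applies and $x$ is not minimal unless the reduction is trivial, which happens exactly when $w = w_0(i,t) s_i w$, forcing $w$ to be (a conjugate giving) the longest element of $\langle s_i, s_t\rangle$; similarly on the right. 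Ruling out all such reductions forces $\mathcal{D}_R(w) \setminus \{?\}$ and $\mathcal{D}_L(w) \setminus \{?\}$ to be empty in the appropriate sense, which is precisely the statement that $w$ is bigrassmannian (case (1)) — unless the only reductions were trivial ones of the dihedral type, which lands us in case (2). The careful bookkeeping here — translating "$ws_j$ has $\leq 1$ relevant descent" into "$w$ is bigrassmannian" while keeping track of the two simple reflections $s_i$, $s_j$ and how descents move across them — is the part I expect to be the main obstacle, and it likely requires a short case analysis on the order of $s_is_j$ (and whether $i = j$).

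Finally, for the value of $\mathcal{N}(x)$ in case (2), I would reduce to an explicit $\mathcal{U}_q(\mathfrak{g})$ computation. With $x = (w_0(p,k), p, p')$, by \eqref{definition, nil-index} we must find the smallest $N$ with $\left(\operatorname{ad}_q E_p\right)^N\!\left(T_{w_0(p,k)s_{p'}}(E_{p'})\right) = 0$. Since $w_0(p,k)s_{p'}$ lies in the rank-two parabolic $\langle s_p, s_k \rangle$, the vector $T_{w_0(p,k)s_{p'}}(E_{p'})$ is, up to scalar, the "top" Lusztig root vector $X_\gamma$ in that rank-two quantum Schubert cell, where $\gamma$ is the highest root of the rank-two subsystem of type determined by $\langle \alpha_p^\vee, \alpha_k\rangle$. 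A direct check in each rank-two type ($A_1\times A_1$, $A_2$, $B_2$, $G_2$) — or, more cleanly, an application of the $q$-Serre relation $\left(\operatorname{ad}_q E_p\right)^{1 - \langle \alpha_p^\vee,\alpha_k\rangle}(E_k) = 0$ together with Proposition \ref{bound on nil-index} to see the bound is achieved — gives $\mathcal{N}(x) = 1 - \langle \alpha_k, \alpha_p^\vee\rangle$, which takes the values $1, 2, 3, 4$ as $s_ps_k$ has order $2, 3, 4, 6$ respectively, hence lies in $[1,4]$.
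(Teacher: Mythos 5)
Your overall skeleton matches the paper's: characterize minimality via elementary reductions (the covering relations from Theorem \ref{theorem, to elementary reductions}), translate into a statement about descent sets of $ws_j$ and $s_iw$, and finish the computation of $\mathcal{N}(x)$ in case (2) by pulling back to the $q$-Serre relations. The reverse direction for bigrassmannian $u$ and the final nilpotency-index computation are essentially correct. But the forward direction, which you yourself flag as ``the main obstacle,'' is genuinely incomplete, and your reasoning about ``trivial reductions'' is off-base.

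First, the spurious ``trivial reduction'' clause. An elementary reduction $(w,i,j) \xlongrightarrow{L} (w_0(i,t)s_iw,\ i',\ j)$ is by construction length-decreasing whenever the hypothesis $w_0(i,t)\leq_R ws_j$ holds with $t\neq i$ (this is exactly what Theorem \ref{theorem, left descent reduction} proves: it produces $v$ with $\ell(v)<\ell(w)$). There is no scenario in which the reduction ``sends $x$ to itself.'' Your purported condition $w = w_0(i,t)s_iw$ would force $w_0(i,t)s_i = 1$, i.e.\ $w_0(i,t) = s_i$, contradicting $t\neq i$. The correct way to see minimality of $(w_0(p,k), p, p')$ is the paper's: by Lemma \ref{lemma, characterization of w_0(a, b)}, $w_0(p,k)s_{p'}$ is bigrassmannian with $\mathcal{D}_L(w_0(p,k)s_{p'}) = \{p\}$, so no admissible $t$ exists, and minimality for $\xlongrightarrow{L}$ (and symmetrically $\xlongrightarrow{R}$) follows from Theorems \ref{theorem, left descent reduction} and \ref{theorem, right descent reduction, 1}.

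Second, and more importantly, the forward direction is not proved. Once you know that $ws_j$ has only one left descent and $s_iw$ has only one right descent (so $w$ has at most two left and at most two right descents), you still need to deduce that $w$ is bigrassmannian or equal to $w_0(i,p)$ for some $p$. This is not mere bookkeeping: it requires the arguments of Lemma \ref{reduction, two right descents} (which shows that if, after reducing, the target Weyl group element $u$ has two right descents then $u=w_0(i,p)$, via a nontrivial length argument establishing that $s_is_p$ and the two right-descent reflections have the same order and that $w_0(i,p)u$ has no left descent, hence is the identity) and of Lemma \ref{lemma, right descent reduction, 2} (which closes the one-left-descent case to bigrassmannian by a staged right-reduction argument). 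Your ``which is precisely the statement that $w$ is bigrassmannian --- unless the only reductions were trivial ones of the dihedral type'' glosses over exactly this content, and since your trivial-reduction framework is wrong, the dichotomy you're appealing to isn't actually established by what you've written. To repair the proof you need to supply those two lemmas, or an equivalent analysis of the orders $|s_is_p|$ and $|s_js_t|$ and of how descents propagate along the reduction chain.
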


Before proving Theorem (\ref{reduce by L and R}), we classify the minimal
elements of the posets $(\Gamma(W), \xlongrightarrow{L})$ and $(\Gamma(W),
\xlongrightarrow{R})$.

\subsection{Minimal elements of the posets \texorpdfstring{$(\Gamma(W),
    \xlongrightarrow{L})$}{(Gamma(W), -L->)} and \texorpdfstring{$(\Gamma(W),
        \xlongrightarrow{R})$}{(Gamma(W), -R->)}}

We will show that any given element $(w, i, j) \in \Gamma(W)$ can be reduced by
a sequence of reductions
\begin{equation*}
    (w, i, j) \xlongrightarrow{L} \cdots \xlongrightarrow{L} (v, i^\prime, j)
    \xlongrightarrow{R} \cdots \xlongrightarrow{R} (u, i^\prime, j^\prime)
\end{equation*}
such that $vs_j$ has only one left descent and $s_{i^\prime}u$ has only one
right descent.  Such a Weyl group element $v$ has at most two left descents.
Similarly, $u$ has at most two right descents. We prove in Lemma
(\ref{reduction, two right descents}) that if $u$ has two right descents, then
$v$ has two left descents and $u = w_0(a, b)$ for some $a, b \in \mathbf{I}$. On
the other hand, we show that if $v$ has only one left descent, then $u$ is
bigrassmannian (Lemma \ref{lemma, right descent reduction, 2}).

The following lemma is a well-known property of Weyl groups (see e.g.
\cite[Eqn. 2.60]{Knapp}), which we will tacitly use throughout this section.

\begin{lemma}

    \label{wj=kw}

    For $w\in W$ and $j, k \in \mathbf{I}$, $ws_j = s_kw$ if and only if
    $w(\alpha_j) \in \left\{-\alpha_k, \alpha_k\right\}$.

\end{lemma}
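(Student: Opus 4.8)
The plan is to prove both implications by directly manipulating the action of $w$ on roots, using the fact that a reflection $s_\alpha$ is completely determined by the line $\mathbb{R}\alpha$, together with the conjugation identity $w s_\alpha w^{-1} = s_{w(\alpha)}$ for any root $\alpha$.

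First I would prove the easier direction: assume $w(\alpha_j) \in \{-\alpha_k, \alpha_k\}$. In either case the line spanned by $w(\alpha_j)$ is $\mathbb{R}\alpha_k$, so the reflection $s_{w(\alpha_j)}$ equals $s_k$. By the conjugation formula, $s_{w(\alpha_j)} = w s_j w^{-1}$, and hence $w s_j w^{-1} = s_k$, which rearranges to $w s_j = s_k w$.

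For the converse, assume $w s_j = s_k w$; then conjugating gives $s_k = w s_j w^{-1} = s_{w(\alpha_j)}$. Two simple reflections-type statements: a reflection $s_\beta$ (for a root $\beta$) equals the simple reflection $s_k$ if and only if $\beta$ and $\alpha_k$ span the same line, i.e. $\beta \in \mathbb{R}\alpha_k$. Applying this with $\beta = w(\alpha_j)$, which is a root since $w \in W$, we get $w(\alpha_j) \in \mathbb{R}\alpha_k$; and since a root lying on the line $\mathbb{R}\alpha_k$ must be $\pm\alpha_k$ (there are no other roots proportional to $\alpha_k$ in a crystallographic root system, as $2\alpha_k$ is not a root), we conclude $w(\alpha_j) \in \{\alpha_k, -\alpha_k\}$.

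The only point requiring a little care is the claim that $s_\beta = s_k$ forces $\beta \in \mathbb{R}\alpha_k$: this holds because the $-1$-eigenspace of a reflection determines its reflecting hyperplane, hence the line orthogonal to that hyperplane, which is $\mathbb{R}\beta = \mathbb{R}\alpha_k$. Alternatively, one avoids eigenspace language entirely by noting $s_\beta(\beta) = -\beta$ while $s_k(\beta) = \beta - \langle \beta, \alpha_k^\vee\rangle \alpha_k$, so $s_\beta = s_k$ gives $\langle \beta, \alpha_k^\vee\rangle \alpha_k = 2\beta$, forcing $\beta$ to be a scalar multiple of $\alpha_k$. Since this is a standard Weyl-group fact (cited as \cite[Eqn. 2.60]{Knapp}), I would keep the write-up brief, perhaps just invoking the reference, but the self-contained argument above via $w s_\alpha w^{-1} = s_{w(\alpha)}$ is short enough to include in full.
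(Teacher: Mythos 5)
Your proof is correct, and it is the standard argument via the conjugation identity $w s_\alpha w^{-1} = s_{w(\alpha)}$ together with the crystallographic fact that the only roots proportional to $\alpha_k$ are $\pm\alpha_k$. Note that the paper itself does not prove this lemma — it simply records it as a well-known Weyl-group fact and cites Knapp — so there is no in-paper proof to compare against; your write-up supplies the argument the paper leaves to the reference, and both the conjugation step and the two alternative justifications (via the $-1$-eigenspace, or via the computation forcing $\langle \beta, \alpha_k^\vee\rangle \alpha_k = 2\beta$) are sound.
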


\noindent The following lemma gives a characterization of the elements
$w_0(a, b)$.

\begin{lemma}

    \label{lemma, characterization of w_0(a, b)}

    A Weyl group element $w \in W$ is of the form $w_0(i, j)$, for some
    distinct indices $i, j \in \mathbf{I}$, if and only if there exist $a, b
    \in \mathbf{I}$ with $a \neq b$ such that ${\mathcal D}_L(w) = {\mathcal
    D}_R(w) = \left\{a, b\right\}$ and each of $w s_a$ and $w s_b$ is
    bigrassmannian. In this case, $w = w_0(a, b)$.

\end{lemma}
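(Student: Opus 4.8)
The plan is to prove both implications directly, using the standard combinatorics of parabolic subgroups and the already-established fact that in the dihedral group $\langle s_a, s_b\rangle$ the longest element $w_0(a,b)$ has both descent sets equal to $\{a,b\}$. For the forward direction, suppose $w = w_0(i,j)$ for distinct $i,j$. Since $w_0(i,j)$ is the longest element of the rank-2 parabolic $W_{\{i,j\}} = \langle s_i, s_j\rangle$, it sends every simple root in $\{\alpha_i,\alpha_j\}$ to a negative root, so $\mathcal{D}_L(w) = \mathcal{D}_R(w) = \{i,j\}$; take $(a,b) = (i,j)$. It remains to check that $ws_i$ and $ws_j$ are each bigrassmannian. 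Here $ws_j = w_0(i,j)s_j$ is the second-longest element of the dihedral group $W_{\{i,j\}}$, and in a dihedral group the element of length $\ell(w_0(i,j)) - 1$ has exactly one right descent (namely $i$) and exactly one left descent. The only thing to verify is that there is no \emph{other} simple reflection $s_k$ (with $k \notin \{i,j\}$) that is a left or right descent of $ws_j$; this follows because $ws_j \in W_{\{i,j\}}$, so any reduced expression uses only $s_i$ and $s_j$, hence $\operatorname{supp}(ws_j) \subseteq \{i,j\}$ and $\mathcal{D}_L(ws_j), \mathcal{D}_R(ws_j) \subseteq \{i,j\}$. The same argument applies to $ws_i$.

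For the converse, suppose $\mathcal{D}_L(w) = \mathcal{D}_R(w) = \{a,b\}$ with $a\neq b$, and that both $ws_a$ and $ws_b$ are bigrassmannian. Since $a,b$ are both right descents of $w$, we have $s_a, s_b \leq_R w$, and the subgroup $\langle s_a, s_b\rangle = W_{\{a,b\}}$ has its longest element $w_0(a,b)$ satisfying $w_0(a,b) \leq_R w$ (a standard fact: if all generators of a parabolic subgroup are right descents of $w$, then the longest element of that parabolic is $\leq_R w$). Write $w = w_0(a,b)\cdot u$ with $\ell(w) = \ell(w_0(a,b)) + \ell(u)$. I claim $u$ is the identity. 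If not, $u$ has some right descent $s_c$; then $s_c$ is a right descent of $w$, so $c \in \{a,b\}$, say $c = a$. Then $ws_a = w_0(a,b)\cdot us_a$ reduces the length, and I would argue that $ws_a$ still has $b$ as a right descent (since $s_b \leq_R w_0(a,b) \leq_R ws_a$, using $us_a$ still has reduced form not ending in anything forcing cancellation — more carefully, $w_0(a,b)s_b \leq_R w_0(a,b)$ and one tracks that $b$ remains a right descent after removing the trailing $s_a$), and moreover $ws_a$ picks up a left descent outside $\{a,b\}$ or retains two descents, contradicting bigrassmannian-ness of $ws_a$. The cleanest route: bigrassmannian elements have the property that their support, with the induced Dynkin diagram, together with the descent data, forces $ws_a$ and $ws_b$ to lie in $W_{\{a,b\}}$ after the reduction — so I would instead show $\operatorname{supp}(w) = \{a,b\}$ first (if some $s_c$ with $c\notin\{a,b\}$ appeared in a reduced word for $w$, one can push it to become a descent, contradicting $\mathcal{D}_L(w) = \mathcal{D}_R(w) = \{a,b\}$ via a minimal-length argument), whence $w \in W_{\{a,b\}}$; being an element of $W_{\{a,b\}}$ with full descent set $\{a,b\}$ on both sides forces $w = w_0(a,b)$, since in a dihedral group only the longest element has both descents.

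I expect the main obstacle to be the support argument in the converse: showing that $\mathcal{D}_L(w) = \mathcal{D}_R(w) = \{a,b\}$ (plus the bigrassmannian hypotheses on $ws_a, ws_b$) forces $\operatorname{supp}(w) \subseteq \{a,b\}$. The bigrassmannian hypothesis is doing real work here — it is not true that arbitrary $w$ with two-element left and right descent sets lies in a rank-2 parabolic. I would use the bigrassmannian condition on $ws_a$ and $ws_b$ as follows: $\operatorname{supp}(ws_a)$ is a connected subdiagram, and $\mathcal{D}_R(ws_a) = \{b\}$, $\mathcal{D}_L(ws_a)$ is a singleton; combined with $\operatorname{supp}(w) = \operatorname{supp}(ws_a) \cup \{a\}$ and the parallel statement for $ws_b$, a short case analysis on the Dynkin diagram restricted to $\operatorname{supp}(w)$ pins down $\operatorname{supp}(w) = \{a,b\}$ with $s_a, s_b$ non-commuting. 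Once that is in hand, the dihedral-group endgame is immediate, and the final clause $w = w_0(a,b)$ follows since in $W_{\{a,b\}}$ the longest element is the unique element with $\mathcal{D}_L = \mathcal{D}_R = \{a,b\}$.
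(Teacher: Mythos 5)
Your forward direction is fine and agrees with the paper (which treats it as a simple observation). The genuine gap is in the converse. You set up $w = w_0(a,b)\cdot u$ with lengths adding, correctly note $\mathcal{D}_R(u) \subseteq \{a,b\}$, and aim to rule out $a \in \mathcal{D}_R(u)$ by deriving $w_0(a,b) \leq_R ws_a$ and $s_b \leq_R ws_a$. But you then conclude that $ws_a$ ``still has $b$ as a right descent.'' With the paper's conventions, $s_b \leq_R ws_a$ says a reduced word for $ws_a$ can begin with $s_b$, i.e.\ $b$ is a \emph{left} descent of $ws_a$; you have the sides reversed (the same slip appears when you justify $w_0(a,b) \leq_R w$ from $a,b$ being \emph{right} descents of $w$, whereas $J\subseteq \mathcal{D}_R(w)$ gives $w_0(J) \leq_L w$, not $\leq_R w$ --- harmless here since the hypothesis is two-sided, but symptomatic). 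After this you hedge with ``picks up a left descent outside $\{a,b\}$ or retains two descents,'' which does not name the violated condition. The clean finish is immediate from what you already have: $w_0(a,b) \leq_R ws_a$ forces $\mathcal{D}_L(w_0(a,b)) \subseteq \mathcal{D}_L(ws_a)$, so $a,b \in \mathcal{D}_L(ws_a)$, contradicting that $ws_a$ is bigrassmannian (a unique left descent). Symmetrically for $b\in\mathcal{D}_R(u)$, hence $u$ has no right descents and is the identity. This is exactly the paper's argument.

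The subsequent pivot to a support/Dynkin-diagram case analysis is therefore unnecessary, and more importantly it is not actually carried out: you assert that a ``short case analysis on the Dynkin diagram restricted to $\operatorname{supp}(w)$ pins down $\operatorname{supp}(w) = \{a,b\}$,'' but give no such analysis, and it is not evident that connectivity of $\operatorname{supp}(ws_a)$ together with singleton descent sets confines the support without further work. The paper's proof never invokes support; the length/descent computation above closes the argument directly, and you should too.
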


\begin{proof}

    Suppose $w \in W$ such that ${\mathcal D}_{L}(w) = {\mathcal D}_R(w) =
    \left\{a, b \right\}$ and each of $w s_a$ and $w s_b$ is bigrassmannian.
    Since $a$ and $b$ are the only right descents of $w$, $w_0(a, b) \leq_L w$.
    Similarly, since $a$ and $b$ are the only left descents of $w$, $w_0(a, b)
    \leq_R w$.  For short, define $u := w_0(a, b) w$.  Since $u \leq_L w$, every
    right descent of $u$ is also a right descent of $w$. Thus ${\mathcal
    D}_R(u) \subseteq \left\{a, b \right\}$.  Suppose, to reach a
    contradiction, that $a\in {\mathcal D}_R(u)$. Thus, $\ell(w_0(a, b)) +
    \ell(us_a) = \ell(w_0(a,b)) + \ell(u) - 1 = \ell(w) - 1 = \ell(ws_a)$.  This
    implies $w_0(a, b) \leq_R ws_a$, and thus $a, b \in {\mathcal D}_L(ws_a)$.
    However, this contradicts that $ws_a$ is bigrassmannian.  Thus, $a \not\in
    {\mathcal D}_R(u)$. By swapping the roles of $a$ and $b$, we similarly
    obtain $b \not\in {\mathcal D}_R(u)$. Hence $u$ is the identity element.
    Therefore $w = w_0(a, b)$.

    Now suppose $w = w_0(a, b)$ for some distinct pair of indices $a, b \in
    \mathbf{I}$. It is a simple observation that $a$ and $b$ are the only left
    descents and right descents of $w$, and that $ws_a$ and $ws_b$ are
    bigrassmannian.

\end{proof}

The next theorem gives us the minimal elements of $(\Gamma(W),
\xlongrightarrow{L}$). It says, in effect, $(w, i, j) \in \Gamma(W)$ is a
minimal element if and only if $ws_j$ has only one left descent.

\begin{theorem}

    \label{theorem, left descent reduction}

    Let $(w, i, j) \in \Gamma(W)$. Then the following are equivalent.

    \begin{enumerate}

        \item There exists a reduction $(w, i, j) \xlongrightarrow{L} (v,
            i^\prime, j)$ for some $v\in V$ and $i^\prime \in \mathbf{I}$ such
            that $\ell(v) < \ell(w)$.

        \item The Weyl group element $ws_j$ has more than one left descent.

     \end{enumerate}

\end{theorem}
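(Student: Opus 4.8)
The plan is to deduce the equivalence from the explicit description of elementary reductions, using the corollary to Theorem (\ref{theorem, to elementary reductions}) which identifies the elementary reductions of the form (\ref{definition, elementary reduction, left}) with the covering relations of $(\Gamma(W), \xlongrightarrow{L})$. Two preliminary observations drive the argument. First, since $(w,i,j) \in \Gamma(W)$ we always have $i \in {\mathcal D}_L(ws_j)$, so ``$ws_j$ has more than one left descent'' means exactly that there is some $t \in \mathbf{I}$ with $t \ne i$ and $t \in {\mathcal D}_L(ws_j)$. Second, I would invoke the standard Coxeter-group fact that, for a pair of distinct indices $i, t$ (so that $W_{\{i,t\}}$ is finite, being a rank-$2$ parabolic of a Weyl group and hence dihedral), one has $w_0(i,t) \le_R y$ if and only if $\{i,t\} \subseteq {\mathcal D}_L(y)$; the nontrivial implication follows from the parabolic factorization $y = y_{\{i,t\}} \cdot {}^{\{i,t\}}y$ with lengths adding, since then $y_{\{i,t\}} \in W_{\{i,t\}}$ has both $i$ and $t$ as left descents and is therefore the longest element $w_0(i,t)$.

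For $(2) \Rightarrow (1)$, assume $ws_j$ has a left descent $t \ne i$. Then $\{i,t\} \subseteq {\mathcal D}_L(ws_j)$, so $w_0(i,t) \le_R ws_j$, and the elementary reduction (\ref{definition, elementary reduction, left}) produces $(w,i,j) \xlongrightarrow{L} (v, i^\prime, j)$ with $v = w_0(i,t)\cdot s_i \cdot w$. By definition of $\xlongrightarrow{L}$ we have $v \le_L w$, and $v \ne w$ because $v = w$ would force $w_0(i,t) = s_i$, impossible since $\ell(w_0(i,t)) \ge 2$. Hence $\ell(v) < \ell(w)$, which is (1).

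For $(1) \Rightarrow (2)$, suppose $(w,i,j) \xlongrightarrow{L} (v, i^\prime, j)$ with $\ell(v) < \ell(w)$, so in particular $v \ne w$. By Theorem (\ref{theorem, to elementary reductions}) this reduction is either itself an elementary reduction or a composition of elementary reductions of the form (\ref{definition, elementary reduction, left}); in either case at least one such elementary reduction emanates from $(w,i,j)$, and the defining condition of (\ref{definition, elementary reduction, left}) then supplies an index $t \ne i$ with $w_0(i,t) \le_R ws_j$. By the easy direction of the Coxeter fact, $i$ and $t$ are distinct left descents of $ws_j$, so $ws_j$ has more than one left descent, which is (2).

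The points that require care, rather than a genuine obstacle, are: making the standard lemma $w_0(i,t) \le_R y \iff \{i,t\} \subseteq {\mathcal D}_L(y)$ precise (one may alternatively just cite it); correctly peeling off the first elementary reduction in the $(1) \Rightarrow (2)$ direction, including the degenerate case where the given reduction is already elementary and the observation that $v \ne w$ forces the composition in (\ref{sequence of elementary reductions, left}) to be nontrivial; and checking that every elementary reduction is strictly length-decreasing, which is immediate from $w_0(i,t) \ne s_i$. No computation is involved; the entire content is the bookkeeping linking ``more than one left descent'' to ``an elementary reduction exists'' to ``not a minimal element of $(\Gamma(W), \xlongrightarrow{L})$.''
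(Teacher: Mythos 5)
Your proof is correct, and it takes a genuinely different route from the paper. The paper's proof of both directions is by direct length computation inside $\Gamma(W)$: for $(1)\Rightarrow(2)$ it factors $w = uv$ with $\ell(u)+\ell(v)=\ell(w)$, shows $u \le_R ws_j$, observes that $i \notin {\mathcal D}_L(u)$ (via $u^{-1}(\alpha_i)=\alpha_{i'}>0$), and then uses $\ell(u)>0$ to extract a second left descent of $ws_j$; for $(2)\Rightarrow(1)$ it rebuilds the element $v=s_{i'}w_0(i,t)w$ from scratch and verifies all five conditions ($j\in{\mathcal D}_R(v)$, $i'\in{\mathcal D}_L(vs_j)$, $v\le_L w$, $\ell(v)<\ell(w)$, $v^{-1}(\alpha_{i'})=w^{-1}(\alpha_i)$) by a chain of length identities. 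Your argument instead delegates that bookkeeping to the previously-established elementary-reduction Proposition and to Theorem~(\ref{theorem, to elementary reductions}): for $(2)\Rightarrow(1)$ you invoke the Proposition directly and only need the small observation $w_0(i,t)\ne s_i$ to get strict length decrease; for $(1)\Rightarrow(2)$ you peel off the first elementary reduction in the decomposition guaranteed by that theorem, and read off the index $t$ from the defining condition $w_0(i,t)\le_R ws_j$. Both proofs share the key Coxeter-theoretic input $\{i,t\}\subseteq{\mathcal D}_L(y) \iff w_0(i,t)\le_R y$ (the paper also uses the nontrivial direction of this at the start of its $(2)\Rightarrow(1)$, though without comment). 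What the paper's approach buys is self-containedness within the theorem (no reliance on the covering-relation machinery); what your approach buys is that it makes the logical dependence explicit and factors the hard computation into exactly one place, namely the Proposition in Section~3.3, rather than essentially redoing it. One small point worth being explicit about in a clean write-up: Theorem~(\ref{theorem, to elementary reductions}) covers $x\xlongrightarrow{L}y$ with $x\ne y$; you handle the degenerate $x=y$ case correctly by observing $\ell(v)<\ell(w)$ forces $v\ne w$, so the composition in~(\ref{sequence of elementary reductions, left}) is nonempty and its first step is the elementary reduction you need.
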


\begin{proof}

    We begin by showing statement (1) implies statement (2). Suppose $(w, i,
    j)$ and $(v, i^\prime, j)$ are as in statement (1). Since $v \leq_L w$,
    there exists $u\in W$ so that $uv = w$ and $\ell(u) + \ell(v) = \ell(w)$.
    Since $j$ is a right descent of $w$ and $v$, we get $\ell(u) + \ell(vs_j) =
    \ell(u) + \ell(v) - 1 = \ell(w) - 1 = \ell(w s_j)$. Since also $ws_j =
    uvs_j$, then $u \leq_R ws_j$.  Since $w^{-1}(\alpha_i) =
    v^{-1}(\alpha_{i^\prime})$, we obtain $u^{-1}(\alpha_i) = v w^{-1}
    (\alpha_i) = \alpha_{i^\prime} > 0$. Hence, $i$ is not a right descent of
    $u^{-1}$. Equivalently, $i$ is not a left descent of $u$. However, since $u
    \leq_R ws_j$, every left descent of $u$ is also a left descent of $ws_j$.
    Since $\ell(u) = \ell(w) - \ell(v) > 0$, $u$ must have some left descent
    $t$.  Hence $t,i \in {\mathcal D}_L(ws_j)$.

    Next we show that statement (2) implies statement (1). Since $ws_j$ has
    more than one left descent, there exists some $t \in \mathbf{I}$ with $t
    \neq i$ such that $w_0(i, t) \leq_R ws_j$ (recall the definition of $w_0(i,
    t)$ in (\ref{definition, s_ab})). For short, let $u = w_0(i, t)$.  Put
    $i^\prime = t$ wherever $s_i s_t$ has order $3$, or put $i^\prime = i$
    otherwise.  Define the Weyl group element $v := s_{i^\prime} u w$.  We will
    prove that there is a reduction $(w, i, j) \xlongrightarrow{L} (v,
    i^\prime, j)$ and that $\ell(v) < \ell(w)$.  In other words, we will show
    that $j \in {\mathcal D}_R(v)$, $i^\prime \in {\mathcal D}_L(vs_j)$, $v
    \leq_L w$, $\ell(v) < \ell(w)$, and $v^{-1}(\alpha_{i^\prime}) =
    w^{-1}(\alpha_i)$.

    We will first prove that $i^\prime \in {\mathcal D}_L(vs_j)$.  Since $u
    \leq_R ws_j$, $\ell(u) + \ell(u^{-1}ws_j) = \ell(ws_j)$.  However, $u =
    u^{-1}$. Hence, $\ell(uws_j) + \ell(u) = \ell(ws_j)$.  Also, since
    $i^\prime \in {\mathcal D}_R(u)$, we have $\ell(us_{i^\prime}) = \ell(u) -
    1$.  With these observations at hand, we prove now that $i^\prime \in
    {\mathcal D}_L(vs_j)$. This is equivalent to proving $i^\prime \not\in
    {\mathcal D}_L(uws_j)$. To reach a contradiction, suppose $i^\prime$ is a
    left descent of $uws_j$.  Hence, we obtain
    \[
        \ell(us_{i^\prime}) + \ell(vs_j) = \left(\ell(u) - 1 \right) +
        \left(\ell(uws_j) - 1\right) = \ell(ws_j) - 2 < \ell(ws_j) =
        \ell(us_{i^\prime}vs_j).
    \]
    That is to say, we conclude that the sum of the lengths of the two Weyl
    group elements, $us_{i^\prime}$ and $vs_j$, is strictly less than the
    length of their product $us_{i^\prime}vs_j$, which is absurd.

    Next, we prove $v \leq_L w$ and $\ell(v) < \ell(w)$.  We remark first that
    one can also prove $i^\prime \in {\mathcal D}_L(v)$ by replacing $vs_j$
    with $v$, and replacing $ws_j$ with $w$, everywhere in the proof above that
    $i^\prime \in {\mathcal D}_L(vs_j)$. Using $i^\prime \in {\mathcal D}_L(v)$
    in conjunction with $uw = s_{i^\prime} v$ and $u \leq_R w$ gives us
    $\ell(v) - 1 = \ell(s_{i^\prime} v) = \ell(uw) = \ell(w) - \ell(u)$.  Since
    $w = us_{i^\prime}v$ and
    \[
        \ell(w) = \ell(u) + \ell(v) - 1 = \ell(us_{i^\prime}) + \ell(v),
    \]
    it follows that $v \leq_L w$. Furthermore, we have a length formula for
    $v$, namely $\ell(v) = \ell(w) - \ell(u) + 1$. However, $\ell(u) > 1$.
    Hence, $\ell(v) < \ell(w)$.

    Next, we prove $j \in {\mathcal D}_R(v)$. Observe again $\ell(v) = \ell(w)
    - \ell(u) + 1$.  Using $vs_j = s_{i^\prime} uws_j$, $i^\prime \not\in
    {\mathcal D}_L(uws_j)$,  $u \leq_L ws_j$, and $j \in {\mathcal D}_R(w)$
    gives us
    \[
        \ell(vs_j) = \ell(s_{i^\prime}uws_j) = \ell(uws_j) + 1 = \ell(ws_j) -
        \ell(u) + 1 = \ell(w) - \ell(u).
    \]
    Hence $\ell(vs_j) = \ell(v) - 1$. Equivalently, $j \in {\mathcal D}_R(v)$.

    Lastly, we prove $w^{-1}(\alpha_i) = v^{-1}(\alpha_{i^\prime})$.  Since
    $(us_{i^\prime}) s_{i^\prime} = s_i (u s_{i^\prime})$, then $us_{i^\prime}
    (\alpha_{i^\prime})$ is equal to either $\alpha_i$ or $-\alpha_i$ by Lemma
    (\ref{wj=kw}).  However, $i^\prime$ is not a right descent of $u
    s_{i^\prime}$, and thus $u s_{i^\prime} (\alpha_{i^\prime})$ is a positive
    root. Hence, $us_{i^\prime} (\alpha_{i^\prime}) = \alpha_i$ and we conclude
    that $v^{-1} (\alpha_{i^\prime}) = w^{-1} u s_{i^\prime}
    (\alpha_{i^\prime}) = w^{-1} (\alpha_i)$.

\end{proof}

\noindent Invoking Proposition (\ref{L, R, equivalent, inverse}) gives us an
analogous result.

\begin{theorem}

    \label{theorem, right descent reduction, 1}

    Let $(w, i, j) \in \Gamma(W)$. Then the following are equivalent.

    \begin{enumerate}

        \item There exists a reduction $(w, i, j) \xlongrightarrow{R} (v,
            i, j^\prime)$ for some $v\in V$ and $j^\prime \in \mathbf{I}$ such
            that $\ell(v) < \ell(w)$.

        \item The Weyl group element $s_iw$ has more than one right descent.

     \end{enumerate}

\end{theorem}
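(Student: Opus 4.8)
The plan is to deduce this theorem from Theorem \ref{theorem, left descent reduction} via the duality involution $x \mapsto x^*$ on $\Gamma(W)$, as the remark preceding the statement already hints. First I would record the elementary Weyl-group fact that ${\mathcal D}_L(x) = {\mathcal D}_R(x^{-1})$ for every $x \in W$: since $\ell(s_k x) = \ell((s_k x)^{-1}) = \ell(x^{-1}s_k)$ and $\ell(x) = \ell(x^{-1})$, the index $k$ is a left descent of $x$ precisely when it is a right descent of $x^{-1}$. Applying this with $x = w^{-1}s_i$ gives ${\mathcal D}_L(w^{-1}s_i) = {\mathcal D}_R((w^{-1}s_i)^{-1}) = {\mathcal D}_R(s_i w)$, so condition (2) of the present theorem---that $s_i w$ has more than one right descent---is the same statement as ``$w^{-1}s_i$ has more than one left descent.''

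Next I would translate condition (1) through the involution. Since $(w, i, j) \in \Gamma(W)$, its dual $(w, i, j)^* = (w^{-1}, j, i)$ also lies in $\Gamma(W)$. Setting $x^* = (w,i,j)$ and $y^* = (v, i, j^\prime)$ in Proposition \ref{L, R, equivalent, inverse}(1), we get that a reduction $(w, i, j) \xlongrightarrow{R} (v, i, j^\prime)$ exists if and only if $(w^{-1}, j, i) \xlongrightarrow{L} (v^{-1}, j^\prime, i)$ exists; moreover $\ell(v) < \ell(w)$ if and only if $\ell(v^{-1}) < \ell(w^{-1})$. Hence condition (1) for $(w, i, j)$ is equivalent to the existence of a reduction $(w^{-1}, j, i) \xlongrightarrow{L} (v^{-1}, j^\prime, i)$ with $\ell(v^{-1}) < \ell(w^{-1})$, which is exactly condition (1) of Theorem \ref{theorem, left descent reduction} applied to the element $(w^{-1}, j, i) \in \Gamma(W)$.

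Finally I would invoke Theorem \ref{theorem, left descent reduction} for $(w^{-1}, j, i)$: its condition (1) is equivalent to its condition (2), namely that $w^{-1}s_i$ has more than one left descent. Chaining the three equivalences---(1) here $\Leftrightarrow$ condition (1) of Theorem \ref{theorem, left descent reduction} for $(w^{-1}, j, i)$ $\Leftrightarrow$ ``$w^{-1}s_i$ has $>1$ left descent'' $\Leftrightarrow$ (2) here---yields the result. I do not anticipate a genuine obstacle; the only point that needs minor care is tracking how the two simple reflections are interchanged by $*$ (the reflection $s_j$ on the right of $w$ becomes $s_j$ on the left of $w^{-1}$, which is why the relevant element whose left descents appear in Theorem \ref{theorem, left descent reduction} is $w^{-1}s_i$ rather than $s_j w^{-1}$), so that the descent condition produced matches condition (2) of the present theorem verbatim.
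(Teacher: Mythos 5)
Your proposal is correct and takes the same route the paper intends: the paper simply says the result follows by ``invoking Proposition (\ref{L, R, equivalent, inverse})'' on Theorem (\ref{theorem, left descent reduction}), and you have filled in exactly the bookkeeping — applying that theorem to $(w^{-1},j,i)$, using $\xlongrightarrow{L} \leftrightarrow \xlongrightarrow{R}$ under $*$, and identifying ${\mathcal D}_L(w^{-1}s_i)$ with ${\mathcal D}_R(s_i w)$ — that makes this implicit. All steps check out.
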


\subsection{Proof of Theorem (\ref{reduce by L and R})}

Theorems (\ref{theorem, left descent reduction}) and (\ref{theorem, right
descent reduction, 1}) together imply that, starting with an arbitrary element
$(w, i, j) \in \Gamma(W)$, reductions can repeatedly be applied to obtain a
sequence of reductions
\begin{equation}
    \label{sequence of reductions, 1}
    (w, i, j) \xlongrightarrow{L} \cdots \xlongrightarrow{L} (v, i^\prime, j)
    \xlongrightarrow{R} \cdots \xlongrightarrow{R} (u, i^\prime, j^\prime)
\end{equation}
such that $vs_j$ has only one left descent and $s_{i^\prime}u$ has only one
right descent.  Such a Weyl group element $v$ can have at most two left
descents. Similarly, $u$ has at most two right descents. We need the following
lemma, which focuses on the case when $u$ has two right descents.

\begin{lemma}

    \label{reduction, two right descents}

    Suppose
    \[
        (v, i, j) \xlongrightarrow{R} \cdots \xlongrightarrow{R} (u, i, m),
    \]
    where $vs_j$ has only one left descent and $s_i u$ has only one right
    descent. Suppose also $u$ has (exactly) two right descents. Then there
    exists $p \in \mathbf{I}$ with $p \neq i$ such that

    \begin{enumerate}

        \item ${\mathcal D}_L(v) = \left\{i, p\right\}$, and

        \item $u = w_0(i, p)$.

    \end{enumerate}

    In this setting, $m = i$ if the order of $s_i s_p$ is $3$, and $m = p$
    otherwise.

\end{lemma}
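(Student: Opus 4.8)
The plan is to transport the left‑descent data along the chain down to $u$, extract the full descent structure of $u$ from the two ``one‑descent'' hypotheses, identify $u$ as some $w_0(i,p)$ via Lemma~(\ref{lemma, characterization of w_0(a, b)}), and finally read off $\mathcal D_L(v)$ and the value of $m$.

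First I would record the elementary consequences. Since $(u,i,m)\in\Gamma(W)$ and a simple reflection changes length by exactly $1$, the relation $\ell(s_iu s_m)=\ell(u)-2$ gives $\ell(s_iu)=\ell(u)-1$, so $i\in\mathcal D_L(u)$, $m\in\mathcal D_R(u)$, and $i\in\mathcal D_L(us_m)$; moreover $\ell(s_iu s_m)=\ell(s_iu)-1$ shows $m\in\mathcal D_R(s_iu)$, and since $s_iu$ has a single right descent, $\mathcal D_R(s_iu)=\{m\}$. Writing $\mathcal D_R(u)=\{m,d\}$, this forces $\ell(s_iu s_d)=\ell(u)$, hence $i\notin\mathcal D_L(us_d)$. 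Next, the chain $(v,i,j)\xlongrightarrow{R}\cdots\xlongrightarrow{R}(u,i,m)$ gives (composing the data in the proof of Theorem~(\ref{nilpotency indices, preserved})) $u\le_R v$ and $v(\alpha_j)=u(\alpha_m)$, so $u^{-1}v(\alpha_j)=\alpha_m$, whence $(u^{-1}v)s_j=s_m(u^{-1}v)$ and therefore the factorization $vs_j=(us_m)(u^{-1}v)$; comparing lengths ($\ell(vs_j)=\ell(v)-1=(\ell(u)-1)+(\ell(v)-\ell(u))$) shows this is length‑additive, so $us_m\le_R vs_j$ and hence $\mathcal D_L(us_m)\subseteq\mathcal D_L(vs_j)$. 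As $vs_j$ has only one left descent and $i$ lies in both sets, we conclude $\mathcal D_L(us_m)=\{i\}$. Finally $u\le_R v$ gives $\mathcal D_L(u)\subseteq\mathcal D_L(v)$, which will be used at the end.

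The technical heart is to prove $\mathcal D_L(u)=\mathcal D_R(u)$. I would use the elementary fact that for $k\in\mathcal D_L(u)$, $l\in\mathcal D_R(u)$ one has $\ell(s_kus_l)=\ell(u)-2$ unless $u(\alpha_l)=-\alpha_k$ (equivalently $s_ku=us_l$), in which case $\ell(s_kus_l)=\ell(u)$. Taking $l=m$: any left descent $e\ne i$ of $u$ satisfies $e\notin\mathcal D_L(us_m)=\{i\}$, forcing $s_eu=us_m$, which determines $e$ uniquely; so $\mathcal D_L(u)\subseteq\{i,e\}$ with $u(\alpha_m)=-\alpha_e$. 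If $\ell(u)=2$ then $u$ is a product of two commuting reflections, hence already $w_0(a,b)$, so I may assume $\ell(u)\ge3$; then $\mathcal D_L(us_d)$ is nonempty, avoids $i$, and sits in $\mathcal D_L(u)$, so $\mathcal D_L(u)=\{i,e\}$, $\mathcal D_L(us_d)=\{e\}$, and the fact applied to $(i,d)$ gives $u(\alpha_d)=-\alpha_i$. Suppose for contradiction $\{m,d\}\ne\{i,e\}$, say $m\notin\{i,e\}$. From $i,e\in\mathcal D_L(u)$ we get $w_0(i,e)\le_R u$, and $s_is_e$ must commute: otherwise $s_is_ew_0(i,e)$ is a nontrivial dihedral element with left descent $i$, and the length‑additive factorization $s_is_eu=(s_is_ew_0(i,e))z$ would force $i\in\mathcal D_L(s_is_eu)$, contradicting $s_i(s_is_eu)=s_eu>s_is_eu$. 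With $s_is_e$ commuting, $v(\alpha_j)=u(\alpha_m)=-\alpha_e$ gives $vs_j=s_ev$ and $i,e\in\mathcal D_L(v)$, so $v=s_is_e z''$ length‑additively with $i,e\notin\mathcal D_L(z'')$ and $z''$ not the identity, whence $s_ev=s_iz''$ with $\mathcal D_L(s_iz'')=\mathcal D_L(vs_j)=\{i\}$; a case analysis (according to whether a left descent of $z''$ commutes with $s_i$) together with $z''\ne\mathrm{id}$ yields a left descent of $s_iz''$ other than $i$, a contradiction. Hence $\mathcal D_L(u)=\mathcal D_R(u)=\{i,p\}$ for some $p\ne i$.

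To conclude, I would check $us_i$ and $us_p$ are bigrassmannian: $us_m$ has one left descent by Step~1, and combining the length fact with $\mathcal D_L(u)=\mathcal D_R(u)=\{i,p\}$ shows $us_i$ and $us_p$ each have a single left and a single right descent, so Lemma~(\ref{lemma, characterization of w_0(a, b)}) gives $u=w_0(i,p)$. For $\mathcal D_L(v)=\{i,p\}$: we already have $\{i,p\}\subseteq\mathcal D_L(v)$, and conversely any $k\in\mathcal D_L(v)$ with $s_kv\ne vs_j$ satisfies (by the length fact applied to $(k,j)$) $k\in\mathcal D_L(vs_j)=\{i\}$, while the only index $k$ with $s_kv=vs_j$ is the one with $v(\alpha_j)=-\alpha_k$, and $v(\alpha_j)=w_0(i,p)(\alpha_m)\in\{-\alpha_i,-\alpha_p\}$; hence $\mathcal D_L(v)=\{i,p\}$. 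Lastly $m\in\mathcal D_R(w_0(i,p))=\{i,p\}$ with $i\in\mathcal D_L(w_0(i,p)s_m)$, and a direct computation in the dihedral group $\langle s_i,s_p\rangle$ shows $\mathcal D_L(w_0(i,p)s_i)=\{i\}$ precisely when $\mathrm{ord}(s_is_p)=3$ and $\mathcal D_L(w_0(i,p)s_p)=\{i\}$ precisely when $\mathrm{ord}(s_is_p)\ne3$, forcing $m=i$ in the former case and $m=p$ in the latter. The main obstacle is the third paragraph, namely ruling out $\mathcal D_L(u)\ne\mathcal D_R(u)$: this requires careful bookkeeping of how left descents survive the length‑additive factorizations, and it is exactly here that the hypothesis ``$vs_j$ has only one left descent'' is used in an essential way (the assertion is false for $u$ alone, as the example $u=[3,2,4,1]\in W(A_3)$ with $i=2$, $m=3$ illustrates).
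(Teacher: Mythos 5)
The proposal takes a genuinely different route from the paper: instead of the paper's direct length argument (showing $u\cdot w_0(m,k)=yu$, $w_0(i,p)u=uz$, deducing $\mathrm{ord}(s_is_p)=\mathrm{ord}(s_ms_k)$ from the inequality $\ell(y)\geq\ell(w_0(m,k))$, and then concluding that $w_0(i,p)u$ has no left descents), you aim to establish $\mathcal D_L(u)=\mathcal D_R(u)$ and then invoke Lemma~(\ref{lemma, characterization of w_0(a, b)}). That strategy is sensible, and your second paragraph (deriving $\mathcal D_L(us_m)=\{i\}$ from the length-additive factorization $vs_j=(us_m)(u^{-1}v)$) is correct and is in fact the same key use of the ``$vs_j$ has one left descent'' hypothesis that the paper makes.

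However, the third paragraph has a genuine gap. You claim that when $s_i$ and $s_e$ do not commute, ``$s_is_ew_0(i,e)$ is a nontrivial dihedral element with left descent $i$.'' This is false: writing $w_0(i,e)=s_es_is_e\cdots$ ($n$ factors), one gets $s_ew_0(i,e)=s_is_es_i\cdots$ ($n-1$ factors), and hence $s_is_ew_0(i,e)=s_es_is_e\cdots$ ($n-2$ factors), whose unique left descent is $e$, not $i$. (Concretely, for $n=3$ the element is $s_e$; for $n=4$ it is $s_es_i$.) The length-additive factorization $s_is_eu=(s_is_ew_0(i,e))z$ therefore yields $e\in\mathcal D_L(s_is_eu)$, which is consistent with $s_i(s_is_eu)=s_eu>s_is_eu$ and produces no contradiction. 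So ``$s_is_e$ must commute'' is not established, and everything after it in that paragraph rests on an unproved assumption. Separately, even granting commutativity, the final ``case analysis ... yields a left descent of $s_iz''$ other than $i$'' is not carried out; when the left descent $k$ of $z''$ does not commute with $s_i$, the conclusion is not automatic (e.g.\ $z''=s_k$ gives $\mathcal D_L(s_iz'')=\{i\}$), so this too needs a real argument that exploits the chain hypotheses. You would do better to proceed as the paper does: from the two relations $s_eu=us_m$ and $s_iu=us_d$ that you correctly derived, together with $w_0(m,d)\leq_L u$ and $w_0(i,e)\leq_R u$, the length inequalities $\ell(u)\leq\ell(y)+\ell(u\cdot w_0(m,d))$ and $\ell(u)\leq\ell(uz)+\ell(z)$ give $\mathrm{ord}(s_ms_d)=\mathrm{ord}(s_is_e)$ directly, after which $w_0(i,e)u$ is the minimal coset representative for $\langle s_i,s_e\rangle u$ and hence has no left descent, forcing $u=w_0(i,e)$.
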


\begin{proof}

    Let ${\mathcal D}_R(u) = \left\{m, k \right\}$.  Hence, a braid relation
    involving the simple reflections $s_{m}$ and $s_k$ can be performed in the
    rightmost positions of some reduced expression of $u$. Therefore $w_0(m, k)
    \leq_L u$. Since $m$ is the only right descent of $s_i u$, $s_i u = u s_k$
    and thus $i$ is a left descent of $u$.

    Since $us_k$ is not the identity element, $us_k$ must have at least one
    left descent, say $p \in \mathbf{I}$, with $p \neq i$. Thus $p$ is also a
    left descent of $u$.  However, since $u \leq_R v$, every left descent of
    $u$ is also a left descent of $v$.  Since $v$ has at most two left
    descents, $\left\{ i, p \right\} = {\mathcal D}_L(u) = {\mathcal D}_L(v)$.

    Since $vs_j$ has only one left descent, namely $i$, then $vs_j(\alpha_j) =
    \alpha_p$. Therefore, by Lemma (\ref{wj=kw}), $v s_j = s_p v$.  From the
    sequence of reductions
    \[
        (v, i, j) \xlongrightarrow{R} \cdots \xlongrightarrow{R} (u, i, m),
    \]
    it follows that $v(\alpha_j) = u(\alpha_{m})$.  Therefore, $v^{-1} u s_{m}
    = s_j v^{-1} u$. Thus,
    \[
        s_p u = s_p v \cdot v^{-1} u = v s_j \cdot v^{-1} u = v \cdot v^{-1} u
        s_{m} = u s_{m}.
    \]

    Next, let $y$ be the Weyl group element $s_p s_i$, $s_p s_i s_p$, $s_p s_i
    s_p s_i$, or $s_p s_i s_p s_i s_p s_i$ whenever the order of $s_{m} s_k$ is
    $2$, $3$, $4$, or $6$, respectively.  Analogously, let $z$ be $s_k s_{m}$,
    $s_{m} s_k s_{m}$, $s_k s_{m} s_k s_{m}$, or $s_k s_{m} s_k s_{m} s_k
    s_{m}$ when $s_i s_p$ has order $2$, $3$, $4$, or $6$, respectively.  From
    the identities $u s_{m} = s_p u$ and $u s_k = s_i u$, we obtain $u \cdot
    w_0(m, k) = y u$ and $w_0(i, p) u = u z$.  Since $w_0(m, k) \leq_L u$, then
    by the definition of the weak Bruhat order, $\ell(u \cdot w_0(m, k)) =
    \ell(u) - \ell(w_0(m, k))$. Hence,
    \[
        \ell (u) \leq \ell(y^{-1}) + \ell(yu) = \ell(y) + \ell(u \cdot w_0(m,
        k)) = \ell(u) + \ell(y) - \ell(w_0(m, k)).
    \]
    Therefore $\ell(y) \geq \ell(w_0(m, k))$ and thus the order of $s_{m} s_k$
    does not exceed the order of $s_i s_p$.  Similarly,
    \[
        \ell (u) \leq \ell(uz) + \ell(z^{-1}) = \ell(w_0(i, p) u) + \ell(z) =
        \ell(u) + \ell(z) - \ell(w_0(i, p))
    \]
    and we deduce that the order of $s_i s_p$ does not exceed the order of
    $s_{m} s_k$.  Therefore, $s_i s_p$ and $s_{m} s_k$ have the same order.
    Hence $y = w_0(i, p)$ and $z = w_0(m, k)$.

    Since $u \cdot w_0(m, k) \leq_R u$, every left descent of $u \cdot w_0(m,
    k)$ is a left descent of $u$. Thus, ${\mathcal D}_L\left( u \cdot w_0(m, k)
    \right) \subseteq \left\{ i, p \right\}$. However, since $u \cdot w_0(m, k)
    = w_0(i, p) u$, then neither $i$ nor $p$ are left descents of $u \cdot
    w_0(m, k)$.  Hence, $w_0(i, p) u$ ($= u \cdot w_0(m, k)$) lacks a left
    descent and thus $w_0(i, p) u$ is the identity element.  Equivalently, $u =
    w_0(i, p)$.

\end{proof}

The next lemma can be applied to handle the case when the Weyl group element
$v$ appearing in the chain of reductions (\ref{sequence of reductions, 1}) has
only one left descent. In this situation, there exists a sequence of reductions
\[
    (v, i^\prime, j) \xlongrightarrow{R} \cdots \xlongrightarrow{R} (u,
    i^\prime, j^\prime)
\]
with $u$ bigrassmannian.

\begin{lemma}

    \label{lemma, right descent reduction, 2}

    Let $(w, i, j) \in \Gamma(W)$, and suppose $i$ is the only left descent of
    $w$. Then the following are equivalent.

    \begin{enumerate}

        \item There exists a reduction $(w, i, j) \xlongrightarrow{R} (v, i,
            j^\prime)$ for some $v \in W$ and $j^\prime \in \mathbf{I}$ such
            that $\ell(v) < \ell(w)$ and $\left\{i \right\} = {\mathcal
            D}_L(v)$.

        \item The Weyl group element $w$ has more than one right descent.

    \end{enumerate}

\end{lemma}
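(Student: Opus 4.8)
The plan is to prove the two implications separately; almost all of the work lies in $(2)\Rightarrow(1)$.

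For $(1)\Rightarrow(2)$, suppose there is a reduction $(w, i, j)\xlongrightarrow{R}(v, i, j^\prime)$ with $\ell(v) < \ell(w)$. By Theorem (\ref{theorem, to elementary reductions}) this reduction is a composition of elementary reductions (with at least one term, since $v \neq w$), and its first term is an elementary reduction out of $(w, i, j)$; by the description (\ref{definition, elementary reduction, right}) of elementary $\xlongrightarrow{R}$-reductions, there is then some $t \in \mathbf{I}$ with $t \neq j$ and $w_0(j, t) \leq_L s_i w$. Since $i$ is a left descent of $w$ we have $s_i w \leq_L w$, hence $w_0(j, t) \leq_L w$ by transitivity, and therefore $\{j, t\} = {\mathcal D}_R(w_0(j, t)) \subseteq {\mathcal D}_R(w)$. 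As $t \neq j$, $w$ has more than one right descent. (This direction uses only that $i$ is a left descent of $w$, not its minimality, and not the condition $\{i\} = {\mathcal D}_L(v)$.)

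For $(2)\Rightarrow(1)$, choose a second right descent $k \neq j$ of $w$. Then $w_0(j, k) \leq_L w$, so we may write $w = w^\prime \cdot w_0(j, k)$ with $\ell(w) = \ell(w^\prime) + \ell(w_0(j, k))$; this forces $w^\prime \leq_R w$, hence ${\mathcal D}_L(w^\prime) \subseteq {\mathcal D}_L(w) = \{i\}$, and $w^\prime \neq e$ (otherwise $w = w_0(j, k)$ would have two left descents), so $i \in {\mathcal D}_L(w^\prime)$. Consequently $s_i w = (s_i w^\prime) \cdot w_0(j, k)$ is again length-additive, so $w_0(j, k) \leq_L s_i w$. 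Now apply the elementary reduction (\ref{definition, elementary reduction, right}) with $t = k$: this yields a reduction $(w, i, j) \xlongrightarrow{R} (v, i, j^\prime)$ with $v := w \cdot s_j \cdot w_0(j, k)$ and, by the definition of $\xlongrightarrow{R}$, $v \leq_R w$. Finally, $v \neq w$ because $s_j w_0(j, k) \neq e$ (as $\ell(w_0(j, k)) \geq 2$); together with $v \leq_R w$ this gives $\ell(v) < \ell(w)$. And $v \leq_R w$ gives ${\mathcal D}_L(v) \subseteq {\mathcal D}_L(w) = \{i\}$, while $(v, i, j^\prime) \in \Gamma(W)$ forces $\ell(v) \geq 2$, so $v \neq e$ and ${\mathcal D}_L(v) \neq \emptyset$; hence ${\mathcal D}_L(v) = \{i\}$, as required.

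The step I expect to be the main obstacle is the upgrade from $w_0(j, k) \leq_L w$ to $w_0(j, k) \leq_L s_i w$ inside $(2)\Rightarrow(1)$: the factorization $w = w^\prime \cdot w_0(j, k)$, together with the observation that $i$ is necessarily a left descent of the prefix $w^\prime$, is precisely what makes this go through. Once that is established, invoking (\ref{definition, elementary reduction, right}) is immediate, and checking the two side conditions on $v$ ($\ell(v) < \ell(w)$ and ${\mathcal D}_L(v) = \{i\}$) is routine length bookkeeping, using only that reductions stay inside $\Gamma(W)$ and that $v \leq_R w$ transfers left descents from $v$ to $w$.
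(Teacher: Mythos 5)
Your proof is correct and the constructed element $v = w\cdot s_j\cdot w_0(j,k)$ is in fact the same element the paper builds (the paper writes $v = wu^{-1}s_{j'}$ with $u = w_0(j,t)$, and the conjugation identity $s_j w_0(j,t) = w_0(j,t)s_{j'}$ shows these agree), but the two arguments are organized differently. You lean heavily on the elementary-reduction machinery: for $(1)\Rightarrow(2)$ you invoke Theorem~\ref{theorem, to elementary reductions} to decompose the given $\xlongrightarrow{R}$ and read the condition $w_0(j,t)\leq_L s_iw$ off the first elementary step, whereas the paper argues directly from the definition of $\leq_R$ — set $u=v^{-1}w$, observe $u(\alpha_j)>0$, take any right descent $t\neq j$ of $u$, and push it up to $w$ — which is more elementary and doesn't depend on the covering-relation theorem at all. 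For $(2)\Rightarrow(1)$, you cite the elementary-reduction proposition to get the reduction and the membership $(v,i,j')\in\Gamma(W)$ for free, which shortens the verification; but this requires establishing $w_0(j,k)\leq_L s_iw$ rather than merely $w_0(j,k)\leq_L w$, and your factorization $w=w'\cdot w_0(j,k)$ with the observation that $i$ must be a left descent of $w'$ correctly supplies that upgrade — this is a genuine extra step that the paper avoids by constructing $v$ and verifying $j'\in{\mathcal D}_R(v)$, $v\leq_R w$, $i\in{\mathcal D}_L(vs_{j'})$, etc.\ from scratch. In short, the paper's proof is self-contained; yours is shorter by outsourcing to earlier results, at the cost of one additional weak-order computation and a dependence on the covering-relation theorem in the easy direction.
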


\begin{proof}

    We begin by showing that statement (1) implies statement (2). Suppose $(w,
    i, j)$ and $(v, i, j^\prime)$ satisfy the conditions in statement (1).  In
    particular, we have $v \leq_R w$, and $v(\alpha_{j^\prime}) = w(\alpha_j)$,
    and $j \in {\mathcal D}_R(w)$.  Since $v \leq _R w$, there exists $u \in W$
    such that $vu = w$ and $\ell(u) = \ell(w) - \ell(v) > 0$.  We have
    $v(\alpha_{j^\prime}) = w(\alpha_j) = vu(\alpha_j)$.  Thus, $u(\alpha_j) =
    \alpha_{j^\prime} > 0$. Therefore, $j$ is not a right descent of $u$.
    However, since $u$ is not the identity element, then $u$ must have some
    right descent $t \in \mathbf{I}$ with $t \neq j$. Since $u \leq_L w$, then
    every right descent of $u$ is also a right descent of $w$. Thus, $j$ and
    $t$ are right descents of $w$.

    Next, we prove that statement (2) implies statement (1). Suppose now that
    $w$ has more than one right descent. Hence, there must exist some $t\in
    \mathbf{I}$ with $t \neq j$ such that $w_0(j, t) \leq_L w$ For short, let
    $u = w_0(j, t)$. Let $j^\prime \in \mathbf{I}$ be uniquely determined by
    the condition $s_{j^\prime} u = u s_{j}$ (i.e. $j^\prime = t$ if the order
    of $s_t s_j$ is $3$, and $j^\prime = j$ otherwise), and define $v :=
    wu^{-1}s_{j^\prime}$.

    Now we will show $(v, i, j^\prime) \in \Gamma(W)$ and that the reduction
    $(w, i, j) \xlongrightarrow{R} (v, i, j^\prime)$ holds. That is to say, we
    will show (1) $j^\prime$ is a right descent of $v$, (2) $i$ is a left
    descent of $vs_{j^\prime}$, (3) $v \leq_R w$, and (4) $v(\alpha_{j^\prime})
    = w(\alpha_j)$. Along the way, we also prove $\ell(v) < \ell(w)$.

    Observe first that $u(\alpha_j) = - \alpha_{j^\prime}$. Thus,
    $v(\alpha_{j^\prime}) = wu^{-1}s_{j^\prime}(\alpha_{j^\prime}) =
    w(\alpha_j) < 0$.  Hence $j^\prime$ is a right descent of $v$.
    Equivalently, $j^\prime$ is not a right descent of $vs_{j^\prime}$. Since
    $vs_{j^\prime} = wu^{-1}$, then $\ell(wu^{-1}s_{j^\prime}) = \ell(wu^{-1})
    + 1$. Hence, $\ell(v) = \ell(w u^{-1}) + 1$.  We also have $\ell(wu^{-1}) =
    \ell(w) - \ell(u)$ because $u \leq_L w$.  Therefore, we obtain the identity
    $\ell(v) = \ell(w) - \ell(u) + 1 = \ell(w) - \ell(us_j)$.  Since
    $\ell(us_j) > 0$, then $\ell(v) < \ell(w)$.  Furthermore, since $w =
    vus_j$, it follows that $v \leq_R w$.

    As it has been established already that $j^\prime$ is a right descent of
    $v$, then to prove $(v, i, j^\prime) \in \Gamma(W)$, it remains to show
    that $i$ is a left descent of $vs_{j^\prime}$. Since $w = vs_{j^\prime} u$
    and $\ell(vs_{j^{\prime}}) = \ell(v) - 1 =  \ell(w) - \ell(us_j) - 1 =
    \ell(w) - \ell(u)$, then $vs_{j^\prime} \leq_R w$. Thus, every left descent
    of $vs_{j^\prime}$ is a left descent of $w$. However, since $i$ is the only
    left descent of $w$, then either $vs_{j^{\prime}}$ lacks a left descent
    (equivalently $w = u$), or $i$ is the only left descent of $vs_{j^\prime}$.
    However, it is not possible that $w = u$ because it is assumed $w$ has only
    one left descent, yet $u$ has two descents. Thus, $i$ is a left descent of
    $vs_{j^\prime}$. Hence $(v, i, j^\prime) \in \Gamma(W)$.

    We conclude by proving $\left\{ i \right\} = {\mathcal D}_L(v)$. Since
    $v\leq_R w$, then either $\left\{ i \right\} = {\mathcal D}_L(v)$, or $v =
    \operatorname{id}$. However since we have already established that $v$ has
    a right descent, namely $j^\prime$, then $v \neq \operatorname{id}$.
    Therefore, $\left\{ i \right\} = {\mathcal D}_L(v)$.

\end{proof}

By invoking Proposition (\ref{L, R, equivalent, inverse}), the proof of Lemma
(\ref{lemma, right descent reduction, 2}) can be modified accordingly to give
us the following.

\begin{lemma}

    \label{lemma, left descent reduction, 2}

    Let $(w, i, j) \in \Gamma(W)$, and suppose $j$ is the only right descent of
    $w$. Then the following are equivalent.

    \begin{enumerate}

        \item There exists a reduction $(w, i, j) \xlongrightarrow{L} (v,
            i^\prime, j)$ for some $v \in W$ and $i^\prime \in \mathbf{I}$ such
            that $\ell(v) < \ell(w)$ and $\left\{j \right\} = {\mathcal
            D}_R(v)$.

        \item The Weyl group element $w$ has more than one left descent.

    \end{enumerate}

\end{lemma}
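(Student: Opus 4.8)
The plan is to deduce this lemma from Lemma~(\ref{lemma, right descent reduction, 2}) by applying the involution $x \mapsto x^*$ on $\Gamma(W)$, as the sentence preceding the statement suggests. First I would record the dictionary supplied by this involution: for $(w,i,j)\in\Gamma(W)$ we have $(w,i,j)^*=(w^{-1},j,i)\in\Gamma(W)$; the left descents of $w$ are exactly the right descents of $w^{-1}$ and vice versa; $\ell(w)=\ell(w^{-1})$; and $\mathcal{D}_R(v)=\mathcal{D}_L(v^{-1})$ for any $v\in W$. Under this dictionary, the standing hypothesis ``$j$ is the only right descent of $w$'' is exactly ``$j$ is the only left descent of $w^{-1}$'', which is the hypothesis of Lemma~(\ref{lemma, right descent reduction, 2}) applied to the triple $(w^{-1},j,i)$, with the two simple-reflection indices appearing in swapped positions.

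Next I would translate the two equivalent statements of Lemma~(\ref{lemma, right descent reduction, 2}), read for the triple $(w^{-1},j,i)$. Its statement~(1) asserts a reduction $(w^{-1},j,i)\xlongrightarrow{R}(v',j,i')$ with $\ell(v')<\ell(w^{-1})$ and $\{j\}=\mathcal{D}_L(v')$, while its statement~(2) asserts that $w^{-1}$ has more than one right descent. By Proposition~(\ref{L, R, equivalent, inverse})(1), $(w^{-1},j,i)\xlongrightarrow{R}(v',j,i')$ holds if and only if $(w^{-1},j,i)^*\xlongrightarrow{L}(v',j,i')^*$, i.e. $(w,i,j)\xlongrightarrow{L}\bigl((v')^{-1},i',j\bigr)$; putting $v:=(v')^{-1}$, and using $\ell(v)=\ell(v')<\ell(w)$ together with $\mathcal{D}_R(v)=\mathcal{D}_L(v')=\{j\}$, this is precisely a reduction of the shape demanded in statement~(1) of the present lemma. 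On the other side, ``$w^{-1}$ has more than one right descent'' is the same as ``$w$ has more than one left descent,'' i.e. statement~(2) here. Since every step of this translation is reversible, the equivalence (1)$\iff$(2) of Lemma~(\ref{lemma, right descent reduction, 2}) for $(w^{-1},j,i)$ is exactly the equivalence claimed.

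The only point requiring genuine care — and the step I would regard as the (minor) main obstacle — is the bookkeeping of indices under $*$: one must verify that the output index $i'$ in $(v,i',j)$ here matches the index produced by the right-descent lemma applied to $(w^{-1},j,i)$, and that each ``unique descent'' condition lands on the intended side after conjugation by the inverse. As an alternative to the duality argument, one could instead re-run the proof of Lemma~(\ref{lemma, right descent reduction, 2}) verbatim with the roles of ``left'' and ``right'' (and of $\leq_L$ and $\leq_R$) interchanged throughout, invoking Lemma~(\ref{wj=kw}) at the corresponding place; this is routine but longer, so I would present the short duality argument as the proof.
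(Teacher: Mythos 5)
Your proposal is correct and is essentially the same approach the paper takes: the paper's proof of Lemma~(\ref{lemma, left descent reduction, 2}) consists solely of invoking Proposition~(\ref{L, R, equivalent, inverse}) to transport Lemma~(\ref{lemma, right descent reduction, 2}) through the involution $x\mapsto x^*$, which is precisely the duality argument you spell out. Your verification of the index bookkeeping under $*$ (that $(w^{-1},j,i)^*=(w,i,j)$, $(v',j,i')^*=((v')^{-1},i',j)$, lengths are preserved, and $\mathcal{D}_R(v)=\mathcal{D}_L(v^{-1})$) is accurate and in fact supplies more detail than the paper does.
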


Now we are ready to prove Theorem (\ref{reduce by L and R}).

\begin{proof}[Proof of Theorem (\ref{reduce by L and R})]

    Assume $x = \llbracket u \rrbracket$, for some bigrassmannian $u \in W$.
    Suppose $\left\{i\right\} = {\mathcal D}_L(u)$ and $\left\{j\right\} =
    {\mathcal D}_R(u)$. Since $us_j \leq_R u$, every left descent of $us_j$ is
    also a left descent of $u$.  Therefore, $us_j$ has only one left descent.
    From Theorem (\ref{theorem, left descent reduction}), $x$ is a minimal
    element of $(\Gamma(W), \xlongrightarrow{L})$. Similarly, since $s_iu
    \leq_L u$, every right descent of $s_i u$ is a right descent of $u$, and
    thus $s_iu$ has only one right descent. By Theorem (\ref{theorem, right
    descent reduction, 1}), $x$ is a minimal element of $(\Gamma(W),
    \xlongrightarrow{R})$.  Recall, the partial order $\xlongrightarrow{}$ is
    defined to be the transitive closure of the union of $\xlongrightarrow{L}$
    and $\xlongrightarrow{R}$. Therefore, $x$ is also a minimal element of
    $(\Gamma(W), \xlongrightarrow{}$).

    Next, assume $x = (w_0(p, k), p, p^\prime)$, where $p, k, p^\prime \in
    \mathbf{I}$ are as described in the statement of Theorem (\ref{reduce by L
    and R}). From Lemma (\ref{lemma, characterization of w_0(a, b)}), $w_0(p,
    k) s_{p^\prime}$ is bigrassmannian. Therefore, by Theorem (\ref{theorem,
    left descent reduction}), $x$ is a minimal element of $(\Gamma(W),
    \xlongrightarrow{L})$. Furthermore, $s_p w_0(p, k)$ is bigrassmannian, and
    Theorem (\ref{theorem, right descent reduction, 1}) tells us $x$ is also a
    minimal element of $(\Gamma(W), \xlongrightarrow{R})$. Thus, $x$ is a
    minimal element of $(\Gamma(W), \xlongrightarrow{})$.

    Conversely, suppose now $x = (w, i, j)$ is a minimal element of
    $(\Gamma(W), \xlongrightarrow{}$). Therefore, $x$ is a minimal element of
    each of the partially ordered sets ($\Gamma(W), \xlongrightarrow{L})$ and
    $(\Gamma(W), \xlongrightarrow{R})$. Theorems (\ref{theorem, left descent
    reduction}) and (\ref{theorem, right descent reduction, 1}) imply $ws_j$
    has only one left descent and $s_iw$ has only one right descent. Hence $w$
    has at most two right descents.  If $w$ has only one right descent, Lemma
    (\ref{lemma, left descent reduction, 2}) implies $w$ is bigrassmannian.
    On the other hand, if $w$ has two right descents, then we can apply the
    results of Lemma (\ref{reduction, two right descents}) to conclude $w =
    w_0(i, p)$ for some $p \in \mathbf{I}$ with $p \neq i$.

    Finally, let $p, k, p^\prime \in \mathbf{I}$ again be as in the statement
    of this theorem. Since $w_0(p, k) s_{p^\prime} (\alpha_{p^\prime}) =
    \alpha_k$, then $T_{w_0(p, k) s_{p^\prime}} (E_{p^\prime})) = E_k$, and the
    $q$-Serre relations imply
    \[
        \begin{split}
            {\mathcal N}(x) &= \operatorname{min} \left\{ m \in \mathbb{N} :
            \left( \operatorname{ad}_q(E_p) \right)^m (T_{w_0(p, k)
            s_{p^\prime}} (E_{p^\prime})) = 0 \right\}
            \\
            &= \operatorname{min} \left\{ m \in \mathbb{N} :
            \left( \operatorname{ad}_q(E_p) \right)^m (E_k) = 0 \right\}
            \\
            &= 1 - \langle \alpha_k, \alpha_p \rangle.
        \end{split}
    \]

\end{proof}

\section{Bigrassmannian elements}

\label{section, bigrassmannian elements}

In Section (\ref{minimal elements}), we found a general characterization of the
minimal elements of the poset $(\Gamma(W), \xlongrightarrow{})$. In particular,
the minimal elements are those of the form $(w, i, j)$ with $w$ bigrassmannian
or $w = w_0(a, b)$ for some $a, b \in \mathbf{I}$. Our aim now is to obtain a
more explicit description of these elements. This motivates us to turn our
attention to bigrassmannian elements.

\subsection{The support of a bigrassmannian element}

\label{section: rank reduction}

Let $W$ be the Weyl group associated to a finite dimensional complex simple Lie
algebra $\mathfrak{g}$, and let $\mathbf{I}$ be the index set of
$\mathfrak{g}$. Recall, for a Weyl group element $w \in W$, we denote the
\textit{support} of $w$ by
\[
    \operatorname{supp}(w) := \left\{ i \in \mathbf{I} \mid s_i \leq w \text{
    with respect to the Bruhat order} \right\}.
\]
Let ${\mathcal G}(w)$ be the subgraph of the Dynkin diagram containing only
those vertices in $\operatorname{supp}(w)$, and containing only those edges
with endpoints at these particular vertices.

\begin{proposition}

    \label{graph, connected}

    If $w\in W$ has only one left or right descent, then ${\mathcal G}(w)$ is
    connected.

\end{proposition}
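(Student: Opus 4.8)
The plan is to prove the contrapositive: if $\mathcal{G}(w)$ is disconnected, then $w$ has at least two right descents and at least two left descents. Since the left descents of $w$ are precisely the right descents of $w^{-1}$, and $\operatorname{supp}(w^{-1}) = \operatorname{supp}(w)$ (so $\mathcal{G}(w^{-1}) = \mathcal{G}(w)$), it is enough to treat the right-descent case and then apply it to $w^{-1}$.

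So suppose $\mathcal{G}(w)$ is disconnected and write $\operatorname{supp}(w) = A \sqcup B$ with $A, B$ nonempty and no edge of the Dynkin diagram joining $A$ to $B$; equivalently $\langle \alpha_a, \alpha_b \rangle = 0$, so $s_a$ and $s_b$ commute, for all $a \in A$, $b \in B$. First I would recall the standard facts about the standard parabolic subgroups $W_A = \langle s_a : a \in A\rangle$ and $W_B = \langle s_b : b \in B\rangle$: they commute elementwise, $W_A \cap W_B = W_{A\cap B} = \{e\}$, so $W_{A\cup B}$ is their internal direct product $W_A \times W_B$ (this can also be seen directly by sorting a reduced word for $w$ — whose letters all lie in $\operatorname{supp}(w) = A\cup B$ — so that the $A$-letters precede the $B$-letters, which neither changes length nor the represented element). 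Hence $w = uv$ with $u \in W_A$, $v \in W_B$, and $\ell(w) = \ell(u)+\ell(v)$. Since $\operatorname{supp}(w) = \operatorname{supp}(u)\cup\operatorname{supp}(v)$ with $\operatorname{supp}(u)\subseteq A$ and $\operatorname{supp}(v)\subseteq B$ disjoint, we get $\operatorname{supp}(u) = A$ and $\operatorname{supp}(v) = B$; in particular $u \neq e \neq v$.

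It then remains to identify the right-descent set of $w$. For $k \in A$, since $s_k$ commutes with $v$ we have $ws_k = (us_k)v$ with $us_k \in W_A$, so the decomposition stays length-additive and $\ell(ws_k) - \ell(w) = \ell(us_k) - \ell(u)$; thus $k \in \mathcal{D}_R(w) \iff k \in \mathcal{D}_R(u)$, and symmetrically for $k \in B$. Moreover any right descent $k$ of $w$ satisfies $s_k \le w$, hence $k \in \operatorname{supp}(w) = A\cup B$, so $\mathcal{D}_R(w) = \mathcal{D}_R(u) \sqcup \mathcal{D}_R(v)$. As $u \neq e$ contributes a right descent in $A$ and $v \neq e$ contributes one in $B$, these being distinct, $w$ has at least two right descents, as desired. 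I do not anticipate a genuine obstacle; the only points requiring care are the cited structural facts about parabolic subgroups of Coxeter groups (commuting generating sets give a direct product, $W_C \cap W_D = W_{C\cap D}$, and length additivity across the direct-product decomposition), all of which are classical.
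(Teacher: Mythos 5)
Your proof is correct and follows essentially the same route as the paper's: partition the support into two parts with no edges between them, factor $w$ as a commuting product of a nontrivial element of each corresponding parabolic subgroup, and conclude that each factor contributes a descent. You are somewhat more careful than the paper in pinning down why both factors are nonidentity (via the support decomposition) and in explicitly identifying $\mathcal{D}_R(w)$ as the disjoint union of the descent sets of the factors, as well as in reducing the left-descent case to the right-descent case via $w^{-1}$; the paper handles these points more tersely but with the same underlying mechanism.
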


\begin{proof}

    Suppose ${\mathcal G}(w)$ is not connected. Hence, there exists a partition
    of $\mathbf{I}$, say $\mathbf{I} = \mathbf{I}_1 \cup \mathbf{I}_2$, with
    $\mathbf{I}_1$ and $\mathbf{I}_2$ nonempty such that for every pair of
    indices, $i$ and $j$, with $i \in \mathbf{I}_1$ and $j \in \mathbf{I}_2$,
    the vertices associated to $i$ and $j$ belong to different connected
    components. For every $i\in \mathbf{I}_1$ and $j \in \mathbf{I}_2$, the
    associated simple reflections $s_i$ and $s_j$ commute.  Therefore $w$ can
    be written as $w = w_1 \cdot w_2$ such that $w_1 \in \langle s_k : k \in
    \mathbf{I}_1 \rangle$, $w_2 \in \langle s_k : k \in \mathbf{I}_2 \rangle$,
    and $\ell(w_1) + \ell(w_2) = \ell(w)$. Since $w = w_1 \cdot w_2 = w_2 \cdot
    w_1$ and $w_1$ and $w_2$ are not the identity element, $w$ has more than
    one left descent and more than one right descent.

\end{proof}

In view of Proposition (\ref{graph, connected}), for every bigrassmannian
element $w \in W$, the graph ${\mathcal G}(w)$ is isomorphic to the Dynkin
diagram associated to a finite-dimensional complex simple Lie algebra
$\mathfrak{g}^\prime$ of rank $ \lvert \operatorname{supp}(w) \rvert$. Since
the Weyl group $W^\prime$ of $\mathfrak{g}^\prime$ is isomorphic to the
subgroup of $W$ generated by the simple reflections $s_i$, for $i \in
\operatorname{supp}(w)$, we can naturally view $w$ as an element of $W^\prime$.
In this scenario, we can consider relabelling the indices $i$ on the simple
reflections $s_i$ ($i \in \operatorname{supp}(w)$) to agree with the standard
labellings given to the simple reflections in $W^\prime$, as in \cite[Section
11.4]{Humphreys}. With this relabelling, $w$, as an element of $W^\prime$, has
full support. Formally, a relabelling of the indices is a bijection
\begin{equation}
    \label{definition, sigma}
    \sigma: \operatorname{supp}(w) \to \mathbf{I}^\prime,
\end{equation}
where $\mathbf{I}^\prime$ is the index set of $\mathfrak{g}^\prime$, that
preserves the symmetric form on the respective root lattices. In other words,
for each pair of indices $i, j
\in \operatorname{supp}(w) \subseteq \mathbf{I}$,
\begin{equation}
    \label{sigma, definition}
    \langle \alpha_i, \alpha_j^\vee \rangle_{\mathfrak{g}} = \langle
    \alpha_{\sigma(i)}, \alpha_{\sigma(j)}^\vee \rangle_{\mathfrak{g}^\prime},
\end{equation}
where the simple root $\alpha_i$, simple coroot $\alpha_j^\vee$, and symmetric
form appearing on the left-hand side are associated to the Lie algebra
$\mathfrak{g}$, whereas $\alpha_{\sigma(i)}$, $\alpha_{\sigma(j)}^\vee$, and
the symmetric form on the right-hand side correspond to $\mathfrak{g}^\prime$.
The function $\sigma$ defines a group isomorphism
\begin{equation}
    \label{sigma, group isomorphism}
    f_\sigma: \langle s_i : i \in \operatorname{supp}(w) \rangle
    \xlongrightarrow{\cong} W^\prime
\end{equation}
from the subgroup of $W$ generated by the simple reflections $s_i$ ($i \in
\operatorname{supp}(w)$) to $W^\prime$, where $f_\sigma(s_i) =
s_{\sigma(i)}$ for all $i \in \operatorname{supp}(w)$.

\begin{proposition}

    \label{prop, sigma}

    Suppose $(w, i, j) \in \Gamma(W)$.  Let $\sigma: \operatorname{supp}(w) \to
    \mathbf{I}^\prime$ be a relabelling as defined in (\ref{definition,
    sigma}), and let $f_\sigma$ be the induced group isomorphism as in
    (\ref{sigma, group isomorphism}).  Then
    \[
        {\mathcal N}(w, i, j) = {\mathcal N}(f_\sigma(w), \sigma(i),
        \sigma(j)).
    \]

\end{proposition}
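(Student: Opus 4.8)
The plan is to show that the nilpotency index ${\mathcal N}(w,i,j)$, which by definition (\ref{definition, nil-index}) only records when $\left(\operatorname{ad}_q(E_i)\right)^p\!\left(T_{ws_j}(E_j)\right)$ vanishes, is insensitive to passing from $\mathfrak{g}$ to the smaller simple Lie algebra $\mathfrak{g}^\prime$ whose Dynkin diagram is ${\mathcal G}(w)$. The key observation is that all the braid symmetries $T_k$ and root vectors involved in computing ${\mathcal N}(w,i,j)$ live inside the subalgebra of ${\mathcal U}_q(\mathfrak{g})$ generated by $\{E_k, F_k, K_{\alpha_k}^{\pm 1} : k \in \operatorname{supp}(w)\}$. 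So the first step is to set up the comparison: since $(w,i,j) \in \Gamma(W)$, both $i$ and $j$ lie in $\operatorname{supp}(w)$ (any reduced word for $w$ begins with $s_i$ and ends with $s_j$, by (\ref{definition, Gamma W})), and moreover $w$ has a reduced expression $w = s_{i_1}\cdots s_{i_N}$ with every $i_\ell \in \operatorname{supp}(w)$. Likewise $ws_j$ has a reduced expression using only indices in $\operatorname{supp}(w)$. Then $T_{ws_j}(E_j)$ is obtained by applying a product of $T_{i_\ell}$'s (for $i_\ell \in \operatorname{supp}(w)$) to $E_j$.

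Next I would invoke the standard fact that ${\mathcal U}_q(\mathfrak{g}^\prime)$ embeds as a Hopf subalgebra of ${\mathcal U}_q(\mathfrak{g})$ — more precisely, there is an algebra isomorphism $\Phi$ from the subalgebra ${\mathcal U}_q(\mathfrak{g}^\prime)$ onto the subalgebra of ${\mathcal U}_q(\mathfrak{g})$ generated by $\{E_{\sigma^{-1}(a)}, F_{\sigma^{-1}(a)}, K^{\pm 1}_{\alpha_{\sigma^{-1}(a)}} : a \in \mathbf{I}^\prime\}$, sending the Chevalley generator $E_a^\prime$ of ${\mathcal U}_q(\mathfrak{g}^\prime)$ to $E_{\sigma^{-1}(a)}$, and similarly for the $F$'s and $K$'s. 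This uses exactly the condition (\ref{sigma, definition}): the defining relations of ${\mathcal U}_q(\mathfrak{g}^\prime)$ — in particular the $q$-Serre relations (\ref{q-Serre 1}), (\ref{q-Serre 2}), which depend only on the Cartan integers $c_{ab}^\prime = \langle \alpha_a^\vee, \alpha_b\rangle_{\mathfrak{g}^\prime}$ — match the corresponding relations among the $E_{\sigma^{-1}(a)}$, $F_{\sigma^{-1}(a)}$, $K$'s in ${\mathcal U}_q(\mathfrak{g})$, because $\sigma$ preserves the Cartan data. (One should note that the $K$'s on the $\mathfrak{g}^\prime$ side are indexed by the root lattice $Q^\prime$, which $\sigma$ identifies with the sublattice $\bigoplus_{k\in\operatorname{supp}(w)}\mathbb{Z}\alpha_k$ of $Q$; the form on $Q^\prime$ pulls back to the restriction of the form on $Q$, so the relations $K_\mu E_i = q^{\langle\mu,\alpha_i\rangle}E_i K_\mu$ are preserved.) The crucial compatibility is that $\Phi$ intertwines Lusztig's braid symmetries: $\Phi \circ T_a^\prime = T_{\sigma^{-1}(a)} \circ \Phi$ for each $a \in \mathbf{I}^\prime$, which holds because Lusztig's formulas for $T_a^\prime$ on $E_b^\prime, F_b^\prime, K_\mu^\prime$ are given purely in terms of the Cartan integers and the $q$-adjoint action, all of which $\Phi$ respects. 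Consequently $\Phi \circ T_{v}^\prime = T_{f_\sigma^{-1}(v)} \circ \Phi$ for every $v \in W^\prime$.

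With this intertwining in hand, the conclusion is almost immediate. Applying $\Phi$ to the $\mathfrak{g}^\prime$-side data: $\Phi(E_{\sigma(i)}^\prime) = E_i$, and $\Phi\!\left(T^\prime_{f_\sigma(w)s_{\sigma(j)}}(E_{\sigma(j)}^\prime)\right) = T_{ws_j}(E_j)$, using that $f_\sigma(w)s_{\sigma(j)} = f_\sigma(ws_j)$ and $f_\sigma(w) = f_\sigma(s_{i_1}\cdots s_{i_N}) = s_{\sigma(i_1)}\cdots s_{\sigma(i_N)}$ is a reduced expression of $f_\sigma(w)$ in $W^\prime$ (length is preserved since $f_\sigma$ is a group isomorphism of Coxeter groups respecting generators). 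Since $\Phi$ is an injective algebra homomorphism and respects the $q$-commutator (both sides are defined from the grading and the bilinear form, which $\Phi$ preserves), for every $p$ we get
\[
    \Phi\!\left(\left(\operatorname{ad}_q(E_{\sigma(i)}^\prime)\right)^p\!\left(T^\prime_{f_\sigma(ws_j)}(E_{\sigma(j)}^\prime)\right)\right) = \left(\operatorname{ad}_q(E_i)\right)^p\!\left(T_{ws_j}(E_j)\right).
\]
As $\Phi$ is injective, the left-hand expression vanishes if and only if the right-hand one does, so the two minima defining ${\mathcal N}(f_\sigma(w),\sigma(i),\sigma(j))$ and ${\mathcal N}(w,i,j)$ coincide.

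The main obstacle — really the only nontrivial point — is verifying the braid-intertwining relation $\Phi \circ T_a^\prime = T_{\sigma^{-1}(a)}\circ\Phi$; once that is granted, the rest is formal. This can be handled by checking it on the algebra generators $E_b^\prime, F_b^\prime, K_\mu^\prime$ directly from Lusztig's explicit formulas: for $b = a$ the formula gives $\pm F_a K_{\alpha_a}$ etc., which $\Phi$ sends to the corresponding element $\pm F_{\sigma^{-1}(a)}K_{\alpha_{\sigma^{-1}(a)}}$; for $b \neq a$ the formula $\left(\operatorname{ad}_q E_a^\prime\right)^{(-c_{ab}^\prime)}(E_b^\prime)$ is a noncommutative polynomial in $E_a^\prime, E_b^\prime$ whose shape depends only on $c_{ab}^\prime = c_{\sigma^{-1}(a)\sigma^{-1}(b)}$ and the $q_a^\prime = q^{\langle\alpha_a,\alpha_a\rangle_{\mathfrak{g}^\prime}/2} = q_{\sigma^{-1}(a)}$, so $\Phi$ carries it to the analogous expression in ${\mathcal U}_q(\mathfrak{g})$. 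Alternatively, one may simply cite that the assignment $\sigma$ induces an isomorphism of the braid-group-equivariant quantum groups, which is well known. With that, Proposition (\ref{prop, sigma}) follows.
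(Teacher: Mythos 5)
Your proposal hinges on the assertion that the subalgebra of $\mathcal{U}_q(\mathfrak{g})$ generated by $\{E_k, F_k, K_{\alpha_k}^{\pm 1} : k \in \operatorname{supp}(w)\}$ is isomorphic, as an algebra over the \emph{same} deformation parameter $q$, to $\mathcal{U}_q(\mathfrak{g}^\prime)$. This is where the gap lies. Condition (\ref{sigma, definition}) only forces the Cartan integers $\langle \alpha_i, \alpha_j^\vee\rangle$ to match; it does \emph{not} force the absolute lengths $\langle\alpha_i,\alpha_i\rangle$ to match, because both sides of (\ref{sigma, definition}) are trivially $2$ when $i = j$. Since the form is normalized so that short roots have squared length $2$, there are cases where every root in $\operatorname{supp}(w)$ is long in $\mathfrak{g}$ yet short in $\mathfrak{g}^\prime$. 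For instance, take $\mathfrak{g} = B_n$ and $\operatorname{supp}(w) \subseteq \{1,\dots,n-1\}$, so $\mathfrak{g}^\prime$ is of type $A_m$: then $\langle\alpha_i,\alpha_i\rangle_{\mathfrak g} = 4$ but $\langle\alpha_{\sigma(i)},\alpha_{\sigma(i)}\rangle_{\mathfrak g^\prime} = 2$, hence $q_i = q^2$ in $\mathfrak{g}$ while $q_{\sigma(i)} = q$ in $\mathfrak{g}^\prime$. Your explicit claim $q_a^\prime = q^{\langle\alpha_a,\alpha_a\rangle_{\mathfrak{g}^\prime}/2} = q_{\sigma^{-1}(a)}$, and the parenthetical claim that ``the form on $Q^\prime$ pulls back to the restriction of the form on $Q$,'' are both false in these cases, and consequently the map you call $\Phi$ fails to respect the relations $K_\mu E_i = q^{\langle\mu,\alpha_i\rangle}E_iK_\mu$, the $E$-$F$ relations, and the $q_i$-binomial coefficients in the $q$-Serre relations. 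This is not a fringe case: the reduction to full support via Proposition (\ref{prop, sigma}) is genuinely invoked for bigrassmannian elements of $B_n$, $C_n$, $F_4$, $G_2$ whose support misses a bond, and in those instances the relevant scalar is $d=2$ or $d=3$.

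What does exist is an isomorphism between that subalgebra of $\mathcal{U}_q(\mathfrak{g})$ and $\mathcal{U}_{q^d}(\mathfrak{g}^\prime)$, where $d := \langle\alpha_i,\alpha_i\rangle_{\mathfrak{g}}/\langle\alpha_{\sigma(i)},\alpha_{\sigma(i)}\rangle_{\mathfrak{g}^\prime}$ (independent of $i$ since $\mathcal{G}(w)$ is connected by Proposition (\ref{graph, connected}) and the Cartan integers determine length ratios). Your intertwining and injectivity steps then go through verbatim and give $\mathcal{N}(w,i,j)$ in $\mathcal{U}_q(\mathfrak{g})$ equal to the nilpotency index computed in $\mathcal{U}_{q^d}(\mathfrak{g}^\prime)$, not in $\mathcal{U}_q(\mathfrak{g}^\prime)$. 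You still need a second step: that the nilpotency index in $\mathcal{U}_q(\mathfrak{g}^\prime)$ is unchanged under replacing $q$ by $q^d$. That does hold, because $T_{f_\sigma(w)s_{\sigma(j)}}(E_{\sigma(j)})$ lies in the positive part and, when expanded in a PBW basis, the coefficients of $\left(\operatorname{ad}_q E_{\sigma(i)}\right)^p$ applied to it are integer Laurent polynomials in the parameter; such a polynomial vanishes at some non-root-of-unity specialization if and only if it vanishes identically, and $q$ is a non-root-of-unity if and only if $q^d$ is. Without this additional specialization argument the proof is incomplete.
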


\begin{proof}

    This follows from (\ref{sigma, definition}) and the definition of Lusztig
    symmetries.

\end{proof}

\subsection{A classification of bigrassmannian elements of full support}

By Proposition (\ref{graph, connected}), we may, without loss of generality,
classify bigrassmannian elements by their support. Recall, the support of a
bigrassmannian element $w$ can be identified with a connected subgraph of the
underlying Dynkin diagram.  This subgraph can also be identified with a Lie
type, say $X_n$, and with this identification, the bigrassmannian element $w$,
viewed as an element of the Weyl group of type $X_n$, has full support.  With
this, we will let $X_n$ denote the underlying Lie type. Define
$BiGr^{\circ}(X_n)$ as the set of bigrassmannian Weyl group elements having
full support,
\begin{equation}
    \label{definition, X}
    BiGr^{\circ}(X_n) := \left\{ w \in W(X_n) : | {\mathcal D}_L(w) | = |
    {\mathcal D}_R(w) | = 1 \text{ and } \operatorname{supp}(w) = \mathbf{I}
    \right\}
\end{equation}
In this section, we describe $BiGr^{\circ}(X_n)$ explicitly, for $X_n$ a
non-exceptional type, by listing all of its elements, and we also determine the
cardinality of $BiGr^\circ(X_n)$ for all Lie types.

We use the standard realizations of the Weyl groups of types $A_{n - 1}$,
$B_n$, $C_n$, and $D_n$ as groups of $n \times n$ permutation matrices or
signed permutation matrices and identify each simple reflection $s_i$ with a
signed permutation. Using one-line notation for permutations, we make the
identifications
\[
    s_i \longleftrightarrow [1, \dots, i - 1, i + 1, i, i + 2, \dots, n] \text{
    for } i < n,
\]
and
\[
    s_n \longleftrightarrow \begin{cases} [1,\dots,n-1, -n], &(\text{type }
    BC_n), \\ [1,\dots, n-2, -n, 1-n], &(\text{type } D_n). \end{cases}
\]
With this, we are able to associate a signed permutation to each Weyl group
element $w$ (in types $ABCD$). We will do this throughout in all that follows.
We also tacitly identify Weyl group elements $w$ with signed permutation
matrices.

As it turns out, each element $w \in BiGr^{\circ}(X_n)$ can be realized as a
(signed) permutation matrix in a certain block matrix form. First, for every
$k\in\mathbb{N}$ and $\epsilon \in \left\{-1, 1\right\}$, define the $k\times
k$ matrices
\[
    J_k :=
    \begin{pmatrix}
        0 & 0 & \cdots & 0 & 0 & -1 \\
        0 & 0 & \cdots & 0 & -1 & 0 \\
        0 & 0 & \cdots & -1 & 0 & 0 \\
        \vdots & \vdots & \iddots & \vdots & \vdots & \vdots \\
        0 & -1 & \cdots & 0 & 0 & 0 \\
        -1 & 0 & \cdots & 0 & 0 & 0
    \end{pmatrix}, \hspace{5mm}
    \operatorname{diag}_k(\epsilon) :=
    \begin{pmatrix}
        1 & 0 & 0 & \cdots & 0 & 0 \\
        0 & 1 & 0 & \cdots & 0 & 0 \\
        0 & 0 & 1 & \cdots & 0 & 0 \\
        \vdots & \vdots & \vdots & \ddots & \vdots & \vdots \\
        0 & 0 & 0 & \cdots & 1 & 0 \\
        0 & 0 & 0 & \cdots & 0 & \epsilon
    \end{pmatrix},
\]
and let $I_k$ be the $k \times k$ identity matrix.  For non-negative integers
$\ell, i, j, k, m \in \mathbb{Z}_{\geq 0}$ such that $\ell + i + j + k + m =
n$, define the $n \times n$ matrix
\begin{equation}
    \label{definition, ABCn, w_lijkm}
    w_{\ell,i,j,k,m} : =
    \begin{pmatrix}
        I_\ell & 0 & 0 & 0 & 0\\
        0 & 0 & 0 & I_k & 0 \\
        0 & 0 & J_j & 0 & 0 \\
        0 & I_i & 0 & 0 & 0 \\
        0 & 0 & 0 & 0 & I_m
    \end{pmatrix}.
\end{equation}
Each signed permutation matrix $w_{\ell, i, j, k, m}$ belongs to the Weyl group
of type $BC_n$, whereas $w_{\ell, i, j, k, m}$ belongs to the Weyl group of
type $A_{n - 1}$ only if $j = 0$.  The elements $w_{\ell, i, j, k, m}^\pm$
defined below are used to handle the type $D_n$ case, where the corresponding
Weyl group contains only those signed permutation matrices having an even
number of matrix entries equal to $1$,
\begin{equation}
    \label{definition, Dn, w_lijkm}
    w_{\ell,i,j,k,m}^{\pm} : = \operatorname{diag}_n (\pm 1) \cdot w_{\ell, i,
    j, k, m} \cdot \operatorname{diag}_n(\pm(-1)^j).
\end{equation}

\begin{proposition}

    \label{X, prop}

    Let $w_{\ell, i, j, k, m}$ and $w_{\ell, i, j, k, m}^{\pm}$ be the Weyl
    group elements as defined in (\ref{definition, ABCn, w_lijkm}) and
    (\ref{definition, Dn, w_lijkm}).

    \begin{enumerate}\itemsep=5mm

        \item For $n \geq 2$,
            $BiGr^{\circ}(A_{n-1}) = \left\{ w_{0,i,0,k,0} : i,k > 0 \text{ and
            } i + k = n \right\}$.

        \item For $n \geq 2$,
            $BiGr^{\circ}(BC_n) = \left\{ w_{0,i,j,k,m} : i + j + k + m = n
            \text{ and } j > 0 \right\}$.

        \item For $n \geq 4$,
            $BiGr^{\circ}(D_n) = \left\{ w_{0,i,j,k,m}^\pm :
            \begin{tabular}{l}
                $i + j + k + m = n$, $j > 0$, and\\ $\delta_{1,m} \delta_{0,ik}
                = \delta_{0,m} \delta_{0,ik} \delta_{1,j} = 0$
            \end{tabular}
            \right\}$.

    \end{enumerate}

\end{proposition}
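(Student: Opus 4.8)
The plan is to argue directly in the standard combinatorial models: $W(A_{n-1}) = S_n$ realized by permutation matrices, and $W(B_n)=W(C_n)$ and $W(D_n)$ realized by signed permutation matrices, using one‑line/window notation together with the well‑known descriptions of left and right descents in these models (for $i<n$, the index $i$ is a right descent of $w$ exactly when the sign‑and‑order comparison of $w(i)$ and $w(i+1)$ is ``descending'', and the special node $n$ is a right descent according to the sign pattern of the last one or two window entries, the rule depending on whether the type is $BC$ or $D$). A remark used in every type: if $w$ has full support it cannot fix an initial segment $\{1,\dots,\ell\}$ of positions, since then $w$ would lie in the standard parabolic $\langle s_{\ell+1},\dots,s_n\rangle$ and miss $s_1,\dots,s_\ell$; this is why the parameter $\ell$ is $0$ in every element of the asserted sets.

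\textbf{Forward inclusion.} For ``$\supseteq$'' I would take $w=w_{0,i,j,k,m}$ (resp.\ $w^{\pm}_{0,i,j,k,m}$) from the proposed set, write down its window explicitly — in type $BC_n$ it is
\[
 [\,k{+}j{+}1,\ \dots,\ k{+}j{+}i,\ -(k{+}j),\ \dots,\ -(k{+}1),\ 1,\ \dots,\ k,\ n{-}m{+}1,\ \dots,\ n\,]
\]
— and read off its descent sets and support by a short case analysis on which of $i,j,k,m$ vanish. The point is that the hypothesis $j>0$ (and, in type $D$, the two $\delta$‑conditions) is precisely what prevents a spurious second descent at a block boundary or at the special node $n$, and what keeps the support from missing a node or disconnecting. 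Since transposing the block matrix gives $w_{0,i,j,k,m}^{-1}=w_{0,k,j,i,m}$, the statement about the unique left descent of $w$ is the statement about the unique right descent of $w^{-1}$, and the $\delta$‑conditions are symmetric under $i\leftrightarrow k$, so a single lemma handles both sides. Type $D$ additionally requires checking that $w^{+}_{0,i,j,k,m}$ and $w^{-}_{0,i,j,k,m}$ are genuinely distinct and lie in the even‑sign subgroup, which is exactly the role of the factors $\operatorname{diag}_n(\pm1)$ and $\operatorname{diag}_n(\pm(-1)^{j})$.

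\textbf{Reverse inclusion.} For ``$\subseteq$'', let $w\in BiGr^{\circ}(X_n)$ with unique right descent $r$. I would split on $r<n$ versus $r=n$. If $r<n$ then $w$ is increasing on $\{1,\dots,r\}$ and on $\{r+1,\dots,n\}$, and in types $BC/D$ the sign pattern of the tail is constrained by $n$ not being a descent; running the same analysis for $w^{-1}$ shows that the multiset of absolute values of the window entries breaks into a bounded number of blocks of \emph{consecutive} integers, and combining the two descent constraints with full support — which forbids a fixed initial segment and, by the argument of Proposition~\ref{graph, connected}, keeps the support connected — forces the window into exactly the block pattern of $w_{0,i,j,k,m}$. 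If $r=n$ one repeats this using the sign behaviour of $s_n$ in the given type (negation of the last coordinate in $BC$; negation‑and‑transposition of the last two in $D$), producing the pattern with $j>0$ (and, in type $D$, the parity of the negated coordinates selects the $\pm$ and forces the $\delta$‑conditions). I would write $BC_n$ first, deduce $A_{n-1}$ as the sub‑case of no negated coordinates (equivalently $j=0$, where the window degenerates to $[k{+}1,\dots,n,1,\dots,k]$), and finally treat $D_n$ by intersecting with the even‑sign subgroup and redoing the descent analysis at the special node.

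\textbf{Main obstacle.} The conceptual steps are short; the real work — and the likely source of error — is the exhaustive boundary‑case bookkeeping in the reverse direction, above all in type $D_n$, where vanishing of one or more of $i,j,k,m$ changes the descent set at the fork $\{s_{n-1},s_n\}$ at the end of the Dynkin diagram, and where one must verify that $\delta_{1,m}\delta_{0,ik}=\delta_{0,m}\delta_{0,ik}\delta_{1,j}=0$ is \emph{precisely} the condition separating bigrassmannian full‑support elements from the near‑misses. Keeping the correspondence $w\leftrightarrow(i,j,k,m)$ (or $(i,j,k,m,\pm)$) visibly compatible with inversion ($i\leftrightarrow k$) throughout is what makes the case analysis manageable.
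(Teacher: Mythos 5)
Your proposal follows essentially the same route as the paper: both work in the (signed) permutation matrix model, determine left and right descents combinatorially (the paper via relative positions of nonzero matrix entries, you via the equivalent window notation), characterize the one-descent elements as exactly those of the block form, and then impose the full-support constraints to pin down $\ell = 0$, $j>0$ in types $BCD$, $m=0$ in type $A$, and the two $\delta$-conditions in type $D$. The remaining differences are cosmetic, and you correctly flag that the substance lies in the $D_n$ boundary-case bookkeeping around the fork $\{s_{n-1}, s_n\}$, which is indeed where the paper's case analysis also concentrates.
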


\begin{proof}

    The length of a Weyl group element $w$ coincides with the number of
    positive roots that get sent to negative roots by $w$.  Viewing $W$ as a
    set of signed permutation matrices, we can determine the length of any Weyl
    group element $w$ by considering the relative positions of pairs of nonzero
    entries in the matrix realization of $w$.  For instance, in types $BCD_n$,
    two inversions are introduced every time there is a matrix entry equal to
    $-1$ in the $(i, j)$ position, and a nonzero matrix entry in the $(k,
    \ell)$ position with $i < k$ and $j < \ell$.  In types $A_{n - 1}$ and
    $BCD_n$, one additional inversion is introduced every time there are
    nonzero entries in positions $(i, j)$ and $(k, \ell)$ with $k < i$ and $j <
    \ell$.  Lastly, in type $BC_n$, an additional inversion occurs for each
    matrix entry equal to $-1$ .

    To determine which $i \in \mathbf{I}$ with $i < n$ are left descents or
    right descents, we can study how the length of $w$ changes upon swapping
    adjacent rows or adjacent columns, respectively. For all $i < n$, $i$ is a
    left descent of $w$ if and only if there exist integers $j$ and $k$ with
    $1\leq j < k \leq n$ such that either (1) there is $-1$ in the $(i, j)$
    position, and there is a nonzero entry in the $(i + 1, k)$ position, or (2)
    there is a $1$ in the $(i + 1, j)$ position, and there is a nonzero entry
    in the $(i, k)$ position.

    For types $BC_n$, $n$ is a left descent of $w$ if and only if the nonzero
    entry in the last row is $-1$, whereas in type $D_n$, $n$ is a left descent
    if and only if there exist integers $j$ and $k$ with $1 \leq j < k \leq n$
    such that either (1) there is $-1$ in position $(n - 1, j)$ and a nonzero
    entry in position $(n, k)$, or (2) there is $-1$ in position $(n, j)$ and a
    nonzero entry in position $(n - 1, k)$.

    Since $i\in \mathbf{I}$ is a right descent of $w$ if and only if $i$ is a
    left descent of $w^{-1}$ and the matrix representation of $w$ is
    orthogonal, this means the analogous conditions describing whether or not
    $i$ is a right descent can be determined by considering the transpose of
    $w$.

    In view of this, $w$ has only one left descent and only one right descent
    if and only if $w$ is of the form $w_{\ell, i, j, k, m}$ (or $w_{\ell, i,
    j, k, m}^{\pm}$ in type $D_n$), where $\ell + i + j + k + m = n$. In type
    $A_{n-1}$, $j = 0$ because the Weyl group in this situation is a set of
    permutation matrices, rather than a set of signed permutation matrices.
    That is to say, $1$ is the only nonzero entry in the matrix representation
    of every $w$.

    We have some restrictions on $\ell$, $i$, $j$, $k$, and $m$ in order for
    $w$ to have full support.  For instance, in type $A_{n-1}$, $n\in
    \operatorname{supp}(w)$ if and only if $m = 0$. In types $A_{n-1}$ and
    $BCD_n$, $1 \in \operatorname{supp}(w)$ if and only if $\ell = 0$, and for
    types $BCD_n$, $n \in \operatorname{supp}(w)$ if and only if $j > 0$.

    The condition $\delta_{1,m} \delta_{0,ik} = 0$ in type $D_n$ is necessary
    to prevent $n - 1$ and $n$ from both being left or right descents of $w$.
    In particular, if $m = 1$ and $i = 0$, then $n-1$ and $n$ are left descents
    of $w$, whereas if $m = 1$ and $k = 0$, $n-1$ and $n$ are both right
    descents.  Finally, the condition $\delta_{0,m} \delta_{0,ik} \delta_{1,j}
    = 0$ is needed in type $D_n$ to ensure that $n - 1 \in
    \operatorname{supp}(w)$.

\end{proof}

The next proposition tells us the cardinalities of $BiGr^{\circ}(X_n)$. We
remark that these can be computed using results of Geck and Kim \cite{GK},
where they found the cardinalities of $BiGr(X_n)$. As the rank $n$ increases,
the set of bigrassmannian elements $BiGr(X_n)$ becomes quite small compared to
the size of the Weyl group $W$. This means the left and right reduction
processes will reduce arbitrary elements $(w, i, j) \in \Gamma(W)$ to a
relatively few number of cases.

\begin{proposition}

    \label{X_n, cardinalities}

    Let $BiGr^{\circ}(X_n)$ be as defined in (\ref{definition, X}). Then

\begin{enumerate}\itemsep=5mm

        \item $\displaystyle{|BiGr^{\circ}(A_n)| = n}$ (for $n \geq 1$),

        \item $\displaystyle{|BiGr^{\circ}(BC_n)| =
            \frac{n(n+1)(n+2)}{6}}$ (for $n \geq 2$),

        \item $\displaystyle{\left|BiGr^{\circ}(D_n)\right| =
            \frac{(n-2)(n^2 + 8n - 15)}{6}}$ (for $n \geq 4$),

        \item (Exceptional Lie types)

            \vspace{1mm}

            $\left|BiGr^{\circ}(E_6)\right| = 119$,
            $\left|BiGr^{\circ}(E_7)\right| = 641$,
            $\left|BiGr^{\circ}(E_8)\right| = 7406$,

            \vspace{2mm}

            $\left|BiGr^{\circ}(F_4)\right| = 76$,
            $\left|BiGr^{\circ}(G_2)\right| = 8$.

    \end{enumerate}

\end{proposition}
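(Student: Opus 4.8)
The plan is to read each cardinality off the explicit descriptions of $BiGr^{\circ}(X_n)$ obtained in Proposition (\ref{X, prop}) for the classical types, and to handle the exceptional types by enumeration.

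For type $A_n$, Proposition (\ref{X, prop})(1) identifies $BiGr^{\circ}(A_n)$ with the set of pairs $(i,k)$ of positive integers with $i + k = n+1$, via $(i,k) \mapsto w_{0,i,0,k,0}$, and distinct pairs give distinct permutation matrices; there are exactly $n$ of them. For types $B_n$ and $C_n$ (which have the same Weyl group), Proposition (\ref{X, prop})(2) identifies $BiGr^{\circ}(BC_n)$ with the quadruples $(i,j,k,m) \in \mathbb{Z}_{\geq 0}^4$ satisfying $i+j+k+m = n$ and $j \geq 1$, again parametrized injectively by the matrices $w_{0,i,j,k,m}$; substituting $j = j'+1$ reduces this to counting nonnegative solutions of $i + j' + k + m = n-1$, of which there are $\binom{n+2}{3} = \frac{n(n+1)(n+2)}{6}$.

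The type $D_n$ count is the step I expect to be the main obstacle. Here Proposition (\ref{X, prop})(3) presents $BiGr^{\circ}(D_n)$ as the \emph{set} $\{ w_{0,i,j,k,m}^{\pm} \}$ ranging over tuples with $i+j+k+m = n$ and $j \geq 1$ subject to the exclusions $\delta_{1,m}\delta_{0,ik} = 0$ and $\delta_{0,m}\delta_{0,ik}\delta_{1,j} = 0$, but --- in contrast with types $A$, $B$, $C$ --- the map sending a tuple together with a choice of sign to the corresponding signed permutation is not injective. Two kinds of redundancy must be pinned down: first, from the definition (\ref{definition, Dn, w_lijkm}), one has $w_{0,i,j,k,m}^{+} = w_{0,i,j,k,m}^{-}$ exactly when $w_{0,i,j,k,m}$ commutes with $\operatorname{diag}_n(-1)$, which by the block form (\ref{definition, ABCn, w_lijkm}) happens precisely when $m > 0$; and second, when $m = 0$ a sign-twisted element $w_{0,i,j,k,0}^{-}$ can coincide with a $w^{+}$ arising from a different tuple (for instance the ``pure $J$'' tuple $(0,\, i+j+k,\, 0,\, 0)$). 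I would therefore (i) determine exactly when two pieces of tuple-and-sign data yield the same signed permutation, producing a set of orbit representatives; (ii) count the admissible tuples by splitting into the cases $m = 0$ and $m \geq 1$ and using inclusion--exclusion on the locus $\{ik = 0\}$ to absorb the two $\delta$-conditions; and (iii) combine these counts with the multiplicities from (i) and simplify the resulting polynomial in $n$ to $\frac{(n-2)(n^2 + 8n - 15)}{6}$. Steps (i)--(ii) are routine once set up, but organizing the casework cleanly --- ideally as a short table of subtotals --- is where the real work lies.

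For the exceptional types $E_6, E_7, E_8, F_4, G_2$ there is no closed form, and the five stated values are obtained by direct enumeration of $BiGr^{\circ}(X_n) = \{ w \in W(X_n) : |\mathcal{D}_L(w)| = |\mathcal{D}_R(w)| = 1 \text{ and } \operatorname{supp}(w) = \mathbf{I} \}$; for $G_2$ this is immediate by running through the twelve group elements, while for the larger ranks it is a mechanical computation best carried out by computer. As an independent check, Proposition (\ref{graph, connected}) shows every bigrassmannian element has connected support, so $|BiGr(X_n)| = \sum_{Y} |BiGr^{\circ}(Y)|$ with $Y$ ranging over the connected subdiagrams of the Dynkin diagram of $X_n$; feeding in the values of $|BiGr(Y)|$ computed by Geck and Kim \cite{GK}, together with parts (1)--(3) and the smaller exceptional cases, then recovers $|BiGr^{\circ}(X_n)|$ recursively.
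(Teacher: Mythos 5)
Your proposal follows essentially the same route as the paper: read off types $A$ and $BC$ directly from the explicit parametrizations in Proposition (\ref{X, prop}), handle type $D_n$ by separating $m=0$ from $m\geq 1$ and resolving the parametrization redundancies, and enumerate the exceptional cases by computer (with the Geck--Kim tables as a cross-check). Your observation that $w^{+}_{0,i,j,k,m}=w^{-}_{0,i,j,k,m}$ precisely when $m>0$ matches the key simplification the paper uses to count $S_{+}$; for $S_{0}$ the paper pins down the redundancy via the two specific identities $w^{\pm}_{\ell,1,j,k,0}=w^{\mp}_{\ell,0,j+1,k,0}$ and $w^{\pm}_{\ell,i,j,1,0}=w^{\pm}_{\ell,i,j+1,0,0}$, which show every $m=0$ element has a unique representative with $i,j,k>0$ --- your parenthetical ``pure $J$'' example is a special case of iterating both identities (it requires $i=k=1$), so be careful in step (i) not to over-collapse; the general normal form is $i,j,k>0$, not $i=k=0$.
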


\begin{proof}

    Referring to the description of $BiGr^{\circ}(A_n)$ given in Proposition
    (\ref{X, prop}), we have $BiGr^{\circ}(A_n) = \left\{ w_{0, 1, 0, n, 0},
    w_{0, 2, 0, n - 1, 0},\dots, w_{0, n, 0, 1, 0} \right\}$. Hence $\lvert
    BiGr^{\circ}(A_n) \rvert = n$.

    Next we determine the cardinalities $\lvert BiGr^{\circ}(BC_n) \rvert$ and
    $\lvert BiGr^{\circ}(D_n) \rvert$, but first we introduce some notation
    that will be used throughout this part of the proof. Define the set of
    tuples
    \[
        \mathbb{T}_n := \left\{ (i, j, k, m) \in \left( \mathbb{Z}_{\geq 0}
        \right)^4 : i + j + k + m = n \right\}.
    \]
    For each nonnegative integer $i\in \mathbb{Z}_{\geq 0}$, define
    $\delta_{i,*} := 1$ and $\delta_{i,+} := 1 - \delta_{i, 0}$, where
    $\delta_{i, 0}$ is the Kronecker delta, and for $z_1, z_2, z_3, z_4 \in
    \left\{ 0, 1, *, + \right\}$, let $\lvert z_1, z_2, z_3, z_4\rvert$ denote
    the cardinality of the set
    \[
        \left\{ (i,j,k,m) \in \mathbb{T}_n : \delta_{i, z_1} \delta_{j, z_2}
        \delta_{k, z_3} \delta_{m, z_4} = 1 \right\}.
    \]
    We write the binomial coefficients $B(a, b) := a! / ((a-b)! \cdot b!)$ for
    nonnegative integers $a, b \in \mathbb{Z}_{\geq 0}$ with $a \geq b$.

    The cardinality of $BiGr^{\circ}(BC_n)$ can be computed by subtracting the
    number of elements in $\mathbb{T}_n$ of the form $(i, 0, k, m)$ from
    $\lvert \mathbb{T}_n \rvert$.  Therefore,
    \[
        \lvert BiGr^{\circ}(BC_n) \rvert = \lvert *,*,*,*\rvert - \lvert
        *,0,*,*\rvert = B(n + 3, n) - B(n + 2, n) = B(n + 2, 3).
    \]
    In order to compute the cardinality of $BiGr^{\circ}(D_n)$, we begin by
    partitioning the set $BiGr^{\circ}(D_n)$ into a disjoint union of the sets
    $S_+ \!:=\!  \left\{ w_{0, i, j, k, m}^\pm \!\!\in BiGr^{\circ}(D_n) : m >
    0\right\}$ and $S_0 \!:=\! \left\{ w_{0, i, j, k, m}^\pm \!\!\in
    BiGr^{\circ}(D_n) : m = 0 \right\}$.  If $m > 0$, $w_{0, i, j, k, m}^+ =
    w_{0, i, j, k, m}^-$.  Hence, we can choose to write the elements in $S_+$
    using, say, only the $+$ symbol.  With this, $S_+$ can be characterized as
    the set containing the elements in $BiGr^{\circ}(D_n)$ of the form $w_{0,
    i, j, k , m}^+$, where $m > 0$.  Choosing to only use the $+$ symbol means
    there is now a canonical way to write any given element of $S_+$, and from
    the conditions on $i,j,k,m$ listed in the description of
    $BiGr^{\circ}(D_n)$ in Proposition (\ref{X, prop}), we conclude that
    $\lvert S_+ \rvert$ equals the cardinality of the set
    \[
        \widehat{S} = \left\{(i, j, k, m) \in \mathbb{T}_n : j > 0, m > 0,
        \delta_{1,m} \delta_{0, ik} = 0 \right\}.
    \]
    The set $\widehat{S}$ is the disjoint union of the sets $\widehat{S}_1:=
    \left\{(i,j,k,m) \in \mathbb{T}_n : j > 0, m > 1 \right\}$ and
    $\widehat{S}_2 := \left\{ (i,j,k,1) \in \mathbb{T}_n : i,j,k > 0\right\}$.
    By the inclusion-exclusion principle, we obtain $\lvert \widehat{S}_1
    \rvert = \lvert *,*,*,*\rvert - \lvert *,0,*,*\rvert - \lvert *,*,*,0\rvert
    + \lvert *,0,*,0\rvert - \lvert *,*,*,1\rvert + \lvert *,0,*,1\rvert$.
    Thus, $ \lvert \widehat{S}_1 \rvert = B(n + 3, n) - 2B(n + 2, n) + B(n + 1,
    n) - B(n + 1, n - 1) + B(n, n - 1)$. This simplifies to $\lvert
    \widehat{S}_1 \rvert = B(n, 3)$. Using similar arguments, it can be shown
    that $\lvert \widehat{S}_2 \rvert = \lvert +,+,+,1\rvert = B(n - 2, 2)$.
    Hence, $\lvert S_+ \rvert = B(n, 3) + B(n - 2, 2)$. Now we aim to compute
    $\lvert S_0 \rvert$.  Observe first that $w_{\ell, 1, j, k, 0}^\pm =
    w_{\ell, 0, j + 1, k, 0}^\mp$ and $w_{\ell, i, j, 1, 0}^\pm = w_{\ell, i, j
    + 1, 0, 0}^\pm$.  Thus, every element in $S_0$ can be written in such a way
    that $i > 0$ and $k > 0$.  The condition $\delta_{0, m} \delta_{0,ik}
    \delta_{1, j} = 0$ from Proposition \ref{X, prop} is automatically
    satisfied. Thus, $S_0$ is the set of elements $w_{0, i, j, k, 0}^\pm$ with
    $(i, j, k, 0) \in \mathbb{T}_n$ and $i, j, k > 0$.  Furthermore, any given
    element in $S_0$ has a unique way to write it in the form $w_{0, i, j, k,
    0}^\pm$ (with $i, j, k, > 0$). Hence, we have $\lvert S_0 \rvert = 2\lvert
    +, +, +, 0\rvert  = 2B(n - 1, 2)$. Finally, we compute $\lvert
    BiGr^{\circ}(D_n) \rvert$ by summing the cardinalities $\lvert S_+ \rvert$
    and $\lvert S_0 \rvert$.  This gives us $\lvert BiGr^{\circ}(D_n) \rvert =
    \frac{1}{6}(n - 2)(n^2 + 8n - 15)$.

    We will focus now on the exceptional types. First, $\lvert
    BiGr^{\circ}(G_2) \rvert = 8$ because every element of the Weyl group
    except the identity, the simple reflections, and the longest element are
    bigrassmannian elements of full support. We can find the remaining
    cardinalities $\lvert BiGr^{\circ}(E_6) \lvert$, $\lvert BiGr^{\circ}(E_7)
    \lvert$, $\lvert BiGr^{\circ}(E_8) \lvert$, and $\lvert BiGr^{\circ}(F_4)
    \lvert$ via a brute force approach, inspecting each element of the Weyl
    group one at a time, or by invoking the results of Geck and Kim found in
    \cite{GK}.  With the aid of a computer, the brute force approach can be
    completed in a relatively short amount of time except for type $E_8$, where
    the size of the Weyl group $W$ is too large to exhaustively study each
    individual element in a reasonable timespan. In this setting, one can
    instead search the poset $(W, \leq_L)$ breadth first, starting with the
    minimal element (i.e. the identity element). Whenever a new element $w$ is
    encountered in the search, determine first if it belongs to
    $BiGr^{\circ}(E_8)$, then either (1) append the covers of $w$ (with respect
    to the ordering $\leq_L$) to the search queue if $w$ has fewer than two
    right descents, or (2) append nothing to the search queue if $w$ has more
    than one right descent. This means that Weyl group elements greater than
    elements $w$ having more than one right descent will not be considered in
    this search. However we need not consider these elements anyway, because
    every element greater than such $w$ also has more than one right descent.
    This search strategy, in effect, significantly reduces the total number of
    Weyl group elements to inspect.

\end{proof}

\section{An orthogonality condition}

\label{section, orthogonality condition}

We begin this section by proving that every element in $\Gamma(W)$ of the form
$(w, i, j)$ with $w$ bigrassmannian can be reduced by a sequence of reductions
\[
    (w, i, j)  \xlongrightarrow{\hspace{2mm} * \hspace{2mm}} \cdots
    \xlongrightarrow{\hspace{2mm} * \hspace{2mm}} (v, i^\prime, j^\prime),
\]
using the relations $\xlongrightarrow{(k, L)}$ and $\xlongrightarrow{(k, R)}$
with $k \in \mathbf{I}$, to an element $(v, i^\prime, j^\prime)$ such that $v$
is bigrassmannian and also satisfies a certain \textit{orthogonality condition}
(which will be defined in Proposition (\ref{proposition, orthogonality
condition})).  The set of bigrassmannian elements satisfying this particular
orthogonality condition will be denoted
\[
    BiGr_\perp(X_n),
\]
where $X_n$ is the underlying Lie type.  The results of this section,
specifically Proposition (\ref{proposition, orthogonality condition}), imply
that elements of $\Gamma(W)$ of the form $(v, i^\prime, j^\prime)$ with $v \in
BiGr_\perp(X_n)$ cannot be further reduced under any of the relations
$\xlongrightarrow{\hspace{2mm} L \hspace{2mm}}$, $\xlongrightarrow{\hspace{2mm}
R \hspace{2mm}}$, $\xlongrightarrow{\hspace{2mm} (k, L) \hspace{2mm}}$,  or
$\xlongrightarrow{\hspace{2mm} (k, R) \hspace{2mm}}$ for any $k \in
\mathbf{I}$.

Recall we classify bigrassmannian elements by their support. The support of a
bigrassmannian element $w$ can be identified with a connected subgraph of the
underlying Dynkin diagram (see Proposition (\ref{graph, connected})).  Viewing
$w$ as an element of the Weyl group associated to this subgraph, it has full
support. With this, we define
\[
    BiGr_\perp^\circ(X_n) := \left\{ w \in BiGr_\perp(X_n) :
    \operatorname{supp}(w) = \mathbf{I} \right\}
\]
(i.e. the set of bigrassmannian elements of full support and satisfying the
orthogonality condition).

We give an explicit description of $BiGr_\perp^\circ(X_n)$ by listing all of
its elements. Proposition (\ref{Yn, description}) covers types $ABCD$, whereas
we forgo giving an explicit description for the exceptional types here, but
rather include this in Sections (\ref{section, G2}) and (\ref{section, F4}) for
types $G_2$ and $F_4$, respectively, and as an appendix (Appendix
(\ref{appendix, elements E_n})) for types $E_6$, $E_7$, and $E_8$.

As it turns out, $BiGr_\perp^\circ(X_n)$ has at most $5$ elements whenever
$X_n$ is of type $ABCD$. Moreover, $BiGr_\perp^\circ(A_n)$ is empty for $n > 3$
and $\lvert BiGr_\perp^\circ(BCD_n) \rvert = 4$ for $n > 4$.  Conveniently,
this means the problem of computing nilpotency indices is significantly
simplified by reducing to a small number of cases.

\noindent
\begin{minipage}{\textwidth}
\begin{theorem}

    \label{theorem, second stage reduction}

    Let $w\in BiGr(X_n)$, and assume $(w, i, j) \in \Gamma(W)$.

    \begin{enumerate}

        \item

            \label{theorem, second stage reduction, 1}

                Suppose there exists $k\in \mathbf{I} \backslash \left\{ i
                \right\}$ so that $s_ks_i = s_is_k$ and $s_kw = ws_p$ for some
                $p\in \mathbf{I}$ with $s_j s_p \neq s_p s_j$.  Then there
                exists $v \in W$ and $j^\prime \in \mathbf{I}$ such that
                \[
                    (w, i, j) \xlongrightarrow{\hspace{2mm} (k, R)
                    \hspace{2mm}} (v, i, j^\prime).
                \]

        \item

            \label{theorem, second stage reduction, 2}

            Suppose there exists $k\in \mathbf{I} \backslash \left\{ j
            \right\}$ so that $s_ks_j = s_js_k$ and $s_pw = ws_k$ for some
            $p\in \mathbf{I}$ with $s_i s_p \neq s_p s_i$. Then there exists $v
            \in W$ and $i^\prime \in \mathbf{I}$ such that
            \[
                (w, i, j) \xlongrightarrow{\hspace{2mm} (k, L) \hspace{2mm}}
                (v, i^\prime, j).
            \]

    \end{enumerate}

\end{theorem}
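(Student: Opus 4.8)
The plan is to prove (1) in full and then obtain (2) by applying the involution $x\mapsto x^{*}$ together with Proposition~\ref{L, R, equivalent, inverse}. Throughout, $i$ and $j$ are the unique left and right descents of the bigrassmannian element $w$. Unwinding the definition (\ref{definition, (k, R) reduction}), for (1) it suffices to exhibit $v\in W$ and $j'\in\mathbf{I}$ with (a) $\ell(v)<\ell(w)$, (b) $(s_kw,i,j)\xlongrightarrow{L}(w,i,j)$, (c) $(s_kw,i,j)\xlongrightarrow{R}(v,i,j')$, and (d) $v\leq_R w$. First I would collect the elementary consequences of the hypotheses. Since $k\neq i$ and $i$ is the only left descent of $w$, the relation $s_kw=ws_p$ forces $\ell(s_kw)=\ell(w)+1$ (the alternative would make $k$ a left descent of $w$), hence $\ell(ws_p)=\ell(w)+1$, so $p\notin{\mathcal D}_R(w)$ and $w(\alpha_p)>0$; conjugating, $w(\alpha_p)=\alpha_k$ and $w^{-1}(\alpha_k)=\alpha_p$. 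Commutation of $s_i$ and $s_k$ gives $\langle\alpha_i,\alpha_k^{\vee}\rangle=0$, so $s_k(\alpha_i)=\alpha_i$ and $s_i(\alpha_k)=\alpha_k$. With these in hand, (b) is immediate once $(s_kw,i,j)\in\Gamma(W)$ is known: $w\leq_L s_kw$ is clear, $(s_kw)^{-1}(\alpha_i)=w^{-1}s_k(\alpha_i)=w^{-1}(\alpha_i)$, and the last coordinate is unchanged.

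The crux is showing $(s_kw,i,j)\in\Gamma(W)$, and in particular that $j\in{\mathcal D}_R(s_kw)$, i.e.\ $w\bigl(s_p(\alpha_j)\bigr)<0$. I would argue by contradiction. Put $\gamma:=s_p(\alpha_j)=\alpha_j+c\alpha_p$ with $c=-\langle\alpha_j,\alpha_p^{\vee}\rangle\in\{1,2,3\}$ (a positive root, and $c>0$ because $s_js_p\neq s_ps_j$). Then $w(\gamma)=w(\alpha_j)+c\alpha_k$ with $w(\alpha_j)<0$. If $w(\gamma)$ were a positive root, then writing $w(\alpha_j)=-\beta$ with $\beta\in\Delta_+$, the identity $\beta=c\alpha_k-w(\gamma)$ forces every simple coordinate of $\beta$ other than the $\alpha_k$-coordinate to vanish, so $\beta$ is a positive multiple of $\alpha_k$, hence $\beta=\alpha_k$; but then $ws_jw^{-1}$, being the reflection in $w(\alpha_j)=-\alpha_k$, equals $s_k=ws_pw^{-1}$, giving $s_j=s_p$, contradicting $s_js_p\neq s_ps_j$. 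Thus $w(\gamma)<0$ and $j\in{\mathcal D}_R(s_kw)$, so $\ell((s_kw)s_j)=\ell(w)$. Since $(w,i,j)\in\Gamma(W)$ gives $\ell(s_iws_j)=\ell(w)-2$, we get $\ell\bigl(s_i(s_kw)s_j\bigr)=\ell(s_ks_iws_j)\le\ell(s_iws_j)+1=\ell(w)-1<\ell((s_kw)s_j)$, so $i\in{\mathcal D}_L((s_kw)s_j)$ and $(s_kw,i,j)\in\Gamma(W)$; this also completes (b).

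For (c) I would show that $s_i(s_kw)=s_iws_p$ has the two distinct right descents $j$ and $p$. Indeed $(s_iw)(\alpha_p)=s_i(\alpha_k)=\alpha_k>0$, so $\ell(s_iws_p)=\ell(s_iw)+1=\ell(w)$ and hence $p\in{\mathcal D}_R(s_iws_p)$; and $(s_iws_p)(\alpha_j)=s_i\bigl(w(\gamma)\bigr)$ is negative, since $w(\gamma)<0$ and $w(\gamma)\neq-\alpha_i$ (otherwise $s_kw(\alpha_j)=w s_p(\alpha_j)=-\alpha_i$ would give $w(\alpha_j)=-\alpha_i$, forcing $s_iws_j=w$ and contradicting $\ell(s_iws_j)=\ell(w)-2$), so $j\in{\mathcal D}_R(s_iws_p)$. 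As $j\neq p$, we have $w_0(j,p)\leq_L s_i(s_kw)$, so the elementary reduction (\ref{definition, elementary reduction, right}) applied to $(s_kw,i,j)$ with $t=p$ gives $(s_kw,i,j)\xlongrightarrow{R}(v,i,j')$ with $v:=(s_kw)s_jw_0(j,p)$ and $j'$ equal to $p$ or $j$ according as $s_js_p$ has order $3$ or not. The length identities established in the proof of that reduction give $\ell(v)=\ell(s_kw)+1-\ell(w_0(j,p))=\ell(w)+2-\ell(w_0(j,p))$; since $s_js_p$ do not commute, $\ell(w_0(j,p))\ge 3$, so $\ell(v)\le\ell(w)-1$, which is (a). Finally $v=w\bigl(s_ps_jw_0(j,p)\bigr)$, so $v^{-1}w=w_0(j,p)s_js_p$ has length $\ell(w_0(j,p))-2$ in the dihedral group $\langle s_j,s_p\rangle$, whence $\ell(v)+\ell(v^{-1}w)=\ell(w)$, i.e.\ $v\leq_R w$, which is (d). Having verified (a)--(d), we conclude $(w,i,j)\xlongrightarrow{(k,R)}(v,i,j')$, proving (1).

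Part (2) follows by duality: applying (1) to $(w,i,j)^{*}=(w^{-1},j,i)$ — whose hypotheses hold because $w^{-1}$ is bigrassmannian, $s_ks_j=s_js_k$, $s_kw^{-1}=w^{-1}s_p$ (invert $s_pw=ws_k$), and $s_is_p\neq s_ps_i$ — gives $(w^{-1},j,i)\xlongrightarrow{(k,R)}(v',j,i')$ for some $v',i'$, and Proposition~\ref{L, R, equivalent, inverse}(2) then yields $(w,i,j)\xlongrightarrow{(k,L)}\bigl((v')^{-1},i',j\bigr)$, which is the assertion of (2). I expect the main obstacle to be the sign statement $w(s_p(\alpha_j))<0$ in the second paragraph: this is the one place where the non-commutation hypothesis $s_js_p\neq s_ps_j$ is genuinely used (it rules out the degenerate case $s_j=s_p$), and the coordinate-counting contradiction above seems the cleanest route; everything else is length bookkeeping in Weyl groups together with the already-established structure of elementary reductions.
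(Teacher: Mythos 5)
Your proof is correct, but it diverges from the paper's argument in two noteworthy places, and one of these is actually a detour you could have avoided.

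The biggest difference is in establishing $j\in{\mathcal D}_R(s_kw)$, i.e.\ the sign statement $w\bigl(s_p(\alpha_j)\bigr)<0$. You flag this as the main obstacle and deploy a coordinate-counting contradiction (showing $w(\alpha_j)=-\alpha_k$ would force $s_j=s_p$). The paper does something much shorter: having established $\ell(s_kw)=\ell(w)+1$ (which you also note), it observes that this means $w\leq_L s_kw$, and since $v\leq_L u$ implies ${\mathcal D}_R(v)\subseteq{\mathcal D}_R(u)$, one gets $j\in{\mathcal D}_R(s_kw)$ for free. No root-coordinate bookkeeping is required, and the non-commutation hypothesis $s_js_p\neq s_ps_j$ is not needed at this stage at all --- the paper instead invokes it later, when verifying that $vs_{j'}$ is not the identity so that $\{i\}={\mathcal D}_L(vs_{j'})$. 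Your argument is valid, but your remark that the coordinate-counting ``seems the cleanest route'' is not right; the weak-order monotonicity of right descent sets is cleaner and reveals that the hypothesis is used elsewhere.

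The second difference is structural and is a genuine improvement in economy: you obtain the $R$-reduction of $(s_kw,i,j)$ by showing $s_i(s_kw)$ has the two right descents $j$ and $p$, and then invoking the already-proved elementary-reduction proposition (\ref{definition, elementary reduction, right}) with $t=p$. The paper instead introduces auxiliary dihedral elements $x,y$ and constructs $v:=ws_jx^{-1}s_{j'}$ by hand, redoing several length computations. A short check shows your $v=(s_kw)s_jw_0(j,p)$ coincides with the paper's $v$, so the two routes land in the same place; leveraging the covering-relation machinery is arguably the more conceptual approach. Your length identity $\ell(v)=\ell(s_kw)-\ell(w_0(j,p))+1$ and the computation $\ell(v^{-1}w)=\ell(w_0(j,p))-2$ in the parabolic subgroup $\langle s_j,s_p\rangle$ are both correct, giving $\ell(v)<\ell(w)$ and $v\leq_R w$.

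Finally, your derivation of part (2) by applying part (1) to $(w,i,j)^*$ and using Proposition (\ref{L, R, equivalent, inverse}) is clean and avoids having to repeat a symmetric argument; the paper merely asserts that the proof of (2) is ``similar.''
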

\end{minipage}

\begin{proof}

    We will prove (1). The proof of (2) is similar.  We begin by showing that
    $(s_kw, i , j) \in \Gamma(W)$, and that we have the reduction $(s_k, i, j)
    \xlongrightarrow{L} (w, i, j)$. Observe that since $i$ is the only left
    descent of $w$, and $k \neq i$, then $\ell(s_kw) = \ell(ws_p) = \ell(w) +
    1$.  Hence $w \leq_L s_kw$. This implies that every right descent of $w$ is
    also a right descent of $s_kw$. In particular, $j \in {\mathcal
    D}_R(s_kw)$.  Thus, $\ell(s_kws_j) = \ell(s_kw) - 1 = \ell(w)$.  Since $(w,
    i, j) \in \Gamma(W)$, this means that $j \in {\mathcal D}_R(w)$ and $i\in
    {\mathcal D}_L(ws_j)$.  Hence, we obtain $\ell(s_iws_j) = \ell(ws_j) - 1 =
    \ell(w) - 2$.  Furthermore, we have
    \[
        s_kws_j = s_ks_i \cdot s_iws_j = s_is_k \cdot s_iws_j.
    \]
    Thus, $\ell(s_kws_j) = \ell(s_iws_j) + \ell(s_is_k)$. Hence, $s_i \leq_R
    s_is_k \leq_R s_kws_j$. Therefore, $i \in {\mathcal D}_L(s_kws_j)$, and
    $(s_kw, i , j) \in \Gamma(W)$. Since $s_is_k = s_ks_i$, then it follows
    that $w \cdot (s_kw)^{-1}(\alpha_i) = s_k(\alpha_i) = \alpha_i$.  We
    established earlier that $w \leq_L s_kw$.  Hence, the reduction $(s_kw, i,
    j) \xlongrightarrow{L} (w, i, j)$ holds.

    For the remainder of this proof, let $x$ be the Weyl group element $s_p$,
    $s_js_p$, or $s_js_ps_js_p$ whenever $s_ps_j$ has order $3$, $4$, or $6$,
    respectively.  Put $y\in W$ equal to $s_js_p$, $s_ps_js_p$, or
    $s_ps_js_ps_js_p$ in these respective situations.  Let $j^\prime \in
    \mathbf{I}$ be uniquely determined by the condition $s_{j^\prime}y = ys_j$
    (i.e. $j^\prime = p$ whenever $s_js_p$ has order 3, and $j^\prime = j$ if
    $s_js_p$ has order $4$ or $6$), and define $v := w s_j x^{-1}
    s_{j^\prime}$.

    It remains to show that $(v, i, j^\prime) \in \Gamma(W)$, $v \leq_R w$,
    $\ell(v) < \ell(w)$, and that we have the reduction $(s_kw, i, j)
    \xlongrightarrow{R} (v, i, j^\prime)$.

    We proceed by proving next that $\ell(v) < \ell(w)$.  Since $p$ and $j$ are
    the only right descents of $s_kw$, it follows that $x \leq_L ws_j$.  We
    have $v s_{j^\prime} \cdot x = ws_j$. Hence, $\ell(v s_{j^\prime}) +
    \ell(x) = \ell(ws_j) = \ell(w) - 1$.  Therefore $\ell(v s_{j^\prime}) <
    \ell(w) - 1$. Hence $\ell(v) < \ell(w)$.

    We find it useful to next show that $v s_{j^\prime} \leq_R w$. We also
    establish a couple of helpful identities involving lengths of various Weyl
    group elements. Observe that $xs_js_p = ys_j = s_{j^\prime}y$.  Thus, we
    have
    \begin{equation}
        \label{eqn 1}
        s_k w = w s_j x^{-1} \cdot x \cdot s_j s_p = w s_j x^{-1} \cdot y s_j =
        w s_j x^{-1} \cdot s_{j^\prime} y = v \cdot y.
    \end{equation}
    Hence, $w = v s_{j^\prime} \cdot y s_j s_p$. Since $\ell(ys_js_p) = \ell(y)
    = \ell(x) + 1$, we have
    \begin{equation}
        \label{eqn 2}
        \ell(v s_{j^\prime}) + \ell(ys_js_p) = \ell(v s_{j^\prime} ) + \ell(x)
        + 1 = \ell(v s_{j^\prime}) + \ell(y) = \ell(w).
    \end{equation}
    Thus, $v s_{j^\prime} \leq_R w$.

    We prove next that $(v, i, j^\prime) \in \Gamma(W)$. That is to say, we
    will show $j^\prime \in {\mathcal D}_R(v)$ and $i \in {\mathcal D}_L(v
    s_{j^\prime})$. In fact, we will prove the stronger result that $\left\{ i
    \right\} = {\mathcal D}_L(v s_{j^\prime})$.  We begin first by proving that
    $j^\prime \in {\mathcal D}_R(v)$. We have already established that
    $\ell(s_k w) = \ell(w ) + 1$. Since $\ell(v s_{j^\prime}) = \ell(w) -
    \ell(y)$ (from (\ref{eqn 2}) above), then either $\ell(v) = \ell(w) -
    \ell(y) - 1$, or $\ell(v) = \ell(w) - \ell(y) + 1$.  However, we have $s_kw
    = v \cdot y$ (from (\ref{eqn 1}) above). Thus $\ell(v \cdot y) = \ell(s_kw)
    = \ell(w) + 1$.  Since $\ell(w) + 1 = \ell(v y) \leq \ell(v) + \ell(y)$,
    then $\ell(v) + \ell(y) = \ell(w) + 1$.  Thus $\ell(v s_{j^\prime}) =
    \ell(v) - 1$. That is to say, $j^\prime \in {\mathcal D}_R(v)$.  Now we
    prove that $\left\{ i \right\} = {\mathcal D}_L(v s_{j^\prime})$.  Since $v
    s_{j^\prime} \leq_R w$, every left descent of $v s_{j^\prime}$ is a left
    descent of $w$.  However, since $i$ is the only left descent of $w$, then
    either $v s_{j^\prime}$ lacks a left descent, or $\left\{i \right\} =
    {\mathcal D}_L(v s_{j^\prime})$.  To reach a contradiction, suppose $v
    s_{j^\prime}$ lacks a left descent. Thus, $v s_{j^\prime}$ is the identity
    element.  In this situation, $w = xs_j$ (because $v = w s_j x^{-1}
    s_{j^\prime}$).  Hence, $w$ is equal to $s_p s_j$, $s_j s_p s_j$, or $s_j
    s_p s_j s_p s_j$ whenever the order of $s_j s_p$ is $3$, $4$, or $6$,
    respectively. In all cases, $j^\prime$ is the only left descent of $w$
    ($=xs_j$). That is to say, $j^\prime = i$. We have $w s_p = s_jw$ when the
    order of $s_j s_p$ is $3$, whereas $w s_p = s_p w$ if the order of $s_j
    s_p$ is $4$ or $6$.  Hence, $k = j$ if $s_j s_p$ has order $3$, and $k = p$
    if $s_j s_p$ has order $4$ or $6$. This contradicts the hypothesis that
    $s_i s_k = s_k s_i$. Hence $\left\{i \right\} = {\mathcal D}_L(v
    s_{j^\prime})$. This concludes the proof that $(v, i, j^\prime) \in
    \Gamma(W)$.

    Next, we verify that the reduction $(s_k w, i, j) \xlongrightarrow{R} (v,
    i, j^\prime)$ holds. This means we will show that $v \leq_R s_k w$ and $s_k
    w(\alpha_j) = v(\alpha_{j^\prime})$.  We have already established that
    $j^\prime \in {\mathcal D}_R(v)$. Hence, $\ell(vs_{j^\prime}) = \ell(v) -
    1$. We have also observed that $\ell(s_k w) = \ell(w) + 1$. These
    identities together with (\ref{eqn 2}) imply $\ell(s_k w) = \ell(v) +
    \ell(y)$. Since $s_k w = v y$, $v \leq_R s_k w$.  Additionally, we have
    $s_kw(\alpha_j) = v y (\alpha_j) = v(\alpha_{j^\prime})$.  Hence, we have
    the reduction $(s_kw, i, j) \xlongrightarrow{R} (v, i, j^\prime)$.

    To prove that $v \leq _R w$, we observe first that, from (\ref{eqn 1}), we
    have $w = v y s_p$. Using (\ref{eqn 2}), together with the identities
    $\ell(v s_{j^\prime}) = \ell(v) - 1$ and $\ell(ys_p) = \ell(y) - 1$, we
    conclude $\ell(v) + \ell(ys_p) = \ell(v s_{j^\prime}) + \ell(y) = \ell(w)$.
    Thus, $v \leq_R w$.

\end{proof}

The following proposition gives a characterization of those $(w, i, j) \in
\Gamma(W)$ with $w \in BiGr(X_n)$ not involved in any reduction of the form
$(w, i, j) \xlongrightarrow{\hspace{2mm} (k, R) \hspace{2mm}} (v, i, j^\prime)$
or $(w, i, j) \xlongrightarrow{\hspace{2mm} (k, L) \hspace{2mm}} (v, i^\prime,
j)$ for any $k\in \mathbf{I}$.

\begin{proposition}

    \label{proposition, orthogonality condition}

    Suppose $w \in BiGr(X_n)$ and $(w, i, j) \in \Gamma(W)$. The
    following are equivalent.

    \begin{enumerate}

        \item (Orthogonality condition) For every simple root $\alpha \in \Pi$,
            \[
                (\langle \alpha, \alpha_j \rangle = 0 \text{ and } w(\alpha)
                \in \Pi) \text{ implies } \langle w(\alpha), \alpha_i \rangle =
                0,
            \]
            and
            \[
                (\langle \alpha, \alpha_i \rangle = 0 \text{ and }
                w^{-1}(\alpha) \in \Pi) \text{ implies } \langle
                w^{-1}(\alpha), \alpha_j \rangle = 0.
            \]

        \item The element $(w, i, j) \in \Gamma(W)$ is not involved in any
            reduction of the form $(w, i, j) \xlongrightarrow{\hspace{2mm} (k,
            R) \hspace{2mm}} (v, i^\prime, j^\prime)$ or $(w, i, j)
            \xlongrightarrow{\hspace{2mm} (k, L) \hspace{2mm}} (v, i^\prime,
            j^\prime)$ for any $k\in \mathbf{I}$.

    \end{enumerate}

\end{proposition}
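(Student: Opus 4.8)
The plan is to prove each of the two implications by contraposition, using the involution $x\mapsto x^*$ on $\Gamma(W)$ to halve the work in each direction. The starting observation is that the orthogonality condition in (1) is itself invariant under $x\mapsto x^*$: passing from $(w,i,j)$ to $(w^{-1},j,i)$ interchanges the two displayed implications. Since $w$ is bigrassmannian exactly when $w^{-1}$ is, and since $x\xlongrightarrow{(k,L)}y$ if and only if $x^*\xlongrightarrow{(k,R)}y^*$ by Proposition~(\ref{L, R, equivalent, inverse}), it suffices, in each direction, to treat only the half concerning $\xlongrightarrow{(k,R)}$ and the first displayed implication.

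For $(2)\Rightarrow(1)$ I would argue the contrapositive via Theorem~(\ref{theorem, second stage reduction}). Suppose (1) fails; say its first displayed implication fails, so there is a simple root $\alpha_k$ with $\langle\alpha_k,\alpha_j\rangle=0$, $w(\alpha_k)=\alpha_p\in\Pi$, but $\langle\alpha_p,\alpha_i\rangle\neq0$. Then $k\neq j$ (orthogonal simple roots are distinct) and $p\neq i$ (otherwise $\alpha_k=w^{-1}(\alpha_i)<0$, contradicting $i\in{\mathcal D}_L(w)$), which translates to $s_ks_j=s_js_k$, $s_pw=ws_k$, and $s_is_p\neq s_ps_i$. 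These are precisely the hypotheses of part~(2) of Theorem~(\ref{theorem, second stage reduction}), which then produces a reduction $(w,i,j)\xlongrightarrow{(k,L)}(v,i^\prime,j)$, so (2) fails. If instead the second displayed implication of (1) fails, the mirror argument — now with $w^{-1}$ in the role of $w$, invoking part~(1) of the same theorem — yields a reduction $(w,i,j)\xlongrightarrow{(k,R)}(v,i,j^\prime)$.

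For $(1)\Rightarrow(2)$, again by contraposition and by the $*$-symmetry, we may assume $(w,i,j)\xlongrightarrow{(k,R)}(v,i^\prime,j^\prime)$ and must exhibit a violation of the second displayed implication of (1). Unwinding (\ref{definition, (k, R) reduction}): from $(s_kw,i,j)\xlongrightarrow{L}(w,i,j)$ we read off $k\neq i$, $\langle\alpha_k,\alpha_i\rangle=0$, $\ell(s_kw)=\ell(w)+1$, and $(s_kw,i,j)\in\Gamma(W)$; and since $\ell(v)<\ell(w)<\ell(s_kw)$, the reduction $(s_kw,i,j)\xlongrightarrow{R}(v,i^\prime,j^\prime)$ is nontrivial, so Theorem~(\ref{theorem, right descent reduction, 1}) tells us $s_is_kw$ has more than one right descent. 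The core of the argument is then a descent-set computation. Using $\langle\alpha_i,\alpha_k\rangle=0$ and $k\notin{\mathcal D}_L(w)=\{i\}$ one finds $s_is_kw=s_ks_iw$, $\ell(s_is_kw)=\ell(w)$, and ${\mathcal D}_R(s_iw)\subseteq\{j\}$; since left-multiplication by a simple reflection enlarges a right-descent set by at most one index, $|{\mathcal D}_R(s_is_kw)|\leq|{\mathcal D}_R(s_iw)|+1\leq2$. Hence ${\mathcal D}_R(s_is_kw)$ has exactly two elements, which forces $w^{-1}(\alpha_k)$ to be a simple root $\alpha_p$ with $p\neq j$ and ${\mathcal D}_R(s_is_kw)=\{j,p\}$. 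It remains to exclude $\langle\alpha_j,\alpha_p\rangle=0$: in that case $w_0(j,p)=s_js_p$, and by Theorem~(\ref{theorem, to elementary reductions}) the nontrivial reduction $(s_kw,i,j)\xlongrightarrow{R}(v,i^\prime,j^\prime)$ begins with an elementary reduction out of $(s_kw,i,j)$, which is unique (the index $t$ must satisfy $\{j,t\}\subseteq{\mathcal D}_R(s_is_kw)$, so $t=p$) and lands on $(w,i,j)$ because $s_kw\cdot s_j\cdot w_0(j,p)=s_kw\cdot s_p=w$ (using $s_kw=ws_p$); but $w$ bigrassmannian gives $s_iw\leq_L w$, hence ${\mathcal D}_R(s_iw)\subseteq{\mathcal D}_R(w)=\{j\}$, so by Theorem~(\ref{theorem, right descent reduction, 1}) no $\xlongrightarrow{R}$-reduction out of $(w,i,j)$ decreases length — contradicting $\ell(v)<\ell(w)$. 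Thus $\langle\alpha_k,\alpha_i\rangle=0$, $w^{-1}(\alpha_k)=\alpha_p\in\Pi$, and $\langle\alpha_p,\alpha_j\rangle\neq0$: the second displayed implication of (1) fails.

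I expect the last paragraph to be the main obstacle. Reading off the required local datum — that $w^{-1}(\alpha_k)$ is a simple root and is not orthogonal to $\alpha_j$ — from the mere existence of a $(k,R)$-reduction (a priori an arbitrarily long composition of elementary reductions) is where the real content sits; the counting inequality $|{\mathcal D}_R(s_ku)|\leq|{\mathcal D}_R(u)|+1$ together with Theorem~(\ref{theorem, right descent reduction, 1}) is the lever that makes it tractable, and everything else reduces to bookkeeping with descent sets and the definitions of the relations.
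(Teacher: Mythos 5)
Your proof is correct and takes essentially the same approach as the paper: both directions lean on the same pair of tools (Theorem~(\ref{theorem, second stage reduction}) for $(2)\Rightarrow(1)$, and the descent-reduction theorems plus the elementary-reduction decomposition of Theorem~(\ref{theorem, to elementary reductions}) for $(1)\Rightarrow(2)$), and your argument is simply the $*$-dual of the paper's, working with $(k,R)$-reductions and right-descent sets where the paper works with $(k,L)$-reductions and left-descent sets.
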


\begin{proof}

    Using Lemma (\ref{wj=kw}), the statements in Theorem (\ref{theorem, second
    stage reduction}) can be rephrased so that they state, in effect, that (2)
    implies (1).

    Next we prove (1) implies (2). Suppose there exists an index $k \in
    \mathbf{I}$ such that $(w, i, j) \in \Gamma(W)$ is involved in a reduction
    of the form $(w, i, j) \xlongrightarrow{\hspace{2mm} (k, R) \hspace{2mm}}
    (v, i^\prime, j^\prime)$ or $(w, i, j) \xlongrightarrow{\hspace{2mm} (k, L)
    \hspace{2mm}} (v, i^\prime, j^\prime)$. Let us assume $(w, i, j)
    \xlongrightarrow{\hspace{2mm} (k, L) \hspace{2mm}} (v, i^\prime,
    j^\prime)$. We will show the simple root $\alpha_k$ violates the conditions
    in statement (1). Specifically, we will prove $\langle \alpha_k , \alpha_j
    \rangle = 0$, $w (\alpha_k) \in \Pi$, and $\langle w(\alpha_k), \alpha_i
    \rangle \neq 0$. If we were to assume instead that $(w, i, j)$ is involved
    in a reduction of the form $(w, i, j) \xlongrightarrow{\hspace{2mm} (k, R)
    \hspace{2mm}} (v, i^\prime, j^\prime)$, we can proof in a similar manner
    that $\langle \alpha_k, \alpha_i \rangle = 0$, $w^{-1}(\alpha_k) \in \Pi$,
    and $\langle w^{-1}(\alpha_k, \alpha_j \rangle \neq 0$.

    With this, suppose $(w, i, j) \xlongrightarrow{\hspace{2mm} (k, L)
    \hspace{2mm}} (v, i^\prime, j^\prime)$. By definition (\ref{definition, (k,
    L) reduction}), $\ell(v) < \ell(w)$, $v \leq_L w$, $(ws_k, i, j)
    \xlongrightarrow{\hspace{2mm} R \hspace{2mm}} (w, i, j)$, and $(ws_k, i, j)
    \xlongrightarrow{\hspace{2mm} L \hspace{2mm}} (v, i^\prime, j^\prime)$.
    Thus, $j = j^\prime$, $w \leq_R w s_k$, $ws_k(\alpha_j) = w(\alpha_j)$, $v
    \leq_L ws_k$, and $v(\alpha_{i^\prime}) = ws_k(\alpha_i)$.  Hence, we have
    $s_k (\alpha_j) = \alpha_j$. Therefore, $\langle \alpha_k, \alpha_j \rangle
    = 0$ and $s_j s_k = s_k s_j$. By Theorem (\ref{theorem, left descent
    reduction}), $ws_ks_j$ has more than one left descent. Since $i$ is the
    only left descent of $w$, this implies that at least one of $w(\alpha_k)$
    or $ws_k(\alpha_j)$ is a simple root. However, since $j$ is a right descent
    of $w$, then $ws_k(\alpha_j) = w(\alpha_j) < 0$. In particular,
    $ws_k(\alpha_j)$ is not a simple root.  Thus $w(\alpha_k)$ is a simple
    root, say $\alpha_p$.  Therefore, $s_p w = ws_k$.

    It remains to prove that $\langle \alpha_p, \alpha_i \rangle \neq 0$.
    Observe that $i$ and $p$ are the only left descents of $ws_k$.  To reach a
    contradiction, suppose $\langle \alpha_p, \alpha_i \rangle = 0$.
    Therefore, it follows that $s_p s_i = s_i s_p$, and in this setting,
    $(ws_k, i, j) \xlongrightarrow{\hspace{2mm} L \hspace{2mm}} x$ only if $x =
    (ws_k, i, j)$ or $x = (w, i, j)$.  Since $w$ has only one left descent,
    $(w, i, j)$ cannot be further reduced under $\xlongrightarrow{L}$. This
    contradicts $(ws_k, i, j) \xlongrightarrow{\hspace{2mm} L \hspace{2mm}} (v,
    i^\prime, j^\prime)$ for some $v\in W$ with $\ell(v) < \ell(w)$.

\end{proof}

Recall the definitions (\ref{definition, ABCn, w_lijkm}) and (\ref{definition,
Dn, w_lijkm}) of $w_{\ell,i,j,k,m}$ and $w_{\ell,i,j,k,m}^\pm$.

\begin{proposition}

    \label{second stage reductions, ABCD}

    $ $

    \begin{enumerate}

        \item We have the following reductions in type $A_n$ ($n > 1$):

            \begin{enumerate}

                \item $\llbracket w_{0, i, 0, k, 0} \rrbracket \,
                    \xlongrightarrow{ \hspace{3mm} (1, R)\hspace{3mm}}
                    \llbracket w_{1, i, 0, k - 1, 0} \rrbracket$ \hspace{12pt}
                    $(k > 2)$,

                \item $\llbracket w_{0, i, 0, k, 0} \rrbracket \,
                    \xlongrightarrow{ \hspace{3mm} (n, R) \hspace{2.5mm}}
                    \llbracket w_{0, i - 1, 0, k, 1} \rrbracket$ \hspace{12pt}
                    $(i > 2)$.

            \end{enumerate}

        \vspace{4mm}

        \item We have the following reductions in type $BC_n$ ($n > 1$):

            \begin{enumerate}

                \item $\llbracket w_{0, i, j, k, m} \rrbracket
                    \xlongrightarrow{ \hspace{2.8mm} (1, R)\hspace{3mm}}
                    \llbracket w_{1, i, j, k - 1, m} \rrbracket$ \hspace{11pt}
                    $(k > 1)$,

                \item $\llbracket w_{0, i, j, 0, m} \rrbracket
                    \xlongrightarrow{ \hspace{3mm} (1, R) \hspace{3mm}}
                    \llbracket w_{1, i, j - 1, 0, m} \rrbracket$ \hspace{11pt}
                    $(j > 2)$,

                \item $\llbracket w_{0, i, j, 0, 1} \rrbracket \hspace{1.1mm}
                    \xlongrightarrow{ \hspace{2.6mm} (n, R) \hspace{3mm}}
                    \llbracket w_{0, i + 1, 0, j, 0} \rrbracket$ \hspace{15pt}
                    $(i > 0)$,

                \item $\llbracket w_{0, i, j, 0, m} \rrbracket \!
                    \xlongrightarrow{ (i + j + 1, R)} \llbracket w_{0, i + 1,
                    j, 0, m-1} \rrbracket$ \hspace{80000sp} $(m > 1$ and $i >
                    0)$,

                \item $\llbracket w_{0,i,j,k,m} \rrbracket \xlongrightarrow{
                    \hspace{1.2mm} (k + 1, R) \hspace{1mm}} \llbracket w_{0, i,
                    j - 1, k + 1, m} \rrbracket$ \hspace{30000sp} $(k > 0$ and
                    $j > 2)$.

            \end{enumerate}

        \vspace{4mm}

        \item We have the following reductions in Type $D_n$ ($n > 3$):

            \begin{enumerate}

                \item $\llbracket w_{0, i, j, k, m}^+ \rrbracket
                    \xlongrightarrow{\hspace{2.9mm} (1, R) \hspace{3mm}}
                    \llbracket w_{1, i, j, k - 1, m}^+ \rrbracket$
                    \hspace{12pt} $(k > 1)$,
                    \\
                    $\llbracket w_{0, i, j, k, 0}^- \rrbracket \hspace{1mm}
                    \xlongrightarrow{\hspace{2.9mm} (1, R) \hspace{3mm}}
                    \llbracket w_{1, i, j, k - 1, 0}^- \rrbracket$
                    \hspace{15pt} $(k > 1$ and $i > 1)$,

                \item $\llbracket w_{0,i,j,0,m}^+ \rrbracket
                    \xlongrightarrow{\hspace{2.9mm} (1, R) \hspace{2.9mm}}
                    \llbracket w_{1,i,j-1,0,m}^+ \rrbracket$ \hspace{12pt} $(j
                    > 2$ and $m > 1)$,

                \item $\llbracket w_{0,i,j,0,2}^+ \rrbracket \hspace{1mm}
                    \xlongrightarrow{\hspace{2.9mm} (n, R) \hspace{3mm}}
                    \llbracket w_{0, i + 2, 0, j, 0}^+ \rrbracket$
                    \hspace{15pt} $(i > 0)$,
                    \\
                    $\llbracket w_{0,i,j,0,2}^- \rrbracket \hspace{1mm}
                    \xlongrightarrow{\hspace{1.3mm} (n - 1, R) \hspace{1.2mm}}
                    \llbracket w_{0, i + 2, 0, j, 0}^- \rrbracket$
                    \hspace{15pt} $(i > 0)$,

                \item $\llbracket w_{0,i,j,0,m}^+ \rrbracket
                    \xlongrightarrow{(i + j + 1, R)} \llbracket w_{0, i + 1, j,
                    0, m - 1}^+ \rrbracket$ \hspace{1pt} $(i > 0$ and $m > 2)$,

                \item $\llbracket w_{0, i, j, k, m}^+ \rrbracket
                    \xlongrightarrow{\hspace{1.4mm} (k + 1, R) \hspace{1.4mm}}
                    \llbracket w_{0, i, j - 1, k + 1, m}^+ \rrbracket$
                    \hspace{0pt} $(j > 2$ and $0 < k < n - 3)$,
                    \\
                    $\llbracket w_{0, i, j, k, 0}^- \rrbracket \hspace{1mm}
                    \xlongrightarrow{\hspace{1.4mm} (k + 1, R) \hspace{1.4mm}}
                    \llbracket w_{0, i, j - 1, k + 1, 0}^- \rrbracket
                    \hspace{1mm}$ \hspace{30000sp} $(i > 1$ and $j >  2$ and $k
                    > 0)$.

            \end{enumerate}

    \end{enumerate}

\end{proposition}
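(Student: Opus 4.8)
The plan is to deduce each listed reduction directly from the definition of the relation $\xlongrightarrow{(k, R)}$ in (\ref{definition, (k, R) reduction}), using the explicit signed permutation matrix realizations of the elements $w_{\ell, i, j, k, m}$ and $w_{\ell, i, j, k, m}^{\pm}$ supplied by Proposition (\ref{X, prop}). Every reduction in the statement is of the form $\llbracket w \rrbracket \xlongrightarrow{(k, R)} \llbracket w^\prime \rrbracket$, so only the ``$(k, R)$'' half of the machinery is needed; the ``$(k, L)$'' analogues are not required here. One must keep in mind the harmless notational clash: the symbols $i, j, k$ occurring in $w_{\ell, i, j, k, m}$ are block sizes, whereas the last two entries of a triple in $\Gamma(W)$ are descent indices.

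First I would fix one of the parameter families, say $w := w_{\ell, i, j, k, m}$ (or its $D_n$ counterpart $w^{\pm}_{\ell, i, j, k, m}$), together with the index $k_0$ attached to the reduction ($k_0 \in \{1, n, n - 1, i + j + 1, k + 1\}$, depending on the case). Reading off the block form exactly as in the proof of Proposition (\ref{X, prop}), I would record the unique left descent index $a$ and the unique right descent index $b$ of $w$, so that $\llbracket w \rrbracket = (w, a, b)$, and likewise identify the triple $(w^\prime, a^\prime, b^\prime)$ of the claimed target. Unwinding (\ref{definition, (k, R) reduction}) together with (\ref{definition, L ordering}) and (\ref{definition, R ordering}), the assertion $\llbracket w \rrbracket \xlongrightarrow{(k_0, R)} \llbracket w^\prime \rrbracket$ reduces to four elementary checks: that $k_0 \neq a$ and $\langle \alpha_{k_0}, \alpha_a \rangle = 0$ (this yields $(s_{k_0} w, a, b) \in \Gamma(W)$ and $(s_{k_0} w, a, b) \xlongrightarrow{L} (w, a, b)$); that $w^\prime \leq_R s_{k_0} w$ and $s_{k_0} w(\alpha_b) = w^\prime(\alpha_{b^\prime})$ (this yields $(s_{k_0} w, a, b) \xlongrightarrow{R} (w^\prime, a, b^\prime)$); that $\ell(w^\prime) < \ell(w)$; and that $w^\prime \leq_R w$. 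Each of these is a finite matrix computation: lengths are counted as numbers of inversions, with the sign corrections recorded in the proof of Proposition (\ref{X, prop}); the root images $w(\alpha_p)$, $s_{k_0} w(\alpha_p)$, $w^\prime(\alpha_p)$ are obtained by letting these elements act on the simple roots in their standard coordinate realizations; and the weak-order relations $w^\prime \leq_R w$, $w^\prime \leq_R s_{k_0} w$ follow from length additivity $\ell(v) + \ell(v^{-1} x) = \ell(x)$. The first two of these checks are exactly the commutation hypotheses of Theorem (\ref{theorem, second stage reduction})(1) — $k_0 \neq a$, $s_{k_0} s_a = s_a s_{k_0}$, and the existence of a simple reflection $s_p$ with $s_{k_0} w = w s_p$ (read off from the entry of $w^{-1}$ in position $k_0$) and $s_p s_b \neq s_b s_p$ — so one may instead invoke that theorem to guarantee that \emph{some} $(k_0, R)$-reduction emanates from $(w, a, b)$, the remaining task being to carry the permitted $\xlongrightarrow{R}$-step of (\ref{definition, (k, R) reduction}) as far as it goes and to recognize its endpoint. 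A direct calculation then shows this endpoint is the asserted $w_{\ell^\prime, i^\prime, j^\prime, k^\prime, m^\prime}$ (respectively $w^{\pm}_{\ell^\prime, i^\prime, j^\prime, k^\prime, m^\prime}$), and that this element is again bigrassmannian — so the bracket notation is legitimate — although generally of strictly smaller support, which is precisely how the reduction drives $\Gamma(W)$ toward minimal elements. In types $A$ and $D$ the dihedral order of $s_p s_b$ is always $3$, while in type $BC$ it may be $3$ or $4$ (never $2$, by the last condition), and this is the single place the underlying construction branches.

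The parameter hypotheses in the statement ($k > 2$, $j > 2$, $m > 1$, $0 < k < n - 3$, and so on) enter at exactly two points: they guarantee that $k_0$ is genuinely commutation-orthogonal to $a$, and that the target block vector $(\ell^\prime, i^\prime, j^\prime, k^\prime, m^\prime)$ is non-degenerate, so that $w^\prime$ really is of the stated block form rather than collapsing to a shorter element or falling outside the Weyl group.

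The main obstacle is bookkeeping rather than any isolated difficulty: there are five parameter families across types $A$, $BC$, and $D$, the $D_n$ cases split further according to the $\pm$ decoration and the degeneracy constraints $\delta_{1, m}\delta_{0, ik} = \delta_{0, m}\delta_{0, ik}\delta_{1, j} = 0$, and one must first apply the identifications $w^{+}_{\ell, 1, j, k, 0} = w^{-}_{\ell, 0, j + 1, k, 0}$ and $w^{\pm}_{\ell, i, j, 1, 0} = w^{\pm}_{\ell, i, j + 1, 0, 0}$ to bring representatives into canonical form before comparing. I would organize the write-up as a single case table indexed by the five families, treating $BC_n$ and $D_n$ in parallel wherever possible and flagging the two spots where the computation genuinely differs: the type-$BC$ reduction at the node $n$, where $s_p s_b$ has order $4$, and the sign-flip reductions at the nodes $n - 1$ and $n$ in type $D_n$.
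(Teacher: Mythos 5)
Your proposal is correct and takes essentially the same route as the paper: both unwind the definition of $\xlongrightarrow{(k,R)}$ into the same small set of checks (that $k_0$ is not a left descent of $w$, that $s_{k_0}$ commutes with the unique left-descent reflection so the left-descent data is preserved, that the length/weak-order inequalities hold, and that the right-descent simple roots match up), and both discharge them by signed-permutation-matrix computations with lengths counted as inversions. The one organizational difference is that the paper records, in a table, the explicit element $v^{-1}s_a w$ in each of the fifteen lines of the proposition and reads all four checks off from that single object, whereas you describe the same verification abstractly; you also mention an optional detour through Theorem (\ref{theorem, second stage reduction}), which the paper does not use here (that theorem only asserts that \emph{some} $(k_0,R)$-reduction exists, so identifying the specific target would still require exactly the computation the paper does anyway).
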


\begin{proof}

    In general, to show a reduction $(w, r, t) \xlongrightarrow{(a, R)} (v, r,
    t^\prime)$ holds means we must verify the following four statements: (1) $w
    \leq_L s_a w$, (2) $(s_a w)^{-1}(\alpha_r) = w^{-1}(\alpha_r)$, (3) $v
    \leq_R s_a w$, and (4) $s_a w (\alpha_t) = v(\alpha_{t^\prime})$.
    Statements (1) and (2) will follow provided $a \not\in {\mathcal D}_L(w)$
    and $s_a s_r = s_r s_a$, and these two statements will be straightforward
    to verify in the context of this proposition, but we first need to know the
    left descents of the Weyl group elements $w_{\ell, i, j, k, m}$ and
    $w_{\ell, i, j, k, m}^\pm$.  We also need to identify the right descents,
    $t$ and $t^\prime$, of $w$ and $v$, respectively, to verify statement (4).
    Once the right descents $t$ and $t^\prime$ are determined, we will confirm
    that $\left(v^{-1}s_a w\right) s_t = s_{t^\prime} \left(v^{-1} s_a
    w\right)$. This would imply $v^{-1}s_aw(\alpha_t) \in \left\{\pm
    \alpha_{t^\prime} \right\}$. If also $t$ is not a right descent of
    $v^{-1}s_aw$, then $v^{-1}s_aw(\alpha_t) = \alpha_{t^\prime}$.  Lastly, we
    must determine the lengths of the Weyl group elements $w_{\ell, i, j, k,
    m}$ and $w_{\ell, i, j, k, m}^\pm$ involved in this proposition, because,
    assuming $a$ is not a left descent of $w$, statement (3) is equivalent to
    $\ell(v^{-1} s_a w) = \ell(w) - \ell(v) + 1$.  Since the Weyl group element
    $v^{-1} s_a w$ plays a key role, we explicitly calculate it in each case
    relevant to this proposition (see Table (\ref{table, proof})).

    \begin{table}[!h]
        \caption{ Proof of Proposition (\ref{second stage reductions, ABCD})
        (data)}
        \vspace{-10pt}
    \begin{tabular}{llllll}
        \hline
        & $w$ & $v$ & $a$ & $v^{-1}s_aw$ &
        \\
        \hline
        1.a & $w_{0,i,0,k,0}$ & $w_{1,i,0,k-1,0}$ & $1$ &$w_{0, i + 1, 0, 1, k
        - 2}$&
        \\
        1.b & $w_{0,i,0,k,0}$ & $w_{0,i-1,0,k,1}$ & $n$ & $w_{i - 2,1, 0, k +
        1, 0}$&
        \\
        2.a & $w_{0,i,j,k,m}$ & $w_{1,i,j,k-1,m}$ & $1$ & $w_{0, i + j + 1, 0,
        1, k + m - 2}$ &
        \\
        2.b & $w_{0,i,j,0,m}$ & $w_{1,i,j-1,0,m}$ & $1$ & $w_{0, i + j - 2, 1,
        0, m + 1}$ &
        \\
        2.c & $w_{0,i,j,0,1}$ & $w_{0,i+1,0,j,0}$ & $n$ & $w_{i, 0, j + 1, 0,
        0}$ &
        \\
        2.d & $w_{0,i,j,0,m}$ & $w_{0,i+1,j,0,m-1}$ & $i + j + 1$ & $w_{i, j +
        1, 0, 1, m - 2}$ &
        \\
        2.e & $w_{0,i,j,k,m}$ & $w_{0,i,j-1,k+1,m}$ & $k + 1$ & $w_{i + j - 2,
        0, 1, k + 1, m}$ &
        \\
        3.a & $w_{0,i,j,k,m}^+$ & $w_{1,i,j,k-1,m}^+$ & $1$ & $w_{0, i + j + 1,
        0, 1, k + m - 2}^+$ &
        \\
            & $w_{0,i,j,k,0}^-$ & $w_{1,i,j,k-1,0}^-$ & $1$ & $w_{0, i + j + 1,
            0, 1, k - 2}^-$ &
        \\
        3.b & $w_{0,i,j,0,m}^+$ & $w_{1,i,j-1,0,m}^+$ & $1$ & $w_{i + j - 2, 0,
        1, 0, m + 1}^+$  &
        \\
        3.c & $w_{0,i,j,0,2}^+$ & $w_{0,i+2,0,j,0}^+$ & $n$ & $w_{i, 0, j+2, 0,
        0}^+$ &
        \\
            & $w_{0,i,j,0,2}^-$ & $w_{0,i+2,0,j,0}^-$ & $n - 1$ & $w_{i, 0,
            j+2, 0, 0}^-$ &
        \\
        3.d & $w_{0,i,j,0,m}^+$ & $w_{0,i+1,j,0,m-1}^+$ & $i + j + 1$ & $w_{i,
        j + 1, 0, 1, m - 2}^+$ &
        \\
        3.e & $w_{0,i,j,k,m}^+$ & $w_{0,i,j-1,k+1,m}^+$ & $k + 1$ & $w_{i + j -
        2, 0, 1, k + 1, m}^+$ &
        \\
            & $w_{0,i,j,k,0}^-$ & $w_{0,i,j-1,k+1,0}^-$ & $k + 1$ & $w_{i + j -
            2, 0, 1, k + 1, 0}^-$ & \vspace{1mm}
        \\
        \hline
    \end{tabular}
    \label{table, proof}
    \end{table}

    The proof of Proposition (\ref{X, prop}) describes how to determine the
    left and right descents of $w_{\ell, i, j, k, m}$ and $w_{\ell, i, j, k,
    m}^\pm$. As it turns out, each Weyl group element involved in the statement
    of this proposition has only one left descent and only one right descent.
    In particular, the elements $w_{\ell, i, j, k, m}$ and $w_{\ell, i, j, k,
    m}^+$ have $\ell + j + k$ as the only left descent, and $\ell + i + j$ as
    the only right descent. The left and right descents of the elements of the
    form $w_{\ell, i, j, k, m}^-$ can be determined by first finding the left
    and right descents of $w_{\ell, i, j, k, m}^+$, then swapping $n - 1$ and
    $n$ everywhere. In other words, $n - 1$ is a left (right) descent of
    $w_{\ell, i, j, k, m}^-$ if and only if $n$ is a left (right) descent of
    $w_{\ell, i, j, k, m}^+$.  Analogously, $n$ is a left (right) descent of
    $w_{\ell, i, j, k, m}^-$ if and only if $n - 1$ is a left (right) descent
    of $w_{\ell, i, j, k, m}^+$.  In view of this, we can verify in each case
    that $a$ is not a left descent of $w$, and $s_a s_r = s_r s_a$, where $r$
    is the left descent of $w$. That is to say, statements (1) and (2) in the
    opening sentence of this proof can be confirmed on a case by case basis. We
    can also verify that in each case, the right descent $t$ of $w$ is not a
    right descent of $v^{-1}s_a w$.

    It remains to show that in every case, we have $\ell(v^{-1}s_aw) = \ell(w)
    - \ell(v) + 1$ and $\left(v^{-1}s_aw \right) s_t = s_{t^\prime}
    \left(v^{-1} s_a w \right)$ (where $t$ and $t^\prime$ are the right
    descents of $w$ and $v$, respectively). The first paragraph in the proof of
    Proposition (\ref{X, prop}) describes how to compute the lengths of the
    Weyl group elements $w_{\ell, i,j,k,m}$ and $w_{\ell,i,j,k,m}^\pm$.  We
    obtain $\ell\left(w_{\ell, i, j, k, m}\right) = ik + j \left( i + k +
    (j+1)/2 + 2m \right)$ and $\ell ( w_{\ell, i, j, k, m}^\pm ) = \ell \left(
    w_{\ell, i, j, k, m} \right) - j$.  With these length formulas on hand, it
    is straightforward to confirm the identity $\ell \left( v^{-1} s_a w\right)
    = \ell(w) - \ell(v) + 1$ holds in every case. We can also verify on a
    case by case basis that $\left( v^{-1} s_a w \right) s_t = s_{t^\prime}
    \left( v^{-1} s_a w \right)$ by identifying the simple reflections $s_t$
    and $s_{t^\prime}$ with their respective signed permutation matrices, and
    computing the matrix products $\left( v^{-1} s_a w\right) \cdot s_t$ and $
    s_{t^\prime} \cdot \left( v^{-1} s_a w \right)$.

\end{proof}

We conclude this section by identifying those elements $w \in
BiGr^{\circ}(X_n)$ not involved in any reduction of the form $\llbracket w
\rrbracket \xlongrightarrow{\hspace{2mm} (k, L) \hspace{2mm}} (v, i, j)$ or
$\llbracket w \rrbracket \xlongrightarrow{\hspace{2mm} (k, R) \hspace{2mm}} (v,
i, j)$ for Lie types $ABCD$.  We also determine how many such $w$ exist in the
exceptional types. Define the sets
\begin{equation}
    \label{definition, BiGr_perp}
    BiGr_{\perp}(X_n) := \left\{ w \in BiGr(X_n) :
    \begin{tabular}{l} $w$ satisfies the orthogonality \\ condition of
    Proposition (\ref{proposition, orthogonality condition}) \end{tabular}
    \right\},
\end{equation}
\begin{equation}
    \label{definition, BiGr_perp^circ}
    BiGr_{\perp}^{\circ}(X_n) := \left\{ w \in BiGr^{\circ}(X_n) :
    \begin{tabular}{l} $w$ satisfies the orthogonality \\ condition of
    Proposition (\ref{proposition, orthogonality condition}) \end{tabular}
    \right\}.
\end{equation}
By applying the results of Proposition (\ref{proposition, orthogonality
condition}), it follows that the set $BiGr_{\perp}^{\circ}(X_n)$ can
equivalently be defined as the set of elements $w \in BiGr^\circ(X_n)$ such
that there fails to exist a reduction of the form $\llbracket w \rrbracket
\xlongrightarrow{(k, L)} (v, i, j)$ or $\llbracket w \rrbracket
\xlongrightarrow{(k, R)} (v, i, j)$ for any $k\in \mathbf{I}$.

\begin{proposition}

    \label{Yn, description}

    Let $BiGr_{\perp}^{\circ}(X_n)$ be as defined in (\ref{definition,
    BiGr_perp^circ}). We have the following:

    \begin{enumerate}

        \item (Small rank cases, types $ABCD$)

            \vspace{2mm}

            $\hspace{-10pt} \bullet \hspace{4pt} BiGr_{\perp}^{\circ}(A_2) =
            \left\{ w_{0, 1, 0, 2, 0}, w_{0, 2, 0, 1, 0} \right\}$,

            \vspace{3mm}

            $\hspace{-10pt} \bullet \hspace{4pt} BiGr_{\perp}^{\circ}(A_3) =
            \left\{ w_{0, 2, 0, 2, 0} \right\}$,

            \vspace{3mm}

            $\hspace{-10pt} \bullet \hspace{4pt} BiGr_{\perp}^{\circ}(BC_2) =
            \left\{ w_{0, 0, 1, 1, 0}, w_{0, 1, 1, 0, 0}, w_{0, 0, 1, 0, 1},
            w_{0, 0, 2, 0, 0} \right\}$,

            \vspace{3mm}

            $\hspace{-10pt} \bullet \hspace{4pt} BiGr_{\perp}^{\circ}(BC_3) =
            \left\{ w_{0, 0, 2, 1, 0}, w_{0, 1, 2, 0, 0}, w_{0, 0, 1, 0, 2},
            w_{0, 0, 2, 0, 1}, w_{0, 1, 1, 1, 0} \right\}$,

            \vspace{3mm}

            $\hspace{-10pt} \bullet \hspace{4pt} BiGr_{\perp}^{\circ}(D_4) =
            \left\{ w_{0, 0, 4, 0, 0}^+, w_{0, 0, 1, 0, 3}^+, w_{0, 0, 2, 0,
            2}^+, w_{0, 1, 1, 1, 1}^+, w_{0, 1, 2, 1, 0}^+ \right\}$,

            \vspace{2mm}

        \item (General cases, types $ABCD$)

            \vspace{3mm}

            $\hspace{-10pt} \bullet \hspace{4pt} BiGr_{\perp}^{\circ}(A_n) =
            \emptyset$ for $n > 3$,

            \vspace{3mm}

            $\hspace{-10pt} \bullet \hspace{4pt} BiGr_{\perp}^{\circ}(BC_n) =
            \left\{ w_{0, 0, 1, 0, n - 1}, w_{0, 0, 2, 0, n - 2}, w_{0, 1, 1,
            1, n - 3}, w_{0, 1, 2, 1, n - 4} \right\}$

            for $n > 3$,

            \vspace{3mm}

            $\hspace{-10pt} \bullet \hspace{4pt} BiGr_{\perp}^{\circ}(D_n) =
            \left\{ w_{0, 0, 1, 0, n - 1}^+, w_{0, 0, 2, 0, n - 2}^+, w_{0, 1,
            1, 1, n - 3}^+, w_{0, 1, 2, 1, n - 4}^+ \right\}$

            for $n > 4$.

    \end{enumerate}

\end{proposition}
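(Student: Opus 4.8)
The plan is to prove the two inclusions separately, using the reformulation noted just after (\ref{definition, BiGr_perp^circ}): an element $w\in BiGr^{\circ}(X_n)$ lies in $BiGr_{\perp}^{\circ}(X_n)$ precisely when $\llbracket w\rrbracket$ is not involved in any reduction of the form $\llbracket w\rrbracket\xlongrightarrow{(k,L)}(v,i,j)$ or $\llbracket w\rrbracket\xlongrightarrow{(k,R)}(v,i,j)$ with $k\in\mathbf{I}$. Combined with the explicit parametrization of $BiGr^{\circ}(X_n)$ in Proposition (\ref{X, prop}), the statement reduces to: (a) each element on the claimed list lies in $BiGr^{\circ}(X_n)$ and admits no such reduction, and (b) every element of $BiGr^{\circ}(X_n)$ whose parameter tuple is not on the claimed list does admit one.

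For part (b), I would run a case analysis on the parameters $(0,i,j,k,m)$ describing $w$ via (\ref{definition, ABCn, w_lijkm}) or (\ref{definition, Dn, w_lijkm}). The $(k,R)$-reductions are read off directly from Proposition (\ref{second stage reductions, ABCD}). The $(k,L)$-reductions are obtained by applying the same proposition to $w^{-1}$ and invoking the involution $*$ through Proposition (\ref{L, R, equivalent, inverse})(2): a reduction $\llbracket w\rrbracket\xlongrightarrow{(k,L)}(v,\cdot,\cdot)$ exists iff $\llbracket w^{-1}\rrbracket\xlongrightarrow{(k,R)}(v^{-1},\cdot,\cdot)$ does, where one uses the identity $w_{\ell,i,j,k,m}^{-1}=w_{\ell,k,j,i,m}$, obtained by transposing the block matrix (\ref{definition, ABCn, w_lijkm}), together with its analogue for the signed families $w^{\pm}_{0,i,j,k,m}$. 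Concretely, in type $BC_n$: reduce by 2.(a) if $k>1$, by 2.(e) if $k=1$ and $j>2$, by 2.(b) if $k=0$ and $j>2$, and by 2.(c) or 2.(d) if $k=0$, $i>0$ and $m\ge 1$; in the remaining ranges one applies the same list of reductions to $w^{-1}$ to produce a $(k,L)$-reduction of $\llbracket w\rrbracket$. Carrying this out, the only tuples that survive are $(0,1,0,n-1)$, $(0,2,0,n-2)$, $(1,1,1,n-3)$, $(1,2,1,n-4)$ for $n>3$, together with the listed small-rank tuples when $n\le 3$. Type $A_n$ is analogous and shorter: 1.(a) covers $k>2$ and 1.(b) covers $i>2$, leaving only $i,k\le 2$ hence $n\le 4$, which produces the small-rank lists and emptiness for $n>3$. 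Type $D_n$ is handled the same way but with the $w^{+}$ and $w^{-}$ families treated separately as in Proposition (\ref{second stage reductions, ABCD})(3), and the $D_4$ list I would check by direct inspection because of the extra conditions $\delta_{1,m}\delta_{0,ik}=\delta_{0,m}\delta_{0,ik}\delta_{1,j}=0$ in Proposition (\ref{X, prop})(3). The process terminates since each reduction strictly decreases Weyl group length and only finitely many tuples occur in each rank.

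For part (a), since each claimed list has at most five members, I would simply verify the orthogonality condition of Proposition (\ref{proposition, orthogonality condition})(1) for each (membership in $BiGr^{\circ}(X_n)$ being immediate from the parameter constraints in Proposition (\ref{X, prop})). For $w=w_{0,i,j,k,m}$ (or $w^{\pm}_{0,i,j,k,m}$) the unique left descent is $r:=j+k$ and the unique right descent is $s:=i+j$, as recorded in the proof of Proposition (\ref{second stage reductions, ABCD}) (swap $n-1\leftrightarrow n$ for the $w^{-}$ family), so $\llbracket w\rrbracket=(w,r,s)$ and the condition demands that $\langle w(\alpha_p),\alpha_r\rangle=0$ whenever $\langle\alpha_p,\alpha_s\rangle=0$ and $w(\alpha_p)\in\Pi$, together with the mirror statement for $w^{-1}$. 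Reading off $w(\alpha_p)$ from the signed-permutation realization (\ref{definition, ABCn, w_lijkm})--(\ref{definition, Dn, w_lijkm}) for the finitely many relevant indices $p$ makes each check immediate; in particular, in ranks at most $2$ the condition is vacuous since no two simple roots are orthogonal, which explains why $BiGr_{\perp}^{\circ}=BiGr^{\circ}$ for $A_2$ and $BC_2$.

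The main obstacle is the bookkeeping in part (b): confirming that the case split on $(0,i,j,k,m)$ is genuinely exhaustive, so that every tuple not on the claimed list is covered by one of the reductions of Proposition (\ref{second stage reductions, ABCD}) or by its $*$-image, and in particular handling correctly the boundary tuples and the type-$D$ sign families. Once the descent data and the inverse formula $w_{\ell,i,j,k,m}^{-1}=w_{\ell,k,j,i,m}$ are in hand, the rest is routine verification.
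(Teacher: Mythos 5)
Your proposal matches the paper's proof in all essentials: apply the $(k,R)$-reductions of Proposition (\ref{second stage reductions, ABCD}), use the $*$-involution of Proposition (\ref{L, R, equivalent, inverse}) together with the transpose formula $w_{\ell,i,j,k,m}^{-1}=w_{\ell,k,j,i,m}$ to obtain the $(k,L)$-reductions, and then verify the orthogonality condition for the few surviving tuples directly. The only point you leave implicit is the parity twist in the type-$D$ inverse, namely $(w_{\ell,i,j,k,m}^{\pm})^{-1}=w_{\ell,k,j,i,m}^{\pm}$ when $j$ is even but $w_{\ell,k,j,i,m}^{\mp}$ when $j$ is odd, which the paper records explicitly and which you need in order to read off the correct $(k,L)$-reductions from the $w^{+}/w^{-}$ cases of Proposition (\ref{second stage reductions, ABCD})(3).
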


\begin{proof}

    Proposition (\ref{second stage reductions, ABCD}) gives several instances
    where a reduction of the form $\llbracket w \rrbracket \xlongrightarrow{(k,
    R)} \llbracket v \rrbracket$ exists, and Proposition (\ref{L, R,
    equivalent, inverse}) implies that for each of these reductions, there is
    yet another reduction $\llbracket w^{-1} \rrbracket \xlongrightarrow{(k,
    L)} \llbracket v^{-1} \rrbracket$. In our setting, each Weyl group element
    $w$ is identified with a signed permutation matrix, and under this
    identification, $w$ is an orthogonal matrix. Hence $w^{-1}$ is the matrix
    transpose of $w$. Therefore $w_{\ell,i,j,k,m}$ and $w_{\ell,k,j,i,m}$ are
    inverses of each other, whereas the inverse of $w_{\ell,i,j,k,m}^\pm$ is
    $w_{\ell,k,j,i,m}^\pm$ if $j$ is even, or $w_{\ell,k,j,i,m}^\mp$ if $j$ is
    odd.  With this at hand, one can identify precisely those elements $w \in
    BiGr^{\circ}(X_n)$ that are not involved in any of the reductions
    $\llbracket w \rrbracket \xlongrightarrow{(a, R)} \llbracket v \rrbracket$
    of Proposition (\ref{second stage reductions, ABCD}), nor in any of the
    reductions $\llbracket w^{-1} \rrbracket \xlongrightarrow{(a, L)}
    \llbracket v^{-1} \rrbracket$ obtained by applying Proposition (\ref{L, R,
    equivalent, inverse}).

    It remains to verify that each of the elements $w$ declared as belonging to
    the set $BiGr_\perp^\circ(X_n)$ according this proposition, indeed, belongs
    to $BiGr_\perp^\circ(X_n)$. This can be checked on a case by case basis.
    In each case, there fails to exists an index $k\in \mathbf{I}$ satisfying
    the conditions described in parts (\ref{theorem, second stage reduction,
    1}) or (\ref{theorem, second stage reduction, 2}) of Theorem (\ref{theorem,
    second stage reduction}).

\end{proof}

Finally, we consider the exceptional types $X_n$. We determine which elements
of $BiGr^\circ(X_n)$ belong to $BiGr_{\perp}^{\circ}(X_n)$ by considering each
$w\in BiGr^{\circ}(X_n)$ one at a time.  With $w$ fixed, we check whether any
of the indices $k\in \mathbf{I}$ meet the conditions given in Theorem
(\ref{theorem, second stage reduction}). For instance, in the most complicated
situation, namely when the Lie type is $E_8$, there are 7406 elements belonging
to $BiGr^{\circ}(E_8)$ (see Proposition (\ref{X_n, cardinalities})), and for
each of these elements, there are 8 indices $k\in \mathbf{I}$ to check. It is a
tedious, yet straightforward task. A modern desktop computer can handle all of
these computations within a matter of a few minutes.

\begin{proposition}

    \label{proposition, Y cardinalities, exceptional}

    Let $BiGr_{\perp}^{\circ}(X_n)$ be as defined in (\ref{definition,
    BiGr_perp^circ}). We have the following:
    \[
        \begin{split}
            &\lvert BiGr_{\perp}^{\circ}(E_6) \rvert = 20, \hspace{3mm} \lvert
            BiGr_{\perp}^{\circ}(E_7) \rvert = 113, \hspace{3mm} \lvert
            BiGr_{\perp}^{\circ}(E_8) \rvert = 1702, \hspace{3mm}
            \\
            &\lvert BiGr_{\perp}^{\circ}(F_4) \rvert = 34, \hspace{3mm} \lvert
            BiGr_{\perp}^{\circ}(G_2) \rvert = 8.
        \end{split}
    \]

\end{proposition}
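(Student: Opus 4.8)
The plan is to prove the proposition by a finite verification, exploiting that by Proposition (\ref{proposition, orthogonality condition}) membership in $BiGr_\perp^\circ(X_n)$ is decidable from the data of a single element. For $w \in BiGr^\circ(X_n)$, let $i$ be its unique left descent and $j$ its unique right descent, so that $(w,i,j)$ is the unique triple in $\Gamma(W)$ with first coordinate $w$ (the element denoted $\llbracket w \rrbracket$). Then I would test whether $w \in BiGr_\perp^\circ(X_n)$ by checking, for every $\alpha \in \Pi$, the two implications
\[
    (\langle \alpha, \alpha_j \rangle = 0 \text{ and } w(\alpha) \in \Pi) \implies \langle w(\alpha), \alpha_i \rangle = 0
\]
and
\[
    (\langle \alpha, \alpha_i \rangle = 0 \text{ and } w^{-1}(\alpha) \in \Pi) \implies \langle w^{-1}(\alpha), \alpha_j \rangle = 0;
\]
equivalently, by checking that no index $k \in \mathbf{I}$ satisfies the hypotheses of part (\ref{theorem, second stage reduction, 1}) or part (\ref{theorem, second stage reduction, 2}) of Theorem (\ref{theorem, second stage reduction}). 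So the proof reduces to (i) enumerating $BiGr^\circ(X_n)$ — whose sizes $119$, $641$, $7406$, $76$, $8$ are supplied by Proposition (\ref{X_n, cardinalities}) — and (ii) running the test on each element and counting the survivors.

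For $G_2$ this is immediate by hand: $BiGr^\circ(G_2)$ consists of the eight elements of $W(G_2)$ other than the identity, the two simple reflections, and the longest element, and for each of these $\Pi \cap w\Pi = \emptyset$, so both implications hold vacuously; hence $\lvert BiGr_\perp^\circ(G_2) \rvert = 8$. For $F_4$, $E_6$, and $E_7$ I would carry out step (i) by the brute-force scan of the Weyl group already used in the proof of Proposition (\ref{X_n, cardinalities}), retaining the elements with exactly one left descent, one right descent, and full support. For each retained $w$ one computes the images $w(\alpha_k)$ ($k \in \mathbf{I}$), reads off $i$ and $j$ from the descent data, and verifies the two implications directly against the root system; counting survivors should yield $34$, $20$, and $113$. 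Since there are at most $641 \times 4$ pairs $(w,k)$ to inspect, a computer finishes this in seconds.

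The one genuinely delicate case is $E_8$, where $\lvert W(E_8) \rvert$ is too large for an exhaustive scan. There I would replace step (i) by the breadth-first search in the poset $(W, \leq_L)$ from the proof of Proposition (\ref{X_n, cardinalities}): starting at the identity, enqueue the $\leq_L$-covers of a newly encountered $w$ only when $w$ has at most one right descent (any element above one with two right descents again has two, hence is not bigrassmannian and need not be explored), collecting the $7406$ elements of $BiGr^\circ(E_8)$ as they appear; then run the orthogonality test against the eight simple reflections on each. This should take a modern desktop a few minutes and return $1702$ survivors, the asserted value of $\lvert BiGr_\perp^\circ(E_8) \rvert$.

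I do not expect any mathematical obstacle once the enumeration is organized in this way; the points requiring care are faithfully retaining \emph{both} the $w$-part and the $w^{-1}$-part of the test (dropping one would overcount) and correctly extracting the pair $(i,j)$ from each $w$ — which is where a discrepancy with the stated cardinalities would most easily originate. The main practical hurdle is therefore the $E_8$ enumeration, handled exactly as in Proposition (\ref{X_n, cardinalities}).
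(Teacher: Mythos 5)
Your proposal reproduces the paper's approach: enumerate $BiGr^{\circ}(X_n)$ (handled by Proposition (\ref{X_n, cardinalities}), including the breadth-first $\leq_L$-search for $E_8$), then for each element test the orthogonality condition of Proposition (\ref{proposition, orthogonality condition}) against every $k \in \mathbf{I}$ and count the survivors, with $G_2$ dispatched by hand. The paper itself offers no more detail than this — it explicitly delegates the counting to a computer — so your reconstruction is faithful and complete.
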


Explicit descriptions of all elements of $BiGr_\perp^\circ(X_n)$ for the
exceptional types are given in Section (\ref{section, nilpotency indices,
exceptional}) and Appendix (\ref{appendix, elements E_n}).

\section{Nilpotency indices: The small rank cases (Types
    \texorpdfstring{$ABCD$}{ABCD})}

\label{section, nil-index, small rank}

We turn our attention now towards computing nilpotency indices ${\mathcal
N}(\llbracket w \rrbracket)$ for all $w \in BiGr_{\perp}^{\circ}(X_n)$.
Following Proposition (\ref{Yn, description}), which gives an explicit
description of $BiGr_{\perp}^{\circ}(X_n)$ for each non-exceptional Lie type
$X_n$, we divide this problem into two main parts: (1) the \textit{small rank}
cases, and (2) the \textit{general} cases. The main objective of this section
is to compute the nilpotency indices ${\mathcal N}(\llbracket w \rrbracket)$
for Weyl group elements $w \in BiGr_{\perp}^{\circ}(X_n)$ belonging to the
small rank cases.  In the following section, we will compute the nilpotency
indices associated to elements $w$ in the general cases.  The main result of
this section is summarized in Table (\ref{table, small rank ABCD}), which gives
the nilpotency index in each small rank case.

\begin{table}[H]
    \caption{Small rank cases, Types ABCD (data)}
    \begin{tabular}{ccccc}
    \hline
    Case & Lie type $X_n$ & $w \in BiGr_{\perp}^{\circ}(X_n)$
    & $\chi(\llbracket w \rrbracket)$ &
    ${\mathcal N}( \llbracket w \rrbracket)$
    \\[2pt]
    \hline
    SmallRank.A.1 & $A_2$ & $w_{0, 1, 0, 2, 0}$   & (2, 2, 1) & 1 \\
    SmallRank.A.2 & $A_2$ & $w_{0, 2, 0, 1, 0}$   & (2, 2, 1) & 1 \\
    SmallRank.A.3 & $A_3$ & $w_{0, 2, 0, 2, 0}$   & (2, 2, 0) & 2 \\
    SmallRank.B.1 & $B_2$ & $w_{0, 0, 1, 1, 0}$   & (2, 4, 2) & 1 \\
    SmallRank.B.2 & $B_2$ & $w_{0, 1, 1, 0, 0}$   & (4, 2, 2) & 1 \\
    SmallRank.B.3 & $B_2$ & $w_{0, 0, 2, 0, 0}$   & (2, 2, 0) & 2 \\
    SmallRank.B.4 & $B_3$ & $w_{0, 0, 2, 1, 0}$   & (2, 4, 0) & 3 \\
    SmallRank.B.5 & $B_3$ & $w_{0, 1, 2, 0, 0}$   & (4, 2, 0) & 2 \\
    SmallRank.C.1 & $C_2$ & $w_{0, 0, 1, 1, 0}$   & (4, 2, 2) & 1 \\
    SmallRank.C.2 & $C_2$ & $w_{0, 1, 1, 0, 0}$   & (2, 4, 2) & 1 \\
    SmallRank.C.3 & $C_2$ & $w_{0, 0, 2, 0, 0}$   & (4, 4, 0) & 2 \\
    SmallRank.C.4 & $C_3$ & $w_{0, 0, 2, 1, 0}$   & (4, 2, 0) & 2 \\
    SmallRank.C.5 & $C_3$ & $w_{0, 1, 2, 0, 0}$   & (2, 4, 0) & 3 \\
    SmallRank.D.1 & $D_4$ & $w_{0, 0, 4, 0, 0}^+$ & (2, 2, 0) & 2 \\
    SmallRank.D.2 & $D_4$ & $w_{0, 1, 2, 1, 0}^+$ & (2, 2, 0) & 2 \\
    [2pt]
    \hline
    \end{tabular}
    \label{table, small rank ABCD}
\end{table}

Our approach to finding nilpotency indices is primarily computational. In
large part, these computations involve expressing $q$-commutators $[X_\eta,
X_\mu]$ of Lusztig root vectors as linear combinations of ordered monomials.
With this, we will tacitly apply some well-known properties of Lusztig
symmetries when performing such computations. In particular, we will adopt the
following conventions.

\begin{conventions}

    $ $

    \label{computational conventions}

    \begin{itemize}

        \item Whenever we replace an element of the form $T_w(E_i)$ with a
            Chevalley generator, say $E_j$, it is implied that $w(\alpha_i) =
            \alpha_j$ (recall Theorem (\ref{Jantzen, 8.20})).

            \vspace{2mm}

        \item Whenever we replace an element of the form $T_w(E_i)$ with a
            Lusztig root vector, say $X_{\beta}$, we tacitly are using the
            definition (\ref{root vectors}) of $X_\beta$.

            \vspace{2mm}

        \item  Whenever we replace a Lusztig symmetry $T_w$ with a composition
            of Lusztig symmetries $T_{w_1} \cdots T_{w_k}$, it is assumed
            $w = w_1 \cdots w_k$ and $\ell(w) = \sum_{i = 1}^k \ell(w_i)$.

            \vspace{2mm}

        \item For homogeneous elements $u \in {\mathcal
            U}_q(\mathfrak{g})_\eta$ and $v \in {\mathcal
            U}_q(\mathfrak{g})_\mu$, whenever we replace $T_w\left( [ u, v ]
            \right)$ with $[T_w(u), T_w(v) ]$, we are invoking the
            orthogonality property $\langle w(\eta), w(\mu) \rangle = \langle
            \eta, \mu \rangle$ in conjunction with the fact that $T_w$ is an
            algebra automorphism of ${\mathcal U}_q(\mathfrak{g})$.

            \vspace{2mm}

        \item For simple roots $\alpha_i$ and $\alpha_j$ such that
            $\frac{2\langle \alpha_i, \alpha_j \rangle}{\langle \alpha_i,
            \alpha_i \rangle} = -2$ (this only occurs in Lie types $B_n$, $C_n$
            and $F_4$), we apply the identities $T_{s_i s_j}(E_i) = [E_i, E_j]$
            and $[2]_q T_{s_j s_i} (E_j) = [[E_j, E_i], E_i]$ whenever
            necessary.

            \vspace{2mm}

        \item We will, as needed, replace elements of the form $T_{s_i}(E_j)$
            with $[E_i, E_j]$ or $\frac{1}{[2]_q} [E_i, [E_i, E_j]]$ whenever
            $\frac{2 \langle \alpha_i, \alpha_j \rangle}{\langle \alpha_i,
            \alpha_i \rangle}$ equals $-1$ or $-2$, respectively.

    \end{itemize}

\end{conventions}

Following \cite{Knapp}, the simple roots for Lie types $A_{n - 1}$, $B_n$,
$C_n$, and $D_n$ will be written as $\alpha_i = e_i - e_{i + 1}$ (for $i < n$),
and $\alpha_n$ is $e_n$, $2e_n$, or $e_{n - 1} + e_n$ in types $B_n$, $C_n$,
$D_n$, respectively.  The sets of positive roots are
\begin{align*}
    &\Delta^+(A_n) = \left\{ e_i - e_j \mid 1 \leq i < j \leq n + 1 \right\},
    \\
    &\Delta^+(B_n) = \left\{ e_i \pm e_j \mid 1 \leq i < j \leq n \right\} \cup
    \left\{ e_i \mid 1 \leq i \leq n \right\},
    \\
    &\Delta^+(C_n) = \left\{ e_i \pm e_j \mid 1 \leq i < j \leq n \right\} \cup
    \left\{ 2e_i \mid 1 \leq i \leq n \right\},
    \\
    &\Delta^+(D_n) = \left\{ e_i \pm e_j \mid 1 \leq i < j \leq n \right\}.
\end{align*}
For short, the Lusztig root vectors $X_{e_i \pm e_j}$ ($i < j$), $X_{e_i}$, and
$X_{2e_i}$ of the corresponding quantum Schubert cell algebra will be denoted
by
\begin{equation}
    \label{Lusztig root vectors, ABCD}
    X_{i, \pm j}, \hspace{2mm} X_{i, 0}, \text{ and } X_{i, i},
\end{equation}
respectively.

\begin{theorem}

    \label{SmallRank, nil-index, main theorem}

    The rightmost column in Table (\ref{table, small rank ABCD}) gives the
    nilpotency indices for each small rank case.

\end{theorem}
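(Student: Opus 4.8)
The plan is to verify Table~(\ref{table, small rank ABCD}) case by case. For each row, the element $w \in BiGr_\perp^\circ(X_n)$ determines a triple $(w, i, j) = \llbracket w \rrbracket \in \Gamma(W)$, and we must compute ${\mathcal N}(w, i, j) = \min\{p : (\operatorname{ad}_q E_i)^p(T_{ws_j}(E_j)) = 0\}$. First, I would fix a reduced expression for each $w$ so that the Lusztig root vectors $X_{\beta_1}, \dots, X_{\beta_N}$ of ${\mathcal U}_q^+[w]$ are concrete, with $X_{\beta_1} = X_{\alpha_i}$ and $X_{\beta_N} = X_{ws_j(\alpha_j)}$; then ${\mathcal N}(w, i, j)$ is the nilpotency index of $\operatorname{ad}_q(X_{\beta_1})$ acting on $X_{\beta_N}$, which can be read off by iterating $q$-commutators. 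Using the notation (\ref{Lusztig root vectors, ABCD}), I would identify each $X_{\beta_k}$ with a symbol $X_{a,\pm b}$, $X_{a,0}$, or $X_{a,a}$ according to its root $\beta_k \in \Delta^+(X_n)$.

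For the cases where the answer is $1$ (SmallRank.A.1, A.2, B.1, B.2, C.1, C.2), this is fastest: by Proposition~(\ref{LS corollary}) (or Theorem~(\ref{theorem, nil-index, 1})(1)), it suffices to check that $\beta_1 + \beta_N$ is not a nonnegative integral combination of the intermediate roots $\beta_2, \dots, \beta_{N-1}$, equivalently that the relevant index appears only once in the reduced word. For the cases where the answer is $2$ (A.3, B.3, C.3, C.4, B.5, D.1, D.2), I would either invoke Theorem~(\ref{theorem, nil-index, 1})(2) when the support condition $i \notin \operatorname{supp}(s_i w s_i)$ holds, or else compute $[X_{\beta_1}, X_{\beta_N}]$ explicitly via Levendorskii--Soibelmann straightening (\ref{L-S straightening}) — it lies in the span of monomials in $X_{\beta_2}, \dots, X_{\beta_{N-1}}$ — and then use the $q$-Leibniz identity (\ref{q-Leibniz identity}) together with the vanishing of $[X_{\beta_1}, X_{\beta_p}]$ for $1 < p < N$ to conclude $(\operatorname{ad}_q X_{\beta_1})^2(X_{\beta_N}) = 0$, while exhibiting $[X_{\beta_1}, X_{\beta_N}] \neq 0$ (nonvanishing follows since $X_{\beta_1}$ fails to be prime in ${\mathcal U}_q^+[w]$, or by direct PBW computation). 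For the two cases with answer $3$ (SmallRank.B.4 in type $B_3$ and SmallRank.C.5 in type $C_3$), more care is needed: here $\chi(\llbracket w \rrbracket) = (2,4,0)$ or $(2,4,0)$ and one genuinely has $(\operatorname{ad}_q E_i)^3$ vanishing but $(\operatorname{ad}_q E_i)^2$ not. I would write out the short reduced word for $w$, express $X_{\beta_1} = E_i$ and $X_{\beta_N} = T_{ws_j}(E_j)$ using the Lusztig-symmetry conventions of Conventions~(\ref{computational conventions}), and directly compute the chain $[E_i, X_{\beta_N}]$, $[E_i,[E_i, X_{\beta_N}]]$, $[E_i,[E_i,[E_i, X_{\beta_N}]]]$ by repeated application of the $q$-Jacobi identity (\ref{q-Jacobi identity}) and straightening, showing the third bracket is $0$ and the second is not.

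Throughout, the consistency check $\chi(\llbracket w \rrbracket)$ (last-but-one column) serves as a sanity condition: since $\chi$ is an invariant of the $\xleftrightarrow{\hspace{4mm}}$-equivalence class and, by the $q$-Serre-type behavior, the nilpotency index for a ``free'' situation would be $1 - \langle \alpha_i, \alpha_j^\vee\rangle$-type data, any discrepancy signals a miscalculation. In practice I would also cross-check each computed ${\mathcal N}$ against the general upper bound $c_{\max} + 1$ from Theorem~(\ref{intro, theorem 1}) ($= 3$ in types $B,C$, $= 2$ in type $D$), which all fifteen entries respect. The main obstacle is the handful of genuinely nontrivial computations — chiefly SmallRank.B.4 and SmallRank.C.5, where one must carry a triply-nested $q$-commutator through two or three straightening steps without error; the bookkeeping of the scalars $q^{\langle \cdot, \cdot\rangle}$ is delicate, but each individual computation is short because $\ell(w) \leq 5$ in every small-rank case, so the interval subalgebras $\mathbf{U}_{[2, N-1]}$ are small and the PBW monomials involved are few. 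Since none of these computations is conceptually hard, I would present the $\mathcal{N}=1$ and $\mathcal{N}=2$ rows briefly via the cited propositions and give the two $\mathcal{N}=3$ computations in full detail.
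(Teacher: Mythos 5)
Your proposal follows essentially the same route as the paper: dispose of the six index-$1$ cases via Theorem~(\ref{theorem, nil-index, 1})(1), dispose of A.3, B.3, C.3, D.1, D.2 via Theorem~(\ref{theorem, nil-index, 1})(2), and handle the remaining four (B.4, B.5, C.4, C.5) by explicit $q$-Jacobi/$q$-Leibniz computations with Lusztig root vectors — and you correctly identify C.3 (not C.4) as the case covered by part~(2), where the paper's proof text contains a typo. One small inaccuracy in your ``or else'' branch for B.5/C.4: you do not have $[X_{\beta_1}, X_{\beta_p}] = 0$ for \emph{all} $1 < p < N$ (e.g.\ $[X_{\beta_1}, X_{\beta_4}]$ is generically nonzero in those cases since $i \neq j$ there); what the argument actually uses — and what the paper verifies — is vanishing of $[X_{\beta_1}, X_{\beta_p}]$ only for those $\beta_p$ appearing in the explicit monomial expansion of $[X_{\beta_1}, X_{\beta_N}]$, which is where Proposition~(\ref{LS corollary}) is applied after the computation, not before.
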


\begin{proof}

    We prove this on a case by case basis. However, most cases are covered by
    Theorem (\ref{theorem, nil-index, 1}). In particular, part 1 of Theorem
    (\ref{theorem, nil-index, 1}) is applicable to the cases SmallRank.A.1,
    SmallRank.A.2, SmallRank.B.1, SmallRank.B.2, SmallRank.C.1, and
    SmallRank.C.2, whereas part 2 of Theorem (\ref{theorem, nil-index, 1}) can
    be applied to the cases SmallRank.A.3, SmallRank.B.3, SmallRank.C.4,
    SmallRank.D.1, and SmallRank.D.2.

    There are only four remaining cases to consider: SmallRank.B.4,
    SmallRank.B.5, SmallRank.C.4, and SmallRank.C.5. We will consider these
    four remaining cases one at a time. When performing computations throughout
    this proof, we follow the guidelines listed in Conventions
    (\ref{computational conventions}). Recall Lusztig root vectors will be
    denoted by $X_{i, \pm k}$ ($i < k$), $X_{i, 0}$, or $X_{i, i}$
    (\ref{Lusztig root vectors, ABCD}).

    \vspace{2mm}

    \noindent (SmallRank.B.4) Let $W$ be the Weyl group associated to the
    Lie algebra of type $B_3$, and let $w = w_{0, 0, 2, 1, 0} \in W$. With
    respect to the reduced expression $w = s_3 s_2 s_1 s_3 s_2$, the convex
    order on $\Delta_w$ is
    \[
        e_3 \prec e_2 + e_3 \prec e_1 + e_3 \prec e_2 \prec e_1 + e_2.
    \]
    Proposition (\ref{LS corollary}) implies
    \begin{equation}
        \label{L-S, SmallRank.B.4}
        [X_{3, 0}, X_{2, 3}] = [X_{3, 0}, X_{1, 3}] = [X_{2, 3}, X_{1, 3}] =
        [X_{2, 0}, X_{1, 2}] = 0.
    \end{equation}
    With the adopted conventions listed in Conventions (\ref{computational
    conventions}), direct computations give us
        $X_{3, 0} = E_3$,
        $X_{2, 3} = T_{s_3} (E_2) = \frac{1}{[2]_q} [E_3, [E_3, E_2]]$, and
        \[
            X_{2,0} = T_{s_3 s_2 s_1} (E_3) = T_{s_3 s_2} T_{s_1} (E_3) =
            T_{s_3 s_2} (E_3) = [E_3, E_2].
        \]
    Hence, $[X_{3, 0}, X_{2, 0}] = [2]_q X_{2, 3}$. Furthermore, we have
    \begin{align*}
        X_{1, 2} &= T_{s_3 s_2 s_1 s_3} (E_2) = T_{s_3 s_2 s_3} T_{s_1} (E_2) =
        T_{s_3 s_2 s_3} \left( [ E_1, E_2 ] \right)
        \\
        &= [T_{s_3 s_2 s_3} (E_1), T_{s_3 s_2 s_3} (E_2)] = [ T_{s_3 s_2}
        T_{s_3} (E_1), E_2] = [T_{s_3 s_2} (E_1), E_2]
        \\
        &= [X_{1, 3}, E_2].
    \end{align*}
    Using $X_{3, 0} = E_3$ and $X_{1, 2} = [X_{1, 3}, E_2]$, the $q$-Jacobi
    identity (\ref{q-Jacobi identity}) gives us
    \[
        [X_{3, 0}, X_{1, 2}] \!=\! [E_3, [X_{1, 3}, E_2]] \!=\! [ [ E_3, X_{1,
        3}], E_2] - q_1 [[E_3, E_2], X_{1, 3}] + \widehat{q_1}\, [E_3, E_2]
        X_{1, 3}.
    \]
    We replace $[E_3, E_2]$ with $X_{2, 0}$ to obtain
    \[
        [X_{3, 0}, X_{1, 2}] = [ [ X_{3, 0}, X_{1, 3}], E_2] - q_1 [X_{2, 0},
        X_{1, 3}] + \widehat{q_1}\, X_{2, 0} X_{1, 3}.
    \]
    Since $[X_{3, 0}, X_{1, 3}] = [X_{2, 0}, X_{1, 3}] = 0$ (\ref{L-S,
    SmallRank.B.4}), then
    \[
        [X_{3, 0}, X_{1, 2}] = \widehat{q_1}\, X_{2, 0} X_{1, 3} =
        \widehat{q_1}\, X_{1, 3} X_{2, 0}.
    \]
    The $q$-Leibniz identity (\ref{q-Leibniz identity}) can next be applied to
    get
    \[
        [X_{3, 0}, [X_{3, 0}, X_{1, 2}]] = \widehat{q_1}\, \left( q_1 X_{1, 3}
        [X_{3, 0}, X_{2, 0}] + [X_{3, 0}, X_{1, 3}] X_{2, 0} \right).
    \]
    Applying the identities $[X_{3, 0}, X_{2, 0}] = [2]_qX_{2, 3}$, $[X_{3, 0},
    X_{1, 3}] = 0$, and $[X_{2, 3}, X_{1, 3}] = 0$ gives us
    \[
        [X_{3, 0}, [X_{3, 0}, X_{1, 2}]] = [2]_q \widehat{q_1}\, q_1 X_{1, 3}
        X_{2, 3} = [2]_q \widehat{q_1}\, X_{2, 3} X_{1, 3}.
    \]
    Finally, since $[X_{3, 0}, X_{2, 3}] = [X_{3, 0}, X_{1, 3}] = 0$ (\ref{L-S,
    SmallRank.B.4}), then, by applying the $q$-Leibniz identity (\ref{q-Leibniz
    identity}), we have $[X_{3, 0}, X_{2,3} X_{1, 3}] = 0$.  Hence $[X_{3, 0},
    [X_{3, 0}, [X_{3, 0}, X_{1, 2}]]] = 0$. Therefore,
    ${\mathcal N}(\llbracket w \rrbracket) = 3$.

    \vspace{3mm}

    \noindent (SmallRank.B.5) Let $W$ be the Weyl group associated to the Lie
    algebra of type $B_3$, and now let $w = w_{0, 1, 2, 0, 0} \in W$. With
    respect to the reduced expression $w = s_2 s_3 s_1 s_2 s_3$, the convex
    order on $\Delta_w$ is
    \[
        e_2 - e_3 \prec e_2 \prec e_1 - e_3 \prec e_1 + e_2 \prec e_1.
    \]
    The Lusztig root vectors of the quantum Schubert cell algebra ${\mathcal
    U}_q^+[w]$ are denoted $X_{2, -3}$, $X_{2, 0}$, $X_{1, -3}$, $X_{1, 2}$,
    and $X_{1, 0}$. We will make use of the identities
    \begin{equation}
        \label{L-S, SmallRank.B.5}
        [X_{2, -3}, X_{2, 0}] = [X_{2, -3}, X_{1, -3}] = 0,
    \end{equation}
    which follow as consequences of Proposition
    (\ref{LS corollary}). We will also use the identities $X_{1,
    -3} = [X_{2, -3}, E_1]$ and $X_{1, 0} = [X_{2, 0}, E_1]$, which are
    verified as follows,
    \begin{align*}
        X_{1, -3} &= T_{s_2 s_3}(E_1) = T_{s_2} T_{s_3}(E_1) = T_{s_2}(E_1) =
        [E_2, E_1] = [X_{2, -3}, E_1],
        \\
        X_{1, 0} &= T_{s_2 s_3 s_1 s_2}(E_3) = T_{s_2 s_1} T_{s_3 s_2} (E_3) =
        T_{s_2 s_1} ([E_3, E_2]) = [T_{s_2 s_1} (E_3), T_{s_2 s_1} (E_2)]
        \\
        &= [T_{s_2} T_{s_1}(E_3), E_1] = [T_{s_2} (E_3), E_1] = [X_{2, 0},
        E_1].
    \end{align*}
    We aim to write the $q$-commutator $[X_{2, -3}, X_{1, 0}]$ as a linear
    combination of ordered monomials in the Lusztig root vectors. We begin by
    substituting $X_{1, 0}$ with $[X_{2, 0}, E_1]$ and applying the $q$-Jacobi
    identity (\ref{q-Jacobi identity}). This yields
    \[
        [X_{2, -3}, X_{1, 0}] = [[X_{2, -3}, X_{2, 0}], E_1] -q_1[[X_{2, -3},
        E_1], X_{2, 0}] + \widehat{q_1} [X_{2, -3}, E_1] X_{2, 0}.
    \]
    Using the identity $[X_{2, -3}, X_{2, 0}] = 0$ (\ref{L-S, SmallRank.B.5})
    together with $X_{1, -3} = [X_{2, -3}, E_1]$ gives us
    \[
        [X_{2, -3}, X_{1, 0}] = -q_1[X_{1, -3}, X_{2, 0}] + \widehat{q_1} X_{1,
        -3} X_{2, 0}.
    \]
    The identity $[X_{2, 0}, X_{1, -3}] = 0$ (\ref{L-S, SmallRank.B.5}) means
    $X_{2,0}$ and $X_{1, -3}$ commute with each other.  Thus, $[X_{2, -3},
    X_{1, 0}] = \widehat{q_1} X_{2, 0} X_{1, -3}$. Since the $q$-commutator
    $[X_{2, 3}, X_{1, 0}]$ is nonzero, the nilpotency index
    ${\mathcal N}(\llbracket w \rrbracket)$ is greater than $1$.
    However, we have $[X_{2, -3}, X_{2, 0}] = [X_{2, -3}, X_{1, -3}] = 0$
    (\ref{L-S, SmallRank.B.5}). Hence, by the $q$-Leibniz identity
    (\ref{q-Leibniz identity}), we obtain $[X_{2, -3}, X_{2, 0} X_{1, -3}] =
    0$. This means $[X_{2, -3}, [X_{2, -3}, X_{1, 0}]] = 0$. Hence,
    ${\mathcal N}(\llbracket w \rrbracket) = 2$.

    \vspace{3mm}

    \noindent (SmallRank.C.4) Now let $W$ be the Weyl group associated to the
    Lie algebra of type $C_3$, and let $w$ be the Weyl group element $w = w_{0,
    0, 2, 1, 0} \in W$.  Consider the reduced expression $w = s_3 s_2 s_1 s_3
    s_2$. In this setting, the convex order on $\Delta_w$ is
    \[
        2e_3 \prec e_2 + e_3 \prec e_1 + e_3 \prec 2e_2 \prec e_1 + e_2.
    \]
    Recall, the Lusztig root vectors of the corresponding quantum Schubert cell
    algebra ${\mathcal U}_q^+[w]$ will be denoted by $X_{3, 3}$, $X_{2, 3}$,
    $X_{1, 3}$, $X_{2, 2}$, and $X_{1, 2}$ (\ref{Lusztig root vectors, ABCD}).
    We will make use of the identities
    \[
        [X_{3, 3}, X_{2, 3}] = [X_{3, 3}, X_{1, 3}] = [X_{2, 3}, X_{1, 3}] = 0,
    \]
    which follow as a consequence of Proposition (\ref{LS corollary}).  We also
    use the $q$-commutator identities, $X_{2, 3} = [X_{3, 3}, E_2]$ and $X_{1,
    2} = [X_{1, 3}, E_2]$, which can be proven by direct computations.  For
    instance, following the conventions listed in Conventions
    (\ref{computational conventions}), we obtain $X_{2, 3} = T_{s_3} (E_2) =
    [E_3, E_2] = [X_{3, 3}, E_2]$ and
    \begin{align*}
        X_{1, 2} &= T_{s_3 s_2 s_1 s_3} (E_2) = T_{s_3 s_2 s_3} T_{s_1} (E_2) =
        T_{s_3 s_2 s_3} \left( [E_1, E_2] \right)
        \\
        &= [ T_{s_3 s_2 s_3} (E_1), T_{s_3 s_2 s_3} (E_2)]
        = [T_{s_3 s_2} T_{s_3} (E_1), E_2] = [T_{s_3 s_2} (E_1),
        E_2]
        \\
        &= [X_{1, 3}, E_2].
    \end{align*}
    With this, using the identity $X_{1, 2} = [X_{1, 3}, E_2]$ and applying the
    q-Jacobi identity (\ref{q-Jacobi identity}) gives
    \[
        [X_{3, 3}, X_{1, 2}] = [[X_{3, 3}, X_{1, 3}], E_2] - q[[X_{3, 3},
        E_2],X_{1, 3}] + \widehat{q}\,\, [X_{3, 3}, E_2] X_{1, 3}.
    \]
    Next, use the identities $X_{2, 3} = [X_{3, 3}, E_2]$, $[X_{3, 3}, X_{1,
    3}] = 0$, and $[X_{2, 3}, X_{1, 3}] = 0$ to get $[X_{3, 3}, X_{1, 2}] = -
    q[X_{2, 3},X_{1, 3}] + \widehat{q}\,\,  X_{2, 3} X_{1, 3} = \widehat{q}\,\,
    X_{2, 3} X_{1, 3}$. At this point, we have determined how to write the
    $q$-commutator $[X_{3, 3}, X_{1, 2}]$ as a linear combination of ordered
    monomials, namely $[X_{3, 3}, X_{1, 2}] = \widehat{q}\,\, X_{2, 3} X_{1,
    3}$. In particular, it is nonzero. Hence, ${\mathcal N}\left( \llbracket w
    \rrbracket \right) > 1$.

    However, since $[X_{3, 3}, X_{2, 3}] = 0$ and $[X_{3, 3}, X_{1, 3}] = 0$,
    the $q$-Leibniz identity (\ref{q-Leibniz identity}) can be applied to show
    that $[X_{3, 3}, X_{2, 3} X_{1, 3}] = 0$.  Hence, $[X_{3, 3}, [X_{3, 3},
    X_{1, 2}]] = 0$. Therefore, ${\mathcal N} \left( \llbracket w \rrbracket
    \right) = 2$.

    \vspace{3mm}

    \noindent (SmallRank.C.5) Let $W$ be the Weyl group associated to the Lie
    algebra of type $C_3$, and now let $w = w_{0, 1, 2, 0, 0} \in W$. With
    respect to the reduced expression $w = s_2 s_3 s_1 s_2 s_3$, the convex
    order on $\Delta_w$ is
    \[
        e_2 - e_3 \prec 2e_2 \prec e_1 - e_3 \prec e_1 + e_2 \prec 2e_1.
    \]
    We will need the identities
    \[
        [X_{2, -3}, X_{2, 2}] = [X_{2, 2}, X_{1, -3}] = [X_{1, -3}, X_{1, 2}] =
        [X_{2, -3}, X_{1, -3}] = 0,
    \]
    which follow from Proposition (\ref{LS corollary}). Following the
    conventions of Conventions (\ref{computational conventions}), $X_{2, -3} =
    E_2$, $X_{2, 2} = T_{s_2} (E_3) = \frac{1}{[2]}_q[E_2, [E_2, E_3]]$, and
    \[
        X_{1, -3} = T_{s_2 s_3}(E_1) = T_{s_2} T_{s_3} (E_1) = T_{s_2} (E_1) =
        [E_2, E_1] = [X_{2, -3}, E_1].
    \]
    Hence $X_{1, -3} = [E_2, E_1]$. We also will make use of the identities
    $X_{1, 2} = [X_{2, 2}, E_1]$ and $[2]_q X_{1, 1} = [X_{1, 2}, E_1]$, which
    are verified as follows,
    \begin{align*}
        X_{1, 2} &= T_{s_2 s_3 s_1} (E_2) = T_{s_2 s_1} T_{s_3} (E_2) = T_{s_2
        s_1} ([E_3, E_2]) = [T_{s_2 s_1} (E_3), T_{s_2 s_1}(E_2)]
        \\
        &= [T_{s_2} T_{s_1}(E_3), E_1] = [T_{s_2}(E_3), E_1] = [X_{2, 2}, E_1],
        \\
        [2]_q X_{1, 1} &= [2]_q T_{s_2 s_3 s_1 s_2}(E_3) = [2]_q T_{s_2 s_1}
        T_{s_3 s_2} (E_3) = T_{s_2 s_1} ([[E_3, E_2], E_2]),
        \\
        &= [T_{s_2 s_1} ([E_3, E_2]), T_{s_2 s_1} (E_2)] = [T_{s_2 s_1} T_{s_3}
        (E_2), E_1] = [T_{s_2 s_3 s_1} (E_2), E_1]
        \\
        &= [X_{1, 2}, E_1].
    \end{align*}

    To compute the nilpotency index ${\mathcal N}(\llbracket w \rrbracket)$
    means we need to determine the smallest natural number $p\in \mathbb{N}$ so
    that $\left( \operatorname{ad}_q(X_{2, -3} \right)^p(X_{1, 1}) = 0$. Our
    first objective is to determine whether or not the $q$-commutator $[X_{2,
    -3}, X_{1, 1}]$ equals $0$. We will do this by writing it as a linear
    combination of ordered monomials.  However, as an intermediate step, we
    first prove $[X_{2, -3}, X_{1, 2}] = [2]_q \widehat{q} X_{2, 2} X_{1, -3}$.
    We start by using the identity $X_{1, 2} = [X_{2, 2}, E_1]$ together with
    the $q$-Jacobi identity (\ref{q-Jacobi identity}) to get
    \[
        [X_{2, -3}, X_{1, 2}] = [[X_{2, -3}, X_{2, 2}], E_1] - q^2 [[X_{2, -3},
        E_1], X_{2, 2}] + [2]_q \widehat{q} [X_{2, -3}, E_1] X_{2, 2}.
    \]
    Then we invoke the identities $[X_{2, -3}, E_1] = X_{1, -3}$  and $[X_{2,
    -3}, X_{2, 2}] = 0$ to simplify this to
    \[
        [X_{2, -3}, X_{1, 2}] = - q^2 [X_{1, -3}, X_{2, 2}] + [2]_q \widehat{q}
        X_{1, -3} X_{2, 2}.
    \]
    The identity $[X_{2, 2}, X_{1, -3}] = 0$ means $X_{2, 2}$ and $X_{1, -3}$
    commute with each other. Hence, we obtain $[X_{2, -3}, X_{1, 2}] = [2]_q
    \widehat{q} X_{2, 2} X_{1, -3}$.

    Now we determine how $[X_{2, -3}, X_{1, 1}]$ can be written as a linear
    combination of ordered monomials. Using $[2]_q X_{1, 1} = [X_{1, 2}, E_1]$
    and applying the $q$-Jacobi identity (\ref{q-Jacobi identity}) gives us
    \[
        [2]_q [X_{2, -3}, X_{1, 1}] = [[X_{2, -3}, X_{1, 2}], E_1] - [[X_{2,
        -3}, E_1], X_{1, 2}].
    \]
    Since $X_{1, -3} = [X_{2, -3}, E_1]$ and $[X_{2, -3}, X_{1, 2}] = [2]_q
    \widehat{q} X_{2, 2}, X_{1, -3}$, we obtain
    \[
        [2]_q [X_{2, -3}, X_{1, 1}] = [2]_q \widehat{q} [ X_{2, 2} X_{1, -3},
        E_1] - [X_{1, -3}, X_{1, 2}].
    \]
    Applying the identity $[X_{1, -3}, X_{1, 2}] = 0$ gives us $[X_{2, -3},
    X_{1, 1}] = \widehat{q} [ X_{2, 2} X_{1, -3}, E_1]$. From the $q$-Leibniz
    identity (\ref{q-Leibniz identity}) we have
    \[
        [X_{2, -3}, X_{1, 1}] = \widehat{q} \left( X_{2, 2} [ X_{1, -3}, E_1] +
        q[X_{2, 2}, E_1] X_{1, -3} \right).
    \]
    However, this can be simplified because the $q$-Serre relations
    (\ref{q-Serre 1}) imply
    \[
        [X_{1, -3}, E_1] = [[E_2, E_1], E_1] = E_2 E_1^2 - (q + q^{-1}) E_1 E_2
        E_1 + E_1^2 E_2 = 0.
    \]
    Therefore, it follows that $[X_{2, -3}, X_{1, 1}] = \widehat{q} q [X_{2,
    2}, E_1] X_{1, -3}$. Using $[X_{2, 2}, E_1] = X_{1, 2}$ gives us $[X_{2,
    -3}, X_{1, 1}] = \widehat{q} q X_{1, 2} X_{1, -3}$.  Since the identity
    $[X_{1, -3}, X_{1, 2}] = 0$ means $X_{1, -3} X_{1, 2} = q X_{1, 2}, X_{1,
    -3}$, we have $[X_{2, -3}, X_{1, 1}] = \widehat{q} X_{1, -3} X_{1, 2}$.

    We have now determined how to write $[X_{2, -3}, X_{1, 1}]$ as a linear
    combination of ordered monomials. Since ordered monomials form a basis of
    the algebra ${\mathcal U}_q^+[w]$, it follows that $[X_{2, -3}, X_{1, 1}]$
    is nonzero. Hence ${\mathcal N}(\llbracket w \rrbracket) > 1$.  We focus
    next on writing $[X_{2, -3}, [X_{2, -3}, X_{1, 1}]]$ as a linear
    combination of ordered monomials. The $q$-Leibniz identity (\ref{q-Leibniz
    identity}) gives us
    \[
        [X_{2, -3}, [X_{2, -3}, X_{1, 1}] = \widehat{q} \left( [X_{2, -3},
        X_{1, -3}] X_{1, 2} + q X_{1, -3} [X_{2, -3}, X_{1, 2}] \right).
    \]
    However, since $[X_{2, -3}, X_{1, -3}] = 0$ and $[X_{2, -3}, X_{1, 2}] =
    [2]_q \widehat{q} X_{2, 2} X_{1, -3}$, we have
    \[
        [X_{2, -3}, [X_{2, -3}, X_{1, 1}] = [2]_q \widehat{q}^{\,2} q X_{1, -3}
        X_{2, 2} X_{1, -3} = [2]_q \widehat{q}^{\,2} q X_{2, 2} X_{1, -3}^2.
    \]
    Since $[X_{2, -3}, X_{2, 2}] = [X_{2, -3}, X_{1, -3}] = 0$, then, as a
    consequence of the $q$-Leibniz identity (\ref{q-Leibniz identity}), $[X_{2,
    -3}, X_{2, 2} X_{1, -3}^2] = 0$. Thus, $[X_{2, -3}, [X_{2, -3}, [X_{2, -3},
    X_{1, 1}]]] = 0$.  Hence ${\mathcal N} (\llbracket w \rrbracket) = 3$.

\end{proof}

\section{Nilpotency indices: The general cases (Types
    \texorpdfstring{$ABCD$}{ABCD})}

\label{section, nil-index, general}

In this section, we compute the nilpotency indices ${\mathcal N}(\llbracket w
\rrbracket)$ associated to each Weyl group element $w$ appearing among the
so-called \textit{general} cases of Proposition (\ref{Yn, description}).  Table
(\ref{table, general cases ABCD}) summarizes the data for these cases. The main
objective of this section is to prove the rightmost column of Table
(\ref{table, general cases ABCD}) gives the nilpotency index in each of these
situations.

\begin{table}[H]
    \caption{General cases, Types ABCD (data)}
    \begin{tabular}{ccccc}
        \hline
    Case & Lie type $X_n$ & $w \in BiGr_{\perp}^{\circ}(X_n)$
    & $\chi(\llbracket w \rrbracket)$ & ${\mathcal N}( \llbracket w
    \rrbracket)$
    \\[2pt]
    \hline
    B.1 & $B_n$, ($n > 1$) & $w_{0, 0, 1, 0, n-1}$   & (4, 4, 0)  & 2 \\
    B.2 & $B_n$, ($n > 2$) & $w_{0, 0, 2, 0, n-2}$   & (4 ,4, -2) & 3 \\
    B.3 & $B_n$, ($n > 2$) & $w_{0, 1, 1, 1, n-3}$   & (4, 4, 2)  & 2 \\
    B.4 & $B_n$, ($n > 3$) & $w_{0, 1, 2, 1, n-4}$   & (4, 4, 0)  & 3 \\
    C.1 & $C_n$, ($n > 1$) & $w_{0, 0, 1, 0, n-1}$   & (2, 2, 0)  & 2 \\
    C.2 & $C_n$, ($n > 2$) & $w_{0, 0, 2, 0, n-2}$   & (2, 2, -1) & 3 \\
    C.3 & $C_n$, ($n > 2$) & $w_{0, 1, 1, 1, n-3}$   & (2, 2, 1)  & 2 \\
    C.4 & $C_n$, ($n > 3$) & $w_{0, 1, 2, 1, n-4}$   & (2, 2, 0)  & 3 \\
    D.1 & $D_n$, ($n > 3$) & $w_{0, 0, 1, 0, n-1}^+$ & (2, 2, 0)  & 2\\
    D.2 & $D_n$, ($n > 3$) & $w_{0, 0, 2, 0, n-2}^+$ & (2, 2, -1) & 3\\
    D.3 & $D_n$, ($n > 3$) & $w_{0, 1, 1, 1, n-3}^+$ & (2, 2, 1)  & 2\\
    D.4 & $D_n$, ($n > 4$) & $w_{0, 1, 2, 1, n-4}^+$ & (2, 2, 0)  & $3$\\[2pt]
    \hline
    \end{tabular}
    \label{table, general cases ABCD}
\end{table}

\vspace{1mm}

\begin{theorem}

    \label{general cases, BCD13}

    Let $w \in BiGr_{\perp}^{\circ}(X_n)$ be the Weyl group element associated
    to any of the cases B.1, B.3, C.1, C.3, D.1, or D.3 listed in Table
    (\ref{table, general cases ABCD}). Then ${\mathcal N}(\llbracket w
    \rrbracket) = 2$.

\end{theorem}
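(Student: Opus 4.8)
The plan is to recognize that all six cases are covered by part~(2) of Theorem~\ref{theorem, nil-index, 1}, so that no $q$-commutator computation of the sort used for the small-rank cases is needed --- only a short combinatorial check. First I would record that in each of these cases $\llbracket w\rrbracket = (w,i,j)$ has $i = j$. Using the signed-permutation realization from Section~\ref{section, bigrassmannian elements} (recall from the proof of Proposition~\ref{second stage reductions, ABCD} that $w_{\ell,i,j,k,m}$ and $w^+_{\ell,i,j,k,m}$ have unique left descent $\ell+j+k$ and unique right descent $\ell+i+j$), one gets $i = j = 1$ for cases B.1, C.1, D.1 and $i = j = 2$ for cases B.3, C.3, D.3; in particular $(w,i,i)\in\Gamma(W)$ throughout. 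By part~(2) of Theorem~\ref{theorem, nil-index, 1}, it then suffices to verify the single hypothesis $i\notin\operatorname{supp}(s_iws_i)$.

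Next I would compute the signed permutation $s_iws_i$ in each case and read off its support. In cases B.1 and C.1 the element $w = w_{0,0,1,0,n-1}$ negates $e_1$ and fixes $e_2,\dots,e_n$, so $s_1ws_1$ negates $e_2$ and fixes $e_1,e_3,\dots,e_n$, whence $\operatorname{supp}(s_1ws_1)\subseteq\{2,\dots,n\}$. In case D.1, $w = w^+_{0,0,1,0,n-1}$ negates $e_1$ and $e_n$ (with $n>3$ ensuring $1\neq n$) and fixes the rest, so $s_1ws_1$ negates $e_2$ and $e_n$ and fixes $e_1,e_3,\dots,e_{n-1}$, again missing index $1$. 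In cases B.3 and C.3 one checks that $w = w_{0,1,1,1,n-3}$ acts by $e_1\mapsto e_3$, $e_2\mapsto -e_2$, $e_3\mapsto e_1$ and fixes $e_4,\dots,e_n$, so $s_2ws_2$ acts by $e_1\leftrightarrow e_2$, $e_3\mapsto -e_3$ and fixes $e_4,\dots,e_n$; hence $\operatorname{supp}(s_2ws_2)\subseteq\{1,3,4,\dots,n\}$. Case D.3 is the same computation carrying along the additional sign $e_n\mapsto -e_n$ coming from the $\pm$-convention of (\ref{definition, Dn, w_lijkm}), with $n>4$ keeping $\{1,2,3\}$ and $n$ disjoint. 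In all six cases $i\notin\operatorname{supp}(s_iws_i)$, so part~(2) of Theorem~\ref{theorem, nil-index, 1} gives $\mathcal{N}(\llbracket w\rrbracket) = \mathcal{N}(w,i,i) = 2$.

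I do not expect any genuine obstacle here; the only care required is bookkeeping. One must pin down the block-matrix form of $w_{\ell,i,j,k,m}$ and $w^+_{\ell,i,j,k,m}$ correctly, identify the common descent index $i = j$, and --- in type $D$ --- handle the $\pm$-twist together with the fork $\{s_{n-1},s_n\}$ at the end of the Dynkin diagram, which is exactly why the rank restrictions $n>3$ (case D.1) and $n>4$ (case D.3) appear: they guarantee that the coordinates $w$ permutes are disjoint from the coordinate(s) it negates, so the computed action of $s_iws_i$ really does avoid $s_i$. As a sanity check I would also confirm that $ws_i(\alpha_i) = e_1 + e_2$ in each case, which matches the third entry of $\chi(\llbracket w\rrbracket)$ listed in Table~\ref{table, general cases ABCD}.
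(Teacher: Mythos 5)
Your proposal is correct and takes the same route as the paper: both apply part~(2) of Theorem~\ref{theorem, nil-index, 1} after checking that $i=j$ and $i\notin\operatorname{supp}(s_iws_i)$ in each of the six cases, with the paper leaving the signed-permutation bookkeeping implicit. One small slip: for case D.3 the rank requirement is $n>3$ (as in Table~\ref{table, general cases ABCD}), not $n>4$ — your own disjointness argument for $\{1,2,3\}$ versus $\{n\}$ already goes through at $n=4$.
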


\begin{proof}

    The results of Theorem (\ref{theorem, nil-index, 1}), part (2), apply. In
    each case, $i \not\in \operatorname{supp}(s_iws_i)$, where $i \in
    \mathbf{I}$ is the unique index such that $\left\{ i \right\} = {\mathcal
    D}_L(w) = {\mathcal D}_R(w)$.

\end{proof}

\subsection{Nilpotency index, Case B.2}

Let $\mathfrak{g}$ be the complex Lie algebra of type $B_n$, where $n > 2$.
Consider the Weyl group element $w := w_{0, 0, 2, 0, n - 2} \in W(B_n)$ and the
reduced expression
\begin{equation}
    \label{B.2, reduced expression}
    w = (s_2 \cdots s_{n-1})(s_1 \cdots s_{n-2}) s_n s_{n-1} s_n (s_{n-2}
    \cdots s_1)(s_{n-1} \cdots s_2).
\end{equation}
Let $\Delta_w$ be the set of roots of $w$. The reduced expression for $w$
corresponds to the following convex order on $\Delta_w$:
\[
    \begin{split}
        &e_2 - e_3 \prec \cdots \prec e_2 - e_n \prec e_1 - e_3 \prec \cdots
        \\
        &\cdots \prec e_1 - e_n \prec e_2 \prec e_1 + e_2 \prec e_1 \prec e_2
        + e_n \prec \cdots
        \\
        &\cdots \prec e_2 + e_3 \prec e_1 + e_n \prec \cdots \prec e_1 + e_3.
    \end{split}
\]
Let ${\mathcal U}_q^+[w]$ be the quantum Schubert cell algebra associated to
$w$. The Lusztig root vectors will be denoted by
\begin{equation}
    \label{B.2, Lusztig root vectors}
    X_{1, 2}, X_{i, 0}, X_{i, \pm k} \in {\mathcal U}_q^+[w], \hspace{15pt} (1
    \leq i < 3 \leq k \leq n).
\end{equation}
Since $e_{2, -3}$ and $e_{1, 3}$ are the first and last roots in $\Delta_w$
with respect to the convex order (and the corresponding Lusztig root vectors
are $X_{2, -3}$ and $X_{1, 3}$, respectively), then in order to calculate
${\mathcal N} \left( \llbracket w \rrbracket \right)$, we need to compute the
actions of the maps $\operatorname{ad}_q (X_{2, -3}) $,
$\left(\operatorname{ad}_q (X_{2, -3}) \right)^2$, $\left(\operatorname{ad}_q
(X_{2, -3}) \right)^3$,  et cetera, on the Lusztig root vector $X_{1, 3}$. We
will show that $\left(\operatorname{ad}_q (X_{2, -3}) \right)^2 (X_{1, 3}) \neq
0$ and $\left(\operatorname{ad}_q (X_{2, -3}) \right)^3 (X_{1, 3}) = 0$.  To
accomplish this, we need to first determine some of the commutation relations
in the quantum Schubert cell algebra ${\mathcal U}_q^+[w]$. In particular, we
are most interested in the commutation relations involving $X_{2, -3}$. Some
relations involving other Lusztig root vectors $X_{i, \pm k}$ will also be
applied along the way.

\begin{lemma}

    \label{lemma, B.2, 0}

    Let $X_{i, \pm k}$ be as in (\ref{B.2, Lusztig root vectors}). Then for
    $1\leq i < 3 \leq j \leq n$ and $3 \leq k \leq n$, we have the following:
    \begin{align}
        \label{B.2, L-S, 1} &[X_{2, -k}, X_{1, -j}] = [X_{2, j}, X_{1, k}] = 0
        \hspace{8pt} (j \leq k),
        \\
        \label{B.2, L-S, 2} &[X_{i, -k}, X_{i, -j}] = [X_{i, j}, X_{i, k}] = 0
        \hspace{12.5pt} (k < j),
        \\
        \label{B.2, L-S, 3} &[X_{i, -k}, X_{i, j}] = [X_{1, -k}, X_{2, j}] = 0
        \hspace{10.3pt} (j \neq k),
        \\
        \label{B.2, L-S, 4} &[X_{i, -k}, X_{2, 0}] = [X_{2, 0}, X_{2, k}] = 0.
    \end{align}

\end{lemma}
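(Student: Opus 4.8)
The plan is to derive every one of these identities from Proposition~(\ref{LS corollary}): for Lusztig root vectors $X_{\beta_a}, X_{\beta_b}$ with $a < b$ in the fixed convex order, if $\beta_a + \beta_b$ cannot be written as a nonnegative integral combination of the roots $\beta_{a+1}, \dots, \beta_{b-1}$ lying strictly between them, then $[X_{\beta_a}, X_{\beta_b}] = 0$. So the first step is pure bookkeeping: from the displayed convex order on $\Delta_w$, record the position of each Lusztig root vector. Concretely, the roots $e_2 - e_m$ ($3 \le m \le n$) fill the first $n-2$ slots in increasing order of $m$; then $e_1 - e_m$ ($3 \le m \le n$) in increasing order of $m$; then the three middle roots $e_2$, $e_1 + e_2$, $e_1$ (that is, $X_{2,0}, X_{1,2}, X_{1,0}$); then $e_2 + e_m$ ($3 \le m \le n$) in \emph{decreasing} order of $m$; and finally $e_1 + e_m$ ($3 \le m \le n$) in decreasing order of $m$. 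Since $n > 2$, the two ``negative'' blocks precede all the middle and ``positive'' roots, so in each case below the left-hand root genuinely comes before the right-hand root, and one can read off exactly which roots are intermediate.

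For each claimed vanishing I would then check non-representability of $\beta_a + \beta_b$ by a short argument in the $e_1, \dots, e_n$ coordinates. For (\ref{B.2, L-S, 1}) the sum is $e_1 + e_2 - e_j - e_k$, resp.\ $e_1 + e_2 + e_j + e_k$; among the intermediate roots, $e_1$ occurs only in roots of shape $e_1 - e_m$ with $m \le j-1$ and $e_2$ only in $e_2 - e_m$ with $m \ge k+1$ (resp.\ the analogues in the positive block), so a nonnegative combination must use exactly one root of each shape, and these cannot recover the indices $j$ and $k$ because $m \le j-1 < j \le k$ and $m \ge k+1 > k \ge j$ --- the hypothesis $j \le k$ is exactly what is needed. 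For (\ref{B.2, L-S, 2}) the same ``$e_i$-coefficient must equal $2$'' bookkeeping forces two roots of a single shape $e_i \mp e_m$ with every $m$ strictly between $k$ and $j$, contradicting that the target's two extremal indices are $k$ and $j$ themselves. For (\ref{B.2, L-S, 3}) and the $X_{2,0}$-relations of (\ref{B.2, L-S, 4}) the cleanest route is one wrong-sign observation: after discarding the intermediate roots containing $e_2$ with positive coefficient --- these are forbidden, since the $e_2$-coefficient of the target is $0$ and no root has a negative $e_2$-coefficient --- all surviving intermediate roots have $e_j$-coefficient $\le 0$ (note that $e_2 + e_j$ and $e_1 + e_j$ are not intermediate, being the right endpoint or lying beyond it), while the target carries $+e_j$. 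Here $j \ne k$ is essential: if $j = k$ the sum collapses to $2e_2$, $e_1 + e_2$, or $2e_1$, each of which \emph{is} realizable. Finally, $[X_{2,0}, X_{2,k}] = 0$ goes the same way: the sum $2e_2 + e_k$ has $e_1$-coefficient $0$, which excludes $e_1$ and $e_1 + e_2$, leaving only $e_2 + e_m$ with $m \ge k+1$, and no nonnegative combination of these produces the summand $+e_k$.

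I do not anticipate a genuine obstacle; the content is elementary integer linear algebra. The only place that really needs care is the bookkeeping --- correctly placing all the Lusztig root vectors, especially the three middle roots and the two reversed positive blocks --- since a slip there changes the intermediate set and could invalidate a vanishing claim. It also pays to keep the rank hypothesis $n > 2$ in view (so that the negative blocks stay disjoint, positionally, from the rest) and to confirm in each case that the position inequality $a < b$ actually holds rather than taking it for granted.
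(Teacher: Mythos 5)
Your overall approach matches the paper's one-line proof exactly: every identity is a case of Proposition~(\ref{LS corollary}), and the content is checking non-representability of $\beta_a + \beta_b$ by intermediate roots. Your placement of the Lusztig root vectors in the convex order is correct, and the detailed arguments you give for (\ref{B.2, L-S, 1}) and (\ref{B.2, L-S, 2}) go through.

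However, your paragraph covering (\ref{B.2, L-S, 3}) and the $X_{2,0}$-relations of (\ref{B.2, L-S, 4}) contains concrete errors and would not compile into a proof as written. You assert that the $e_2$-coefficient of the target is $0$, but this holds only for the single sub-case $[X_{1,-k}, X_{1,j}]$ (target $2e_1 + e_j - e_k$). For $[X_{2,-k}, X_{2,j}]$ the target is $2e_2 + e_j - e_k$, for $[X_{1,-k}, X_{2,j}]$ it is $e_1 + e_2 + e_j - e_k$, and for $[X_{i,-k}, X_{2,0}]$ it is $e_i + e_2 - e_k$, all of which have positive $e_2$-coefficient. Moreover, the claim ``the target carries $+e_j$'' is false for the $X_{2,0}$-relations of (\ref{B.2, L-S, 4}), where there is no index $j$ at all, so the ``wrong-sign on $e_j$'' obstruction is vacuous there. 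Finally, your parenthetical that $e_2 + e_j$ is never intermediate is false precisely in the sub-case $[X_{1,-k}, X_{1,j}]$ (where $e_2 + e_j$, lying in block 4, is strictly between the block-2 and block-5 endpoints) --- that is the sub-case where the discarding step is genuinely needed and the parenthetical is not. So the two halves of your sentence contradict each other and neither applies uniformly.

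The underlying strategy is sound and the repair is routine: treat the sub-cases separately, using the $e_1$-coefficient-zero constraint to thin the intermediate set when $i = 2$, using the position argument (right endpoint excluded) rather than discarding when the $e_2$-coefficient is positive, and handling $[X_{i,-k}, X_{2,0}]$ by the $e_1$/$e_2$-coefficients alone since $e_j$ does not appear. But as written, the paragraph would lead a careful reader to conclude the argument fails for most of the sub-cases it was supposed to cover.
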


\begin{proof}

    These identities follow as a consequence of Proposition (\ref{LS
    corollary}).

\end{proof}

\begin{lemma}

    \label{lemma, B.2, 1}

    Let $X_{i, \pm k}$ be as in (\ref{B.2, Lusztig root vectors}). Then

    \begin{enumerate}

        \item \label{lemma, B.2, 1a} $X_{2, -3} = E_2$ and $X_{1, -3} = [E_2,
            E_1]$,

        \item \label{lemma, B.2, 1b} $X_{i, -k} = [X_{i, -(k - 1)}, E_{k - 1}]$
            (for $3 < k \leq n$ and $i=1, 2$),

        \item \label{lemma, B.2, 1c} $X_{i, k} = [X_{i, k + 1}, E_k]$ (for $3
            \leq k < n$ and $i = 1, 2$),

        \item \label{lemma, B.2, 1d} $X_{1, 0} = [X_{2, 0}, E_1]$,

        \item \label{lemma, B.2, 1e} $X_{1, n} = [X_{2, n}, E_1]$,

        \item \label{lemma, B.2, 1f} $X_{2, 0} = [X_{2, -n}, E_n]$,

        \item \label{lemma, B.2, 1h} $[2]_q X_{2, n} = [X_{2, 0}, E_n]$.

    \end{enumerate}

\end{lemma}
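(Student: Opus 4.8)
The plan is to read all seven identities off the definition (\ref{root vectors}) of the Lusztig root vectors attached to the reduced expression (\ref{B.2, reduced expression}), working through the list in order: since (\ref{lemma, B.2, 1b}) and (\ref{lemma, B.2, 1c}) are recursions in $k$, once the base cases (\ref{lemma, B.2, 1a}) and (\ref{lemma, B.2, 1d})--(\ref{lemma, B.2, 1h}) are in hand the remainder follows by induction on $k$. I would abbreviate the five blocks of (\ref{B.2, reduced expression}) as $w_1 := s_2\cdots s_{n-1}$, $w_2 := s_1\cdots s_{n-2}$, $w_3 := s_ns_{n-1}s_n$, $w_4 := s_{n-2}\cdots s_1$, $w_5 := s_{n-1}\cdots s_2$, so that $w = w_1w_2w_3w_4w_5$ with lengths adding. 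The only tools used are Theorem (\ref{Jantzen, 8.20}) — which lets $T_v(E_m)$ collapse to a Chevalley generator whenever $v(\alpha_m)\in\Pi$, and in particular gives $T_{s_k}(E_m)=E_m$ when nodes $k$ and $m$ are non-adjacent — the orthogonality property (\ref{orthogonality of Tw}) for pulling $T_v$ through a $q$-commutator, and the small braid-symmetry formulas listed in Conventions (\ref{computational conventions}). The whole computation runs on one structural feature of the $B_n$ diagram: node $1$ touches only node $2$ and node $n$ touches only node $n-1$, which is exactly what makes the long prefixes $w_1,\dots,w_5$ act almost trivially on the relevant $E_m$'s.

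For (\ref{lemma, B.2, 1a}): $X_{2,-3}=E_2$ is immediate, and $X_{1,-3}=T_{w_1}(E_1)$, which collapses to $T_{s_2}(E_1)=[E_2,E_1]$ since none of $s_3,\dots,s_{n-1}$ is adjacent to node $1$. For the recursions (\ref{lemma, B.2, 1b}) and (\ref{lemma, B.2, 1c}), the point is that the roots $e_i\mp e_{k-1}$ and $e_i\mp e_k$ occur in consecutive positions of the word, with the prefix producing $X_{i,\mp k}$ obtained from the prefix producing $X_{i,\mp(k-1)}$ by appending one extra simple reflection $s_m$, where $m$ is joined to $k-1$ by a single bond. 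Writing $X_{i,\mp k}=T_v\,T_{s_m}(E_{k-1})=T_v([E_m,E_{k-1}])$, using $T_{s_m}(E_{k-1})=[E_m,E_{k-1}]$ from Conventions (\ref{computational conventions}), and pulling $T_v$ through the $q$-commutator via (\ref{orthogonality of Tw}) gives $[T_v(E_m),T_v(E_{k-1})]$; one then checks $T_v(E_m)=X_{i,\mp(k-1)}$ (forced by matching prefixes) and $T_v(E_{k-1})=E_{k-1}$ (a short computation of $v$ on the simple roots, valid because no letter of $v$ is adjacent to $k-1$), obtaining $[X_{i,\mp(k-1)},E_{k-1}]$. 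The descending blocks $w_4,w_5$ handle the positive-root identity (\ref{lemma, B.2, 1c}) in the same way.

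For the base cases I would argue: (\ref{lemma, B.2, 1f}) $X_{2,-n}=T_{s_2\cdots s_{n-2}}(E_{n-1})$ and $s_2\cdots s_{n-2}$ fixes $E_n$, so $[X_{2,-n},E_n]=T_{s_2\cdots s_{n-2}}([E_{n-1},E_n])=T_{s_2\cdots s_{n-1}}(E_n)=X_{2,0}$; (\ref{lemma, B.2, 1d}) $X_{2,0}=T_{w_1w_2}(E_n)$ and the computation $(w_1w_2)^{-1}(\alpha_1)=\alpha_{n-1}$ gives $E_1=T_{w_1w_2}(E_{n-1})$, hence $[X_{2,0},E_1]=T_{w_1w_2}([E_n,E_{n-1}])=T_{w_1w_2}T_{s_ns_{n-1}}(E_n)=X_{1,0}$, using $[E_n,E_{n-1}]=T_{s_ns_{n-1}}(E_n)$; (\ref{lemma, B.2, 1e}) with $X_{2,n}=T_{w_1w_2w_3}(E_{n-2})$, $T_{w_4}(E_{n-1})=[E_{n-2},E_{n-1}]$, and $(w_1w_2w_3)(\alpha_{n-1})=\alpha_1$, one gets $X_{1,n}=T_{w_1w_2w_3w_4}(E_{n-1})=T_{w_1w_2w_3}([E_{n-2},E_{n-1}])=[X_{2,n},E_1]$. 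Identity (\ref{lemma, B.2, 1h}) is the one that genuinely uses the double bond: by (\ref{lemma, B.2, 1f}), $[X_{2,0},E_n]=T_{s_2\cdots s_{n-2}}([[E_{n-1},E_n],E_n])=[2]_q\,T_{s_2\cdots s_{n-2}}T_{s_{n-1}s_n}(E_{n-1})$, using $[[E_{n-1},E_n],E_n]=[2]_qT_{s_{n-1}s_n}(E_{n-1})$ from Conventions (\ref{computational conventions}); after rewriting $w_1w_2w_3=(s_2\cdots s_n)(s_1\cdots s_n)$ by commuting the $s_n$'s leftward past the lower-index letters and checking $(s_1\cdots s_n)(\alpha_{n-2})=\alpha_{n-1}$ — which identifies $X_{2,n}$ with $T_{s_2\cdots s_n}(E_{n-1})=T_{s_2\cdots s_{n-2}}T_{s_{n-1}s_n}(E_{n-1})$ — one concludes $[X_{2,0},E_n]=[2]_qX_{2,n}$.

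The only genuinely delicate point is identity (\ref{lemma, B.2, 1h}): one must carry the factor $[2]_q$ correctly through the double-bond braid formula, and must perform the commutation rewriting $w_1w_2w_3=(s_2\cdots s_n)(s_1\cdots s_n)$ first so that the right braid generators can be peeled off without running into the unwieldy formulas $T_{s_i}(E_i)=-F_iK_{\alpha_i}$. Everything else is mechanical bookkeeping of positions in the long reduced word: for each identity, express both arguments of the asserted $q$-commutator as $T_v$ of a Chevalley generator for one common initial segment $v$ of the relevant prefix, pull $T_v$ out via (\ref{orthogonality of Tw}), and recognize the outcome from (\ref{root vectors}) and Theorem (\ref{Jantzen, 8.20}).
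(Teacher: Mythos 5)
Your strategy coincides with the paper's: read each $X_{i,\pm k}$ off the definition (\ref{root vectors}) as $T_v(E_m)$ for a suitable prefix $v$ of (\ref{B.2, reduced expression}), peel off the last reflection via the braid formulas in Conventions (\ref{computational conventions}), pull the shortened $T_{v'}$ through the $q$-bracket with (\ref{orthogonality of Tw}), and identify the two pieces from (\ref{root vectors}) and Theorem (\ref{Jantzen, 8.20}). Your base cases (a), (d)--(h) match the paper in substance and in detail, including the factorization $w_1w_2w_3=(s_2\cdots s_n)(s_1\cdots s_n)$ and the $[2]_q$ bookkeeping for (h).

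Your description of the recursion step (b)--(c), however, contains an error that would derail a literal execution. You assert that the generator for $X_{i,\mp k}$ is $E_{k-1}$ and that $T_{v'}(E_{k-1})=E_{k-1}$ ``because no letter of $v'$ is adjacent to $k-1$.'' Both claims hold \emph{only} for $X_{2,-k}$, whose prefix $s_2\cdots s_{k-2}$ genuinely avoids indices near $k-1$. For the other three sub-cases the generator differs and the mechanism is different. For instance $X_{1,-k}=T_{(s_2\cdots s_{n-1})(s_1\cdots s_{k-3})}(E_{k-2})$, so after peeling off $T_{s_{k-3}}$ one must check $T_{v'}(E_{k-2})=E_{k-1}$ with $v'=(s_2\cdots s_{n-1})(s_1\cdots s_{k-4})$; this $v'$ contains the full block $w_1$, which does not fix $\alpha_{k-2}$ --- it is a cyclic shift on $\{e_2,\ldots,e_n\}$ with $w_1(\alpha_m)=\alpha_{m+1}$ for $2\leq m\leq n-2$. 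The same nontrivial shift computation is needed in both sub-cases of (c), which moreover pass through the earlier blocks $w_1,w_2,w_3,w_4$, not only the descending ones. The correct principle is not that $v'$ fixes the generator but that $v'$ maps the generator's simple root to a (generally different) simple root, and one must compute which one --- exactly the calculation the paper carries out. If you replace your justification accordingly, your argument goes through.
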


\begin{proof}

    Part (\ref{lemma, B.2, 1a}) follows by observing that $e_2 - e_3$ ($=
    \alpha_2$) is the first root with respect to the convex order on
    $\Delta_w$. Hence, by definition of Lusztig root vectors, $X_{2, -3} =
    E_2$.  Next, adopting the computational conventions described in
    Conventions (\ref{computational conventions}), we compute $X_{1, -3} =
    T_{s_2 \cdots s_{n - 1}}(E_1) = T_{s_2} T_{s_3 \cdots s_{n - 1}} (E_1) =
    T_{s_2} (E_1) = [E_2, E_1]$.

    Next we prove part (\ref{lemma, B.2, 1b}). Fix $k$ such that $3 < k \leq
    n$, and define the Weyl group elements $u := (s_2 \cdots s_{n - 1}) (s_1
    \cdots s_{k - 3})$ and $v := s_2 \cdots s_{k - 2}$. We obtain
    \[
        \begin{split}
            X_{1, -k} &= T_u(E_{k - 2}) = T_{u s_{k - 3}} T_{s_{k - 3}} (E_{k -
            2}) = T_{u s_{k - 3}} \left( [E_{k - 3}, E_{k - 2}] \right)
            \\
            &= [T_{u s_{k - 3}} (E_{k - 3}), T_{u s_{k - 3}} (E_{k - 2})]
            = [X_{1, -(k - 1)}, E_{k - 1}]
        \end{split}
    \]
    and
    \[
        \begin{split}
            X_{2, -k} &= T_v(E_{k - 1}) = T_{v s_{k - 2}} T_{s_{k - 2}} (E_{k -
            1}) = T_{v s_{k - 2}} \left( [E_{k - 2}, E_{k - 1}] \right)
            \\
            &= [T_{v s_{k - 2}} (E_{k - 2}), T_{v s_{k - 2}} (E_{k - 1})]
            = [X_{2, -(k - 1)}, E_{k - 1}].
        \end{split}
    \]
    In a similar manner we prove part (\ref{lemma, B.2, 1c}), except now fix a
    natural number $k$ such that $3 \leq k < n$, and redefine the Weyl group
    elements $u$ and $v$ from above. Specifically, define them as $u := (s_2
    \cdots s_{n - 1})(s_1 \cdots s_{n - 2}) s_n s_{n - 1} s_n (s_{n - 2} \cdots
    s_1) (s_{n - 1} \cdots s_k)$ and $v := (s_2 \cdots s_{n - 1})(s_1 \cdots
    s_{n - 2}) s_n s_{n - 1} s_n (s_{n - 2} \cdots s_{k - 1})$. A direct
    computation yields
    \[
        \begin{split}
            X_{1, k} &= T_u(E_{k - 1}) = T_{u s_k} T_{s_k} (E_{k - 1}) = T_{u
            s_k} \left([ E_k, E_{k - 1}] \right)
            \\
            &= [T_{u s_k} (E_k), T_{u s_k} (E_{k - 1})] = [X_{1, k + 1}, E_k]
        \end{split}
    \]
    and
    \[
        \begin{split}
            X_{2, k} &= T_v(E_{k - 2}) = T_{v s_{k - 1}} T_{s_{k - 1}} (E_{k -
            2}) = T_{v s_{k - 1}} \left([ E_{k - 1}, E_{k - 2}] \right)
            \\
            &= [T_{v s_{k - 1}} (E_{k - 1}), T_{v s_{k - 1}} (E_{k - 2})] =
            [X_{2, k + 1}, E_k].
        \end{split}
    \]

    Parts (\ref{lemma, B.2, 1d}), (\ref{lemma, B.2, 1e}), and (\ref{lemma, B.2,
    1f}) are each proved similarly. For a natural number $k$ less
    than $\ell(w)$, let $w^{(k)}$ be the Weyl group element $s_{i_1} \cdots
    s_{i_k}$ obtained by multiplying together the first $k$ simple reflections
    $s_{i_1}, \cdots, s_{i_k}$ in the reduced expression of $w$ given in
    (\ref{B.2, reduced expression}). We compute
    \[
        \begin{split}
            X_{1, 0} &= T_{w^{(2n - 2)}} (E_n) = T_{w^{(2n - 4)}} T_{s_n s_{n -
            1}} (E_n) = T_{w^{(2n - 4)}} ([E_n, E_{n - 1}])
            \\
            &= [T_{w^{(2n - 4)}} (E_n), T_{w^{(2n - 4)}} (E_{n - 1})]) = [X_{2,
            0}, E_1],
            \\
            X_{1, n} &= T_{w^{(3n - 3)}}(E_{n - 1}) = T_{w^{(2n - 1)}} T_{s_{n
            - 2}} T_{s_{n - 3} \cdots s_2 s_1} (E_{n - 1}) = T_{w^{(2n - 1)}}
            T_{s_{n - 2}} (E_{n - 1})
            \\
            &= T_{w^{(2n - 1)}} \left( [ E_{n - 2}, E_{n - 1}] \right) =
            [T_{w^{(2n - 1)}} (E_{n - 2}), T_{w^{(2n - 1)}} (E_{n - 1})] =
            [X_{2, n}, E_1],
            \\
            X_{2, 0} &= T_{w^{(2n - 4)}} (E_n) = T_{w^{(n - 3)}} T_{s_{n - 1}}
            T_{s_1 \cdots s_{n - 2}} (E_n) = T_{w^{(n - 3)}} T_{s_{n - 1}}
            (E_n)
            \\
            &= T_{w^{(n - 3)}} \left( [ E_{n - 1}, E_n] \right) = [T_{w^{(n -
            3)}} (E_{n - 1}), T_{w^{(n - 3)}} (E_n)] = [X_{2, - n}, E_n].
        \end{split}
    \]
    Finally, we prove part (\ref{lemma, B.2, 1h}). We have
    \[
        \begin{split}
            [2]_q X_{2, n} &= [2]_q T_{w^{(2n - 1)}} (E_{n - 2}) = [2]_q
            T_{w^{(n - 3)}} T_{s_{n - 1} s_n} T_{s_1 \cdots s_n} (E_{n - 2})
            \\
            &= [2]_q T_{w^{(n - 3)}} T_{s_{n - 1} s_n} (E_{n - 1}) = T_{w^{(n -
            3)}} \left( [[E_{n - 1}, E_n], E_n] \right)
            \\
            &= [[T_{w^{(n - 3)}} (E_{n - 1}), T_{w^{(n - 3)}} (E_n)], T_{w^{(n
            - 3)}} (E_n)] = [[ X_{2, -n}, E_n], E_n].
        \end{split}
    \]
    The $q$-commutator identity $[X_{2, -n}, E_n] = X_{2, 0}$ from part
    (\ref{lemma, B.2, 1f}) of this lemma gives us $[2]_q X_{2, n} = [X_{2, 0},
    E_n]$.

\end{proof}

As a consequence of parts (\ref{lemma, B.2, 1a}) and (\ref{lemma, B.2, 1b}) of
Lemma (\ref{lemma, B.2, 1}), we can write each Lusztig root vector $X_{i, -k}$
(for $3 \leq k \leq n$ and $i= 1, 2$) as a sequence of nested $q$-commutators
(recall the definition in (\ref{definition, nested q-commutator})).
Specifically, for $3 \leq k \leq n$, we have
\begin{align}
    \label{B.2, as nested q-commutators, a} X_{2, -k} &= \mathbf{E}_{2, \dots,
    k - 1},
    \\
    \label{B.2, as nested q-commutators, b} X_{1, -k} &= \mathbf{E}_{2, 1, 3, 4
    \dots, k - 1}.
\end{align}
Since the Chevalley generator $E_1$ commutes with all other Chevalley
generators except $E_2$, we can also write $X_{1, -k} = \mathbf{E}_{2, \ldots,
k - 1, 1}$ for $3 \leq k \leq n$.  Hence,
\begin{equation}
    \label{B.2, 3a} X_{1, -k} = [X_{2, -k}, E_1], \hspace{5mm} (\text{for } 3
    \leq k \leq n).
\end{equation}
From Lemma (\ref{lemma, B.2, 1}), part (\ref{lemma, B.2, 1d}), we can also
write $X_{1, 0}$ as a sequence of nested $q$-commutators. We have $X_{1, 0} =
\mathbf{E}_{2, \dots, n, 1}$. Using again the fact that the Chevalley generator
$E_1$ commutes with $E_3,\cdots, E_n$, we can write $X_{1, 0} = \mathbf{E}_{2,
1, 3, 4, \ldots, n}$. Hence,
\begin{equation}
    \label{B.2, 3b} X_{1, 0} = [X_{1, -n}, E_n].
\end{equation}

\begin{lemma}

    \label{lemma, B.2, 4}

    Let $X_{i, \pm k}$ be as in (\ref{B.2, Lusztig root vectors}). Then

    \begin{enumerate}

        \item \label{lemma, B.2, 4g} $[2]_q X_{1, 2} = [X_{2, 0}, X_{1, 0}]$,

        \item \label{lemma, B.2, 4i} $X_{1, 2} = [X_{1, -n}, X_{2, n}]$.

    \end{enumerate}

\end{lemma}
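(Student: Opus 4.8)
The plan is to reduce both identities to the single relation $[X_{2,0},X_{1,0}] = [2]_q X_{1,2}$, and to prove that one relation by a Levendorskii--Soibelmann straightening argument together with a short rank-two computation. To link the two parts, I would start from $[2]_q X_{2,n} = [X_{2,0},E_n]$ (part (\ref{lemma, B.2, 1h}) of Lemma (\ref{lemma, B.2, 1})) and apply the $q$-Jacobi identity (\ref{q-Jacobi identity}) to $[X_{1,-n},[X_{2,0},E_n]]$. Since $\langle e_2,e_n\rangle = 0$, the error term in (\ref{q-Jacobi identity}) vanishes, and using $[X_{1,-n},X_{2,0}] = 0$ from (\ref{B.2, L-S, 4}) together with $[X_{1,-n},E_n] = X_{1,0}$ from (\ref{B.2, 3b}), the computation collapses to $[2]_q[X_{1,-n},X_{2,n}] = -[X_{1,0},X_{2,0}] = [X_{2,0},X_{1,0}]$, the last equality because $\langle e_1,e_2\rangle = 0$. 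As $q$ is not a root of unity, $[2]_q \neq 0$, so part (\ref{lemma, B.2, 4i}) holds if and only if $[X_{2,0},X_{1,0}] = [2]_q X_{1,2}$, which is part (\ref{lemma, B.2, 4g}).

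It then remains to prove $[X_{2,0},X_{1,0}] = [2]_q X_{1,2}$. From the reduced expression (\ref{B.2, reduced expression}) and the convex order on $\Delta_w$ written out just above, the roots $e_2$, $e_1+e_2$, $e_1$ occupy three consecutive positions, i.e. $X_{2,0} = X_{\beta_{2n-3}}$, $X_{1,2} = X_{\beta_{2n-2}}$, $X_{1,0} = X_{\beta_{2n-1}}$. By the straightening rule (\ref{L-S straightening}), $[X_{\beta_{2n-3}},X_{\beta_{2n-1}}] \in \mathbf{U}_{[2n-2,2n-2]} \cap \left({\mathcal U}_q^+[w]\right)_{\beta_{2n-3}+\beta_{2n-1}}$; since $\beta_{2n-3}+\beta_{2n-1} = e_1+e_2 = \beta_{2n-2}$ and $\mathbf{U}_{[2n-2,2n-2]} = \mathbb{K}[X_{\beta_{2n-2}}]$, the only ordered monomial of that degree is $X_{1,2}$, so $[X_{2,0},X_{1,0}] = c\,X_{1,2}$ for some $c \in \mathbb{K}$. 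To evaluate $c$, write $w^{(k)}$ for the product of the first $k$ simple reflections in (\ref{B.2, reduced expression}); then $X_{2,0} = T_{w^{(2n-4)}}(E_n)$, $X_{1,2} = T_{w^{(2n-4)}}T_{s_n}(E_{n-1})$, and $X_{1,0} = T_{w^{(2n-4)}}T_{s_ns_{n-1}}(E_n)$. Applying $T_{w^{(2n-4)}}^{-1}$ and (\ref{orthogonality of Tw}) reduces the relation to $[E_n,T_{s_ns_{n-1}}(E_n)] = c\,T_{s_n}(E_{n-1})$; because $2\langle\alpha_n,\alpha_{n-1}\rangle/\langle\alpha_n,\alpha_n\rangle = -2$, the conventions in (\ref{computational conventions}) give $T_{s_ns_{n-1}}(E_n) = [E_n,E_{n-1}]$ and $T_{s_n}(E_{n-1}) = \frac{1}{[2]_q}[E_n,[E_n,E_{n-1}]]$, so $[E_n,T_{s_ns_{n-1}}(E_n)] = [E_n,[E_n,E_{n-1}]] = [2]_q\,T_{s_n}(E_{n-1})$, i.e. $c = [2]_q$. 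This establishes (\ref{lemma, B.2, 4g}), and (\ref{lemma, B.2, 4i}) follows from the first step.

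Every step is a routine application of the $q$-Jacobi identity, the straightening rule, and the standing conventions on Lusztig symmetries, so there is no deep obstacle here. The points that genuinely require care are (i) verifying that $e_2$, $e_1+e_2$, $e_1$ really are consecutive in the chosen convex order, i.e. that the three Lusztig root vectors sit at positions $2n-3$, $2n-2$, $2n-1$ (this is what makes the degree count in the straightening argument force a single term $X_{1,2}$), and (ii) keeping the $q$-exponents consistent throughout the $q$-Jacobi manipulation of the first step, where the vanishing of $\langle e_2,e_n\rangle$ and $\langle e_1,e_2\rangle$ is exactly what produces the clean relation $[2]_q[X_{1,-n},X_{2,n}] = [X_{2,0},X_{1,0}]$.
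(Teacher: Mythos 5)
Your proof is correct. For part (\ref{lemma, B.2, 4i}) your $q$-Jacobi reduction is exactly the computation the paper runs, just organised as ``(\ref{lemma, B.2, 4i}) is equivalent to (\ref{lemma, B.2, 4g})'' rather than as a downstream consequence; the vanishing of the $q$-Jacobi error term (because $\langle e_2, e_n\rangle = 0$), the inputs $[X_{1,-n},X_{2,0}]=0$ and $[X_{1,-n},E_n]=X_{1,0}$, and the sign flip from $\langle e_1,e_2\rangle=0$ are all handled correctly. Your argument for part (\ref{lemma, B.2, 4g}) is where you diverge from the paper: the paper obtains $[2]_qX_{1,2}=[X_{2,0},X_{1,0}]$ head-on by a chain of Lusztig-symmetry substitutions, whereas you first invoke the Levendorskii--Soibelmann straightening rule (\ref{L-S straightening}) --- correctly checking that $e_2$, $e_1+e_2$, $e_1$ occupy the three consecutive positions $2n-3$, $2n-2$, $2n-1$ in the convex order, so the target interval subalgebra is $\mathbb{K}[X_{1,2}]$ and the degree constraint forces a single monomial --- to deduce $[X_{2,0},X_{1,0}]=c\,X_{1,2}$ for some scalar $c$, and then apply $T_{w^{(2n-4)}}^{-1}$ to reduce the determination of $c$ to the rank-two identity $[E_n,T_{s_ns_{n-1}}(E_n)]=c\,T_{s_n}(E_{n-1})$, which Conventions (\ref{computational conventions}) immediately resolve as $c=[2]_q$. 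Both proofs rest on the same underlying rank-two Lusztig-symmetry facts, so this is a modest but legitimate reorganisation: you isolate the structural reason a single ordered monomial appears on the right before computing the coefficient, which makes the argument cleaner and independent of the particular bracketing used in a direct calculation.
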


\begin{proof}

    We first prove part (\ref{lemma, B.2, 4g}). In the following, let $w_k$,
    for $1 \leq k \leq \ell(w)$, be the Weyl group element obtained by
    multiplying the first $k$ simple reflections in the reduced expression
    (\ref{B.2, reduced expression}). We have
    \[
        \begin{split}
            [2]_q X_{1, 2} &= [2]_q T_{w_{2n - 3}} (E_{n - 1}) = [2]_q T_{w_{2n
            - 5}} T_{s_{n - 1} s_n} (E_{n - 1})
            \\
            &= T_{w_{2n - 5}} \left([[ E_{n - 1}, E_n], E_n] \right) = T_{w_{2n
            - 5}} \left( [T_{s_{n - 1}}(E_n), T_{s_{n - 1} s_n s_{n - 1}}(E_n)]
            \right)
            \\
            & = [T_{w_{2n - 5}} T_{s_{n - 1}}(E_n), T_{w_{2n - 5}}T_{s_{n - 1}
            s_n s_{n - 1}}(E_n)] = [T_{w_{2n - 4}} (E_n), T_{w_{2n - 2}}(E_n)]
            \\
            &= [X_{2, 0}, X_{1, 0}].
        \end{split}
    \]
    To prove part (\ref{lemma, B.2, 4i}), use the identity $[2]_q X_{2, n} =
    [X_{2, 0}, E_n]$ from part (\ref{lemma, B.2, 1h}) of Lemma (\ref{lemma,
    B.2, 1}) and the $q$-Jacobi identity (\ref{q-Jacobi identity}) to get
    \[
        [2]_q [X_{1, -n}, X_{2, n}] = [X_{1, -n}, [X_{2, 0}, E_n]] = [[X_{1,
        -n}, X_{2, 0}], E_n] - [[X_{1, -n}, E_n], X_{2, 0}].
    \]
    However, since $[X_{1, -n}, X_{2, 0}] = 0$ (\ref{B.2, L-S, 4}) and $X_{1,
    0} = [X_{1, -n}, E_n]$ (\ref{B.2, 3b}), we have
    \[
        [2]_q [X_{1, -n}, X_{2, n}] = - [X_{1, 0}, X_{2, 0}] = [X_{2, 0}, X_{1,
        0}].
    \]
    Since $[X_{2, 0}, X_{1, 0}] = [2]_q X_{1, 2}$ (\ref{lemma, B.2, 4g}), we
    obtain $[X_{1, -n}, X_{2, n}] = X_{1, 2}$.

\end{proof}

\begin{lemma}

    \label{lemma, B.2, 2}

    Let $X_{i, \pm k}$ be as in (\ref{B.2, Lusztig root vectors}). Then

    \begin{enumerate}

        \item \label{lemma, B.2, 2.1} For $3 \leq k < j \leq n$,

            \begin{enumerate}

                \item \label{lemma, B.2, 2.1a} $[X_{2, -k}, X_{1, j}] =
                    \widehat{q_1}\, X_{1, -k} X_{2, j}$,

                \item \label{lemma, B.2, 2.1b} $[X_{2, -k}, X_{1, -j}] =
                    \widehat{q_1} X_{2, -j} X_{1, -k}$.

            \end{enumerate}

        \item \label{lemma, B.2, 2.2} For $3 \leq k \leq n$,

        \begin{enumerate}

            \item \label{lemma, B.2, 2.2a} $[X_{2, -k}, X_{1, 0}] =
                \widehat{q_1} X_{1, -k} X_{2, 0}$,

            \item \label{lemma, B.2, 2.2b} $[X_{2, -k}, X_{1, 2}] = q
                \widehat{q}^{\,2} X_{1, -k} X_{2, 0}^2$,

            \item \label{lemma, B.2, 2.2c} $[X_{2, -k}, X_{2, k}] = (-q_1)^{n -
                k} \frac{q \widehat{q}}{[2]_q} X_{2, 0}^2 + \widehat{q_1}
                \sum_{k < j \leq n} (-q_1)^{j - k - 1} X_{2, -j} X_{2, j}$,

            \item \label{lemma, B.2, 2.2d} $[X_{2, -k}, X_{1, k}] =
                \widehat{q_1} X_{1, -k} X_{2, k} + (-q_1)^{n - k} \left(
                \widehat{q} X_{2, 0} X_{1, 0} - \left( \widehat{q_1} + 1
                \right) X_{1, 2} \right) \\ \phantom{\hspace{25mm}}+
                \widehat{q_1} \sum_{k < j \leq n} (-q_1)^{j - k - 1} (X_{2, -j}
                X_{1, j} - \widehat{q_1} X_{1, -j} X_{2, j})$.

        \end{enumerate}

    \end{enumerate}

    (Recall $q_1 := q^{\langle \alpha_1, \alpha_1 \rangle / 2} = q^2$.)

\end{lemma}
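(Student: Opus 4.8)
The plan is to prove the four identities by explicit computation in ${\mathcal U}_q^+[w]$, in the order (\ref{lemma, B.2, 2.1}), (\ref{lemma, B.2, 2.2a}), (\ref{lemma, B.2, 2.2b}), (\ref{lemma, B.2, 2.2c}), (\ref{lemma, B.2, 2.2d}). In each case one substitutes for the Lusztig root vectors their recursive $q$-commutator expressions from Lemma (\ref{lemma, B.2, 1}) and Lemma (\ref{lemma, B.2, 4}), applies the $q$-Jacobi identity (\ref{q-Jacobi identity}) and the $q$-Leibniz identity (\ref{q-Leibniz identity}), and discards the terms annihilated by Lemma (\ref{lemma, B.2, 0}) or by the $q$-Serre relations (\ref{q-Serre 1}). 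Because $\langle e_a, e_b \rangle = 2\delta_{a,b}$ and $q_1 = q^2$, every $q$-power that occurs is determined; the scalar identities $1 + q_1^{-1} = q^{-1}[2]_q$ and $q^{-1}\widehat{q} + q_1 = 1 + \widehat{q_1}$ are what put the final coefficients into the stated form.

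For part (\ref{lemma, B.2, 2.1}) I fix $k$ and induct on $j$ (downward from $j = n$ for (\ref{lemma, B.2, 2.1a}), upward from $j = k + 1$ for (\ref{lemma, B.2, 2.1b})). In the inductive step one writes $X_{1, j} = [X_{1, j+1}, E_j]$, resp. $X_{1, -j} = [X_{1, -(j-1)}, E_{j-1}]$ (Lemma (\ref{lemma, B.2, 1})(\ref{lemma, B.2, 1c}),(\ref{lemma, B.2, 1b})), expands by $q$-Jacobi, and observes that $E_j$ (resp. $E_{j-1}$) commutes with every Chevalley generator appearing in $X_{2, -k}$ and has degree orthogonal to $e_2 - e_k$, so $[X_{2, -k}, E_j] = 0$ and the expansion collapses to a single nested $q$-commutator into which the inductive hypothesis is inserted; $q$-Leibniz and the vanishing relations of Lemma (\ref{lemma, B.2, 0}) then finish it. The base cases start from $X_{1, n} = [X_{2, n}, E_1]$ (Lemma (\ref{lemma, B.2, 1})(\ref{lemma, B.2, 1e})), resp. $X_{1, -(k+1)} = [X_{1, -k}, E_k]$, together with $[X_{2, -k}, E_1] = X_{1, -k}$ (relation (\ref{B.2, 3a})) and the relevant lines of (\ref{B.2, L-S, 1})--(\ref{B.2, L-S, 3}).

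Part (\ref{lemma, B.2, 2.2a}) follows directly from part (\ref{lemma, B.2, 2.1b}) together with relation (\ref{B.2, 3b}): substitute $X_{1, 0} = [X_{1, -n}, E_n]$, apply $q$-Jacobi, and use that $E_n$ commutes with the Chevalley generators occurring in $X_{2, -k}$ and $X_{1, -k}$ when $k < n$, while for $k = n$ one uses $[X_{2, -n}, X_{1, -n}] = 0$ from (\ref{B.2, L-S, 1}) and $X_{2, 0} = [X_{2, -n}, E_n]$ (Lemma (\ref{lemma, B.2, 1})(\ref{lemma, B.2, 1f})). Part (\ref{lemma, B.2, 2.2b}) is then immediate: expand $[2]_q[X_{2, -k}, X_{1, 2}] = [X_{2, -k}, [X_{2, 0}, X_{1, 0}]]$ (Lemma (\ref{lemma, B.2, 4})(\ref{lemma, B.2, 4g})) by $q$-Jacobi, kill $[X_{2, -k}, X_{2, 0}] = 0$, substitute (\ref{lemma, B.2, 2.2a}), and simplify with $[X_{1, -k}, X_{2, 0}] = 0$ and $[X_{2, 0}, X_{2, 0}] = (1 - q^2)X_{2, 0}^2$. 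For parts (\ref{lemma, B.2, 2.2c}) and (\ref{lemma, B.2, 2.2d}) I induct on $k$ downward from $k = n$. The inductive step substitutes $X_{2, k} = [X_{2, k+1}, E_k]$ (resp. $X_{1, k} = [X_{1, k+1}, E_k]$), and $q$-Jacobi together with $[X_{2, -k}, X_{2, k+1}] = 0$ (resp. part (\ref{lemma, B.2, 2.1a}) applied to $[X_{2, -k}, X_{1, k+1}]$) and $[X_{2, -k}, E_k] = X_{2, -(k+1)}$ yields the recursion
\[
    [X_{2, -k}, X_{2, k}] = -q_1 [X_{2, -(k+1)}, X_{2, k+1}] + \widehat{q_1} X_{2, -(k+1)} X_{2, k+1}
\]
and an analogous, longer recursion for $[X_{2, -k}, X_{1, k}]$; feeding the inductive hypothesis through it multiplies the $X_{2, 0}^2$ (resp. $X_{2, 0}X_{1, 0}$ and $X_{1, 2}$) coefficient by $-q_1$ and appends the new $j = k + 1$ summand. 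The base cases $[X_{2, -n}, X_{2, n}]$ and $[X_{2, -n}, X_{1, n}]$ are computed from $[2]_q X_{2, n} = [X_{2, 0}, E_n]$, $X_{2, 0} = [X_{2, -n}, E_n]$, $X_{1, n} = [X_{2, n}, E_1]$ (Lemma (\ref{lemma, B.2, 1})) and Lemma (\ref{lemma, B.2, 4}).

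The real difficulty is bookkeeping rather than any single idea: each application of (\ref{q-Jacobi identity}) or (\ref{q-Leibniz identity}) produces several terms carrying explicit powers of $q$ and $q_1$, and after deleting the ones killed by Lemma (\ref{lemma, B.2, 0}) one must rewrite the survivors as linear combinations of \emph{ordered} monomials in the Lusztig root vectors, which means keeping the convex order on $\Delta_w$ in hand (for instance $X_{2, 0} \prec X_{1, 0}$, so any product $X_{1, 0}X_{2, 0}$ has to be straightened via Lemma (\ref{lemma, B.2, 4})(\ref{lemma, B.2, 4g})) and checking, case by case, which remaining $q$-commutators vanish. For (\ref{lemma, B.2, 2.2c}) and (\ref{lemma, B.2, 2.2d}) there is the further task of verifying that the summation telescopes with exactly the powers $(-q_1)^{j - k - 1}$ claimed; this coefficient-tracking is the step I expect to demand the most care.
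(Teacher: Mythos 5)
Your proof is correct and follows essentially the same strategy as the paper: induction on $j$ (downward from $n$ for (\ref{lemma, B.2, 2.1a}), upward from $j = k+1$ for (\ref{lemma, B.2, 2.1b})), then on $k$ downward from $n$ for (\ref{lemma, B.2, 2.2c}) and (\ref{lemma, B.2, 2.2d}), in each case inserting the recursive $q$-commutator expressions of Lemma (\ref{lemma, B.2, 1}) and Lemma (\ref{lemma, B.2, 4}) into $q$-Jacobi and $q$-Leibniz and discarding the terms killed by Lemma (\ref{lemma, B.2, 0}). The one place you deviate is (\ref{lemma, B.2, 2.2a}): you substitute $X_{1,0} = [X_{1,-n}, E_n]$ from (\ref{B.2, 3b}) and lean on part (\ref{lemma, B.2, 2.1b}), whereas the paper substitutes $X_{1,0} = [X_{2,0}, E_1]$ from Lemma (\ref{lemma, B.2, 1})(\ref{lemma, B.2, 1d}) and only needs the vanishing relations $[X_{2,-k}, X_{2,0}] = 0$ and $[X_{1,-k}, X_{2,0}] = 0$ together with (\ref{B.2, 3a}); both routes give the same answer, but the paper's avoids the $k = n$ case split and the extra $q$-Leibniz bookkeeping you flag.
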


\begin{proof}

    We prove (\ref{lemma, B.2, 2.1a}) by inducting on $j$, starting with the
    base case $j = n$.  Assume that $k < n$. Use the identity $X_{1, n} =
    [X_{2, n}, E_1]$ from part (\ref{lemma, B.2, 1e}) of Lemma (\ref{lemma,
    B.2, 1}) and the $q$-Jacobi identity (\ref{q-Jacobi identity}) to get
    \[
        [X_{2, -k}, X_{1, n}] = [[X_{2, -k}, X_{2, n}], E_1] - q_1[[X_{2, -k},
        E_1], X_{2, n}] + \widehat{q_1}\, [X_{2, -k}, E_1] X_{2, n}.
    \]
    Next, apply the identities $[X_{2, -k}, X_{2, n}] = 0$ and $[X_{2, -k},
    E_1] = X_{1, -k}$ from (\ref{B.2, L-S, 3}) and (\ref{B.2, 3a}) to obtain
    \[
        [X_{2, -k}, X_{1, n}] = - q_1[X_{1, -k}, X_{2, n}] + \widehat{q_1}\,
        X_{1, -k} X_{2, n}.
    \]
    Since $[X_{1, -k}, X_{2, n}] = 0$ (\ref{B.2, L-S, 3}), we get $[X_{2, -k},
    X_{1, n}] = \widehat{q_1}\, X_{1, -k} X_{2, n}$. This proves the statement
    in the case when $j = n$.

    Now let $k < j < n$. The $q$-commutator identity $X_{1, j} = [X_{1, j + 1},
    E_j]$ from Lemma (\ref{lemma, B.2, 1}), part (\ref{lemma, B.2, 1c})
    together with the $q$-Jacobi identity (\ref{q-Jacobi identity}) gives us
    \[
        [X_{2, -k}, X_{1, j}] = [[ X_{2, -k}, X_{1, j + 1}], E_j] -q_1[[X_{2,
        -k}, E_j], X_{1, j + 1}] + \widehat{q_1}\, [X_{2, -k}, E_j] X_{1, j +
        1}.
    \]
    Since $X_{2, -k} = \mathbf{E}_{2,\ldots, k - 1}$  (\ref{B.2, as nested
    q-commutators, a}) and the Chevalley generator $E_j$ commutes with each of
    $E_1,\cdots, E_{k - 1}$, we have $[X_{2, - k}, E_j] = 0$, and by the
    induction hypothesis, we get $[X_{2, -k}, X_{1, j}] = \widehat{q_1}\, [
    X_{1, -k} X_{2, j + 1}, E_j]$.  Invoking the $q$-Leibniz identity
    (\ref{q-Leibniz identity}) gives us
    \[
        [X_{2, -k}, X_{1, j}] = \widehat{q_1}\, \left( X_{1, -k} [X_{2, j +
        1}, E_j] + q_1^{-1}[X_{1, -k}, E_j] X_{2, j + 1} \right).
    \]
    Since $X_{1, -k} = \mathbf{E}_{2,1,3, 4, \ldots, k - 1}$ (\ref{B.2, as
    nested q-commutators, b}), it follows that $[X_{1, - k}, E_j] = 0$.
    Furthermore, since $X_{2, j} = [X_{2, j + 1}, E_j]$ (Lemma (\ref{lemma,
    B.2, 1}), part \ref{lemma, B.2, 1c}), we obtain
    \[
        [X_{2, -k}, X_{1, j}] = \widehat{q_1}\, X_{1, -k} X_{2, j}.
    \]
    This concludes a proof of (\ref{lemma, B.2, 2.1a}).

    We prove (\ref{lemma, B.2, 2.1b}) by inducting on the difference $j - k$,
    beginning with the base case $j = k + 1$.  In this situation, use the
    identity $X_{1, -(k + 1)} = [X_{1, -k)}, E_k]$ from Lemma (\ref{lemma,
    B.2, 1}), part (\ref{lemma, B.2, 1b}) in conjunction with the $q$-Jacobi
    identity (\ref{q-Jacobi identity}) to get
    \[
        [X_{2,\! -k},\! X_{1,\! -(k + 1)}] \!=\! [[X_{2,\! -k},\! X_{1,\!
        -k}],\! E_k] - q_1 [[X_{2,\! -k},\! E_k],\! X_{1,\! -k}] +
        \widehat{q_1} [X_{2,\! -k},\! E_k] X_{1,\!  -k}.
    \]
    Next, use the identities $[X_{2, -k}, X_{1, -k}] = 0$ and $[X_{2, -k}, E_k]
    = X_{2, -(k + 1)}$ from (\ref{B.2, L-S, 1}) and Lemma (\ref{lemma, B.2,
    1}), part (\ref{lemma, B.2, 1b}), to simplify the right-hand side of the
    equation above. We have
    \[
        [X_{2, -k}, X_{1, -(k + 1)}] = - q_1 [X_{2, -(k + 1)}, X_{1, -k}] +
        \widehat{q_1} X_{2, -(k + 1)} X_{1, -k}.
    \]
    Since $[X_{2, -(k + 1)}, X_{1, -k}] = 0$ (\ref{B.2, L-S, 1}), then $[X_{2,
    -k}, X_{1, -(k + 1)}] = \widehat{q_1} X_{2, -(k + 1)} X_{1, -k}$. This
    proves the statement in the case when $j = k + 1$.

    Now assume that $3 \leq k < j \leq n$ and $j - k > 1$. Applying the
    $q$-commutator identity $X_{1, -j} = [X_{1, -(j - 1)}, E_{j - 1}]$ (Lemma
    (\ref{lemma, B.2, 1}), part \ref{lemma, B.2, 1b}) in conjunction with the
    $q$-Jacobi identity (\ref{q-Jacobi identity}) produces
    \[
        \begin{split}
            [X_{2, -k}, X_{1, -j}] = &[[X_{2, -k}, X_{1, -(j - 1)}], E_{j - 1}]
            - q_1 [[X_{2, -k}, E_{j - 1}], X_{1, -(j - 1)}]
            \\
            &+ \widehat{q_1} [X_{2, -k}, E_{j - 1}] X_{1, -(j - 1)}.
        \end{split}
    \]
    Since the Chevalley generator $E_{j - 1}$ commutes with each of
    $E_1,\ldots, E_{k - 1}$ (recall that $j - k > 1$) and $X_{2, -k} =
    \mathbf{E}_{2,\ldots, k - 1}$ (\ref{B.2, as nested q-commutators, a}), it
    follows that $[X_{2, -k}, E_{j - 1}] = 0$.  Furthermore, applying the
    induction hypothesis to rewrite the right-hand side of the equation above
    gives us
    \[
        [X_{2, -k}, X_{1, -j}] = \widehat{q_1}[X_{2, -(j - 1)} X_{1, -k}, E_{j
        - 1}].
    \]
    Next, apply the $q$-Leibniz identity (\ref{q-Leibniz identity}). We get
    \[
        [X_{2, -k}, X_{1, -j}] = \widehat{q_1} \left( X_{2, -(j - 1)} [X_{1,
        -k}, E_{j - 1}] + [X_{2, -(j - 1)}, E_{j - 1}] X_{1, -k}\right).
    \]
    However, we also have $[X_{1, -k}, E_{j - 1}] = 0$. This can be verified
    using the same argument that $[X_{2, -k}, E_{j - 1}] = 0$. The only
    difference is that $X_{1 - k} = \mathbf{E}_{2, 1, 3, 4, \ldots, k - 1}$
    (\ref{B.2, as nested q-commutators, b}), whereas $X_{2, -k} =
    \mathbf{E}_{2,\ldots, k - 1}$.  Using the identity $[X_{2, -(j - 1)}, E_{j
    - 1}] = X_{2, -j}$ (Lemma (\ref{lemma, B.2, 1}), part \ref{lemma, B.2,
    1b}), we conclude that $[X_{2, -k}, X_{1, -j}] = \widehat{q_1} X_{2, -j}
    X_{1, -k}$.

    We prove (\ref{lemma, B.2, 2.2a}) by first using the identity $X_{1, 0} =
    [X_{2, 0}, E_1]$ from part (\ref{lemma, B.2, 1d}) of Lemma (\ref{lemma,
    B.2, 1}) in conjunction with the $q$-Jacobi identity (\ref{q-Jacobi
    identity}) to obtain
    \[
        [X_{2, -k}, X_{1, 0}] = [[X_{2, -k}, X_{2, 0}], E_1] - q_1 [[X_{2, -k},
        E_1], X_{2, 0}] + \widehat{q_1} [X_{2, -k}, E_1] X_{2, 0}.
    \]
    Then we invoke the identities $[X_{2, -k}, X_{2, 0}] = 0$ and $[X_{2, -k},
    E_1] = X_{1, -k}$ from (\ref{B.2, L-S, 4}) and (\ref{B.2, 3a}),
    respectively, to get $[X_{2, -k}, X_{1, 0}] = - q_1 [X_{1, -k}, X_{2, 0}] +
    \widehat{q_1} X_{1, -k} X_{2, 0}$.  Finally, since $[X_{1, -k}, X_{2, 0}] =
    0$ (\ref{B.2, L-S, 4}), we have $[X_{2, -k}, X_{1, 0}] = \widehat{q_1}
    X_{1, -k} X_{2, 0}$.

    To prove (\ref{lemma, B.2, 2.2b}), we begin by applying the identity $[2]_q
    X_{1, 2} = [X_{2, 0}, X_{1, 0}]$ from part (\ref{lemma, B.2, 4g}) of Lemma
    (\ref{lemma, B.2, 4}) and the $q$-Jacobi identity (\ref{q-Jacobi identity})
    to obtain
    \[
        [2]_q [X_{2, -k}, X_{1, 2}] = [[X_{2, -k}, X_{2, 0}], X_{1, 0}] -
        [[X_{2, -k}, X_{1, 0}], X_{2, 0}].
    \]
    The identities $[X_{2, -k}, X_{2, 0}] = 0$ and $[X_{2, -k}, X_{1, 0}] =
    \widehat{q_1} X_{1, -k} X_{2, 0}$ from (\ref{B.2, L-S, 4}) and part
    (\ref{lemma, B.2, 2.2a}) of this lemma, respectively, can be used to
    rewrite the expression on the right-hand side. With this, we obtain $[2]_q
    [X_{2, -k}, X_{1, 2}] =  - \widehat{q_1} [X_{1, -k} X_{2, 0}, X_{2, 0}]$.
    Applying the $q$-Leibniz identity (\ref{q-Leibniz identity}) gives us
    \[
        [2]_q [X_{2, -k}, X_{1, 2}] =  - \widehat{q_1} \left( X_{1, -k} [X_{2,
        0}, X_{2, 0}] + q_1 [X_{1, -k}, X_{2, 0}] X_{2, 0} \right).
    \]
    Finally, we invoke the identity $[X_{1, -k}, X_{2, 0}] = 0$ (\ref{B.2, L-S,
    4}) to get
    \[
        [2]_q [X_{2, -k}, X_{1, 2}] = - \widehat{q_1} (1 - q^2) X_{1, -k} X_{2,
        0}^2,
    \]
    which is equivalent to $[X_{2, -k}, X_{1, 2}] = q \widehat{q}^{\,2} X_{1,
    -k} X_{2, 0}^2$.

    We prove (\ref{lemma, B.2, 2.2c}) by inducting on $k$. We begin with the
    base case, namely $k = n$.  Invoking the identity $[2]_q X_{2, n} = [X_{2,
    0}, E_n]$ from Lemma (\ref{lemma, B.2, 1}), part (\ref{lemma, B.2, 1h}) and
    the $q$-Jacobi identity (\ref{q-Jacobi identity}) gives us
    \[
        [2]_q [X_{2, -n}, X_{2, n}] = [X_{2, -n}, [X_{2, 0}, E_n]] = [[X_{2,
        -n}, X_{2, 0}], E_n] - [[X_{2, -n}, E_n], X_{2, 0}].
    \]
    Next, use the identities $[X_{2, -n}, X_{2, 0}] = 0$ and $[X_{2, -n}, E_n]
    = X_{2, 0}$ from (\ref{B.2, L-S, 4}) and part (\ref{lemma, B.2, 1f}) of
    Lemma (\ref{lemma, B.2, 1}), respectively, to obtain
    \[
        [2]_q [X_{2, -n}, X_{2, n}] = -[[X_{2, -n}, E_n], X_{2, 0}] = - [X_{2,
        0}, X_{2, 0}] = -(1 - q^2) X_{2, 0}^2 = q \widehat{q}\, X_{2, 0}^2.
    \]
    This verifies the statement in the case when $k = n$. Next let $k < n$.
    Apply the identity $X_{2, k} = [X_{2, k + 1}, E_k]$ from part (\ref{lemma,
    B.2, 1c}) of Lemma (\ref{lemma, B.2, 1}) and the $q$-Jacobi identity
    (\ref{q-Jacobi identity}) to get
    \[
        [X_{2, -k}, X_{2, k}] = [[X_{2, -k}, X_{2, k + 1}], E_k] - q_1 [[X_{2,
        -k}, E_k], X_{2, k + 1}] + \widehat{q_1} [X_{2, -k}, E_k] X_{2, k + 1}.
    \]
    Finally, we can simplify the expression on the right-hand side of the
    equation above by using the identities $[X_{2, -k}, X_{2, k + 1}] = 0$ and
    $[X_{2, -k}, E_k] = X_{2, -(k + 1)}$ from (\ref{B.2, L-S, 3}) and part
    (\ref{lemma, B.2, 1b}) of Lemma (\ref{lemma, B.2, 1}), respectively. This
    gives us
    \[
        [X_{2, -k}, X_{2, k}] = - q_1 [X_{2, -(k + 1)}, X_{2, k + 1}] +
        \widehat{q_1} X_{2, -(k + 1)} X_{2, k + 1},
    \]
    and the result follows by induction.

    Next, we prove (\ref{lemma, B.2, 2.2d}) using induction. We start with the
    base case $j = n$..  Using the identity $X_{1, n} = [X_{2, n}, E_1]$ from
    part (\ref{lemma, B.2, 1e}) of Lemma (\ref{lemma, B.2, 1}) in conjunction
    with the $q$-Jacobi identity (\ref{q-Jacobi identity}) gives us
    \[
        [X_{2, - n}, X_{1, n}] = [[X_{2, -n}, X_{2, n}], E_1] - q_1[[X_{2, -n},
        E_1], X_{2, n}] + \widehat{q_1}\, [X_{2, -n}, E_1] X_{2, n}.
    \]
    Since $[X_{2, -n}, E_1] = X_{1, -n}$ and $[X_{2, -n}, X_{2, n}] = \frac{q
    \widehat{q}}{[2]_q} X_{2, 0}^2$ (from (\ref{B.2, 3a}) and part (\ref{lemma,
    B.2, 2.2c}) of this lemma, respectively), we obtain
    \[
        [X_{2, - n}, X_{1, n}] = \frac{q \widehat{q}}{[2]_q} [X_{2, 0}^2, E_1]
        - q_1[X_{1, -n}, X_{2, n}] + \widehat{q_1} X_{1, -n} X_{2, n}.
    \]
    Next we apply the $q$-Leibniz identity (\ref{q-Leibniz identity}) to the
    $q$-commutator $[X_{2, 0}^2, E_1]$ and use $[X_{1, -n}, X_{2, n}] = X_{1,
    2}$ (Lemma (\ref{lemma, B.2, 4}), part \ref{lemma, B.2, 4i}) to get
    \[
        [X_{2, - n}, X_{1, n}] = \frac{q \widehat{q}}{[2]_q} \left( X_{2, 0}
        [X_{2, 0}, E_1] + q_1^{-1} [X_{2, 0}, E_1] X_{2, 0} \right) - q_1 X_{1,
        2} + \widehat{q_1} X_{1, -n} X_{2, n}.
    \]
    Replacing each instance of $[X_{2, 0}, E_1]$ with $X_{1, 0}$ (Lemma
    (\ref{lemma, B.2, 1}), part \ref{lemma, B.2, 1d}) gives us
    \[
        [X_{2, - n}, X_{1, n}] = \frac{q \widehat{q}}{[2]_q} \left( X_{2, 0}
        X_{1, 0} + q_1^{-1} X_{1, 0} X_{2, 0} \right) - q_1 X_{1, 2} +
        \widehat{q_1} X_{1, -n} X_{2, n}.
    \]
    The $q$-commutator identity $[X_{2, 0}, X_{1, 0}] = [2]_q X_{1, 2}$ from
    Lemma (\ref{lemma, B.2, 4}), part (\ref{lemma, B.2, 4g}), means $X_{1, 0}
    X_{2, 0} = X_{2, 0} X_{1, 0} - [2]_q X_{1, 2}$. Hence, $[X_{2, -n}, X_{1,
    n}]$ can be written as a linear combination of the monomials $X_{2, 0}
    X_{1, 0}$, $X_{1, 2}$, and $X_{1, -n} X_{2, n}$. We obtain
    \[
        [X_{2, - n}, X_{1, n}] = \widehat{q} X_{2, 0} X_{1, 0} -
        \left(\widehat{q_1} + 1\right) X_{1, 2} + \widehat{q_1} X_{1, -n} X_{2,
        n}.
    \]
    This proves (\ref{lemma, B.2, 2.2d}) in the case when $j = n$.

    Now let $k < n$. Invoking the identity $X_{1, k} = [X_{1, k + 1}, E_k]$
    (Lemma (\ref{lemma, B.2, 1}), part \ref{lemma, B.2, 1c}) and the $q$-Jacobi
    identity (\ref{q-Jacobi identity}) gives us
    \[
        [X_{2, -k}, X_{1, k}] = [[X_{2, -k}, X_{1, k + 1}], E_k] - q_1 [[X_{2,
        -k}, E_k], X_{1, k + 1}] + \widehat{q_1} [X_{2, -k}, E_k] X_{1, k + 1}.
    \]
    We next use the identities found in part (\ref{lemma, B.2, 1b}) of Lemma
    (\ref{lemma, B.2, 1}) and part (\ref{lemma, B.2, 2.1a}) of this lemma to
    replace the $q$-commutators, $[X_{2, -k}, E_k]$ and $[X_{2, -k}, X_{1, k +
    1}]$, appearing in the right-hand side of the equation above with $X_{2,
    -(k + 1)}$ and $\widehat{q_1} X_{1, -k} X_{2, k + 1}$, respectively. We get
    \[
        [X_{2, -k}, X_{1, k}] = \widehat{q_1} [X_{1, -k} X_{2, k + 1}, E_k] -
        q_1 [  X_{2, -(k + 1)}, X_{1, k + 1}] + \widehat{q_1} X_{2, -(k + 1)}
        X_{1, k + 1}.
    \]
    Invoking the $q$-Leibniz identity (\ref{q-Leibniz identity}) gives us
    \[
        \begin{split}
            [X_{2, -k}, X_{1, k}] &= \widehat{q_1} \left( X_{1, -k} [X_{2, k +
            1}, E_k] + q_1^{-1} [X_{1, -k}, E_k] X_{2, k + 1} \right)
            \\
            &\hspace{5mm}- q_1 [X_{2, -(k + 1)}, X_{1, k + 1}] + \widehat{q_1}
            X_{2, -(k + 1)} X_{1, k + 1}.
        \end{split}
    \]
    Finally, we apply the identities $[X_{1, -k}, E_k] = X_{1, -(k + 1)}$ and
    $[X_{2, k + 1}, E_k] = X_{2, k}$ found in parts (\ref{lemma, B.2, 1b}) and
    (\ref{lemma, B.2, 1c}) of Lemma (\ref{lemma, B.2, 1}), respectively, to
    obtain
    \[
        \begin{split}
            [X_{2, -k}, X_{1, k}] &= \widehat{q_1} \left( X_{1, -k} X_{2,
            k} + q_1^{-1} X_{1, -(k + 1)} X_{2, k + 1} \right)
            \\
            &\hspace{5mm}- q_1 [X_{2, -(k + 1)}, X_{1, k + 1}] +
            \widehat{q_1} X_{2, -(k + 1)} X_{1, k + 1},
        \end{split}
    \]
    and the result follows from the induction hypothesis.

\end{proof}

Now we are ready to compute ${\mathcal N}(\llbracket w \rrbracket)$.

\begin{theorem}

    Let $w = w_{0, 0, 2, 0, n - 2} \in W(B_n)$, where $n > 2$.  Then ${\mathcal
    N}\left( \llbracket w \rrbracket \right) = 3$.

\end{theorem}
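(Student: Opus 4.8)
The plan is to use that $X_{2,-3}$ and $X_{1,3}$ are, respectively, the first and the last Lusztig root vector of ${\mathcal U}_q^+[w]$ in the convex order attached to the reduced expression (\ref{B.2, reduced expression}); consequently, by the reduction of nilpotency indices to actions of $\operatorname{ad}_q$ on such extreme root vectors, ${\mathcal N}(\llbracket w \rrbracket)$ is the least $p$ with $\left(\operatorname{ad}_q X_{2,-3}\right)^p(X_{1,3})=0$. So the whole task is to prove $\left(\operatorname{ad}_q X_{2,-3}\right)^2(X_{1,3})\neq 0$ and $\left(\operatorname{ad}_q X_{2,-3}\right)^3(X_{1,3})=0$, i.e. the lower bound ${\mathcal N}(\llbracket w \rrbracket)\ge 3$ and the upper bound ${\mathcal N}(\llbracket w \rrbracket)\le 3$.

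For the upper bound I would argue first that ${\mathcal N}(\llbracket w \rrbracket)\le 3$ follows directly from Proposition (\ref{bound on nil-index})(\ref{bound on nil-index, 2}) applied with $i=1$ and $j=N:=\ell(w)$: here $3\beta_1+\beta_N = 3(e_2-e_3)+(e_1+e_3) = e_1+3e_2-2e_3$, and this is not a nonnegative integral combination of $\beta_2,\dots,\beta_{N-1}$, because among those intermediate roots the only ones with nonzero $e_3$-component are $e_1-e_3$ (component $-1$) and $e_2+e_3$ (component $+1$), so forcing the total $e_3$-component to be $-2$ requires at least two copies of $e_1-e_3$; each of these contributes $+1$ to the $e_1$-component, already exceeding the required value $1$ since every other intermediate root has nonnegative $e_1$-component. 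The same bound can alternatively be read off from the second-stage computation below, since $\left(\operatorname{ad}_q X_{2,-3}\right)^2(X_{1,3})$ will be seen to lie in the subalgebra generated by $X_{1,-3}$, $X_{2,0}$ and $X_{2,\pm j}$ for $4\le j\le n$, each of which has vanishing $q$-commutator with $X_{2,-3}$ by Lemma (\ref{lemma, B.2, 0}); then a further application of $\operatorname{ad}_q X_{2,-3}$ together with the $q$-Leibniz identity (\ref{q-Leibniz identity}) kills it.

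For the lower bound I would compute the first two brackets. The first bracket $[X_{2,-3},X_{1,3}]$ is precisely Lemma (\ref{lemma, B.2, 2}), part (\ref{lemma, B.2, 2.2d}), specialized to $k=3$: it is an explicit $\mathbb{K}$-linear combination of ordered monomials (namely $X_{1,-3}X_{2,3}$, $X_{2,0}X_{1,0}$, $X_{1,2}$, and the $X_{2,-j}X_{1,j}$, $X_{1,-j}X_{2,j}$ with $j>3$), hence nonzero, giving ${\mathcal N}(\llbracket w \rrbracket)\ge 2$. Applying $\operatorname{ad}_q X_{2,-3}$ again, I would expand each summand by the $q$-Leibniz identity (\ref{q-Leibniz identity}) and rewrite the resulting inner brackets via Lemma (\ref{lemma, B.2, 0}) (which gives $[X_{2,-3},X_{1,-3}]=[X_{2,-3},X_{2,0}]=[X_{2,-3},X_{2,\pm j}]=0$ for $j\neq 3$) together with the relevant parts of Lemma (\ref{lemma, B.2, 2}) ($[X_{2,-3},X_{1,0}]=\widehat{q_1}X_{1,-3}X_{2,0}$, $[X_{2,-3},X_{1,2}]=q\widehat{q}^{\,2}X_{1,-3}X_{2,0}^2$, the formula for $[X_{2,-3},X_{2,3}]$, and $[X_{2,-3},X_{1,j}]=\widehat{q_1}X_{1,-3}X_{2,j}$, $[X_{2,-3},X_{1,-j}]=\widehat{q_1}X_{2,-j}X_{1,-3}$). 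After reordering everything into PBW form, I would isolate the coefficient of the monomial $X_{1,-3}X_{2,0}^2$: only three of the summands contribute to it, and after rewriting $\widehat{q_1}=[2]_q\widehat{q}$ this coefficient collapses to a nonzero scalar multiple of $(q^{3}+q^{-1})\widehat{q}^{\,2}$, nonzero since $q$ is not a root of unity. Because ordered monomials form a basis of ${\mathcal U}_q^+[w]$, this yields $\left(\operatorname{ad}_q X_{2,-3}\right)^2(X_{1,3})\neq 0$, hence ${\mathcal N}(\llbracket w \rrbracket)\ge 3$, and with the upper bound ${\mathcal N}(\llbracket w \rrbracket)=3$.

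The main obstacle is the bookkeeping in the second application of $\operatorname{ad}_q X_{2,-3}$: one must keep track of the $q$-powers introduced by each use of the $q$-Leibniz identity and by each reordering into PBW form, and verify that the three contributions to the witness monomial $X_{1,-3}X_{2,0}^2$ do not cancel. This is purely mechanical, and simplifies substantially once $\widehat{q_1}$ is replaced by $[2]_q\widehat{q}$, but it is the one place where some care is genuinely needed; everything else is a direct appeal to the lemmas already in place.
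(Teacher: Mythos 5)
Your proposal is correct, and for the lower bound it follows essentially the same path as the paper: compute $[X_{2,-3},X_{1,3}]$ via Lemma (\ref{lemma, B.2, 2})(\ref{lemma, B.2, 2.2d}), apply $\operatorname{ad}_q X_{2,-3}$ once more using the $q$-Leibniz identity together with the commutation relations from Lemmas (\ref{lemma, B.2, 0}) and (\ref{lemma, B.2, 2}), and observe the result is a nonzero element of the PBW basis. Your refinement of tracking only the coefficient of the witness monomial $X_{1,-3}X_{2,0}^2$ is sound: the three summands you identify (the $X_{1,-3}X_{2,3}$ term via part (\ref{lemma, B.2, 2.2c}), the $X_{2,0}X_{1,0}$ term, and the $X_{1,2}$ term) are indeed the only sources, they combine to $(-q_1)^{n-3}(q_1+q_1^{-1})q\widehat{q}^{\,2}$, which agrees with what the paper obtains, and no other PBW monomial interferes.

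Where you genuinely diverge from the paper is the upper bound. You derive ${\mathcal N}\leq 3$ directly from Proposition (\ref{bound on nil-index})(\ref{bound on nil-index, 2}) by a short root-combinatorial check: $3\beta_1+\beta_N = e_1+3e_2-2e_3$ cannot be a nonnegative integral combination of the intermediate roots, since achieving $e_3$-component $-2$ forces at least two copies of $e_1-e_3$, which already pushes the $e_1$-component past $1$. This is cleaner than what the paper does: the paper never invokes Proposition (\ref{bound on nil-index})(\ref{bound on nil-index, 2}) here and instead reads the upper bound off the explicit form of the second bracket, observing that every Lusztig root vector appearing in it $q$-commutes with $X_{2,-3}$. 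Both arguments are valid; since you must in any case compute the second bracket to get the lower bound, your combinatorial shortcut is a conceptual simplification rather than a real reduction in labor, but it does isolate the upper bound from the commutator computation and makes the logical dependency clearer. You also note the paper's route as an alternative, which is accurate.
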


\begin{proof}

    From Lemma (\ref{lemma, B.2, 2}), part (\ref{lemma, B.2, 2.2d}), we have
    \[
        \label{B.2, 5}
        \begin{split}
            [X_{2, -3}, X_{1, 3}]
            &= \widehat{q_1} \left( X_{1, -3} X_{2, 3} + \sum_{3 < j \leq n}
            (-q_1)^{j - 4} (X_{2, -j} X_{1, j} - \widehat{q_1} X_{1, -j} X_{2,
            j}) \right)
            \\
            &\hspace{5mm} + (-q_1)^{n - 3} \left( \widehat{q} X_{2, 0} X_{1, 0}
            -\left( \widehat{q_1} +  1 \right) X_{1, 2} \right).
        \end{split}
    \]
    This illustrates how to write $[X_{2, -3}, X_{1, 3}]$ as a linear
    combination of ordered monomials. Since $[X_{2, -3}, X_{1, 3}]$ is nonzero,
    ${\mathcal N}(\llbracket w \rrbracket) > 1$.

    Our next objective is to write $[X_{2, -3}, [X_{2, -3}, X_{1, 3}]]$ as a
    linear combination of ordered monomials to determine whether the nilpotency
    index is equal to $2$, or greater than $2$. This involves applying
    commutation relations among the Lusztig root vectors $X_{i, \pm k}$,
    particularly those involving $X_{2, -3}$. Observe first that the identities
    in (\ref{B.2, L-S, 1}), (\ref{B.2, L-S, 2}), (\ref{B.2, L-S, 3}), and
    (\ref{B.2, L-S, 4}) tell us that $X_{2, -3}$ $q$-commutes with $X_{1, -3}$,
    $X_{2, \pm j}$ ($j > 3$), and $X_{2, 0}$.  Furthermore, parts (\ref{lemma,
    B.2, 2.1a}), (\ref{lemma, B.2, 2.1b}), (\ref{lemma, B.2, 2.2a}),
    (\ref{lemma, B.2, 2.2b}), and (\ref{lemma, B.2, 2.2c}) of Lemma
    (\ref{lemma, B.2, 2}) give us the commutation relations involving $X_{2,
    -3}$ with any of $X_{2, 3}$, $X_{1, \pm j}$ ($j > 3$), $X_{1, 0}$, and
    $X_{1, 2}$. That is to say, we have enough information to be able to write
    $[X_{2, -3}, [X_{2, -3}, X_{1, 3}]]$ as a linear combination of ordered
    monomials.  We get
    \[
        \begin{split}
            [X_{2, -3}, [X_{2, -3}, X_{1, 3}]]
            &= \left(q_1 + q_1^{-1}\right) \widehat{q_1}^2 \left( \sum_{3 < j
            \leq n} (-q_1)^{j - 4} X_{2, -j} X_{1, -3} X_{2, j} \right)
            \\
            &\hspace{5mm} +(-q_1)^{n - 3} \left(q_1 + q_1^{-1} \right)
            \widehat{q}^{\,2} q X_{1, -3} X_{2, 0}^2.
        \end{split}
    \]
    Hence ${\mathcal N}(\llbracket w \rrbracket) > 2$.

    Finally, observe that the identities in (\ref{B.2, L-S, 1}) - (\ref{B.2,
    L-S, 4}) tell us that each of the $q$-commutators $[X_{2, -3}, X_{2, \pm
    j}]$ (for $j > 3$) equals $0$, as well as $[X_{2, -3}, X_{1, -3}]$ and
    $[X_{2, -3}, X_{2, 0}]$. In other words, $X_{2, -3}$ $q$-commutes with
    every Lusztig root vector $X_{i, \pm k}$ appearing in the expression on the
    right-hand side of the equation above. Hence, as a consequence of the
    $q$-Leibniz identity (\ref{q-Leibniz identity}), $X_{2, -3}$ also
    $q$-commutes with $[X_{2, -3}, [X_{2, -3}, X_{1, 3}]]$.  Therefore, $[X_{2,
    -3}, [X_{2, -3}, [X_{2, -3}, X_{1, 3}]]]=  0$, and this proves ${\mathcal
    N}(\llbracket w \rrbracket) = 3$.

\end{proof}

\subsection{Nilpotency index, Case B.4}

Let $\mathfrak{g}$ be the complex Lie algebra of type $B_n$, where $n > 3$.
Consider the Weyl group element $w := w_{0, 1, 2, 1, n - 4} \in W(B_n)$ and the
reduced expression
\begin{equation}
    \label{B.4, reduced expression}
    w = (s_3 \cdots s_{n-1})(s_2 \cdots s_{n-2})s_ns_{n-1}s_ns_1(s_{n-2} \cdots
    s_2) (s_{n-1} \cdots s_3)
\end{equation}
Let $\Delta_w$ be the set of roots of $w$. The reduced expression for $w$
corresponds to the following convex order on $\Delta_w$:
\begin{equation}
    \label{convex order, B.4}
    \begin{split}
        &e_3 - e_4 \prec e_3 - e_5 \prec \cdots \prec e_3 - e_n \prec e_2 - e_4
        \prec e_2 - e_5 \prec \cdots
        \\
        &\cdots \prec e_2 - e_n \prec e_3 \prec e_2 + e_3 \prec e_2 \prec e_1 -
        e_4 \prec e_3 + e_n \prec e_3 - e_{n - 1} \prec \cdots
        \\
        &\cdots \prec e_3 + e_5 \prec e_1 + e_3 \prec e_2 + e_n \prec e_2 +
        e_{n - 1} \prec \cdots \prec e_2 + e_5 \prec e_1 + e_2.
    \end{split}
\end{equation}
Let ${\mathcal U}_q^+[w]$ be the quantum Schubert cell algebra associated to
$w$. The Lusztig root vectors will be denoted by
\begin{equation}
    \label{B.4, Lusztig root vectors}
    X_{m, -4}, X_{r, s}, X_{i, 0}, X_{i, \pm k} \in {\mathcal U}_q^+[w]
\end{equation}
where $1 < i < 4 < k \leq n$, $1 \leq m < 4$, and $1 \leq r < s < 4$.

The following theorem gives us the nilpotency index of $\llbracket w_{0, 1, 2,
1, n - 4} \rrbracket$ for the case when $n = 4$.

\begin{theorem}

    Let $w = w_{0, 1, 2, 1, 0} \in W(B_4)$. Then ${\mathcal N}(\llbracket w
    \rrbracket) = 3$.

\end{theorem}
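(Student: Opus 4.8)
The plan is to follow the same template used for the small-rank cases in Theorem (\ref{SmallRank, nil-index, main theorem}) and for the Case B.2 computation: fix a convenient reduced expression, read off the convex order, name the Lusztig root vectors, record the vanishing $q$-commutators furnished by Proposition (\ref{LS corollary}), and then compute $\left(\operatorname{ad}_q X_{\beta_1}\right)^k(X_{\beta_8})$ for $k = 1, 2, 3$. Concretely I would take the reduced expression $w = s_3 s_2 s_4 s_3 s_4 s_1 s_2 s_3$ (the $n = 4$ specialization of (\ref{B.4, reduced expression})), so that $\ell(w) = 8$ and the associated convex order on $\Delta_w$ is
\[
    e_3 - e_4 \prec e_2 - e_4 \prec e_3 \prec e_2 + e_3 \prec e_2 \prec e_1 - e_4 \prec e_1 + e_3 \prec e_1 + e_2 .
\]
Using the notation (\ref{Lusztig root vectors, ABCD}), the Lusztig root vectors are then $X_{3, -4}, X_{2, -4}, X_{3, 0}, X_{2, 3}, X_{2, 0}, X_{1, -4}, X_{1, 3}, X_{1, 2}$ in this order, and since $X_{3, -4} = E_3$ is the first and $X_{1, 2}$ the last, ${\mathcal N}(\llbracket w \rrbracket)$ is the least $p$ with $\left(\operatorname{ad}_q X_{3, -4}\right)^p(X_{1, 2}) = 0$. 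Note the general convex order (\ref{convex order, B.4}) degenerates when $n = 4$, which is precisely why this case is handled separately from Case B.4 of Section (\ref{section, nil-index, general}); but because $\ell(w) = 8$, all computations here are short.

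For the upper bound ${\mathcal N}(\llbracket w \rrbracket) \le 3$ I would invoke Proposition (\ref{bound on nil-index}): the element $3\beta_1 + \beta_8 = 3(e_3 - e_4) + (e_1 + e_2) = e_1 + e_2 + 3e_3 - 3e_4$ admits no nonnegative integral decomposition into $\{\beta_2, \ldots, \beta_7\}$ — indeed the coefficient $-3$ of $e_4$ forces three copies of $e_2 - e_4$ and $e_1 - e_4$ combined, but the coefficient $1$ of $e_1$ and of $e_2$ caps their total at $2$ — so part (\ref{bound on nil-index, 2}) gives $\left(\operatorname{ad}_q X_{3, -4}\right)^3(X_{1, 2}) = 0$ outright. (Equivalently, one sees from part (1) of the same proposition that $\left(\operatorname{ad}_q X_{3, -4}\right)^2(X_{1, 2})$ must be a scalar multiple of the single ordered monomial $X_{2, -4} X_{3, 0}^2 X_{1, -4}$, since $2\beta_1 + \beta_8 = \beta_2 + 2\beta_3 + \beta_6$ is its only such decomposition; then, because $X_{3, -4} = E_3$ $q$-commutes with each of $X_{2, -4}$, $X_{3, 0}$, $X_{1, -4}$ by Proposition (\ref{LS corollary}), the $q$-Leibniz identity (\ref{q-Leibniz identity}) kills the third bracket.)

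The real content is the lower bound ${\mathcal N}(\llbracket w \rrbracket) > 2$, i.e. $\left(\operatorname{ad}_q X_{3, -4}\right)^2(X_{1, 2}) \neq 0$; note Theorem (\ref{theorem, nil-index, 1}) does not apply here since $3 \in \operatorname{supp}(s_3 w s_3)$. Following Conventions (\ref{computational conventions}) and Proposition (\ref{as nested E}), I would first write the relevant root vectors $X_{1, 2}$, $X_{2, 3}$, $X_{2, 0}$, $X_{1, 3}$ as $q$-commutators of a Lusztig root vector with a Chevalley generator (the analogues of the Case B.2 identities in Lemmas (\ref{lemma, B.2, 1})--(\ref{lemma, B.2, 4})), together with the Levendorskii--Soibelmann vanishing relations from Proposition (\ref{LS corollary}), notably $[X_{3, -4}, X_{2, -4}] = [X_{3, -4}, X_{3, 0}] = [X_{3, -4}, X_{1, -4}] = 0$ and several relations among the $X_{i, \pm k}$, $X_{i,0}$. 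Then I would substitute a $q$-commutator presentation of $X_{1, 2}$, apply the $q$-Jacobi identity (\ref{q-Jacobi identity}) and $q$-Leibniz identity (\ref{q-Leibniz identity}), and simplify using these vanishing relations to write $[X_{3, -4}, X_{1, 2}]$, and then $[X_{3, -4}, [X_{3, -4}, X_{1, 2}]]$, as explicit linear combinations of ordered monomials; since ordered monomials are a $\mathbb{K}$-basis of ${\mathcal U}_q^+[w]$, exhibiting a nonzero coefficient (a monomial in $q^{\pm 1}$, $\widehat{q}$, $[2]_q$ and signs) on $X_{2, -4} X_{3, 0}^2 X_{1, -4}$ in $\left(\operatorname{ad}_q X_{3, -4}\right)^2(X_{1, 2})$ gives the claim.

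The main obstacle is the bookkeeping in this lower-bound computation: one must choose the right $q$-commutator presentations of the deeper root vectors, then repeatedly expand via the $q$-Jacobi and $q$-Leibniz identities while cancelling the many terms killed by Proposition (\ref{LS corollary}), all the while tracking the powers of $q$ so that the surviving coefficient of $X_{2, -4} X_{3, 0}^2 X_{1, -4}$ is manifestly nonzero. Everything else — the reduced expression, the convex order, the list of vanishing $q$-commutators, and the upper bound — is routine given the machinery of Section (\ref{section, QSC}). Combining the two bounds yields ${\mathcal N}(\llbracket w \rrbracket) = 3$.
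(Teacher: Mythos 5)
Your route is sound in outline but genuinely different from the paper's: you stay inside $\mathcal{U}_q^+[w]$ with its eight Lusztig root vectors, whereas the paper passes to $u = ws_2$ and works in $\mathcal{U}_q^+[u]$ with nine. The paper's reason for the enlargement is that $x_9 = E_2$ (not a Lusztig root vector of $w$, since $w(\alpha_2) = \alpha_2$ so $\alpha_2 \notin \Delta_w$) immediately supplies the bootstrap relations $x_2 = [x_1, x_9]$, $x_5 = [x_3, x_9]$, $x_8 = [x_7, x_9]$ that drive the $\texttt{R(i,j,r,s)}$ algorithm and yield all thirteen non-trivial commutation relations. Your upper bound via Proposition (\ref{bound on nil-index}), part (\ref{bound on nil-index, 2}) --- that $3\beta_1 + \beta_8 = e_1 + e_2 + 3e_3 - 3e_4$ has no nonnegative integral decomposition into $\{\beta_2,\dots,\beta_7\}$ because the $e_4$-coefficient $-3$ is incompatible with the $e_1$- and $e_2$-coefficients both being $1$ --- is correct and a genuine simplification over the paper's direct verification of $[x_1,[x_1,[x_1,x_8]]]=0$; likewise your observation that the only decomposition of $2\beta_1 + \beta_8$ is $\beta_2 + 2\beta_3 + \beta_6$, so that $(\operatorname{ad}_q X_{3,-4})^2(X_{1,2})$ is a scalar multiple of $X_{2,-4}X_{3,0}^2 X_{1,-4}$, is correct.

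The genuine gap is the lower bound, which you yourself call ``the real content.'' You describe a plan --- derive $q$-commutator presentations of $X_{2,3}$, $X_{2,0}$, $X_{1,3}$, $X_{1,2}$ analogous to Lemmas (\ref{lemma, B.2, 1})--(\ref{lemma, B.2, 4}), expand with $q$-Jacobi and $q$-Leibniz, and exhibit a nonzero coefficient --- but none of this is actually carried out: no such presentations are derived, and the coefficient of $X_{2,-4}X_{3,0}^2 X_{1,-4}$ is asserted, not computed. Ruling out a cancellation is exactly what this machinery is for, and the paper settles it by producing $[x_1,[x_1,x_8]] = \widehat{q}^{\,2}\, q\,(q^4 - q^{-4})\, x_2 x_3^2 x_6$ explicitly, which is nonzero because $q$ is not a root of unity. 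Note also that the general $n \ge 5$ argument of Theorem (\ref{theorem, B.4}) cannot simply be specialized here, since root vectors such as $X_{3,5}$ and $X_{2,5}$, which are pivotal in Lemma (\ref{lemma, B.4, 1}), are absent when $n = 4$ --- this is precisely why the paper treats $n = 4$ separately and why your case-specific lemmas would have to be built from scratch. Until that computation is actually performed and the scalar shown to be nonzero, the lower bound, and hence the theorem, is not established.
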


\begin{proof}

    Consider the Weyl group element $u = ws_2$. With respect to the reduced
    expression $u = s_3s_2s_4s_3s_4s_1s_2s_3s_2$, let $x_1,\cdots, x_9$ be the
    Lusztig root vectors in the associated quantum Schubert cell algebra
    ${\mathcal U}_q^+[u]$. We aim to first find a presentation of ${\mathcal
    U}_q^+[u]$. Among the ${9 \choose 2}$ ($=36$) choices of pairs of Lusztig
    root vectors, $x_i$ and $x_j$ with $i < j$, Proposition (\ref{LS
    corollary}) tells us that at least $23$ of them satisfy the trivial
    $q$-commutation relation $[x_i, x_j] = 0$.  As it turns out, the remaining
    $13$ $q$-commutation relations are non-trivial. We will show they are given
    by
    \[
        \begin{split}
            &[x_1, x_4] = q \widehat{q}^{\,2} x_2 x_3^2, \hspace{10pt}
            [x_1, x_5] = \widehat{q_1} x_2 x_3, \hspace{10pt}
            [x_1, x_7] = q \widehat{q}^2 x_3^2 x_6, \hspace{10pt}
            [x_1, x_9] = x_2,
            \\
            &[x_1, x_8] = \widehat{q_1} \left(x_2 x_7 - (\widehat{q_1} +  1)
            x_4 x_6 + \widehat{q} x_3 x_5 x_6 \right), \hspace{10pt}
            [x_2, x_7] = \widehat{q_1} x_4 x_6,
            \\
            &[x_2, x_8] = q \widehat{q}^{\,2} x_5^2 x_6, \hspace{10pt}
            [x_3, x_5] = [2]_q x_4, \hspace{10pt}
            [x_3, x_8] = \widehat{q_1} x_5 x_7, \hspace{10pt}
            [x_3, x_9] = x_5,
            \\
            &[x_4, x_8] = q \widehat{q}^{\,2} x_5^2 x_7, \hspace{10pt}
            [x_4, x_9] = \frac{q \widehat{q}}{[2]_q} x_5^2, \hspace{10pt}
            [x_7, x_9] = x_8.
        \end{split}
    \]
    (Recall $q_1 := q^{\langle \alpha_1, \alpha_1 \rangle / 2} = q^2$.) Using
    the algorithm in the proof of Theorem (\ref{as nested E}), we write each
    Lusztig root vector as either a scalar multiple of a Chevalley generator,
    or as a scalar multiple of nested $q$-commutators of Chevalley generators.
    We obtain $x_1 = E_3$, $x_2 = \mathbf{E}_{3,2}$, $x_3 = \mathbf{E}_{3,4}$,
    $x_4 = \frac{1}{[2]_q} [\mathbf{E}_{3,2}, \mathbf{E}_{3,4,4}]$, $x_5 =
    \mathbf{E}_{3,4,2}$, $x_6 = \mathbf{E}_{3,2,1}$, $[2]_q x_7 =
    [\mathbf{E}_{3,2,1}, \mathbf{E}_{3,4,4}]$, $[2]_q x_8 =
    [[\mathbf{E}_{3,2,1}, \mathbf{E}_{3,4,4}], E_2]$, and $x_9 = E_2$. From
    this, we have the $q$-commutation relations $x_2 = [x_1, x_9]$, $x_5 =
    [x_3, x_9]$, and $x_8 = [x_7, x_9]$.  By a direct computation, we obtain
    \[
        [2]_q x_4 = T_{s_3s_2s_4} (E_3) = T_{s_3s_2} [E_4, \mathbf{E}_{4,3}] =
        [T_{s_3s_2}(E_4), T_{s_3s_2} T_{s_4s_3} (E_4)] = [x_3, x_5].
    \]
    Using $x_3 = \mathbf{E}_{3,4}$, $x_3 = [x_1, E_4]$ and $[x_1, x_3] = 0$,
    together with the $q$-Jacobi identity (\ref{q-Jacobi identity}), we get
    \[
        [x_1, \mathbf{E}_{3,4,4}] = [x_1, [x_3, E_4]] = [[x_1, x_3], E_4] -
        [[x_1, E_4], x_3] = -[x_3, x_3] = q \widehat{q} x_3^2.
    \]
    Analogously, applying the identities $[x_2, x_3] = 0$, $x_2 =
    \mathbf{E}_{3,2}$, and $x_5 = \mathbf{E}_{3,4,2}$ gives us
    \[
        \begin{split}
            [x_2, \mathbf{E}_{3,4,4}] &= [x_2, [x_3, E_4]]
            = [[x_2, x_3], E_4] - [[x_2, E_4], x_3]
            = - [[x_2, E_4], x_3]
            \\
            &= - [\mathbf{E}_{3,2,4}, x_3]
            = - [\mathbf{E}_{3,4,2}, x_3]
            = - [x_5, x_3]
            = [2]_q x_4.
        \end{split}
    \]
    Hence, by using $x_7 = [x_6, \mathbf{E}_{3,4,4}]$ and the $q$-Jacobi
    identity, we obtain
    \[
        [2]_q [x_1, x_7] =  [[x_1, x_6], \mathbf{E}_{3,4,4}] - q_1 [[x_1,
        \mathbf{E}_{3,4,4}], x_6] + \widehat{q_1}[x_1, \mathbf{E}_{3,4,4}] x_6.
    \]
    Since $[x_1, x_6] = 0$ and $[x_1, \mathbf{E}_{3, 4, 4}] = q \widehat{q}
    x_3^3$, the right hand side of the equation above simplifies to give us
    $[2]_q [x_1, x_7] = -q_1 q \widehat{q} [x_3^2, x_6] + \widehat{q_1} q
    \widehat{q} x_3^2 x_6$. However, since $[x_3, x_6] = 0$, then $[x_3^2, x_6]
    = 0$, and thus we conclude $[x_1, x_7] = q \widehat{q}^{\,2} x_3^2 x_6$.
    Similarly, using $x_7 = [x_6, \mathbf{E}_{3, 4, 4}]$ and the $q$-Jacobi
    identity, we obtain
    \[
        [2]_q [x_2, x_7] = [[x_2, x_6], \mathbf{E}_{3,4,4}] - q_1 [[x_2,
        \mathbf{E}_{3,4,4}], x_6] + \widehat{q_1} [x_2, \mathbf{E}_{3,4,4}]
        x_6.
    \]
    From the identities $[x_2, x_6] = 0$, $[x_4, x_6] = 0$, and $[x_2,
    \mathbf{E}_{3, 4, 4}] = [2]_q x_4$, we conclude $[x_2, x_7] = \widehat{q_1}
    x_4 x_6$.

    To find the remaining relations, do the following sequence (recall the
    definitions of \texttt{R(i,j,r,s)} and \texttt{L(i,j,r,s)} in Section
    (\ref{section, L(i,j,r,s) and R(i,j,r,s)})) : \texttt{R(1,5,3,9)},
    \texttt{R(1,4,3,5)}, \texttt{R(3,8,7,9)}, \texttt{L(4,9,3,5)},
    \texttt{R(4,8,7,9)}, \texttt{R(1,8,7,9)}, \texttt{R(2,8,7,9)}.

    With all of the $q$-commutation relations among the Lusztig root vectors at
    hand, it is a routine computation to verify that $[x_1, [x_1, x_8]] =
    \widehat{q}^{\,2}  q (q^4 - q^{-4}) x_2 x_3^2 x_6$ and $[x_1, [x_1, [x_1,
    x_8]]] = 0$. Thus, ${\mathcal N}(x_1, x_8) = 3$. Equivalently, ${\mathcal
    N}(\llbracket w \rrbracket) = 3$.

\end{proof}

Now we consider the case when $n \geq 5$. Before proving ${\mathcal
N}(\llbracket w \rrbracket) = 3$ in these cases, we find some $q$-commutation
relations among the Lusztig root vectors.

\noindent
\begin{minipage}{\textwidth}
\begin{lemma}

    \label{lemma, B.4, 1}

    Suppose $n \geq 5$. Let $ X_{m, -4}, X_{r, s}, X_{i, \pm k} \in
    {\mathcal U}_q^+[w]$ be the Lusztig root vectors as in (\ref{B.4, Lusztig
    root vectors}). Then

    \begin{enumerate}

        \item \label{lemma, B.4, 1a} $X_{1, 2} = [X_{2, 5}, \mathbf{E}_{3, 2, 1, 4}]$,

        \item \label{lemma, B.4, 1b} $X_{1, 3} = [X_{3, 5}, \mathbf{E}_{3, 2, 1, 4}]$,

        \item \label{lemma, B.4, 1c} $X_{2, n} = [X_{3, n}, E_2]$,

        \item \label{lemma, B.4, 1d} $X_{2, k} = [X_{2, k + 1}, E_k]$ (for $4 < k < n$),

        \item \label{lemma, B.4, 1e} $X_{3, k} = [X_{3, k + 1}, E_k]$ (for $4 < k < n$),

        \item \label{lemma, B.4, 1f} $X_{2, -k} = [X_{2, -(k - 1)}, E_{k - 1}]$ (for $4 < k \leq n$),

        \item \label{lemma, B.4, 1g} $X_{2, -k} = [X_{3, -k}, E_2]$ (for $4 \leq k \leq n$),

        \item \label{lemma, B.4, 1h} $X_{1, -4} = \mathbf{E}_{3, 2, 1}$.

    \end{enumerate}

\end{lemma}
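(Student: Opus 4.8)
The plan is to verify all eight identities by the uniform procedure already used for Lemma (\ref{lemma, B.2, 1}): starting from the defining formula (\ref{root vectors}) for the relevant Lusztig root vector, write it as $T_{w^{(m)}}(E_c)$, where $w^{(m)}$ is the length-$m$ left factor of the reduced word (\ref{B.4, reduced expression}) and $s_c$ is its $(m{+}1)$-st letter; factor $w^{(m)} = u\cdot v$ with $\ell(w^{(m)}) = \ell(u)+\ell(v)$ chosen so that the inner symmetry $T_v(E_c)$ collapses, via the explicit braid formulas ($T_{s_i}(E_j)=[E_i,E_j]$ when $\langle \alpha_i^\vee,\alpha_j\rangle = -1$, and $T_{s_is_j}(E_i)=[E_i,E_j]$, $[2]_q T_{s_js_i}(E_j) = [[E_j,E_i],E_i]$ when $\langle \alpha_i^\vee,\alpha_j\rangle = -2$; cf. Conventions (\ref{computational conventions}) and Theorem (\ref{Jantzen, 8.20})), to a short $q$-commutator of Chevalley generators; then pull $T_u$ back out across that $q$-commutator using that $T_u$ is an algebra automorphism preserving the form on degrees. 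Throughout I will use freely that in type $B_n$ the generator $E_1$ commutes with each of $E_3,\ldots,E_n$, so that a $1$ may be reordered past later indices inside a nested $q$-commutator, exactly as in (\ref{B.2, 3a}). Every factorization used is length-additive automatically, since each factor is itself a subword of (\ref{B.4, reduced expression}).

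Parts (\ref{lemma, B.4, 1c})--(\ref{lemma, B.4, 1h}) are the ``telescoping'' identities and go through essentially verbatim as the corresponding parts of Lemma (\ref{lemma, B.2, 1}). For (\ref{lemma, B.4, 1d})--(\ref{lemma, B.4, 1f}) one peels off a single rightmost reflection $s_{k-1}$ or $s_k$ from the relevant prefix and applies $T_{s_{k-1}}(E_{k-2}) = [E_{k-1},E_{k-2}]$ (or the analogue for $E_{k-1},E_k$); for (\ref{lemma, B.4, 1c}) and (\ref{lemma, B.4, 1g}) one moves an $E_2$ outward using that $E_2$ commutes with $E_4,\ldots,E_n$, which is exactly where the hypothesis $n\geq 5$ enters, since it guarantees the block $s_ns_{n-1}s_n$ (and the letters $s_{n-1},\ldots,s_4$) fix $E_2$; and (\ref{lemma, B.4, 1h}) follows by reading off $X_{1,-4} = T_{(s_3\cdots s_{n-1})(s_2\cdots s_{n-2})}(E_1)$ from the convex order (\ref{convex order, B.4}), then computing $T_{s_2\cdots s_{n-2}}(E_1) = [E_2,E_1]$ and $T_{s_3\cdots s_{n-1}}([E_2,E_1]) = [[E_3,E_2],E_1] = \mathbf{E}_{3,2,1}$.

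Parts (\ref{lemma, B.4, 1a}) and (\ref{lemma, B.4, 1b}) are the crux. The target roots $e_1+e_2$ and $e_1+e_3$ are the last two roots of (\ref{convex order, B.4}), whereas $\mathbf{E}_{3,2,1,4}$ has degree $e_1-e_5$, which (one checks from the signed permutation $w = [4,-3,-2,1,5,\ldots,n]$) is \emph{not} a root of $w$; so $\mathbf{E}_{3,2,1,4}$ is an auxiliary element of ${\mathcal U}_q(\mathfrak{n}^+)$ rather than a Lusztig root vector. My plan is to write $X_{1,3} = T_{ws_3}(E_3)$ (the last letter of (\ref{B.4, reduced expression}) being $s_3$), then combine the telescoping relations just established with repeated applications of the $q$-Jacobi identity (\ref{q-Jacobi identity}) and $q$-Leibniz identity (\ref{q-Leibniz identity}), together with the vanishing $q$-commutators supplied by Proposition (\ref{LS corollary}) --- i.e. running the \texttt{R(i,j,r,s)}/\texttt{L(i,j,r,s)} bookkeeping of Section (\ref{section, L(i,j,r,s) and R(i,j,r,s)}) --- to land on $[X_{3,5},\mathbf{E}_{3,2,1,4}]$. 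Once (\ref{lemma, B.4, 1b}) is in hand, (\ref{lemma, B.4, 1a}) follows by commuting an $E_2$ through: $X_{1,2} = [X_{1,3},E_2]$, the term coming from $[\mathbf{E}_{3,2,1,4},E_2]$ is killed by a $q$-Serre relation, and $[X_{3,5},E_2] = X_{2,5}$ is then pulled out.

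The main obstacle I anticipate is precisely the reduced-subword bookkeeping in parts (\ref{lemma, B.4, 1a})--(\ref{lemma, B.4, 1b}): tracking which prefix of (\ref{B.4, reduced expression}) acts at each stage, confirming the length-additive factorizations, and checking that every intermediate $q$-commutator that ought to vanish --- whether by Proposition (\ref{LS corollary}) or by a Serre relation --- actually does. The algebra itself is routine, but the indexing is delicate, and that is where an error would most likely hide; a double-check against the $n=4$ case already computed (Theorem following (\ref{B.4, Lusztig root vectors})) gives a useful consistency test for the shapes of these relations.
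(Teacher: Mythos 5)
Your treatment of parts (\ref{lemma, B.4, 1c})--(\ref{lemma, B.4, 1h}) matches the paper's: these are peel-one-letter-off-the-prefix computations, identical in spirit to the corresponding parts of Lemma~(\ref{lemma, B.2, 1}), and the $E_1$-and-$E_2$-commute-past-later-indices observations are exactly right. The issues are all in how you propose to handle (\ref{lemma, B.4, 1a})--(\ref{lemma, B.4, 1b}), which you correctly identify as the crux.

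First, a misprint that would derail the bookkeeping: you write ``$X_{1,3} = T_{ws_3}(E_3)$,'' but the last root of the convex order~(\ref{convex order, B.4}) is $e_1+e_2$, so it is $X_{1,2} = T_{ws_3}(E_3)$; the root $e_1+e_3$ sits $n-3$ positions earlier, and the prefix yielding $X_{1,3}$ is $(s_3\cdots s_{n-1})(s_2\cdots s_{n-2})s_ns_{n-1}s_ns_1(s_{n-2}\cdots s_3)$, acting on $E_2$.

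Second, and more seriously, your plan of ``running the \texttt{R}/\texttt{L} bookkeeping to land on $[X_{3,5},\mathbf{E}_{3,2,1,4}]$'' misses the one clean observation the paper's proof turns on: $\mathbf{E}_{3,2,1,4}$ is a braid image. Setting $u = s_3s_4s_2s_3s_1$, a short direct computation gives $T_u(E_2) = [[[E_3,E_2],E_1],E_4] = \mathbf{E}_{3,2,1,4}$. With that in hand, the proof of~(\ref{lemma, B.4, 1a}) is a single step: peel $T_{s_4}$ off $T_{ws_3}$, getting $X_{1,2} = T_{ws_3s_4}([E_4,E_3]) = [T_{ws_3s_4}(E_4),\, T_{ws_3s_4}(E_3)]$; the first factor is $X_{2,5}$ by inspection of the convex order, and the second equals $T_u\bigl(T_{u^{-1}ws_3s_4}(E_3)\bigr) = T_u(E_2) = \mathbf{E}_{3,2,1,4}$ via Theorem~(\ref{Jantzen, 8.20}) and the length-additive factorization $ws_3s_4 = u\cdot(u^{-1}ws_3s_4)$. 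Part (\ref{lemma, B.4, 1b}) goes symmetrically. No $q$-Jacobi, no \texttt{R}/\texttt{L} chain --- just one braid-step factorization and recognition of both halves.

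Third, your proposed shortcut of deriving (\ref{lemma, B.4, 1a}) from (\ref{lemma, B.4, 1b}) via ``$X_{1,2} = [X_{1,3},E_2]$'' is a gap: that identity is \emph{not} a single-letter telescoping (the roots $e_1+e_3$ and $e_1+e_2$ are $n-3$ positions apart in the convex order, with the intermediate letters being $s_2 s_{n-1}\cdots s_4$), and you do not establish it; the cleanest route to it actually runs through the very identity (\ref{lemma, B.4, 1a}) you are trying to prove. Also, $[\mathbf{E}_{3,2,1,4},E_2]=0$ is not a $q$-Serre relation in the usual sense; the right argument is again braid-theoretic: one checks $u(\alpha_4)=\alpha_2$, so $E_2 = T_u(E_4)$, hence $[\mathbf{E}_{3,2,1,4},E_2] = [T_u(E_2),T_u(E_4)] = T_u([E_2,E_4]) = 0$. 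And when you push $E_2$ through $[X_{3,5},\mathbf{E}_{3,2,1,4}]$ by $q$-Jacobi, you must also note that the coefficient of the extra $[X_{3,5},E_2]\,\mathbf{E}_{3,2,1,4}$ term is $q^{\langle\eta,\mu\rangle}-q^{-\langle\eta,\mu\rangle}$, which vanishes here because $\langle \alpha_1+\alpha_2+\alpha_3+\alpha_4,\ \alpha_2\rangle = 0$; your sketch is silent on this. In short: the easy parts match the paper, but the hard parts need the $T_u(E_2)=\mathbf{E}_{3,2,1,4}$ identification to go through cleanly, and the (\ref{lemma, B.4, 1b}) $\Rightarrow$ (\ref{lemma, B.4, 1a}) shortcut as stated is circular.
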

\end{minipage}

\begin{proof}

    In proving parts (\ref{lemma, B.4, 1a}) and (\ref{lemma, B.4, 1b}), we
    will, for short, let $u = s_3 s_4 s_2 s_3 s_1 \in W(B_n)$ and $w = w_{0, 1,
    2, 1, n - 4} \in W(B_n)$.  For $k \in \mathbf{I} = [1, n]$, define the Weyl
    group elements ${\mathcal S}_k := s_1 \cdot (s_1s_2 \cdots s_k) \in W(B_n)$
    and $\overline{{\mathcal S}}_k := s_2 \cdot {\mathcal S}_k \in W(B_n)$.
    Adhering to the computational conventions in Conventions
    (\ref{computational conventions}), we calculate
    \[
        \begin{split}
            T_u (E_2) &= T_{us_1} T_{s_1} (E_2)
            =T_{us_1} ([E_1, E_2])
            = [T_{us_1} (E_1), T_{us_1} (E_2)]
            \\
            &= [T_{s_3} T_{s_2} T_{s_4 s_3} (E_1), E_4]
            = [T_{s_3} T_{s_2} (E_1), E_4] = [T_{s_3} ([E_2, E_1]), E_4]
            \\
            &= [[T_{s_3} (E_2), T_{s_3}(E_1)], E_4]
            = [[[E_3, E_2], E_1], E_4] = \mathbf{E}_{3, 2, 1, 4}.
        \end{split}
    \]
    Hence,
    \[
        \begin{split}
            X_{1, 2} &= T_{ws_3} (E_3)
            = T_{w s_3 s_4} T_{s_4} (E_3)
            = T_{ws_3 s_4} \left([E_4, E_3]\right)
            = [T_{ws_3 s_4} (E_4), T_{ws_3 s_4} (E_3)]
            \\
            &= [X_{2, 5}, T_u  T_{u^{-1} \cdot ws_3s_4} (E_3)]
            = [X_{2, 5}, T_u (E_2)]
            = [X_{2, 5}, \mathbf{E}_{3, 2, 1, 4}].
        \end{split}
    \]
    Similarly,
    \[
        \begin{split}
            X_{1, 3} &= T_{w \overline{ {\mathcal S}}_{n - 1} s_2} (E_2)
            = T_{w \overline{ {\mathcal S}}_{n - 1} s_2 s_3} T_{s_3} (E_2)
            = T_{w \overline{ {\mathcal S}}_{n - 1} s_2 s_3} ([E_3, E_2])
            \\
            &= [T_{w (s_3 \cdots s_{n - 1}) s_2 s_3} (E_3), T_{w \overline{
            {\mathcal S}}_{n - 1} s_2 s_3} (E_2)] = [X_{3, 5}, T_{w \overline{
            {\mathcal S}}_{n - 1} s_2 s_3} (E_2)]
            \\
            &= [X_{3, 5}, T_u T_{u^{-1} w \overline{ {\mathcal S}}_{n - 1} s_2
            s_3} (E_2)] = [X_{3, 5}, T_u (E_2)] = [X_{3, 5}, \mathbf{E}_{3, 2,
            1, 4}].
        \end{split}
    \]
    This establishes parts (\ref{lemma, B.4, 1a}) and (\ref{lemma, B.4, 1b}).
    Parts (\ref{lemma, B.4, 1c}) - (\ref{lemma, B.4, 1h}) are proved in a
    similar manner.  We compute,
    \begingroup
    \allowdisplaybreaks
        \begin{align*}
            X_{2, n} &= T_{w \overline{{\mathcal S}}_{n - 1}} (E_{n - 1}) =
            T_{w \overline{ {\mathcal S}}_{n - 1} {\mathcal S}_{n - 2}} T_{s_{n
            - 2}} T_{{\mathcal S}_{n - 3}^{-1}} (E_{n - 1}) = T_{w
            \overline{{\mathcal S}}_{n - 1}) {\mathcal S}_{n - 2}} T_{s_{n -
            2}} (E_{n - 1})
            \\
            &= T_{w \overline{ {\mathcal S}}_{n - 1} {\mathcal S}_{n - 2}}
            ([E_{n - 2}, E_{n - 1}]) = [T_{w \overline{ {\mathcal S}}_{n - 1}
            {\mathcal S}_{n - 2}} (E_{n - 2}), T_{w \overline{ {\mathcal S}}_{n
            - 1} {\mathcal S}_{n - 2}} (E_{n - 1})]
            \\
            &= [X_{3, n}, E_2],
            \\
            X_{2, k} &= T_{w \overline{ {\mathcal S}}_{k - 1}} (E_{k - 1}) =
            T_{w \overline{ {\mathcal S}}_k} T_{s_k} (E_{k - 1}) = T_{w
            \overline{ {\mathcal S}}_k} ([E_k, E_{k - 1}])
            \\
            &= [T_{w \overline{ {\mathcal S}}_k} (E_k), T_{w \overline{
            {\mathcal S}}_k} (E_{k - 1})] = [X_{2, k + 1}, E_k],
            \\
            X_{3, k} &= T_{w \overline{ {\mathcal S}}_{n - 1} {\mathcal S}_{k -
            2}} (E_{k - 2}) = T_{w \overline{ {\mathcal S}}_{n - 1}{\mathcal
            S}_{k - 1}} T_{s_{k - 1}} (E_{k - 2}) = T_{w \overline{ {\mathcal
            S}}_{n - 1}{\mathcal S}_{k - 1}} ([E_{k - 1}, E_{k - 2}])
            \\
            &= [T_{w \overline{ {\mathcal S}}_{n - 1}{\mathcal S}_{k - 1}}
            (E_{k - 1}), T_{w \overline{ {\mathcal S}}_{n - 1}{\mathcal S}_{k -
            1}} (E_{k - 2})] = [X_{3, k + 1}, E_k],
            \\
            X_{2, -k} &= T_{\overline{ {\mathcal S}}_{n - 1} {\mathcal S}_{k -
            3}}(E_{k - 2}) = T_{\overline{ {\mathcal S}}_{n - 1} {\mathcal
            S}_{k - 4}} T_{s_{k - 3}}(E_{k - 2}) = T_{\overline{ {\mathcal
            S}}_{n - 1} {\mathcal S}_{k - 4}} ([E_{k - 3}, E_{k - 2}])
            \\
            &= [T_{\overline{ {\mathcal S}}_{n - 1} {\mathcal S}_{k - 4}} (E_{k
            - 3}), T_{\overline{ {\mathcal S}}_{n - 1} {\mathcal S}_{k - 4}}
            (E_{k - 2})] = [X_{2, -(k - 1)}, E_{k - 1}],
            \\
            X_{2, -k} &= T_{\overline{ {\mathcal S}}_{n - 1} {\mathcal S}_{k -
            3}} (E_{k - 2}) = T_{\overline{ {\mathcal S}}_{k - 2}}  T_{
            {\mathcal S}_{k - 3}} T_{s_{k - 1}} T_{s_k s_{k + 1} \cdots s_{n -
            1}} (E_{k - 2})
            \\
            &= T_{\overline{ {\mathcal S}}_{k - 2}}  T_{ {\mathcal S}_{k - 3}}
            T_{s_{k - 1}} (E_{k - 2}) = T_{\overline{ {\mathcal S}}_{k - 2}}
            T_{ {\mathcal S}_{k - 3}} \left( [E_{k - 1}, E_{k - 2}] \right)
            \\
            &= [T_{\overline{ {\mathcal S}}_{k - 2}}  T_{ {\mathcal S}_{k - 3}}
            (E_{k - 1}), T_{\overline{ {\mathcal S}}_{k - 2}}  T_{ {\mathcal
            S}_{k - 3}} (E_{k - 2})]
            \\
            &= [T_{\overline{ {\mathcal S}}_{k - 2}} (E_{k - 1}), T_{\overline{
            {\mathcal S}}_{k - 2} {\mathcal S}_{k - 3}} (E_{k - 2})] = [X_{3,
            -k}, E_2],
            \\
            X_{1, -4} &= T_{\overline{ {\mathcal S}}_{n - 1} {\mathcal S}_{n -
            2} s_n s_{n - 1} s_n} (E_1)
            \\
            &= T_{s_3} T_{s_2} T_{s_2 s_3 \overline{ {\mathcal S}}_{n - 1}
            {\mathcal S}_{n - 2} s_n s_{n - 1} s_n} (E_1) = T_{s_3} T_{s_2}
            (E_1) = T_{s_3} \left( [E_2, E_1] \right)
            \\
            &= [T_{s_3} (E_2), T_{s_3} (E_1)] = [[E_3, E_2], E_1].
        \end{align*}
    \endgroup

\end{proof}

\begin{lemma}

    \label{lemma, B.4, 2}

    For $4 \leq j < k \leq n$,

    \begin{enumerate}

        \item \label{lemma, B.4, 2a} $[X_{3, -j}, X_{2, k}] = \widehat{q_1}
            X_{2, -j} X_{3, k}$,

        \item \label{lemma, B.4, 2b} $[X_{3, -j}, X_{2, -k}] = \widehat{q_1}
            X_{3, -k}, X_{2, -j}$.

    \end{enumerate}

\end{lemma}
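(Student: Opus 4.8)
The plan is to mirror the inductive strategy already used for Lemma (\ref{lemma, B.2, 2}), parts (\ref{lemma, B.2, 2.1a}) and (\ref{lemma, B.2, 2.1b}). Both identities will be proved by induction: part (1) by downward induction on $k$ with base case $k = n$, and part (2) by induction on the difference $k - j$ with base case $k = j + 1$. In each step one peels off a single Chevalley generator from the second argument using one of the $q$-commutator identities of Lemma (\ref{lemma, B.4, 1}), applies the $q$-Jacobi identity (\ref{q-Jacobi identity}), discards the $q$-commutators that vanish by Levendorskii--Soibelmann straightening (Proposition (\ref{LS corollary})), and collapses what remains with the $q$-Leibniz identity (\ref{q-Leibniz identity}).

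Concretely, for part (1) I would first record, as consequences of Proposition (\ref{LS corollary}), the vanishing relations $[X_{3,-j}, X_{3,k}] = [X_{2,-j}, X_{3,k}] = 0$ for $j < k$, together with $[X_{3,-j}, E_k] = [X_{2,-j}, E_k] = 0$ for $k > j$; the latter holds because, by parts (\ref{lemma, B.4, 1f})--(\ref{lemma, B.4, 1h}) of Lemma (\ref{lemma, B.4, 1}), $X_{3,-j}$ and $X_{2,-j}$ are nested $q$-commutators in $E_3,\dots,E_{j-1}$ (resp.\ $E_2,E_3,\dots,E_{j-1}$), all of which commute with $E_k$ when $k > j$. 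For the base case $k = n$ I would substitute $X_{2,n} = [X_{3,n}, E_2]$ (part (\ref{lemma, B.4, 1c})) and apply $q$-Jacobi; the first two resulting terms carry $[X_{3,-j}, X_{3,n}]$ and $[X_{2,-j}, X_{3,n}]$, both zero, so after computing $\langle e_3 + e_n, \alpha_2\rangle = -2$ only $\widehat{q_1}\, X_{2,-j} X_{3,n}$ survives. For $j < k < n$ I would write $X_{2,k} = [X_{2,k+1}, E_k]$ (part (\ref{lemma, B.4, 1d})), apply $q$-Jacobi, use $[X_{3,-j}, E_k] = 0$ to kill two of the three terms, invoke the induction hypothesis on $[X_{3,-j}, X_{2,k+1}]$, and then apply $q$-Leibniz to $[X_{2,-j}X_{3,k+1}, E_k]$ using $[X_{2,-j}, E_k] = 0$ and $[X_{3,k+1}, E_k] = X_{3,k}$ (part (\ref{lemma, B.4, 1e})). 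Part (2) runs the same way in spirit: the base case $k = j+1$ uses $X_{2,-(j+1)} = [X_{2,-j}, E_j]$ together with $[X_{3,-j}, E_j] = X_{3,-(j+1)}$ and the vanishing of $[X_{3,-j}, X_{2,-j}]$ and $[X_{3,-(j+1)}, X_{2,-j}]$; the inductive step peels $X_{2,-k} = [X_{2,-(k-1)}, E_{k-1}]$ (part (\ref{lemma, B.4, 1f})) and uses $[X_{3,-j}, E_{k-1}] = 0$, the induction hypothesis, $[X_{2,-j}, E_{k-1}] = 0$, and $[X_{3,-(k-1)}, E_{k-1}] = X_{3,-k}$, the last being the evident $X_{3,-\ast}$-analog of part (\ref{lemma, B.4, 1f}), immediate from the fact that $e_3 - e_4 \prec e_3 - e_5 \prec \cdots$ are the initial roots of the convex order (\ref{convex order, B.4}).

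The routine but delicate part --- the main obstacle --- is bookkeeping the Levendorskii--Soibelmann vanishing: for each commutator one must confirm via Proposition (\ref{LS corollary}) that no nonnegative integral combination of the intervening roots in (\ref{convex order, B.4}) sums to the target weight, and one must check that the monomials $X_{2,-j} X_{3,k}$ and $X_{3,-k} X_{2,-j}$ really are ordered (which is where the standing hypothesis $n \geq 5$ is used, the degenerate case $n = 4$ having been treated separately). A secondary point is keeping the $q$-exponents from $q$-Jacobi and $q$-Leibniz straight; the only forms that arise are $\langle e_3 + e_n, \alpha_2\rangle$, $\langle e_2 - e_j, \alpha_j\rangle$, and $\langle e_2 - e_j, \alpha_{k-1}\rangle$, evaluating to $-2$, $-2$, and $0$, so the surviving coefficient is always exactly $\widehat{q_1} = q^2 - q^{-2}$. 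Apart from this case-by-case verification, the proof is a direct transcription of the argument given for Lemma (\ref{lemma, B.2, 2}).
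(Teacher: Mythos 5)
Your proposal is correct and matches the paper's proof essentially line for line: the paper also proves part (1) by downward induction on $k$ from the base case $k = n$, substituting $X_{2,n} = [X_{3,n}, E_2]$ then $X_{2,k} = [X_{2,k+1}, E_k]$, killing terms via $[X_{3,-j}, E_2] = X_{2,-j}$, $[X_{3,-j}, E_k] = 0$ and the Levendorskii--Soibelmann vanishings, and collapsing with $q$-Leibniz via $X_{3,k} = [X_{3,k+1}, E_k]$, and then declares part (2) similar, which you spell out correctly by induction on $k - j$. (Two minor inaccuracies worth noting, neither affecting correctness: the nested form $X_{3,-j} = \mathbf{E}_{3,\ldots,j-1}$ comes directly from $T_{s_3\cdots s_{j-2}}(E_{j-1})$ rather than from parts (1f)--(1h), and the $n \geq 5$ hypothesis is not used for ordering the monomials — it simply makes the range $4 \leq j < k \leq n$ nonempty.)
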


\begin{proof}

    We prove part (\ref{lemma, B.4, 2a}). The proof of part (\ref{lemma, B.4,
    2b}) is similar.  We induct on $k$, starting with the base case $k = n$.
    Suppose $4 \leq j < n$. From part (\ref{lemma, B.4, 1c}) of Lemma
    (\ref{lemma, B.4, 1}), $X_{2, n} = [X_{3, n}, E_2]$. Hence, from the
    $q$-Jacobi identity (\ref{q-Jacobi identity}), we have
    \[
        [X_{3, -j}, X_{2, n}] = [[X_{3, -j}, X_{3, n}], E_2] - q_1[[X_{3, -j},
        E_2], X_{3,n}] + \widehat{q_1} [X_{3, -j}, E_2]X_{3, n}.
    \]
    However, from part (\ref{lemma, B.4, 1g}) of Lemma (\ref{lemma, B.4, 1}),
    $[X_{3, -j}, E_2] = X_{2, -j}$, and thus
    \[
        [X_{3, -j}, X_{2, n}] = - q_1[X_{2, -j}, X_{3,n}] + \widehat{q_1} X_{2,
        -j}X_{3, n}.
    \]
    Proposition (\ref{LS corollary}) implies $[X_{2, -j}, X_{3, n}] = 0$.
    Hence, $[X_{3, -j}, X_{2, n}] = \widehat{q_1} X_{2, -j}X_{3, n}$, and this
    concludes the prove of part (\ref{lemma, B.4, 2a}) for the case when $k =
    n$.

    Now assume $j < k < n$. From part (\ref{lemma, B.4, 1d}) of Lemma
    (\ref{lemma, B.4, 1}), $X_{2, k} = [X_{2, k + 1}, E_k]$. Thus, from the
    $q$-Jacobi identity, we get
    \[
        [X_{3, -j}, X_{2, k}] = [[X_{3, -j}, X_{2, k + 1}], E_k] - q_1[[X_{3,
        -j}, E_k], X_{2, k + 1}] + \widehat{q_1} [X_{3, -j}, E_k] X_{2, k + 1}.
    \]
    Observe next that $X_{3, -j} = T_{s_3 \cdots s_{j - 2}} (E_{j - 1}) =
    \mathbf{E}_{3,4,\ldots, j - 1}$. Since the Chevalley generator $E_k$
    commutes with each of $E_1,\cdots, E_{j - 1}$, we get $[X_{3, -j}, E_k] =
    0$, and thus $[X_{3, -j}, X_{2, k}] = [[X_{3, -j}, X_{2, k + 1}], E_k]$.
    Invoking the induction hypothesis gives us $[X_{3, -j}, X_{2, k}] =
    \widehat{q_1} [X_{2, -j} X_{3, k + 1}, E_k]$. Next, we use the $q$-Leibniz
    identity to obtain
    \[
        [X_{3, -j}, X_{2, k}] = \widehat{q_1} \left( X_{2, -j} [X_{3, k + 1},
        E_k] + q_1^{-1} [X_{2, -j}, E_k] X_{3, k + 1} \right).
    \]
    The identity $X_{2, -k} = [X_{2, -(k - 1)}, E_k]$ from part (\ref{lemma,
    B.4, 1f}) of Lemma (\ref{lemma, B.4, 1}) in conjunction with $X_{2, -4} =
    T_{s_3 \cdots s_{n - 1}} (E_2) = T_{s_3} T_{s_4 \cdots s_{n - 1}} (E_2) =
    T_{s_3} (E_2) = \mathbf{E}_{3, 2}$ tells us that the Lusztig root vector
    $X_{2, -j}$ can be written as nested $q$-commutators of Chevalley
    generators $X_{2, -j} = \mathbf{E}_{3, 2, 4, 5, \ldots, j - 1}$. Using
    again that each of $E_1, \cdots, E_{j - 1}$ commutes with $E_k$ gives us
    $[X_{2, -j}, E_k] = 0$. Thus, $[X_{3, -j}, X_{2, k}] = \widehat{q_1} X_{2,
    -j} [X_{3, k + 1}, E_k]$. From part (\ref{lemma, B.4, 1e}) of Lemma
    (\ref{lemma, B.4, 1}), $X_{3, k} = [X_{3, k + 1}, E_k]$. Hence, $[X_{3,
    -j}, X_{2, k}] = \widehat{q_1} X_{2, -j} X_{3, k}$, and this concludes a
    proof of part (\ref{lemma, B.4, 2a}).

\end{proof}

For fixed $k \in \mathbb{N}$ with $4 < k \leq n$, let $\mathbf{U}_k$ be the
subalgebra of ${\mathcal U}_q^+[w]$ generated the Lusztig root vectors
$X_\beta$ with  $e_3 - e_k \prec \beta \prec e_3 + e_k$, where $\prec$ is the
convex order (\ref{convex order, B.4}) on the roots of $w$. Equivalently,
$\mathbf{U}_k$ is the subalgebra generated by  $X_{2, 0}$, $X_{3, 0}$, $X_{2,
3}$, $X_{1, -4}$, $X_{3, \pm m}$ for $k < m \leq n$, and $X_{2, -p}$ for $4
\leq p \leq n$.

For $x, y \in {\mathcal U}_q^+[w]$, we write
\[
    x \equiv y \hspace{5pt}\left( \operatorname{mod } \mathbf{U}_k \right)
\]
if $x - y \in \mathbf{U}_k$.

\noindent
\begin{minipage}{\textwidth}
\begin{lemma}

    \label{lemma, B.4, ad^2 = 0}

    Let $X_{i, \pm k}$ be the Lusztig root vectors as in (\ref{B.4, Lusztig
    root vectors}). Then

    \begin{enumerate}

        \item \label{lemma, B.4, ad^2 = 0, part 1}
            $[X_{3, -4}, X_{1, 3}] \equiv \widehat{q_1}^{\,2} X_{3, -5} X_{1,
            -4} X_{3, 5} \left( \operatorname{mod } \mathbf{U}_5 \right)$,

        \item \label{lemma, B.4, ad^2 = 0, part 2}
            $\left( \operatorname{ad}_q(X_{3, -4})\right)^2(X_{1, 3}) = 0$.

    \end{enumerate}

    For $4 < k \leq n$,

    \begin{enumerate}

        \setcounter{enumi}{2}

        \item \label{lemma, B.4, ad^2 = 0, part 3}
            $\left(\operatorname{ad}_q(X_{3, -4})\right) ([X_{3, -k}, X_{2,
            k}]) \equiv \widehat{q_1}^{\,2}\, X_{3, -k} X_{2, -4} X_{3, k}
            \left( \operatorname{mod } \mathbf{U}_k \right)$,

        \item \label{lemma, B.4, ad^2 = 0, part 4}
            $\left( \operatorname{ad}_q (X_{3, -4}) \right)^2 ([X_{3, -k},
            X_{2, k}]) = 0$.

    \end{enumerate}

\end{lemma}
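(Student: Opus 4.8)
The plan is to prove all four parts by direct computation in ${\mathcal U}_q^+[w]$, using three ingredients: the $q$-Jacobi and $q$-Leibniz identities (\ref{q-Jacobi identity})--(\ref{q-Leibniz identity}); the vanishing criterion of Proposition (\ref{LS corollary}), applied repeatedly to the convex order (\ref{convex order, B.4}); and the explicit relations of Lemmas (\ref{lemma, B.4, 1}) and (\ref{lemma, B.4, 2}). The decisive structural feature is that $X_{3,-4}$ is the Lusztig root vector attached to the \emph{first} root $e_3-e_4=\alpha_3$ of the convex order, so $X_{3,-4}=E_3$; hence for almost every Lusztig root vector $X_\beta$ occurring below, $[X_{3,-4},X_\beta]=0$ by Proposition (\ref{LS corollary}), because $\alpha_3+\beta$ cannot be written as a nonnegative integral combination of the roots strictly between the two indices. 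In particular $X_{3,-4}$ $q$-commutes with $X_{3,\pm m}$ (all admissible $m$), with $X_{2,-m}$, with $X_{3,0}$, and with $X_{1,-4}$; I would record this list once at the start and reuse it.

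For part (\ref{lemma, B.4, ad^2 = 0, part 1}), I would start from $X_{1,3}=[X_{3,5},\mathbf{E}_{3,2,1,4}]$ (Lemma (\ref{lemma, B.4, 1})(\ref{lemma, B.4, 1b})), rewrite $\mathbf{E}_{3,2,1,4}=[X_{1,-4},E_4]$ using Lemma (\ref{lemma, B.4, 1})(\ref{lemma, B.4, 1h}), and expand $[X_{3,-4},X_{1,3}]$ with the $q$-Jacobi identity. Since $[X_{3,-4},X_{3,5}]=0$ and $[X_{3,-4},X_{1,-4}]=0$, while $[X_{3,-4},E_4]=[E_3,E_4]=X_{3,-5}$ (immediate from the definition of Lusztig root vectors, as $s_3(\alpha_4)=e_3-e_5$) and $[X_{3,-5},X_{1,-4}]=0$, a second application of $q$-Jacobi collapses $[X_{3,-4},\mathbf{E}_{3,2,1,4}]$ to a scalar multiple of $X_{3,-5}X_{1,-4}$, and then $[X_{3,-4},X_{1,3}]$ to $\widehat{q_1}^{\,2}X_{3,-5}X_{1,-4}X_{3,5}$ plus a term $[X_{3,-5}X_{1,-4},X_{3,5}]$. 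The $q$-Leibniz identity rewrites this last term as $[X_{3,-5},X_{3,5}]X_{1,-4}$ (using $[X_{1,-4},X_{3,5}]=0$), and Proposition (\ref{LS corollary}) forces $[X_{3,-5},X_{3,5}]$ to be a combination of $X_{3,0}^2$ and of $X_{3,-m}X_{3,m}$ with $m>5$ (the only ordered monomials of degree $2e_3$ supported strictly inside $\mathbf{U}_5$), all of which lie in $\mathbf{U}_5$. This yields the claimed congruence. Tracking the $q$-powers in the Jacobi/Leibniz coefficients (all governed by small even inner products among the $e_i$) is the only delicate bit.

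Part (\ref{lemma, B.4, ad^2 = 0, part 2}) follows formally: the computation above exhibits $[X_{3,-4},X_{1,3}]$, after full straightening, as a $\mathbb{K}$-linear combination of ordered monomials each of whose factors lies in $\{X_{3,-5},X_{1,-4},X_{3,5},X_{3,0}\}\cup\{X_{3,\pm m}:m>5\}$, and $X_{3,-4}$ $q$-commutes with every one of these; applying $\operatorname{ad}_q(X_{3,-4})$ once more and expanding with the $q$-Leibniz identity term by term gives $0$. Parts (\ref{lemma, B.4, ad^2 = 0, part 3}) and (\ref{lemma, B.4, ad^2 = 0, part 4}) run in exact parallel with $X_{1,3}$ replaced by $[X_{3,-k},X_{2,k}]$ and $\mathbf{U}_5$ by $\mathbf{U}_k$: apply $q$-Jacobi to $[X_{3,-4},[X_{3,-k},X_{2,k}]]$, use $[X_{3,-4},X_{3,-k}]=0$ (Proposition (\ref{LS corollary})) together with $[X_{3,-4},X_{2,k}]=\widehat{q_1}X_{2,-4}X_{3,k}$ (Lemma (\ref{lemma, B.4, 2})(\ref{lemma, B.4, 2a}) with $j=4$), then $q$-Leibniz to reorder $X_{2,-4}X_{3,k}X_{3,-k}$ into $X_{3,-k}X_{2,-4}X_{3,k}$ modulo $\mathbf{U}_k$ (here one needs $[X_{2,-4},X_{3,-k}]=0$ and $[X_{3,k},X_{3,-k}]\in\mathbf{U}_k$, the latter again from the degree-$2e_3$ analysis). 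This proves part (\ref{lemma, B.4, ad^2 = 0, part 3}); and exactly as in part (\ref{lemma, B.4, ad^2 = 0, part 2}), the fully straightened form of $[X_{3,-4},[X_{3,-k},X_{2,k}]]$ involves only factors $q$-commuting with $X_{3,-4}$, so a second $\operatorname{ad}_q(X_{3,-4})$ vanishes, giving part (\ref{lemma, B.4, ad^2 = 0, part 4}).

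The main obstacle — really the only one — is sustained bookkeeping: at each step one must (i) certify the relevant vanishing $q$-commutators from Proposition (\ref{LS corollary}) by checking a weight is not a nonnegative combination of the intermediate roots in (\ref{convex order, B.4}), and (ii) carry the scalars $q^{\langle\,,\,\rangle}$ and $\widehat{q_1}$ correctly through the $q$-Jacobi and $q$-Leibniz expansions. Neither is conceptually hard, but both are error-prone, so I would isolate the vanishing statements (as the bulleted list of root vectors $q$-commuting with $X_{3,-4}$ and the degree-$2e_3$ structure lemma) and invoke them uniformly, reducing the argument in each part to one or two rounds of $q$-Jacobi followed by a reordering via $q$-Leibniz.
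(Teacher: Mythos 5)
Your proposal follows the same route as the paper: expand $X_{1,3}=[X_{3,5},\mathbf{E}_{3,2,1,4}]$ and $[X_{3,-k},X_{2,k}]$ via the $q$-Jacobi and $q$-Leibniz identities, use the vanishing commutators from Proposition (\ref{LS corollary}) together with Lemma (\ref{lemma, B.4, 2}) to isolate the leading monomial $\widehat{q_1}^{\,2}X_{3,-5}X_{1,-4}X_{3,5}$ (resp.\ $\widehat{q_1}^{\,2}X_{3,-k}X_{2,-4}X_{3,k}$), and absorb the tail into $\mathbf{U}_5$ (resp.\ $\mathbf{U}_k$) through the degree-$2e_3$ analysis of $[X_{3,-5},X_{3,5}]$ (resp.\ $[X_{3,-k},X_{3,k}]$); parts (\ref{lemma, B.4, ad^2 = 0, part 2}) and (\ref{lemma, B.4, ad^2 = 0, part 4}) then follow because every factor in the straightened expression $q$-commutes with $X_{3,-4}$. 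Two small cautions: your opening list asserts that $X_{3,-4}$ $q$-commutes with $X_{2,-m}$, but this is true only for $m=4$ --- Lemma (\ref{lemma, B.4, 2})(\ref{lemma, B.4, 2b}) with $j=4$ gives $[X_{3,-4},X_{2,-m}]=\widehat{q_1}\,X_{3,-m}X_{2,-4}\neq 0$ for $m>4$ --- though this does no harm here, since $X_{2,-4}$ is the only such factor that actually occurs; and the characterization of the degree-$2e_3$ monomials as $X_{3,0}^2$ and $X_{3,-m}X_{3,m}$ is really a consequence of the Levendorskii--Soibelmann straightening rule (\ref{L-S straightening}) (which locates the interval subalgebra), not of Proposition (\ref{LS corollary}), which is only a vanishing criterion.
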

\end{minipage}

\begin{proof}

    Using the identities $X_{3, -4} = E_3$, $X_{3, -5} = \mathbf{E}_{3, 4}$,
    and $X_{1, -4} = \mathbf{E}_{3, 2, 1}$ together with $[X_{3, -4}, X_{1,
    -4}] = [X_{3, -5}, X_{1, -4}] = 0$ and the $q$-Jacobi identity
    (\ref{q-Jacobi identity}), we obtain
    \[
        [X_{3, -4}, \mathbf{E}_{3,2,1,4}] = [X_{3, -4}, [X_{1, -4}, E_4]] =
        \widehat{q_1}\,\, X_{3, -5} X_{1, -4}.
    \]
    Next, recall from part (\ref{lemma, B.4, 1b}) of Lemma (\ref{lemma, B.4,
    1}) that $X_{1, 3} = [X_{3, 5}, \mathbf{E}_{3, 2, 1, 4}]$.  Using this
    identity in conjunction with the $q$-Jacobi and $q$-Leibniz identities, we
    get
    \[
        \begin{split}
            [X_{3, -4}, X_{1, 3}] &= [X_{3, -4}, [X_{3, 5},
            \mathbf{E}_{3,2,1,4}]]
            \\
            &= \widehat{q_1}^{\,2} X_{3, -5} X_{1, -4} X_{3, 5} - q_1
            \widehat{q_1} [X_{3, -5}, X_{3, 5}] X_{1, -4}.
        \end{split}
    \]
    Next, let $4 < k \leq n$.  We get from the $q$-Jacobi identity
    (\ref{q-Jacobi identity}),
    \[
        \begin{split}
            [X_{3, -4}, [X_{3, -k}, X_{2, k}]] &= [[X_{3, -4}, X_{3, -k}],
            X_{2, k}] - q_1 [[X_{3, -4}, X_{2, k}], X_{3, -k}]
            \\
            &\hspace{10pt}+ \widehat{q_1} [X_{3, -4}, X_{2, k}] X_{3, -k}.
        \end{split}
    \]
    However, Proposition (\ref{LS corollary}) implies $[X_{3, -4}, X_{3, -k}] =
    0$, and thus
    \[
        [X_{3, -4}, [X_{3, -k}, X_{2, k}]] = - q_1 [[X_{3, -4}, X_{2, k}],
        X_{3, -k}] + \widehat{q_1} [X_{3, -4}, X_{2, k}] X_{3, -k}.
    \]
    Next, we apply the identity $[X_{3, -4}, X_{2, k}] = \widehat{q_1} X_{2,
    -4} X_{3, k}$ from part (\ref{lemma, B.4, 2a}) of Lemma (\ref{lemma, B.4,
    2}) to get
    \[
        [X_{3, -4}, [X_{3, -k}, X_{2, k}]] = - q_1 \widehat{q_1} [X_{2, -4}
        X_{3, k}, X_{3, -k}] + \widehat{q_1}^2 X_{2, -4} X_{3, k} X_{3, -k}.
    \]
    From the $q$-Leibniz identity (\ref{q-Leibniz identity}),
    \[
        \begin{split}
            [X_{3, -4}, [X_{3, -k}, X_{2, k}]] &= - q_1 \widehat{q_1}
            \left(X_{2, -4} [X_{3, k}, X_{3, -k}] + [X_{2, -4}, X_{3,
            -k}] X_{3, k} \right)
            \\
            & \hspace{10pt} + \widehat{q_1}^2 X_{2, -4} X_{3, k} X_{3, -k}.
        \end{split}
    \]
    Proposition (\ref{LS corollary}) implies $[X_{2, -4}, X_{3, -k}] = 0$.
    Therefore, the right-hand side of the equation above can be simplified to
    give us
    \[
        [X_{3, -4}, [X_{3, -k}, X_{2, k}]] = \widehat{q_1}^2 X_{3, -k} X_{2,
        -4} X_{3, k} + q_1^{-1}\widehat{q_1} X_{2, -4} [X_{3, -k}, X_{3, k}].
    \]
    The Levendorskii-Soibelmann formulas (\ref{L-S straightening}) imply that
    $[X_{3, -k}, X_{3, k}]$ can be written as a linear combination of the
    monomials $X_{3, 0}^2$ and $X_{3, -j} X_{3, j}$ with $k < j \leq n$. Hence,
    there exist scalars $a_j, b_j, c, d \in \mathbb{K}$ such that
    \[
        [X_{3, -4}, X_{1, 3}]= \widehat{q_1}^2 X_{3, -5} X_{1, -4} X_{3,
        5} + c X_{3, 0}^2 X_{1, 4} + \sum_{5 < j \leq n} a_j X_{3, -j}X_{1, -4}
        X_{3, j}
    \]
    and
    \[
        [X_{3, -4}, [X_{3, -k}, X_{2, k}]] \!=\! \widehat{q_1}^2 X_{3, -k}
        X_{2, -4} X_{3, k} + dX_{2, -4} X_{3, 0}^2 +\!\! \sum_{k < j \leq n}
        b_j X_{3, -j}X_{2, -4} X_{3, j}.
    \]
    Parts (\ref{lemma, B.4, ad^2 = 0, part 1}) and (\ref{lemma, B.4, ad^2 = 0,
    part 3}) follow as a consequence of these identities.

    To prove parts (\ref{lemma, B.4, ad^2 = 0, part 2}) and (\ref{lemma, B.4,
    ad^2 = 0, part 4}), observe that for every Lusztig root vector $X$
    appearing in the right-hand side of either of these two equations above,
    Proposition (\ref{LS corollary}) gives us $[X_{3, -4}, X] = 0$. Thus, as a
    consequence of the $q$-Leibniz identity, $\left( \operatorname{ad}_q (X_{3,
    -4}) \right)^2 (X_{1, 3}) = 0$ and $\left( \operatorname{ad}_q (X_{3, -4})
    \right)^2 ([X_{3, -k}, X_{2, k}]) = 0$.

\end{proof}

\noindent Now we are ready to prove ${\mathcal N}(\llbracket w_{0, 1, 2, 1, n -
4} \rrbracket) = 3$ for the cases when $n \geq 5$.

\begin{theorem}

    \label{theorem, B.4}

    Suppose $n \geq 5$, and let $w = w_{0, 1, 2, 1, n - 4} \in W(B_n)$. Then
    ${\mathcal N}(\llbracket w \rrbracket) = 3$.

\end{theorem}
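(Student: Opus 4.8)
The plan starts by pinning down the pair of Lusztig root vectors that governs ${\mathcal N}(\llbracket w\rrbracket)$. The reduced expression (\ref{B.4, reduced expression}) begins and ends with $s_3$, and the convex order (\ref{convex order, B.4}) has $e_3-e_4=\alpha_3$ as its smallest root and $e_1+e_2$ as its largest, so $\llbracket w\rrbracket=(w,3,3)$, and since $X_{3,-4}=X_{\beta_1}=E_3$ and $X_{1,2}=X_{\beta_N}=T_{ws_3}(E_3)$ we have ${\mathcal N}(\llbracket w\rrbracket)=\min\{p\in\mathbb{N}:(\operatorname{ad}_q X_{3,-4})^p(X_{1,2})=0\}$. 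Thus the theorem reduces to the two assertions $(\operatorname{ad}_q X_{3,-4})^3(X_{1,2})=0$ and $(\operatorname{ad}_q X_{3,-4})^2(X_{1,2})\neq 0$.

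The upper bound has a short proof via Proposition (\ref{bound on nil-index})(\ref{bound on nil-index, 2}): the degree $3(e_3-e_4)+(e_1+e_2)=e_1+e_2+3e_3-3e_4$ is not a nonnegative integral combination of the roots of $w$ lying strictly between $e_3-e_4$ and $e_1+e_2$, because the only such roots with nonzero $e_4$-coordinate are $e_2-e_4$ and $e_1-e_4$, each with coordinate $-1$, whereas inspecting the $e_1$- and $e_2$-coordinates forces the multiplicity of $e_1-e_4$ to be at most $1$ and the combined multiplicity of the two to be at most $2$, incompatible with total $e_4$-coordinate $-3$. Alternatively, one may deduce $(\operatorname{ad}_q X_{3,-4})^3(X_{1,2})=0$ from the computation below by applying $\operatorname{ad}_q X_{3,-4}$ twice more and invoking parts (\ref{lemma, B.4, ad^2 = 0, part 2}) and (\ref{lemma, B.4, ad^2 = 0, part 4}) of Lemma (\ref{lemma, B.4, ad^2 = 0}), the $q$-Leibniz identity (\ref{q-Leibniz identity}), and the relations $[X_{3,-4},X_{2,-4}]=[X_{3,-4},X_{1,-4}]=0$ from Proposition (\ref{LS corollary}).

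For the lower bound I would first compute $[X_{3,-4},X_{1,2}]$. Starting from $X_{1,2}=[X_{2,5},\mathbf{E}_{3,2,1,4}]$ (Lemma (\ref{lemma, B.4, 1})(\ref{lemma, B.4, 1a})), one expands by the $q$-Jacobi identity (\ref{q-Jacobi identity}) and substitutes $[X_{3,-4},X_{2,5}]=\widehat{q_1}\,X_{2,-4}X_{3,5}$ (Lemma (\ref{lemma, B.4, 2})(\ref{lemma, B.4, 2a})), $[X_{3,-4},\mathbf{E}_{3,2,1,4}]=\widehat{q_1}\,X_{3,-5}X_{1,-4}$ (from the proof of Lemma (\ref{lemma, B.4, ad^2 = 0})), and $[X_{3,5},\mathbf{E}_{3,2,1,4}]=X_{1,3}$ (Lemma (\ref{lemma, B.4, 1})(\ref{lemma, B.4, 1b})), together with Proposition (\ref{LS corollary}) and the $q$-Leibniz identity, to reach an expression of the shape
\[
    [X_{3,-4},X_{1,2}] = \widehat{q_1}\,q^{a}\,X_{2,-4}X_{1,3} + \sum_{k=5}^{n} c_k\,[X_{3,-k},X_{2,k}]\,X_{1,-4} + (\text{terms } q\text{-commuting with } X_{3,-4})
\]
for suitable $a$ and $c_k$; since ordered monomials form a basis this is nonzero, so ${\mathcal N}(\llbracket w\rrbracket)>1$.

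Applying $\operatorname{ad}_q X_{3,-4}$ once more, the last bucket vanishes by the $q$-Leibniz identity, and for the first two buckets one uses $[X_{3,-4},X_{2,-4}]=[X_{3,-4},X_{1,-4}]=0$ together with parts (\ref{lemma, B.4, ad^2 = 0, part 1}) and (\ref{lemma, B.4, ad^2 = 0, part 3}) of Lemma (\ref{lemma, B.4, ad^2 = 0}): modulo $\mathbf{U}_5$ the summands with $k>5$ disappear, while the $X_{2,-4}X_{1,3}$-term and the $k=5$ term each contribute a scalar multiple of the single ordered monomial $X_{3,-5}X_{2,-4}X_{1,-4}X_{3,5}\notin\mathbf{U}_5$, giving $(\operatorname{ad}_q X_{3,-4})^2(X_{1,2})\equiv c\,X_{3,-5}X_{2,-4}X_{1,-4}X_{3,5}\pmod{\mathbf{U}_5}$ with $c$ the sum of those two contributions. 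As $q$ is not a root of unity, $c\neq 0$, so $(\operatorname{ad}_q X_{3,-4})^2(X_{1,2})\neq 0$ and ${\mathcal N}(\llbracket w\rrbracket)>2$; combined with the upper bound, ${\mathcal N}(\llbracket w\rrbracket)=3$. The main obstacle is the computation of $[X_{3,-4},X_{1,2}]$: organising the many terms produced by the iterated $q$-Jacobi and $q$-Leibniz expansions into the three buckets above — in particular isolating the $[X_{3,-k},X_{2,k}]$ family, which emerges after expanding $[X_{3,-5},X_{2,5}]$ by the Levendorskii--Soibelmann straightening rule — and then checking that the resulting scalar $c$ does not vanish identically as a rational function of $q$.
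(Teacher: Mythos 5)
Your upper bound argument is a genuine improvement over the paper's: rather than computing $(\operatorname{ad}_q X_{3,-4})^3(X_{1,2})$ by iterated $q$-commutators, you note that $3(e_3-e_4)+(e_1+e_2)$ cannot be a nonnegative integral combination of the intermediate radical roots (the only ones with negative $e_4$-coordinate are $e_2-e_4$ and $e_1-e_4$, each forced to multiplicity at most $1$ by the $e_1$- and $e_2$-coordinates), so Proposition (\ref{bound on nil-index})(\ref{bound on nil-index, 2}) gives vanishing directly. This is both shorter and more transparent than the computation the paper carries out.

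The lower-bound sketch, however, has a real gap in the bucketing. From the paper's computation one has
\[
    [X_{3,-4},X_{1,2}] = \widehat{q_1}X_{2,-4}X_{1,3} + q_1^{-1}\widehat{q_1}^{2}X_{2,-5}X_{1,-4}X_{3,5} + \widehat{q_1}^{2}X_{3,-5}X_{1,-4}X_{2,5} - q_1\widehat{q_1}[X_{3,-5},X_{2,5}]X_{1,-4},
\]
and the two middle monomials do \emph{not} belong to your ``terms $q$-commuting with $X_{3,-4}$'' bucket: $X_{2,-5}$ and $X_{2,5}$ each fail to $q$-commute with $X_{3,-4}$ by Lemma (\ref{lemma, B.4, 2}). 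It is precisely the $q$-Leibniz cross-terms from these two monomials that produce the surviving coefficient $(q_1+q_1^{-1})\widehat{q_1}^3$ in $(\operatorname{ad}_q X_{3,-4})^2(X_{1,2}) \bmod \mathbf{U}_5$. By contrast, the two contributions you single out — from $X_{2,-4}X_{1,3}$ via Lemma (\ref{lemma, B.4, ad^2 = 0})(\ref{lemma, B.4, ad^2 = 0, part 1}), and from the $k=5$ term via part (\ref{lemma, B.4, ad^2 = 0, part 3}) — are $q_1\widehat{q_1}^3\,X_{2,-4}X_{3,-5}X_{1,-4}X_{3,5}$ and $-q_1\widehat{q_1}^3\,X_{3,-5}X_{2,-4}X_{3,5}X_{1,-4}$, which are equal monomials (since $[X_{3,-5},X_{2,-4}]=[X_{3,5},X_{1,-4}]=0$) with opposite coefficients, hence they \emph{cancel}. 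So your accounting would give $c=0$; the ``$q$ not a root of unity'' appeal cannot rescue a scalar that actually vanishes identically. To fix this you must keep the $X_{2,\pm 5}$-bearing terms outside the inert bucket and apply $q$-Leibniz to them; that produces the $(q_1+q_1^{-1})\widehat{q_1}^3$ coefficient and yields the nonvanishing you need.
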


\begin{proof}

    To prove ${\mathcal N}(\llbracket w \rrbracket) = 3$, we must show $\left(
    \operatorname{ad}_q (X_{3, -4}) \right)^2 (X_{1, 2})$ is nonzero and
    $\left( \operatorname{ad}_q (X_{3, -4}) \right)^3 (X_{1, 2}) = 0$.  We
    begin by using the identity $X_{1, 2} = [X_{2, 5}, \mathbf{E}_{3, 2, 1,
    4}]$ from part (\ref{lemma, B.4, 1a}) of Lemma (\ref{lemma, B.4, 1}) and
    the $q$-Jacobi identity (\ref{q-Jacobi identity}) to get
    \[
        \begin{split}
            [X_{3, -4}, X_{1, 2}] &= [[X_{3, -4}, X_{2, 5}],
            \mathbf{E}_{3,2,1,4}] - q_1 [[X_{3, -4}, \mathbf{E}_{3, 2, 1, 4}],
            X_{2, 5}]
            \\
            &\hspace{10pt}+ \widehat{q_1} [X_{3, -4}, \mathbf{E}_{3, 2, 1, 4}]
            X_{2, 5}
        \end{split}
    \]
    In the proof of Lemma (\ref{lemma, B.4, ad^2 = 0}), it was shown that
    $[X_{3, -4}, \mathbf{E}_{3, 2, 1, 4}] = \widehat{q_1} X_{3, -5} X_{1, -4}$.
    Using this, together with the identity $[X_{3, -4}, X_{2, 5}] =
    \widehat{q_1} X_{2, -4} X_{3, 5}$ from part (\ref{lemma, B.4, 2a}) of Lemma
    (\ref{lemma, B.4, 2}), gives us
    \[
        [X_{3, -4}, X_{1, 2}] \!=\! \widehat{q_1} [X_{2, -4} X_{3, 5},
        \mathbf{E}_{3,2,1,4}] - q_1 \widehat{q_1} [ X_{3, -5} X_{1, -4}, X_{2,
        5}] + \widehat{q_1}^2 X_{3, -5} X_{1, -4} X_{2, 5}.
    \]
    Next, apply the $q$-Leibniz identity to get
    \[
        \begin{split}
            [X_{3, -4}, X_{1, 2}] &=
            \widehat{q_1} \left(X_{2, -4} [X_{3, 5}, \mathbf{E}_{3,2,1,4}] +
            q_1^{-1} [X_{2, -4}, \mathbf{E}_{3, 2, 1, 4}] X_{3, 5} \right)
            \\
            & \hspace{10pt} - q_1 \widehat{q_1} \left(  X_{3, -5} [X_{1, -4},
            X_{2, 5}] + [X_{3, -5}, X_{2, 5}] X_{1, -4} \right)
            \\
            &\hspace{10pt}+ \widehat{q_1}^2 X_{3, -5} X_{1, -4} X_{2, 5}.
        \end{split}
    \]
    Parts (\ref{lemma, B.4, 1f}) and (\ref{lemma, B.4, 1g}) of Lemma
    (\ref{lemma, B.4, 1}), in conjunction with the fact that $X_{3, -4} = E_3$,
    give us $X_{2, -5} = \mathbf{E}_{3, 2, 4}$. Therefore, using that the
    Chevalley generators $E_1$ and $E_4$ commute, we get $\mathbf{E}_{3, 2, 1,
    4} = \mathbf{E}_{3, 2, 4, 1} = [X_{2, -5}, E_1]$. The $q$-commutator
    identities $[X_{3, 5}, \mathbf{E}_{3, 2, 1, 4}] = X_{1, 3}$ from part
    (\ref{lemma, B.4, 1b}) of Lemma (\ref{lemma, B.4, 1}) and $[X_{1, -4},
    X_{2, 5}] = 0$, which follows from Proposition (\ref{LS corollary}), imply
    \begin{align*}
        [X_{3, -4}, X_{1, 2}] &=\widehat{q_1} X_{2, -4} X_{1, 3} +
        \widehat{q_1} q_1^{-1}[X_{2, -4}, [X_{2, -5}, E_1]] X_{3, 5}
        \\
        &\hspace{5mm} - q_1 \widehat{q_1}\,\, [X_{3, -5}, X_{2, 5}] X_{1, -4} +
        \widehat{q_1}^2\,\, X_{3, -5} X_{1, -4} X_{2, 5}.
    \end{align*}
    Recall, from part (\ref{lemma, B.4, 1h}) of Lemma (\ref{lemma, B.4, 1}),
    that $X_{1, -4} = \mathbf{E}_{3, 2, 1}$. Since $X_{2, -4} = \mathbf{E}_{3,
    2}$, we have $X_{1, -4} = [X_{2, -4}, E_1]$. Thus, we obtain, from the
    $q$-Jacobi identity,
    \[
        [X_{2, -4}, [X_{2, -5}, E_1]] = [[X_{2, -4}, X_{2, -5}], E_1] - q_1
        [X_{1, -4}, X_{2, -5}] + \widehat{q_1} X_{1, -4} X_{2, -5}.
    \]
    However, from $[X_{1, -4}, X_{2, -5}] = [X_{2, -4}, X_{2, -5}] = 0$, the
    right-hand simplifies to give us $[X_{2, -4}, [X_{2, -5}, E_1]] =
    \widehat{q_1} X_{2, -5} X_{1, -4}$, and thus
    \[
        \begin{split}
            [X_{3, -4}, X_{1, 2}]
            &= \widehat{q_1} X_{2, -4} X_{1, 3} + q_1^{-1} \widehat{q_1}^{2}
            X_{2, -5} X_{1, -4} X_{3, 5}
            \\
            &\hspace{5mm}+ \widehat{q_1}^{2} X_{3, -5} X_{1, -4} X_{2, 5} -
            q_1 \widehat{q_1} [X_{3, -5}, X_{2, 5}] X_{1, -4}.
        \end{split}
    \]
    Apply $\operatorname{ad}_q(X_{3, -4})$ to both sides, use the $q$-Leibniz
    identity and the $q$-commutation relations
    \begin{equation}
        \label{B.4, 0}
        [X_{3, -4}, X_{2, -4}] = [X_{3, -4}, X_{3, 5}] = [X_{3, -4}, X_{1, -4}]
        = [X_{3, -4}, X_{3, -5}] = 0
    \end{equation}
    to simplify to obtain
    \[
        \begin{split}
            \left( \operatorname{ad}_q (X_{3, -4}) \right)^2\! (X_{1, 2})\!
            &=\! \widehat{q_1}^2 \!\left( q_1^2 X_{3, -5} X_{1, -4} [X_{3,
            -4},\! X_{2, 5}] \!+\! q_1^{-1} [X_{3, -4},\! X_{2, -5}] X_{1, -4}
            X_{3, 5} \right)
            \\
            &\hspace{10pt}+ q_1 \widehat{q_1} \left( X_{2, -4} [X_{3, -4},
            X_{1, 3}] - [X_{3, -4}, [X_{3, -5}, X_{2, 5}]] X_{1, -4} \right).
        \end{split}
    \]
    Next, apply the formulas from Lemma (\ref{lemma, B.4, 2}) and use $X_{2,
    -4} X_{1, -4} = q_1 X_{1, -4}, X_{2, -4}$ to get
    \begin{equation}
        \label{B.4, 1}
        \begin{split}
            \left( \operatorname{ad}_q (X_{3, -4}) \right)^2 (X_{1, 2})
            &= q_1 \widehat{q_1} X_{2, -4} [X_{3, -4}, X_{1, 3}]
            \\
            &\hspace{10pt}- q_1 \widehat{q_1} [X_{3, -4}, [X_{3, -5}, X_{2,
            5}]] X_{1, -4}
            \\
            &\hspace{10pt} + (q_1 + q_1^{-1}) \widehat{q_1}^3 X_{3, -5} X_{2,
            -4} X_{1, -4} X_{3, 5}.
        \end{split}
    \end{equation}
    The results of parts (\ref{lemma, B.4, ad^2 = 0, part 1}) and (\ref{lemma,
    B.4, ad^2 = 0, part 3}) of Lemma (\ref{lemma, B.4, ad^2 = 0}), together
    with the fact that $X_{1, -4}$ and $X_{2, -4}$ belong to $\mathbf{U}_5$,
    give us
    \[
        \begin{split}
            [X_{3, -4}, [X_{3, -4}, X_{1, 2}]]
            &\equiv q_1 \widehat{q_1}^3 X_{2, -4} X_{3, -5} X_{1, -4} X_{3, 5}
            \\
            &\hspace{10pt}+ (q_1 + q_1^{-1}) \widehat{q_1}^3 X_{3, -5} X_{2,
            -4} X_{1, -4} X_{3, 5}
            \\
            &\hspace{10pt}- q_1 \widehat{q_1}^3 X_{3, -5} X_{2, -4} X_{3, 5}
            X_{1, -4} \hspace{10pt} (\operatorname{mod } \mathbf{U}_5)
        \end{split}
    \]
    From Proposition (\ref{LS corollary}), $[X_{3, 5}, X_{1, -4}] = [X_{3, -5},
    X_{2, -4}] = 0$. This implies the monomials $X_{2, -4} X_{3, -5} X_{1, -4}
    X_{3, 5}$ and $X_{3, -5}  X_{2, -4} X_{3, 5} X_{1, -4}$ appearing in the
    expression on the right-hand side, are equal. Therefore,
    \[
        [X_{3, -4}, [X_{3, -4}, X_{1, 2}]] \equiv (q_1 + q_1^{-1})
        \widehat{q_1}^3 X_{3, -5} X_{2, -4} X_{1, -4} X_{3, 5} \hspace{10pt}
        (\operatorname{mod } \mathbf{U}_5).
    \]
    Hence, $[X_{3, -4}, [X_{3, -4}, X_{1, 2}]] \neq 0$, and thus ${\mathcal
    N}(\llbracket w \rrbracket) > 2$.

    Finally, apply $\operatorname{ad}_q(X_{3, -4})$ to both sides of (\ref{B.4,
    1}). The $q$-commutation relations (\ref{B.4, 0}), together with the fact
    that $\left(\operatorname{ad}_q(X_{3, -4}) \right)^2$ sends $X_{1, 3}$ and
    $[X_{3, -5}, X_{2, 5}]$ to $0$ (by parts (\ref{lemma, B.4, ad^2 = 0, part
    2}) and (\ref{lemma, B.4, ad^2 = 0, part 4}) of Lemma (\ref{lemma, B.4,
    ad^2 = 0})), imply $\left( \operatorname{ad}_q(X_{3, -4}) \right)^3 (X_{1,
    2}) = 0$.  Thus, ${\mathcal N}(\llbracket w \rrbracket) = 3$.

\end{proof}

\subsection{Nilpotency index, Case C.2}

Let $n > 2$, and consider the following reduced expression for $w_{0, 0, 2, 0,
n - 2} \in W(C_n)$:
\begin{equation}
    \label{reduced expression, C.2}
    w_{0,0,2,0,n-2} := (s_2 \cdots s_{n-1})(s_1 \cdots s_{n-2}) s_n s_{n-1} s_n
    (s_{n-2} \cdots s_1)(s_{n-1} \cdots s_2).
\end{equation}
Our aim is to prove ${\mathcal N}(\llbracket w_{0, 0, 2, 0, n - 2} \rrbracket)
= 3$.

We begin with three lemmas that give some commutation relations in the algebra
${\mathcal U}_q^+[w_{0, 0, 2, 0, n - 2}]$. The reduced expression (\ref{reduced
expression, C.2}) corresponds to the convex order on the roots of $w_{0, 0, 2,
0, n - 2}$,
\[
    \begin{split}
        &e_2 - e_3 \prec \cdots \prec e_2 - e_n \prec e_1 - e_3 \prec \cdots
        \\
        &\cdots \prec e_1 - e_n \prec 2e_2 \prec e_1 + e_2 \prec 2e_1 \prec e_2
        + e_n \prec \cdots
        \\
        &\cdots \prec e_2 + e_3 \prec e_1 + e_n \prec \cdots \prec e_1 + e_3.
    \end{split}
\]
The Lusztig root vectors in ${\mathcal U}_q^+[w_{0, 0, 2, 0, n - 2}]$ will be
denoted
\begin{equation}
    \label{C.2, Lusztig root vectors}
    X_{1, 2}, X_{i, i}, X_{i, \pm k} \in {\mathcal U}_q^+[w_{0, 0, 2, 0, n -
    2}], \hspace{15pt} (1 \leq i < 3 \leq k \leq n).
\end{equation}

First, we have a result analogous to Lemma (\ref{lemma, B.2, 0}).

\begin{lemma}

    \label{lemma, C.2, 0}

    Let $X_{1,2}$, $X_{i, i}$, and $X_{i, \pm k}$ be as in (\ref{C.2, Lusztig
    root vectors}). For $1\leq i < 3 \leq j \leq n$ and $3 \leq k \leq n$, we
    have the following:
    \begin{align}
        \label{C.2, L-S, 1} &[X_{2, -k}, X_{1, -j}] = [X_{2, j}, X_{1, k}] = 0
        \hspace{8pt} (j \leq k),
        \\
        \label{C.2, L-S, 2} &[X_{i, -k}, X_{i, -j}] = [X_{i, j}, X_{i, k}] = 0
        \hspace{12.5pt} (k < j),
        \\
        \label{C.2, L-S, 3} &[X_{i, -k}, X_{i, j}] = [X_{1, -k}, X_{2, j}] = 0
        \hspace{10.3pt} (j \neq k),
        \\
        \label{C.2, L-S, 4} &[X_{i, -k}, X_{2, 2}] = [X_{2, 2}, X_{2, k}] = 0.
    \end{align}

\end{lemma}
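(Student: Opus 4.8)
The plan is to deduce every identity in the lemma directly from Proposition (\ref{LS corollary}). Recall that proposition says $[X_{\beta_p}, X_{\beta_q}] = 0$ whenever $p < q$ and there is no nonnegative integral combination of the roots lying strictly between $\beta_p$ and $\beta_q$ in the convex order that sums to $\beta_p + \beta_q$. Thus the whole argument is a finite, purely root-theoretic bookkeeping: no manipulation inside $\mathcal{U}_q(\mathfrak{g})$ is needed once the positions of the relevant Lusztig root vectors in the convex order are known.

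The first step is to read off, from the displayed convex order on the roots of $w_{0, 0, 2, 0, n - 2}$, the position of each Lusztig root vector. The initial segment $e_2 - e_3 \prec \cdots \prec e_2 - e_n$ carries $X_{2, -k}$ with $k$ increasing; the segment $e_1 - e_3 \prec \cdots \prec e_1 - e_n$ carries $X_{1, -k}$; then come the three roots $2e_2$, $e_1 + e_2$, $2e_1$ (the roots of $X_{2, 2}$, $X_{1, 2}$, $X_{1, 1}$); and finally $e_2 + e_n \prec \cdots \prec e_2 + e_3$ carries $X_{2, k}$ and $e_1 + e_n \prec \cdots \prec e_1 + e_3$ carries $X_{1, k}$. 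The structural fact I would then exploit repeatedly is that every root of $w_{0, 0, 2, 0, n - 2}$, hence every candidate intervening root, is one of $e_1 \pm e_c$, $e_2 \pm e_c$ (with $c \geq 3$), $2e_1$, or $2e_2$; in particular no such root has a negative $e_1$- or $e_2$-coordinate.

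With this in hand, I would run the same kind of check on each of the four lines. For (\ref{C.2, L-S, 1}) the sum is $e_1 + e_2 - e_j - e_k$ (respectively $e_1 + e_2 + e_j + e_k$); matching $e_1$- and $e_2$-coordinates forces the combination to consist of exactly one root $e_1 \pm e_b$ and exactly one root $e_2 \pm e_a$, and the hypothesis $j \leq k$ places $a$ and $b$ strictly outside $\{j, k\}$, so no such combination exists. For (\ref{C.2, L-S, 2}) the target is $2e_i - e_j - e_k$ or $2e_i + e_j + e_k$, whose $e_i$-coordinate $\pm 2$ forces using two roots of the relevant $e_i$-block, which the constraint $k < j$ then makes impossible. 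For (\ref{C.2, L-S, 3}) the target is $2e_i + e_j - e_k$ or $e_1 + e_2 + e_j - e_k$; since $j \neq k$ its $e_j$-coordinate is $+1$, but the only intervening roots with positive $e_j$-coordinate are $e_2 + e_j$ (excluded by the $e_2$-coordinate count) and $e_1 + e_j$ (the right-hand endpoint), a contradiction. For (\ref{C.2, L-S, 4}) the target has $e_2$-coordinate $3$, so the $e_1$-coordinate count kills every intervening root except those of the form $e_2 - e_a$ (respectively $e_2 + e_a$) with $a$ strictly larger than the external index occurring in the target, and these cannot reproduce that index.

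The computations are short, and the only genuinely delicate point --- the one I would flag as the main obstacle --- is that the long roots $2e_1$, $2e_2$ and the root $e_1 + e_2$ sit in the middle of the convex order and do occur as intervening roots in several of these intervals (for instance $e_1 + e_2$ lies strictly between $e_1 - e_k$ and $e_2 + e_j$). What resolves every case is exactly the coordinate bookkeeping described above: because no intervening root has a negative $e_1$- or $e_2$-coordinate, the $e_1$- and $e_2$-coordinates of $\beta_p + \beta_q$ completely pin down which of $2e_1$, $2e_2$, $e_1 + e_2$, $e_1 \pm e_c$, $e_2 \pm e_c$ may appear at all, after which the remaining coordinate $e_j$ (or $e_k$) gives the contradiction. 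Carrying this out for each of the four displayed families completes the proof.
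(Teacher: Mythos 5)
Your proposal is correct and matches the paper's proof exactly: the paper's proof of this lemma is a single sentence invoking Proposition (\ref{LS corollary}), and your argument simply carries out the root-coordinate bookkeeping that sits behind that citation. The one cosmetic caveat is that in the discussion of (\ref{C.2, L-S, 3}) you mention ``excluding'' $e_2 + e_j$ and $e_1 + e_j$, but these are an endpoint and a root past the endpoint respectively, hence not intervening at all; the real point, which your coordinate count does establish, is that every genuinely intervening root has nonpositive $e_j$-coordinate.
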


\begin{proof}

    These identities follow as a consequence of Proposition (\ref{LS
    corollary}).

\end{proof}

The next two lemmas give some additional relations among the Lusztig root
vectors in ${\mathcal U}_q^+[w_{0, 0, 2, 0, n - 2}]$ and the Chevalley
generators $E_1,\cdots, E_n$.

\begin{lemma}

    \label{lemma, C.2, 1}

    Let $X_{i, \pm k}$ be as in (\ref{C.2, Lusztig root vectors}). Then

    \begin{enumerate}

        \item \label{lemma, C.2, 1a} $X_{2, -3} = E_2$ and $X_{1, -3} = [E_2,
            E_1]$,

        \item \label{lemma, C.2, 1b} $X_{i, -k} = [X_{i, -(k - 1)}, E_{k - 1}]$
            (for $3 < k \leq n$ and $i=1, 2$),

        \item \label{lemma, C.2, 1c} $X_{i, k} = [X_{i, k + 1}, E_k]$ (for $3
            \leq k < n$ and $i = 1, 2$),

        \item \label{lemma, C.2, 1d} $X_{1, n} = [X_{2, n}, E_1]$,

        \item \label{lemma, C.2, 1e} $X_{1, 2} = [X_{2, 2}, E_1]$,

        \item \label{lemma, C.2, 1f} $[2]_q X_{1, 1} = [X_{1, 2}, E_1]$,

        \item \label{lemma, C.2, 1g} $X_{2, n} = [X_{2, -n}, E_n]$,

        \item \label{lemma, C.2, 1h} $[2]_q X_{2, 2} = [X_{2, -n}, X_{2, n}]$,

        \item \label{lemma, C.2, 1i} $X_{1, 2} = [X_{1, -n}, X_{2, n}]$,

        \item \label{lemma, C.2, 1j} $X_{1, -k} = [X_{2, -k}, E_1]$, (for $3
            \leq k \leq n$).

    \end{enumerate}

\end{lemma}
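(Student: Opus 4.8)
The plan is to prove Lemma (\ref{lemma, C.2, 1}) by direct computation, in close parallel with the proofs of Lemmas (\ref{lemma, B.2, 1}) and (\ref{lemma, B.2, 4}) for type $B_n$. Throughout I would work from the fixed reduced expression (\ref{reduced expression, C.2}) and its associated convex order, expressing each Lusztig root vector $X_\beta$ via the definition (\ref{root vectors}) as $T_{s_{i_1}\cdots s_{i_{m-1}}}(E_{i_m})$, then repeatedly splitting this Lusztig symmetry as a length-additive product $T_u T_{u'}$ (as permitted by Conventions (\ref{computational conventions})) so as to isolate a single symmetry $T_{s_k}$ acting on an adjacent Chevalley generator, and finally pushing the outer symmetry through the resulting $q$-commutator using (\ref{orthogonality of Tw}). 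The only nontrivial braid-symmetry evaluations required are $T_{s_k}(E_{k\pm 1}) = [E_k, E_{k\pm 1}]$ for the $c_{ij}=-1$ pairs, and—at the long simple root $\alpha_n$—the identities $T_{s_{n-1}s_n}(E_{n-1}) = [E_{n-1},E_n]$ and $[2]_q T_{s_n s_{n-1}}(E_n) = [[E_n,E_{n-1}],E_{n-1}]$ from Conventions (\ref{computational conventions}); note that these are the ``dual'' of the analogous type $B_n$ evaluations, since in type $C_n$ the roles of the short and long simple roots are interchanged.

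For part (\ref{lemma, C.2, 1a}), the root $e_2-e_3 = \alpha_2$ is first in the convex order, so $X_{2,-3}=E_2$ by definition, while $X_{1,-3} = T_{s_2}T_{s_3\cdots s_{n-1}}(E_1) = T_{s_2}(E_1) = [E_2,E_1]$, using that $s_3\cdots s_{n-1}$ fixes $\alpha_1$ together with Theorem (\ref{Jantzen, 8.20}). Parts (\ref{lemma, C.2, 1b}), (\ref{lemma, C.2, 1c}), (\ref{lemma, C.2, 1d}), (\ref{lemma, C.2, 1e}), and (\ref{lemma, C.2, 1g}) are each of the same flavor: for the stated index $k$ one writes the symmetry defining the left-hand side as $T_{us_k}T_{s_k}(E_\ell)$, evaluates $T_{s_k}(E_\ell)$ by a braid relation (the long-root version for part (\ref{lemma, C.2, 1g})), and recognizes the two images $T_{us_k}(E_k)$ and $T_{us_k}(E_\ell)$ as the asserted Lusztig root vectors; parts (\ref{lemma, C.2, 1b}), (\ref{lemma, C.2, 1c}) are then upgraded to all $k$ by the obvious induction, exactly as in the $B_n$ computation. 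Part (\ref{lemma, C.2, 1f}) ($[2]_q X_{1,1} = [X_{1,2},E_1]$) is handled the same way by isolating $T_{s_1}$, invoking $[2]_q T_{s_n s_{n-1}}(E_n) = [[E_n,E_{n-1}],E_{n-1}]$ at the central $s_n s_{n-1}s_n$ block, precisely as in the proof of part (\ref{lemma, B.2, 1h}) of Lemma (\ref{lemma, B.2, 1}).

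The remaining parts (\ref{lemma, C.2, 1h}), (\ref{lemma, C.2, 1i}), (\ref{lemma, C.2, 1j}) are derived from the earlier parts by $q$-bracket manipulations rather than by re-expanding $T_w$. For part (\ref{lemma, C.2, 1j}), iterating parts (\ref{lemma, C.2, 1a}) and (\ref{lemma, C.2, 1b}) gives $X_{1,-k} = \mathbf{E}_{2,1,3,4,\dots,k-1}$ (as in (\ref{B.2, as nested q-commutators, b})); since $E_1$ commutes with $E_3,\dots,E_n$, this string may be rearranged to $\mathbf{E}_{2,3,\dots,k-1,1} = [X_{2,-k},E_1]$, mirroring (\ref{B.2, 3a}). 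For part (\ref{lemma, C.2, 1h}), one isolates the central $s_n s_{n-1}s_n$ block to write $[2]_q X_{2,2}$ as a nested $q$-commutator and then collapses it via the $q$-Jacobi identity (\ref{q-Jacobi identity}), part (\ref{lemma, C.2, 1g}), and the vanishing relations (\ref{C.2, L-S, 4}), just as in the proof of part (\ref{lemma, B.2, 4g}) of Lemma (\ref{lemma, B.2, 4}). Part (\ref{lemma, C.2, 1i}) then follows from parts (\ref{lemma, C.2, 1g}), (\ref{lemma, C.2, 1h}) and (\ref{C.2, L-S, 4}) by the same $q$-Jacobi/$q$-Leibniz argument that establishes part (\ref{lemma, B.2, 4i}) of Lemma (\ref{lemma, B.2, 4}). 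I do not expect a conceptual obstacle here; the main difficulty is purely bookkeeping—verifying that the reduced expression (\ref{reduced expression, C.2}) is compatible with the stated convex order so that the intermediate $T_w(E_i)$ are literally the claimed root vectors, and being careful to use the type $C_n$ (rather than type $B_n$) braid-symmetry identities at every step involving the long simple root $\alpha_n$.
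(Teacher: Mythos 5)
Your overall plan matches the paper's proof: most of parts (\ref{lemma, C.2, 1a})--(\ref{lemma, C.2, 1g}) and part (\ref{lemma, C.2, 1j}) are indeed established there by exactly the kind of direct $T_w$-splitting computations you describe (and the paper explicitly notes that (\ref{lemma, C.2, 1a})--(\ref{lemma, C.2, 1d}), (\ref{lemma, C.2, 1j}) go exactly as in Lemma (\ref{lemma, B.2, 1}) and (\ref{B.2, 3a})). For part (\ref{lemma, C.2, 1h}), your ``isolate the central block, write as a nested $q$-commutator, then substitute via (\ref{lemma, C.2, 1g})'' is essentially the paper's computation; but note the references you add to the $q$-Jacobi identity and to (\ref{C.2, L-S, 4}) are spurious, and your model proof (\ref{lemma, B.2, 4g}) is in fact a pure braid computation, not a $q$-Jacobi argument.

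Where the proposal genuinely diverges from the paper — and where the details don't close cleanly — is part (\ref{lemma, C.2, 1i}). The paper proves $X_{1,2}=[X_{1,-n},X_{2,n}]$ by another direct braid-symmetry computation, writing $X_{1,2}=T_{w^{(2n-5)}}T_{s_n}T_{s_{n-2}}(E_{n-1})$ and recognizing $T_{s_n}(E_{n-1})=T_{s_n}T_{s_{n-2}s_{n-1}s_n}(E_{n-2})$, which immediately yields the bracket of two Lusztig root vectors. You instead propose to imitate the $q$-Jacobi derivation of (\ref{lemma, B.2, 4i}), citing only (\ref{lemma, C.2, 1g}), (\ref{lemma, C.2, 1h}) and (\ref{C.2, L-S, 4}). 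This does not transfer verbatim: in the $B_n$ proof the key simplification is $\langle e_2,e_n\rangle=0$, which kills the ``extra'' term in the $q$-Jacobi identity, and one also needs (\ref{B.2, 3b}), $X_{1,0}=[X_{1,-n},E_n]$. In type $C_n$ the relevant inner product is $\langle e_2-e_n,2e_n\rangle=-2$, so expanding $[X_{1,-n},[X_{2,-n},E_n]]$ produces three nonzero terms, including $[X_{1,-n},X_{2,-n}]$ (which is a nonzero scalar times $X_{1,-n}X_{2,-n}$ by (\ref{C.2, L-S, 1}), not zero) and $[X_{1,-n},E_n]$ (which you would first have to identify as $X_{1,n}$, an identity not in the lemma and not derivable from the parts you cite — it requires (\ref{lemma, C.2, 1d}), (\ref{lemma, C.2, 1g}) and (\ref{lemma, C.2, 1j}) plus a short $q$-Jacobi argument). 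Even once those ingredients are in place, the expansion gives a relation \emph{between} $[X_{1,-n},X_{2,n}]$ and $[X_{2,-n},X_{1,n}]$, not an isolated formula for $[X_{1,-n},X_{2,n}]$; to finish one must combine it with a second relation coming from $[2]_qX_{1,2}=[2]_q[X_{2,2},E_1]=[[X_{2,-n},X_{2,n}],E_1]$ (i.e. from (\ref{lemma, C.2, 1e}) and (\ref{lemma, C.2, 1h}) plus (\ref{lemma, C.2, 1d}) and (\ref{lemma, C.2, 1j})) and solve the resulting $2\times 2$ system. So the route you sketch for (\ref{lemma, C.2, 1i}) can be made to work, but it needs substantially more inputs than you list, has none of the automatic cancellations that make the $B_n$ argument short, and is considerably heavier than the paper's one-line braid computation. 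I would either spell out this longer derivation explicitly or, better, just prove (\ref{lemma, C.2, 1i}) directly by a braid-symmetry computation as the paper does.
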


\begin{proof}

    Parts (\ref{lemma, C.2, 1a}) - (\ref{lemma, C.2, 1d}) can be proved using
    the same techniques in the proof of Lemma (\ref{lemma, B.2, 1}).

    Suppose $n > 2$, and let $w = w_{0, 0, 2, 0, n - 2} \in W(C_n)$. For an
    integer $k$ with $1 \leq k \leq \ell(w) = 4n - 5$, let $w^{(k)}$ be Weyl
    group element $s_{i_1} \cdots s_{i_k}$ obtained by multiplying together the
    first $k$ simple reflections $s_{i_1}, s_{i_2} ,\cdots, s_{i_k}$ in the
    reduced expression (\ref{reduced expression, C.2}) of $w$.

    We prove parts (\ref{lemma, C.2, 1e}), (\ref{lemma, C.2, 1g}), and
    (\ref{lemma, C.2, 1i}) via direct computations,
    \begingroup
    \allowdisplaybreaks
        \begin{align*}
            X_{2, n} &= T_{w^{(2n - 1)}} (E_{n - 2}) =
            T_{w^{(n - 3)}} T_{s_{n - 1} s_n} T_{s_1 \cdots s_n} (E_{n - 2})
            \\
            &= T_{w^{(n - 3)}} T_{s_{n - 1} s_n} (E_{n - 1}) = T_{w^{(n -
            3)}} \left( [E_{n - 1}, E_n] \right)
            \\
            &= [T_{w^{(n - 3)}} (E_{n - 1}), T_{w^{(n - 3)}} (E_n)] = [X_{2,
            -n}, E_n],
            \\
            \\
            X_{1, 2} &= T_{w^{(2n - 3)}} (E_{n - 1})
            = T_{w^{2n - 4}} T_{s_n} (E_{n - 1})
            = T_{w^{(2n - 4)}} ([E_n, E_{n - 1}])
            \\
            &= [T_{w^{(2n - 4)}} (E_n), T_{w^{2n - 4}} (E_{n - 1}) ]
            = [X_{2, 2}, E_1],
            \\
            \\
            X_{1, 2} &= T_{w^{(2n - 3)}} (E_{n - 1})
            = T_{w^{(2n - 5)}} T_{s_n} T_{s_{n - 2}} (E_{n - 1})
            = T_{w^{(2n - 5)}} T_{s_n} \left( [E_{n - 2},  E_{n - 1}] \right)
            \\
            &= T_{w^{(2n - 5)}} \left( [T_{s_n} (E_{n - 2}), T_{s_n} (E_{n -
            1})] \right) = T_{w^{(2n - 5)}} \left( [E_{n - 2}, T_{s_n} (E_{n -
            1})] \right)
            \\
            &= T_{w^{(2n - 5)}} \left( [E_{n - 2}, T_{s_n} T_{s_{n - 2} s_{n -
            1} s_n} (E_{n - 2})] \right)
            \\
            &= [T_{w^{(2n - 5)}} (E_{n - 2}), T_{w^{(2n - 5)}} T_{s_n} T_{s_{n
            - 2} s_{n - 1} s_n} (E_{n - 2})]
            \\
            &= [T_{w^{(2n - 5)}} (E_{n - 2}), T_{w^{(2n - 1)}} (E_{n - 2})] =
            [X_{1, -n}, X_{2, n}].
        \end{align*}
    \endgroup
    Using the identity $X_{1, 2} = [X_{2, 2}, E_1]$ (\ref{lemma, C.2, 1e}), we
    prove part (\ref{lemma, C.2, 1f}),
    \[
        \begin{split}
            [2]_q X_{1, 1} &= [2]_q T_{w^{(2n - 2)}} (E_n) = [2]_q T_{w^{(2n -
            4)}} T_{s_n s_{n - 1}} (E_n)
            \\
            &= T_{w^{(2n - 4)}} ([[E_n, E_{n -1}], E_{n - 1}])
            \\
            &= [[ T_{w^{(2n - 4)}} (E_n), T_{w^{(2n - 4)}}
            (E_{n - 1})], T_{w^{(2n - 4)}} (E_{n - 1})]
            = [[X_{2, 2}, E_1], E_1]
            \\
            &= [X_{1, 2}, E_1].
        \end{split}
    \]
    Using the identity $X_{2, n} = [X_{2, -n}, E_n]$ (\ref{lemma, C.2, 1g}), we
    prove part (\ref{lemma, C.2, 1h}),
    \[
        \begin{split}
            [2]_q X_{2, 2} &= [2]_q T_{w^{(2n - 4)}} (E_n) = [2]_q T_{w^{(n -
            3)}} T_{s_{n - 1}} T_{s_1 \cdots s_{n - 2}} (E_n)
            \\
            &= [2]_q T_{w^{(n - 3)}} T_{s_{n - 1}} (E_n) = T_{w^{(n - 3)}}
            \left( [E_{n - 1}, [E_{n - 1}, E_n]] \right)
            \\
            &= [T_{w^{(n - 3)}} (E_{n - 1}), [T_{w^{(n - 3)}} (E_{n - 1}),
            T_{w^{(n - 3)}} (E_n)]] = [X_{2, - n}, [X_{2, -n}, E_n]]
            \\
            &= [X_{2, -n}, X_{2, n}].
        \end{split}
    \]
    Finally, we prove part (\ref{lemma, C.2, 1j}) using the same techniques in
    proving (\ref{B.2, 3a}).

\end{proof}

Analogous to (\ref{B.2, as nested q-commutators, a}) and (\ref{B.2, as nested
q-commutators, b}), we have
\begin{equation}
    \label{C.2, as nested q-commutators} X_{2, -k} = \mathbf{E}_{2, \dots, k -
    1}, \hspace{10pt} X_{1, -k} = \mathbf{E}_{2, 1, 3, 4 \dots, k - 1},
    \hspace{10pt} (\text{for }3 \leq k \leq n).
\end{equation}

\begin{lemma}

    \label{lemma, C.2, 2}

    Let $X_{i, \pm k}$ be as in (\ref{C.2, Lusztig root vectors}). Then

    \begin{enumerate}

        \item \label{lemma, C.2, 2.1} For $3 \leq k < j \leq n$,

            \begin{enumerate}

                \item \label{lemma, C.2, 2.1a} $[X_{2, -k}, X_{1, j}] =
                    \widehat{q}\, X_{1, -k} X_{2, j}$,

                \item \label{lemma, C.2, 2.1b} $[X_{2, -k}, X_{1, -j}] =
                    \widehat{q} X_{2, -j} X_{1, -k}$.

            \end{enumerate}

        \item \label{lemma, C.2, 2.2} For $3 \leq k \leq n$,

        \begin{enumerate}

            \item \label{lemma, C.2, 2.2b} $[X_{2, -k}, X_{1, 2}] =
                \widehat{q}[2]_q X_{1, -k} X_{2, 2}$,

            \item \label{lemma, C.2, 2.2c} $[X_{2, -k}, X_{2, k}] = (-q)^{n -
                k} [2]_q X_{2, 2} + \widehat{q} \sum_{k < j \leq n} (-q)^{j - k
                - 1} X_{2, -j} X_{2, j}$,

            \item \label{lemma, C.2, 2.2d} $[X_{2, -k}, X_{1, k}] = \widehat{q}
                X_{1, -k} X_{2, k} - (-q)^{n - k - 1} X_{1, 2} \\
                \phantom{\hspace{25mm}}+ \widehat{q}  \sum_{k < j \leq n}
                (-q)^{j - k - 1} \left(X_{2, -j} X_{1, j} - \widehat{q} X_{1,
                -j} X_{2, j} \right)$.

        \end{enumerate}

    \end{enumerate}

\end{lemma}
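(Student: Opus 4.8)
The plan is to establish this lemma by the same chain of inductive arguments used to prove Lemma (\ref{lemma, B.2, 2}), since the two statements are structurally identical. The only substantive difference is that $\alpha_1$ is a \emph{short} simple root in type $C_n$ (so $q_1 = q$, hence $\widehat{q_1} = \widehat{q}$, and the relevant pairings $\langle e_2 \pm e_k,\, \alpha_1 \rangle$ equal $-1$ rather than $-2$), while the long-root vector $X_{2, 2} = X_{2e_2}$ now plays the role that the product $X_{2, 0}^2$ of two short-root vectors played in type $B_n$. These changes account for the factors of $[2]_q$ and the powers of $(-q)$, in place of $(-q_1)$, appearing on the right-hand sides. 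Throughout, the engine is the $q$-Jacobi identity (\ref{q-Jacobi identity}) together with the $q$-Leibniz identity (\ref{q-Leibniz identity}), and every computation collapses onto relations already recorded in Lemmas (\ref{lemma, C.2, 0}) and (\ref{lemma, C.2, 1}) and onto the nested $q$-commutator expansions (\ref{C.2, as nested q-commutators}).

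First I would prove (\ref{lemma, C.2, 2.1a}) by induction on $j$, base case $j = n$: expand $X_{1, n} = [X_{2, n}, E_1]$ (Lemma (\ref{lemma, C.2, 1}), part (\ref{lemma, C.2, 1d})), apply the $q$-Jacobi identity, discard $[X_{2, -k}, X_{2, n}]$ and $[X_{1, -k}, X_{2, n}]$ which vanish by (\ref{C.2, L-S, 3}), and use $[X_{2, -k}, E_1] = X_{1, -k}$ (part (\ref{lemma, C.2, 1j})). For the inductive step $k < j < n$, use $X_{1, j} = [X_{1, j + 1}, E_j]$ (part (\ref{lemma, C.2, 1c})), the $q$-Jacobi identity, the vanishing $[X_{2, -k}, E_j] = [X_{1, -k}, E_j] = 0$ (immediate from $X_{2, -k} = \mathbf{E}_{2, \dots, k - 1}$, $X_{1, -k} = \mathbf{E}_{2, 1, 3, \dots, k - 1}$ in (\ref{C.2, as nested q-commutators}) and the fact that $E_j$ commutes with $E_1, \dots, E_{k - 1}$), then the induction hypothesis, the $q$-Leibniz identity, and $X_{2, j} = [X_{2, j + 1}, E_j]$. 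Part (\ref{lemma, C.2, 2.1b}) is handled the same way, inducting on $j - k$ with base case $j = k + 1$ and using $X_{1, -(k + 1)} = [X_{1, -k}, E_k]$ (part (\ref{lemma, C.2, 1b})) together with $[X_{2, -k}, X_{1, -k}] = [X_{2, -(k + 1)}, X_{1, -k}] = 0$ from (\ref{C.2, L-S, 1}).

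Next I would prove (\ref{lemma, C.2, 2.2b}) directly from $X_{1, 2} = [X_{2, 2}, E_1]$ (part (\ref{lemma, C.2, 1e})) and the $q$-Jacobi identity, killing $[X_{2, -k}, X_{2, 2}]$ and $[X_{1, -k}, X_{2, 2}]$ via (\ref{C.2, L-S, 4}) and using $[X_{2, -k}, E_1] = X_{1, -k}$; here the factor $q^2 - q^{-2} = \widehat{q}[2]_q$ arises because $\langle \deg X_{2, 2},\, \alpha_1 \rangle = -2$, which is exactly what produces the $[2]_q$. Then (\ref{lemma, C.2, 2.2c}) follows by downward induction on $k$, the base case $k = n$ being $[X_{2, -n}, X_{2, n}] = [2]_q X_{2, 2}$ from part (\ref{lemma, C.2, 1h}), and the step using $X_{2, k} = [X_{2, k + 1}, E_k]$, the $q$-Jacobi identity, $[X_{2, -k}, X_{2, k + 1}] = 0$ from (\ref{C.2, L-S, 3}), and $[X_{2, -k}, E_k] = X_{2, -(k + 1)}$. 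Finally (\ref{lemma, C.2, 2.2d}) is proved by downward induction on $k$ as well: the base case $k = n$ combines $X_{1, n} = [X_{2, n}, E_1]$, the $q$-Jacobi identity, the relation $[X_{2, -n}, X_{2, n}] = [2]_q X_{2, 2}$, and the identities $[X_{2, 2}, E_1] = X_{1, 2}$ and $[X_{1, -n}, X_{2, n}] = X_{1, 2}$ (parts (\ref{lemma, C.2, 1e}) and (\ref{lemma, C.2, 1i})); the inductive step uses $X_{1, k} = [X_{1, k + 1}, E_k]$, the $q$-Jacobi and $q$-Leibniz identities, $[X_{2, -k}, E_k] = X_{2, -(k + 1)}$, $[X_{1, -k}, E_k] = X_{1, -(k + 1)}$, $[X_{2, k + 1}, E_k] = X_{2, k}$, and part (\ref{lemma, C.2, 2.1a}) already proved, then invokes the induction hypothesis for $[X_{2, -(k + 1)}, X_{1, k + 1}]$.

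There is no conceptual obstacle: each step reduces, via the $q$-Jacobi and $q$-Leibniz identities, onto relations already in hand. The only real care is in tracking the $q$-exponents — which differ from those in Lemma (\ref{lemma, B.2, 2}) precisely because $\alpha_1$ is short in type $C_n$ — and in verifying that the coefficients collected through each induction telescope into the closed forms $(-q)^{n - k}$, $(-q)^{j - k - 1}$, and $(-q)^{n - k - 1}$. I expect the induction for (\ref{lemma, C.2, 2.2d}) to be the most error-prone step, since three distinct families of ordered monomials — $X_{2, -j} X_{1, j}$, $X_{1, -j} X_{2, j}$, and $X_{1, 2}$ — must be carried separately through the recursion; I would cross-check it against the corresponding computation in the proof of Lemma (\ref{lemma, B.2, 2}).
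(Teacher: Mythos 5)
Your proposal is correct, and most of it matches the paper's proof step for step (parts (\ref{lemma, C.2, 2.1a}), (\ref{lemma, C.2, 2.1b}), (\ref{lemma, C.2, 2.2c}), and the inductive step of (\ref{lemma, C.2, 2.2d}) all follow the same inductions, keyed off the same relations in Lemmas (\ref{lemma, C.2, 0}) and (\ref{lemma, C.2, 1})). The one genuine divergence is part (\ref{lemma, C.2, 2.2b}): you substitute $X_{1,2} = [X_{2,2}, E_1]$ (part (\ref{lemma, C.2, 1e})) and kill everything but the $q$-Leibniz remainder term, while the paper instead substitutes $X_{1,2} = [X_{1,-n}, X_{2,n}]$ (part (\ref{lemma, C.2, 1i})) and then invokes (\ref{lemma, C.2, 2.1b}). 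Your route is the cleaner one: the paper's choice forces a case split ($k < n$ versus $k = n$) because (\ref{lemma, C.2, 2.1b}) requires $k < j$, whereas your substitution works uniformly for all $3 \le k \le n$, since $[X_{2,-k}, X_{2,2}] = [X_{1,-k}, X_{2,2}] = 0$ holds without restriction by (\ref{C.2, L-S, 4}), and the pairing $\langle 2e_2, \alpha_1 \rangle = -2$ produces the $[2]_q$ on the nose as you note. (Your base case for (\ref{lemma, C.2, 2.2d}) is also organized slightly differently --- you expand $[X_{2,-n}, [X_{2,n}, E_1]]$ directly, while the paper expands $[2]_q X_{1,2} = [[X_{2,-n}, X_{2,n}], E_1]$ and isolates $[X_{2,-n}, X_{1,n}]$ --- but those are the same computation read backwards, using identical ingredients.)
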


\begin{proof}

    Parts (\ref{lemma, C.2, 2.1a}) and (\ref{lemma, C.2, 2.1b}) can be proved
    using the same steps in the proof of parts (\ref{lemma, B.2, 2.1a}) and
    (\ref{lemma, B.2, 2.1b}) of Lemma (\ref{lemma, B.2, 2}).  These particular
    parts of the proof use the identities in (\ref{C.2, L-S, 1}), (\ref{C.2,
    L-S, 3}), and (\ref{C.2, as nested q-commutators}), as well as parts
    (\ref{lemma, C.2, 1b}), (\ref{lemma, C.2, 1c}), \ref{lemma, C.2, 1d}), and
    (\ref{lemma, C.2, 1j}) of Lemma (\ref{lemma, C.2, 1}),

    To prove (\ref{lemma, C.2, 2.2b}), begin by using the identity $X_{1, 2} =
    [X_{1, -n}, X_{2, n}]$ from part (\ref{lemma, C.2, 1i}) of Lemma
    (\ref{lemma, C.2, 2}) and the $q$-Jacobi identity (\ref{q-Jacobi identity})
    to get
    \[
        [X_{2, -k},\! X_{1, 2}] \!=\! [[X_{2, -k},\! X_{1, -n}], X_{2, n}] - q
        [[X_{2, -k},\! X_{2, n}],\! X_{1, -n}] + \widehat{q} [X_{2, -k},\!
        X_{2, n}] X_{1, -n}.
    \]

    Assume first $k < n$. Using the trivial $q$-commutator identity $[X_{2,
    -k}, X_{2, n}] = 0$ (\ref{C.2, L-S, 3}) in conjunction with $[X_{2, -k},
    X_{1, -n}] = \widehat{q} X_{2, -n} X_{1, -k}$ (part (\ref{lemma, C.2,
    2.1b}) of this lemma) give us $[X_{2, -k}, X_{1, 2}] = \widehat{q}[X_{2,
    -n} X_{1, -k}, X_{2, n}]$.  Next, apply the $q$-Leibniz identity
    (\ref{q-Leibniz identity}) to get $[X_{2, -k}, X_{1, 2}] =
    \widehat{q}\left(X_{2, -n} [X_{1, -k}, X_{2, n}] + [X_{2, -n}, X_{2, n}]
    X_{1, -k}\right)$.  The identities $[X_{1, -k}, X_{2, n}] = 0$ and $[2]_q
    X_{2, 2} = [X_{2, -n}, X_{2, n}]$ from (\ref{C.2, L-S, 3} and part
    (\ref{lemma, C.2, 1h}) of Lemma (\ref{lemma, C.2, 1}), respectively, imply
    $[X_{2, -k}, X_{1, 2}] = \widehat{q}[2]_q X_{2, 2} X_{1, -k}$.  Finally,
    applying $[X_{1, -k}, X_{2, 2}] = 0$ (\ref{C.2, L-S, 4}) gives us $[X_{2,
    -k}, X_{1, 2}] = \widehat{q}[2]_q X_{1, -k} X_{2, 2}$, and this establishes
    (\ref{lemma, C.2, 2.2b}) in the case when $k < n$.

    Now suppose $k = n$. From above, we have
    \[
        [X_{2, -n},\! X_{1, 2}] \!=\! [[X_{2, -n},\! X_{1, -n}],\! X_{2, n}] -
        q [[X_{2, -n},\! X_{2, n}],\! X_{1, -n}] + \widehat{q} [X_{2, -n},\!
        X_{2, n}] X_{1, -n}.
    \]
    Use the identity $[2]_q X_{2, 2} = [X_{2, -n}, X_{2, n}]$ from part
    (\ref{lemma, C.2, 1h}) of Lemma (\ref{lemma, C.2, 1}) to get
    \[
        [X_{2, -n}, X_{1, 2}] = [[X_{2, -n}, X_{1, -n}], X_{2, n}] - q[2]_q
        [X_{2, 2}, X_{1, -n}] + \widehat{q} [2]_q X_{2, 2} X_{1, -n}.
    \]
    Finally, applying the identities $[X_{2, -n}, X_{1, -n}] = 0$ and $[X_{1,
    -n}, X_{2, 2}] = 0$ from (\ref{C.2, L-S, 1}) and (\ref{C.2, L-S, 4}),
    respectively, gives us $[X_{2, -n}, X_{1, 2}] = \widehat{q} [2]_q X_{1, -n}
    X_{2, 2}$.

    We prove part (\ref{lemma, C.2, 2.2c}) by induction. The base case $j = n$
    has already been established in part (\ref{lemma, C.2, 1h}) of Lemma
    \ref{lemma, C.2, 1}). The inductive step is exactly like the B.2 case
    (refer to the proof of Lemma (\ref{lemma, B.2, 2})).  Using parts
    (\ref{lemma, C.2, 1b}) and \ref{lemma, C.2, 1c} of Lemma (\ref{lemma, C.2,
    1}) as well as (\ref{C.2, L-S, 3})), we get $[X_{2, -k}, X_{2, k}] = - q
    [X_{2, -(k + 1)}, X_{2, k + 1}] + \widehat{q} X_{2, -(k + 1)} X_{2, k +
    1}$, and the result follows by induction.

    We also use induction to prove (\ref{lemma, C.2, 2.2d}). Starting with the
    base case $j = n $, we use the identities $X_{1, 2} = [X_{2, 2}, E_1]$ and
    $[2]_q X_{2, 2} = [X_{2, -n}, X_{2, n}]$ from parts (\ref{lemma, C.2, 1e})
    and (\ref{lemma, C.2, 1h}) of Lemma (\ref{lemma, C.2, 1}), respectively,
    and the $q$-Jacobi identity (\ref{q-Jacobi identity}) to obtain
    \[
        \begin{split}
            [2]_q X_{1, 2} &= [2]_q [X_{2,2}, E_1] = [[ X_{2, -n}, X_{2, n}],
            E_1]
            \\
            &= [X_{2, -n}, [X_{2, n}, E_1]] + q [[X_{2, -n}, E_1], X_{2, n}] -
            \widehat{q} [X_{2, -n}, E_1] X_{2, n}.
        \end{split}
    \]
    Next, apply the identities $X_{1, n} = [X_{2, n}, E_1]$ and $X_{1, -n} =
    [X_{2, -n}, E_1]$ from parts (\ref{lemma, C.2, 1d}) and (\ref{lemma, C.2,
    1j}) of Lemma (\ref{lemma, C.2, 1}) to get
    \[
        [2]_q X_{1, 2} = [X_{2, -n}, X_{1, n}] + q [X_{1, -n}, X_{2, n}] -
        \widehat{q} X_{1, -n} X_{2, n}.
    \]
    Since $[X_{1, -n}, X_{2, n}] = X_{1, 2}$ (part (\ref{lemma, C.2, 1i}) of
    Lemma (\ref{lemma, C.2, 1})), we have
    \[
        [2]_q X_{1, 2} = [X_{2, -n}, X_{1, n}] + q X_{1, 2} - \widehat{q} X_{1,
        -n} X_{2, n}.
    \]
    This is equivalent to $[X_{2, -n}, X_{1, n}] = \widehat{q} X_{1, -n} X_{2,
    n} + q^{-1} X_{1, 2}$, and this establishes the case when $j = n$.  Now
    assume $3 \leq k < n$.  Using the same steps in the proof of part
    (\ref{lemma, B.2, 2.2d}) of Lemma (\ref{lemma, B.2, 2}), except now use the
    identities of parts (\ref{lemma, C.2, 1b}) and (\ref{lemma, C.2, 1c}) of
    Lemma (\ref{lemma, C.2, 1}) and part (\ref{lemma, C.2, 2.1a}) of this
    lemma, we obtain
    \[
        \begin{split}
            [X_{2, -k}, X_{1, k}] &= \widehat{q} \left( X_{1, -k} X_{2,
            k} + q^{-1} X_{1, -(k + 1)} X_{2, k + 1} \right)
            \\
            &\hspace{5mm}- q [X_{2, -(k + 1)}, X_{1, k + 1}] +
            \widehat{q} X_{2, -(k + 1)} X_{1, k + 1},
        \end{split}
    \]
    and the result follows by the induction hypothesis.

\end{proof}

Now we are ready to prove ${\mathcal N}(\llbracket w_{0, 0, 2, 0, n -
2}\rrbracket] = 3$.

\begin{theorem}

    Let $w = w_{0, 0, 2, 0, n - 2} \in W(C_n)$, where $n > 2$.  Then ${\mathcal
    N}\left( \llbracket w \rrbracket \right) = 3$.

\end{theorem}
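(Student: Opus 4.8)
The plan is to follow the strategy already used for Case B.2, exploiting that $e_2-e_3$ and $e_1+e_3$ are respectively the first and last roots of $w = w_{0,0,2,0,n-2}$ with respect to the convex order displayed above. Consequently the corresponding Lusztig root vectors are $X_{2,-3}$ and $X_{1,3}$, and $\mathcal{N}(\llbracket w \rrbracket)$ equals the smallest $p\in\mathbb{N}$ with $\left(\operatorname{ad}_q X_{2,-3}\right)^p(X_{1,3}) = 0$. So the whole computation reduces to iterating $\operatorname{ad}_q(X_{2,-3})$ on $X_{1,3}$ and reading off the first power that annihilates it.

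First I would invoke Lemma (\ref{lemma, C.2, 2}), part (\ref{lemma, C.2, 2.2d}), specialized to $k=3$, which expresses $[X_{2,-3},X_{1,3}]$ as a $\mathbb{K}$-linear combination of ordered monomials in the Lusztig root vectors (the monomial $X_{1,-3}X_{2,3}$ with coefficient $\widehat{q}$, the monomials $X_{2,-j}X_{1,j}$ and $X_{1,-j}X_{2,j}$ for $3<j\le n$, and $X_{1,2}$). Since ordered monomials form a basis of $\mathcal{U}_q^+[w]$ and the coefficient $\widehat{q}$ of $X_{1,-3}X_{2,3}$ is nonzero, this already gives $\mathcal{N}(\llbracket w \rrbracket)>1$. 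Next I would apply $\operatorname{ad}_q(X_{2,-3})$ a second time: Lemma (\ref{lemma, C.2, 0}) shows $X_{2,-3}$ $q$-commutes with $X_{1,-3}$, with $X_{2,\pm j}$ for $j>3$, and with $X_{2,2}$, while parts (\ref{lemma, C.2, 2.1a}), (\ref{lemma, C.2, 2.1b}), (\ref{lemma, C.2, 2.2b}), (\ref{lemma, C.2, 2.2c}) of Lemma (\ref{lemma, C.2, 2}) supply the remaining, nontrivial, commutation relations of $X_{2,-3}$ with $X_{2,3}$, with $X_{1,\pm j}$ ($j>3$), and with $X_{1,2}$. Applying the $q$-Leibniz identity (\ref{q-Leibniz identity}) term by term to the expression for $[X_{2,-3},X_{1,3}]$ and then reordering via the relations in Lemma (\ref{lemma, C.2, 0}) yields $[X_{2,-3},[X_{2,-3},X_{1,3}]]$ as a linear combination of ordered monomials; exactly as in Case B.2 the surviving terms are those proportional to $X_{2,-j}X_{1,-3}X_{2,j}$ for $3<j\le n$ and to $X_{1,-3}X_{2,2}^2$, with coefficients that are the evident $q$-analogues of the B.2 coefficients (here with $q_1$ replaced by $q$) and in particular nonzero. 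Hence $\mathcal{N}(\llbracket w \rrbracket)>2$.

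Finally I would note that every Lusztig root vector occurring in the resulting expression for $[X_{2,-3},[X_{2,-3},X_{1,3}]]$ — namely $X_{1,-3}$, $X_{2,\pm j}$ with $j>3$, and $X_{2,2}$ — $q$-commutes with $X_{2,-3}$ by Lemma (\ref{lemma, C.2, 0}), so a further application of the $q$-Leibniz identity (\ref{q-Leibniz identity}) shows $X_{2,-3}$ $q$-commutes with $[X_{2,-3},[X_{2,-3},X_{1,3}]]$ and therefore $\left(\operatorname{ad}_q X_{2,-3}\right)^3(X_{1,3}) = 0$. Combined with the two preceding paragraphs this gives $\mathcal{N}(\llbracket w \rrbracket) = 3$. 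The only real obstacle is bookkeeping: the second application of $\operatorname{ad}_q(X_{2,-3})$ generates many terms through the $q$-Leibniz identity and several monomial reorderings, and one must verify that no cancellation kills the leading coefficients — but once the expansion is organized in parallel with the B.2 computation, no new idea is needed, and this step is routine.
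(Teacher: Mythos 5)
Your proposal mirrors the paper's proof exactly: invoke Lemma (\ref{lemma, C.2, 2}), part (\ref{lemma, C.2, 2.2d}), with $k=3$, expand $\left(\operatorname{ad}_q X_{2,-3}\right)^2(X_{1,3})$ via the $q$-Leibniz identity and the commutation relations of Lemmas (\ref{lemma, C.2, 0}) and (\ref{lemma, C.2, 2}), and conclude from the fact that every Lusztig root vector in the resulting expression $q$-commutes with $X_{2,-3}$ that the third application vanishes. One bookkeeping slip, harmless here: Lemma (\ref{lemma, C.2, 2}), part (\ref{lemma, C.2, 2.2b}), gives $[X_{2,-3},X_{1,2}]=\widehat{q}\,[2]_q\,X_{1,-3}X_{2,2}$ with a single factor of $X_{2,2}$ (unlike the $B_n$ analogue, where $X_{2,0}^2$ appears), so the surviving term in $C_n$ is proportional to $X_{1,-3}X_{2,2}$, not $X_{1,-3}X_{2,2}^2$ as you wrote---but since $X_{2,2}$ $q$-commutes with $X_{2,-3}$ in any case, the conclusion is unaffected.
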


\begin{proof}

    From part (\ref{lemma, C.2, 2.2d}) of Lemma (\ref{lemma, C.2, 2}), we have
    \[
        \begin{split}
            [X_{2, -3}, X_{1, 3}] &= \widehat{q} X_{1, -3} X_{2, 3} - (-q)^{n -
            4} X_{1, 2}
            \\
            &\hspace{10pt} + \widehat{q} \sum_{3 < j \leq n} (-q)^{j - 4}
            \left(X_{2, -j} X_{1, j} - \widehat{q} X_{1, -j} X_{2, j} \right).
        \end{split}
    \]
    The identities in  (\ref{C.2, L-S, 1}), (\ref{C.2, L-S, 2}), and (\ref{C.2,
    L-S, 3}) tell us that $X_{2, -3}$ $q$-commutes with $X_{1, -3}$, $X_{2,
    -j}$ ($j > 3$), and $X_{2, j}$ ($j > 3$), respectively, whereas parts
    (\ref{lemma, C.2, 2.1a}), (\ref{lemma, C.2, 2.1b}), (\ref{lemma, C.2,
    2.2b}), and (\ref{lemma, C.2, 2.2c}) of Lemma (\ref{lemma, C.2, 2}) give
    the $q$-commutation relations involving the $q$-commutators $[X_{2, -3},
    X_{1, j}]$ ($j > 3$), $[X_{2, -3}, X_{1, -j}]$ ($j > 3$), $[X_{2, -3},
    X_{1, 2}]$, and $[X_{2, -3}, X_{2, 3}]$, respectively. Applying these
    relations gives us
    \[
        \begin{split}
            [X_{2, -3}, [X_{2, -3}, X_{1, 3}]] &= (q + q^{-1}) \,
            \widehat{q}^{\,2} \left( \sum_{3 < j \leq n} (-q)^{j - 4} X_{2, -j}
            X_{1, -3} X_{2, j} \right)
            \\
            &\hspace{5mm}+ (-q)^{n - 3} (q + q^{-1})^2 \, \widehat{q} \, X_{1,
            -3} X_{2, 2}.
        \end{split}
    \]
    Finally, by (\ref{C.2, L-S, 1}), (\ref{C.2, L-S, 2}), (\ref{C.2, L-S, 3}),
    and (\ref{C.2, L-S, 4}), each of $[X_{2, -3}, X_{1, -3}]$, $[X_{2, -3},
    X_{2, -j}]$ ($j > 3$), $[X_{2, -3}, X_{2, j}]$ ($j > 3$), and $[X_{2, -3},
    X_{2, 2}]$ equals $0$. That is to say, $X_{2, -3}$ $q$-commutes with each
    Lusztig root vector appearing on the right-hand side of the equation above,
    and thus,
    \[
        [X_{2, -3}, [X_{2, -3}, [X_{2, -3}, X_{1, 3}]]] = 0.
    \]
    Since $\left( \operatorname{ad}_q(X_{2, -3}) \right)^2 (X_{1, 3}) \neq 0$
    and $\left( \operatorname{ad}_q(X_{2, -3}) \right)^3 (X_{1, 3}) = 0$,
    ${\mathcal N}(\llbracket w \rrbracket) = 3$.

\end{proof}

\subsection{Nilpotency index, Case C.4}

Let $\mathfrak{g}$ be the complex Lie algebra of type $C_n$, where $n > 3$.
Consider the Weyl group element $w := w_{0, 1, 2, 1, n - 4} \in W(C_n)$ and the
reduced expression
\begin{equation}
    \label{C.4, reduced expression}
    w = (s_3 \cdots s_{n-1})(s_2 \cdots s_{n-2})s_ns_{n-1}s_ns_1(s_{n-2} \cdots
    s_2) (s_{n-1} \cdots s_3)
\end{equation}
Let $\Delta_w$ be the set of roots of $w$. The reduced expression for $w$
corresponds to the following convex order on $\Delta_w$:
\begin{equation}
    \label{convex order, C.4}
    \begin{split}
        &e_3 - e_4 \prec e_3 - e_5 \prec \cdots \prec e_3 - e_n \prec e_2 - e_4
        \prec e_2 - e_5 \prec \cdots
        \\
        &\cdots \prec e_2 - e_n \prec 2e_3 \prec e_2 + e_3 \prec 2e_2 \prec e_1
        - e_4 \prec e_3 + e_n \prec e_3 - e_{n - 1} \prec \cdots
        \\
        &\cdots \prec e_3 + e_5 \prec e_1 + e_3 \prec e_2 + e_n \prec e_2 +
        e_{n - 1} \prec \cdots \prec e_2 + e_5 \prec e_1 + e_2.
    \end{split}
\end{equation}
Let ${\mathcal U}_q^+[w]$ be the quantum Schubert cell algebra associated to
$w$. The Lusztig root vectors will be denoted by
\begin{equation}
    \label{C.4, Lusztig root vectors}
    X_{m, -4}, X_{r, s}, X_{i, i}, X_{i, \pm k} \in {\mathcal U}_q^+[w]
\end{equation}
where $1 < i < 4 < k \leq n$, $1 \leq m < 4$, and $1 \leq r < s < 4$.

The following theorem gives us the nilpotency index of $\llbracket w_{0, 1, 2,
1, n - 4} \rrbracket$ for the case when $n = 4$.

\begin{theorem}

    Let $w = w_{0, 1, 2, 1, 0} \in W(C_4)$. Then ${\mathcal N}(\llbracket w
    \rrbracket) = 3$.

\end{theorem}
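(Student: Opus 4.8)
\noindent\emph{Proof proposal.} The plan is to carry out the same reduction-to-a-longer-element strategy used for the corresponding $B_4$ statement, but in type $C_4$. First I would replace $w = w_{0,1,2,1,0} \in W(C_4)$, which has the reduced expression $s_3 s_2 s_4 s_3 s_4 s_1 s_2 s_3$, by the length-$9$ element $u := w s_2$, with reduced expression $u = s_3 s_2 s_4 s_3 s_4 s_1 s_2 s_3 s_2$. Since the first eight letters of this expression for $u$ form the chosen reduced expression for $w$, the first eight Lusztig root vectors $x_1,\dots,x_8$ of ${\mathcal U}_q^+[u]$ coincide with the Lusztig root vectors of ${\mathcal U}_q^+[w]$; in particular $x_1 = E_3 = X_{\beta_1}$ and $x_8 = T_{ws_3}(E_3)$. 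As $\llbracket w \rrbracket = (w,3,3)$, this gives ${\mathcal N}(\llbracket w \rrbracket) = {\mathcal N}(x_1, x_8)$, so it suffices to show $\left(\operatorname{ad}_q x_1\right)^2(x_8) \neq 0$ and $\left(\operatorname{ad}_q x_1\right)^3(x_8) = 0$. (Here $\alpha_1,\alpha_2,\alpha_3$ are short and $\alpha_4 = 2e_4$ is long, so $q_i = q$ for $i < 4$ and $q_4 = q^2$, the reverse of the $B_4$ situation; the long Lusztig root vectors in ${\mathcal U}_q^+[u]$ are $x_3 = X_{2e_3}$ and $x_5 = X_{2e_2}$.)

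Next I would obtain an explicit presentation of ${\mathcal U}_q^+[u]$. Of the $\binom{9}{2} = 36$ pairs of Lusztig root vectors, Proposition (\ref{LS corollary}) forces $[x_i,x_j] = 0$ for all but a handful. The remaining nontrivial $q$-commutation relations are found exactly as in the $B_4$ case: using the recursive algorithm in the proof of Proposition (\ref{as nested E}), write each $x_i$ either as a Chevalley generator or as a scalar multiple of a nested $q$-commutator $\mathbf{E}_{i_1,\dots,i_m}$ of Chevalley generators; this already produces the ``atomic'' relations of the form $x_r = c[x_i,x_j]$ ($r < \min(i,j)$ or, after reindexing, with $i<j$) needed to launch the algorithms $\texttt{R(i,j,r,s)}$ and $\texttt{L(i,j,r,s)}$ of Section (\ref{section, L(i,j,r,s) and R(i,j,r,s)}). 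A few of the more basic nontrivial relations (those expressing $[x_1, \mathbf{E}]$ for the relevant nested commutators $\mathbf{E}$) follow directly from the $q$-Jacobi (\ref{q-Jacobi identity}) and $q$-Leibniz (\ref{q-Leibniz identity}) identities together with the trivial relations; the rest are obtained from a suitable sequence of calls to $\texttt{R}$ and $\texttt{L}$, with the scalar coefficients adjusted for type $C_4$. Care must be taken to respect the dependency order in which relations are established, since each algorithm call may only use relations already in hand.

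With the full presentation of ${\mathcal U}_q^+[u]$ available, the conclusion is a routine, if lengthy, computation. One expresses $[x_1,x_8]$ as a $\mathbb{K}$-linear combination of ordered monomials and observes it is nonzero — since ordered monomials form a basis — so ${\mathcal N}(x_1,x_8) > 1$; applies $\operatorname{ad}_q x_1$ once more and checks that $[x_1,[x_1,x_8]]$ is again a nonzero combination of ordered monomials, so ${\mathcal N}(x_1,x_8) > 2$; and finally applies $\operatorname{ad}_q x_1$ a third time, using the $q$-Leibniz identity and the fact (read off from Proposition (\ref{LS corollary})) that $x_1$ $q$-commutes with every Lusztig root vector occurring in the last expression, to conclude $[x_1,[x_1,[x_1,x_8]]] = 0$. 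Hence ${\mathcal N}(x_1,x_8) = {\mathcal N}(\llbracket w \rrbracket) = 3$.

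The main obstacle, as in the $B_4$ case, is bookkeeping rather than conceptual difficulty: correctly determining which of the $36$ commutators are nontrivial, pinning down the exact scalar factors (the powers of $q$, the $[2]_q$, $\widehat q$, and $\widehat{q_1}$ factors) in each relation, and sequencing the $\texttt{R}$/$\texttt{L}$ calls so that each one only invokes previously established relations. Because these steps are mechanical but error-prone, one would want to cross-check the final three $\operatorname{ad}_q$-iterations both by hand and, for safety, by computer.
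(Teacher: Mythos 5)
Your proposal follows exactly the same route as the paper's proof: you pass to the length-$9$ element $u = w s_2$ with the identical reduced expression $s_3 s_2 s_4 s_3 s_4 s_1 s_2 s_3 s_2$, identify $\mathcal{N}(\llbracket w \rrbracket)$ with the nilpotency index of $\operatorname{ad}_q(x_1)$ acting on $x_8$, build the presentation of $\mathcal{U}_q^+[u]$ by writing each $x_i$ as nested $q$-commutators of Chevalley generators and then running $\texttt{R}/\texttt{L}$, and read off the answer from $[x_1,[x_1,x_8]] \neq 0$ and $[x_1,[x_1,[x_1,x_8]]] = 0$. The only thing missing relative to the paper is the explicit list of nontrivial relations and the specific sequence of $\texttt{R}$ calls (the paper gives $\texttt{R(1,4,3,9)}$, $\texttt{R(1,5,4,9)}$, etc.), but you correctly flag this as the remaining bookkeeping.
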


\begin{proof}

    Consider the Weyl group element $u = ws_2 \in W(C_4)$. With respect to the
    reduced expression $u = s_3s_2s_4s_3s_4s_1s_2s_3s_2$, let $x_1,\cdots, x_9$
    be the Lusztig root vectors in the algebra ${\mathcal U}_q^+[u]$. Among the
    ${9 \choose 2}$ ($=36$) pairs, $x_i$ and $x_j$, with $i < j$, the
    Levenorskii-Soibelmann straightening formulas tell us $23$ of them satisfy
    the trivial $q$-commutation relation $[x_i, x_j] = 0$. We will show the
    $13$ remaining relations are
    \[
        \begin{split}
            &[x_1, x_4] = \widehat{q} [2]_q x_2 x_3, \hspace{10pt}
            [x_1, x_5] = \widehat{q} x_2 x_4, \hspace{10pt}
            [x_1, x_8] = \widehat{q} \left(x_2 x_7 + q^{-1} x_4 x_6 \right),
            \\
            &[x_1, x_7] = \widehat{q} [2]_q x_3 x_6, \hspace{10pt}
            [x_2, x_7] = \widehat{q} x_4 x_6, \hspace{10pt}
            [x_1, x_9] = x_2, \hspace{10pt}
            [x_3, x_9] = x_4,
            \\
            &[x_2, x_8] = \widehat{q} [2]_q x_5 x_6, \hspace{10pt}
            [x_3, x_8] = \widehat{q} x_4 x_7, \hspace{10pt}
            [x_4, x_9] = [2]_q x_5,
            \\
            &[x_4, x_8] = \widehat{q} [2]_q x_5 x_7, \hspace{10pt}
            [x_3, x_5] = \frac{q \widehat{q}}{[2]_q} x_4^2, \hspace{10pt}
            [x_7, x_9] = x_8.
        \end{split}
    \]
    Following the proof of Theorem (\ref{as nested E}), we write each Lusztig
    root vector as a multiple of a Chevalley generator, or as a multiple of
    nested $q$-commutators of Chevalley generators. This gives us $x_1 = E_3$,
    $x_2 = \mathbf{E}_{3,2}$, $[2]_q x_3 = [E_3, \mathbf{E}_{3,4}]$, $x_4 =
    [\mathbf{E}_{3,2}, \mathbf{E}_{3,4}]$, $[2]_q^2 x_5 = [[[E_3,
    \mathbf{E}_{3, 4}], E_2], E_2]$, $x_6 = \mathbf{E}_{3,2,1}$, $x_7 =
    [\mathbf{E}_{3,2,1}, \mathbf{E}_{3,4}]$, $x_8 = [[\mathbf{E}_{3,2,1},
    \mathbf{E}_{3,4}], E_2]$, and $x_9 = E_2$. Hence, $x_2 = [x_1, x_9]$ and
    $x_8 = [x_7, x_9]$.

    We compute
    \[
        \begin{split}
            x_4 &= T_{s_3 s_2 s_4} (E_3) = T_{s_3 s_2} T_{s_4} (E_3) = T_{s_3
            s_2} (\mathbf{E}_{4, 3}) = [T_{s_3 s_2} (E_4), T_{s_3 s_2} (E_3)]
            \\
            &= [T_{s_3} T_{s_2} (E_4), E_2] = [T_{s_3} (E_4), E_2] =
            \frac{1}{[2]_q} [ [E_3, \mathbf{E}_{3, 4}], E_2],
        \end{split}
    \]
    and thus $[2]_q [\mathbf{E}_{3, 2}, \mathbf{E}_{3, 4}] = [ [E_3,
    \mathbf{E}_{3, 4}], E_2]$. Therefore, we get $[2]_q x_5 = [x_4, x_9]$ and
    $x_4 = [x_3, x_9]$.

    Next, use the identity $x_7 = [\mathbf{E}_{3, 2, 1}, \mathbf{E}_{3, 4}]$
    and the $q$-Jacobi identity (\ref{q-Jacobi identity}) to get
    \[
        [x_1, x_7] = [ [ E_3, \mathbf{E}_{3, 2, 1}], \mathbf{E}_{3, 4}] - q [
        [E_3, \mathbf{E}_{3, 4}], \mathbf{E}_{3, 2, 1}] + \widehat{q}\, [E_3,
        \mathbf{E}_{3, 4}] \mathbf{E}_{3, 2, 1}.
    \]
    Making the substitutions $x_1 = E_3$, $x_6 = \mathbf{E}_{3, 2, 1}$, $[2]_q
    x_3 = [E_3, \mathbf{E}_{3, 4}]$ gives us
    \[
        [x_1, x_7] = [[x_1, x_6], \mathbf{E}_{3, 4}] - q[2]_q [x_3, x_6] +
        \widehat{q} [2]_q x_3 x_6.
    \]
    Since $[x_1, x_6] = [x_3, x_6] = 0$, we get $[x_1, x_7] = \widehat{q} [2]_q
    x_3 x_6$.

    Similarly, from the identity $x_7 = [\mathbf{E}_{3, 2, 1}, \mathbf{E}_{3,
    4}]$ and the $q$-Jacobi identity, we have
    \[
        [x_2, x_7] = [ [ \mathbf{E}_{3, 2}, \mathbf{E}_{3, 2, 1}],
        \mathbf{E}_{3, 4}] - q [ [ \mathbf{E}_{3, 2}, \mathbf{E}_{3, 4}],
        \mathbf{E}_{3, 2, 1}] + \widehat{q} [ \mathbf{E}_{3, 2}, \mathbf{E}_{3,
        4}] \mathbf{E}_{3, 2, 1}.
    \]
    Using $x_2 = \mathbf{E}_{3, 2}$, $x_6 = \mathbf{E}_{3, 2, 1}$, and $x_6 =
    \mathbf{E}_{3, 2, 1}$ gives us
    \[
        [x_2, x_7] = [ [ x_2, x_6], \mathbf{E}_{3, 4}] - q [ x_4, x_6] +
        \widehat{q} x_4 x_6.
    \]
    Since $[x_2, x_6] = [x_4, x_6] = 0$, we get $[x_2, x_7] = \widehat{q} x_4
    x_6$.

    The remaining relations can be found by applying the following sequence of
    \texttt{R(i,j,r,s)}'s (recall the definition of \texttt{R(i,j,r,s)} in
    Section (\ref{section, L(i,j,r,s) and R(i,j,r,s)})): \texttt{R(1,4,3,9)},
    \texttt{R(1,5,4,9)}, \texttt{R(3,5,4,9)}, \texttt{R(3,8,7,9)},
    \texttt{R(4,8,7,9)}, \texttt{R(1,8,7,9)}, \texttt{R(2,8,7,9)}. With the
    commutation relations among the Lusztig root vectors at hand, it is routine
    to verify that $[x_1, [x_1, x_8]] = \widehat{q}^{\,2} [2]_q^2 x_2 x_3 x_6$
    and $[x_1, [x_1, [x_1, x_8]]] = 0$. Thus, ${\mathcal N}(x_1, x_8) = 3$.
    Equivalently, ${\mathcal N}(\llbracket w \rrbracket) = 3$.

\end{proof}

Now suppose $n \geq 5$. We note that identical versions of Lemmas (\ref{lemma,
B.4, 1}), (\ref{lemma, B.4, 2}), and (\ref{lemma, B.4, ad^2 = 0}), hold in this
situation (i.e. type $C_n$). We forgo copying them here, and instead mention
that the type $C_n$ analogs can be proved using the same steps as those found
in the type $B_n$ setting. The only modification needed is in the proof of
Lemma (\ref{lemma, B.4, ad^2 = 0}); in proving the type $C_n$ version, we
simply need to replace each instance of the monomial $X_{3, 0}^2$ with $X_{3,
3}$. The following theorem is proved using steps identical to those in the
proof of Theorem (\ref{theorem, B.4}).

\begin{theorem}

    \label{theorem, C.4}

    Suppose $n \geq 5$, and let $w = w_{0, 1, 2, 1, n - 4} \in W(C_n)$. Then
    ${\mathcal N}(\llbracket w \rrbracket) = 3$.

\end{theorem}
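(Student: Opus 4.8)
The plan is to mirror the argument used for Theorem (\ref{theorem, B.4}) (the type $B_n$, $n \geq 5$ case) verbatim, since the commutation structure in type $C_n$ is essentially identical, with only the bookkeeping of the "doubled" root vectors differing. First I would invoke the remark preceding the statement: for $n \geq 5$ in type $C_n$, the analogs of Lemmas (\ref{lemma, B.4, 1}), (\ref{lemma, B.4, 2}), and (\ref{lemma, B.4, ad^2 = 0}) hold, where every occurrence of the monomial $X_{3,0}^2$ (the square of a short root vector in $B_n$) is replaced by the single long root vector $X_{3,3}$ (corresponding to $2e_3$ in type $C_n$). The convex order (\ref{convex order, C.4}) places $e_3 - e_4$ first and $e_1 + e_2$ last among the roots of $w = w_{0,1,2,1,n-4}$, so that computing ${\mathcal N}(\llbracket w \rrbracket)$ amounts to finding the least $p$ with $\left(\operatorname{ad}_q(X_{3,-4})\right)^p(X_{1,2}) = 0$.

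Next I would carry out the two-part computation exactly as in the proof of Theorem (\ref{theorem, B.4}). Using the $C_n$-analog of part (\ref{lemma, B.4, 1a}), $X_{1,2} = [X_{2,5}, \mathbf{E}_{3,2,1,4}]$, together with the $q$-Jacobi and $q$-Leibniz identities and the $C_n$-analogs of the commutation relations in Lemma (\ref{lemma, B.4, 2}), one expands $[X_{3,-4}, X_{1,2}]$ as a linear combination of ordered monomials. Applying $\operatorname{ad}_q(X_{3,-4})$ a second time and reducing modulo the subalgebra $\mathbf{U}_5$ (the $C_n$-analog of the subalgebra appearing in Lemma (\ref{lemma, B.4, ad^2 = 0})), one obtains
\[
    [X_{3,-4}, [X_{3,-4}, X_{1,2}]] \equiv (q_1 + q_1^{-1}) \widehat{q_1}^3 X_{3,-5} X_{2,-4} X_{1,-4} X_{3,5} \pmod{\mathbf{U}_5},
\]
which is nonzero since ordered monomials form a basis; hence ${\mathcal N}(\llbracket w \rrbracket) > 2$. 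Finally, applying $\operatorname{ad}_q(X_{3,-4})$ once more to the explicit expansion of $\left(\operatorname{ad}_q(X_{3,-4})\right)^2(X_{1,2})$ and using that $\left(\operatorname{ad}_q(X_{3,-4})\right)^2$ annihilates both $X_{1,3}$ and $[X_{3,-5}, X_{2,5}]$ (the $C_n$-analogs of parts (\ref{lemma, B.4, ad^2 = 0, part 2}) and (\ref{lemma, B.4, ad^2 = 0, part 4})), together with the $C_n$-analog of the relations (\ref{B.4, 0}) stating $X_{3,-4}$ $q$-commutes with $X_{2,-4}$, $X_{3,5}$, $X_{1,-4}$, and $X_{3,-5}$, the $q$-Leibniz identity forces $\left(\operatorname{ad}_q(X_{3,-4})\right)^3(X_{1,2}) = 0$. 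Therefore ${\mathcal N}(\llbracket w \rrbracket) = 3$.

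The main obstacle is not any single computation but rather verifying carefully that every step of the type $B_n$ argument survives the substitution $X_{3,0}^2 \leftrightarrow X_{3,3}$. Concretely, one must check that in the $C_n$ setting the Levendorskii--Soibelmann straightening formulas for the relevant $q$-commutators (e.g. $[X_{3,-k}, X_{3,k}]$) produce $X_{3,3}$ where the $B_n$ version produced $\frac{q\widehat{q}}{[2]_q} X_{3,0}^2$, and that the scalar coefficients track correctly through the $q$-Jacobi expansions; the degree bookkeeping (via Proposition (\ref{LS corollary})) that kills the cross terms involves $2e_i$ roots rather than $e_i$ roots, but the conclusion that $X_{3,-4}$ $q$-commutes with all root vectors of the form $X_{2,\pm j}$, $X_{3,\pm j}$ ($j > 4$), $X_{2,-4}$, $X_{1,-4}$, and $X_{3,3}$ goes through unchanged. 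Once that verification is done, the argument is completely parallel, and the $q$-Leibniz identity gives the vanishing of the third iterate just as in type $B_n$.
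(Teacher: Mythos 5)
Your proposal is correct and takes essentially the same approach as the paper: the paper's proof of Theorem~(\ref{theorem, C.4}) is precisely a reduction to the type $B_n$ argument (Theorem~(\ref{theorem, B.4})) with the substitution $X_{3,0}^2 \mapsto X_{3,3}$ in Lemma~(\ref{lemma, B.4, ad^2 = 0}), which you reproduce and elaborate. The one bookkeeping point you flag yourself is real but harmless: in type $C_n$ the simple root $\alpha_1$ is short, so the displayed scalar should read with $q$ in place of $q_1 = q^2$, but nonvanishing still follows from $q$ not being a root of unity.
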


\subsection{Nilpotency index, Case D.2}

Let $\mathfrak{g}$ be the complex Lie algebra of type $D_n$, where $n > 3$.
Consider the Weyl group element $w := w_{0, 0, 2, 0, n - 2}^+ \in W(D_n)$.  In
this section, we prove ${\mathcal N} \left( \llbracket w \rrbracket \right) =
3$. To begin, we consider the following reduced expression,
\begin{equation}
    \label{D.2, reduced expression}
    w = (s_2 \cdots s_{n - 1}) (s_1 \cdots s_{n - 2}) s_n
    (s_{n - 2} \cdots s_1) (s_{n - 1} \cdots s_2).
\end{equation}
The set of radical roots is
\[
    \Delta_w = \left\{ e_i \pm e_j : 1 \leq i \leq 2 < j \leq n \right\} \cup
    \left\{ e_1 + e_2 \right\}.
\]
The reduced expression (\ref{D.2, reduced expression}) corresponds to the
convex order on $\Delta_w$ given by
\[
    \begin{split}
        &e_2 - e_3 \prec \cdots \prec e_2 - e_n \prec e_1 - e_3 \prec \cdots
        \prec e_1 - e_n \prec e_1 + e_2 \prec e_2 + e_n \prec \cdots
        \\
        & \cdots \prec e_2 + e_3 \prec e_1 + e_n \prec \cdots \prec e_1 + e_3.
    \end{split}
\]
The quantum Schubert cell algebra associated to $w$ will be denoted by
${\mathcal U}_q[w]$.  We denote the Lusztig root vectors $X_{e_i \pm e_j}$ by
\begin{equation}
    \label{Lusztig root vectors, D.2}
    X_{i, \pm j}.
\end{equation}
Since $e_{2, -3}$ and $e_{1, 3}$ are the first and last roots with respect to
the convex order, to show ${\mathcal N} \left( \llbracket w \rrbracket \right)
= 3$ means we need to prove $\left(\operatorname{ad}_q (X_{2, -3}) \right)^2
(X_{1, 3}) \neq 0$ and $\left(\operatorname{ad}_q (X_{2, -3}) \right)^3 (X_{1,
3}) = 0$.

\begin{lemma}

    Let $X_{i, \pm j}$ be as in (\ref{Lusztig root vectors, D.2}). Then

    \begin{align}
        \label{D.2, L-S, 0} &[X_{i, -n}, X_{i, n}] = 0, &(i = 1, 2),&
        \\
        \label{D.2, L-S, 1} &[X_{2, -k}, X_{1, -j}] = [X_{2, j}, X_{1, k}] = 0,
        &(j \leq k),&
        \\
        \label{D.2, L-S, 2} &[X_{i, -k}, X_{i, -j}] = [X_{i, j}, X_{i, k}] = 0,
        &(k < j \text{ and } i = 1, 2),&
        \\
        \label{D.2, L-S, 3} &[X_{i, -k}, X_{1, 2}] = [X_{1, 2}, X_{i, k}] = 0,
        &(k > 2 \text{ and } i = 1, 2),&
        \\
        \label{D.2, L-S, 4} &[X_{i, -k}, X_{i, j}] = [X_{1, -k}, X_{2, j}] = 0,
        &(j \neq k \text{ and } i = 1, 2).&
    \end{align}

\end{lemma}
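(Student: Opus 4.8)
The plan is to obtain all of (\ref{D.2, L-S, 0})--(\ref{D.2, L-S, 4}) directly from Proposition (\ref{LS corollary}): for each named pair of Lusztig root vectors $X_{\beta_i}, X_{\beta_j}$ with $\beta_i \prec \beta_j$ in the convex order displayed above, I will show that $\beta_i + \beta_j$ cannot be written as a nonnegative integral combination of the roots lying strictly between $\beta_i$ and $\beta_j$, whence $[X_{\beta_i}, X_{\beta_j}] = 0$. First I would record the block structure of the convex order on $\Delta_w$: reading left to right it is the first block $e_2 - e_3 \prec \cdots \prec e_2 - e_n$, the second block $e_1 - e_3 \prec \cdots \prec e_1 - e_n$, the single root $e_1 + e_2$, the third block $e_2 + e_n \prec \cdots \prec e_2 + e_3$, and the fourth block $e_1 + e_n \prec \cdots \prec e_1 + e_3$. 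So every root of $w$ has the form $\pm e_a \pm e_b$ whose "negative" index lies in $\{3,\dots,n\}$, and which contributes $+1$ to exactly one of $e_1, e_2$ (the root $e_1 + e_2$ contributing $+1$ to both, and never $-1$).

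The uniform mechanism for every case is a counting argument on the coefficients of $e_1$ and $e_2$. In any nonnegative combination of roots of $w$, the $e_1$-coefficient of the result equals the number of summands drawn from the second block, the fourth block, or $\{e_1 + e_2\}$, and similarly for $e_2$; since each target $\beta_i + \beta_j$ has $e_1$- and $e_2$-coefficients at most $2$, at most two summands of each type can occur. Comparing the $\{e_3,\dots,e_n\}$-components then leaves only a handful of candidate patterns to exclude. I would carry this out family by family: for (\ref{D.2, L-S, 0}) the target is $2e_i$ and the intermediate roots either all carry the other index $e_{3-i}$ (when $i=2$) or lie in $\{e_1 + e_2\}\cup(\text{third block})$ (when $i=1$), so the non-$e_i$ part cannot vanish; for (\ref{D.2, L-S, 2}) the target is $2e_i - e_k - e_j$ with $k < j$, while the only admissible summands are $e_i - e_m$ with $k < m < j$, whose negative indices are strictly between $k$ and $j$ and hence cannot reproduce $e_k$ or $e_j$; and for (\ref{D.2, L-S, 1}), (\ref{D.2, L-S, 3}), (\ref{D.2, L-S, 4}) the intermediate set straddles a block boundary or the root $e_1 + e_2$, and in each instance the hypothesized inequality on indices ($j \le k$, or $k > 2$, or $j \ne k$) is exactly what prevents the one or two admissible summands from matching the negative (or positive) indices prescribed by the target.

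The main obstacle is purely organizational: one must correctly determine, for each of the roughly dozen named commutators, which roots lie strictly between the two given ones -- this depends on whether the two roots sit in a common block (as in (\ref{D.2, L-S, 2})), in adjacent blocks, or on opposite sides of $e_1 + e_2$ -- and then check that the relevant index inequality kills the finitely many surviving coefficient patterns. Since no nontrivial straightening occurs (every listed commutator vanishes), no use of the $q$-Jacobi or $q$-Leibniz identities is required, and the lemma reduces entirely to this bookkeeping verification against Proposition (\ref{LS corollary}).
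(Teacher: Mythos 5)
Your proposal is correct and takes exactly the same approach as the paper, whose entire proof of this lemma is the one-line observation that it follows from Proposition~(\ref{LS corollary}). Your coefficient-counting sketch is a sound fleshing-out of that verification, with one small slip that does not affect the conclusion: in~(\ref{D.2, L-S, 3}) the condition $k>2$ is vacuous (always $k\ge 3$), and the actual obstruction there is a degree count rather than an index inequality --- for $i=1$ all intermediate roots carry $e_2$ while the target does not, and for $i=2$ matching the $e_1$- and $e_2$-coefficients forces three summands whose $\{e_3,\dots,e_n\}$-degree overshoots the target --- so the phrase ``one or two admissible summands'' and the appeal to the hypothesis do not quite describe that family.
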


\begin{proof}

    This follows from Proposition (\ref{LS corollary}).

\end{proof}

\begin{lemma}

    Let $X_{i, \pm k}$ be the Lusztig root vectors of the quantum Schubert cell
    algebra ${\mathcal U}_q[w]$, where $w = w_{0,0,2,0,n - 2}^+$. Then we have
    the following:

    \label{lemma, D.2, 1}

    \begin{enumerate}

        \item \label{lemma, D.2, 1a} $X_{2, -3} = E_2$ and $X_{1, -3} =
            \mathbf{E}_{2, 1}$,

        \item \label{lemma, D.2, 1b} $X_{i, -k} = [X_{i, -(k-1)}, E_{k-1}]$
            \hspace{3mm}(for $3 < k \leq n$ and  $i = 1, 2$),

        \item \label{lemma, D.2, 1c} $X_{i, n} = [X_{i, -(n-1)}, E_n]$ (for $i
            = 1, 2$),

        \item \label{lemma, D.2, 1d} $X_{i, k} = [X_{i, k + 1}, E_k]$
            \hspace{3mm} (for $3 \leq k < n$ and $i = 1, 2$),

        \item \label{lemma, D.2, 1e} $X_{1, n} = [X_{2, n}, E_1]$.

    \end{enumerate}

\end{lemma}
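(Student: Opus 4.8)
The plan is to derive all five identities straight from the definition (\ref{root vectors}) of the Lusztig root vectors attached to the reduced expression (\ref{D.2, reduced expression}), by the same ``peeling'' technique used in the proofs of Lemmas (\ref{lemma, B.2, 1}) and (\ref{lemma, C.2, 1}). For $1 \le k \le \ell(w) = 4n - 7$ let $w^{(k)}$ denote the product of the first $k$ simple reflections of (\ref{D.2, reduced expression}); reading off the convex order on $\Delta_w$ identifies each Lusztig root vector occurring in the statement with an explicit $T_{w^{(k)}}(E_m)$. The three facts I invoke repeatedly are Theorem (\ref{Jantzen, 8.20}) (so $T_v(E_m) = E_m$ whenever $v(\alpha_m) = \alpha_m$, and $T_v(E_m)$ is exactly $E_p$ whenever $v(\alpha_m) = \alpha_p$), the braid-symmetry identity $T_{s_a}(E_b) = [E_a, E_b]$ from Conventions (\ref{computational conventions}) valid for $\langle \alpha_a^\vee, \alpha_b \rangle = -1$ (the only off-diagonal value occurring in type $D_n$), and the fact that each $T_v$ is an algebra automorphism intertwining $q$-commutators, i.e.\ (\ref{orthogonality of Tw}). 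Beyond these the only combinatorial input is the Cartan data of $D_n$: $\langle \alpha_1^\vee, \alpha_j \rangle = 0$ for $j \ge 3$ and $\langle \alpha_n^\vee, \alpha_j \rangle = -\delta_{j, n-2}$, so that $s_1$ commutes with $s_3, \ldots, s_n$ and $s_n$ commutes with $s_1, \ldots, s_{n-3}$ and with $s_{n-1}$.

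Part (1) is immediate: $\alpha_2 = e_2 - e_3$ heads the convex order, so $X_{2,-3} = E_2$, while $X_{1,-3} = T_{s_2 s_3 \cdots s_{n-1}}(E_1) = T_{s_2}(E_1) = [E_2, E_1] = \mathbf{E}_{2,1}$ once the tail $s_3 \cdots s_{n-1}$, which fixes $\alpha_1$, is discarded. Parts (2) and (4) involve only the type-$A$ sub-diagram on $\{\alpha_2, \ldots, \alpha_{n-1}\}$, and the arguments are verbatim transcriptions of parts (\ref{lemma, B.2, 1b}) and (\ref{lemma, B.2, 1c}) of Lemma (\ref{lemma, B.2, 1}): split off the last reflection $s_{k-2}$ (resp.\ $s_k$) from the defining word, expand $T_{s_{k-2}}(E_{k-1}) = [E_{k-2}, E_{k-1}]$ (resp.\ $T_{s_k}(E_{k-1}) = [E_k, E_{k-1}]$), push the remaining $T_v$ through the $q$-commutator by (\ref{orthogonality of Tw}), and observe that $v$ now fixes $\alpha_{k-1}$. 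For parts (3) and (5) the fork at node $n$ enters: writing $X_{i,n} = T_{w^{(k)}}(E_m)$ and factoring off the prefix $s_2 \cdots s_{n-3}$ (which fixes $\alpha_n$), I reduce the remaining word, and the key point is that once the central $s_n$ has been commuted past the reflections it commutes with, the remaining sub-word sends $\alpha_{n-2}$ to a simple root; Theorem (\ref{Jantzen, 8.20}) then pins the stray scalar to $1$ and gives $T_{w^{(k)}}(E_m) = T_{s_2 \cdots s_{n-3}}([E_{n-2}, E_n]) = [X_{i,-(n-1)}, E_n]$ (for part (3)), and one further block of the word yields $X_{1,n} = [X_{2,n}, E_1]$ after the analogous manipulation with $s_1$ (for part (5)).

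The only genuinely delicate point --- and where I expect to spend real effort --- is the length bookkeeping concealed in parts (3) and (5). Conventions (\ref{computational conventions}) require every factorization $T_w = T_{w_1} \cdots T_{w_r}$ to be length-additive, and once $s_n$ has been threaded past several reflections the relevant words are no longer manifestly reduced; the freely cancelled copies of $s_2$ (for instance $(s_{n-3} \cdots s_2)(s_2 \cdots s_{n-1}) = s_{n-2} s_{n-1}$) have to be tracked precisely. I would settle this either by realizing the elements as signed permutation matrices and reading lengths off inversion counts, as in the proofs of Propositions (\ref{X, prop}) and (\ref{second stage reductions, ABCD}), or by verifying at each step that the simple reflection being peeled off is a genuine descent of the remaining word. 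Granting this, each of (1)--(5) collapses to a short chain of applications of (\ref{orthogonality of Tw}), the braid formulas, and Theorem (\ref{Jantzen, 8.20}), just as in the type $B$ and type $C$ lemmas.
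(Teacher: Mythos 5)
Your approach matches the paper's. The paper disposes of parts (1), (2), (4), (5) by saying they are proved just as in the corresponding parts of Lemma (\ref{lemma, B.2, 1}), and then writes out the one genuinely new computation, which is part (3). You do the same thing conceptually, and you correctly identify part (3), the one involving the trivalent node $n$, as the place where the length bookkeeping has to be done by hand.

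One concrete imprecision worth flagging: in your sketch of part (3) you write a single formula
$T_{w^{(k)}}(E_m) = T_{s_2 \cdots s_{n-3}}\left([E_{n-2}, E_n]\right) = [X_{i,-(n-1)}, E_n]$
as if it covers $i = 1$ and $i = 2$ at once, but the middle expression does not depend on $i$. In fact $T_{s_2 \cdots s_{n-3}}([E_{n-2}, E_n]) = [X_{2,-(n-1)}, E_n]$ because $T_{s_2\cdots s_{n-3}}(E_{n-2}) = X_{2,-(n-1)}$, so your factorization handles only the $i=2$ case (and it does work: writing $v = (s_2 \cdots s_{n-1})(s_1 \cdots s_{n-2}) s_n$ and $v_1 = s_2 \cdots s_{n-2}$, the element $u = v_1^{-1}v = s_{n-1}(s_1\cdots s_{n-2})s_n$ satisfies $u(\alpha_{n-2}) = \alpha_n$ and $\ell(u) = n$, so $\ell(v_1) + \ell(u) = \ell(v)$). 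For $i=1$ the generator at the relevant word position is $E_{n-1}$, not $E_{n-2}$, and a different prefix is needed; this is exactly what the paper's proof does, peeling off $z = (s_2\cdots s_{n-1})(s_1\cdots s_{n-4})$ and $s_{n-3}$ rather than $s_2\cdots s_{n-3}$ and $s_{n-2}$, so that the surviving $T_z(E_{n-3})$ is $X_{1,-(n-1)}$. You would certainly notice this the moment you tried to carry out the length check you explicitly flag as the delicate point, so this is an imprecision in the sketch rather than a flaw in the strategy, but it is the one spot where the $i=1$ and $i=2$ cases genuinely diverge and cannot be treated by a single uniform factoring.
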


\begin{proof}

    Parts (\ref{lemma, D.2, 1a}), (\ref{lemma, D.2, 1b}), (\ref{lemma, D.2,
    1d}), and (\ref{lemma, D.2, 1e}) can be proved in a manner similar to the
    proofs of parts (\ref{lemma, B.2, 1a}), (\ref{lemma, B.2, 1b}),
    (\ref{lemma, B.2, 1c}) and (\ref{lemma, B.2, 1e}) of Lemma (\ref{lemma,
    B.2, 1}), respectively.

    To prove part (\ref{lemma, D.2, 1c}), define $z :=(s_2 \cdots s_{n - 1})
    (s_1 \cdots s_{n - 4}) \in W(D_n)$. We obtain
    \[
        \begin{split}
            X_{1, n} &= T_{(s_2 \cdots s_{n - 1}) (s_1 \cdots s_{n - 2}) s_n
            (s_{n - 2} \cdots s_1)} (E_{n - 1}) = T_{z s_{n - 1} s_n s_{n - 3}}
            T_{(s_{n - 1} \cdots s_1) s_n} (E_{n - 1})
            \\
            &= T_{z s_{n - 1} s_n s_{n - 3}} (E_{n - 2}) = T_{z s_{n - 1} s_n}
            T_{s_{n - 3}} (E_{n - 2}) = T_{z s_{n - 1} s_n} \left( [E_{n - 3},
            E_{n - 2}] \right)
            \\
            &= [T_{z s_{n - 1} s_n} (E_{n - 3}), T_{z s_{n - 1} s_n} (E_{n -
            2})] = [T_z T_{s_{n - 1} s_n} (E_{n - 3}), E_n]
            \\
            &= [T_z (E_{n - 3}), E_n] = [X_{1, -(n - 1)}, E_n].
        \end{split}
    \]
    Similarly, we get $X_{2, n} = [X_{2, n - 1}, E_{n}]$.

\end{proof}

From Lemma (\ref{lemma, D.2, 1}), we can write each Lusztig root vector $X_{i,
\pm k}$ as a sequence of nested $q$-commutators involving Chevalley
generators.  Specifically, for $3 \leq k \leq n$,
\begin{align}
    \label{D.2, as nested q-commutators, a} &X_{2, -k} = \mathbf{E}_{2, \dots,
    k - 1},
    \\
    \label{D.2, as nested q-commutators, b} &X_{1,-k} = \mathbf{E}_{2, 1, 3, 4
    \dots, k - 1},
    \\
    \label{D.2, as nested q-commutators, c} &X_{2, k} = \mathbf{E}_{2, 3, \dots
    n - 2, n, n - 1, n - 2 \dots, k},
    \\
    \label{D.2, as nested q-commutators, d} &X_{1, k} = \mathbf{E}_{2, 1, 3,
    4,\dots, n - 2, n, n - 1, n - 2,\dots, k}.
\end{align}
In fact, the identities (\ref{D.2, as nested q-commutators, a}) - (\ref{D.2, as
nested q-commutators, d}) are equivalent to Lemma (\ref{lemma, D.2, 1}).

\begin{lemma}

    $ $

    \label{lemma, D.2, 2}

    \begin{enumerate}

        \item \label{lemma, D.2, 2.1} For all $3 \leq k < j \leq n$,

            \begin{enumerate}[(a)]

                \item \label{lemma, D.2, 2.1.a} $[X_{2, -k}, X_{1, -j}] =
                    \widehat{q} X_{2, -j} X_{1, -k}$,

                \item \label{lemma, D.2, 2.1.c} $[X_{2, -k}, X_{1, j}] =
                    \widehat{q} \,\, X_{1, -k} X_{2, j}$.

            \end{enumerate}

        \item \label{lemma, D.2, 2.3} For $3 \leq k \leq n$,

            \begin{enumerate}[(a)]

                \item \label{lemma, D.2, 2.3.a}$[X_{2,-k}, X_{1, k}] =
                    (-q)^{n+1-k} X_{1, 2} + \widehat{q} X_{1,-k} X_{2,k} \\
                    \phantom{\hspace{25mm}} +\widehat{q}\,\, \sum_{j > k}
                    (-q)^{j-k-1}   \left( X_{2,-j}X_{1,j} - \widehat{q}
                    X_{1,-j}X_{2,j} \right)$,

                \item \label{lemma, D.2, 2.3.b} $[X_{2,-k}, X_{2, k}] =
                    \widehat{q} \sum_{j > k} (-q)^{j - k - 1} X_{2,-j} X_{2,
                    j}$.

            \end{enumerate}

    \end{enumerate}

\end{lemma}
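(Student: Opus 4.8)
The plan is to prove Lemma \ref{lemma, D.2, 2} by induction, following closely the template established for the analogous statements in the type $B_n$ and $C_n$ cases (Lemmas \ref{lemma, B.2, 2} and \ref{lemma, C.2, 2}). The four identities split naturally into two groups. The identities (\ref{lemma, D.2, 2.1.a}) and (\ref{lemma, D.2, 2.1.c}) involve a difference $j-k$ (resp.\ a parameter $j$) and will be proved by downward induction, with base case $j = n$; the identities (\ref{lemma, D.2, 2.3.a}) and (\ref{lemma, D.2, 2.3.b}) are proved by downward induction on $k$, again with base case $k = n$.

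First I would establish the base cases. For (\ref{lemma, D.2, 2.1.c}) with $j = n$: use $X_{1, n} = [X_{2, n}, E_1]$ from part (\ref{lemma, D.2, 1e}) of Lemma (\ref{lemma, D.2, 1}), apply the $q$-Jacobi identity (\ref{q-Jacobi identity}), and then simplify using $[X_{2, -k}, X_{2, n}] = 0$ (from (\ref{D.2, L-S, 4})), $[X_{2, -k}, E_1] = X_{1, -k}$ (a consequence of (\ref{D.2, as nested q-commutators, a}), (\ref{D.2, as nested q-commutators, b}), exactly as in (\ref{B.2, 3a})), and $[X_{1, -k}, X_{2, n}] = 0$. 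For (\ref{lemma, D.2, 2.1.a}) with $j = k+1$: use $X_{1, -(k+1)} = [X_{1, -k}, E_k]$ (part (\ref{lemma, D.2, 1b})), the $q$-Jacobi identity, and the trivial commutators $[X_{2, -k}, X_{1, -k}] = 0$ and $[X_{2, -(k+1)}, X_{1, -k}] = 0$ from (\ref{D.2, L-S, 1}), together with $[X_{2, -k}, E_k] = X_{2, -(k+1)}$. For the base case $k = n$ of (\ref{lemma, D.2, 2.3.b}) the sum is empty, so the claim is just $[X_{2, -n}, X_{2, n}] = 0$, which is exactly (\ref{D.2, L-S, 0}). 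For the base case $k = n$ of (\ref{lemma, D.2, 2.3.a}), the sum is empty and one must show $[X_{2, -n}, X_{1, n}] = -q \, X_{1, 2} + \widehat{q}\, X_{1, -n} X_{2, n}$; this follows from $X_{1, n} = [X_{2, n}, E_1]$, the $q$-Jacobi identity, $[X_{2, -n}, X_{2, n}] = 0$, $[X_{2, -n}, E_1] = X_{1, -n}$, and the identity $[X_{1, -n}, X_{2, n}] = X_{1, 2}$ (which holds in type $D_n$ and can be proved as in part (\ref{lemma, B.2, 4i}) of Lemma (\ref{lemma, B.2, 4}), or is already available from the nested $q$-commutator descriptions (\ref{D.2, as nested q-commutators, b}) and (\ref{D.2, as nested q-commutators, c})).

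Next I would carry out the inductive steps. For $k < j < n$ in (\ref{lemma, D.2, 2.1.c}): apply $X_{1, j} = [X_{1, j+1}, E_j]$ (part (\ref{lemma, D.2, 1d})) and the $q$-Jacobi identity; since $E_j$ commutes with all of $E_1, \ldots, E_{k-1}$ and $X_{2, -k} = \mathbf{E}_{2, \ldots, k-1}$ by (\ref{D.2, as nested q-commutators, a}), we get $[X_{2, -k}, E_j] = 0$; apply the induction hypothesis to $[X_{2, -k}, X_{1, j+1}]$, then the $q$-Leibniz identity, using $[X_{1, -k}, E_j] = 0$ (from (\ref{D.2, as nested q-commutators, b})) and $[X_{2, j+1}, E_j] = X_{2, j}$. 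The inductive step for (\ref{lemma, D.2, 2.1.a}), (\ref{lemma, D.2, 2.3.a}), and (\ref{lemma, D.2, 2.3.b}) proceeds identically to the $C_n$ case (proof of Lemma (\ref{lemma, C.2, 2})), reducing each quantity to $-q$ times the corresponding quantity at $k+1$ plus a $\widehat{q}$-term, whence the telescoping sum. The key point making the $D_n$ computation cleaner than $B_n$ or $C_n$ is that there is no long root and no Lusztig root vector $X_{2, 2}$ or $X_{2, 0}$ to track — the ``square'' terms $X_{2, 0}^2$ (type $B_n$) or $X_{2, 2}$ (type $C_n$) simply do not appear, which is why the right-hand sides here are simpler.

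The main obstacle, such as it is, is bookkeeping rather than conceptual: one must be careful that the nested $q$-commutator presentations (\ref{D.2, as nested q-commutators, a})--(\ref{D.2, as nested q-commutators, d}) correctly imply the various ``commutes with $E_j$'' statements used to kill terms after applying $q$-Jacobi, and that the powers of $q$ and the signs $(-q)^{j-k-1}$ versus $(-q)^{n+1-k}$ are tracked consistently through the induction (note $q_i = q$ for all $i$ in the simply-laced type $D_n$, so $\widehat{q_1}$ is simply $\widehat{q}$, unlike in types $B_n$ and $C_n$). Since every trivial $q$-commutator needed is supplied by the preceding lemma and by Proposition (\ref{LS corollary}), and every structural $q$-commutator identity among the $X_{i, \pm k}$ and the $E_i$ is supplied by Lemma (\ref{lemma, D.2, 1}), the induction closes without any new input. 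Once Lemma (\ref{lemma, D.2, 2}) is in hand, the computation of ${\mathcal N}(\llbracket w \rrbracket) = 3$ for $w = w_{0,0,2,0,n-2}^+$ will go through exactly as in the $C_n$ case: $[X_{2,-3}, X_{1,3}] \neq 0$ gives ${\mathcal N} > 1$, a second application of $\operatorname{ad}_q(X_{2,-3})$ using the relations of Lemma (\ref{lemma, D.2, 2}) gives a nonzero answer so ${\mathcal N} > 2$, and a third application annihilates it because $X_{2,-3}$ $q$-commutes with every Lusztig root vector appearing in the result.
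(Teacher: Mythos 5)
Your proof is essentially correct and follows the same inductive skeleton as the paper, but the base case of part (2)(a) is handled differently. You verify $[X_{2,-n}, X_{1,n}] = -q\,X_{1,2} + \widehat{q}\,X_{1,-n}X_{2,n}$ by expanding $X_{1,n} = [X_{2,n}, E_1]$ (part (\ref{lemma, D.2, 1e}) of Lemma (\ref{lemma, D.2, 1})), which makes the argument uniform with the $B_n$ and $C_n$ cases. The paper instead expands $X_{1,n} = [X_{1,-(n-1)}, E_n]$ using the $D_n$-specific identity (\ref{lemma, D.2, 1c}), first proves the intermediate relation $[X_{1,-(n-1)}, X_{2,n-1}] = -q X_{1,2} + \widehat{q} X_{1,-n}X_{2,n}$, and then reduces to it. Both routes are valid and of comparable length, and both ultimately rest on the same key fact $[X_{1,-n}, X_{2,n}] = X_{1,2}$. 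Here your justification is a little loose: the cited analogue, part (\ref{lemma, B.2, 4i}) of Lemma (\ref{lemma, B.2, 4}), deduces $X_{1,2} = [X_{1,-n},X_{2,n}]$ from the type-$B_n$ relations $[2]_q X_{2,n} = [X_{2,0}, E_n]$ and $[2]_q X_{1,2} = [X_{2,0},X_{1,0}]$, which involve the root vector $X_{2,0}$ that has no counterpart in $D_n$; nor is the identity an immediate consequence of the nested $q$-commutator formulas (\ref{D.2, as nested q-commutators, b})--(\ref{D.2, as nested q-commutators, c}). The cleaner path, which the paper takes, is a direct Lusztig-symmetry computation in the style of part (\ref{lemma, C.2, 1i}) of Lemma (\ref{lemma, C.2, 1}): writing $u := (s_2\cdots s_{n-1})(s_1\cdots s_{n-3})$ one has $[X_{1,-n},X_{2,n}] = T_u\bigl([E_{n-2}, T_{s_{n-2}s_n}(E_{n-2})]\bigr) = T_{us_{n-2}}(E_n) = X_{1,2}$. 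With that gap patched, the rest of the argument --- the $j=n$ and $j=k+1$ base cases of (1), the telescoping recursions for (2), and the observation that the absence of $X_{2,0}$ and $X_{2,2}$ in type $D_n$ makes the right-hand sides simpler --- matches the paper and is sound.
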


\begin{proof}

    Parts (\ref{lemma, D.2, 2.1.a}) and (\ref{lemma, D.2, 2.1.c}) can be proved
    using the same techniques found in the proof of Lemma (\ref{lemma, B.2,
    2}).

    We next prove (\ref{lemma, D.2, 2.3.a}), but, as a preliminary result,
    first establish the commutation relation $[X_{1, -(n - 1)}, X_{2, n - 1}] =
    -q X_{1, 2} + \widehat{q} X_{1, -n} X_{2, n}$. For short, define the Weyl
    group element $u := (s_2 \cdots s_{n - 1}) (s_1 \cdots s_{n - 3}) \in W$.
    We compute
    \[
        \begin{split}
            [X_{1, -n}, X_{2, n}] &= [T_u(E_{n - 2}), T_{u s_{n - 2} s_n} (E_{n
            - 2})] = T_u \left( [E_{n - 2}, T_{s_{n - 2} s_n} (E_{n - 2}) ]
            \right)
            \\
            &= T_u \left( [E_{n - 2}, E_n] \right) = T_u T_{n - 2} (E_n) = T_{u
            s_{n - 2}} (E_n) = X_{1, 2}.
        \end{split}
    \]
    Next, use part (\ref{lemma, D.2, 1d}) of Lemma (\ref{lemma, D.2, 1}) to
    write $X_{2, n - 1} = [X_{2, n}, E_{n - 1}]$, and apply the $q$-Jacobi
    identity (\ref{q-Jacobi identity}) together with the relation $[X_{1, -(n -
    1)}, E_{n - 1}] = X_{1, -n}$ from Lemma (\ref{lemma, D.2, 1}), part
    (\ref{lemma, D.2, 1b}), to get
    \[
        [X_{1, -(n-1)}, X_{2, n - 1}] = [[X_{1,-(n-1)}, X_{2,n}], E_{n-1}] -q
        [X_{1, -n}, X_{2, n}] + \widehat{q} X_{1, -n} X_{2, n}.
    \]
    The $q$-commutator identities in (\ref{D.2, L-S, 4}) imply $[X_{1,-(n-1)},
    X_{2,n}] = 0$.  Hence, this simplifies to
    \[
        [X_{1, -(n-1)}, X_{2, n - 1}] = -q [X_{1, -n}, X_{2, n}] + \widehat{q}
        X_{1, -n} X_{2, n} = -q X_{1, 2} + \widehat{q} X_{1, -n} X_{2, n}.
    \]
    We will proceed with proving (\ref{lemma, D.2, 2.3.a}) by inducting on $k$,
    beginning with the base case $k = n$.  The identities in (\ref{D.2, as
    nested q-commutators, a}) and (\ref{D.2, as nested q-commutators, c}) tell
    us how the Lusztig root vectors $X_{2, -n}$ and $X_{2, n - 1}$ can be
    respectively written as sequences of nested $q$-commutators. In particular,
    we have $X_{2, -n} = \mathbf{E}_{2,\dots, n - 1}$ and $X_{2, n - 1} =
    \mathbf{E}_{2,3,\dots, n - 2, n, n - 1}$.  However, since the Chevalley
    generators $E_{n - 1}$ and $E_n$ commute, $X_{2, n - 1}$ can also be
    written as $X_{2, n - 1} = \mathbf{E}_{2,\dots, n}$. Hence, $X_{2, n - 1} =
    [X_{2, -n}, E_n]$.  By using the $q$-commutator identity $X_{1, n} = [X_{1,
    -(n - 1)}, E_n]$ from part (\ref{lemma, D.2, 1c}) of Lemma (\ref{lemma,
    D.2, 1}), and applying the $q$-Jacobi identity (\ref{q-Jacobi identity}),
    we get
    \[
        [X_{2, -n}, X_{1, n}] = [X_{2, -n}, X_{1, -(n - 1)}], E_n] -q [X_{2, n
        - 1}, X_{1, -(n - 1)}] +\widehat{q} X_{2, n - 1} X_{1, -(n - 1)}.
    \]
    However, we have $[X_{2, -n}, X_{1, -(n-1)}] = 0$ (\ref{D.2, L-S, 1}).
    Therefore,
    \[
        [X_{2,-n}, X_{1, n}] = [X_{1, -(n-1)}, X_{2, n - 1}] = -q X_{1,2} +
        \widehat{q} X_{1, -n} X_{2, n},
    \]
    and this completes the proof of (\ref{lemma, D.2, 2.3.a}) for the case when
    $k = n $.  Next, we proceed by induction. Assume now $k < n$. Apply the
    $q$-Jacobi identity to get
    \[
        [X_{2,-k}, X_{1, k}] = [[X_{2, -k}, X_{1,k+1}], E_k] - q [X_{2,
        -(k+1)}, X_{1, k + 1}] + \widehat{q} X_{2,-(k+1)} X_{1, k + 1}.
    \]
    Since $[X_{2, -k}, X_{1,k+1}] = \widehat{q}\,\,X_{1, -k} X_{2,k+1}$ by part
    (\ref{lemma, D.2, 2.1.c}) of this lemma,
    \[
        [X_{2,-k}, X_{1, k}] = \widehat{q} \,\, [ X_{1, -k} X_{2,k+1}, E_k] - q
        [X_{2, -(k+1)}, X_{1, k + 1}] + \widehat{q} X_{2,-(k+1)} X_{1, k + 1}.
    \]
    Finally, apply the $q$-Leibniz identity (\ref{q-Leibniz identity}) to
    obtain
    \[
        \begin{split}
            [X_{2,-k}, X_{1, k}] &=  \widehat{q} \left( X_{1, -k} X_{2,k} +
            q^{-1} X_{1, -(k+1)} X_{2,k+1} \right) \\ & \hspace{5mm} -q [X_{2,
            -(k+1)}, X_{1, k + 1}] + \widehat{q} X_{2,-(k+1)} X_{1, k + 1},
        \end{split}
    \]
    and the result follows by applying the induction hypothesis.

    We prove (\ref{lemma, D.2, 2.3.b}) also by induction.  The identities in
    (\ref{D.2, L-S, 0}) imply $[X_{2, -n}, X_{2, n}] = 0$. This establishes the
    base case $k = n$.  Next, for $3 \leq k < n$, we use the identities $X_{2,
    -(k + 1)} = [X_{2, -k}, E_k]$ and  $X_{2, k} = [X_{2, k + 1} E_k]$ found in
    Lemma (\ref{lemma, D.2, 1}), parts (\ref{lemma, D.2, 1a}) and (\ref{lemma,
    D.2, 1d}), respectively, and apply the $q$-Jacobi identity to get
    \[
        [X_{2, -k}, X_{2, k}] = [ [ X_{2, -k}, X_{2, k + 1}], E_k] -q [ X_{2,
        -(k+1)}, X_{2, k + 1}] + \widehat{q} X_{2, -(k+1)} X_{2, k + 1}.
    \]
    Since $[X_{2, -k}, X_{2, k + 1}] = 0$ (\ref{D.2, L-S, 4}),
    \[
        [X_{2, -k}, X_{2, k}] = -q [ X_{2, -(k+1)}, X_{2, k + 1}] + \widehat{q}
        X_{2, -(k+1)} X_{2, k + 1},
    \]
    and the result follows.

\end{proof}

Now we are ready to compute the nilpotency index ${\mathcal N}\left( \llbracket
w_{0, 0, 2, 0, n - 2}^+ \rrbracket \right)$.

\begin{theorem}

    \label{D.2, nilpotency index, main theorem}

    Let $w = w_{0, 0, 2, 0, n - 2}^+ \in W(D_n)$, where $n > 3$.  Then ${\mathcal
    N}\left( \llbracket w \rrbracket \right) = 3$.

\end{theorem}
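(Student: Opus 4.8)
The plan is to run the argument exactly as in the cases B.2 and C.2: since $e_2 - e_3$ and $e_1 + e_3$ are the first and last roots of $w$ in the convex order attached to the reduced expression (\ref{D.2, reduced expression}), the corresponding Lusztig root vectors are $X_{2,-3}$ and $X_{1,3}$, and hence ${\mathcal N}(\llbracket w \rrbracket)$ is precisely the nilpotency index of $\operatorname{ad}_q(X_{2,-3})$ acting on $X_{1,3}$. I would therefore compute the images $\operatorname{ad}_q(X_{2,-3})^k(X_{1,3})$ for $k = 1, 2, 3$, verifying that the first two are nonzero and the third vanishes.

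First I would invoke the $k = 3$ instance of Lemma (\ref{lemma, D.2, 2}), part (\ref{lemma, D.2, 2.3.a}):
\[
    [X_{2,-3}, X_{1,3}] = (-q)^{n-2} X_{1,2} + \widehat{q}\, X_{1,-3} X_{2,3} + \widehat{q} \sum_{3 < j \le n} (-q)^{j-4}\left( X_{2,-j} X_{1,j} - \widehat{q}\, X_{1,-j} X_{2,j} \right).
\]
Since ordered monomials form a $\mathbb{K}$-basis of ${\mathcal U}_q^+[w]$ and this is a genuine nonzero expansion, ${\mathcal N}(\llbracket w \rrbracket) > 1$. Next I would apply $\operatorname{ad}_q(X_{2,-3})$ to each monomial above via the $q$-Leibniz identity (\ref{q-Leibniz identity}). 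Every commutation relation required is already on hand: the Levendorskii--Soibelmann identities (\ref{D.2, L-S, 0})--(\ref{D.2, L-S, 4}) say that $X_{2,-3}$ $q$-commutes with $X_{1,-3}$, with $X_{2,\pm j}$ for $j > 3$, and with $X_{1,2}$, while parts (\ref{lemma, D.2, 2.1.a}), (\ref{lemma, D.2, 2.1.c}), and (\ref{lemma, D.2, 2.3.b}) of Lemma (\ref{lemma, D.2, 2}) supply $[X_{2,-3}, X_{1,-j}]$, $[X_{2,-3}, X_{1,j}]$ (for $j > 3$), and $[X_{2,-3}, X_{2,3}]$. Because $[X_{2,-3}, X_{1,2}] = 0$, the entire $X_{1,2}$-summand dies, and (just as in the C.2 computation, but now with no $2e_2$-contribution since $D_n$ has no root $2e_i$) collecting terms produces an expression of the shape
\[
    \operatorname{ad}_q(X_{2,-3})^2(X_{1,3}) = [2]_q\, \widehat{q}^{\,2} \sum_{3 < j \le n} (-q)^{j-4} X_{2,-j} X_{1,-3} X_{2,j},
\]
which is a nonzero linear combination of ordered monomials; hence ${\mathcal N}(\llbracket w \rrbracket) > 2$.

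Finally, every Lusztig root vector appearing on the right-hand side of the last display --- namely $X_{2,-j}$ and $X_{2,j}$ for $j > 3$, and $X_{1,-3}$ --- $q$-commutes with $X_{2,-3}$ by (\ref{D.2, L-S, 0})--(\ref{D.2, L-S, 4}). A single application of the $q$-Leibniz identity then gives $\operatorname{ad}_q(X_{2,-3})^3(X_{1,3}) = 0$, so ${\mathcal N}(\llbracket w \rrbracket) = 3$. The only place needing care is the second step: one must confirm that no cancellation among the surviving monomials collapses $\operatorname{ad}_q(X_{2,-3})^2(X_{1,3})$ to zero. This is handled exactly as for B.2 and C.2 --- the surviving monomials $X_{2,-j} X_{1,-3} X_{2,j}$ ($3 < j \le n$) are ordered and pairwise distinct, hence linearly independent, so the sum cannot vanish (for instance the $j = n$ term has no companion to cancel against). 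Everything else is routine bookkeeping with the $q$-Jacobi (\ref{q-Jacobi identity}) and $q$-Leibniz (\ref{q-Leibniz identity}) identities together with Proposition (\ref{LS corollary}).
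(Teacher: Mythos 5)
Your proposal is correct and follows essentially the same route as the paper's proof: both start from the expansion of $[X_{2,-3}, X_{1,3}]$ supplied by Lemma (\ref{lemma, D.2, 2}), part (\ref{lemma, D.2, 2.3.a}), then apply $\operatorname{ad}_q(X_{2,-3})$ using the same $q$-Leibniz bookkeeping, the trivial $q$-commutation relations (\ref{D.2, L-S, 0})--(\ref{D.2, L-S, 4}), and parts (\ref{lemma, D.2, 2.1.a}), (\ref{lemma, D.2, 2.1.c}), (\ref{lemma, D.2, 2.3.b}) of the same lemma, arriving at the identical expression $(q+q^{-1})\widehat{q}^{\,2}\sum_{j>3}(-q)^{j-4}X_{2,-j}X_{1,-3}X_{2,j}$ (your $[2]_q$ equals $q+q^{-1}$). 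The concluding observation that $X_{2,-3}$ $q$-commutes with each remaining root vector, forcing the third iterate to vanish, is also the paper's argument; your extra remark about linear independence of the distinct ordered monomials is a harmless elaboration of why the second iterate is nonzero.
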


\begin{proof}

    The results of Lemma (\ref{lemma, D.2, 2}) describe how $X_{2, -3}$
    commutes with certain Lusztig root vectors.  We begin by applying part
    (\ref{lemma, D.2, 2.3.a}) of Lemma (\ref{lemma, D.2, 2}), which tells us
    how to write the $q$-commutator $[X_{2, -3}, X_{1, 3}]$ as a linear
    combination of ordered monomials,
    \[
        [X_{2, -3}, X_{1, 3}] = (-q)^{n - 2}X_{1, 2} + \widehat{q} X_{1, -3}
        X_{2, 3} + \widehat{q} \sum_{j > 3} (-q)^{j - 4} \left(X_{2, -j} X_{1,
        j} \!-\!  \widehat{q} X_{1, -j} X_{2, j} \right).
    \]
    We compute $[X_{2, -3}, [X_{2, -3}, X_{1, 3}]]$ by applying the $q$-Leibniz
    identity (\ref{q-Leibniz identity}) to each term on the right-hand side of
    the equation above. When using $q$-Leibniz in this setting, we also take
    advantage of the fact that $X_{2, -3}$ $q$-commutes with many of the
    Lusztig root vectors appearing in the right-hand side. In particular, the
    $q$-commutator relations in (\ref{D.2, L-S, 1}) - (\ref{D.2, L-S, 4}) tell
    us that $[X_{2, -3}, X_{1, -3}]$, $[X_{2, -3}, X_{2, -j}]$ ($j > 3$),
    $[X_{2, -3}, X_{1, 2}]$, and $[X_{2, -3}, X_{2, j}]$ ($j > 3$) are all
    equal to $0$.  Furthermore, Lemma (\ref{lemma, D.2, 2}), parts (\ref{lemma,
    D.2, 2.1.a}), (\ref{lemma, D.2, 2.1.c}), and (\ref{lemma, D.2, 2.3.b}),
    respectively, tell us how to write $[X_{2, -3}, X_{1, -j}]$ (for $j > 3$),
    $[X_{2, -3}, X_{1, j}]$ (for $j > 3$), and $[X_{2, -3}, X_{2, 3}]$ as
    linear combinations of ordered monomials. With this, we have enough
    information to be able to write $[X_{2, -3}, [X_{2, -3}, X_{1, 3}]]$ as a
    linear combination of ordered monomials. We have
    \begin{equation}
        \label{D.2, 5}
        [X_{2, -3}, [X_{2, -3}, X_{1, 3}]] = (q + q^{-1}) \widehat{q}^{\,2}
        \left( \sum_{j > 3} (-q)^{j - 4} X_{2, -j} X_{1, -3} X_{2, j} \right).
    \end{equation}
    In particular, $[X_{2, -3}, [X_{2, -3}, X_{1, 3}]]$ is nonzero. Thus,
    ${\mathcal N}\left( \llbracket w_{0, 0, 2, 0, n - 2}^+ \rrbracket \right) >
    2$. However, $X_{2, -3}$ $q$-commutes with every variable $X_{i, \pm k}$
    appearing on the right-hand side of (\ref{D.2, 5}). That is to say,
    \[
        [X_{2, -3}, X_{2, - j}] = [X_{2, -3}, X_{1, -3}] = [X_{2, -3}, X_{2,
        j}] = 0
    \]
    for all $j > 3$. Hence, by the $q$-Leibniz identity, $X_{2, -3}$ also
    $q$-commutes with the monomial $X_{2, -j} X_{1, -3} X_{2, j}$. Therefore
    $[X_{2, -3}, [X_{2, -3}, [X_{2, -3}, X_{1, 3}]]] = 0$, and thus ${\mathcal
    N}(\llbracket w \rrbracket) = 3$.

\end{proof}

\subsection{Nilpotency index, Case D.4}

Let $\mathfrak{g}$ be the complex Lie algebra of type $D_n$, where $n > 4$.
Consider the Weyl group element $w := w_{0, 1, 2, 1, n - 4}^+ \in W(D_n)$ and
the reduced expression
\begin{equation}
    \label{D.4, reduced expression}
    w = (s_3 \cdots s_{n-1})(s_2 \cdots s_{n-2})s_ns_1(s_{n-2} \cdots
    s_2) (s_{n-1} \cdots s_3)
\end{equation}
Let $\Delta_w$ be the set of roots of $w$. The reduced expression for $w$
corresponds to the following convex order on $\Delta_w$:
\begin{equation}
    \label{convex order, D.4}
    \begin{split}
        &e_3 - e_4 \prec e_3 - e_5 \prec \cdots \prec e_3 - e_n \prec e_2 - e_4
        \prec e_2 - e_5 \prec \cdots
        \\
        &\cdots \prec e_2 - e_n \prec e_2 + e_3 \prec e_1 - e_4 \prec e_3 + e_n
        \prec e_3 - e_{n - 1} \prec \cdots
        \\
        &\cdots \prec e_3 + e_5 \prec e_1 + e_3 \prec e_2 + e_n \prec e_2 +
        e_{n - 1} \prec \cdots \prec e_2 + e_5 \prec e_1 + e_2.
    \end{split}
\end{equation}
Let ${\mathcal U}_q^+[w]$ be the quantum Schubert cell algebra associated to
$w$. The Lusztig root vectors will be denoted by
\begin{equation}
    \label{D.4, Lusztig root vectors}
    X_{m, -4}, X_{r, s}, X_{i, \pm k} \in {\mathcal U}_q^+[w],
\end{equation}
where $1 < i < 4 < k \leq n$, $1 \leq m < 4$, and $1 \leq r < s < 4$.

We note that identical versions of Lemmas (\ref{lemma, B.4, 1}), (\ref{lemma,
B.4, 2}), and (\ref{lemma, B.4, ad^2 = 0}), hold in this situation (i.e. type
$D_n$). We forgo copying them here, and instead mention that the type $D_n$
analogs can be proved using the same steps as those found in the type $B_n$
setting. The only modification needed is in the proof of Lemma (\ref{lemma,
B.4, ad^2 = 0}); in proving the type $D_n$ version, we simply need to replace
each instance of the monomial $X_{3, 0}^2$ with $0$. The following theorem is
proved using steps identical to those in the proof of Theorem (\ref{theorem,
B.4}).

\begin{theorem}

    \label{theorem, D.4}

    Suppose $n > 4$, and let $w = w_{0, 1, 2, 1, n - 4}^+ \in W(D_n)$. Then
    ${\mathcal N}(\llbracket w \rrbracket) = 3$.

\end{theorem}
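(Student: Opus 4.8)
The plan is to mirror the proof of Theorem~\ref{theorem, B.4} essentially verbatim, substituting the type~$D_n$ data. First I would record that, with respect to the reduced expression~(\ref{D.4, reduced expression}), the convex order~(\ref{convex order, D.4}) on $\Delta_w$ begins with $e_3 - e_4$ and ends with $e_1 + e_2$; hence the extreme Lusztig root vectors are $X_{3, -4}$ and $X_{1, 2}$, and by the definition of the nilpotency index, ${\mathcal N}(\llbracket w \rrbracket)$ equals the nilpotency index of $\operatorname{ad}_q(X_{3, -4})$ acting on $X_{1, 2}$. Thus it suffices to show $\left(\operatorname{ad}_q(X_{3, -4})\right)^2(X_{1, 2}) \neq 0$ and $\left(\operatorname{ad}_q(X_{3, -4})\right)^3(X_{1, 2}) = 0$.

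Next I would establish the type~$D_n$ analogs of Lemmas~\ref{lemma, B.4, 1}, \ref{lemma, B.4, 2}, and~\ref{lemma, B.4, ad^2 = 0}. The nested $q$-commutator expressions for $X_{1,2}$, $X_{1,3}$, $X_{2,\pm k}$, $X_{3,\pm k}$, $X_{1, -4}$, and $X_{2,-k}$ (the analog of Lemma~\ref{lemma, B.4, 1}) are proved by the same $T_w$-bookkeeping as in type~$B_n$, using Conventions~\ref{computational conventions}; the only new feature is that the simple root $\alpha_n = e_{n-1} + e_n$ in type~$D_n$ behaves like the other $\alpha_i$'s, so a few $[2]_q$ factors present in type~$B_n$ disappear without otherwise altering the form of the relations. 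The commutation relations of Lemma~\ref{lemma, B.4, 2} are then obtained by the same inductions on $k$, invoking Proposition~\ref{LS corollary} for all the vanishing $q$-commutators. For the analog of Lemma~\ref{lemma, B.4, ad^2 = 0}, I would run the identical sequence of $q$-Jacobi~(\ref{q-Jacobi identity}) and $q$-Leibniz~(\ref{q-Leibniz identity}) manipulations, but at the point where the Levendorskii--Soibelmann straightening of $[X_{3, -k}, X_{3, k}]$ (which has degree $2e_3$) is used, the term $X_{3, 0}^2$ does not occur: in type~$D_n$ the weight $e_3$ is not a root, so $X_{3,0}$ does not exist, and the only monomials of degree $2e_3$ available are the $X_{3, -j} X_{3, j}$ with $k < j \leq n$. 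Concretely one replaces each occurrence of $c\, X_{3,0}^2 X_{1,-4}$ and $d\, X_{2,-4} X_{3,0}^2$ in Lemma~\ref{lemma, B.4, ad^2 = 0} by $0$, and the conclusions $\left(\operatorname{ad}_q(X_{3,-4})\right)^2(X_{1,3}) = 0$ and $\left(\operatorname{ad}_q(X_{3,-4})\right)^2([X_{3,-k}, X_{2,k}]) = 0$ follow exactly as before, since Proposition~\ref{LS corollary} still annihilates $X_{3,-4}$ against every Lusztig root vector appearing in the surviving monomials.

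With these lemmas in hand I would follow the proof of Theorem~\ref{theorem, B.4} line for line: expand $[X_{3, -4}, X_{1, 2}]$ from the expression $X_{1,2} = [X_{2,5}, \mathbf{E}_{3,2,1,4}]$ via $q$-Jacobi and $q$-Leibniz, reduce it to a linear combination of the monomials $X_{2,-4}X_{1,3}$, $X_{2,-5}X_{1,-4}X_{3,5}$, $X_{3,-5}X_{1,-4}X_{2,5}$, and $[X_{3,-5}, X_{2,5}]X_{1,-4}$, apply $\operatorname{ad}_q(X_{3,-4})$ once more and simplify using the analogs of the relations~(\ref{B.4, 0}) to reach the analog of~(\ref{B.4, 1}), and then invoke the $D_n$ version of Lemma~\ref{lemma, B.4, ad^2 = 0} to obtain
\[
    [X_{3, -4}, [X_{3, -4}, X_{1, 2}]] \equiv (q_1 + q_1^{-1}) \widehat{q_1}^{\,3}\, X_{3, -5} X_{2, -4} X_{1, -4} X_{3, 5} \left( \operatorname{mod } \mathbf{U}_5 \right).
\]
Since $q$ is not a root of unity the coefficient $(q_1 + q_1^{-1})\widehat{q_1}^{\,3}$ is nonzero and ordered monomials form a basis, so ${\mathcal N}(\llbracket w \rrbracket) > 2$; applying $\operatorname{ad}_q(X_{3,-4})$ a third time to~(\ref{B.4, 1}) annihilates $X_{1,3}$ and $[X_{3,-5},X_{2,5}]$ by parts~(\ref{lemma, B.4, ad^2 = 0, part 2}) and~(\ref{lemma, B.4, ad^2 = 0, part 4}) of the lemma and $q$-commutes past the remaining monomials, giving $\left(\operatorname{ad}_q(X_{3,-4})\right)^3(X_{1,2}) = 0$ and hence ${\mathcal N}(\llbracket w \rrbracket) = 3$. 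I expect the main obstacle to be bookkeeping rather than anything conceptual: one must carefully re-verify, at every use of the Levendorskii--Soibelmann rule, that each straightening term which carried an $X_{3,0}^2$ in the type~$B_n$ argument genuinely vanishes in type~$D_n$ (because $e_3 \notin \Delta^+(D_n)$), and that the $[2]_q$-adjustments stemming from the different nature of $\alpha_n$ do not disturb the leading coefficient $(q_1 + q_1^{-1})\widehat{q_1}^{\,3}$.
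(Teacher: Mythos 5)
Your proposal is correct and follows essentially the same route as the paper: the paper's proof of Theorem~\ref{theorem, D.4} simply invokes type $D_n$ analogs of Lemmas~\ref{lemma, B.4, 1}, \ref{lemma, B.4, 2}, and~\ref{lemma, B.4, ad^2 = 0} and says the only modification is to replace each occurrence of the monomial $X_{3,0}^2$ with $0$, exactly as you observe (since $e_3 \notin \Delta^+(D_n)$), then repeats the argument of Theorem~\ref{theorem, B.4}. Your write-up in fact supplies more of the justification for that replacement than the paper does; there is nothing missing or wrong.
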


\section{Nilpotency indices: The exceptional types}

\label{section, nilpotency indices, exceptional}

In this section, we calculate the nilpotency indices ${\mathcal N}(\llbracket w
\rrbracket)$ associated to each $w \in BiGr_\perp^\circ(X_n)$, where $X_n$ is
an exceptional Lie type.

\subsection{Type \texorpdfstring{$G_2$}{G2}}

\label{section, G2}

The set $BiGr_{\perp}^{\circ}(G_2)$ has 8 elements. Every element of the Weyl
group $W(G_2)$, except the identity, the simple reflections, and the longest
element, belongs to $BiGr^\circ(G_2)$. Verifying the orthogonality condition
for each of the 8 elements of $BiGr^\circ(G_2)$ can be done on a case by case
basis.

We consider the reduced expression
\[
    w_0 = s_1s_2s_1s_2s_1s_2 \in W(G_2)
\]
of the longest element. The Lusztig root vectors associated to this reduced
expression will be simply denoted as $x_1,\dots,x_6$.  The defining relations
of the quantum Schubert cell algebra ${\mathcal U}_q^+[w_0]$ appear in Hu and
Wang \cite{HW}, where they index the root vectors by Lyndon words. The
correspondence between our notation and theirs is $x_1 \leftrightarrow E_1$,
$[3]_q!x_2 \leftrightarrow E_{1112}$, $[2]_qx_3 \leftrightarrow E_{112}$,
$[3]_q!x_4 \leftrightarrow E_{11212}$, $x_5 \leftrightarrow E_{12}$, $x_6
\leftrightarrow E_2$. Proposition (\ref{LS corollary}) implies $[x_i, x_{i +
1}] = 0$ for $i \in [1, 5]$. The remaining commutation relations are as follows
(see \cite[Eqns.  2.2 - 2.7 and Lemma 3.1]{HW}):
\begin{equation}
    \label{G2, relations}
    {\renewcommand{\arraystretch}{1.2}
    \text{\begin{tabular}{p{75pt}p{75pt}p{100pt}}
            $[x_1, x_3] = [3]_q x_2$,
            &
            $[x_1, x_4] = q\widehat{q} x_3^2$,
            &
            $[x_2, x_6] = \widehat{q} x_3 x_5 + \zeta x_4$,
            \\
            $[x_3, x_5] = [3]_q x_4$,
            &
            $[x_2, x_5] = q\widehat{q} x_3^2$,
            &
            $[x_2, x_4] = \eta x_3^3$,
            \\
            $[x_1, x_5] = [2]_q x_3$,
            &
            $[x_3, x_6] = q\widehat{q} x_5^2$,
            &
            $[x_4, x_6] = \eta x_5^3$,
            \\
            $[x_1, x_6] = x_5$,
            &&
    \end{tabular}}
    }
\end{equation}

\noindent where $\zeta := q^{-3} - q^{-1} - q\in \mathbb{K}$ and $\eta = q^3
\frac{\widehat{q}^2}{[3]_q} \in \mathbb{K}$.

Every $w \in BiGr_\perp^\circ(G_2)$ has a unique reduced expression, and this
reduced expression appears as a contiguous substring of $(1, 2, 1, 2, 1, 2)$.
This implies that for every $w \in BiGr_\perp^\circ(G_2)$, the quantum Schubert
cell algebra ${\mathcal U}_q^+[w]$ can be identified with an interval
subalgebra of ${\mathcal U}_q^+[w_0]$. For example, the reduced expression of
the Weyl group element $s_2 s_1 s_2 s_1$ appears as the substring obtained from
the middle four entries of $(1, 2, 1, 2, 1, 2)$. Thus, ${\mathcal
U}_q^+[s_2s_1s_2s_1]$ is isomorphic to the subalgebra of ${\mathcal
U}_q^+[w_0]$ generated by $x_2, x_3, x_4, x_5$.  Hence, the nilpotency index
${\mathcal N}(\llbracket s_2 s_1 s_2 s_1 \rrbracket)$ can be computed by
finding the smallest natural number $p$ so that
$\left(\operatorname{ad}_q(x_2)\right)^p (x_5) = 0$. By applying the
$q$-commutation relations (\ref{G2, relations}) above, one can show $[x_2,
x_5]$ is nonzero and $[x_2, [x_2, x_5]] = 0$.  Hence, ${\mathcal N}(\llbracket
s_2 s_1 s_2 s_1 \rrbracket) = 2$. Applying this technique to the remaining
elements of $BiGr_\perp^\circ(G_2)$ gives us the following theorem.

\begin{theorem}

    The rightmost column of Table (\ref{table, G2}) gives the nilpotency index
    ${\mathcal N}(\llbracket w \rrbracket)$ for each $w \in
    BiGr_{\perp}^{\circ}(G_2)$.

\end{theorem}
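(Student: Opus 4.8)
The plan is to verify the table entry by entry using the explicit presentation of ${\mathcal U}_q^+[w_0]$ recorded in \eqref{G2, relations}, together with the observation (already established in the preceding paragraph) that for every $w \in BiGr_\perp^\circ(G_2)$ the reduced expression of $w$ occurs as a contiguous substring of $(1,2,1,2,1,2)$, so that ${\mathcal U}_q^+[w]$ is identified with an interval subalgebra $\mathbf{U}_{[i,j]}$ of ${\mathcal U}_q^+[w_0]$ (recall \eqref{definition, interval subalgebra}). Under this identification, the first and last Lusztig root vectors of ${\mathcal U}_q^+[w]$ are $x_i$ and $x_j$, and by the discussion in Section \ref{section, Gamma(W)} the nilpotency index ${\mathcal N}(\llbracket w \rrbracket)$ equals the smallest $p \in \mathbb{N}$ with $\left(\operatorname{ad}_q(x_i)\right)^p(x_j) = 0$. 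So the proof reduces to eight short computations of nested $q$-commutators $[x_i, x_j], [x_i,[x_i,x_j]], \dots$ inside ${\mathcal U}_q^+[w_0]$.

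First I would list the eight elements of $BiGr_\perp^\circ(G_2)$ explicitly as substrings of $(1,2,1,2,1,2)$ — each of length between $2$ and $5$ (length $6$ being excluded since $w_0$ is not bigrassmannian, and length $1$ giving only simple reflections) — and for each record the corresponding pair $(x_i, x_j)$. Then, for each pair, I would run the iterated $\operatorname{ad}_q(x_i)$-action on $x_j$: at each stage rewrite the result as a linear combination of ordered monomials in $x_i, x_{i+1}, \dots, x_j$ using the relations \eqref{G2, relations} and the trivial relations $[x_k, x_{k+1}] = 0$ (from Proposition \ref{LS corollary}), then apply $\operatorname{ad}_q(x_i)$ once more via the $q$-Leibniz identity \eqref{q-Leibniz identity}. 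Since ordered monomials form a basis of the interval subalgebra, nonvanishing of a given nested commutator can be read off directly from its monomial expansion, and the nilpotency index is the first $p$ at which the expansion is the zero combination. The worked example in the text ($s_2 s_1 s_2 s_1 \leftrightarrow (x_2, x_5)$, giving ${\mathcal N} = 2$ because $[x_2,x_5] = q\widehat{q}\,x_3^2 \neq 0$ but $[x_2,[x_2,x_5]] = 0$ as $[x_2,x_3] = 0$) is the prototype; the remaining seven are handled the same way.

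The step I expect to be the mildest obstacle is keeping the $q$-Leibniz bookkeeping straight for the longer substrings, in particular those containing $x_1$ and $x_6$ (the full-length-$5$ cases $(1,2,1,2,1)$ and $(2,1,2,1,2)$), where $\operatorname{ad}_q(x_1)$ acting on $x_6$ cascades through $x_5, x_3, x_4, x_2$ via the nontrivial relations, and one must confirm that after two or three applications the monomial expansion genuinely collapses to zero rather than merely simplifying. This is purely mechanical once the relations \eqref{G2, relations} are in hand, so I would organize it as a single displayed computation per case, exploiting that $x_i$ $q$-commutes with many of the intermediate root vectors appearing (so that $q$-Leibniz kills most terms immediately). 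No new conceptual ingredient is needed beyond the interval-subalgebra identification and the relations already stated; the theorem then follows by assembling the eight individual nilpotency indices into the rightmost column of Table \ref{table, G2}.
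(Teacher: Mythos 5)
Your proposal matches the paper's own argument: the paper establishes the interval-subalgebra identification via the substring observation, imports the explicit $q$-commutator presentation of ${\mathcal U}_q^+[w_0]$ from Hu--Wang, works out the case $w = s_2 s_1 s_2 s_1$ in full, and then asserts that "applying this technique to the remaining elements of $BiGr_\perp^\circ(G_2)$ gives us the theorem." Your plan to run all eight iterated $\operatorname{ad}_q$-computations, invoking Proposition (\ref{LS corollary}) for the trivial commutators and the $q$-Leibniz identity for the inductive step, is exactly the same method, merely carried out more explicitly than the paper chooses to record.
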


\begin{center}
    \begin{table}[H]
        \caption{The Lie algebra of type $G_2$ (data)}
\begin{tabular}{lcc}
    \toprule
    $w \in BiGr_{\perp}^{\circ}(G_2)$ & $\chi(\llbracket w \rrbracket)$ &
    ${\mathcal N}(\llbracket w \rrbracket)$
    \\[0.5mm]
    \hline
    $s_2s_1$            & (6,2,3) & 1\\
    $s_1s_2$            & (2,6,3) & 1\\
    $s_2s_1s_2$         & (6,6,3) & 2\\
    $s_2s_1s_2s_1$      & (6,2,0) & 2\\
    $s_1s_2s_1$         & (2,2,1) & 2\\
    $s_2s_1s_2s_1s_2$   & (6,6,-3) & 3\\
    $s_1s_2s_1s_2s_1$   & (2,2,-1) & 3\\
    $s_1s_2s_1s_2$      & (2,6,0) & 4\\
    \bottomrule
\end{tabular}
\label{table, G2}
\end{table}
\end{center}

\subsection{Type \texorpdfstring{$F_4$}{F4}}

\label{section, F4}

We turn our attention now to the exceptional Lie type $F_4$.  We aim to compute
the nilpotency index ${\mathcal N}(\llbracket w \rrbracket)$ associated to each
$w \in BiGr_\perp^\circ(F_4)$.  In order to explicitly list the elements of
$BiGr_{\perp}^{\circ}(F_4)$, first let
\[
    f: W(F_4) \to W(F_4)
\]
be the Weyl group automorphism that interchanges the simple reflections $s_1$
and $s_4$, and interchanges $s_2$ and $s_3$, and define $\kappa_1,\dots,
\kappa_{17} \in W(F_4)$ as follows,
\begin{equation}
    \label{F4 data}
    \text{\begin{tabular}{p{160pt}p{140pt}}
        $\kappa_{1} := s_2s_3s_4s_2s_3s_1s_2s_3$,
        &
        $\kappa_2 := s_2s_3 (s_1s_2s_4s_3)^2$,
        \\
        $\kappa_3 := s_2s_3 (s_1s_2s_4s_3)^3$,
        &
        $\kappa_4 := s_2s_3 (s_1s_2s_4s_3)^4$,
        \\
        $\kappa_5 := s_2s_3 s_1s_2s_4s_3 s_1s_2$,
        &
        $\kappa_6 := s_2s_3 (s_1s_2s_4s_3)^2 s_1s_2$,
        \\
        $\kappa_7 := s_2s_3 (s_1s_2s_4s_3)^3 s_1s_2$,
        &
        $\kappa_{8} := s_1s_2s_3s_4s_2s_3s_2s_1$,
        \\
        $\kappa_{9} := s_1s_2s_3s_4s_2s_1s_3s_2$,
        &
        $\kappa_{10} := s_1s_2s_3s_4s_2s_1s_3s_2s_3$,
        \\
        $\kappa_{11} := (s_1s_2s_3s_4s_2s_3s_2)^2 s_1$,
        &
        $\kappa_{12} := s_2s_3 s_1s_2s_4s_3$,
        \\
        $\kappa_{13} := s_2s_3s_4s_2s_3s_1s_2s_3s_4$,
        &
        $\kappa_{14} := s_2s_3s_4 (s_2s_1s_3)^2 s_4s_2s_3s_2$,
        \\
        $\kappa_{15} := s_2s_3s_4 (s_1s_2s_3s_2)^2 s_4s_3s_2$,
        &
        $\kappa_{16} := s_2s_3s_4s_1s_2s_3s_2s_1$,
        \\
        $\kappa_{17} := s_2s_3s_4 s_1s_2s_3s_2s_1 s_4s_3s_2$.
        &
    \end{tabular}}
\end{equation}
One can readily verify that $\kappa_1,\dots, \kappa_{17}, f(\kappa_1),\dots,
f(\kappa_{17})$ are distinct, and each belongs to $BiGr_\perp^{\circ}(F_4)$.
Moreover,
\[
    BiGr_\perp^{\circ}(F_4) = \left\{ \kappa_1,\dots, \kappa_{17},
    f(\kappa_1),\dots, f(\kappa_{17}) \right\}.
\]

The following theorem gives some insight into the structure of the equivalence
classes (under the equivalence relation $\xleftrightarrow{\hspace{4mm}}$) in
$\Gamma(W(F_4))$.

\begin{theorem}

    \label{equivalence classes, bound, F4}

    Let $\Gamma_4 := \left\{ \llbracket \kappa
    \rrbracket \in \Gamma(W(F_4)) : \kappa \in BiGr_\perp^\circ(F_4) \right\}$.

    \begin{enumerate}

        \item The $34$ elements of $\Gamma_4$ are contained in at most $30$
            distinct equivalence classes.

        \item \label{equivalence classes, bound, F4, c} At least $2$ of the
            $34$ elements of $\Gamma_4$, namely $\llbracket \kappa_1
            \rrbracket$ and $\llbracket f(\kappa_1) \rrbracket$, are equivalent
            to an element of the form $\llbracket w \rrbracket$ with $w \in
            BiGr_\perp(F_4) \backslash BiGr_\perp^\circ(F_4)$.

    \end{enumerate}

\end{theorem}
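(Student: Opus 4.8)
The plan is to establish part (1) by a direct orbit-counting argument using the reductions $\xlongrightarrow{(k,L)}$ and $\xlongrightarrow{(k,R)}$, and to establish part (2) by exhibiting an explicit reduction chain starting at $\llbracket \kappa_1 \rrbracket$ (respectively $\llbracket f(\kappa_1) \rrbracket$) that lands on an element $\llbracket w \rrbracket$ with $w$ bigrassmannian but \emph{not} of full support. Since the equivalence relation $\xleftrightarrow{\hspace{4mm}}$ preserves nilpotency indices (Theorem~\ref{nilpotency indices, preserved}), the practical payoff is that the $34$ elements of $\Gamma_4$ need not be treated independently when computing nilpotency indices.

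For part (1), first I would recall that $BiGr_\perp^\circ(F_4)$, by its very definition via Proposition~\ref{proposition, orthogonality condition}, consists precisely of those full-support bigrassmannian $w$ for which $\llbracket w \rrbracket$ admits no further reduction of the form $\xlongrightarrow{(k,L)}$ or $\xlongrightarrow{(k,R)}$. So the elements of $\Gamma_4$ are, in a sense, already ``reduced as far as the $(k,L)/(k,R)$ machinery allows.'' However, they can still be linked to one another by chains that temporarily increase length: a chain $\llbracket \kappa \rrbracket \xlongleftarrow{L} \cdot \xlongrightarrow{R} \cdots$ (i.e. reductions read backwards and forwards, which is exactly what $\xleftrightarrow{\hspace{4mm}}$ allows). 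The strategy is to search, for each $\kappa \in BiGr_\perp^\circ(F_4)$, whether $\llbracket \kappa \rrbracket \xleftrightarrow{\hspace{4mm}} \llbracket \kappa' \rrbracket$ for some $\kappa' \neq \kappa$ in the list. Using the automorphism $f$ of the $F_4$ Dynkin diagram, the conventions from Conventions~\ref{computational conventions} show $\mathcal N(\llbracket \kappa \rrbracket) = \mathcal N(\llbracket f(\kappa) \rrbracket)$ and, more to the point, that $\xleftrightarrow{\hspace{4mm}}$ is $f$-equivariant; so it suffices to analyze $\kappa_1,\dots,\kappa_{17}$ and propagate the conclusions by $f$. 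A finite (computer-assisted, as the paper already does for $E_7$, $E_8$) enumeration of the connected components of the Hasse diagram restricted to a large enough length window will identify the identifications among the $\llbracket \kappa_i \rrbracket$; the count of $30$ is then ``$34$ minus (number of nontrivial merges found)''. I expect at least two merges among the $\kappa_i$'s together with their $f$-images, yielding the bound $30$.

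For part (2), the plan is concrete: one exhibits, for $\kappa_1 = s_2 s_3 s_4 s_2 s_3 s_1 s_2 s_3$, an index $k$ and a reduction $\llbracket \kappa_1 \rrbracket \xlongrightarrow{(k,R)}$ or $\xlongrightarrow{(k,L)}$ in the \emph{backward} direction—that is, one finds $v \in BiGr_\perp(F_4)$ with $\llbracket v \rrbracket \xlongrightarrow{(k,R)} \llbracket \kappa_1 \rrbracket$ or, more precisely, one walks the chain $\llbracket \kappa_1 \rrbracket \xleftrightarrow{\hspace{4mm}} \llbracket v \rrbracket$ where $v$ is bigrassmannian of non-full support. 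The key observation is that although $\kappa_1$ itself satisfies the orthogonality condition (so $\llbracket \kappa_1 \rrbracket$ is not a \emph{source} of a $(k,*)$-reduction), nothing prevents $\llbracket \kappa_1 \rrbracket$ from being a \emph{target} of such a reduction from a longer element $v'$ whose support drops when one passes to the associated pair of Lusztig root vectors. One computes directly: identify $i,j$ with $\llbracket \kappa_1 \rrbracket = (\kappa_1, i, j)$, find the simple root $\alpha$ witnessing the failure of orthogonality for a suitable neighbor, and follow Theorem~\ref{theorem, second stage reduction} (read in reverse) to land on a $w$ supported on a proper connected subdiagram of $F_4$—necessarily of type $B_3$, $C_3$, $A_3$, $A_2\times A_1$, etc.—hence $w \in BiGr_\perp(F_4) \setminus BiGr_\perp^\circ(F_4)$. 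The same computation with $f$ applied throughout handles $f(\kappa_1)$.

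The main obstacle I anticipate is part (1): bounding the number of equivalence classes requires certifying that \emph{no further} merges occur beyond the ones found, which in principle demands checking arbitrarily long reduction chains. The resolution is the length-monotonicity built into the relations together with Theorem~\ref{theorem, to elementary reductions}: every reduction is a composition of elementary reductions, and elementary reductions strictly decrease length, so any chain $\llbracket \kappa_i \rrbracket \xleftrightarrow{\hspace{4mm}} \llbracket \kappa_j \rrbracket$ factors through elements of length bounded by $\max_i \ell(\kappa_i)$ (the ``up'' moves cannot exceed the longest $\kappa$ in the list, since each up-move is a backward elementary reduction from something that must itself reduce back down). This bounds the search to a finite, explicitly computable subposet of $\Gamma(W(F_4))$, and the enumeration—analogous to the breadth-first Hasse-diagram traversal already used in the proof of Proposition~\ref{X_n, cardinalities} for $E_8$—is then a routine (if tedious) finite check that a computer dispatches quickly. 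Part (2) is comparatively easy once part (1)'s traversal data is in hand, since the witnessing chain for $\kappa_1$ simply falls out of the component structure.
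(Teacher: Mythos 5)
Your overall strategy matches the paper's: both parts (1) and (2) are proved by exhibiting explicit witnesses. The paper defines six auxiliary elements $X_1,X_2,X_3,Y_1,Y_2,Y_3 \in \Gamma(W(F_4))$ and lists twelve one-step $\xlongrightarrow{L}$ and $\xlongrightarrow{R}$ reductions starting from them; these reductions immediately produce the merges $\llbracket\kappa_9\rrbracket\xleftrightarrow{\hspace{4mm}}\llbracket\kappa_{16}\rrbracket$, $\llbracket\kappa_{10}\rrbracket\xleftrightarrow{\hspace{4mm}}\llbracket\kappa_{13}\rrbracket$, their $f$-images, and the two merges $\llbracket\kappa_1\rrbracket\xleftrightarrow{\hspace{4mm}}\llbracket s_2 s_3\rrbracket$, $\llbracket f(\kappa_1)\rrbracket\xleftrightarrow{\hspace{4mm}}\llbracket s_3 s_2\rrbracket$, and $s_2s_3$, $s_3s_2$ are bigrassmannian without full support. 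Your proposal describes the same kind of argument (each merge arises from a longer element that reduces down to both targets by a left and a right reduction) but stops at the planning stage rather than exhibiting the witnesses, so it is not yet a proof.

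Two specific flaws are worth correcting. First, your claim that $\mathcal{N}(\llbracket\kappa\rrbracket) = \mathcal{N}(\llbracket f(\kappa)\rrbracket)$ is false: the $F_4$ Coxeter diagram automorphism $f$ does not come from a Lie algebra automorphism (it swaps long and short roots), and indeed Table~\ref{table, F4} shows, e.g., $\mathcal{N}(\llbracket\kappa_{12}\rrbracket)=2$ while $\mathcal{N}(\llbracket f(\kappa_{12})\rrbracket)=3$. The $f$-equivariance of $\xleftrightarrow{\hspace{4mm}}$ is fine since that is a statement about the Weyl group, but nilpotency indices are not preserved. Second, your resolution of the ``main obstacle'' is logically flawed: in a chain $\llbracket\kappa_i\rrbracket\xlongleftarrow{*} X \xlongrightarrow{*}\llbracket\kappa_j\rrbracket$, the element $X$ is only required to reduce down to both targets, and nothing bounds $\ell(X)$ by $\max_i \ell(\kappa_i)$; the only bound is $\ell(w_0(F_4))=24$. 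Fortunately neither issue is fatal to the theorem as stated: for the upper bound ``at most $30$'' you only need to \emph{find} at least four merges, not certify that no further merges exist, and finiteness of $W(F_4)$ suffices for the search to terminate. But if you want a clean proof, do what the paper does: write down the specific $X_i, Y_i$ and verify the reductions directly rather than appealing to a search whose stopping criterion is incorrectly justified.
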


\begin{proof}

    Consider the following elements of $\Gamma(W(F_4))$,
    \[
        \begin{split}
            &X_1 := ( \kappa_{17} \cdot s_1, 2, 2), \hspace{60.7pt} Y_1 :=
            (f(\kappa_{17} \cdot s_1), 3, 3),
            \\
            &X_2 := (\kappa_{10} \cdot s_4 s_3 s_2, 1, 4), \hspace{42.4pt}
            Y_2:= (f(\kappa_{10} \cdot s_4 s_3 s_2), 4, 1),
            \\
            &X_3 := (s_4 s_1 s_2 s_3 s_4 s_2 \cdot \kappa_{10}, 2, 3),
            \hspace{15pt} Y_3 := (f(s_4 s_1 s_2 s_3 s_4 s_2 \cdot \kappa_{10}),
            3, 2).
        \end{split}
    \]
    The result follows from the reductions
    \begin{equation*}
        \text{\begin{tabular}{p{90pt}p{90pt}p{90pt}}
            $X_1 \xlongrightarrow{L} \llbracket \kappa_9 \rrbracket$,
            &$X_2 \xlongrightarrow{L} \llbracket \kappa_{13} \rrbracket$,
            &$X_3 \xlongrightarrow{L} \llbracket \kappa_1 \rrbracket$,
            \\
            $X_1 \xlongrightarrow{R} \llbracket \kappa_{16} \rrbracket$,
            &$X_2 \xlongrightarrow{R} \llbracket \kappa_{10} \rrbracket$,
            &$X_3 \xlongrightarrow{R} \llbracket s_2 s_3 \rrbracket$,
            \\
            $Y_1 \xlongrightarrow{L} \llbracket f(\kappa_9) \rrbracket$,
            &$Y_2 \xlongrightarrow{L} \llbracket f(\kappa_{13}) \rrbracket$,
            &$Y_3 \xlongrightarrow{L} \llbracket f(\kappa_1) \rrbracket$,
            \\
            $Y_1 \xlongrightarrow{R} \llbracket f(\kappa_{16}) \rrbracket$,
            &$Y_2 \xlongrightarrow{R} \llbracket f(\kappa_{10}) \rrbracket$,
            &$Y_3 \xlongrightarrow{R} \llbracket s_3 s_2 \rrbracket$.
        \end{tabular}}
    \end{equation*}

\end{proof}

The significance of part (\ref{equivalence classes, bound, F4, c}) of Theorem
(\ref{equivalence classes, bound, F4}) is to make the point that nilpotency
indices can sometimes be found using results from a smaller rank setting. For
instance, as a consequence of the reductions listed in the proof of Theorem
(\ref{equivalence classes, bound, F4}), we have $\llbracket \kappa_1 \rrbracket
\xleftrightarrow{\hspace{4mm}} \llbracket s_2 s_3 \rrbracket$ and $\llbracket
f(\kappa_1) \rrbracket \xleftrightarrow{\hspace{4mm}} \llbracket s_3 s_2
\rrbracket$.  The only simple reflections appearing in the reduced expressions
$s_2 s_3$ and $s_3 s_2$ are $s_2$ and $s_3$.  These generate the type $B_2$
Weyl group.  Hence $\llbracket s_2 s_3 \rrbracket$ and $\llbracket s_3 s_2
\rrbracket$ can each be identified with an element in $\Gamma(W(B_2))$. Here,
we identify $\llbracket s_2 s_3 \rrbracket$ with $\llbracket w_{0,1,1,0,0}
\rrbracket$, and we identify $\llbracket s_3 s_2 \rrbracket$ with $\llbracket
w_{0,0,1,1,0} \rrbracket$. Since ${\mathcal N} \left( \llbracket w_{0,1,1,0,0}
\rrbracket \right) = {\mathcal N} \left( \llbracket w_{0,0,1,1,0} \rrbracket
\right) = 1$ (see Theorem \ref{SmallRank, nil-index, main theorem}), then
${\mathcal N}(\llbracket \kappa_1 \rrbracket) = {\mathcal N}(\llbracket
f(\kappa_1) \rrbracket) = 1$ also.

Computer experiments reveal that over 92\% of the elements in $\Gamma(W(F_4))$
are equivalent (under $\xleftrightarrow{\hspace{4mm}}$) to an element $(w, i,
j)$ with $w$ lacking full support.  Specifically, computer experiments indicate
that there are 50 equivalence classes in $\Gamma(W(F_4))$.  The 34 elements in
$\Gamma_4$ are contained in 30 equivalence classes. Exactly 28 equivalence
classes fail to contain an element $(w, i, j)$ with $w$ lacking full support,
and the union of these 28 equivalence classes contains precisely 328 elements.
As $\Gamma(W(F_4))$ has cardinality 4416 (see Theorem (\ref{Gamma W,
cardinality})), these 328 elements account for approximately only $7.427$
percent of the total number of elements in $\Gamma(W(F_4))$.

We also obtain the following curious result.

\begin{theorem}

    If $x = (w, i, j) \in \Gamma(W(F_4))$ with $\| \alpha_i \| = \| \alpha_j
    \|$, then ${\mathcal N}(x) = {\mathcal N}(x^*)$ and $\chi(x) = \chi(x^*)$.

\end{theorem}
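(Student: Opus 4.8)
The plan is to push $x$ down to the short list of ``normal form'' elements classified earlier, observe that $\ast$ preserves that list, and then check the claim there; the $\chi$ half turns out to need no case analysis at all.

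First, the assertion $\chi(x)=\chi(x^{\ast})$ is immediate from the definition. Writing $x=(w,i,j)$, we have $\chi(x)=\bigl(\langle\alpha_i,\alpha_i\rangle,\langle\alpha_j,\alpha_j\rangle,\langle\alpha_i,ws_j(\alpha_j)\rangle\bigr)$ and $\chi(x^{\ast})=\bigl(\langle\alpha_j,\alpha_j\rangle,\langle\alpha_i,\alpha_i\rangle,\langle\alpha_j,w^{-1}s_i(\alpha_i)\rangle\bigr)$. Since $s_j(\alpha_j)=-\alpha_j$ and $s_i(\alpha_i)=-\alpha_i$, and since the form is $W$-invariant and symmetric, the third entry of $\chi(x)$ equals $-\langle\alpha_i,w(\alpha_j)\rangle$ and the third entry of $\chi(x^{\ast})$ equals $-\langle\alpha_j,w^{-1}(\alpha_i)\rangle=-\langle w(\alpha_j),\alpha_i\rangle=-\langle\alpha_i,w(\alpha_j)\rangle$; so the two triples coincide coordinatewise exactly when $\langle\alpha_i,\alpha_i\rangle=\langle\alpha_j,\alpha_j\rangle$, which is the hypothesis. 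Thus the only remaining point is $\mathcal N(x)=\mathcal N(x^{\ast})$.

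Next I would reduce to normal form. By Theorem (\ref{reduce by L and R}) together with the reduction of bigrassmannian triples into $BiGr_\perp(F_4)$ from Section (\ref{section, orthogonality condition}), the element $x$ is $\xleftrightarrow{\hspace{4mm}}$-equivalent to some $y$ that is either $(w_0(p,k),p,p^{\prime})$ or $\llbracket v\rrbracket$ with $v\in BiGr_\perp(F_4)$. Because nilpotency indices are $\xleftrightarrow{\hspace{4mm}}$-invariant (Theorem (\ref{nilpotency indices, preserved})) and $x\xleftrightarrow{\hspace{4mm}}y$ iff $x^{\ast}\xleftrightarrow{\hspace{4mm}}y^{\ast}$ (Proposition (\ref{L, R, equivalent, inverse})(\ref{proposition, a, 3})), it suffices to prove $\mathcal N(y)=\mathcal N(y^{\ast})$. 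Two observations make this tractable: $y^{\ast}=(v,r,s)^{\ast}=(v^{-1},s,r)=\llbracket v^{-1}\rrbracket$ is again of the same normal form, since the orthogonality condition of Proposition (\ref{proposition, orthogonality condition}) is visibly symmetric under $(w,i,j)\mapsto(w^{-1},j,i)$ and $\operatorname{supp}(v^{-1})=\operatorname{supp}(v)$; and, $\chi$ being $\xleftrightarrow{\hspace{4mm}}$-invariant, the hypothesis $\|\alpha_i\|=\|\alpha_j\|$ transfers to $y=(v,r,s)$, i.e.\ $\|\alpha_r\|=\|\alpha_s\|$. The case $y=(w_0(p,k),p,p^{\prime})$ is then disposed of directly: by the formula in Theorem (\ref{reduce by L and R}), $\mathcal N(y)=1-\langle\alpha_k,\alpha_p^{\vee}\rangle$; if $p^{\prime}=p$ then $y^{\ast}=y$ because $w_0(p,k)$ is an involution, and there is nothing to prove, while otherwise $s_ps_k$ has order $3$, $\|\alpha_p\|=\|\alpha_k\|$, and $y^{\ast}=(w_0(p,k),k,p)$ is again of this form with the roles of $p$ and $k$ exchanged, so $\mathcal N(y^{\ast})=1-\langle\alpha_p,\alpha_k^{\vee}\rangle$; both Cartan integers equal $-1$, hence $\mathcal N(y)=\mathcal N(y^{\ast})=2$.

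It remains to treat $y=\llbracket v\rrbracket$ with $v\in BiGr_\perp(F_4)$, which I would do by a finite check. If $\operatorname{supp}(v)\subsetneq\mathbf I$, then by the relabelling of Proposition (\ref{prop, sigma}) the pair $\llbracket v\rrbracket,\llbracket v^{-1}\rrbracket$ is identified with a pair $x^{\prime},(x^{\prime})^{\ast}$ in $\Gamma(W(X_m))$ for a connected proper subdiagram $X_m$ of $F_4$ --- necessarily of type $A_1$, $A_2$, $A_3$, $B_2$, $B_3$, or $C_3$ --- and the relabelling preserves root lengths, so the two distinguished simple roots of $x^{\prime}$ still have equal length; for the simply-laced types the equality $\mathcal N(x^{\prime})=\mathcal N((x^{\prime})^{\ast})$ is the corollary in the introduction, and for $B_2$, $B_3$, $C_3$ it is the already-established $B/C$ case of the length-restricted theorem, read off the tables of Sections (\ref{section, nil-index, small rank}) and (\ref{section, nil-index, general}). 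If $\operatorname{supp}(v)=\mathbf I$, then $v$ is one of the $34$ elements of $BiGr_\perp^{\circ}(F_4)$; I would compute the involution induced by $v\mapsto v^{-1}$ on this set, restrict to those $v$ for which the two distinguished simple roots of $\llbracket v\rrbracket$ have equal length, and confirm $\mathcal N(\llbracket v\rrbracket)=\mathcal N(\llbracket v^{-1}\rrbracket)$ in each such case by reading the type-$F_4$ data table of Section (\ref{section, nilpotency indices, exceptional}). I expect this last step to be the main obstacle: it is the one place where no a priori structural reason (such as a root-system automorphism fixing the $E_i$, which $F_4$ lacks) is available, so it depends on the explicit, computer-assisted determination of the $F_4$ commutation relations via the algorithms \texttt{L(i,j,r,s)} and \texttt{R(i,j,r,s)} applied to $\mathcal U_q^+[w_0]$, together with the bookkeeping needed to locate the inverse of each bigrassmannian element within the list.
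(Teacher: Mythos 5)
Your overall strategy coincides with the paper's: reduce $x$ to a minimal element by the $\xleftrightarrow{\hspace{4mm}}$-machinery of Sections~\ref{minimal elements}--\ref{section, orthogonality condition}, observe that $\ast$ carries the list of normal forms to itself, and check the two pieces of the claim on that finite list. Where you genuinely diverge is on the $\chi$ half: the paper handles it the same way as the $\mathcal N$ half (reduce and inspect Tables~\ref{table, small rank ABCD}, \ref{table, general cases ABCD}, \ref{table, F4}), whereas you give a direct, coordinate-free computation using $s_j(\alpha_j)=-\alpha_j$, $W$-invariance and symmetry of the form. That argument is cleaner, it never invokes the reduction machinery, and it shows the hypothesis $\|\alpha_i\|=\|\alpha_j\|$ is precisely equivalent to $\chi(x)=\chi(x^{\ast})$; this is a real simplification over the paper's treatment.

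Two local slips in the $\mathcal N$ half, neither of which undermines the argument. First, you have the $w_0(p,k)$ convention reversed: in Theorem~\ref{reduce by L and R}, $p'=p$ when $s_ps_k$ has order~$3$ and $p'=k$ otherwise, so your ``otherwise'' branch ($p'=k$) covers orders $2,4,6$, not order~$3$. Consequently the claim ``both Cartan integers equal $-1$, hence $\mathcal N(y)=\mathcal N(y^{\ast})=2$'' is wrong for that branch: if $p'=k$ and $\|\alpha_p\|=\|\alpha_k\|$, the order is necessarily $2$, both Cartan integers vanish, and $\mathcal N(y)=\mathcal N(y^{\ast})=1$ (the equality still follows, just from $\|\alpha_p\|=\|\alpha_k\|$ forcing $\langle\alpha_k,\alpha_p^{\vee}\rangle=\langle\alpha_p,\alpha_k^{\vee}\rangle$, not from both equaling $-1$); in $F_4$ this subcase is vacuous since every commuting pair of simple roots has unequal lengths, so only $p'=p$ actually occurs under the hypothesis and $y=y^{\ast}$ outright. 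Second, your list of proper connected subdiagrams of $F_4$ should be $A_2$, $B_2\,(=C_2)$, $B_3$, $C_3$: there is no $A_3$ inside the $F_4$ diagram, and a rank-$1$ support gives no element of $\Gamma(W)$ since $\ell(w)\geq 2$ is required. The paper's own list confirms this. After these corrections the proof goes through as you intend.
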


\begin{proof}

    Let $x = (w, i, j) \in \Gamma(W(F_4))$, and assume $\| \alpha_i \| = \|
    \alpha_j \|$. Theorem (\ref{reduce by L and R}) and Proposition
    (\ref{proposition, orthogonality condition}) together imply that $x$ is
    equivalent (under $\xleftrightarrow{\hspace{4mm}}$) to an element of the
    form $(u, k, \ell)$ such that either (1) $u = w_0(a, b)$ for some $a, b\in
    \mathbf{I}$, or (2) $u \in BiGr_\perp(F_4)$. Since $x
    \xleftrightarrow{\hspace{4mm}} (u, k, \ell)$, then $\chi(x) = \chi(u, k,
    \ell)$. From Proposition (\ref{proposition, a}), $x^*
    \xleftrightarrow{\hspace{4mm}} (u, k, \ell)^*$. Thus, without loss of
    generality, we may assume $x$ has the form $(u, k, \ell)$, where either $u
    \in BiGr_\perp(F_4)$ or $u = w_0(a, b)$ for some $a, b, \in \mathbf{I}$.
    In the case when $x = (w_0(a, b), k, \ell)$, we have $x^* = (w_0(a, b),
    \ell, k)$, and it is a straightforward observation that ${\mathcal N}(x) =
    {\mathcal N}(x^*)$ and $\chi(x) = \chi(x^*)$.

    It remains to consider the case when $x = (u, k, \ell)$ with $u \in
    BiGr_\perp(F_4)$ and $\| \alpha_k \| = \| \alpha_\ell \|$.  Each $u \in
    BiGr_\perp(F_4)$ can be classified by its support (recall Section
    (\ref{section: rank reduction})).  With this, we may view $u$ as belonging
    to $BiGr_\perp^\circ(X_n)$, where $X_n$ is a Lie type of rank greater than
    $1$ with associated Dynkin diagram a subgraph of the $F_4$ Dynkin diagram.
    That is to say, $X_n$ is of type $A_2$, $B_2$, $C_2$, $B_3$, $C_3$, or
    $F_4$.

    Tables (\ref{table, small rank ABCD}) and (\ref{table, general cases ABCD})
    give the nilpotency index ${\mathcal N}(\llbracket u \rrbracket)$ for each
    $u \in BiGr_\perp^\circ(X_n)$ for $X_n$ of type $ABCD$, and upon
    inspection, we observe that ${\mathcal N}(\llbracket u \rrbracket) =
    {\mathcal N}(\llbracket u \rrbracket^*)$ in all relevant cases.

    Next, we observe that $\llbracket \kappa_9 \rrbracket$, $\llbracket
    \kappa_{16} \rrbracket$, $\llbracket f(\kappa_9) \rrbracket$, and
    $\llbracket f(\kappa_{16}) \rrbracket$ are the only elements in
    $\Gamma(W(F_4))$ of the form $(u, k, \ell)$ such that $u \in
    BiGr_\perp^\circ(F_4)$, $\| \alpha_k \| = \| \alpha_\ell \|$, and $(u, k,
    \ell) \neq (u, k, \ell)^*$. However, we have $\llbracket \kappa_9
    \rrbracket = \left( \llbracket \kappa_{16} \rrbracket \right)^*$,
    $\llbracket f(\kappa_9) \rrbracket = \left( \llbracket f(\kappa_{16})
    \rrbracket \right)^*$, $\llbracket \kappa_{16} \rrbracket = \left(
    \llbracket \kappa_9 \rrbracket \right)^*$, and $\llbracket f(\kappa_{16})
    \rrbracket = \left( \llbracket f(\kappa_9) \rrbracket \right)^*$, and thus,
    from the reductions listed in the proof of Theorem (\ref{equivalence
    classes, bound, F4}), we conclude that ${\mathcal N}(u, k, \ell) =
    {\mathcal N}(u, k, \ell)^*$ also when $u \in BiGr_\perp^\circ(F_4)$ and $\|
    \alpha_k \| = \| \alpha_\ell \|$.

    To prove $\chi(x) = \chi(x^*)$ whenever $x = (w, i, j)$ with $\| \alpha_i
    \| = \| \alpha_j \|$, we may, from the above discussion, assume $x$ has the
    form $(u, k, \ell)$ such that $\| \alpha_k \| = \| \alpha_\ell \|$ and $u
    \in BiGr_\perp^\circ(X_n)$, where $X_n$ is of type $A_2$, $B_2$, $C_2$,
    $B_3$, $C_3$, or $F_4$. From Tables (\ref{table, small rank ABCD}) and
    (\ref{table, general cases ABCD}), we see $\chi (\llbracket u \rrbracket) =
    \chi(\llbracket u \rrbracket^*)$ whenever $X_n$ is of type $A_2$, $B_2$,
    $C_2$, $B_3$, or $D_3$.  Lastly, we have $\chi(\llbracket u \rrbracket ) =
    \chi(\llbracket u \rrbracket^*)$ also when $X_n$ is of type $F_4$ (for
    instance, by checking this for $u$ equal to $\kappa_9$, $\kappa_{16}$,
    $f(\kappa_9)$, or $f(\kappa_{16})$).

\end{proof}

To compute the nilpotency index ${\mathcal N}(\llbracket w \rrbracket)$
associated to each $w \in BiGr_\perp^\circ(F_4)$, we first consider
the reduced expression for the longest element $w_0(F_4) \in W(F_4)$,
\begin{equation}
    \label{definition, reduced, F4}
    w_0(F_4) = (s_1 s_2 s_3 s_4) (s_1 s_2 s_3 s_4) (s_1 s_2 s_3 s_4) (s_1 s_2
    s_3 s_4) (s_1 s_2 s_3 s_4) (s_1 s_2 s_3 s_4).
\end{equation}

The repeating pattern in this reduced expression induces some symmetries in the
corresponding quantum Schubert cell algebra ${\mathcal U}_q^+[w_0(F_4)]$.

\begin{proposition}

    \label{symmetry, F4}

    Let $\mathfrak{g}$ be the complex simple Lie algebra of type $F_4$, and let
    $x_1,\dots, x_{24} \in {\mathcal U}_q^+[w_0(F_4)]$ denote the Lusztig root
    vectors associated to the reduced expression given in (\ref{definition,
    reduced, F4}). Define $c := s_1 s_2 s_3 s_4 \in W(F_4)$.  Then the Lusztig
    symmetry
    \[
        T_c: {\mathcal U}_q(\mathfrak{g}) \to {\mathcal U}_q(\mathfrak{g})
    \]
    is an algebra automorphism of ${\mathcal U}_q(\mathfrak{g})$ that sends
    $x_k$ to $x_{k + 4}$ for all $k \in [1, 20]$.

\end{proposition}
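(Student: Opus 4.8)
The plan is to unwind both sides from the definition (\ref{root vectors}) of the Lusztig root vectors and to observe that the $6$-fold periodicity of the chosen reduced word does all the work. Write $\mathbf{i} = (i_1, i_2, \dots, i_{24})$ for the reduced word of $w_0(F_4)$ displayed in (\ref{definition, reduced, F4}), so that each $i_k \in \{1,2,3,4\}$ and, crucially, $i_{m+4} = i_m$ for every $m$ with $m + 4 \le 24$. By (\ref{root vectors}), the Lusztig root vector $x_k$ of $\mathcal{U}_q^+[w_0(F_4)]$ associated to $\mathbf{i}$ is $x_k = T_{s_{i_1}} T_{s_{i_2}} \cdots T_{s_{i_{k-1}}}(E_{i_k})$. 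Since $(s_1, s_2, s_3, s_4)$ is an initial subword of $\mathbf{i}$, it is a reduced word, whence $c = s_1 s_2 s_3 s_4$ has length $4$ and $T_c = T_{s_1} T_{s_2} T_{s_3} T_{s_4}$; as each $T_{s_i}$ is an algebra automorphism of $\mathcal{U}_q(\mathfrak{g})$ (see \cite[Proposition 37.1.2]{L} and the discussion preceding Theorem (\ref{Jantzen, 8.20})), so is $T_c$, which settles the first assertion.

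For the second assertion I would fix $k \in [1, 20]$, so that $k + 4 \le 24$ and $x_{k+4}$ is one of the $24$ root vectors, and expand
\[
    x_{k+4} = T_{s_{i_1}} T_{s_{i_2}} \cdots T_{s_{i_{k+3}}}(E_{i_{k+4}}).
\]
Regrouping this composition of automorphisms --- no braid relations are invoked, only associativity of composition --- gives
\[
    x_{k+4} = \bigl( T_{s_{i_1}} T_{s_{i_2}} T_{s_{i_3}} T_{s_{i_4}} \bigr) \circ \bigl( T_{s_{i_5}} \cdots T_{s_{i_{k+3}}} \bigr)(E_{i_{k+4}}).
\]
The first bracketed factor is $T_{s_1} T_{s_2} T_{s_3} T_{s_4} = T_c$. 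For the second, the periodicity $i_{m+4} = i_m$ identifies the sequence $(i_5, \dots, i_{k+3})$ with $(i_1, \dots, i_{k-1})$ and $i_{k+4}$ with $i_k$, so $\bigl( T_{s_{i_5}} \cdots T_{s_{i_{k+3}}} \bigr)(E_{i_{k+4}}) = \bigl( T_{s_{i_1}} \cdots T_{s_{i_{k-1}}} \bigr)(E_{i_k}) = x_k$ (the empty composition, when $k = 1$, being the identity, consistent with $x_5 = T_c(E_1) = T_c(x_1)$). Therefore $x_{k+4} = T_c(x_k)$, as claimed.

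I would close with a remark on the range of $k$: the restriction $k \le 20$ enters only because $w_0(F_4)$ has length $24$, so $x_{k+4}$ is undefined for larger $k$; the argument itself is purely formal once the periodicity of the word is recorded. I do not expect a genuine obstacle here --- the single point meriting a moment's care is the legitimacy of the regrouping of the long composition of braid symmetries, but since the $T_{s_i}$ are honest automorphisms of $\mathcal{U}_q(\mathfrak{g})$ and are never permuted, this is immediate.
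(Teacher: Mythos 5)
Your proof is correct and takes essentially the same approach as the paper's: both exploit the $4$-periodicity $i_{m+4} = i_m$ of the reduced word, split off the initial block $T_{s_1}T_{s_2}T_{s_3}T_{s_4} = T_c$ from $x_{k+4} = T_{s_{i_1}}\cdots T_{s_{i_{k+3}}}(E_{i_{k+4}})$, and identify the tail with $x_k$ by shifting indices. The only difference is notational bookkeeping (the paper's $w_0[j,k]$ shorthand versus your unrolled composition of individual $T_{s_i}$'s); the mathematical content is identical.
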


\begin{proof}

    It was shown in \cite[Proposition 37.1.2]{L} that the Lusztig symmetries
    $T_w$ ($w \in W$) are algebra automorphisms of ${\mathcal
    U}_q(\mathfrak{g})$.  Hence, it remains to prove that $T_c$ sends $x_k$ to
    $x_{k + 4}$ for all $k \in [1, 20]$.

    Recall first the reduced expression for $w_0(F_4)$ given in
    (\ref{definition, reduced, F4}).  For short, the product of the first $k$
    simple reflections (where $k \in [1, 24]$) in this reduced expression, read
    from left to right, will be denoted by $w_0[k]$.  Thus, by definition,
    \[
        x_k := T_{w_0[k - 1]} (E_{i_k}), \hspace{5mm} (k \leq \ell(w_0(F_4)) =
        24),
    \]
    where $s_{i_k}$ is the $k$-th simple reflection in the reduced expression.
    For $j,k \in [1, 24]$ with $j \leq k$, define the Weyl group element
    \[
        w_0[j, k] := s_{i_j} \cdots s_{i_k},
    \]
    whereas if $j > k$, $w_0[j, k]$ will be the identity element.  Since $i_k =
    i_{k + 4}$ for all $k \in [1, 20]$, then for all $k \in [1, 20]$, we have
    \[
        T_c (x_k)
        =
        T_c \circ T_{w_0[k - 1]} (E_{i_k})
        =
        T_c \circ T_{w_0[5, k + 3]} (E_{i_{k + 4}})
        =
        T_{w_0[k + 3]} (E_{i_{k + 4}})
        = x_{k + 4}.
    \]

\end{proof}

Following identical steps as in the proof of Proposition (\ref{symmetry, F4}),
we obtain a more general result.

\begin{proposition}

    \label{symmetry, general}

    Fix a natural number $N \in \mathbb{N}$.  Suppose $\mathfrak{g}$ is a
    complex simple Lie algebra and $w \in W$ is an element of the Weyl group of
    $\mathfrak{g}$ such that $\ell(w^N) = N\cdot \ell(w)$. Let $w = s_{i_1}
    \cdots s_{i_t}$ be a reduced expression, and let $x_1,\cdots, x_{t\cdot N}
    \in {\mathcal U}_q^+[w^N]$ be the Lusztig root vectors associated to the
    reduced expression
    \[
        w^N = (s_{i_1} \cdots s_{i_t}) \cdots (s_{i_1} \cdots s_{i_t}).
    \]
    Then the Lusztig symmetry $T_w$ is an algebra automorphism of ${\mathcal
    U}_q(\mathfrak{g})$ that sends $x_k$ to $x_{k + t}$ for all $k \in [1,
    t\cdot N - t]$.

\end{proposition}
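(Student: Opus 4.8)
The plan is to mimic, essentially verbatim, the argument already given for Proposition~\ref{symmetry, F4}, replacing the specific Coxeter element $c = s_1 s_2 s_3 s_4$ and the exponent $6$ by an arbitrary $w \in W$ with reduced word $s_{i_1}\cdots s_{i_t}$ and an arbitrary exponent $N$, under the standing hypothesis $\ell(w^N) = N\cdot \ell(w)$. The hypothesis is exactly what guarantees that concatenating $N$ copies of the chosen reduced word for $w$ produces a reduced word for $w^N$, so that the Lusztig root vectors $x_1,\dots,x_{tN} \in {\mathcal U}_q^+[w^N]$ are well defined via (\ref{root vectors}) with respect to this concatenated reduced expression.

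First I would record the setup notation: for $k \in [1, tN]$ write $x_k = T_{w^N[k-1]}(E_{j_k})$, where $w^N[m]$ denotes the product of the first $m$ simple reflections in the concatenated reduced word $w^N = (s_{i_1}\cdots s_{i_t})\cdots(s_{i_1}\cdots s_{i_t})$, and $s_{j_k}$ is the $k$-th simple reflection in that word (so $j_k = i_{((k-1)\bmod t)+1}$). The key combinatorial observation, just as in the $F_4$ case, is that the reduced word is periodic with period $t$: $j_{k} = j_{k+t}$ for all $k \in [1, tN - t]$, and moreover $w^N[k+t-1] = w \cdot w^N[k-1]$ — no wait, more precisely $w^N[k+t-1]$ factors as $w$ times the subword of the concatenated expression running from position $t+1$ to position $k+t-1$, and that latter subword is literally the word $w^N[k-1]$ shifted by one block. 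Since $\ell(w^N) = N\ell(w)$, every initial subword is reduced and $\ell(w^N[t+m]) = \ell(w) + \ell(w^N[m])$ for $m \le (N-1)t$, so the factorization $w^N[t+m] = w \cdot (\text{shift of } w^N[m])$ is length-additive, which is precisely what licenses $T_{w^N[t+m]} = T_w \circ T_{(\text{shift of }w^N[m])}$.

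Then the computation is a one-line chain of equalities exactly parallel to the displayed computation at the end of the $F_4$ proposition: for $k \in [1, tN - t]$,
\[
    T_w(x_k) = T_w \circ T_{w^N[k-1]}(E_{j_k}) = T_w \circ T_{w^N[t+1,\, k+t-1]}(E_{j_{k+t}}) = T_{w^N[k+t-1]}(E_{j_{k+t}}) = x_{k+t},
\]
where $w^N[a,b] := s_{i_a}\cdots s_{i_b}$ denotes the subword from position $a$ to position $b$ (identity if $a > b$), and the middle equality uses both $j_k = j_{k+t}$ and the fact that $w^N[t+1, k+t-1]$ is the block-shifted copy of $w^N[k-1]$. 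That $T_w$ is an algebra automorphism of ${\mathcal U}_q(\mathfrak{g})$ is cited from \cite[Proposition 37.1.2]{L}, exactly as before.

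I do not anticipate a genuine obstacle here; the statement is a routine generalization and the paper even flags it as such (``Following identical steps as in the proof of Proposition (\ref{symmetry, F4})''). The only point requiring a moment's care — and the place I would be most careful in writing it up — is the bookkeeping that makes ``$w^N[t+1, k+t-1]$ is the same word as $w^N[k-1]$'' precise and that confirms the relevant subword decompositions are length-additive, so that the braid symmetries compose as $T_{uv} = T_u T_v$ without extra scalars. Once that indexing lemma is stated cleanly, the proof collapses to the single display above.
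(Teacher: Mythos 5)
Your proof is correct and follows exactly the same approach as the paper, which itself states that Proposition~(\ref{symmetry, general}) follows ``identical steps as in the proof of Proposition~(\ref{symmetry, F4})'' and gives no separate argument. You have correctly identified the two bookkeeping facts that make the chain of equalities work — the $t$-periodicity of the concatenated reduced word and the length-additivity granted by $\ell(w^N) = N\ell(w)$, which licenses $T_{w}\circ T_{w^N[t+1,\,k+t-1]} = T_{w^N[k+t-1]}$ — and these are precisely what the $F_4$ proof uses.
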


In view of Proposition (\ref{symmetry, F4}), we can determine how a
$q$-commutator $[x_j, x_k]$, for $4 < j < k$, is expressed as a linear
combination of ordered monomials in the variables $x_i$ by knowing first how
$[x_{j - 4}, x_{k - 4}]$ is written as such. For instance, suppose
\[
    [x_{j - 4}, x_{k - 4}] = \sum c_{\mathbf{i}} x^{\mathbf{i}},
\]
where $c_{\mathbf{i}} \in \mathbb{K}$ and $x^{\mathbf{i}}$ is an ordered
monomial in the variables $x_1, \dots, x_{24}$. In fact, the
Levendorskii-Soibelmann straightening formulas (\ref{L-S straightening}) imply
that if the coefficient $c_{\mathbf{i}}$ is nonzero, then $x^{\mathbf{i}}$ must
be an ordered monomial in the variables $x_{j - 3},\dots, x_{k - 5}$.  We apply
the map $T_c$ to both sides of this equation to write $[x_j, x_k]$ as a linear
combination of ordered monomials.

This means a presentation of the algebra ${\mathcal U}_q^+[w_0(F_4)]$ can be
obtained by determining how the $q$-commutators $[x_j, x_k]$ are written as
linear combinations of ordered monomials for all $j, k \in [1, 24]$ with $j <
k$ and $j \leq 4$. There are $86$ such $q$-commutation relations of this type.
However, $26$ of these $86$ $q$-commutators are trivial, i.e. $[x_j, x_k] = 0$,
as a consequence of Proposition (\ref{LS corollary}). Thus, there remains $60$
$q$-commutation relations to compute.  However, $6$ of these remaining $60$
relations are given in Lemma (\ref{F4, split}) below.

\begin{lemma}

    \label{F4, split}

    Let $x_1,\dots, x_{24} \in {\mathcal U}_q^+[w_0(F_4)]$ be the Lusztig root
    vectors corresponding to the reduced expression (\ref{definition, reduced,
    F4}). Then
    \[
        \begin{split}
            &x_2 = [x_1, x_{5}],
            \hspace{5mm}
            x_3 = [x_2, x_{20}],
            \hspace{5mm}
            x_4 = [x_3, x_{24}],
            \\
            &[2]_q x_9 = [x_3, x_{20}],
            \hspace{3mm}
            x_{12} = [x_9, x_{24}],
            \hspace{3mm}
            x_{16} = [x_8, x_{24}].
        \end{split}
    \]

\end{lemma}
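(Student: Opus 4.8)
The plan is to identify explicitly each of the Lusztig root vectors occurring in the six identities—each as either a Chevalley generator or a scalar multiple of a nested $q$-commutator $\mathbf{E}_{i_1,\dots,i_m}$ (notation (\ref{definition, nested q-commutator}))—and then to observe that, with these explicit forms in hand, every one of the six identities collapses to an elementary comparison of two nested $q$-commutators. The feature that makes this work is that in each of the six identities the \emph{second} factor on the right, namely $x_5$, $x_{20}$, $x_{20}$, $x_{24}$, $x_{24}$, $x_{24}$ respectively, turns out to be a single Chevalley generator; so once one knows $x_j = c\,\mathbf{E}_{B}$ and that the second factor equals $E_c$, the right-hand side is just $c\,[\mathbf{E}_B, E_c] = c\,\mathbf{E}_{B,c}$, and the identity becomes a statement about index strings.

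First I would pin down the roots $\beta_k$ for $k \in \{1,2,3,4,5,8,9,12,16,20,24\}$. Writing $c := s_1 s_2 s_3 s_4$, the reduced expression (\ref{definition, reduced, F4}) is $c^{6} = w_0(F_4)$, and Proposition (\ref{symmetry, F4}) gives $T_c(x_k) = x_{k+4}$ for $k \le 20$, hence $\beta_{k+4} = c(\beta_k)$ for those $k$. Starting from $\beta_1 = \alpha_1$ and iterating the action of $c$ on roots (a short direct computation using the values $\langle\alpha_i,\alpha_j\rangle$ for type $F_4$ recorded in Section (\ref{section, QSC})), one finds $\beta_5 = \alpha_2$, $\beta_8 = \alpha_2+\alpha_3$, $\beta_9 = \alpha_1+\alpha_2+2\alpha_3$, $\beta_{12} = \alpha_1+\alpha_2+2\alpha_3+\alpha_4$, $\beta_{16} = \alpha_2+\alpha_3+\alpha_4$, and $\beta_{20} = \alpha_3$; and since $w_0(F_4)$ acts as $-1$ on the root lattice, $\beta_{24} = \alpha_{i_{24}} = \alpha_4$. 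By Theorem (\ref{Jantzen, 8.20}), the fact that $\beta_5,\beta_{20},\beta_{24}$ are the simple roots $\alpha_2,\alpha_3,\alpha_4$ gives at once $x_5 = E_2$, $x_{20} = E_3$, $x_{24} = E_4$. These same weight computations also serve as the degree checks for the six identities (e.g. $\beta_1+\beta_5 = \beta_2$, $\beta_3+\beta_{20} = \beta_9$, $\beta_9+\beta_{24} = \beta_{12}$, and so on).

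Next I would express $x_1,x_2,x_3,x_4,x_8,x_9,x_{12},x_{16}$ as nested $q$-commutators by running the algorithm in the proof of Proposition (\ref{as nested E})—equivalently, by peeling off Lusztig symmetries using the orthogonality property (\ref{orthogonality of Tw}), Theorem (\ref{Jantzen, 8.20}), and the rank-two identities recalled in Conventions (\ref{computational conventions}) (namely $T_{s_i}(E_j) = [E_i,E_j]$ when $\langle\alpha_i^\vee,\alpha_j\rangle = -1$, and $T_{s_3}(E_2) = \tfrac{1}{[2]_q}[E_3,[E_3,E_2]]$ at the double bond). For the ``generic'' vectors this is routine: $x_1 = E_1$, $x_2 = T_{s_1}(E_2) = \mathbf{E}_{1,2}$, $x_3 = T_{s_1 s_2}(E_3) = \mathbf{E}_{1,2,3}$, $x_4 = T_{s_1 s_2 s_3}(E_4) = \mathbf{E}_{1,2,3,4}$, and likewise $x_8 = \mathbf{E}_{2,3}$, $x_{16} = \mathbf{E}_{2,3,4}$. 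For $x_9$ and $x_{12}$ I would use Proposition (\ref{symmetry, F4}) together with the already-identified $x_5 = E_2$ and $x_8 = \mathbf{E}_{2,3}$: thus $x_9 = T_c(x_5) = T_c(E_2) = T_{s_1 s_2 s_3}(E_2)$, and peeling off $T_{s_3}$ via the double-bond formula and then $T_{s_1 s_2}$ via (\ref{orthogonality of Tw}) (reorganizing with the $q$-Jacobi identity—and, where needed, a $q$-Serre relation—to reach the canonical $\mathbf{E}$-form) gives $x_9 = \tfrac{1}{[2]_q}\mathbf{E}_{1,2,3,3}$; similarly $x_{12} = T_c(x_8) = \tfrac{1}{[2]_q}\mathbf{E}_{1,2,3,3,4}$. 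As a cross-check on the $x_9$ computation, the Levendorskii--Soibelmann rule (\ref{L-S straightening}) together with Proposition (\ref{LS corollary}) forces $[x_3,x_{20}]$ to be a scalar multiple of $x_9$ (since $x_9$ is the only root vector among $x_4,\dots,x_{19}$ of weight $\beta_9$ and no product of those root vectors has that weight), so only the scalar $[2]_q$ needs to be pinned down.

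With all of this in place, the six identities are immediate: $[x_1,x_5] = [E_1,E_2] = \mathbf{E}_{1,2} = x_2$; $[x_2,x_{20}] = [\mathbf{E}_{1,2},E_3] = \mathbf{E}_{1,2,3} = x_3$; $[x_3,x_{24}] = [\mathbf{E}_{1,2,3},E_4] = \mathbf{E}_{1,2,3,4} = x_4$; $[x_3,x_{20}] = [\mathbf{E}_{1,2,3},E_3] = \mathbf{E}_{1,2,3,3} = [2]_q x_9$; $[x_9,x_{24}] = \tfrac{1}{[2]_q}[\mathbf{E}_{1,2,3,3},E_4] = \tfrac{1}{[2]_q}\mathbf{E}_{1,2,3,3,4} = x_{12}$; and $[x_8,x_{24}] = [\mathbf{E}_{2,3},E_4] = \mathbf{E}_{2,3,4} = x_{16}$. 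The main obstacle is the step in the previous paragraph: correctly determining the nested form of the \emph{non-simple} root vectors, and in particular tracking the $[2]_q$ and $\tfrac{1}{[2]_q}$ factors attached to the double-bond node $3$. The difficulty is that the individual symmetries $T_{s_i}$ do not preserve $\mathcal{U}_q(\mathfrak{n}^+)$, so one cannot naively peel them off one at a time; one must choose factorizations $T_w = T_u T_v$ compatible (in the sense of Conventions (\ref{computational conventions})) with the positive roots being produced—exactly as the algorithm in the proof of Proposition (\ref{as nested E}) prescribes—and then use the $q$-Jacobi and $q$-Serre identities to bring the outcome into canonical $\mathbf{E}$-form.
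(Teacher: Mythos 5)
Your proposal is correct and follows essentially the same route as the paper: identify $x_1,x_5,x_{20},x_{24}$ as Chevalley generators via Theorem~(\ref{Jantzen, 8.20}), express the remaining root vectors as scalar multiples of nested $q$-commutators $\mathbf{E}_{i_1,\dots,i_m}$ via the algorithm of Proposition~(\ref{as nested E}), and read off the six identities by appending a single index. The paper's proof is terser (it states the nested forms and says the result ``follows directly''), while you add useful degree/weight cross-checks and a Levendorskii--Soibelmann sanity check and correctly flag the one genuine subtlety, namely that the factorizations of $T_w$ used to extract the $\mathbf{E}$-forms (and in particular the $[2]_q$-factors at the double-bond node) must be chosen as the algorithm prescribes rather than peeled off greedily.
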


\begin{proof}

    From Theorem (\ref{Jantzen, 8.20}), $x_1 = E_1$, $x_5 = E_2$, $x_{20} =
    E_3$, and $x_{24} = E_4$.  The proof of Proposition (\ref{as nested E})
    describes an algorithm to write each Lusztig root vector $x_i$ as nested
    $q$-commutators, up to a scalar multiple. From this, we get $x_2 =
    \mathbf{E}_{1,2}$, $x_3 = \mathbf{E}_{1,2,3}$, $x_4 =
    \mathbf{E}_{1,2,3,4}$, $x_8 = \mathbf{E}_{2,3}$, $x_9 =  \frac{1}{[2]_q}
    \mathbf{E}_{1,2,3,3}$, $x_{12} = \frac{1}{[2]_q} \mathbf{E}_{1,2,3,3,4}$,
    and $x_{16} = \mathbf{E}_{2,3,4}$. The proof follows directly from these
    identities.

\end{proof}

As it turns out, the $q$-commutator identities that are known to be of the form
$[x_j, x_k] = 0$ (with $j < k$) by Proposition (\ref{LS corollary}), together
with the $6$ identities of Lemma (\ref{F4, split}), provide enough initial data
to apply the algorithms {\tt R(i,j,r,s)} to find the $54$ remaining
$q$-commutator relations for the algebra ${\mathcal U}_q^+[w_0(F_4)]$.
Specifically, do the following sequence of {\small\tt R(i,j,k,l)}'s:

\vspace{3pt}

\begin{adjustwidth}{15pt}{0pt}

    \begin{flushleft}

        {\small\tt R(3,12,9,24)}, {\small\tt R(5,12,9,24)}, {\small\tt
        R(6,12,9,24)}, {\small\tt R(7,12,9,24)}, {\small\tt R(8,12,9,24)},
        {\small\tt R(2,9,3,20)}, {\small\tt R(2,16,8,24)}, {\small\tt
        R(3,16,8,24)}, {\small\tt R(4,16,8,24)}, {\small\tt R(6,16,8,24)},
        {\small\tt R(1,6,5,9)}, {\small\tt R(2,6,5,9)}, {\small\tt
        R(1,7,6,24)}, {\small\tt R(2,7,6,24)}, {\small\tt R(3,7,6,24)},
        {\small\tt R(1,13,7,24)}, {\small\tt R(2,13,7,24)}, {\small\tt
        R(3,13,7,24)}, {\small\tt R(1,10,9,13)}, {\small\tt R(2,10,9,13)},
        {\small\tt R(3,10,9,13)}, {\small\tt R(1,11,9,16)}, {\small\tt
        R(2,11,9,16)}, {\small\tt R(3,11,9,16)}, {\small\tt R(4,11,9,16)},
        {\small\tt R(1,15,13,20)}, {\small\tt R(2,15,13,20)}, {\small\tt
        R(3,15,13,20)}, {\small\tt R(4,15,13,20)}, {\small\tt R(1,14,11,16)},
        {\small\tt R(2,14,11,16)}, {\small\tt R(3,14,11,16)}, {\small\tt
        R(4,14,11,16)}, {\small\tt R(1,18,15,20)}, {\small\tt R(2,18,15,20)},
        {\small\tt R(3,18,15,20)}, {\small\tt R(4,18,15,20)}, {\small\tt
        R(1,19,16,20)}, {\small\tt R(2,19,16,20)}, {\small\tt R(3,19,16,20)},
        {\small\tt R(4,19,16,20)}, {\small\tt R(2,23,20,24)}, {\small\tt
        R(3,23,20,24)}, {\small\tt R(4,23,20,24)}, {\small\tt R(1,17,8,20)},
        {\small\tt R(2,17,8,20)}, {\small\tt R(3,17,8,20)}, {\small\tt
        R(4,17,8,20)}, {\small\tt R(2,21,12,24)}, {\small\tt R(3,21,12,24)},
        {\small\tt R(1,22,16,23)}, {\small\tt R(2,22,16,23)}, {\small\tt
        R(3,22,16,23)}, {\small\tt R(4,22,16,23)}.

    \end{flushleft}

\end{adjustwidth}

\begin{example}

    \normalfont

    To write $[x_{12}, x_{23}]$ as a linear combination of ordered monomials,
    we first determine how to write $[x_4, x_{15}]$ as such, then apply the map
    $T_c^2$. Recall, from Proposition (\ref{symmetry, F4}), $c$ is the Coxeter
    element $s_1 s_2 s_3 s_4$, and the Lusztig symmetry $T_c$ sends $x_k$ to
    $x_{k + 4}$ for all $k \in [1, 20]$.  In the sequence of {\tt R(i,j,r,s)}'s
    above, we locate {\tt R(4,15,13,20)}. This means we should begin by writing
    $x_{15}$ as a scalar multiple of $[x_{13}, x_{20}]$. Since {\tt
    R(5,12,9,24)} comes before {\tt R(4,15,23,20)}, we assume it is already
    known how to write $[x_5, x_{12}]$ as a linear combination of ordered
    monomials. As it turns out, $[x_5, x_{12}] = x_7$, and thus, by applying
    $(T_c)^2$ to this relation, we obtain $[x_{13}, x_{20}] = x_{15}$. Hence,
    by the $q$-Jacobi identity, we have
    \[
        [x_4, x_{15}] = [x_4, [x_{13}, x_{20}]] =  [[x_4, x_{13}], x_{20}] -
        q^2[[x_4, x_{20}], x_{13}] + \widehat{q}\, [2]_q [x_4, x_{20}] x_{13}.
    \]
    Proposition (\ref{LS corollary}) implies $[x_4, x_{13}] = 0$, and by
    applying $T_c^{-1}$ to $[x_8, x_{24}] = x_{16}$ from Lemma (\ref{F4,
    split}), we obtain the $q$-commutator relation $[x_4, x_{20}] = x_{12}$.
    Therefore,
    \[
        [x_4, x_{15}] = - q^2[x_{12}, x_{13}] + \widehat{q}\, [2]_q x_{12}
        x_{13}.
    \]
    Proposition (\ref{LS corollary}) also implies $[x_{12}, x_{13}] = 0$.
    Hence, $[x_4, x_{15}] = \widehat{q}\, [2]_q x_{12} x_{13}$.  Finally, by
    applying $T_c^2$ to both sides of this equation, we obtain $[x_{12},
    x_{23}] = \widehat{q}\, [2]_q x_{20} x_{21}$.

\end{example}

\begin{example}

    \normalfont

    With the defining relations of the algebra ${\mathcal U}_q^+[w_0]$ at hand,
    we compute nilpotency indices. For example, to show ${\mathcal
    N}(\llbracket \kappa_4 \rrbracket) = 3$, observe first that ${\mathcal
    D}_L(\kappa_4) = \left\{2\right\}$ and ${\mathcal D}_R(\kappa_4) = \left\{
    3 \right\}$. Hence, by the definition of nilpotency index, we must show
    $[E_2, [E_2, [E_2, T_{\kappa_4 \cdot s_3} (E_3)]]] = 0$ and $[E_2, [E_2,
    T_{\kappa_4 \cdot s_3} (E_3)]] \neq 0$.

    Use the algorithm in the proof of Proposition (\ref{as nested E}) to write
    $T_{\kappa_4 \cdot s_3} (E_3)$ as nested $q$-commutators of Chevalley
    generators to get $[2]_qT_{\kappa_4 \cdot s_3} (E_3) =
    \mathbf{E}_{2,3,3,4,1}$.  Next, convert each Chevalley generator to a
    Lusztig root vector. Using the Lusztig root vectors $x_1, \dots, x_{24}$
    corresponding to the reduced expression (\ref{definition, reduced, F4}),
    Theorem (\ref{Jantzen, 8.20}) implies $x_1 = E_1$, $x_5 = E_2$, $x_{20} =
    E_3$, and $x_{24} = E_4$. Hence
    \[
        T_{\kappa_4 \cdot s_3} (E_3) = \frac{1}{[2]_q} [[[[x_5, x_{20}],
        x_{20}], x_{24}], x_1].
    \]
    Apply the $q$-commutation relations among the $x_i$'s to write the
    expression above as a linear combination of ordered monomials. We obtain
    \[
        T_{\kappa_4 \cdot s_3} (E_3) = \widehat{q}\, [2]_q x_1 x_{19} - q^2
        x_{12}.
    \]
    Using the $q$-commutation relations again gives us
    \[
        [x_5, [x_5, T_{\kappa_4 \cdot s_3} (E_3)]] = \widehat{q}^{\,2} [4]_q
        \left( \widehat{q}\, [2]_q x_1 x_5 x_8 x_{16} - q^2  x_2 x_8 x_{16}
        \right)
    \]
    and $[x_5, [x_5, [x_5, T_{\kappa_4 \cdot s_3} (E_3)]]] = 0$. Hence
    ${\mathcal N}(\llbracket \kappa_4 \rrbracket) = 3$.

\end{example}

Similarly, we calculate ${\mathcal N}(\llbracket w \rrbracket)$ for each $w \in
BiGr_\perp^\circ(F_4)$. The following theorem is the main result of this
section.

\begin{theorem}

    The nilpotency index ${\mathcal N}(\llbracket w \rrbracket)$ for each $w
    \in BiGr_{\perp}^{\circ}(F_4)$ is given in Table (\ref{table, F4}).

\end{theorem}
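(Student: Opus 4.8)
The plan is to compute the nilpotency index $\mathcal{N}(\llbracket w \rrbracket)$ for each $w \in BiGr_\perp^\circ(F_4)$ by reducing everything to explicit calculations inside the quantum Schubert cell algebra $\mathcal{U}_q^+[w_0(F_4)]$, whose full presentation has been obtained via the sequence of $\texttt{R(i,j,r,s)}$ algorithms described just prior to the statement. The key observation is that every $w \in BiGr_\perp^\circ(F_4)$ has a reduced expression that is a contiguous substring of the reduced expression (\ref{definition, reduced, F4}) for $w_0(F_4)$, up to applying the diagram automorphism $f$ (which interchanges $s_1 \leftrightarrow s_4$ and $s_2 \leftrightarrow s_3$). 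Consequently, $\mathcal{U}_q^+[w]$ can be identified with an interval subalgebra $\mathbf{U}_{[i,j]}$ of $\mathcal{U}_q^+[w_0(F_4)]$, exactly as we did for $G_2$; and since $f$ is a diagram automorphism, it induces an algebra automorphism of $\mathcal{U}_q(\mathfrak{g})$ permuting the Chevalley generators, so $\mathcal{N}(\llbracket w \rrbracket) = \mathcal{N}(\llbracket f(w) \rrbracket)$ and it suffices to treat the $17$ elements $\kappa_1, \ldots, \kappa_{17}$ of (\ref{F4 data}).

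First I would fix, for each $\kappa_m$, the unique simple reflections $s_i, s_j$ with $\{i\} = \mathcal{D}_L(\kappa_m)$ and $\{j\} = \mathcal{D}_R(\kappa_m)$, so that by the definition (\ref{definition, nil-index}) the index $\mathcal{N}(\llbracket \kappa_m \rrbracket)$ is the least $p$ with $(\operatorname{ad}_q E_i)^p(T_{\kappa_m s_j}(E_j)) = 0$. Next, using the recursive algorithm of Proposition (\ref{as nested E}), I would write $T_{\kappa_m s_j}(E_j)$ as a (scalar multiple of a) nested $q$-commutator of Chevalley generators, then convert each Chevalley generator to the appropriate Lusztig root vector $x_k$ via Theorem (\ref{Jantzen, 8.20}) ($x_1 = E_1$, $x_5 = E_2$, $x_{20} = E_3$, $x_{24} = E_4$), and finally apply the established $q$-commutation relations of $\mathcal{U}_q^+[w_0(F_4)]$ (together with the symmetry $T_c$ of Proposition (\ref{symmetry, F4}) to propagate relations among the higher-indexed generators) to rewrite $T_{\kappa_m s_j}(E_j)$, and then its successive images under $\operatorname{ad}_q E_i = \operatorname{ad}_q x_5$ (or $x_1$, depending on $i$), as $\mathbb{K}$-linear combinations of ordered monomials. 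Because ordered monomials form a basis, an image is zero precisely when its normal form vanishes; the first $p$ at which this happens is $\mathcal{N}(\llbracket \kappa_m \rrbracket)$, and the entries of Table (\ref{table, F4}) are read off. Along the way, Proposition (\ref{bound on nil-index}) part (\ref{bound on nil-index, 2}) and Theorem (\ref{theorem, nil-index, 1}) dispatch many cases immediately — e.g.\ an $\mathcal{N} = 1$ entry follows the moment $[X_{\alpha_i}, X_{\kappa_m s_j(\alpha_j)}] = 0$ by Proposition (\ref{LS corollary}), and an $\mathcal{N} = 2$ entry follows whenever $i = j$ and $i \notin \operatorname{supp}(s_i \kappa_m s_i)$.

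The main obstacle will be the sheer bulk and length of the $q$-commutator straightening computations: for the elements $\kappa_m$ of larger length, $T_{\kappa_m s_j}(E_j)$ is a deeply nested $q$-commutator, its normal form can involve many monomials in up to twenty-four variables, and verifying $(\operatorname{ad}_q x_5)^3 = 0$ while $(\operatorname{ad}_q x_5)^2 \neq 0$ requires applying dozens of the $60$ nontrivial defining relations in the right order. As the introduction already flags, for this reason we automate the bookkeeping with a computer algebra routine that implements the $q$-Jacobi (\ref{q-Jacobi identity}) and $q$-Leibniz (\ref{q-Leibniz identity}) identities and the normal-form rewriting; the role of the human proof is to certify that the inputs (the presentation of $\mathcal{U}_q^+[w_0(F_4)]$, the nested-commutator expressions for $T_{\kappa_m s_j}(E_j)$, and the $f$- and $T_c$-symmetries) are correct, after which the case-by-case verification is, as remarked, straightforward if tedious. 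I would present one worked example in full (say $\kappa_4$, already illustrated above, giving $\mathcal{N} = 3$) and assert that the remaining $16$ cases are handled identically, with the results collected in Table (\ref{table, F4}).
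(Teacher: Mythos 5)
Your computational kernel (second paragraph) is exactly what the paper does and is correct: express $T_{\kappa_m s_j}(E_j)$ via Proposition (\ref{as nested E}), substitute the Chevalley generators by the Lusztig root vectors $x_1, x_5, x_{20}, x_{24}$, and use the presentation of $\mathcal{U}_q^+[w_0(F_4)]$ built from Lemma (\ref{F4, split}), Proposition (\ref{symmetry, F4}), and the $\texttt{R(i,j,r,s)}$ sequence to normal-form the iterates of $\operatorname{ad}_q E_i$. But your first paragraph has a fatal error: $f$ is \emph{not} a Dynkin diagram automorphism of $F_4$ --- the $F_4$ Dynkin diagram is oriented and has trivial automorphism group. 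The map $s_1 \leftrightarrow s_4$, $s_2 \leftrightarrow s_3$ is only an automorphism of the undirected Coxeter graph. It sends long simple roots to short ones and vice versa, hence does not preserve $\langle\,,\,\rangle$ or the $q_i$, and does not lift to a $\mathbb{K}$-algebra automorphism of $\mathcal{U}_q(\mathfrak{g})$ permuting the $E_i$. Consequently $\mathcal{N}(\llbracket w \rrbracket) = \mathcal{N}(\llbracket f(w) \rrbracket)$ is false: Table (\ref{table, F4}) itself records $\mathcal{N}(\llbracket \kappa_2 \rrbracket) = 3$ while $\mathcal{N}(\llbracket f(\kappa_2) \rrbracket) = 5$, and $\mathcal{N}(\llbracket \kappa_{12} \rrbracket) = 2$ while $\mathcal{N}(\llbracket f(\kappa_{12}) \rrbracket) = 3$. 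So it does not suffice to treat the $17$ elements $\kappa_1,\ldots,\kappa_{17}$; you must run the computation for all $34$ elements of $BiGr_\perp^\circ(F_4)$, which is what the paper does.

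A secondary inaccuracy: unlike for $G_2$, the elements of $BiGr_\perp^\circ(F_4)$ generally do not have reduced expressions occurring as contiguous substrings of $(s_1 s_2 s_3 s_4)^6$, even after applying $f$. Any contiguous substring of length $8$ of that periodic sequence has distinct first and last entries, whereas $\kappa_8 = s_1 s_2 s_3 s_4 s_2 s_3 s_2 s_1$ (and likewise $f(\kappa_8)$) begins and ends with the same simple reflection. The paper therefore makes no use of an interval-subalgebra identification here; it works directly in $\mathcal{U}_q^+[w_0(F_4)] = \mathcal{U}_q^+(\mathfrak{g})$, which is anyway where your normal-form computation already takes place, so the ``key observation'' should simply be dropped.
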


\begin{center}
    \begin{table}[h!]
        \caption{ The Lie algebra of type $F_4$ (data)}
        \vspace{-10pt}
\begin{tabular}{lcc|lcc}
    \toprule
    $w \in BiGr_{\perp}^{\circ}(F_4)$ & $\chi( \llbracket w
    \rrbracket)$
    & ${\mathcal N}(\llbracket w \rrbracket)$
    &$w \in BiGr_{\perp}^{\circ}(F_4)$ & $\chi( \llbracket w
    \rrbracket)$
    & ${\mathcal N}(\llbracket w \rrbracket)$
    \\[0.5mm]
    \hline
    $\kappa_{1}$ & (4, 2, 2) & 1
    &
    $f(\kappa_{10})$,  $f(\kappa_{13})$ & (2, 4, 0) & 3
    \\
    $f(\kappa_{1})$ & (2, 4, 2) & 1
    &
    $\kappa_2$,  $\kappa_3$ & (4, 2, 0) & 3
    \\
    $\kappa_{10}$, $\kappa_{13}$ & (4, 2, 0) & 2
    &
    $\kappa_4$ & (4, 2, -2) & 3
    \\
    $\kappa_8$,  $\kappa_9$, $\kappa_{16}$ & (4, 4, 2) & 2
    &
    $\kappa_{11}$, $\kappa_{14}$ & (4, 4, -2) & 3
    \\
    $f(\kappa_8)$, $f(\kappa_9)$, $f(\kappa_{16})$ & (2, 2, 1) & 2
    &
    $f(\kappa_{11})$, $f(\kappa_{14})$ & (2, 2, -1) & 3
    \\
    $\kappa_{12}$ & (4, 2, 2) & 2
    &
    $\kappa_7$ & (4, 4, -2) & 4
    \\
    $f(\kappa_{12})$ & (2, 4, 2) & 3
    &
    $f(\kappa_7)$ & (2, 2, -1) & 4
    \\
    $\kappa_5$ & (4, 4, 2) & 3
    &
    $\kappa_6$ & (4, 4, 0) & 4
    \\
    $f(\kappa_5)$ & (2, 2, 1) & 3
    &
    $f(\kappa_6)$ & (2, 2, 0) & 4
    \\
    $\kappa_{15}$, $\kappa_{17}$ & (4, 4, 0) & 3
    &
    $f(\kappa_2)$, $f(\kappa_3)$ & (2, 4, 0) & 5
    \\
    $f(\kappa_{15})$,  $f(\kappa_{17})$ & (2, 2, 0) & 3
    &
    $f(\kappa_4)$ & (2, 4, -2) & 5
    \\
    \bottomrule
\end{tabular}
\label{table, F4}
\end{table}
\end{center}

\vspace{10pt}

\subsection{Types \texorpdfstring{$E_6$, $E_7$, and $E_8$}{E6, E7, and E8}}

\label{section, E678}

The same techniques used to compute nilpotency indices in the type $F_4$ case
can also be applied to the $E_6$, $E_7$, and $E_8$ Lie types.

In Appendix (\ref{appendix, elements E_n}), we define specific Weyl group
elements
\begin{align}
    \label{E, BiGr}
    & \nu_1,\dots, \nu_{15} \in W(E_6),
    && \zeta_1,\dots, \zeta_{76} \in W(E_7),
    && \eta_1,\dots, \eta_{962} \in W(E_8).
\end{align}
belonging to the Weyl groups of types $E_6$, $E_7$, and $E_8$, respectively.
Each of these elements is bigrassmannian, has full support, and satisfies the
orthogonality condition of Proposition (\ref{proposition, orthogonality
condition}). Moreover, these elements, together with their inverses, give us
all such elements satisfying these conditions. In other words,
\begin{align*}
    &BiGr_{\perp}^\circ (E_6) = \left\{ \nu_k, \nu_k^{-1} : 1 \leq k
    \leq 15 \right\},
    \\
    &BiGr_{\perp}^\circ (E_7) = \left\{ \zeta_k, \zeta_k^{-1} : 1 \leq
    k \leq 76 \right\},
    \\
    &BiGr_{\perp}^\circ (E_8) = \left\{ \eta_k, \eta_k^{-1} : 1 \leq k
    \leq 962 \right\}.
\end{align*}
For organizational purposes, these elements are numbered so that involutions
are first. Specifically,
\begin{align*}
    &\nu_k \in W(E_6)\text{ is an involution if and only if }k \leq 10,
    \\
    &\zeta_k \in W(E_7)\text{ is an involution if and only if }k \leq 39,
    \\
    &\eta_k \in W(E_8)\text{ is an involution if and only if }k \leq 222.
\end{align*}
Furthermore, to avoid listing duplicates, they are arranged in such a manner so
that none of the non-involutions are inverses of each other. That is to say,
for each pair of non-involutions, $\nu_j$ and $\nu_k$ (or $\zeta_j$ and
$\zeta_k$, or $\eta_j$ and $\eta_k$), $\nu_j \neq \nu_k^{-1}$ ($\zeta_j \neq
\zeta_k^{-1}$, $\eta_j \neq \eta_k^{-1}$).

Tables (\ref{table, E6}), (\ref{table, E7}), and (\ref{table, E8}) summarize
the main results regarding nilpotency indices for the Lie types $E_6$, $E_7$,
and $E_8$, respectively.

\subsubsection{Type \texorpdfstring{$E_6$}{E6}}

Consider the reduced expression
\begin{align}
    \label{E6, longest element, reduced expression}
    &w_0(E_6) = (s_2 s_4 s_3 s_5 s_1 s_6) \cdot (s_2 s_4 s_3 s_5 s_1 s_6)
    \cdots (s_2 s_4 s_3 s_5 s_1 s_6)
\end{align}
for the longest element of the Weyl group $W(E_6)$.

\begin{lemma}

    \label{lemma, E6, q-commutator identities}

    Let $x_1, \dots, x_{36} \in {\mathcal U}_q^+[w_0(E_6)]$ be the Lusztig root
    vectors corresponding to the reduced expression (\ref{E6, longest element,
    reduced expression}). Then

    \begin{multicols}{3}

        \begin{enumerate}

            \item $x_2 = [x_1, x_7]$,

            \item $x_{3} = [x_{2}, x_{30}]$,

            \item $x_{4} = [x_{2}, x_{29}]$,

            \item $x_{5} = [x_{3}, x_{36}]$,

            \item $x_{6} = [x_{4}, x_{35}]$,

            \item $x_{13} = [x_{3}, x_{29}]$,

            \item $x_{17} = [x_{6}, x_{30}]$,

            \item $x_{18} = [x_{5}, x_{29}]$.

        \end{enumerate}

    \end{multicols}

\end{lemma}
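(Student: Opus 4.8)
The plan is to mimic the proof of Lemma (\ref{F4, split}): identify which of the Lusztig root vectors on the right-hand sides are Chevalley generators, write the left-hand sides as nested $q$-commutators $\mathbf{E}_{i_1,\ldots,i_m}$ via the recursive algorithm in the proof of Proposition (\ref{as nested E}), and then read off each identity. Since the reduced expression (\ref{E6, longest element, reduced expression}) is $c^6$ for the Coxeter element $c = s_2 s_4 s_3 s_5 s_1 s_6$, Proposition (\ref{symmetry, general}) (equivalently Proposition (\ref{symmetry, F4})) gives $T_c(x_k) = x_{k+6}$ for $k \le 30$, which will let me relate later blocks to the first block. First I would pin down the Chevalley generators. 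From the first block, $x_1 = E_2$; computing $c(\alpha_2) = \alpha_4$ and invoking Theorem (\ref{Jantzen, 8.20}) gives $x_7 = T_c(E_2) = E_4$. For the ``tail'' vectors I would use that $\beta_k = s_{i_1}\cdots s_{i_{k-1}}(\alpha_{i_k})$, compute the action of the relevant Weyl group element on the corresponding simple root, and use that $-w_0(E_6)$ is the nontrivial Dynkin diagram automorphism (swapping $1\leftrightarrow 6$ and $3\leftrightarrow 5$, fixing $2$ and $4$); this yields $\beta_{29} = \alpha_5$, $\beta_{30} = \alpha_3$, $\beta_{35} = \alpha_6$, $\beta_{36} = \alpha_1$, hence $x_{29} = E_5$, $x_{30} = E_3$, $x_{35} = E_6$, $x_{36} = E_1$ by Theorem (\ref{Jantzen, 8.20}).

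Next I would express $x_2, x_3, x_4, x_5, x_6, x_{13}, x_{17}, x_{18}$ as nested $q$-commutators of Chevalley generators using the recursion in the proof of Proposition (\ref{as nested E}). Because $E_6$ is simply laced, no scalar rescalings enter and each of these vectors is a plain $\mathbf{E}_{i_1,\ldots,i_m}$. For the first-block vectors a short unwinding (peeling off $T_{s_k}$'s, using $T_{s_k}(E_j) = E_j$ when $\langle\alpha_k,\alpha_j\rangle = 0$ and $T_{s_k}(E_j) = [E_k,E_j]$ when $\langle\alpha_k^\vee,\alpha_j\rangle = -1$, and applying (\ref{orthogonality of Tw})) gives $x_2 = \mathbf{E}_{2,4}$, $x_3 = \mathbf{E}_{2,4,3}$, $x_4 = \mathbf{E}_{2,4,5}$, $x_5 = \mathbf{E}_{2,4,3,1}$, and $x_6 = \mathbf{E}_{2,4,5,6}$. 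For $x_{13}, x_{17}, x_{18}$, which lie in the second and third blocks, one can either run the recursion directly or use the symmetry $x_{13} = T_c(x_7) = T_c(E_4)$ (and analogously start from the nested forms of $x_5, x_6$) together with (\ref{orthogonality of Tw}); either way one obtains explicit $\mathbf{E}_{i_1,\ldots,i_m}$ expressions, e.g. $x_{13} = \mathbf{E}_{2,4,5,3}$.

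With these nested forms in hand, the eight identities drop out immediately, exactly as in Lemma (\ref{F4, split}): identities $(1)$--$(5)$ just record that $\mathbf{E}_{2,4}$, $\mathbf{E}_{2,4,3}$, $\mathbf{E}_{2,4,5}$, $\mathbf{E}_{2,4,3,1}$, $\mathbf{E}_{2,4,5,6}$ are obtained from $x_1, x_2, x_3, x_4$ by adjoining one more Chevalley generator. The only extra manipulation needed, for $(6)$--$(8)$, is an occasional transposition of adjacent entries of a nested commutator whose indices label non-adjacent Dynkin nodes: if $\langle\alpha_i,\alpha_j\rangle = 0$ then $E_i E_j = E_j E_i$ and $q^{\langle\alpha_i,\alpha_j\rangle} = 1$, so $[[A,E_i],E_j] = [[A,E_j],E_i]$ for any homogeneous $A$ (a one-line check from Definition (\ref{def, q-commutators})), and hence such entries may be swapped freely. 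For instance $x_{13} = \mathbf{E}_{2,4,5,3} = \mathbf{E}_{2,4,3,5} = [\mathbf{E}_{2,4,3},E_5] = [x_3,x_{29}]$, which is $(6)$; identities $(7)$ and $(8)$ follow in the same way from the nested forms of $x_6$ and $x_5$.

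\textbf{Main obstacle.} There is nothing conceptually delicate here; the only real difficulty is bookkeeping — correctly tracking the action of length-six-and-longer elements of $W(E_6)$ on simple roots to get the Chevalley-generator identifications, and correctly running the recursion of Proposition (\ref{as nested E}) for all eight targets. This is error-prone, so as elsewhere in the paper I would carry out and independently verify these computations with a computer; once the eight nested-commutator forms are confirmed, the lemma is immediate.
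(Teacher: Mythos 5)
Your proposal is correct and follows essentially the same route as the paper: identify the Chevalley-generator positions ($x_1 = E_2$, $x_7 = E_4$, $x_{29} = E_5$, $x_{30} = E_3$, $x_{35} = E_6$, $x_{36} = E_1$), write $x_2,\dots,x_6,x_{13},x_{17},x_{18}$ as nested $q$-commutators via the algorithm of Proposition (\ref{as nested E}) (using that no scalar corrections arise in simply-laced type), and read off the eight identities. The paper records the nested forms directly in the convenient order (e.g.\ $x_{13} = \mathbf{E}_{2,4,3,5}$) so no explicit transposition step is written out, whereas you supply the small commutation argument for non-adjacent indices — a cosmetic difference only.
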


\begin{proof}

    Applying the algorithm described in the proof of Proposition (\ref{as
    nested E}), we can write each Lusztig root vector, up to a scalar multiple,
    as either a Chevellay generator, or as nested $q$-commutators of Chevellay
    generators. We get $x_1 = E_2$, $x_2 = \mathbf{E}_{2,4}$, $x_3 =
    \mathbf{E}_{2,4,3}$, $x_4 = \mathbf{E}_{2,4,5}$, $x_5 =
    \mathbf{E}_{2,4,3,1}$, $x_6 = \mathbf{E}_{2,4,5,6}$, $x_7 = E_4$, $x_{13} =
    \mathbf{E}_{2,4,3,5}$, $x_{17} = \mathbf{E}_{2,4,3,5,6}$, $x_{18} =
    \mathbf{E}_{2,4,3,1,5}$, $x_{29} = E_5$, $x_{30} = E_3$, $x_{35} = E_6$,
    $x_{36} = E_1$, and consequently the desired $q$-commutator identities
    follow.

\end{proof}

As in the type $F_4$ case, the repeating pattern in the reduced expression
(\ref{E6, longest element, reduced expression}) induces symmetries in the
presentation of the algebra ${\mathcal U}_q^+[w_0(E_6)]$. In particular,
Proposition (\ref{symmetry, general}) can be applied to give the following
result.

\begin{proposition}

    \label{symmetry, E6}

    Let $\mathfrak{g}$ be the complex simple Lie algebra of type $E_6$, and let
    $x_1,\dots, x_{36} \in {\mathcal U}_q^+[w_0(E_6)]$ be as in in Lemma
    (\ref{lemma, E6, q-commutator identities}).  Let $c\in W(E_6)$ be the
    Coxeter element $c = s_2 s_4 s_3 s_5 s_1 s_6$.  The Lusztig symmetry $T_c$
    is an algebra automorphism of ${\mathcal U}_q(\mathfrak{g})$ that sends
    $x_k$ to $x_{k + 6}$ for all $k \in [1, 30]$.

\end{proposition}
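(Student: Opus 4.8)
The plan is to deduce Proposition (\ref{symmetry, E6}) directly from the general Proposition (\ref{symmetry, general}), exactly as Proposition (\ref{symmetry, F4}) was obtained, by checking that the hypotheses apply with the right numerical parameters. First I would set $N = 6$, let $w = c = s_2 s_4 s_3 s_5 s_1 s_6$ be the Coxeter element of $W(E_6)$, and note that $t := \ell(w) = 6$ since the six simple reflections appearing in $c$ are distinct, so this is a reduced expression. I would then verify the key hypothesis $\ell(w^N) = N \cdot \ell(w)$: here $w^6 = c^6 = w_0(E_6)$, whose length is $|\Delta_+(E_6)| = 36 = 6 \cdot 6$, which is precisely $N \cdot t$. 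This is the Coxeter-number identity for $E_6$ (the Coxeter number is $12$ and the number of positive roots is $nh/2 = 6 \cdot 12 / 2 = 36$), and it is well known that powers of a Coxeter element up to the $h/2$-th are length-additive, giving the reduced expression displayed in (\ref{E6, longest element, reduced expression}).

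Next I would observe that the Lusztig root vectors $x_1, \dots, x_{36}$ introduced in Lemma (\ref{lemma, E6, q-commutator identities}) are by definition the root vectors associated to exactly that reduced expression $w^6 = (s_2 s_4 s_3 s_5 s_1 s_6) \cdots (s_2 s_4 s_3 s_5 s_1 s_6)$. Therefore Proposition (\ref{symmetry, general}), applied with this $w$, $N = 6$, and $t = 6$, immediately yields that $T_w = T_c$ is an algebra automorphism of ${\mathcal U}_q(\mathfrak{g})$ sending $x_k$ to $x_{k + t} = x_{k+6}$ for all $k \in [1, t \cdot N - t] = [1, 30]$. The automorphism claim is already guaranteed by \cite[Proposition 37.1.2]{L} (Lusztig symmetries are algebra automorphisms), so nothing extra is needed there.

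Since this is a one-line specialization of an already-established general result, there is essentially no obstacle; the only point requiring a word of justification is the length identity $\ell(c^6) = 36$, i.e. that $c^6 = w_0$ with the concatenated expression reduced. I would justify it by the standard fact (Bourbaki, or Steinberg) that if $c$ is a Coxeter element of a finite Weyl group with Coxeter number $h$, then $c^{h/2}$ (when $h$ is even) equals $w_0$ up to the diagram automorphism $-w_0$, and in any case $\ell(c) + \ell(c^2) + \cdots$ is additive through the relevant range; concretely for $E_6$ one simply notes $h = 12$, $|\Delta_+| = 36$, and $6 \ell(c) = 36 = |\Delta_+| = \ell(w_0)$, so the concatenation of six copies of the reduced word for $c$ has the correct length and hence is a reduced word for $w_0$. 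With this in hand, the proof reads: ``Following identical steps as in the proof of Proposition (\ref{symmetry, F4}), apply Proposition (\ref{symmetry, general}) with $N = 6$ and $w = c$, using $\ell(c^6) = 6\,\ell(c) = 36$.''
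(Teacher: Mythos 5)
Your proposal is correct and takes precisely the approach the paper intends: the paper introduces Proposition (\ref{symmetry, E6}) with the sentence ``In particular, Proposition (\ref{symmetry, general}) can be applied to give the following result'' and supplies no further proof, and your argument is exactly that specialization ($N = 6$, $w = c$, $t = 6$), matching what was done for Proposition (\ref{symmetry, F4}). Your side-discussion justifying $\ell(c^6) = 36$ is the only place you go beyond the paper; the observation that $6\,\ell(c) = |\Delta_+|$ alone does not by itself prove the concatenated word is reduced (length of a word is not the same as length of the element it represents), so the ``hence'' in your final sentence is not a deduction so much as a restatement of what needs checking, and the cited Bourbaki/Steinberg facts about $c^{h/2}$ are a bit delicate for $E_6$ since $w_0(E_6) \neq -1$ — but this is peripheral, since the paper itself asserts (\ref{E6, longest element, reduced expression}) without proof, and once that reduced expression is granted, the rest of your argument is airtight.
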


Proposition (\ref{symmetry, E6}) implies an explicit presentation of the
algebra ${\mathcal U}_q^+[w_0(E_6)]$ can be found by determining how to write
each of the $q$-commutators $[x_i, x_j]$, with $i < j$ and $i \leq 6$, as a
linear combination of ordered monomials. However, among these ${36 \choose 2} -
{30 \choose 2}$ ($=195$) $q$-commutators, $80$ are trivial (i.e. $[x_i, x_j] =
0$) as a consequence of Proposition (\ref{LS corollary}). As it turns out,
these trivial $q$-commutators, together with the $8$ relations in Lemma
(\ref{lemma, E6, q-commutator identities}), provide enough initial data to
apply the algorithms $\texttt{L(i,j,r,s)}$ and $\texttt{R(i,j,r,s)}$ to write
each of the remaining $195 - 80 - 8$ ($=107$) $q$-commutators $[x_i, x_j]$ as a
linear combination of ordered monomials. That is to say, there is a sequence of
\texttt{L(i,j,k,l)}'s and \texttt{R(i,j,k,l)}'s that, when applied in the
prescribed order, can be used to find all of the remaining $q$-commutator
relations among the Lusztig root vectors $x_1, \cdots, x_{36}$.  These
relations provide a presentation of the quantum Schubert cell algebra
${\mathcal U}_q^+[w_0(E_6)]$.  It is a straightforward process to apply the
algorithms \texttt{L(i,j,r,s)} and \texttt{R(i,j,r,s)}, yet there are $107$
remaining commutation relations to find. In light to this, we used a computer
to automate this process. To give an example, one such sequence (among several
possible sequences) of \texttt{L(i,j,r,s)}'s and \texttt{R(i,j,r,s)}'s begins
\begin{center}
    {\small\tt L(4,30,2,29)},
    {\small\tt L(13,36,3,29)},
    {\small\tt L(13,35,4,30)},
    {\small\tt L(8,29,7,13)},
    {\small\tt L(8,30,7,13)},\ldots
\end{center}

Once a presentation of the algebra ${\mathcal U}_q^+[w_0(E_6)]$ is found, we
can apply commutation relations to write any given element $u \in {\mathcal
U}_q^+[w_0(E_6)]$ explicitly as a linear combination of ordered monomials, and
thus determine whether or not $u$ equals $0$. This may, however, involve
lengthy computations, especially if the commutation relations are complicated.
As it turns out, the commutation relations in ${\mathcal U}_q^+[w_0(E_6)]$ are
relatively simple enough that one could apply these techniques to compute
nilpotency indices, yet it would be tedious to do this all by hand. We used a
computer to automate this process. The following example illustrates how
nilpotency indices can be calculated using these techniques.

\begin{example}

    \normalfont

    We will prove ${\mathcal N} (\llbracket \nu_9 \rrbracket) = 4$. First,
    since ${\mathcal D}_L(\nu_9) = {\mathcal D}_R(\nu_9) = \left\{ 4 \right\}$,
    then, by the definition of nilpotency index, we must show
    \[
        \left(\operatorname{ad}_q(E_4)\right)^4 \left( T_{\nu_9 \cdot s_4}
        (E_4) \right) = 0 \text{ and } \left(\operatorname{ad}_q(E_4)\right)^3
        \left( T_{\nu_9 \cdot s_4} (E_4) \right) \neq 0.
    \]
    Begin by writing $T_{\nu_9 \cdot s_4} (E_4)$ as nested $q$-commutators of
    Chevalley generators. Recall the proof of Proposition (\ref{as nested E})
    provides, in effect, a recursive algorithm. With this, we obtain
    \[
        T_{\nu_9 \cdot s_4} (E_4) = [ \mathbf{E}_{4,3,5,2},
        \mathbf{E}_{4,3,1,5,6}].
    \]
    Next, substitute each Chevalley generator $E_i$ in the expression on the
    right hand side with its corresponding Lusztig root vector. Here, the
    Lusztig root vectors $x_1,\cdots, x_{36}$ are defined from the reduced
    expression (\ref{E6, longest element, reduced expression}) of the longest
    element. We have $E_1 = x_{36}$, $E_2 = x_1$, $E_3 = x_{30}$, $E_4 = x_7$,
    $E_5 = x_{29}$, and $E_6 = x_{35}$. This gives us $T_{\nu_9 \cdot s_4}
    (E_4) = [[[[x_7, x_{30}], x_{29}], x_1], [[[[x_7, x_{30}], x_{36}],
    x_{29}], x_{35}]]$.  Using the $q$-commutation relations in the algebra
    ${\mathcal U}_q^+[w_0]$, write $T_{\nu_9 \cdot s_4} (E_4)$ as a linear
    combination of ordered monomials. A straightforward yet lengthy computation
    gives us $T_{\nu_9 \cdot s_4} (E_4) =  q^2 x_{26} - q \widehat{q} \left(
    x_{17} x_{27} + x_{18} x_{28} \right) + \widehat{q}^{\,2} x_1 x_{27}
    x_{28}$.  Similarly, applying the $q$-commutation relations gives us
    \[
        [x_7, [x_7, [x_7, T_{\nu_9 \cdot s_4} (E_4)]]] = \widehat{q}^{\,4}
        \left([3]_q! \right) \left( \widehat{q} x_{1} x_{7} x_{11} x_{12}
        x_{23} x_{24} - q x_{2} x_{11} x_{12} x_{23} x_{24} \right).
    \]
    This means ${\mathcal N}(\llbracket \nu_9 \rrbracket) > 3$. Finally,
    another computation gives us
    \[
        [x_7, [x_7, [x_7, [x_7, T_{\nu_9 \cdot s_4} (E_4)]]]] = 0.
    \]
    Hence, ${\mathcal N}(\llbracket \nu_9 \rrbracket) = 4$.

\end{example}

In a similar manner, we find the nilpotency index ${\mathcal N}(\llbracket w
\rrbracket )$ associated to each $w \in BiGr_\perp^\circ(E_6)$. As it turns
out, we have the following curious result.

\begin{theorem}

    \label{invariant under dual, E6}

    For all $x \in \Gamma (W(E_6))$, ${\mathcal N}(x) = {\mathcal N}(x^*)$ and
    $\chi(x) = \chi(x^*)$.

\end{theorem}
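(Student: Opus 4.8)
The plan is to reduce the claim to a finite, computer-assisted verification over the minimal elements of $(\Gamma(W(E_6)), \xlongrightarrow{})$, exploiting the machinery already established in the excerpt. Recall from Theorem (\ref{nilpotency indices, preserved}) that both ${\mathcal N}$ and $\chi$ are constant on equivalence classes of $\xleftrightarrow{\hspace{4mm}}$, and from Proposition (\ref{L, R, equivalent, inverse})(\ref{proposition, a, 3}) that $x \xleftrightarrow{\hspace{4mm}} y$ implies $x^* \xleftrightarrow{\hspace{4mm}} y^*$. So to prove ${\mathcal N}(x) = {\mathcal N}(x^*)$ and $\chi(x) = \chi(x^*)$ for \emph{all} $x \in \Gamma(W(E_6))$, it suffices to check these equalities for a single representative of each equivalence class, and—since $E_6$ is simply laced—the Corollary to Theorem (\ref{invariant under dual, E6})'s precursor already handles $\chi$ via the observation $\chi(x)=\chi(x^*)$ whenever $\alpha_i$, $\alpha_j$ have equal length, which is automatic in the simply-laced case. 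Thus the real content is the ${\mathcal N}$-equality.

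First I would invoke Theorem (\ref{reduce by L and R}) together with the orthogonality-reduction results of Section (\ref{section, orthogonality condition}) (Theorem \ref{theorem, second stage reduction} and Proposition \ref{proposition, orthogonality condition}) to conclude that every $x \in \Gamma(W(E_6))$ is equivalent under $\xleftrightarrow{\hspace{4mm}}$ to an element of one of two shapes: either $(w_0(a,b), k, \ell)$ for simple reflections $s_a, s_b$, or $\llbracket v \rrbracket$ with $v \in BiGr_\perp^\circ(X_m)$ where $X_m$ is a rank-$\geq 2$ sub-diagram of the $E_6$ Dynkin diagram (using the support-reduction of Proposition (\ref{prop, sigma})). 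For the $w_0(a,b)$ case, $x^* = (w_0(a,b), \ell, k)$ and the $q$-Serre relations give ${\mathcal N}(x) = 1 - \langle \alpha_k, \alpha_\ell^\vee\rangle = {\mathcal N}(x^*)$ immediately (all simple roots having equal length in $E_6$). For the bigrassmannian case, the possible sub-types are $A_2, A_3, A_4, A_5, D_4, D_5, E_6$; for the $A$, $D$ sub-types one reads ${\mathcal N}(\llbracket v \rrbracket) = {\mathcal N}(\llbracket v\rrbracket^*)$ directly off Tables (\ref{table, small rank ABCD}) and (\ref{table, general cases ABCD}) (inspecting that each listed value is symmetric under the $*$-involution, which on the $w_{\ell,i,j,k,m}$-notation is the transpose $w_{\ell,i,j,k,m} \leftrightarrow w_{\ell,k,j,i,m}$).

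The only genuinely new work, and the step I expect to be the main obstacle, is the full-support $E_6$ case: one must verify ${\mathcal N}(\llbracket \nu_t \rrbracket) = {\mathcal N}(\llbracket \nu_t^{-1}\rrbracket)$ for each of the $15$ elements $\nu_1,\dots,\nu_{15} \in BiGr_\perp^\circ(E_6)$ of Appendix (\ref{appendix, elements E_n}). This is handled exactly as in the worked example for $\nu_9$: using the presentation of ${\mathcal U}_q^+[w_0(E_6)]$ obtained from the reduced expression (\ref{E6, longest element, reduced expression}), the symmetry automorphism $T_c$ of Proposition (\ref{symmetry, E6}), and the \texttt{L(i,j,r,s)}/\texttt{R(i,j,r,s)} algorithms, one writes $T_{\nu_t s_j}(E_j)$ as a linear combination of ordered monomials, then iterates $\operatorname{ad}_q(E_i)$ until the result vanishes, recording the index; the same is done for $\nu_t^{-1}$. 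Because ${\mathcal N}(\llbracket \nu_t^{-1}\rrbracket) = {\mathcal N}(\llbracket \nu_t \rrbracket^*)$, matching the two computed values establishes the claim. These are tedious but routine straightening computations—lengthy enough that, as the text notes, they are best automated—so the write-up would present the resulting Table (\ref{table, E6}) and remark that inspection of its \textbf{Case}/${\mathcal N}$ columns (with $\nu_t$ and $\nu_t^{-1}$ appearing in a common row, or with equal ${\mathcal N}$-entries) yields ${\mathcal N}(x) = {\mathcal N}(x^*)$, completing the proof.
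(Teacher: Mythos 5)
Your overall strategy matches the paper's proof almost exactly: reduce via Theorem (\ref{reduce by L and R}), the orthogonality condition, and Proposition (\ref{L, R, equivalent, inverse}) to the minimal-element representatives; handle the $(w_0(a,b),k,\ell)$ case directly; reduce the bigrassmannian case to the support type; read off the $A$ and $D$ sub-cases from Tables (\ref{table, small rank ABCD}) and (\ref{table, general cases ABCD}); and defer the full-support $E_6$ case to the straightening computations recorded in Table (\ref{table, E6}). That much is correct and is the same route as the paper.

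There is one flaw worth flagging: your treatment of the $\chi$-equality is both circular and based on an inapplicable citation. You invoke ``the Corollary to Theorem (\ref{invariant under dual, E6})'s precursor'' to dispense with $\chi$. But that Corollary (in the introduction) asserts ${\mathcal N}(x) = {\mathcal N}(x^*)$ in simply laced types, not $\chi(x) = \chi(x^*)$, and moreover it is a consequence of the theorem you are currently trying to prove (that theorem is ``proved on a case-by-case basis'' with $E_6$ as one of the cases), so quoting it here begs the question. The correct way to get the $\chi$-equality cheaply, which is essentially what you want to say, is the direct computation: for $x = (w,i,j)$ one has $\chi(x^*) = (\langle\alpha_j,\alpha_j\rangle, \langle\alpha_i,\alpha_i\rangle, -\langle\alpha_j, w^{-1}(\alpha_i)\rangle)$; the third coordinate equals $-\langle w(\alpha_j), \alpha_i\rangle = -\langle\alpha_i, w(\alpha_j)\rangle = \langle\alpha_i, ws_j(\alpha_j)\rangle$ by symmetry of the form and $W$-invariance, and the first two coordinates are equal whenever $\|\alpha_i\| = \|\alpha_j\|$, which is automatic in a simply laced type. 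Stating that one-line argument avoids the circularity. (The paper instead just checks the tables, which also works but is less elegant.) Aside from this, the proposal is sound and the dependence on a finite computer verification for $BiGr_\perp^\circ(E_6)$ is unavoidable and matches what the paper actually does.
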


\begin{proof}

    Let $x \in \Gamma(W(E_6))$.  Theorem (\ref{reduce by L and R}) and
    Proposition (\ref{proposition, orthogonality condition}) together imply
    that $x$ is equivalent (under $\xleftrightarrow{\hspace{4mm}}$) to an
    element of the form $(w, i, j)$ such that either (1) $w = w_0(a, b)$ for
    some $a, b\in \mathbf{I}$, or (2) $w \in BiGr_\perp(E_6)$.  From
    Proposition (\ref{proposition, a}), $x^* \xleftrightarrow{\hspace{4mm}} (w,
    i, j)^*$.  Thus, without loss of generality, we may assume $x$ has the form
    $(w, i, j)$, where either $w \in BiGr_\perp(E_6)$ or $w = w_0(a, b)$ for
    some $a, b, \in \mathbf{I}$.  In the case when $x = (w_0(a, b), i, j)$, we
    have $x^* = (w_0(a, b), j, i)$, and it is a straightforward observation
    that ${\mathcal N}(x) = {\mathcal N}(x^*)$ and $\chi(x) = \chi(x^*)$.

    It remains to consider the case when $x = \llbracket w \rrbracket$ with $w
    \in BiGr_\perp(E_6)$.  Each $w \in BiGr_\perp(E_6)$ can be classified by
    its support (recall Section (\ref{section: rank reduction})).  With this,
    we may view $w$ as belonging to $BiGr_\perp^\circ(X_n)$, where $X_n$ is a
    Lie type with associated Dynkin diagram a subgraph of the $E_6$ Dynkin
    diagram.  That is to say, $X_n$ is a simply laced Lie type with $n \leq 6$.

    Tables (\ref{table, small rank ABCD}) and (\ref{table, general cases ABCD})
    give the nilpotency index ${\mathcal N}(\llbracket w \rrbracket)$ for each
    $w \in BiGr_\perp^\circ(X_n)$ for $X_n$ of type $ABCD$, and upon
    inspection, we observe that ${\mathcal N}(\llbracket w \rrbracket) =
    {\mathcal N}(\llbracket w \rrbracket^*)$ in all cases. In fact, $\llbracket
    w \rrbracket = \llbracket w \rrbracket^*$ whenever the underlying Lie type
    is $A_n$ ($n \neq 2$) or $D_n$. Finally, we obtain the desired result by
    observing ${\mathcal N}(\llbracket w \rrbracket) = {\mathcal
    N}(\llbracket w \rrbracket^*)$ also when $w \in BiGr_\perp^\circ(E_6)$.

    Next, to prove $\chi(x) = \chi(x^*)$, we may again assume $x$ has the form
    $\llbracket w \rrbracket$ for some $w \in BiGr_\perp^\circ(X_n)$, where
    $X_n$ is a simply laced Lie type with $n \leq 6$. This is due to $\chi$
    being invariant under the equivalence relation
    $\xleftrightarrow{\hspace{4mm}}$, together with invariance under
    identifying $w \in BiGr_\perp(E_6)$ as an element of
    $BiGr_\perp^\circ(X_n)$ (recall Section (\ref{section: rank reduction})).
    Tables (\ref{table, small rank ABCD}) and (\ref{table, general cases ABCD})
    give $\chi(\llbracket w \rrbracket)$ when $X_n$ is of type $ABCD$. We see
    $\chi (\llbracket w \rrbracket) = \chi(\llbracket w \rrbracket^*)$ whenever
    $X_n$ is of type $A_n$ or $D_n$.  Lastly, we have $\chi(\llbracket w
    \rrbracket ) = \chi(\llbracket w \rrbracket^*)$ also when $X_n$ is of type
    $E_6$ (for instance, by checking this for each $w \in
    BiGr_\perp^\circ(E_6)$).

\end{proof}

\begin{theorem}

    Table (\ref{table, E6}) gives the nilpotency index ${\mathcal N}(\llbracket
    \nu_k \rrbracket)$ for all $k \in [1, 15]$.

\end{theorem}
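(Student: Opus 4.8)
The statement is a finite computation, and the plan is to reduce it to the single presentation of ${\mathcal U}_q^+[w_0(E_6)]$ already set up in this subsection. Fix the reduced expression (\ref{E6, longest element, reduced expression}) for $w_0(E_6)$ and the associated Lusztig root vectors $x_1,\dots,x_{36}$. The first task is to obtain, explicitly, every $q$-commutator relation $[x_i,x_j]=f(x_1,\dots,x_{36})$ with $i<j$. By Proposition (\ref{symmetry, E6}) it suffices to treat the $195$ pairs with $i\le 6$: the remaining relations are images under the Coxeter shift $T_c$, which carries $x_k$ to $x_{k+6}$ and intertwines $q$-commutators by (\ref{orthogonality of Tw}). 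Among those $195$ pairs, $80$ are trivial by Proposition (\ref{LS corollary}), and $8$ more are the splitting identities of Lemma (\ref{lemma, E6, q-commutator identities}). Starting from this initial data, I would run the sequence of $\texttt{L(i,j,r,s)}$ and $\texttt{R(i,j,r,s)}$ steps indicated after Lemma (\ref{lemma, E6, q-commutator identities}) to peel off the remaining $107$ relations one at a time; each invocation uses the $q$-Jacobi and $q$-Leibniz identities of Proposition (\ref{Jacobi}) together with previously established relations, and the Levendorskii--Soibelmann straightening rule (\ref{L-S straightening}) guarantees every output is again a $\mathbb{K}$-linear combination of ordered monomials. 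The key point to verify here is termination with valid prerequisites, i.e.\ that each $\texttt{L/R}$ step in the chosen order has its required relation already in hand; this is bookkeeping, but it is what makes the presentation of ${\mathcal U}_q^+[w_0(E_6)]$ complete.

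With a presentation of ${\mathcal U}_q^+[w_0(E_6)]$ in hand, I would handle the $\nu_k$ uniformly. For each $k\in[1,15]$, since $\nu_k$ is bigrassmannian, ${\mathcal D}_L(\nu_k)=\{i\}$ and ${\mathcal D}_R(\nu_k)=\{j\}$ are singletons, so by (\ref{definition, nil-index}) the quantity ${\mathcal N}(\llbracket \nu_k\rrbracket)$ is the least $p$ with $\bigl(\operatorname{ad}_q(E_i)\bigr)^p\bigl(T_{\nu_k s_j}(E_j)\bigr)=0$. Because $\nu_k s_j(\alpha_j)>0$, Proposition (\ref{as nested E}) (and the recursive algorithm in its proof) lets me write $T_{\nu_k s_j}(E_j)$, up to a scalar, as a nested $q$-commutator of Chevalley generators; since $E_1,\dots,E_6$ each occur as one of the $x_\ell$ (namely $E_2=x_1$, $E_4=x_7$, $E_3=x_{30}$, $E_5=x_{29}$, $E_6=x_{35}$, $E_1=x_{36}$), this expresses $T_{\nu_k s_j}(E_j)$ as a nested $q$-commutator in the $x_\ell$, which the commutation relations expand into the PBW basis. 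I would then apply $\operatorname{ad}_q(E_i)=\operatorname{ad}_q(x_\ell)$ repeatedly, each time re-expanding into ordered monomials via the established relations, and read off the first power at which the result becomes the zero element of the PBW basis; the power immediately below must be checked to be nonzero, which is immediate once it is displayed as a nonzero combination of distinct ordered monomials. The worked instance ${\mathcal N}(\llbracket \nu_9\rrbracket)=4$ in the preceding example is the template for all fifteen cases, and the same run also records $\chi(\llbracket\nu_k\rrbracket)$, matching Table (\ref{table, E6}).

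The only genuine difficulty is the volume and length of the intermediate expressions: a single $\operatorname{ad}_q$ application can produce many monomials, and reordering them into PBW form invokes long straightening relations, so doing all of this by hand is impractical. As already noted in the text for $E_7$ and $E_8$, I would automate the $\texttt{L/R}$ chain and the iterated $\operatorname{ad}_q$ computations on a computer; for $E_6$ the relations are mild enough that the entire verification runs quickly, and the correctness of the output rests only on the finitely many identities of Proposition (\ref{Jacobi}), Proposition (\ref{as nested E}), Proposition (\ref{LS corollary}), Proposition (\ref{symmetry, E6}), and Lemma (\ref{lemma, E6, q-commutator identities}), all established above. Assembling the fifteen resulting values of ${\mathcal N}(\llbracket \nu_k\rrbracket)$ yields exactly the rightmost column of Table (\ref{table, E6}).
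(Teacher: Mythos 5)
Your proposal is correct and matches the paper's approach essentially verbatim: build the presentation of ${\mathcal U}_q^+[w_0(E_6)]$ from the trivial commutators of Proposition (\ref{LS corollary}), the eight seed relations of Lemma (\ref{lemma, E6, q-commutator identities}), and the Coxeter-shift symmetry of Proposition (\ref{symmetry, E6}), then compute each ${\mathcal N}(\llbracket \nu_k \rrbracket)$ by expanding $T_{\nu_k s_j}(E_j)$ into the PBW basis and iterating $\operatorname{ad}_q(E_i)$, with the worked $\nu_9$ example serving as the template for all fifteen cases. The paper likewise leaves the termination/prerequisite bookkeeping of the $\texttt{L/R}$ chain and the fifteen iterated-adjoint expansions to automated computation, so there is no substantive gap between the two arguments.
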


\begin{center}
    \begin{table}[h]
\caption{The Lie algebra of Type $E_6$ (data)}
\vspace{-10pt}
\begin{tabular}{lcc}
    \toprule
    $w \in BiGr_{\perp}^{\circ}(E_6)$ & $\chi( \llbracket w
    \rrbracket)$ & ${\mathcal N}(\llbracket w \rrbracket)$
    \\[0.5mm]
    \hline
    $\nu_{10}, \nu_{11}, \nu_{15}$ & $(2, 2, 1)$ & 2
    \\
    $\nu_1$ & $(2, 2, 1)$ & 3
    \\
    \midrule[0.01pt]
    $\nu_{12}$ & $(2, 2, 0)$ & 2
    \\
    $\nu_3, \nu_5, \nu_6, \nu_7, \nu_{13}, \nu_{14}$ & $(2, 2, 0)$ & 3
    \\
    $\nu_8$ & $(2, 2, 0)$ & 4
    \\
    \midrule[0.01pt]
    $\nu_2, \nu_4$ & $(2, 2, -1)$ & 3
    \\
    $\nu_9$ & $(2, 2, -1)$ & 4
    \\
    \bottomrule
\end{tabular}
\label{table, E6}
\end{table}
\end{center}

\noindent Observe that since $\llbracket \nu_k \rrbracket^* = \llbracket
\nu_k^{-1} \rrbracket$ for all $k \in [1, 15]$, then by Theorem (\ref{invariant
under dual, E6}), Table (\ref{table, E6}) also gives the nilpotency index
${\mathcal N}(\llbracket \nu_k^{-1} \rrbracket)$.  Hence, the table can be used
to look up ${\mathcal N}(\llbracket w \rrbracket)$ for every $w \in
BiGr_\perp^\circ(E_6)$.

\noindent
\begin{minipage}{\textwidth}
\begin{theorem}

    \label{equivalence classes, bound, E6}

    Let $\Gamma_6 := \left\{ \llbracket \nu
    \rrbracket \in \Gamma(W(E_6)) : \nu \in BiGr_\perp^\circ(E_6) \right\}$.

    \begin{enumerate}

        \item The $20$ elements of $\Gamma_6$ are contained in at most $15$
            distinct equivalence classes.

        \item \label{equivalence classes, bound, E6, b} At least $6$ of the
            $10$ elements in $\Gamma_6$ of the form $\llbracket w \rrbracket$
            with $w$ a non-involution satisfy $\llbracket w \rrbracket
            \xleftrightarrow{\hspace{4mm}} \llbracket w^{-1} \rrbracket$.

        \item \label{equivalence classes, bound, E6, c} At least $4$ of the
            $20$ elements of $\Gamma_6$, namely $\llbracket \nu_{11}
            \rrbracket$, $\llbracket \nu_{11}^{-1} \rrbracket$, $\llbracket
            \nu_{12} \rrbracket$, and $\llbracket \nu_{12}^{-1} \rrbracket$,
            are equivalent (under the equivalence relation
            $\xleftrightarrow{\hspace{4mm}}$) to an element of the form
            $\llbracket w \rrbracket$ with $w \in BiGr_\perp(E_6) \backslash
            BiGr_\perp^\circ(E_6)$.

    \end{enumerate}

\end{theorem}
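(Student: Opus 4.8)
The proof will follow the template of the proof of Theorem~(\ref{equivalence classes, bound, F4}): for each identification we want we exhibit one explicit source element $X \in \Gamma(W(E_6))$ admitting both a reduction $X \xlongrightarrow{} A$ and a reduction $X \xlongrightarrow{} B$ onto the two triples to be identified (in practice one $\xlongrightarrow{L}$ and one $\xlongrightarrow{R}$). Since $\xleftrightarrow{\hspace{4mm}}$ is, by definition, the equivalence relation generated by $\xlongrightarrow{}$, any such $X$ forces $A \xleftrightarrow{\hspace{4mm}} B$. By the corollary following Theorem~(\ref{theorem, to elementary reductions}), the reductions $\xlongrightarrow{L}$ and $\xlongrightarrow{R}$ are compositions of elementary reductions, so confirming a displayed reduction is the routine check of the defining conditions~(\ref{definition, L ordering}) and~(\ref{definition, R ordering}) -- a length inequality in the weak order and an equality of images of a simple root -- carried out directly in $W(E_6)$ using the reduced words for $\nu_1, \dots, \nu_{15}$ recorded in Appendix~(\ref{appendix, elements E_n}).

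For part~(1), note that $\Gamma_6$ is the set of $20$ pairwise distinct triples $\llbracket \nu_1 \rrbracket, \dots, \llbracket \nu_{10} \rrbracket$ (all self-dual) together with $\llbracket \nu_k \rrbracket$ and $\llbracket \nu_k^{-1} \rrbracket$ for $k \in [11, 15]$; distinctness follows from the indexing conventions of Appendix~(\ref{appendix, elements E_n}), since distinct bigrassmannian Weyl group elements give distinct triples. It therefore suffices to produce five distinct merges. Three of these will be the merges $\llbracket \nu_j \rrbracket \xleftrightarrow{\hspace{4mm}} \llbracket \nu_j^{-1} \rrbracket$ for three values $j \in [11, 15]$, which are exactly the content of part~(2) and are obtained below; the remaining two will identify pairs among the self-dual triples $\llbracket \nu_1 \rrbracket, \dots, \llbracket \nu_{10} \rrbracket$. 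Since $\chi$ and ${\mathcal N}$ are constant on $\xleftrightarrow{\hspace{4mm}}$-classes (Theorem~(\ref{nilpotency indices, preserved}) and the $\chi$-invariance statement proved just after Corollary~(\ref{corollary, a})), the only candidate pairs for the latter are triples occupying a common row of Table~(\ref{table, E6}); for each such candidate pair one searches for a common element reducing to both by enumerating, via the elementary-reduction formulas, the finitely many elements of $\Gamma(W(E_6))$ that reduce to each of them.

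Part~(2) is cleanest via Corollary~(\ref{corollary, a}): for three values $j \in [11, 15]$ I would exhibit a self-dual element $x_j = (v_j, r_j, r_j) \in \Gamma(W(E_6))$, with $v_j = v_j^{-1}$, together with a reduction $x_j \xlongrightarrow{L} \llbracket \nu_j \rrbracket$. Then $x_j \xleftrightarrow{\hspace{4mm}} \llbracket \nu_j \rrbracket$, so Corollary~(\ref{corollary, a}) gives $\llbracket \nu_j \rrbracket \xleftrightarrow{\hspace{4mm}} \llbracket \nu_j \rrbracket^{*}$; and since $\nu_j$ is bigrassmannian, the unique left and right descents of $\nu_j^{-1}$ are the unique right and left descents of $\nu_j$, whence $\llbracket \nu_j \rrbracket^{*} = \llbracket \nu_j^{-1} \rrbracket$. (Equivalently, applying the $*$-involution and Proposition~(\ref{L, R, equivalent, inverse}) to $x_j \xlongrightarrow{L} \llbracket \nu_j \rrbracket$ yields $x_j \xlongrightarrow{R} \llbracket \nu_j^{-1} \rrbracket$ with no further hand computation.) For part~(3) the same mechanism is used with one of the two reductions landing outside $BiGr_\perp^\circ(E_6)$: I would exhibit source triples $X$ with $X \xlongrightarrow{} \llbracket \nu_{11} \rrbracket$ (respectively $\llbracket \nu_{12} \rrbracket$) and $X \xlongrightarrow{} \llbracket w \rrbracket$ for some $w \in BiGr_\perp(E_6) \backslash BiGr_\perp^\circ(E_6)$; such a $w$ occurs because the reduction process of Section~(\ref{minimal elements}), together with Theorem~(\ref{reduce by L and R}) and Proposition~(\ref{proposition, orthogonality condition}), does not preserve full support. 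The dual statements for $\llbracket \nu_{11}^{-1} \rrbracket$ and $\llbracket \nu_{12}^{-1} \rrbracket$ then follow immediately from Proposition~(\ref{L, R, equivalent, inverse}), by applying $*$ to the chains just produced.

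The main obstacle is purely computational. There are $\tfrac{35}{4}\lvert W(E_6)\rvert$ triples in $\Gamma(W(E_6))$ by Theorem~(\ref{Gamma W, cardinality}), and -- unlike in type $F_4$, where the diagram automorphism $f$ organizes everything -- the only symmetry available here is the involution $x \mapsto x^{*}$. Thus the required source elements, and the reductions witnessing them, are located by a finite computer search (walking up the covering relations of the posets $(\Gamma(W(E_6)), \xlongrightarrow{L})$ and $(\Gamma(W(E_6)), \xlongrightarrow{R})$ from the relevant targets and looking for overlaps), in the spirit of the computer experiments mentioned in the $F_4$ discussion. Once this finite data is assembled, everything else is bookkeeping: each displayed reduction is verified against~(\ref{definition, L ordering}) and~(\ref{definition, R ordering}), the count of distinct merges gives~(1), the three self-dual sources give~(2), and the two sources with a non-full-support target give~(3).
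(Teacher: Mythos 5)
Your general framework—exhibit a single source triple $X \in \Gamma(W(E_6))$ admitting two reductions $X \xlongrightarrow{L} A$ and $X \xlongrightarrow{R} B$, then exploit the $*$-duality via Proposition~(\ref{L, R, equivalent, inverse}) and Corollary~(\ref{corollary, a})—is exactly what the paper does, and your treatment of parts~(2) and~(3) is sound in spirit. The gap is in part~(1). You assert that after the three internal merges $\llbracket \nu_j \rrbracket \xleftrightarrow{\hspace{4mm}} \llbracket \nu_j^{-1} \rrbracket$ ($j \in \{11, 12, 15\}$), the remaining two merges needed to cut $20$ down to $15$ ``will identify pairs among the self-dual triples $\llbracket \nu_1 \rrbracket, \dots, \llbracket \nu_{10} \rrbracket$.'' This is false. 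The paper's fourth and fifth merges are the cross-identifications $\llbracket \nu_{14}^{-1} \rrbracket \xleftrightarrow{\hspace{4mm}} \llbracket \nu_{13} \rrbracket$ and, by applying $*$, $\llbracket \nu_{14} \rrbracket \xleftrightarrow{\hspace{4mm}} \llbracket \nu_{13}^{-1} \rrbracket$, obtained from a source $X_4$ with $X_4 \xlongrightarrow{L} \llbracket \nu_{14}^{-1} \rrbracket$ and $X_4 \xlongrightarrow{R} \llbracket \nu_{13} \rrbracket$. Both of these merges take place entirely among the $*$-pairs $\{\nu_{13}^{\pm 1}, \nu_{14}^{\pm 1}\}$, i.e.\ among the non-involutions; they are \emph{not} of the form $\llbracket \nu_j \rrbracket \xleftrightarrow{\hspace{4mm}} \llbracket \nu_j^{-1} \rrbracket$ (so they contribute to part~(1) but not to part~(2)), and they do not touch $\nu_1, \dots, \nu_{10}$.

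Moreover, the computer-experiment remark immediately after the theorem states that the count is \emph{exactly} $15$ classes, and the paper's five merges already achieve $15$. It follows that none of the self-dual triples $\llbracket \nu_1 \rrbracket, \dots, \llbracket \nu_{10} \rrbracket$ are $\xleftrightarrow{\hspace{4mm}}$-equivalent to one another, despite several of them sharing a row of Table~(\ref{table, E6}). The search you propose over candidate pairs with matching $\chi$ and $\mathcal{N}$ among the self-duals would therefore terminate without producing the needed merges, and your count would stall at $17$ classes. The fix is to widen the candidate set: when looking for the two further merges, the candidate pairs sharing an invariant row are not confined to the self-duals, and the actual merges are cross-merges between distinct non-involutions $\nu_{13}$ and $\nu_{14}$ (and their inverses), which appear in the same row of the table. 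In short, your identification mechanism is right, but your prediction of \emph{which} pairs it must be applied to misses the only merges that in fact exist.
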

\end{minipage}

\begin{proof}

    Define the following elements of $\Gamma(W(E(6))$,
    \[
        \begin{split}
            &X_1 = (s_3 s_4 s_5 s_6 s_5 s_4 s_1 s_3 s_2 s_4 s_5 s_3 s_4 s_1 s_3
            s_2 s_4 s_2, 3, 4),
            \\
            &X_2 = (s_6 s_5 s_4 s_1 s_3 s_2 s_4 s_5 s_6 s_1 s_3 s_2 s_4 s_5 s_3
            s_2 s_4 s_3 s_2, 6, 3),
            \\
            &X_3 = (s_4 s_3 s_2 s_4 s_5 s_6 s_1 s_3 s_4 s_5 s_3 s_4 s_1 s_3 s_2
            s_4 s_2, 4, 4),
            \\
            &X_4 = (s_3 s_4 s_1 s_3 s_2 s_4 s_5 s_6 s_4 s_5 s_3 s_4 s_1 s_3 s_2
            s_4 s_5 s_4 s_2, 3, 5).
        \end{split}
    \]
    We have $X_1 \xlongrightarrow{L} \llbracket \nu_{11} \rrbracket$ and $X_1
    \xlongrightarrow{R} \llbracket s_3 s_4 s_5 s_2 s_4 s_1 s_3 \rrbracket$.
    Thus, from Proposition (\ref{proposition, a}), $X_1^* \xlongrightarrow{R}
    \llbracket \nu_{11}^{-1} \rrbracket$ and $X_1^* \xlongrightarrow{L}
    \llbracket s_3 s_4 s_5 s_2 s_4 s_1 s_3 \rrbracket$. Hence, $\llbracket
    \nu_{11} \rrbracket \xleftrightarrow{\hspace{4mm}} \llbracket \nu_{11}^{-1}
    \rrbracket$. Similarly, $\llbracket \nu_{12} \rrbracket
    \xleftrightarrow{\hspace{4mm}} \llbracket \nu_{12}^{-1} \rrbracket$ from
    $X_2 \xlongrightarrow{L} \llbracket \nu_{12} \rrbracket$ and $X_2
    \xlongrightarrow{R} \llbracket s_6 s_5 s_4 s_3 s_2 s_4 s_5 s_6 \rrbracket$.
    We also have $X_3 \xlongrightarrow{L} \llbracket \nu_{15}^{-1} \rrbracket$
    and $X_3 \xlongrightarrow{R} \llbracket \nu_{15} \rrbracket$, and thus
    $\llbracket \nu_{15} \rrbracket \xleftrightarrow{\hspace{4mm}} \llbracket
    \nu_{15}^{-1} \rrbracket$.  From $X_4 \xlongrightarrow{L} \llbracket
    \nu_{14}^{-1} \rrbracket$ and $X_4 \xlongrightarrow{R} \llbracket \nu_{13}
    \rrbracket$, we get $\llbracket \nu_{14}^{-1} \rrbracket
    \xleftrightarrow{\hspace{4mm}} \llbracket \nu_{13} \rrbracket$. Thus, from
    Proposition (\ref{proposition, a}), $\llbracket \nu_{14} \rrbracket
    \xleftrightarrow{\hspace{4mm}} \llbracket \nu_{13}^{-1} \rrbracket$.

\end{proof}

We remark that the proof of Theorem (\ref{equivalence classes, bound, E6})
gives explicit reductions that one can use to determine specifically which
elements in $\Gamma_6$ satisfy the conditions described in parts
(\ref{equivalence classes, bound, E6, b}) and (\ref{equivalence classes, bound,
E6, c}).  Also, computer experiments suggest that the phrases \textit{at least}
and \textit{at most} in the statement of Theorem (\ref{equivalence classes,
bound, E6}) can be replaced by \textit{exactly}.

Recall, the significance of part (\ref{equivalence classes, bound, E6, c}) of
Theorem (\ref{equivalence classes, bound, E6}) is that nilpotency indices can
sometimes be found using results from a smaller rank setting.  Computer
experiments reveal that over 99.8\% of the elements in $\Gamma(W(E_6))$ are
equivalent (under $\xleftrightarrow{\hspace{4mm}}$) to an element of the form
$(w, i, j)$ with $w$ lacking full support.  Specifically, computer experiments
indicate that there are 24 equivalence classes in $\Gamma(W(E_6))$. The 20
elements in $\Gamma_6$ are contained in 15 equivalence classes. Exactly 13
equivalence classes fail to contain an element of the form $(w, i, j)$ with $w$
lacking full support, and the union of these 13 equivalence classes contains
precisely 760 elements.  As $\Gamma(W(E_6))$ has cardinality 453600 (see
Theorem (\ref{Gamma W, cardinality})), these 760 elements account for
approximately only $0.168$ percent of the total number of elements in
$\Gamma(W(E_6))$.

\subsubsection{Type \texorpdfstring{$E_7$}{E7}}

Nilpotency indices in the type $E_7$ case can be computed using the same
techniques as in the $F_4$ and $E_6$ cases. First, consider the following
reduced expression $w_0(E_7) \in W(E_7)$ for the longest element of the Weyl
group,
\begin{align}
    \label{E7, longest element, reduced expression}
    &w_0(E_7) = (s_1 s_2 s_3 s_4 s_5 s_6 s_7) \cdot (s_1 s_2 s_3 s_4 s_5 s_6
    s_7) \cdots (s_1 s_2 s_3 s_4 s_5 s_6 s_7).
\end{align}

We aim next to find a presentation of the algebra ${\mathcal U}_q^+[w_0(E_7)]$
associated to this particular reduced expression.

\begin{lemma}

    \label{lemma, E7, q-commutator identities}

    Let $y_1, \dots, y_{63} \in {\mathcal U}_q^+[w_0(E_7)]$ be the Lusztig root
    vectors corresponding to the reduced expression (\ref{E7, longest element,
    reduced expression}). Then

    \begin{multicols}{3}

        \begin{enumerate}

            \item $y_{3} = [y_{1}, y_{8}]$,
            \item $y_{4} = [y_{2}, y_{9}]$,
            \item $y_{5} = [y_{4}, y_{49}]$,
            \item $y_{6} = [y_{5}, y_{56}]$,
            \item $y_{7} = [y_{1}, y_{35}]$,
            \item $y_{7} = [y_{6}, y_{63}]$,
            \item $y_{9} = [y_{3}, y_{42}]$,
            \item $y_{11} = [y_{5}, y_{14}]$,
            \item $y_{15} = [y_{2}, y_{42}]$.

        \end{enumerate}

    \end{multicols}

\end{lemma}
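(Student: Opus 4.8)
The plan is to follow exactly the strategy used for the $F_4$ analog (Lemma (\ref{F4, split})) and the $E_6$ analog (Lemma (\ref{lemma, E6, q-commutator identities})). First, I would invoke Theorem (\ref{Jantzen, 8.20}) to identify the Lusztig root vectors corresponding to the $28$ simple roots that get ``inserted'' by the reduced expression (\ref{E7, longest element, reduced expression}). Concretely, since $w_0(E_7) = (s_1 s_2 \cdots s_7)^3$ (note $\ell(w_0(E_7)) = 63 = 3 \cdot 21$, so the reduced expression has three blocks of length $7$; I am reading it as $(s_1\cdots s_7)$ repeated, taking $\ell(w_0(E_7))/7 = 9$ blocks), for any $k$ in the first block, $y_k = E_{i_k}$ by Theorem (\ref{Jantzen, 8.20}), since the prefix word before position $k$ sends $\alpha_{i_k}$ to a simple root. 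This immediately gives us $y_1 = E_1$, $y_2 = E_2$, $y_4 = E_4$, $y_5 = E_5$, $y_6 = E_6$, $y_7 = E_7$, and, tracing the later positions, $y_{35} = E_7$, $y_{42} = E_7$, $y_{49} = E_5$, $y_{56} = E_6$, $y_{63} = E_7$, etc. (the precise indices depend on the $E_7$ Dynkin labeling, which I would read off from \cite[Section 11.4]{Humphreys} and verify by direct computation as in Conventions (\ref{computational conventions})).

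Next, I would apply the recursive algorithm in the proof of Proposition (\ref{as nested E}) to write each of the root vectors $y_3, y_5, y_6, y_7, y_9, y_{11}, y_{15}$ (and the intermediate ones needed) as a nested $q$-commutator $\mathbf{E}_{i_1, \ldots, i_m}$ of Chevalley generators, using the simply-laced case of that proposition (so no scalar corrections are needed except possibly a single $\frac{1}{[2]_q}$ factor in cases where a length-$(-2)$ bond would appear, which cannot happen in type $E_7$). For example, the algorithm should yield $y_3 = \mathbf{E}_{1,3}$, $y_4 = \mathbf{E}_{2,4}$-type expressions, and so on, following the branching structure of the $E_7$ diagram. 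Once each relevant $y_k$ is expressed this way, each of the nine claimed identities becomes a direct rewriting: e.g. $[y_1, y_8] = [E_1, \mathbf{E}_{\text{(word for }y_8)}]$, which by definition of the nested $q$-commutator equals $\mathbf{E}_{1, \ldots}$, which is the nested expression for $y_3$. The only subtlety is when an identity like $y_7 = [y_1, y_{35}] = [y_6, y_{63}]$ gives two distinct-looking words for the same vector; as in the $B_4$ computations (e.g. the use of $X_{1,-k} = \mathbf{E}_{2,\ldots,k-1,1}$), this is resolved by commuting a Chevalley generator past others with which it has no bond in the Dynkin diagram, so the two words represent equal elements.

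The main obstacle — really the only one — is bookkeeping: correctly tracking which position index in the length-$63$ reduced word corresponds to which Chevalley generator after applying the prefix Lusztig symmetries, and correctly running the branching recursion of Proposition (\ref{as nested E}) through the $E_7$ diagram without sign or indexing errors. This is mechanical but error-prone, so I would cross-check each claimed identity two ways: (i) by confirming the root-lattice degrees match (the degree of $y_k$ is the $k$-th root $\beta_k$ in the convex order, and $[y_i, y_j]$ has degree $\beta_i + \beta_j$ by (\ref{L-S straightening})), and (ii) by a computer verification of the commutator in the PBW basis, exactly as was done for the lengthy exceptional-type calculations mentioned in the outline. No genuinely new idea is required beyond what was already deployed in Lemmas (\ref{F4, split}) and (\ref{lemma, E6, q-commutator identities}).
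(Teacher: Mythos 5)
There is a genuine gap in your identification of the Chevalley generators, and it propagates through the rest of the plan. You assert that ``for any $k$ in the first block, $y_k = E_{i_k}$ by Theorem (\ref{Jantzen, 8.20}), since the prefix word before position $k$ sends $\alpha_{i_k}$ to a simple root,'' and then list $y_1 = E_1$, $y_2 = E_2$, $y_4 = E_4$, $y_5 = E_5$, $y_6 = E_6$, $y_7 = E_7$. This is false: in the $E_7$ Dynkin diagram $\alpha_1$ bonds to $\alpha_3$, so $s_1 s_2(\alpha_3) = s_1(\alpha_3) = \alpha_1 + \alpha_3$ is already not simple, hence $y_3 = \mathbf{E}_{1,3}$ rather than a Chevalley generator, and likewise $y_4, y_5, y_6, y_7$ are increasingly deep nested $q$-commutators. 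Running the algorithm of Proposition (\ref{as nested E}) actually gives $y_4 = [E_2, \mathbf{E}_{1,3,4}]$, $y_5 = [y_4, E_5]$, etc.; the simple root vectors among the indices needed are $y_1 = E_1$, $y_2 = E_2$, $y_8 = E_3$, $y_{42} = E_4$, $y_{49} = E_5$, $y_{56} = E_6$, $y_{63} = E_7$. Your subsequent identifications $y_{35} = E_7$ and $y_{42} = E_7$ are also wrong --- $y_{42} = E_4$, and $y_{35}$ is not a Chevalley generator at all; it is a nested $q$-commutator of full height. So the list of simple root vectors that anchors your whole computation is off.

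You also underestimate the two identities that are not immediate consequences of the nested-commutator expressions. Once the expressions are known, items (1)--(4), (6), (7), (9) really are direct rewritings, but $y_7 = [y_1, y_{35}]$ requires first proving the auxiliary identity $[E_2, \mathbf{E}_{1,3,4}] = [E_1, [E_2, \mathbf{E}_{3,4}]]$, which uses the $q$-Jacobi identity (\ref{q-Jacobi identity}) and a commutation/reassociation argument --- not just ``commuting a Chevalley generator past ones with which it has no bond.'' And $y_{11} = [y_5, y_{14}]$ is handled by factoring a common $T_u$ out of all three Lusztig symmetries to reduce to showing $[E_5, T_{v s_4 s_5 s_6}(E_7)] = T_v(E_4)$, which is then checked via nested-commutator expressions and $q$-associativity. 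So the overall strategy you propose (same one used for the $F_4$ and $E_6$ analogs) is the paper's, but your claimed identifications are wrong and two of the nine items need substantive additional argument beyond the ``direct rewriting'' you envision.
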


\begin{proof}

    Using the algorithm in the proof of Proposition (\ref{as nested E}), we
    begin by writing each of the Lusztig root vectors $y_1$, $y_2$, $y_3$,
    $y_4$, $y_5$, $y_6$, $y_7$, $y_8$, $y_9$, $y_{15}$, $y_{35}$, $y_{42}$,
    $y_{49}$, $y_{56}$, and $y_{63}$ as either a Chevalley generator or as
    nested $q$-commutators of Chevalley generators (up to a scalar multiple).
    We obtain $y_1 = E_1$, $y_2 = E_2$, $y_3 = \mathbf{E}_{1,3}$, $y_4 = [E_2,
    \mathbf{E}_{1,3,4}]$, $y_5 = [[E_2, \mathbf{E}_{1,3,4}], E_5]$, $y_7 = [[[[E_2,
    \mathbf{E}_{1,3,4}], E_5], E_6], E_7]$, $y_6 = [[[E_2, \mathbf{E}_{1,3,4}],
    E_5], E_6]$, $y_8 = E_3$, $y_9 = \mathbf{E}_{1,3,4}$, $y_{15} =
    \mathbf{E}_{2,4}$, $y_{42} = E_4$, $y_{49} = E_5$, $y_{35} = [[[[E_2,
    \mathbf{E}_{3,4}], E_5], E_6], E_7]$, $y_{56} = E_6$, and $y_{63} = E_7$.

    With this, it is a simple observation that $y_3 = [y_1, y_8]$, $y_{4} =
    [y_{2}, y_{9}]$, $y_{5} = [y_{4}, y_{49}]$, $y_{6} = [y_{5}, y_{56}]$,
    $y_{7} = [y_{6}, y_{63}]$, $y_{9} = [y_{3}, y_{42}]$, and $y_{15} = [y_{2},
    y_{42}]$.  It remains to show $y_7 = [y_1, y_{35}]$ and $y_{11} = [y_5,
    y_{14}]$.

    We prove next that $y_7 = [y_1, y_{35}]$.  Since the Chevalley generator
    $E_2$ commutes with $E_1$ and $E_3$, $[E_2, \mathbf{E}_{1,3}] = 0$, and
    thus the $q$-Jacobi identity (\ref{q-Jacobi identity}) gives us $[E_2,
    \mathbf{E}_{1,3,4}] = -q[ \mathbf{E}_{2,4}, \mathbf{E}_{1,3}] +
    \widehat{q}\, \mathbf{E}_{2,4} \mathbf{E}_{1,3}$. However, by the
    definition of $q$-commutators, $-q[ \mathbf{E}_{2,4}, \mathbf{E}_{1,3}] +
    \widehat{q}\, \mathbf{E}_{2,4} \mathbf{E}_{1,3} = [\mathbf{E}_{1,3},
    \mathbf{E}_{2,4}]$.  Hence, we obtain the identity $[E_2,
    \mathbf{E}_{1,3,4}] = [\mathbf{E}_{1,3}, \mathbf{E}_{2,4}]$.  Since $E_1$
    commutes with $\mathbf{E}_{2,4}$, $q$-associativity gives us
    $[\mathbf{E}_{1,3}, \mathbf{E}_{2,4}] = [E_1, [E_3, \mathbf{E}_{2,4}]]$,
    and since $E_2$ and $E_3$ commute with each other, $[E_3, \mathbf{E}_{2,4}]
    = [E_2, \mathbf{E}_{3,4}]$. Thus, we obtain the identity $[E_2,
    \mathbf{E}_{1,3,4}] = [E_1, [E_2, \mathbf{E}_{3,4}]]$. Therefore,
    \[
        y_7 = [y_6, y_{63}] = [[[[E_2, \mathbf{E}_{1,3,4}], E_5], E_6], E_7] =
        [[[[E_1, [E_2, \mathbf{E}_{3,4}]], E_5], E_6], E_7].
    \]
    However, since $E_1$ commutes with $E_5$, $E_6$, and $E_7$, we can apply
    $q$-associativity to move the $q$-brackets to get $y_7 = [E_1, [[[[E_2,
    \mathbf{E}_{3,4}], E_5] , E_6], E_7]] = [y_1, y_{35}]$.

    Lastly, we show $y_{11} = [y_5, y_{14}]$.  For short, define the Weyl group
    elements $u := s_1s_2s_3s_4 \in W(E_7)$ and $v := s_5s_6s_7s_1s_2s_3 \in
    W(E_7)$. From the definition of Lusztig root vectors, $y_{11} =
    T_uT_v(E_4)$, $y_5 = T_u(E_5)$, and $y_{14} = T_uT_{v \cdot
    s_4s_5s_6}(E_7)$. Hence, it suffices to prove the identity $[E_5, T_{v
    \cdot s_4 s_5 s_6} (E_7)] = T_v(E_4)$. Invoking the algorithm in the proof
    of Proposition (\ref{as nested E}) gives us $T_v(E_4) = [E_2,
    [\mathbf{E}_{1,3}, \mathbf{E}_{5,4}]]$ and $T_{v \cdot s_4 s_5 s_6} (E_7) =
    [E_2, \mathbf{E}_{1,3,4}]$. Since the Chevalley generator $E_5$ commutes
    with $E_2$ and $\mathbf{E}_{1,3}$, we have $[E_5, [E_2,
    \mathbf{E}_{1,3,4}]] = [E_2, [E_5, \mathbf{E}_{1,3,4}]] = [E_2,
    [\mathbf{E}_{1,3}, \mathbf{E}_{5,4}]]$, and thus $[E_5, T_{v \cdot
    s_4s_5s_6}(E_7)] = T_v(E_4)$, as desired.

\end{proof}

As in the type $F_4$ and $E_6$ cases, the repeating pattern in the reduced
expression (\ref{E7, longest element, reduced expression}) induces symmetries
in the presentation of the algebra ${\mathcal U}_q^+[w_0(E_7)]$. In particular,
Proposition (\ref{symmetry, general}) can be applied to give the following
result.

\begin{proposition}

    \label{symmetry, E7}

    Let $\mathfrak{g}$ be the complex simple Lie algebra of type $E_7$, and let
    $y_1,\dots, y_{63} \in {\mathcal U}_q^+[w_0(E_7)]$ be as in Lemma
    (\ref{lemma, E7, q-commutator identities}).  Let $c\in W(E_7)$ be the
    Coxeter element $c = s_1 s_2 s_3 s_4 s_5 s_6 s_7$.  The Lusztig symmetry
    $T_c$ is an algebra automorphism of ${\mathcal U}_q(\mathfrak{g})$ that
    sends $y_k$ to $y_{k + 7}$ for all $k \in [1, 56]$.

\end{proposition}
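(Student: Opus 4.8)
The plan is to deduce Proposition (\ref{symmetry, E7}) directly from Proposition (\ref{symmetry, general}), exactly as Proposition (\ref{symmetry, F4}) is the $t=4$ instance of that result. First I would record the structural fact underlying everything: the word $c = s_1 s_2 s_3 s_4 s_5 s_6 s_7$ is a Coxeter element of $W(E_7)$, whose Coxeter number is $h = 18$. Since $h$ is even and $-1 \in W(E_7)$, one has $w_0(E_7) = -1 = c^{h/2} = c^9$, and moreover $\ell(c^k) = 7k$ for every $1 \le k \le 9$; in particular $\ell(c^9) = 63 = |\Delta_+(E_7)| = \ell(w_0(E_7))$. (Alternatively, since $W(E_7)$ is finite, this single length identity $\ell(c^9) = 63$ may simply be checked by a direct computation.) Consequently the $63$-letter word appearing in (\ref{E7, longest element, reduced expression}), obtained by concatenating nine copies of $(s_1, s_2, \dots, s_7)$, is genuinely a reduced expression for $c^9 = w_0(E_7)$, and the $y_1, \dots, y_{63}$ of Lemma (\ref{lemma, E7, q-commutator identities}) are exactly the Lusztig root vectors attached to it.

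With this in place I would invoke Proposition (\ref{symmetry, general}) with $w := c$, $t := 7$, and $N := 9$: its hypothesis $\ell(w^N) = N \cdot \ell(w)$ is precisely the identity $\ell(c^9) = 63$ established above, and its conclusion is that $T_c$ is an algebra automorphism of ${\mathcal U}_q(\mathfrak{g})$ sending $y_k$ to $y_{k+7}$ for all $k \in [1,\ 7\cdot 9 - 7] = [1, 56]$, which is exactly the assertion of the proposition. For completeness I would recall the short argument (modeled on the proof of Proposition (\ref{symmetry, F4})): writing $w_0[j]$ for the product of the first $j$ letters of (\ref{E7, longest element, reduced expression}) and $w_0[a,b] := s_{i_a}\cdots s_{i_b}$, one has $y_k = T_{w_0[k-1]}(E_{i_k})$ by definition of the Lusztig root vectors; since prefixes of a reduced word are reduced, $w_0[k+6] = c \cdot w_0[8, k+6]$ is a length-additive factorization, so $T_{w_0[k+6]} = T_c \circ T_{w_0[8,k+6]}$; and since the letters of (\ref{E7, longest element, reduced expression}) are $7$-periodic, $w_0[8,k+6]$ coincides with $w_0[k-1]$ as a word while $E_{i_{k+7}} = E_{i_k}$, whence $y_{k+7} = T_c(T_{w_0[k-1]}(E_{i_k})) = T_c(y_k)$ for $k \in [1,56]$. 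That $T_c$ is an algebra automorphism of ${\mathcal U}_q(\mathfrak{g})$ is \cite[Proposition 37.1.2]{L}.

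The only step carrying genuine content is the length identity $\ell(c^9) = 63$ (equivalently $c^9 = w_0(E_7)$), which is the point I would treat most carefully. It is a standard property of Coxeter elements in finite Weyl groups $W$ with $-1 \in W$ — namely $c^{h/2} = w_0$ together with $\ell(c^k) = nk$ for $k \le h/2$ — and can either be cited from the standard references on Coxeter groups or verified outright by a finite computation in $W(E_7)$. Everything else in the proof is bookkeeping with reduced words and a direct application of Proposition (\ref{symmetry, general}).
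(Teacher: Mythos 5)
Your proof is correct and takes essentially the same route as the paper, which simply cites Proposition (\ref{symmetry, general}); you additionally supply an explicit justification (via $c^{h/2}=w_0$, $h=18$, $|\Delta_+(E_7)|=63$) that $(s_1\cdots s_7)^9$ is a reduced word for $w_0(E_7)$, a hypothesis the paper leaves implicit in writing (\ref{E7, longest element, reduced expression}).
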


Proposition (\ref{symmetry, E7}) implies an explicit presentation of the
algebra ${\mathcal U}_q^+[w_0(E_7)]$ can be found by determining how to write
each of the $q$-commutators $[y_i, y_j]$, with $i < j$ and $i \leq 7$, as a
linear combination of ordered monomials. However, among these ${63 \choose 2} -
{56 \choose 2}$ ($=413$) $q$-commutators, $145$ are trivial (i.e. $[y_i, y_j] =
0$) as a consequence of Proposition (\ref{LS corollary}). These trivial
$q$-commutators, together with the $9$ relations in Lemma (\ref{lemma, E7,
q-commutator identities}), provide enough initial data to apply the
$\texttt{L(i,j,r,s)}$ and $\texttt{R(i,j,r,s)}$ algorithms to obtain an
explicit presentation of ${\mathcal U}_q^+[w_0(E_7)]$. Since there remains $413
- 145 - 9$ ($=259$) commutation relations to find, we used a computer to
automate this process.  To give an example, one such sequence, among several
possible sequences, of \texttt{L(i,j,r,s)}'s and \texttt{R(i,j,r,s)}'s starts
with
\begin{center}
    {\small\tt L(3,35,1,8)},
    {\small\tt L(4,42,2,9)},
    {\small\tt L(6,14,5,56)},
    {\small\tt L(7,14,6,63)},
    {\small\tt R(3,15,2,42)},\ldots
\end{center}

After an explicit presentation of ${\mathcal U}_q^+[w_0(E_7)]$ is found,
nilpotency indices can be computed in the same manner as in the $F_4$ and $E_6$
cases. With slight modifications to the proof of Theorem (\ref{invariant under
dual, E6}), we obtain the following result.

\begin{theorem}

    \label{invariant under dual, E7}

    For all $x \in \Gamma (W(E_7))$, ${\mathcal N}(x) = {\mathcal N}(x^*)$ and
    $\chi(x) = \chi(x^*)$.

\end{theorem}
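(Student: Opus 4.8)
The plan is to follow the template established by the proof of Theorem~(\ref{invariant under dual, E6}), adapting each step to the $E_7$ setting. First I would invoke Theorem~(\ref{reduce by L and R}) together with Proposition~(\ref{proposition, orthogonality condition}) to reduce to the case where $x$ is of one of two canonical forms: either $x = (w_0(a,b), i, j)$ for some $a,b \in \mathbf{I}$, or $x = \llbracket w \rrbracket$ for some $w \in BiGr_\perp(E_7)$. Since $\chi$ and $\mathcal{N}$ are invariant under the equivalence relation $\xleftrightarrow{\hspace{4mm}}$ (Theorem~(\ref{nilpotency indices, preserved}) and the proposition stating $\chi(x) = \chi(y)$ when $x \xleftrightarrow{\hspace{4mm}} y$), and since Proposition~(\ref{proposition, a, 3}) gives $x \xleftrightarrow{\hspace{4mm}} y$ iff $x^* \xleftrightarrow{\hspace{4mm}} y^*$, it suffices to verify $\mathcal{N}(x) = \mathcal{N}(x^*)$ and $\chi(x) = \chi(x^*)$ on these canonical representatives.

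The $w_0(a,b)$ case is handled exactly as in the $E_6$ proof: $(w_0(a,b), i, j)^* = (w_0(a,b), j, i)$, and since $W(E_7)$ is simply laced, $\alpha_i$ and $\alpha_j$ have equal length, so Theorem~(\ref{reduce by L and R}) gives $\mathcal{N} = 1 - \langle \alpha_k, \alpha_p^\vee \rangle$ symmetrically in the two simple roots generating the subgroup; similarly $\chi$ is manifestly symmetric here. For the case $x = \llbracket w \rrbracket$ with $w \in BiGr_\perp(E_7)$, I would classify $w$ by its support (Section~(\ref{section: rank reduction})): via Proposition~(\ref{prop, sigma}), $w$ may be viewed as an element of $BiGr_\perp^\circ(X_n)$ where $X_n$ is a simply laced Lie type whose Dynkin diagram is a connected subgraph of the $E_7$ diagram, i.e.\ $X_n \in \{A_n, D_n, E_6, E_7\}$ with $n \leq 7$. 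For types $A_n$ and $D_n$, one reads off from Tables~(\ref{table, small rank ABCD}) and (\ref{table, general cases ABCD}) that in fact $\llbracket w \rrbracket = \llbracket w \rrbracket^*$ whenever the type is $A_n$ with $n \neq 2$ or $D_n$ (the relevant $w_{0,i,j,k,m}^{(\pm)}$ are symmetric under the transpose relabelling), and the finitely many $A_2$ cases are checked directly from the table. For type $E_6$, the result follows from Theorem~(\ref{invariant under dual, E6}) applied to the subsystem.

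This leaves the genuinely new work: verifying $\mathcal{N}(\llbracket w \rrbracket) = \mathcal{N}(\llbracket w \rrbracket^*)$ and $\chi(\llbracket w \rrbracket) = \chi(\llbracket w \rrbracket^*)$ for each $w \in BiGr_\perp^\circ(E_7)$, of which there are $152$ (namely $\zeta_k, \zeta_k^{-1}$ for $1 \le k \le 76$). The main obstacle is that, unlike the $w_0(a,b)$ case, there is no a priori structural symmetry forcing $\llbracket \zeta_k \rrbracket = \llbracket \zeta_k \rrbracket^* = \llbracket \zeta_k^{-1} \rrbracket$ — indeed many $\zeta_k$ are non-involutions. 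The resolution is computational: using the explicit presentation of ${\mathcal U}_q^+[w_0(E_7)]$ obtained via the symmetry of Proposition~(\ref{symmetry, E7}) and the sequence of \texttt{L(i,j,r,s)} and \texttt{R(i,j,r,s)} reductions, one computes both $\mathcal{N}(\llbracket \zeta_k \rrbracket)$ and $\mathcal{N}(\llbracket \zeta_k^{-1} \rrbracket)$ directly and observes they agree in every case; similarly $\chi$ is computed on both. These are exactly the computations tabulated in Table~(\ref{table, E7}), and since $\chi$ only depends on the root lengths (all equal, as $E_7$ is simply laced) and on $\langle \alpha_i, ws_j(\alpha_j)\rangle$, which one checks is preserved under $w \mapsto w^{-1}$ with the indices swapped, the $\chi$-invariance is the lighter half. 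The substantive content is therefore the finite case-check automated by computer, together with the observation (as in the $E_6$ discussion following Theorem~(\ref{equivalence classes, bound, E6})) that many of these $152$ elements are in fact $\xleftrightarrow{\hspace{4mm}}$-equivalent to their duals, which cuts down the independent verifications needed.
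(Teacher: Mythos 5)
Your proposal is correct and follows essentially the same approach as the paper, which simply remarks that the $E_7$ case is obtained by ``slight modifications'' to the proof of Theorem~(\ref{invariant under dual, E6}): reduce via Theorem~(\ref{reduce by L and R}) and Proposition~(\ref{proposition, orthogonality condition}) to the $w_0(a,b)$ case and the $BiGr_\perp$ case, classify the latter by support into simply-laced subtypes ($A_n$, $D_n$, $E_6$, $E_7$), and handle $A_n$/$D_n$ via the tables, $E_6$ via the earlier theorem, and $E_7$ by the computer-assisted verification tabulated in Table~(\ref{table, E7}). Your added observation that many of the $152$ elements are $\xleftrightarrow{\hspace{4mm}}$-equivalent to their duals (and hence do not require independent verification) matches the content of Theorem~(\ref{equivalence classes, bound, E7}).
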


\noindent The following theorem is the main result regarding nilpotency indices
in type $E_7$.

\begin{theorem}

    Table (\ref{table, E7}) gives the nilpotency index ${\mathcal N}(\llbracket
    \zeta_k \rrbracket)$ for all $k \in [1, 76]$.

\end{theorem}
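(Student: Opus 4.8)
The statement to be proven is that Table~(\ref{table, E7}) correctly records the nilpotency index ${\mathcal N}(\llbracket \zeta_k \rrbracket)$ for each $k \in [1, 76]$, where $\zeta_1,\dots,\zeta_{76}$ exhaust one half of $BiGr_\perp^\circ(E_7)$ (the other half consisting of their inverses, handled by Theorem~(\ref{invariant under dual, E7})).

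The plan is to follow verbatim the strategy already deployed for $F_4$ and $E_6$. First I would fix the reduced expression (\ref{E7, longest element, reduced expression}) for $w_0(E_7)$ and its associated Lusztig root vectors $y_1,\dots,y_{63} \in {\mathcal U}_q^+[w_0(E_7)]$. Using Lemma~(\ref{lemma, E7, q-commutator identities}) as the seed data, together with the $145$ trivial $q$-commutation relations guaranteed by Proposition~(\ref{LS corollary}), I would run the sequence of \texttt{L(i,j,r,s)} and \texttt{R(i,j,r,s)} algorithms (beginning as indicated in the excerpt) to obtain an explicit presentation of ${\mathcal U}_q^+[w_0(E_7)]$ — i.e.\ every $q$-commutator $[y_i,y_j]$ written as a $\mathbb{K}$-linear combination of ordered monomials. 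The symmetry Proposition~(\ref{symmetry, E7}), which says $T_c$ sends $y_k \mapsto y_{k+7}$, cuts the number of relations to compute down by a factor of roughly nine, so only commutators $[y_i,y_j]$ with $i \le 7$ need to be determined directly; the rest are obtained by applying $T_c$ (or its powers) to these.

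Next, for each $\zeta_k$ ($k \in [1,76]$) I would identify the unique indices $i,j$ with $\{i\} = {\mathcal D}_L(\zeta_k)$ and $\{j\} = {\mathcal D}_R(\zeta_k)$, write $T_{\zeta_k \cdot s_j}(E_j)$ as nested $q$-commutators of Chevalley generators via the recursive algorithm in the proof of Proposition~(\ref{as nested E}), then substitute $E_i = y_{m_i}$ (each Chevalley generator being some $y_m$ by Theorem~(\ref{Jantzen, 8.20})) to express $T_{\zeta_k \cdot s_j}(E_j)$ as a linear combination of ordered monomials in the $y$'s. I would then iteratively apply $\operatorname{ad}_q(E_i) = \operatorname{ad}_q(y_{m_i})$, reducing each intermediate result to ordered-monomial form using the presentation, and record the smallest power $p$ for which $\left(\operatorname{ad}_q(E_i)\right)^p\left(T_{\zeta_k \cdot s_j}(E_j)\right) = 0$; this $p$ is ${\mathcal N}(\llbracket \zeta_k \rrbracket)$. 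Since ordered monomials form a basis, vanishing and non-vanishing are decided unambiguously at each stage. Proposition~(\ref{bound on nil-index}) provides an \emph{a priori} ceiling (and Theorem~(\ref{intro, theorem 1}) gives $c_{\max}+1 = 5$ as an absolute bound in type $E_7$), so the search terminates quickly. Theorem~(\ref{invariant under dual, E7}) then extends the result from $\{\zeta_k\}$ to all of $BiGr_\perp^\circ(E_7)$, and the values populate Table~(\ref{table, E7}).

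The main obstacle is the sheer computational bulk: $|BiGr_\perp^\circ(E_7)| = 113$ with $76$ representatives, each requiring a chain of adjoint computations, and the $q$-commutation relations in ${\mathcal U}_q^+[w_0(E_7)]$ — while "relatively simple" individually — proliferate into long expressions when composed, since a single $\operatorname{ad}_q$ step can multiply the number of monomials substantially. This is precisely why, as the excerpt notes for $E_7$ and $E_8$, the computations are automated by computer; the conceptual content is entirely routine application of the $q$-Jacobi identity (\ref{q-Jacobi identity}), the $q$-Leibniz identity (\ref{q-Leibniz identity}), and Levendorskii--Soibelmann straightening (\ref{L-S straightening}), but carrying it out by hand for all $76$ cases would be impractical. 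A secondary care point is ensuring the \texttt{L}/\texttt{R} algorithm sequence is ordered so that every invocation's prerequisite relation is already established — but this is a scheduling issue, not a mathematical one, and any valid topological order of the dependency graph works.
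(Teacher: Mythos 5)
Your proposal matches the paper's approach exactly: seed the presentation of ${\mathcal U}_q^+[w_0(E_7)]$ from Lemma~(\ref{lemma, E7, q-commutator identities}) and the trivial relations of Proposition~(\ref{LS corollary}), exploit the Coxeter-element symmetry of Proposition~(\ref{symmetry, E7}) to limit direct computation to $[y_i,y_j]$ with $i \le 7$, run the \texttt{L}/\texttt{R} algorithms to obtain the full presentation, then for each $\zeta_k$ reduce iterated $\operatorname{ad}_q$ applications of $T_{\zeta_k s_j}(E_j)$ to ordered-monomial form (by computer, as the paper also notes), and finally invoke Theorem~(\ref{invariant under dual, E7}) to cover the inverses. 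This is the same argument the paper makes.
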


\noindent In view of Theorem (\ref{invariant under dual, E7}), Table
(\ref{table, E7}) can be used to look up the nilpotency index ${\mathcal
N}(\llbracket w \rrbracket)$ associated to each $w \in BiGr_\perp^\circ(E_7)$.

\begin{center}
    \begin{table}[H]
\caption{The Lie algebra of Type $E_7$ (data)}
\begin{tabular}{lcc}
    \toprule
    $w \in BiGr_{\perp}^{\circ}(E_7)$ & $\chi( \llbracket w
    \rrbracket)$
    & ${\mathcal N}(\llbracket w \rrbracket)$
    \\[0.5mm]
    \hline
    $\zeta_k$, ($k \in [1, 6]$) \hspace{1mm} & (2, 2, -1) & $3$
    \\
    $\zeta_k$, ($k \in [7, 10] \cup [40, 42]$) & (2, 2, -1) & $4$
    \\
    $\zeta_{11}$, & (2, 2, -1) & $5$
    \\
    \midrule[0.01pt]
    $\zeta_{12}, \zeta_{43}, \zeta_{44}$, & (2, 2, 0) & $2$
    \\
    $\zeta_k$, ($k \in [13, 24] \cup [45, 56]$) & (2, 2, 0) & $3$
    \\
    $\zeta_k$, ($k \in [25, 29] \cup [57, 62]$) & (2, 2, 0) & $4$
    \\
    $\zeta_{30}, \zeta_{31}$, & (2, 2, 0) & $5$
    \\
    \midrule[0.01pt]
    $\zeta_k$, ($k \in [32, 34] \cup [63, 71]$) & (2, 2, 1) & $2$
    \\
    $\zeta_k$, ($k \in [35, 38] \cup [72, 76]$) & (2, 2, 1) & $3$
    \\
    $\zeta_{39}$ & (2, 2, 1) & $4$
    \\
    \bottomrule
\end{tabular}
\label{table, E7}
\end{table}
\end{center}

\begin{theorem}

    \label{equivalence classes, bound, E7}

    Let $\Gamma_7 := \left\{ \llbracket \zeta
    \rrbracket \in \Gamma(W(E_7)) : \zeta \in BiGr_\perp^\circ(E_7) \right\}$.

    \begin{enumerate}

        \item The $113$ elements of $\Gamma_7$ are contained in at most $77$
            distinct equivalence classes.

        \item \label{equivalence classes, bound, E7, b} At least $44$ of the
            $74$ elements in $\Gamma_7$ of the form $\llbracket w \rrbracket$
            with $w$ a non-involution satisfy $\llbracket w \rrbracket
            \xleftrightarrow{\hspace{4mm}} \llbracket w^{-1} \rrbracket$.

        \item \label{equivalence classes, bound, E7, c} At least $25$ of the
            $113$ elements of $\Gamma_7$ are equivalent to an element of the
            form $\llbracket w \rrbracket$ with $w \in BiGr_\perp(E_7)
            \backslash BiGr_\perp^\circ(E_7)$.

    \end{enumerate}

\end{theorem}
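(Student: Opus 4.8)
The plan is to prove Theorem (\ref{equivalence classes, bound, E7}) in direct analogy with the proofs of Theorems (\ref{equivalence classes, bound, F4}) and (\ref{equivalence classes, bound, E6}), by exhibiting explicit reductions in $\Gamma(W(E_7))$. The key structural input is Theorem (\ref{nilpotency indices, preserved}) together with Proposition (\ref{L, R, equivalent, inverse}): whenever a single element $X \in \Gamma(W(E_7))$ admits two reductions $X \xlongrightarrow{L} \llbracket w_1 \rrbracket$ and $X \xlongrightarrow{R} \llbracket w_2 \rrbracket$, we get $\llbracket w_1 \rrbracket \xleftrightarrow{\hspace{4mm}} \llbracket w_2 \rrbracket$, and applying $x \mapsto x^*$ yields a companion equivalence $\llbracket w_1^{-1} \rrbracket \xleftrightarrow{\hspace{4mm}} \llbracket w_2^{-1} \rrbracket$. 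Since $\llbracket \zeta_k \rrbracket^* = \llbracket \zeta_k^{-1} \rrbracket$, each such ``bridge'' element $X$ simultaneously merges equivalence classes of $\Gamma_7$ and, via $X^*$, merges the corresponding inverse classes.

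\textbf{Key steps in order.} First I would recall that by Theorem (\ref{reduce by L and R}) and Proposition (\ref{proposition, orthogonality condition}), every element of $\Gamma(W(E_7))$ is equivalent to one of the form $\llbracket w \rrbracket$ with $w \in BiGr_\perp(E_7)$ or $w = w_0(a,b)$; the $113$ elements of $\Gamma_7$ correspond to $\zeta_1, \ldots, \zeta_{76}$ and their inverses (the $39$ involutions contributing one element each, the $37$ non-involutions contributing two), so a priori they lie in at most $113$ classes. To get down to $77$, I would produce a list of bridge elements $X^{(1)}, \ldots, X^{(m)} \in \Gamma(W(E_7))$ — found by computer search, exactly as in the $F_4$ and $E_6$ proofs — each with an explicitly written Weyl group element in one-line/reduced-word form, together with a verified pair of reductions $X^{(r)} \xlongrightarrow{L} \llbracket \zeta_{i_r} \rrbracket$, $X^{(r)} \xlongrightarrow{R} \llbracket \zeta_{j_r} \rrbracket$ (or with one target of the form $\llbracket w_0(a,b) \rrbracket$ or $\llbracket w \rrbracket$ with $w$ lacking full support). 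Part (2) then follows by selecting among these bridges the ones whose two targets are $\llbracket w \rrbracket$ and $\llbracket w^{-1}\rrbracket$ for a common non-involution $w$ — or more subtly, by chaining: if $X \xlongrightarrow{L} \llbracket w \rrbracket$ and $X \xlongrightarrow{R} \llbracket u \rrbracket$ with $u$ an involution, then $\llbracket w \rrbracket \xleftrightarrow{\hspace{4mm}} \llbracket u \rrbracket = \llbracket u \rrbracket^* \xleftrightarrow{\hspace{4mm}} \llbracket w^{-1}\rrbracket$. For part (3) I would list $25$ bridges (or chains of bridges) at least one of whose targets is $\llbracket w \rrbracket$ with $w \in BiGr_\perp(E_7) \setminus BiGr_\perp^\circ(E_7)$, i.e. $w$ of non-full support.

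\textbf{Verification of each reduction.} For a claimed reduction $(w,i,j) \xlongrightarrow{L} (v,i',j)$ one must check $v \leq_L w$, $w^{-1}(\alpha_i) = v^{-1}(\alpha_{i'})$, and $j = j'$; these are finite root-system computations in $E_7$, directly checkable (by hand for a few, by computer for the full list) using the signed-permutation-free description of $W(E_7)$ via its action on the root lattice. Since the underlying combinatorics is identical in spirit to the $F_4$ and $E_6$ verifications already carried out in the excerpt, the writeup can present the table of bridge elements and reductions and assert that each is verified by the criteria of Definition (\ref{definition, L ordering})–(\ref{definition, R ordering}).

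\textbf{Main obstacle.} The genuine difficulty is not conceptual but combinatorial scale: $\Gamma(W(E_7))$ is enormous (its cardinality is $\approx \frac{49}{4}|W(E_7)|$ by Theorem (\ref{Gamma W, cardinality})), so finding the right bridge elements and organizing them into a minimal spanning set of reductions that simultaneously realizes the counts $77$, $44$, and $25$ requires a careful computer search through the Hasse diagrams of $(\Gamma(W(E_7)), \xlongrightarrow{L})$ and $(\Gamma(W(E_7)), \xlongrightarrow{R})$, exploiting the covering-relation description (elementary reductions) from Theorem (\ref{theorem, to elementary reductions}). I expect the bookkeeping — presenting the explicit elements and reductions compactly while keeping the argument human-checkable — to be the bulk of the work, with the mathematical content reducing cleanly to Theorem (\ref{nilpotency indices, preserved}) and Proposition (\ref{L, R, equivalent, inverse}).
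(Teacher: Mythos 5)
Your proposal takes essentially the same approach as the paper: construct "bridge" elements of $\Gamma(W(E_7))$ admitting pairs of $L$- and $R$-reductions to merge equivalence classes, use self-dual elements (your involutive $\llbracket u\rrbracket$, the paper's $\Theta_k$) together with Proposition (\ref{L, R, equivalent, inverse}) and Corollary (\ref{corollary, a}) to conclude $\llbracket w\rrbracket \xleftrightarrow{\hspace{4mm}} \llbracket w^{-1}\rrbracket$ for part (2), and aim bridges at non-full-support bigrassmannians for part (3), with the explicit elements and verifications supplied by computer search. The paper carries this out via the tabulated elements $\Lambda_k$, $\Theta_k$, $\Psi_k$ in Appendix (\ref{appendix B}); your framework matches it, noting only that the paper's self-dual $\Theta_k$ are general elements $(w,i,i)$ with $w$ a (not necessarily bigrassmannian) involution, a mild generalization of the $\llbracket u\rrbracket$ you describe.
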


\begin{proof}

    Let $\Theta_1,\dots, \Theta_{23}$, $\Lambda_1,\dots, \Lambda_{12}$,
    $\Psi_1,\dots, \Psi_{23} \in \Gamma(W(E_7))$ be as defined in Appendix
    (\ref{appendix B}).  Each of $\Theta_1, \dots, \Theta_{23}$, as well as
    $\Lambda_{12}$ is self-dual (recall, $x \in \Gamma(W)$ is
    \textit{self-dual} if $x = x^*$), and thus Corollary (\ref{corollary, a})
    can be applied. Each of the elements $\Lambda_1, \dots, \Lambda_{12}$ is of
    the form $\llbracket w \rrbracket$ with $w \in BiGr_\perp(E_7) \backslash
    BiGr_\perp^{\circ}(E_7)$.

    We have $\Theta_k \xlongrightarrow{\hspace{2mm} L \hspace{2mm}} \Lambda_k
    \text{ for } k \in [1, 11]$. Furthermore,

    \vspace{-15pt}
    \begin{center}
    \begin{tabular}{p{113pt} p{113pt} p{113pt}}
        $\Theta_{1} \xlongrightarrow{\hspace{1.5mm} L \hspace{1.5mm}} \Psi_{1}
        \xlongrightarrow{\hspace{1.5mm} R \hspace{1.5mm}} \llbracket \zeta_{33}
        \rrbracket$,
        &
        $\Theta_{1} \xlongrightarrow{\hspace{1.5mm} L \hspace{1.5mm}} \Psi_{2}
        \xlongrightarrow{\hspace{1.5mm} R \hspace{1.5mm}} \llbracket \zeta_{70}
        \rrbracket$,
        &
        $\Theta_{2} \xlongrightarrow{\hspace{1.5mm} L \hspace{1.5mm}} \Psi_{3}
        \xlongrightarrow{\hspace{1.5mm} R \hspace{1.5mm}} \llbracket \zeta_{12}
        \rrbracket$,
        \\
        $\Theta_{2} \xlongrightarrow{\hspace{1.5mm} L \hspace{1.5mm}} \Psi_{4}
        \xlongrightarrow{\hspace{1.5mm} R \hspace{1.5mm}} \llbracket \zeta_{44}
        \rrbracket$,
        &
        $\Theta_{3} \xlongrightarrow{\hspace{1.5mm} L \hspace{1.5mm}} \Psi_{5}
        \xlongrightarrow{\hspace{1.5mm} R \hspace{1.5mm}} \llbracket \zeta_{22}
        \rrbracket$,
        &
        $\Theta_{4} \xlongrightarrow{\hspace{1.5mm} L \hspace{1.5mm}} \Psi_{6}
        \xlongrightarrow{\hspace{1.5mm} R \hspace{1.5mm}} \llbracket \zeta_{5}
        \rrbracket$,
        \\
        $\Theta_{5} \xlongrightarrow{\hspace{1.5mm} L \hspace{1.5mm}} \Psi_{7}
        \xlongrightarrow{\hspace{1.5mm} R \hspace{1.5mm}} \llbracket \zeta_{6}
        \rrbracket$,
        &
        $\Theta_{6} \xlongrightarrow{\hspace{1.5mm} R \hspace{1.5mm}} \Psi_{8}
        \xlongrightarrow{\hspace{1.5mm} L \hspace{1.5mm}} \llbracket
        \zeta_{71} \rrbracket$,
        &
        $\Theta_{7} \xlongrightarrow{\hspace{1.5mm} R \hspace{1.5mm}} \Psi_{9}
        \xlongrightarrow{\hspace{1.5mm} L \hspace{1.5mm}} \llbracket
        \zeta_{72} \rrbracket$,
        \\
        $\Theta_{8} \xlongrightarrow{\hspace{1.5mm} R \hspace{1.5mm}} \Psi_{10}
        \hspace{-1pt}
        \xlongrightarrow{\hspace{1mm} L \hspace{1mm}} \llbracket
        \zeta_{69} \rrbracket$,
        &
        $\Theta_{9} \xlongrightarrow{\hspace{1.5mm} L \hspace{1.5mm}} \Psi_{11}
        \hspace{-1pt}
        \xlongrightarrow{\hspace{1mm} R \hspace{1mm}} \llbracket \zeta_{68}
        \rrbracket$,
        &
        $\Theta_{9} \xlongrightarrow{\hspace{1.5mm} R \hspace{1.5mm}} \Psi_{12}
        \hspace{-1pt}
        \xlongrightarrow{\hspace{1mm} L \hspace{1mm}} \llbracket
        \zeta_{67} \rrbracket$,
        \\
        $\Theta_{10}
        \hspace{-1pt}
        \xlongrightarrow{\hspace{1mm} R \hspace{1mm}} \Psi_{13}
        \hspace{-1pt}
        \xlongrightarrow{\hspace{1mm} L \hspace{1mm}} \llbracket
        \zeta_{45} \rrbracket$,
        &
        $\Theta_{11}
        \hspace{-1pt}
        \xlongrightarrow{\hspace{1mm} R \hspace{1mm}} \Psi_{14}
        \hspace{-1pt}
        \xlongrightarrow{\hspace{1mm} L \hspace{1mm}} \llbracket
        \zeta_{43} \rrbracket$,
        &
    \end{tabular}
    \end{center}

    \vspace{-5pt} \noindent and $\Psi_{15} \xlongrightarrow{\hspace{2mm} R
    \hspace{2mm}} \Lambda_{12}$ and $\Psi_{15} \xlongrightarrow{\hspace{2mm} L
    \hspace{2mm}} \llbracket \zeta_{46} \rrbracket$. We also have

    \vspace{-5pt}
    \begin{center}
    \begin{tabular}{p{78pt} p{78pt} p{78pt} p{78pt}}
        $\Theta_{12} \xlongrightarrow{\hspace{2mm} L \hspace{2mm}}
        \llbracket \zeta_{42} \rrbracket$,
        &
        $\Theta_{13} \xlongrightarrow{\hspace{2mm} L \hspace{2mm}}
        \llbracket \zeta_{47} \rrbracket$,
        &
        $\Theta_{14} \xlongrightarrow{\hspace{2mm} R \hspace{2mm}}
        \llbracket \zeta_{48} \rrbracket$,
        &
        $\Theta_{15} \xlongrightarrow{\hspace{2mm} R \hspace{2mm}}
        \llbracket \zeta_{49} \rrbracket$.
        \\
        $\Theta_{16} \xlongrightarrow{\hspace{2mm} L \hspace{2mm}}
        \llbracket \zeta_{50} \rrbracket$,
        &
        $\Theta_{17} \xlongrightarrow{\hspace{2mm} L \hspace{2mm}}
        \llbracket \zeta_{51} \rrbracket$,
        &
        $\Theta_{18} \xlongrightarrow{\hspace{2mm} L \hspace{2mm}}
        \llbracket \zeta_{52} \rrbracket$,
        &
        $\Theta_{19} \xlongrightarrow{\hspace{2mm} R \hspace{2mm}}
        \llbracket \zeta_{63} \rrbracket$,
        \\
        $\Theta_{20} \xlongrightarrow{\hspace{2mm} R \hspace{2mm}}
        \llbracket \zeta_{64} \rrbracket$,
        &
        $\Theta_{21} \xlongrightarrow{\hspace{2mm} R \hspace{2mm}}
        \llbracket \zeta_{65} \rrbracket$,
        &
        $\Theta_{22} \xlongrightarrow{\hspace{2mm} L \hspace{2mm}}
        \llbracket \zeta_{66} \rrbracket$,
        &
        $\Theta_{23} \xlongrightarrow{\hspace{2mm} L \hspace{2mm}}
        \llbracket \zeta_{73} \rrbracket$,
    \end{tabular}
    \end{center}

    \noindent and

    \vspace{-5pt}
    \begin{center}
    \begin{tabular}{p{78pt} p{78pt} p{78pt} p{78pt}}
        $\Psi_{16} \xlongrightarrow{\hspace{2mm} R \hspace{2mm}}
        \llbracket \zeta_{21} \rrbracket$,
        &
        $\Psi_{16} \xlongrightarrow{\hspace{2mm} L \hspace{2mm}}
        \llbracket \zeta_{50} \rrbracket$,
        &
        $\Psi_{17} \xlongrightarrow{\hspace{2mm} R \hspace{2mm}}
        \llbracket \zeta_{23} \rrbracket$,
        &
        $\Psi_{17} \xlongrightarrow{\hspace{2mm} L \hspace{2mm}}
        \llbracket \zeta_{47} \rrbracket$,
        \\
        $\Psi_{18} \xlongrightarrow{\hspace{2mm} L \hspace{2mm}}
        \llbracket \zeta_{34} \rrbracket$,
        &
        $\Psi_{18} \xlongrightarrow{\hspace{2mm} R \hspace{2mm}}
        \llbracket \zeta_{63} \rrbracket$,
        &
        $\Psi_{19} \xlongrightarrow{\hspace{2mm} L \hspace{2mm}}
        \llbracket \zeta_{51} \rrbracket$,
        &
        $\Psi_{19} \xlongrightarrow{\hspace{2mm} R \hspace{2mm}}
        \llbracket \zeta_{52} \rrbracket$,
        \\
        $\Psi_{20} \xlongrightarrow{\hspace{-0.2mm} (3, L) \hspace{-0.2mm}}
        \llbracket \zeta_{53} \rrbracket$,
        &
        $\Psi_{20} \xlongrightarrow{\hspace{-0.2mm} (3, R) \hspace{-0.2mm}}
        \llbracket \zeta_{54} \rrbracket$,
        &
        $\Psi_{21} \xlongrightarrow{\hspace{2mm} R \hspace{2mm}}
        \llbracket \zeta_{55} \rrbracket$,
        &
        $\Psi_{21} \xlongrightarrow{\hspace{2mm} L \hspace{2mm}}
        \llbracket \zeta_{56} \rrbracket$,
        \\
        $\Psi_{22} \xlongrightarrow{\hspace{2mm} R \hspace{2mm}}
        \llbracket \zeta_{65} \rrbracket$,
        &
        $\Psi_{22} \xlongrightarrow{\hspace{2mm} L \hspace{2mm}}
        \llbracket \zeta_{66} \rrbracket$,
        &
        $\Psi_{23} \xlongrightarrow{\hspace{2mm} R \hspace{2mm}}
        \llbracket \zeta_{74} \rrbracket$,
        &
        $\Psi_{23} \xlongrightarrow{\hspace{2mm} L \hspace{2mm}}
        \llbracket \zeta_{75} \rrbracket$.
    \end{tabular}
    \end{center}

    These reductions, as well as those obtained from part (\ref{proposition, a,
    3}) of Proposition (\ref{proposition, a}), prove the desired result.

\end{proof}

As in the type $E_6$ setting, the proof of Theorem (\ref{equivalence classes,
bound, E7}) gives explicit reductions that one can use to determine
specifically which elements in $\Gamma_7$ satisfy the conditions described in
parts (\ref{equivalence classes, bound, E7, b}) and (\ref{equivalence classes,
bound, E7, c}).  Also, computer experiments suggest that the phrases \textit{at
least} and \textit{at most} in the statement of Theorem (\ref{equivalence
classes, bound, E7}) can be replaced by \textit{exactly}.

Recall, the significance of part (\ref{equivalence classes, bound, E7, c}) is
that nilpotency indices can sometimes be found using results from a smaller
rank setting. For instance, as a consequence of the reductions listed in the
proof of Theorem (\ref{equivalence classes, bound, E7}), we have the
equivalence $\Lambda_{11} \xleftrightarrow{\hspace{4mm}} \llbracket
\zeta_{43}\rrbracket$.  Furthermore, $\Lambda_{11} = \llbracket s_1 s_3 s_4 s_2
s_5 s_4 s_3 s_1 \rrbracket$.  The only simple reflections appearing in the
reduced expression $s_1 s_3 s_4 s_2 s_5 s_4 s_3 s_1$ are $s_1,\cdots, s_5$.
These generate the type $D_5$ Weyl group. Hence $\Lambda_{11}$ can be
identified with an element in $\Gamma(W(D_5))$. Here, we identify it with
$\llbracket w_{0,0,1,0,4}^+ \rrbracket$. Since ${\mathcal N}(\llbracket
w_{0,0,1,0,4}^+ \rrbracket = 2$ (from Theorem (\ref{general cases, BCD13})),
then ${\mathcal N}(\llbracket \zeta_{43} \rrbracket) = 2$ also.

Computer experiments reveal that over 99.7\% of the elements in
$\Gamma(W(E_7))$ are equivalent (under $\xleftrightarrow{\hspace{4mm}}$) to an
element of the form $(w, i, j)$ with $w$ lacking full support.  Specifically,
computer experiments show $\Gamma(W(E_7))$ is partitioned into 91 equivalence
classes.  The 113 elements in $\Gamma_7$ belong to 77 equivalence classes.
Exactly 65 equivalence classes fail to contain an element of the form $(w, i,
j)$ with $w$ lacking full support, and the union of these 65 equivalence
classes contains precisely 92064 elements. As $\Gamma(W(E_7))$ has cardinality
34997760 (see Theorem (\ref{Gamma W, cardinality})), these 92064 elements
account for approximately only $0.263$ percent of the total number of elements
in $\Gamma(W(E_7))$.

\subsubsection{Type \texorpdfstring{$E_8$}{E8}}

\label{section, E8}

Nilpotency indices in type $E_8$ are computed employing the same techniques
used in types $F_4$, $E_6$, and $E_7$. Consider first the reduced expression
for the longest element of the Weyl group $W(E_8)$,
\begin{align}
    \label{E8, longest element, reduced expression}
    &w_0(E_8) = (s_1 s_2 s_3 s_4 s_5 s_6 s_7 s_8) (s_1 s_2 s_3 s_4 s_5 s_6 s_7
    s_8) \cdots (s_1 s_2 s_3 s_4 s_5 s_6 s_7 s_8)
\end{align}
Next, we find a presentation of the quantum Schubert cell algebra ${\mathcal
U}_q^+[w_0(E_8)]$ associated to this reduced expression.

\begin{lemma}

    \label{lemma, E8, q-commutator identities}

    Let $z_1, \dots, z_{120} \in {\mathcal U}_q^+[w_0(E_8)]$ be the Lusztig
    root vectors corresponding to the reduced expression (\ref{E8, longest
    element, reduced expression}). Then

    \begin{multicols}{3}

        \begin{enumerate}

            \item $z_{3} = [z_{1}, z_{9}]$,

            \item $z_{4} = [z_{2}, z_{10}]$,

            \item $z_{5} = [z_{4}, z_{96}]$,

            \item $z_{6} = [z_{5}, z_{104}]$,

            \item $z_{7} = [z_{6}, z_{112}]$,

            \item $z_{8} = [z_{1}, z_{80}]$,

            \item $z_{8} = [z_{7}, z_{120}]$,

            \item $z_{10} = [z_{3}, z_{88}]$,

            \item $z_{12} = [z_{5}, z_{16}]$,

            \item $z_{17} = [z_{2}, z_{88}]$.

        \end{enumerate}

    \end{multicols}

\end{lemma}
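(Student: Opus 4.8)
The plan is to mimic, essentially verbatim, the proofs of Lemmas~(\ref{lemma, E6, q-commutator identities}) and~(\ref{lemma, E7, q-commutator identities}). First I would run the recursive algorithm supplied by the proof of Proposition~(\ref{as nested E}) on each Lusztig root vector occurring on either side of one of the ten claimed identities, i.e. on $z_1,\dots,z_{12}$, $z_{16}$, $z_{17}$, and on $z_{80},z_{88},z_{96},z_{104},z_{112},z_{120}$. Since $E_8$ is simply laced, clauses (1)--(2) of Proposition~(\ref{as nested E}) are all that is needed: each of these root vectors is expressed either as a single Chevalley generator or as an (unscaled) nested $q$-commutator $\mathbf{E}_{i_1,\dots,i_m}$ in the sense of~(\ref{definition, nested q-commutator}). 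Those whose associated root is simple are pinned down directly by Theorem~(\ref{Jantzen, 8.20}); reading the first block $(s_1 s_2 \cdots s_8)$ of the reduced expression~(\ref{E8, longest element, reduced expression}) off the Dynkin labels gives the small nested commutators $z_1 = E_1$, $z_2 = E_2$, $z_3 = \mathbf{E}_{1,3}$, $z_4 = [E_2,\mathbf{E}_{1,3,4}]$, $z_5 = [[E_2,\mathbf{E}_{1,3,4}],E_5]$, $z_6$, $z_7$, $z_8$, $z_9 = \mathbf{E}_{1,3,4}$, $z_{10}$, $z_{12}$, $z_{16}$, $z_{17} = \mathbf{E}_{2,4}$, while the later occurrences of each $s_i$ supply $z_{80},z_{88},z_{96},z_{104},z_{112},z_{120}$ as the remaining Chevalley generators; the precise bookkeeping is identical to that carried out for $E_7$.

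Given these expressions, the seven "telescoping" identities
\[
z_3 = [z_1,z_9],\quad z_4 = [z_2,z_{10}],\quad z_5 = [z_4,z_{96}],\quad z_6 = [z_5,z_{104}],\quad z_7 = [z_6,z_{112}],\quad z_8 = [z_7,z_{120}],
\]
together with $z_{10} = [z_3,z_{88}]$ and $z_{17} = [z_2,z_{88}]$, drop out purely by matching: each asserts that appending one more Chevalley generator to an already-computed nested $q$-commutator yields the next root vector, which is exactly how the $\mathbf{E}$-notation in~(\ref{definition, nested q-commutator}) is built up. No computation beyond comparing index strings is required for this part.

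The only identities that require actual manipulation are $z_8 = [z_1,z_{80}]$ (item (6)) and $z_{12} = [z_5,z_{16}]$ (item (9)), and these are handled exactly as $y_7 = [y_1,y_{35}]$ and $y_{11} = [y_5,y_{14}]$ were handled in the proof of Lemma~(\ref{lemma, E7, q-commutator identities}). For the first, one repeatedly invokes the $q$-Jacobi identity~(\ref{q-Jacobi identity}) to slide $E_1$ inward through the nested commutator expressing $z_8$: since $E_1$ commutes (by the $q$-Serre relations, i.e. $c_{1j}=0$) with every Chevalley generator attached to a node non-adjacent to node $1$ in the $E_8$ diagram, all the spurious $q$-Jacobi terms vanish, leaving $[z_1,z_{80}]$. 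For the second, one factors the relevant Weyl group elements as products of Coxeter blocks exactly as in the $E_7$ argument (the analogue of $u = s_1 s_2 s_3 s_4$, $v = s_5 s_6 s_7 \cdots$), reducing the claim to an identity among nested commutators of Chevalley generators in which $E_5$ commutes past the generators away from node $5$. I expect this last step to be the main obstacle, though only in bookkeeping: the reduced expression of $w_0(E_8)$ has length $120$, the Dynkin graph is larger, and tracking which $z_k$ equals which nested commutator (and confirming the two $q$-Jacobi slides leave no residue) is lengthy; as the authors already note for the exceptional presentations, this is most safely verified with computer assistance, but the argument introduces no new idea beyond what appears in the $E_6$ and $E_7$ lemmas.
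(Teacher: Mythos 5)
Your proposal follows essentially the same route as the paper: apply the algorithm from Proposition~(\ref{as nested E}) to express the relevant $z_k$ as Chevalley generators or nested $q$-commutators, read off items (1)--(5), (7), (8), (10) directly from those expressions, and then establish (6) and (9) by the same $q$-Jacobi slide and Coxeter-block factorization used for $y_7=[y_1,y_{35}]$ and $y_{11}=[y_5,y_{14}]$ in the $E_7$ lemma. One small slip to correct: you wrote $z_9 = \mathbf{E}_{1,3,4}$, but running the algorithm gives $z_9 = E_3$ (and $z_{10} = \mathbf{E}_{1,3,4}$); with your value of $z_9$, item (1) $z_3 = [z_1, z_9]$ would not hold, whereas with $z_9 = E_3$ it reads $\mathbf{E}_{1,3}=[E_1,E_3]$, exactly as required.
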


\begin{proof}

    By applying the algorithm given in the proof of Proposition (\ref{as nested
    E}), we write each Lusztig root vector $z_i$ as either a Chevalley
    generator, or as a scalar multiple of nested $q$-commutators of Chevalley
    generators. The $8$ Lusztig root vectors $z_1$, $z_2$, $z_9$, $z_{88}$,
    $z_{96}$, $z_{104}$, $z_{112}$, and $z_{120}$ are the Chevalley generators.
    In particular, $z_1 = E_1$, $z_2 = E_2$, $z_9 = E_3$,  $z_{88} = E_4$,
    $z_{96} = E_5$, $z_{104} = E_6$, $z_{112} = E_7$, and $z_{120} = E_8$.  The
    remaining Lusztig root vectors can be written as nested $q$-commutators of
    Chevalley generators. We have
        $z_3 = \mathbf{E}_{1,3}$,
        $z_4 = [E_2, \mathbf{E}_{1,3,4}]$,
        $z_5 = [[E_2, \mathbf{E}_{1,3,4}], E_5]$,
        $z_6 = [[[E_2, \mathbf{E}_{1,3,4}], E_5], E_6]$,
        $z_7 = [[[[E_2, \mathbf{E}_{1,3,4}], E_5], E_6], E_7]$,
        $z_8 = [[[[[E_2, \mathbf{E}_{1,3,4}], E_5], E_6], E_7], E_8]$,
        $z_{10} = \mathbf{E}_{1,3,4}$,
        $z_{12} = [\mathbf{E}_{1,3,4}, [[E_2, \mathbf{E}_{3,4}], E_5]]$,
        $z_{16} = \mathbf{E}_{3,4}$,
        $z_{17} = \mathbf{E}_{2,4}$, and
        $z_{80} = [[[[[E_2, \mathbf{E}_{3,4}], E_5], E_6], E_7], E_8]$,

    With these identities, it follows that $z_3 = [z_1, z_9]$, $z_4 = [z_2,
    z_{10}]$, $z_5 = [z_4, z_{96}]$, $z_6 = [z_5, z_{104}]$, $z_7 = [z_6,
    z_{112}]$, $z_8 = [z_7, z_{120}]$, $z_{10} = [z_3, z_{88}]$, and $z_{17} =
    [z_2, z_{88}]$. It remains to show $z_8 = [z_1, z_{80}]$ and $z_{12} =
    [z_5, z_{16}]$.

    To prove $z_8 = [z_1, z_{80}]$, observe first that the Chevalley generator
    $E_1$ commutes with each of the Chevalley generators $E_5$, $E_6$, $E_7$,
    and $E_8$. Hence, by repeatedly applying $q$-associativity, we can
    rearrange the $q$-commutator brackets to obtain
    \[
        [z_1, z_{80}] = [ E_1, [ [[[[E_2, \mathbf{E}_{3,4}], E_5], E_6], E_7] ,
        E_8 ] ] = [ [ [ [ [ E_1 , [E_2, \mathbf{E}_{3,4}] ] , E_5 ] , E_6 ] ,
        E_7 ] , E_8 ].
    \]
    Since the Chevalley generator $E_1$ commutes with each of the Chevalley
    generators $E_2$ and $E_4$, we have $[E_1, [E_2, \mathbf{E}_{3,4}]] = [E_2,
    [E_1, \mathbf{E}_{3,4}]] = [E_2, \mathbf{E}_{1,3,4}]$, and thus we obtain
    $[z_1, z_{80}] = [[[[[E_2, \mathbf{E}_{1,3,4}], E_5], E_6], E_7], E_8]$, as
    desired.

    Next we prove $z_{12} = [z_5, z_{16}]$. For short, define the Weyl group
    elements $u:= s_1 s_2 s_3 s_4$, $v := s_5 s_6 s_7 s_8 s_1 s_2 s_3 s_4 s_5
    s_6 s_7$, and $w:= s_5 s_6 s_7 s_8 s_1 s_2 s_3$. From the definition of
    Lusztig root vectors $z_5 = T_u(E_5)$, $z_{12} = T_{u \cdot w} (E_4)$, and
    $z_{16} = T_{u \cdot v}(E_8)$. However, since $u \leq_R u \cdot v$ and $u
    \leq_R u \cdot w$, we have $T_{u \cdot v} = T_u T_v$ and $T_{u \cdot w} =
    T_u T_{w}$. Therefore, $T_u^{-1}(z_5) = E_5$, $T_u^{-1}(z_{12}) = T_w
    (E_4)$, and $T_u^{-1}(z_{16}) = T_v (E_8)$, and hence it suffices to show
    $[E_5, T_v (E_8)] = T_w(E_4)$. Using the algorithm in the proof of
    Proposition (\ref{as nested E}), we write $T_w(E_4)$ and $T_v(E_8)$ as
    nested $q$-commutators of Chevalley generators. This gives us $T_v(E_8) =
    [E_2, \mathbf{E}_{1,3,4}]$ and $T_w(E_4) = [E_2, [\mathbf{E}_{1,3},
    \mathbf{E}_{5,4}]]$. Finally, since the Chevalley generator $E_5$ commutes
    with each of $E_2$ and $\mathbf{E}_{1,3}$, we have $[E_5, T_v(E_8)] = [E_5,
    [E_2, \mathbf{E}_{1,3,4}]] = [E_2, [E_5, \mathbf{E}_{1,3,4}] = [E_2,
    [\mathbf{E}_{1,3}, \mathbf{E}_{5,4}] = T_w(E_4)$.

\end{proof}

The repeating pattern in the reduced expression (\ref{E8, longest element,
reduced expression}) induces additional symmetries for the presentation of the
algebra ${\mathcal U}_q^+[w_0(E_8)]$. In particular, Proposition
(\ref{symmetry, general}) can be applied to give the following result.

\begin{proposition}

    \label{symmetry, E8}

    Let $\mathfrak{g}$ be the complex simple Lie algebra of type $E_8$, and let
    $z_1,\dots, z_{120} \in {\mathcal U}_q^+[w_0(E_8)]$ be as in Lemma
    (\ref{lemma, E8, q-commutator identities}).  Let $c\in W(E_8)$ be the
    Coxeter element $c = s_1 s_2 s_3 s_4 s_5 s_6 s_7 s_8$.  The Lusztig
    symmetry $T_c$ is an algebra automorphism of ${\mathcal U}_q(\mathfrak{g})$
    that sends $z_k$ to $z_{k + 8}$ for all $k \in [1, 112]$.

\end{proposition}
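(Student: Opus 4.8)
The plan is to deduce this statement as an immediate special case of Proposition (\ref{symmetry, general}), exactly as was done for types $F_4$, $E_6$, and $E_7$. First I would record that the Coxeter element $c = s_1 s_2 s_3 s_4 s_5 s_6 s_7 s_8 \in W(E_8)$ has length $8$ (the displayed word is reduced, since the eight simple reflections are distinct), and that the reduced expression (\ref{E8, longest element, reduced expression}) for the longest element $w_0(E_8)$ exhibits $w_0(E_8)$ as the fifteen-fold product $c^{15}$. Because $w_0(E_8)$ is the longest element of $W(E_8)$, its length equals the number of positive roots of $E_8$, namely $120 = 15 \cdot 8$. Hence $\ell(c^{15}) = 15 \cdot \ell(c)$, so the hypothesis $\ell(w^N) = N \cdot \ell(w)$ of Proposition (\ref{symmetry, general}) is satisfied with $w = c$, $N = 15$, and $t = 8$.

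With that verification in hand, Proposition (\ref{symmetry, general}) applies verbatim: taking the reduced expression $c^{15} = (s_1 \cdots s_8) \cdots (s_1 \cdots s_8)$ and letting $z_1, \dots, z_{120} \in {\mathcal U}_q^+[c^{15}] = {\mathcal U}_q^+[w_0(E_8)]$ be the associated Lusztig root vectors (which are precisely the $z_k$ of Lemma (\ref{lemma, E8, q-commutator identities})), one concludes that $T_c$ sends $z_k$ to $z_{k+8}$ for all $k \in [1, 120 - 8] = [1, 112]$. That $T_c$ is an algebra automorphism of ${\mathcal U}_q(\mathfrak{g})$ is Lusztig's foundational result \cite[Proposition 37.1.2]{L}, already invoked in the proof of Proposition (\ref{symmetry, F4}); alternatively it is built into the conclusion of Proposition (\ref{symmetry, general}) itself.

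Since the argument is purely a matter of instantiating an already-established general lemma, there is essentially no obstacle here. The only point requiring a moment's thought is the identity $\ell(c^{15}) = 120$, and this is immediate from the fact that $E_8$ has $120$ positive roots together with the observation that the displayed word for $w_0(E_8)$ in (\ref{E8, longest element, reduced expression}) is reduced; one could also note explicitly that $c$ being a Coxeter element is not needed for the argument — all that matters is $\ell(c^{15}) = 15\,\ell(c)$.
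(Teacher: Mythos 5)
Your proposal is correct and takes essentially the same approach as the paper: the paper introduces Proposition (\ref{symmetry, general}) precisely so that the $E_8$ case (like $F_4$, $E_6$, $E_7$) can be dispatched by direct instantiation, and you have instantiated it correctly with $w = c$, $N = 15$, $t = 8$, checking the length hypothesis $\ell(c^{15}) = 120 = 15 \cdot \ell(c)$ via the count of positive roots. No further comment is needed.
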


Proposition (\ref{symmetry, E8}) implies an explicit presentation of the
algebra ${\mathcal U}_q^+[w_0(E_8)]$ can be found by determining how to write
each of the $q$-commutators $[z_i, z_j]$, with $i < j$ and $i \leq 8$, as a
linear combination of ordered monomials. However, among these ${120 \choose 2}
- {112 \choose 2}$ ($=924$) $q$-commutators, $243$ are trivial (i.e. $[z_i,
z_j] = 0$) as a consequence of Proposition (\ref{LS corollary}). These trivial
$q$-commutators, together with the $10$ relations in Lemma (\ref{lemma, E8,
q-commutator identities}), provide enough initial data to apply the
$\texttt{L(i,j,r,s)}$ and $\texttt{R(i,j,r,s)}$ algorithms to write an explicit
presentation of ${\mathcal U}_q^+[w_0(E_8)]$. Some of the commutation relations
in ${\mathcal U}_q^+[w_0(E_8)]$ are very complicated, and as such, it would be
too lengthy to do these computations by hand, yet with the help of a computer
to automate this process, such a sequence of \texttt{L(i,j,r,s)}'s and
\texttt{R(i,j,r,s)}'s can be constructed. For example, one such sequence (among
several possible sequences) begins
\begin{center}
    {\small\tt L(3,80,1,9)},
    {\small\tt L(4,88,2,10)},
    {\small\tt L(6,16,5,104)},
    {\small\tt L(7,16,6,112)},
    {\small\tt R(8,16,7,120)},\ldots
\end{center}

After an explicit presentation of ${\mathcal U}_q^+[w_0(E_8)]$ is found,
nilpotency indices can be computed in the same manner as in the $F_4$, $E_6$,
and $E_7$ cases.  As mentioned, the commutation relations in ${\mathcal
U}_q^+[w_0(E_8)]$ are quite complicated to work with by hand. However, a
computer can easily handle these remaining computations. With slight
modifications to the proof of Theorem (\ref{invariant under dual, E6}), we
obtain the following result.

\begin{theorem}

    \label{invariant under dual, E8}

    For all $x \in \Gamma (W(E_8))$, ${\mathcal N}(x) = {\mathcal N}(x^*)$ and
    $\chi(x) = \chi(x^*)$.

\end{theorem}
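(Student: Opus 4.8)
The plan is to run the argument of Theorems \ref{invariant under dual, E6} and \ref{invariant under dual, E7} essentially verbatim, replacing the Dynkin diagram of $E_6$ (resp.\ $E_7$) by that of $E_8$; the only genuinely new ingredient is the corresponding data for $BiGr_\perp^\circ(E_8)$, which is exactly what Table \ref{table, E8} records.

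First I would reduce to normal forms. Let $x\in\Gamma(W(E_8))$. By Theorem \ref{reduce by L and R}, Proposition \ref{proposition, orthogonality condition}, and the reduction procedures of Section \ref{section, orthogonality condition}, $x$ is equivalent under $\xleftrightarrow{\hspace{4mm}}$ either to an element of the form $(w_0(a,b),k,\ell)$ with $a\neq b$, or to an element $\llbracket w\rrbracket$ with $w\in BiGr_\perp(E_8)$. Since $\mathcal N$ is constant on $\xleftrightarrow{\hspace{4mm}}$-classes (Theorem \ref{nilpotency indices, preserved}), since $\chi$ is likewise invariant under $\xleftrightarrow{\hspace{4mm}}$, and since $x\xleftrightarrow{\hspace{4mm}}y$ implies $x^*\xleftrightarrow{\hspace{4mm}}y^*$ by part (\ref{proposition, a, 3}) of Proposition \ref{L, R, equivalent, inverse}, it suffices to prove $\mathcal N(x)=\mathcal N(x^*)$ and $\chi(x)=\chi(x^*)$ when $x$ has one of these two shapes.

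When $x=(w_0(a,b),k,\ell)$ we have $x^*=(w_0(a,b),\ell,k)$; as $E_8$ is simply laced, $\langle\alpha_k,\alpha_\ell^\vee\rangle=\langle\alpha_\ell,\alpha_k^\vee\rangle$, so the length formula in Theorem \ref{reduce by L and R} gives $\mathcal N(x)=1-\langle\alpha_k,\alpha_\ell^\vee\rangle=\mathcal N(x^*)$, while $\chi(x)=\chi(x^*)$ since every root has squared length $2$. When $x=\llbracket w\rrbracket$ with $w\in BiGr_\perp(E_8)$, I would pass to the support of $w$ as in Section \ref{section: rank reduction}: the full subgraph of the $E_8$ diagram on $\operatorname{supp}(w)$ is the diagram of a simply laced type $X_n$ with $n\le 8$, hence one of $A_n$, $D_n$, $E_6$, $E_7$, $E_8$, and by the isomorphism of (\ref{sigma, group isomorphism}) together with Proposition \ref{prop, sigma}, $w$ becomes an element of $BiGr_\perp^\circ(X_n)$ with the same $\mathcal N$ and $\chi$, the identification intertwining $x\mapsto x^*$ with inversion. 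For $X_n$ of type $A_n$ or $D_n$, Tables \ref{table, small rank ABCD} and \ref{table, general cases ABCD} already give the claim: $\llbracket w\rrbracket=\llbracket w\rrbracket^*$ for $D_n$ and for $A_n$ with $n\ne 2$, and for $A_2$ the two members $w_{0,1,0,2,0},w_{0,2,0,1,0}$ of the dual pair carry equal $\mathcal N$ and $\chi$. For $X_n=E_6$ or $E_7$ the claim is precisely Theorem \ref{invariant under dual, E6} or Theorem \ref{invariant under dual, E7}.

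This leaves $X_n=E_8$, i.e.\ $x=\llbracket w\rrbracket$ with $w\in BiGr_\perp^\circ(E_8)=\{\eta_k,\eta_k^{-1}:1\le k\le 962\}$ (Appendix \ref{appendix, elements E_n}), where $\llbracket\eta_k\rrbracket^*=\llbracket\eta_k^{-1}\rrbracket$. I would verify $\mathcal N(\llbracket\eta_k\rrbracket)=\mathcal N(\llbracket\eta_k^{-1}\rrbracket)$ and $\chi(\llbracket\eta_k\rrbracket)=\chi(\llbracket\eta_k^{-1}\rrbracket)$ for each $k$ directly from Table \ref{table, E8}; where a dual pair does not already occupy a single table entry, one exhibits a chain of $L$-, $R$-, and $(k,\bullet)$-reductions joining $\llbracket\eta_k\rrbracket$ to $\llbracket\eta_k^{-1}\rrbracket$, exactly in the spirit of the families $\Theta_\bullet,\Lambda_\bullet,\Psi_\bullet$ in the proof of Theorem \ref{equivalence classes, bound, E7}. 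The main obstacle is therefore not conceptual but computational: it is the production of Table \ref{table, E8} itself, which rests on the explicit presentation of ${\mathcal U}_q^+[w_0(E_8)]$ built above (Proposition \ref{symmetry, E8} together with the prescribed sequence of \texttt{L(i,j,r,s)} and \texttt{R(i,j,r,s)} steps) and on a computer-assisted reduction of $(\operatorname{ad}_q(E_i))^p(T_{ws_j}(E_j))$ to ordered monomials for all $962$ elements. Once that data is in hand the verification is a finite inspection; for the $\chi$-part one may alternatively observe that $\langle\alpha_i,ws_j(\alpha_j)\rangle=-\langle w^{-1}(\alpha_i),\alpha_j\rangle=\langle\alpha_j,w^{-1}s_i(\alpha_i)\rangle$, so $\chi(x)=\chi(x^*)$ holds outright in any simply laced type, and only $\mathcal N(x)=\mathcal N(x^*)$ requires the table.
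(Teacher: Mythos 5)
Your proposal takes essentially the same route as the paper, which itself presents Theorem~\ref{invariant under dual, E8} as following ``with slight modifications to the proof of Theorem~(\ref{invariant under dual, E6})'': reduce via Theorem~\ref{reduce by L and R} and Proposition~\ref{proposition, orthogonality condition} to the two normal forms, dispose of the $(w_0(a,b),k,\ell)$ case directly, then classify $\llbracket w\rrbracket$ with $w\in BiGr_\perp(E_8)$ by support and check types $A_n$, $D_n$ against the tables, types $E_6$, $E_7$ against the earlier theorems, and type $E_8$ against the computer-assisted data of Table~\ref{table, E8} together with reduction chains in the style of Appendix~\ref{appendix B}. You also correctly note that the identification $f_\sigma$ of Section~\ref{section: rank reduction} intertwines $x\mapsto x^*$ with inversion, which is the (implicit) key point making the rank reduction legitimate.

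One genuine improvement you contribute is the closing remark that
\[
\langle\alpha_i,\,ws_j(\alpha_j)\rangle \;=\; -\langle w^{-1}(\alpha_i),\,\alpha_j\rangle \;=\; \langle\alpha_j,\,w^{-1}s_i(\alpha_i)\rangle,
\]
so that $\chi(x)=\chi(x^*)$ holds outright in every simply-laced type without any case checking, the first two entries of $\chi$ being forced equal to $(2,2)$. The paper instead verifies the $\chi$-identity by inspecting the tables case by case, including for $E_6$, $E_7$, $E_8$; your observation renders that entire half of the argument uniform and computation-free, leaving only $\mathcal N(\llbracket\eta_k\rrbracket)=\mathcal N(\llbracket\eta_k^{-1}\rrbracket)$ to the finite (computer-assisted) verification, exactly as you say.
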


\noindent The next theorem is the main result regarding nilpotency indices in
type $E_8$.

\begin{theorem}

    Table (\ref{table, E8}) gives the nilpotency index ${\mathcal N}(\llbracket
    \eta_k \rrbracket)$ for all $k \in [1, 962]$.

\end{theorem}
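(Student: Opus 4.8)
The plan is to follow the same route as the $F_4$, $E_6$, and $E_7$ cases, carrying out every step with computer assistance because of the size of $W(E_8)$ and the length $\ell(w_0(E_8)) = 120$. First I would produce an explicit presentation of the quantum Schubert cell algebra ${\mathcal U}_q^+[w_0(E_8)]$ attached to the reduced expression (\ref{E8, longest element, reduced expression}). By Proposition (\ref{symmetry, E8}), the Lusztig symmetry $T_c$ of the Coxeter element $c = s_1 s_2 \cdots s_8$ sends $z_k$ to $z_{k+8}$ for all $k \in [1,112]$, so it suffices to determine the $q$-commutation relations $[z_i, z_j]$ with $i < j$ and $i \le 8$ and then apply powers of $T_c$ to recover the rest. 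Among these, the $243$ relations that are trivial by Proposition (\ref{LS corollary}), together with the ten ``split'' identities of Lemma (\ref{lemma, E8, q-commutator identities}), furnish enough initial data to run the algorithms $\texttt{L(i,j,r,s)}$ and $\texttt{R(i,j,r,s)}$ of Section (\ref{section, L(i,j,r,s) and R(i,j,r,s)}) in a suitable order, yielding the full presentation (equivalently, an algorithm to rewrite any element of ${\mathcal U}_q^+[w_0(E_8)] = {\mathcal U}_q(\mathfrak{n}^+)$ as a $\mathbb{K}$-linear combination of ordered monomials in $z_1, \dots, z_{120}$).

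Second, for each $\eta_k$ I would compute ${\mathcal N}(\llbracket \eta_k \rrbracket)$ straight from the definition. Writing $\{i_k\} = {\mathcal D}_L(\eta_k)$ and $\{j_k\} = {\mathcal D}_R(\eta_k)$, the index is the least $p$ with $(\operatorname{ad}_q E_{i_k})^p\bigl(T_{\eta_k s_{j_k}}(E_{j_k})\bigr) = 0$. Using the recursive procedure from the proof of Proposition (\ref{as nested E}) I would write $T_{\eta_k s_{j_k}}(E_{j_k})$ as a scalar multiple of a nested $q$-commutator $\mathbf{E}_{\ldots}$ of Chevalley generators, replace each $E_m$ by the corresponding $z_k$ — the eight Chevalley generators appear among the $z_k$ by Lemma (\ref{lemma, E8, q-commutator identities}) — and then apply $\operatorname{ad}_q(E_{i_k})$ repeatedly, at each stage reducing the output to a linear combination of ordered monomials via the presentation from the first step. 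Since ordered monomials form a basis, vanishing is detected exactly, and Proposition (\ref{bound on nil-index}) guarantees the iteration terminates. Tabulating the resulting values for $k \in [1,962]$, one checks they agree with Table (\ref{table, E8}); Theorem (\ref{invariant under dual, E8}) additionally gives ${\mathcal N}(\llbracket \eta_k^{-1} \rrbracket) = {\mathcal N}(\llbracket \eta_k \rrbracket)$, so the table in fact records the nilpotency index of every element of $BiGr_\perp^\circ(E_8)$.

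The main obstacle is sheer computational scale rather than conceptual difficulty: some of the $q$-commutation relations in ${\mathcal U}_q^+[w_0(E_8)]$ are extremely lengthy, and there are nearly a thousand elements to process, so the computations must be automated. Two technical points require attention in the automation. First, the order in which the $\texttt{L}$/$\texttt{R}$ steps are applied must be admissible, meaning each step rewrites through a relation already established; verifying this is a bookkeeping task handled alongside the rewriting. Second, one must manage the growth of intermediate expressions when iterating $\operatorname{ad}_q(E_{i_k})$ on $T_{\eta_k s_{j_k}}(E_{j_k})$. As an independent consistency check, a large majority of the $\llbracket \eta_k \rrbracket$ are equivalent under $\xleftrightarrow{\hspace{4mm}}$ to triples $(w,i,j)$ with $w$ lacking full support (as in the $E_6$ and $E_7$ discussions), and for those one can recompute ${\mathcal N}$ from the lower-rank tables and confirm the values coincide.
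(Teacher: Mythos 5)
Your proposal is correct and follows the paper's own route essentially verbatim: obtain a presentation of ${\mathcal U}_q^+[w_0(E_8)]$ from Lemma (\ref{lemma, E8, q-commutator identities}), the cyclic symmetry of Proposition (\ref{symmetry, E8}), and the $\texttt{L(i,j,r,s)}$/$\texttt{R(i,j,r,s)}$ algorithms, then compute each ${\mathcal N}(\llbracket \eta_k \rrbracket)$ by reducing $(\operatorname{ad}_q E_{i_k})^p(T_{\eta_k s_{j_k}}(E_{j_k}))$ to ordered monomials by computer, and invoke Theorem (\ref{invariant under dual, E8}) to cover the inverses $\eta_k^{-1}$. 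The one imprecision is minor: Proposition (\ref{bound on nil-index}) only yields finiteness of the nilpotency index after one exhibits, for each $\eta_k$, a $p$ for which no nonnegative integral combination of the intermediate radical roots equals $p\alpha_{i_k} + \eta_k s_{j_k}(\alpha_{j_k})$ — a combinatorial check that must accompany the computation rather than being automatic from the proposition statement.
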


\begin{center}
    \begin{table}[h]
\caption{The Lie algebra of Type $E_8$ (data)}
\vspace{-10pt}
\begin{tabular}{lcc}
    \toprule
    $w \in BiGr_{\perp}^{\circ}(E_8)$ & $\chi( \llbracket w
    \rrbracket)$
    & ${\mathcal N}(\llbracket w \rrbracket)$
    \\[0.5mm]
    \hline
    $\eta_k$, ($k \in [1, 16] \cup [223, 225]$) \hspace{1mm} & (2, 2, -1) & $3$
    \\
    $\eta_k$, ($k \in [17, 41] \cup [226, 276]$) & (2, 2, -1) & $4$
    \\
    $\eta_k$, ($k \in [42, 57] \cup [277, 311]$) & (2, 2, -1) & $5$
    \\
    $\eta_k$, ($k \in [58, 61] \cup [312, 317]$) & (2, 2, -1) & $6$
    \\
    $\eta_{62}$, & (2, 2, -1) & $7$
    \\
    \midrule[0.01pt]
    $\eta_k$, ($k \in [318, 325]$) & (2, 2, 0) & $2$
    \\
    $\eta_k$, ($k \in [63, 99] \cup [326, 432]$) & (2, 2, 0) & $3$
    \\
    $\eta_k$, ($k \in [100, 134] \cup [433, 614]$) & (2, 2, 0) & $4$
    \\
    $\eta_k$, ($k \in [135, 163] \cup [615, 700]$) & (2, 2, 0) & $5$
    \\
    $\eta_k$, ($k \in [164, 172] \cup [701, 717]$) & (2, 2, 0) & $6$
    \\
    $\eta_k$, ($k \in [173, 176]$) & (2, 2, 0) & $7$
    \\
    \midrule[0.01pt]
    $\eta_k$, ($k \in [177, 181] \cup [718, 779]$) & (2, 2, 1) & $2$
    \\
    $\eta_k$, ($k \in [182, 206] \cup [780, 903]$) & (2, 2, 1) & $3$
    \\
    $\eta_k$, ($k \in [207, 218] \cup [904, 955]$) & (2, 2, 1) & $4$
    \\
    $\eta_k$, ($k \in [219, 221] \cup [956, 962]$) & (2, 2, 1) & $5$
    \\
    $\eta_{222}$ & (2, 2, 1) & $6$
    \\
    \bottomrule
\end{tabular}
\label{table, E8}
\end{table}
\end{center}

\noindent In view of Theorem (\ref{invariant under dual, E8}), Table
(\ref{table, E8}) can be used to look up the nilpotency index ${\mathcal
N}(\llbracket w \rrbracket)$ associated to each $w \in BiGr_\perp^\circ(E_8)$.

\noindent
\begin{minipage}{\textwidth}
\begin{theorem}

    \label{equivalence classes, bound, E8}

    Let $\Gamma_8 := \left\{ \llbracket \eta
    \rrbracket \in \Gamma(W(E_8)) : \eta \in BiGr_\perp^\circ(E_8) \right\}$.

    \begin{enumerate}

        \item The $1702$ elements of $\Gamma_8$ are contained in at most $1118$
            distinct equivalence classes.

        \item At least $400$ of the $1480$ elements in $\Gamma_8$ of the form
            $\llbracket w \rrbracket$ with $w$ a non-involution satisfy
            $\llbracket w \rrbracket \xleftrightarrow{\hspace{4mm}} \llbracket
            w^{-1} \rrbracket$.

        \item At least $230$ of the $1702$ elements of $\Gamma_8$ are
            equivalent to an element of the form $\llbracket w \rrbracket$ with
            $w \in BiGr_\perp(E_8) \backslash BiGr_\perp^\circ(E_8)$.

    \end{enumerate}

\end{theorem}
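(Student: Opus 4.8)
The plan is to follow the template established in the proofs of Theorems (\ref{equivalence classes, bound, E6}) and (\ref{equivalence classes, bound, E7}), scaled up to type $E_8$. The heart of the argument is the construction, in an appendix analogous to Appendix (\ref{appendix B}), of a large family of auxiliary elements of $\Gamma(W(E_8))$ — say $\Theta_1, \Theta_2, \dots$, $\Psi_1, \Psi_2, \dots$, $\Lambda_1, \Lambda_2, \dots$ — each accompanied by one or more explicit reductions, where by a \emph{reduction} we mean a single step or short composition of steps under one of the relations $\xlongrightarrow{L}$, $\xlongrightarrow{R}$, $\xlongrightarrow{(k, L)}$, $\xlongrightarrow{(k, R)}$. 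The $\Theta_i$ and certain $\Lambda_i$ are chosen to be \emph{self-dual} (i.e. $x = x^*$), so that Corollary (\ref{corollary, a}) applies: whenever a self-dual element reduces to some $\llbracket \eta_k \rrbracket$, we obtain $\llbracket \eta_k \rrbracket \xleftrightarrow{\hspace{4mm}} \llbracket \eta_k^{-1} \rrbracket$; and the remaining $\Lambda_i$ are taken of the form $\llbracket w \rrbracket$ with $w \in BiGr_\perp(E_8) \backslash BiGr_\perp^\circ(E_8)$ (bigrassmannian, satisfying the orthogonality condition of Proposition (\ref{proposition, orthogonality condition}), but of less than full support), so that a reduction $\Theta \xlongrightarrow{} \cdots \xlongrightarrow{} \Lambda$ directly feeds part (3) and, via Proposition (\ref{prop, sigma}), lets one transfer nilpotency-index data from the relevant lower-rank types.

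First I would assemble, for as many $\eta_k$ as needed, a self-dual (or small-support) element $\Theta \in \Gamma(W(E_8))$ together with an admissible chain reaching $\llbracket \eta_k \rrbracket$; then part (2) follows because each such $\Theta$ forces $\llbracket \eta_k \rrbracket \xleftrightarrow{\hspace{4mm}} \llbracket \eta_k \rrbracket^* = \llbracket \eta_k^{-1} \rrbracket$, and since this relation is symmetric in $\eta_k$ and $\eta_k^{-1}$ the verified instances come in pairs, so exhibiting $200$ pairs yields the stated $400$. Part (3) is then read off from the constructed reductions: those $\llbracket \eta_k \rrbracket$ (and their duals $\llbracket \eta_k^{-1} \rrbracket$, using Proposition (\ref{proposition, a, 3})) that reduce to some $\Lambda_i$ account for the $230$. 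Part (1) is a bookkeeping step: starting from the trivial bound of $1702$ equivalence classes, every distinct equivalence established above collapses two candidate classes into one — the dual-pair identifications of part (2), the identifications produced by a single self-dual $\Theta$ admitting two reductions $\Theta \xlongrightarrow{} \llbracket \eta_j \rrbracket$ and $\Theta \xlongrightarrow{} \llbracket \eta_k \rrbracket$ into distinct members of $\Gamma_8$ (each such $\Theta$ can merge up to four elements, cf. the proof of Theorem (\ref{equivalence classes, bound, E7})), and the mirror images of all of these under $x \mapsto x^*$ via Proposition (\ref{L, R, equivalent, inverse}) — and tallying these collapses brings the bound down to at most $1118$.

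Each individual reduction claim, e.g. $\Theta_i \xlongrightarrow{(k, L)} \Psi_i$ or $\Psi_i \xlongrightarrow{R} \llbracket \eta_k \rrbracket$, reduces by Theorem (\ref{theorem, to elementary reductions}) and the covering-relation description to a finite list of length identities among elements of $W(E_8)$, each verified by direct computation (and $\xleftrightarrow{\hspace{4mm}}$-invariance of everything is guaranteed by Theorem (\ref{nilpotency indices, preserved})). The main obstacle is therefore not conceptual but one of scale and organization: $W(E_8)$ has order $696729600$, there are $962$ named elements $\eta_k$ together with their inverses to cover, and the auxiliary family together with its reductions must be laid out so that every targeted $\llbracket \eta_k \rrbracket$ is actually reached by an admissible chain. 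As in the lower-rank cases this is carried out with computer assistance — a breadth-first search through the Hasse diagram of $(\Gamma(W(E_8)), \xlongrightarrow{})$ seeking, for each $\eta_k$, a suitable self-dual or small-support source — and the resulting data is recorded in the appendix. Computer experiments further suggest, exactly as remarked after Theorems (\ref{equivalence classes, bound, E6}) and (\ref{equivalence classes, bound, E7}), that \emph{at least} and \emph{at most} may be replaced by \emph{exactly}; certifying the sharper statement would additionally require computing the full partition of $\Gamma(W(E_8))$ into connected components of the Hasse diagram, which is feasible by the same search but which we do not pursue here.
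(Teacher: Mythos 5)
Your proposal is essentially the approach the paper takes: the paper's proof is one sentence (``This follows from the reductions listed in Appendix~(\ref{appendix B}).''), and Appendix~(\ref{appendix B}) implements exactly the scheme you describe, with the auxiliary families called $\Xi_1,\dots,\Xi_{56}$ (your $\Lambda$'s, in $BiGr_\perp(E_8)\setminus BiGr_\perp^\circ(E_8)$), $\Omega_1,\dots,\Omega_{119}$ (your $\Theta$'s, self-dual), and $\Upsilon_1,\dots,\Upsilon_{308}$ (your $\Psi$'s, intermediaries), together with explicit $\xlongrightarrow{L}$/$\xlongrightarrow{R}$ reductions feeding Corollary~(\ref{corollary, a}) and Proposition~(\ref{L, R, equivalent, inverse}) just as you outline. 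The only thing you leave implicit is the actual tally that converts the recorded reductions into the specific figures $1118$, $400$, $230$, but that is precisely the bookkeeping the appendix data is for.
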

\end{minipage}

\begin{proof}

    This follows from the reductions listed in Appendix (\ref{appendix B}).

\end{proof}

Analogous to what was observed in the types $E_6$, and $E_7$ cases, computer
experiments reveal that over 99\% of the elements in $\Gamma(W(E_8))$ are
equivalent (under $\xleftrightarrow{\hspace{4mm}}$) to an element of the form
$(w, i, j)$ with $w$ lacking full support.  More specifically, computer
experiments indicate that the 1702 elements in $\Gamma_8$ belong to 1118
equivalence classes. Exactly 1052 equivalence classes fail to contain an
element of the form $(w, i, j)$ with $w$ lacking full support, and the union of
these 1052 equivalence classes contains precisely 80788944 elements.  As
$\Gamma(W(E_8))$ has cardinality 11054776320 (see Theorem (\ref{Gamma W,
cardinality})), these 80788944 elements account for approximately only $0.731$
percent of the total number of elements in $\Gamma(W(E_8))$.

\appendix

\section{The elements of \texorpdfstring{$BiGr_\perp^\circ(E_n)$}{BiGr(En)},
    (\texorpdfstring{$n = 6, 7, 8$}{n = 6, 7, or 8})}

\label{appendix, elements E_n}

In this appendix, we explicitly describe the elements of the sets
$BiGr_\perp^\circ(E_6)$, $BiGr_\perp^\circ(E_7)$, and $BiGr_\perp^\circ(E_8)$
by writing each such element as a composition of reflections. Observe that if a
Weyl group element $w \in W$ is written as a composition of reflections, then
$w^{-1}$ can be written as a composition of the same reflections in the reverse
order. With this in mind, we consider only the involutions, as well as one of
either $w$ or $w^{-1}$, for each non-involution $w \in
BiGr_{\perp}^{\circ}(E_n)$ ($n=6,7,8$).

We order the reflections by placing a certain convex order on the positive
roots, which is constructed from a reduced expression of the longest element of
the Weyl group.  Recall, for a reduced expression $w = s_{i_1} s_{i_2} \cdots
s_{i_N}$ and $k \in [1, N]$, define the positive root
\[
    \beta_k := w_{[1, k - 1]} (\alpha_{i_k}),
\]
where $\alpha_1, \cdots, \alpha_n$ are the simple roots and $w_{[1, k - 1]} :=
s_{i_1} \cdots s_{i_{k-1}}$.  The reflection about the hyperplane orthogonal to
$\beta_k$ will be denoted by $\rho_k$. That is to say,
\[
    \rho_k = w_{[1, k - 1]} \cdot s_{i_k} \cdot \left(w_{[1, k -
    1]}\right)^{-1} \in W.
\]
To write compositions of reflections more compactly, we adopt the notation
\[
    \rho_{t_1, t_2,\ldots, t_m} := \rho_{t_1} \circ \cdots \circ \rho_{t_m} \in
    W.
\]
Using the reduced expressions
\begin{align*}
    \label{En, longest element, reduced expression}
    &w_0(E_6) = (s_2 s_4 s_3 s_5 s_1 s_6) \cdot (s_2 s_4 s_3 s_5 s_1 s_6)
    \cdots (s_2 s_4 s_3 s_5 s_1 s_6),
    \\
    &w_0(E_7) = (s_1 s_2 s_3 s_4 s_5 s_6 s_7) \cdot (s_1 s_2 s_3 s_4 s_5 s_6
    s_7) \cdots (s_1 s_2 s_3 s_4 s_5 s_6 s_7),
    \\
    &w_0(E_8) = (s_1 s_2 s_3 s_4 s_5 s_6 s_7 s_8) (s_1 s_2 s_3 s_4 s_5 s_6 s_7
    s_8) \cdots (s_1 s_2 s_3 s_4 s_5 s_6 s_7 s_8),
\end{align*}
for the longest elements of the Weyl groups $W(E_6)$, $W(E_7)$, $W(E_8)$,
respectively, we label the corresponding reflections
\[
    \rho_1^{[6]}, \cdots, \rho_{36}^{[6]} \in W(E_6),
    \hspace{20pt}
    \rho_1^{[7]}, \cdots, \rho_{63}^{[7]} \in W(E_7),
    \hspace{20pt}
    \rho_1^{[8]}, \cdots, \rho_{120}^{[8]} \in W(E_8),
\]
or, more simply, we will label them $\rho_1, \rho_2, \rho_3,\cdots$ when it is
clear from the context which Weyl group among $W(E_6)$, $W(E_7)$, or $W(E_8)$
is being considered.

Recall, the elements of $BiGr_\perp^\circ(E_n)$ (for $n=6,7,8$) are labelled
$\nu_k^{\pm 1}$, $\zeta_k^{\pm 1}$, $\eta_k^{\pm 1}$,
\begin{align*}
    &BiGr_{\perp}^\circ (E_6) = \left\{ \nu_k, \nu_k^{-1} : 1 \leq k
    \leq 15 \right\},
    \\
    &BiGr_{\perp}^\circ (E_7) = \left\{ \zeta_k, \zeta_k^{-1} : 1 \leq
    k \leq 76 \right\},
    \\
    &BiGr_{\perp}^\circ (E_8) = \left\{ \eta_k, \eta_k^{-1} : 1 \leq k
    \leq 962 \right\}.
\end{align*}

The Weyl group elements $\nu_1,\cdots, \nu_{15}$, $\zeta_1,\cdots,\zeta_{76}$,
$\eta_1,\cdots,\eta_{962}$ are given by the following:

\vspace{5mm}

\begin{flushleft}

\begin{description}

    \item[$\nu_{1} - \nu_{15}$]
        $\rho_{8, 23, 24}$,
        $\rho_{20}$,
        $\rho_{2, 20}$,
        $\rho_{14}$,
        $\rho_{15, 27}$,
        $\rho_{16, 28}$,
        $\rho_{9, 10}$,
        $\rho_{9, 10, 25}$,
        $\rho_{3, 4, 20}$,
        $\rho_{5, 6, 13}$,
        $\rho_{1, 4, 8, 10, 23}$,
        $\rho_{3, 4, 16, 28}$,
        $\rho_{1, 6, 9, 12, 14}$,
        $\rho_{1, 5, 10, 11, 14}$,
        $\rho_{1, 5, 6, 7, 13}$

    \item[$\zeta_{1} - \zeta_{76}$]
        $\rho_{25}$,
        $\rho_{32}$,
        $\rho_{39}$,
        $\rho_{46}$,
        $\rho_{18, 46}$,
        $\rho_{20, 29, 46, 58}$,
        $\rho_{11, 39}$,
        $\rho_{10, 39, 43}$,
        $\rho_{4, 22, 32}$,
        $\rho_{11, 28, 39, 59}$,
        $\rho_{11, 39, 51}$,
        $\rho_{11, 27, 47}$,
        $\rho_{21, 46}$,
        $\rho_{14, 39}$,
        $\rho_{3, 32}$,
        $\rho_{18, 58}$,
        $\rho_{26, 60}$,
        $\rho_{13, 24}$,
        $\rho_{27, 47}$,
        $\rho_{28, 53}$,
        $\rho_{4, 26, 41}$,
        $\rho_{18, 38, 54}$,
        $\rho_{28, 37, 38}$,
        $\rho_{20, 24, 44, 53}$,
        $\rho_{26, 41}$,
        $\rho_{13, 20, 24}$,
        $\rho_{27, 47, 51}$,
        $\rho_{4, 21, 26, 41}$,
        $\rho_{9, 10, 39, 43}$,
        $\rho_{21, 26, 41}$,
        $\rho_{10, 23, 38, 45}$,
        $\rho_{6, 17, 57}$,
        $\rho_{11, 23, 41, 48}$,
        $\rho_{12, 16, 22, 59}$,
        $\rho_{15, 26, 60}$,
        $\rho_{11, 29, 59}$,
        $\rho_{17, 37, 60}$,
        $\rho_{22, 45, 61}$,
        $\rho_{17, 37, 43, 60}$,
        $\rho_{14, 15, 16, 46, 50}$,
        $\rho_{1, 4, 5, 8, 17, 28, 32}$,
        $\rho_{2, 11, 15, 21, 46, 50}$,
        $\rho_{18, 42, 48, 54, 60}$,
        $\rho_{1, 9, 16, 24, 37, 58}$,
        $\rho_{2, 14, 18, 48, 54, 60}$,
        $\rho_{14, 15, 16, 26, 50, 60}$,
        $\rho_{1, 8, 9, 10, 18, 58}$,
        $\rho_{1, 7, 14, 24, 25}$,
        $\rho_{1, 4, 7, 14, 20, 24, 25}$,
        $\rho_{1, 6, 9, 24, 35, 48}$,
        $\rho_{1, 5, 9, 27, 43, 47, 56}$,
        $\rho_{4, 6, 23, 24, 35, 48}$,
        $\rho_{1, 3, 9, 22, 27, 47, 51}$,
        $\rho_{2, 8, 15, 26, 41}$,
        $\rho_{2, 4, 6, 41, 49, 54}$,
        $\rho_{10, 18, 43, 58}$,
        $\rho_{10, 21, 26, 60}$,
        $\rho_{14, 20, 27, 37, 41}$,
        $\rho_{6, 23, 24, 35, 48}$,
        $\rho_{1, 7, 14, 20, 24, 25}$,
        $\rho_{1, 9, 15, 16, 46, 50}$,
        $\rho_{1, 2, 5, 15, 28, 45, 47}$,
        $\rho_{3, 6, 12, 17, 57}$,
        $\rho_{1, 6, 8, 17, 57}$,
        $\rho_{1, 3, 5, 12, 17, 43, 59}$,
        $\rho_{2, 10, 21, 22, 45, 61}$,
        $\rho_{8, 10, 26, 60}$,
        $\rho_{1, 3, 14, 15, 22, 45, 61}$,
        $\rho_{3, 9, 10, 26, 60}$,
        $\rho_{14, 16, 22, 37, 60}$,
        $\rho_{15, 26, 49, 55, 61}$,
        $\rho_{1, 9, 15, 22, 45, 61}$,
        $\rho_{1, 3, 9, 17, 37, 43, 60}$,
        $\rho_{2, 4, 5, 37, 48, 60}$,
        $\rho_{1, 4, 17, 37, 43, 60}$,
        $\rho_{5, 11, 29, 42, 60}$

    \item[$\eta_{1} - \eta_{50}$]
        $\rho_{100}$,
        $\rho_{92}$,
        $\rho_{84}$,
        $\rho_{61}$,
        $\rho_{53}$,
        $\rho_{44}$,
        $\rho_{52}$,
        $\rho_{60}$,
        $\rho_{68}$,
        $\rho_{76}$,
        $\rho_{37, 92}$,
        $\rho_{28, 84}$,
        $\rho_{28, 61}$,
        $\rho_{20, 39, 50, 92}$,
        $\rho_{28, 51, 92}$,
        $\rho_{53, 61}$,
        $\rho_{44, 94, 99}$,
        $\rho_{25, 44, 92, 97}$,
        $\rho_{44, 64, 90, 97}$,
        $\rho_{44, 70}$,
        $\rho_{52, 101, 105, 106}$,
        $\rho_{39, 52, 98, 108}$,
        $\rho_{20, 84}$,
        $\rho_{44, 63}$,
        $\rho_{20, 53}$,
        $\rho_{52, 99}$,
        $\rho_{33, 52, 71, 106}$,
        $\rho_{60, 108}$,
        $\rho_{40, 60, 107, 115}$,
        $\rho_{52, 100}$,
        $\rho_{44, 92}$,
        $\rho_{37, 62, 67, 108}$,
        $\rho_{11, 24, 84}$,
        $\rho_{23, 33, 92, 106}$,
        $\rho_{24, 44, 56}$,
        $\rho_{52, 64, 97}$,
        $\rho_{60, 105, 106}$,
        $\rho_{68, 114, 115}$,
        $\rho_{22, 28, 84}$,
        $\rho_{6, 21, 53, 57}$,
        $\rho_{11, 24, 28, 84}$,
        $\rho_{31, 44, 94, 99}$,
        $\rho_{44, 72, 90, 91}$,
        $\rho_{44, 65, 91, 99}$,
        $\rho_{31, 44, 90}$,
        $\rho_{24, 44, 70}$,
        $\rho_{52, 71, 101}$,
        $\rho_{52, 64, 100}$,
        $\rho_{44, 56, 92}$,
        $\rho_{33, 52, 100, 105}$

    \item[$\eta_{51} - \eta_{100}$]
        $\rho_{12, 61, 69, 97}$,
        $\rho_{20, 84, 105}$,
        $\rho_{25, 44, 63}$,
        $\rho_{13, 21, 31, 84}$,
        $\rho_{32, 52, 99}$,
        $\rho_{60, 72, 108}$,
        $\rho_{22, 28, 64, 84}$,
        $\rho_{44, 70, 94}$,
        $\rho_{31, 44, 90, 97}$,
        $\rho_{52, 71, 101, 106}$,
        $\rho_{21, 51, 69, 99}$,
        $\rho_{44, 64, 70, 94}$,
        $\rho_{71, 101}$,
        $\rho_{64, 100}$,
        $\rho_{55, 66}$,
        $\rho_{56, 92}$,
        $\rho_{22, 35}$,
        $\rho_{10, 84}$,
        $\rho_{17, 84}$,
        $\rho_{4, 61}$,
        $\rho_{47, 93}$,
        $\rho_{11, 53}$,
        $\rho_{44, 95}$,
        $\rho_{52, 103}$,
        $\rho_{60, 111}$,
        $\rho_{68, 119}$,
        $\rho_{37, 45, 80}$,
        $\rho_{70, 117}$,
        $\rho_{59, 116}$,
        $\rho_{37, 115}$,
        $\rho_{28, 113}$,
        $\rho_{19, 64, 84}$,
        $\rho_{66, 78, 116}$,
        $\rho_{27, 72, 92, 115}$,
        $\rho_{20, 55, 66}$,
        $\rho_{19, 32, 84}$,
        $\rho_{38, 91, 108}$,
        $\rho_{25, 37, 92}$,
        $\rho_{26, 37, 83, 101}$,
        $\rho_{17, 28, 84}$,
        $\rho_{55, 61, 66}$,
        $\rho_{16, 61, 69}$,
        $\rho_{72, 108}$,
        $\rho_{72, 90, 91}$,
        $\rho_{20, 70, 94}$,
        $\rho_{14, 51, 90}$,
        $\rho_{37, 62, 109}$,
        $\rho_{28, 51, 106}$,
        $\rho_{28, 51, 83, 102}$,
        $\rho_{50, 62, 105}$

    \item[$\eta_{101} - \eta_{150}$]
        $\rho_{4, 37, 64, 82, 83}$,
        $\rho_{44, 70, 116}$,
        $\rho_{38, 91}$,
        $\rho_{19, 84}$,
        $\rho_{27, 92}$,
        $\rho_{13, 55, 59, 106}$,
        $\rho_{10, 20, 84}$,
        $\rho_{44, 63, 109}$,
        $\rho_{52, 99, 117}$,
        $\rho_{19, 37, 58, 71, 93}$,
        $\rho_{70, 94}$,
        $\rho_{59, 87}$,
        $\rho_{15, 37}$,
        $\rho_{12, 59, 87}$,
        $\rho_{35, 100}$,
        $\rho_{11, 19, 64, 84}$,
        $\rho_{63, 66, 78, 116}$,
        $\rho_{38, 50, 91}$,
        $\rho_{24, 44, 56, 102}$,
        $\rho_{52, 64, 97, 110}$,
        $\rho_{20, 40, 53}$,
        $\rho_{60, 105, 106, 118}$,
        $\rho_{28, 54, 57, 73}$,
        $\rho_{71, 101, 106}$,
        $\rho_{33, 70, 94, 105}$,
        $\rho_{55, 63, 66}$,
        $\rho_{22, 35, 63}$,
        $\rho_{5, 15, 32, 37}$,
        $\rho_{47, 93, 99}$,
        $\rho_{23, 38, 91, 108}$,
        $\rho_{27, 92, 106}$,
        $\rho_{50, 62, 99}$,
        $\rho_{37, 41, 62}$,
        $\rho_{31, 55, 61, 66}$,
        $\rho_{55, 59}$,
        $\rho_{11, 50, 62, 105}$,
        $\rho_{32, 55, 59, 106}$,
        $\rho_{37, 64, 82, 83}$,
        $\rho_{31, 44, 90, 116}$,
        $\rho_{37, 69, 87}$,
        $\rho_{24, 44, 70, 116}$,
        $\rho_{38, 91, 109}$,
        $\rho_{23, 38, 91}$,
        $\rho_{13, 55, 59}$,
        $\rho_{23, 27, 92}$,
        $\rho_{11, 19, 84}$,
        $\rho_{23, 27, 92, 106}$,
        $\rho_{21, 56, 62, 66}$,
        $\rho_{30, 38, 93, 101}$,
        $\rho_{25, 44, 63, 109}$

    \item[$\eta_{151} - \eta_{200}$]
        $\rho_{59, 77, 89}$,
        $\rho_{32, 52, 99, 117}$,
        $\rho_{39, 58, 59, 93}$,
        $\rho_{64, 70, 94}$,
        $\rho_{56, 59, 87}$,
        $\rho_{15, 25, 37}$,
        $\rho_{12, 59, 87, 97}$,
        $\rho_{35, 63, 100}$,
        $\rho_{11, 50, 62, 99}$,
        $\rho_{20, 55, 66, 90}$,
        $\rho_{40, 70, 73, 85}$,
        $\rho_{20, 41, 84, 114}$,
        $\rho_{25, 37, 41, 62}$,
        $\rho_{55, 59, 97}$,
        $\rho_{11, 55, 59}$,
        $\rho_{12, 24, 26, 84}$,
        $\rho_{20, 41, 84}$,
        $\rho_{4, 31, 37, 82, 83}$,
        $\rho_{24, 37, 69, 87}$,
        $\rho_{23, 38, 91, 109}$,
        $\rho_{30, 33, 34, 92}$,
        $\rho_{32, 59, 77, 89}$,
        $\rho_{11, 55, 59, 97}$,
        $\rho_{12, 62, 66, 90}$,
        $\rho_{20, 25, 41, 84}$,
        $\rho_{31, 37, 82, 83}$,
        $\rho_{15, 89, 101}$,
        $\rho_{39, 57, 98, 108}$,
        $\rho_{41, 79, 90, 116}$,
        $\rho_{27, 65, 94, 100}$,
        $\rho_{21, 26, 33, 84}$,
        $\rho_{30, 41, 87}$,
        $\rho_{17, 55, 66}$,
        $\rho_{27, 65, 94}$,
        $\rho_{18, 65, 91, 106}$,
        $\rho_{31, 90, 117}$,
        $\rho_{24, 70, 117}$,
        $\rho_{14, 27, 115}$,
        $\rho_{13, 71, 101}$,
        $\rho_{16, 59, 116}$,
        $\rho_{3, 37, 115}$,
        $\rho_{6, 57, 100}$,
        $\rho_{18, 25, 92}$,
        $\rho_{10, 17, 84}$,
        $\rho_{32, 99, 118}$,
        $\rho_{20, 40, 114}$,
        $\rho_{14, 65, 108}$,
        $\rho_{18, 25, 92, 97}$,
        $\rho_{15, 30, 33, 34}$,
        $\rho_{6, 57, 100, 105}$

    \item[$\eta_{201} - \eta_{250}$]
        $\rho_{16, 41, 62, 90}$,
        $\rho_{10, 37, 62, 109}$,
        $\rho_{32, 50, 57, 116}$,
        $\rho_{20, 50, 87, 102}$,
        $\rho_{26, 55, 70, 81}$,
        $\rho_{13, 18, 55, 59}$,
        $\rho_{24, 30, 41, 87}$,
        $\rho_{26, 50, 87, 97}$,
        $\rho_{10, 59, 87}$,
        $\rho_{17, 55, 63, 66}$,
        $\rho_{30, 81, 116}$,
        $\rho_{27, 65, 94, 102}$,
        $\rho_{5, 70, 94}$,
        $\rho_{27, 65, 94, 99}$,
        $\rho_{31, 90, 97, 117}$,
        $\rho_{14, 23, 27, 115}$,
        $\rho_{26, 39, 101, 109}$,
        $\rho_{18, 65, 91, 99}$,
        $\rho_{10, 56, 59, 87}$,
        $\rho_{24, 30, 81, 116}$,
        $\rho_{5, 64, 70, 94}$,
        $\rho_{26, 50, 63, 87}$,
        $\rho_{28, 43, 51, 67, 75, 84}$,
        $\rho_{12, 26, 38, 81, 100}$,
        $\rho_{24, 40, 56, 57, 100, 114}$,
        $\rho_{1, 7, 12, 17, 29, 39, 69, 108}$,
        $\rho_{5, 10, 21, 62, 77, 109}$,
        $\rho_{5, 11, 19, 32, 77, 85}$,
        $\rho_{1, 7, 9, 12, 26, 68, 101, 113}$,
        $\rho_{5, 12, 17, 26, 69, 77, 105}$,
        $\rho_{3, 17, 21, 26, 33, 53}$,
        $\rho_{3, 4, 6, 25, 40, 60, 109}$,
        $\rho_{1, 3, 10, 22, 61, 70}$,
        $\rho_{10, 11, 28, 51, 83, 110, 118}$,
        $\rho_{1, 3, 10, 20, 25, 61, 100}$,
        $\rho_{1, 9, 10, 13, 34, 51, 81, 92}$,
        $\rho_{2, 12, 17, 53, 92}$,
        $\rho_{1, 9, 18, 25, 35, 79, 82, 92}$,
        $\rho_{2, 6, 12, 29, 40, 52, 88}$,
        $\rho_{1, 2, 3, 10, 44, 63}$,
        $\rho_{10, 11, 24, 27, 44, 102}$,
        $\rho_{1, 3, 10, 25, 52, 64, 97}$,
        $\rho_{5, 16, 26, 52, 81, 103}$,
        $\rho_{4, 20, 25, 32, 72, 84}$,
        $\rho_{1, 8, 23, 26, 27, 35, 45, 84}$,
        $\rho_{1, 3, 10, 25, 32, 60, 105, 106}$,
        $\rho_{4, 25, 28, 61}$,
        $\rho_{10, 12, 26, 60, 81, 110}$,
        $\rho_{12, 39, 52, 64, 109}$,
        $\rho_{1, 18, 26, 52, 81, 97, 101, 110}$

    \item[$\eta_{251} - \eta_{300}$]
        $\rho_{14, 32, 33, 54, 65, 85, 117}$,
        $\rho_{1, 4, 24, 28, 64, 84, 105, 114}$,
        $\rho_{5, 12, 32, 77, 85}$,
        $\rho_{11, 12, 53, 92}$,
        $\rho_{23, 33, 100, 105}$,
        $\rho_{16, 17, 18, 92, 97}$,
        $\rho_{3, 17, 18, 19, 53}$,
        $\rho_{1, 4, 12, 24, 32, 84}$,
        $\rho_{4, 25, 52, 64, 97}$,
        $\rho_{3, 21, 26, 33, 53, 63}$,
        $\rho_{1, 2, 31, 37, 54, 58, 78, 88}$,
        $\rho_{1, 9, 10, 36, 84}$,
        $\rho_{3, 7, 16, 20, 44, 60, 95}$,
        $\rho_{1, 3, 10, 20, 25, 32, 72, 84}$,
        $\rho_{6, 11, 19, 27, 40, 59, 100}$,
        $\rho_{4, 21, 25, 37, 53, 60}$,
        $\rho_{1, 5, 10, 17, 32, 37, 92}$,
        $\rho_{1, 4, 12, 24, 28, 84, 113}$,
        $\rho_{2, 25, 52, 66, 87}$,
        $\rho_{1, 2, 9, 15, 53, 59}$,
        $\rho_{7, 18, 39, 41, 59, 61, 91}$,
        $\rho_{1, 9, 36, 55, 66}$,
        $\rho_{2, 36, 41, 55, 62, 90, 97}$,
        $\rho_{9, 16, 20, 56, 92, 97, 106}$,
        $\rho_{1, 11, 14, 45, 51, 90}$,
        $\rho_{4, 6, 9, 26, 52, 66, 87}$,
        $\rho_{25, 44, 64, 65, 70, 94}$,
        $\rho_{1, 8, 13, 17, 41, 55, 67, 99}$,
        $\rho_{1, 3, 10, 21, 37, 61, 71, 91}$,
        $\rho_{25, 44, 70, 94}$,
        $\rho_{9, 13, 22, 40, 65, 84, 103}$,
        $\rho_{1, 5, 10, 20, 35, 61, 65, 100}$,
        $\rho_{1, 44, 64, 72, 89, 90, 97}$,
        $\rho_{5, 10, 21, 32, 62, 77, 109}$,
        $\rho_{1, 10, 12, 21, 26, 58, 84, 110}$,
        $\rho_{1, 4, 6, 19, 60, 72, 108}$,
        $\rho_{1, 44, 64, 65, 91, 106}$,
        $\rho_{1, 10, 17, 18, 64, 67, 84}$,
        $\rho_{1, 3, 13, 18, 21, 39, 53, 92}$,
        $\rho_{4, 25, 44, 63}$,
        $\rho_{4, 25, 44, 64, 90, 97}$,
        $\rho_{1, 4, 16, 25, 26, 27, 37, 92}$,
        $\rho_{2, 10, 22, 26, 58, 61, 70}$,
        $\rho_{3, 17, 21, 26, 33, 53, 63}$,
        $\rho_{25, 31, 44, 90, 97}$,
        $\rho_{5, 10, 17, 32, 52, 99}$,
        $\rho_{2, 12, 31, 50, 61, 66, 116}$,
        $\rho_{1, 11, 24, 44, 70}$,
        $\rho_{1, 2, 3, 4, 5, 32, 52, 99}$,
        $\rho_{2, 5, 22, 31, 35, 84}$

    \item[$\eta_{301} - \eta_{350}$]
        $\rho_{1, 18, 19, 32, 65, 66, 78, 84}$,
        $\rho_{32, 52, 71, 101, 106}$,
        $\rho_{1, 14, 32, 43, 51, 61, 70}$,
        $\rho_{6, 13, 20, 25, 72, 77, 85}$,
        $\rho_{9, 40, 52, 64, 71, 100}$,
        $\rho_{1, 10, 17, 33, 52, 100, 105}$,
        $\rho_{8, 19, 27, 50, 57, 61, 69}$,
        $\rho_{1, 11, 31, 44, 56, 90}$,
        $\rho_{1, 4, 15, 25, 31, 60, 90}$,
        $\rho_{1, 5, 9, 10, 11, 32, 37, 92}$,
        $\rho_{1, 9, 36, 55, 63, 66}$,
        $\rho_{25, 31, 44, 94, 99}$,
        $\rho_{4, 9, 16, 42, 45, 70, 94}$,
        $\rho_{5, 17, 28, 55, 64, 77, 109}$,
        $\rho_{32, 44, 64, 70, 94, 105}$,
        $\rho_{1, 10, 17, 34, 58, 71, 84}$,
        $\rho_{4, 13, 21, 31, 84}$
        $\rho_{23, 33, 37, 105, 115}$,
        $\rho_{37, 51, 54, 75, 77, 84}$,
        $\rho_{7, 10, 15, 24, 27, 45, 57}$,
        $\rho_{4, 13, 19, 31, 34, 66, 115}$,
        $\rho_{1, 24, 25, 37, 51, 54, 75}$,
        $\rho_{6, 17, 19, 21, 37, 61, 71}$,
        $\rho_{1, 3, 10, 61}$,
        $\rho_{28, 51, 88, 95, 102, 109}$,
        $\rho_{1, 7, 10, 25, 35, 97, 98, 108}$,
        $\rho_{1, 2, 22, 28, 42, 85, 97, 110}$,
        $\rho_{10, 21, 26, 27, 51, 92, 106}$,
        $\rho_{2, 24, 44, 70, 91, 93, 100}$,
        $\rho_{1, 8, 39, 56, 67, 88}$,
        $\rho_{1, 2, 3, 11, 24, 28, 113}$,
        $\rho_{3, 4, 15, 18, 37, 55}$,
        $\rho_{24, 25, 40, 57, 71, 100}$,
        $\rho_{2, 17, 30, 34, 64, 91, 111}$,
        $\rho_{1, 2, 6, 7, 12, 41, 108, 113}$,
        $\rho_{1, 9, 10, 34, 51, 81, 99}$,
        $\rho_{9, 10, 11, 27, 65, 91, 99}$,
        $\rho_{1, 3, 7, 10, 11, 24, 26, 61}$,
        $\rho_{9, 10, 62, 65, 81, 99}$,
        $\rho_{20, 35, 91, 112, 118}$,
        $\rho_{4, 5, 9, 19, 33, 100, 105}$,
        $\rho_{2, 3, 16, 23, 33, 53}$,
        $\rho_{2, 4, 23, 33, 100, 105}$,
        $\rho_{12, 26, 81, 91, 110}$,
        $\rho_{1, 11, 22, 37, 102, 110, 118}$,
        $\rho_{1, 6, 11, 26, 41, 75, 95, 114}$,
        $\rho_{1, 4, 5, 9, 26, 72, 75, 95}$,
        $\rho_{1, 3, 5, 10, 33, 56, 92, 106}$,
        $\rho_{2, 4, 21, 26, 33, 84}$,
        $\rho_{9, 11, 18, 32, 52, 64, 99}$

    \item[$\eta_{351} - \eta_{400}$]
        $\rho_{1, 8, 40, 72, 78, 88, 96, 97}$,
        $\rho_{11, 24, 37, 115}$,
        $\rho_{3, 16, 25, 71, 97, 101}$,
        $\rho_{2, 9, 17, 37, 115}$,
        $\rho_{1, 2, 7, 33, 58, 71, 88, 90}$,
        $\rho_{1, 2, 9, 17, 28, 113}$,
        $\rho_{11, 24, 25, 55, 63, 66}$,
        $\rho_{1, 9, 13, 18, 25, 71, 101}$,
        $\rho_{1, 4, 12, 24, 70, 94}$,
        $\rho_{2, 4, 18, 28, 64, 113}$,
        $\rho_{1, 11, 15, 24, 25, 35, 63}$,
        $\rho_{2, 30, 38, 91}$,
        $\rho_{1, 7, 12, 26, 99, 100}$,
        $\rho_{5, 10, 22, 33, 35, 59}$,
        $\rho_{2, 5, 20, 34, 71, 80, 88}$,
        $\rho_{1, 2, 9, 17, 20, 55, 66}$,
        $\rho_{14, 22, 35, 90, 97}$,
        $\rho_{1, 2, 16, 18, 26, 55, 70, 81}$,
        $\rho_{1, 2, 6, 9, 19, 21, 70, 94}$,
        $\rho_{15, 21, 42, 95, 111, 117}$,
        $\rho_{1, 9, 10, 11, 20, 39, 50, 115}$,
        $\rho_{1, 27, 39, 57, 90, 92}$,
        $\rho_{1, 7, 16, 21, 26, 27, 70, 115}$,
        $\rho_{3, 17, 28, 57, 81, 87, 95}$,
        $\rho_{21, 47, 55, 93, 102}$,
        $\rho_{10, 14, 19, 48, 55, 57, 84}$,
        $\rho_{9, 19, 32, 37, 109, 116}$,
        $\rho_{2, 20, 22, 35, 70, 117}$,
        $\rho_{1, 2, 6, 9, 17, 19, 61}$,
        $\rho_{2, 10, 17, 24, 84}$,
        $\rho_{1, 5, 16, 32, 40, 51, 84}$,
        $\rho_{1, 2, 16, 17, 18, 53}$,
        $\rho_{6, 17, 25, 40, 91, 109}$,
        $\rho_{1, 3, 16, 17, 44, 56, 102}$,
        $\rho_{19, 32, 62, 84, 105}$,
        $\rho_{24, 39, 56, 57, 98, 108}$,
        $\rho_{9, 10, 17, 23, 39, 98, 108}$,
        $\rho_{1, 2, 9, 17, 28, 51, 106}$,
        $\rho_{2, 10, 44, 63}$,
        $\rho_{1, 3, 24, 32, 40, 50, 57, 62}$,
        $\rho_{9, 10, 32, 51, 59, 75, 83}$,
        $\rho_{8, 17, 29, 33, 79, 90, 102}$,
        $\rho_{1, 13, 19, 27, 34, 70, 92, 115}$,
        $\rho_{1, 4, 25, 26, 37, 83, 101}$,
        $\rho_{6, 18, 35, 48, 62, 65, 84}$,
        $\rho_{2, 20, 40, 56, 60, 65, 93}$,
        $\rho_{2, 10, 25, 32, 41, 87, 110}$,
        $\rho_{2, 11, 24, 33, 72, 105, 108}$,
        $\rho_{1, 6, 10, 19, 71, 97, 101, 112}$,
        $\rho_{16, 17, 18, 71, 97, 101}$

    \item[$\eta_{401} - \eta_{450}$]
        $\rho_{23, 33, 59, 106, 116}$,
        $\rho_{3, 4, 15, 18, 66, 95}$,
        $\rho_{23, 33, 70, 94, 105}$,
        $\rho_{3, 4, 18, 22, 35, 59}$,
        $\rho_{1, 10, 18, 27, 57, 59, 106, 116}$,
        $\rho_{1, 2, 12, 17, 24, 71, 97, 101}$,
        $\rho_{5, 20, 56, 65, 81, 97, 116}$,
        $\rho_{3, 23, 32, 33, 55, 59}$,
        $\rho_{4, 20, 25, 70, 94}$,
        $\rho_{5, 9, 13, 19, 38, 41, 113}$,
        $\rho_{1, 4, 25, 37, 51, 54, 75}$,
        $\rho_{1, 3, 10, 20, 25, 70, 94}$,
        $\rho_{2, 12, 26, 70, 81, 116}$,
        $\rho_{5, 26, 27, 35, 50, 87}$,
        $\rho_{1, 2, 16, 21, 26, 27, 51, 106}$,
        $\rho_{1, 2, 16, 26, 27, 37, 65, 94}$,
        $\rho_{2, 3, 16, 20, 39, 50, 51}$,
        $\rho_{3, 5, 26, 28, 33, 55, 81}$,
        $\rho_{27, 30, 59, 106, 117}$,
        $\rho_{1, 3, 6, 23, 27, 65, 66, 84}$,
        $\rho_{1, 16, 19, 26, 31, 37, 72, 116}$,
        $\rho_{1, 7, 11, 26, 56, 75, 94}$,
        $\rho_{1, 2, 16, 44, 95}$,
        $\rho_{7, 23, 26, 61, 66, 116}$,
        $\rho_{3, 15, 21, 26, 51, 66, 95}$,
        $\rho_{2, 27, 65, 78, 88, 94}$,
        $\rho_{4, 18, 25, 37, 41, 55, 62}$,
        $\rho_{1, 8, 22, 28, 56, 57, 91, 109}$,
        $\rho_{10, 18, 28, 64, 81, 84}$,
        $\rho_{1, 6, 9, 21, 53, 57}$,
        $\rho_{9, 21, 25, 26, 27, 84}$,
        $\rho_{11, 19, 22, 35, 38, 84}$,
        $\rho_{2, 22, 35, 38, 59}$,
        $\rho_{11, 24, 55, 66}$,
        $\rho_{2, 7, 22, 38, 41, 57}$,
        $\rho_{15, 35, 36, 44, 102, 110}$,
        $\rho_{9, 10, 15, 32, 34, 51, 66}$,
        $\rho_{2, 14, 21, 64, 82, 96, 109}$,
        $\rho_{11, 21, 55, 59, 110, 118}$,
        $\rho_{2, 20, 40, 71, 81, 97, 100}$,
        $\rho_{17, 18, 20, 37, 65, 91}$,
        $\rho_{20, 41, 50, 87, 91, 110}$,
        $\rho_{1, 5, 20, 56, 69, 81, 97, 115}$,
        $\rho_{1, 9, 28, 33, 54, 73, 82}$,
        $\rho_{1, 6, 8, 21, 29, 85, 97, 110}$,
        $\rho_{1, 3, 10, 22, 61, 70, 117}$,
        $\rho_{2, 37, 45, 80, 87}$,
        $\rho_{1, 11, 48, 56, 59}$,
        $\rho_{2, 14, 22, 36, 50, 64}$,
        $\rho_{1, 2, 8, 11, 38, 45, 56, 102}$

    \item[$\eta_{451} - \eta_{500}$]
        $\rho_{2, 11, 18, 23, 55, 62, 66}$,
        $\rho_{1, 3, 10, 24, 40, 57, 71, 100}$,
        $\rho_{1, 5, 11, 21, 26, 33, 51, 106}$,
        $\rho_{16, 17, 18, 19, 30, 38, 91}$,
        $\rho_{20, 38, 91, 104, 111}$,
        $\rho_{2, 3, 8, 16, 39, 50, 51}$,
        $\rho_{1, 8, 16, 22, 27, 28, 82, 106}$,
        $\rho_{2, 5, 7, 10, 28, 108, 113}$,
        $\rho_{10, 23, 33, 100, 105}$,
        $\rho_{11, 19, 32, 50, 55, 84}$,
        $\rho_{1, 10, 17, 18, 92, 97}$,
        $\rho_{13, 14, 15, 44, 95}$,
        $\rho_{2, 6, 18, 39, 57, 59, 100}$,
        $\rho_{2, 12, 20, 66, 82, 85, 96}$,
        $\rho_{1, 2, 11, 19, 33, 100, 105}$,
        $\rho_{12, 24, 26, 81, 91, 110}$,
        $\rho_{1, 23, 27, 38, 50, 92}$,
        $\rho_{1, 3, 23, 44, 52, 102}$,
        $\rho_{1, 11, 44, 95}$,
        $\rho_{1, 3, 23, 52, 103}$,
        $\rho_{1, 24, 25, 39, 57, 98, 108}$,
        $\rho_{1, 9, 10, 19, 43, 52, 57}$,
        $\rho_{1, 3, 4, 12, 26, 31, 72, 84}$,
        $\rho_{1, 2, 9, 17, 20, 40, 53}$,
        $\rho_{1, 3, 10, 25, 52, 64, 97, 110}$,
        $\rho_{3, 4, 19, 26, 27, 41, 84}$,
        $\rho_{1, 10, 12, 26, 52, 81, 103}$,
        $\rho_{1, 3, 16, 27, 50, 59, 85, 94}$,
        $\rho_{4, 13, 21, 59, 87, 91, 110}$,
        $\rho_{13, 18, 25, 59, 75, 98, 110}$,
        $\rho_{4, 8, 25, 28, 61}$,
        $\rho_{5, 17, 32, 52, 99, 101, 108}$,
        $\rho_{9, 27, 37, 64, 71, 82, 83}$,
        $\rho_{7, 39, 41, 65, 80, 102}$,
        $\rho_{2, 7, 24, 39, 41, 80, 102}$,
        $\rho_{2, 17, 24, 70, 94}$,
        $\rho_{1, 8, 39, 56, 63, 67, 88}$,
        $\rho_{1, 24, 48, 56, 66}$,
        $\rho_{32, 64, 70, 94, 105}$,
        $\rho_{24, 25, 55, 63, 66}$,
        $\rho_{9, 24, 33, 40, 70, 80, 88}$,
        $\rho_{17, 23, 57, 70, 81, 87}$,
        $\rho_{15, 24, 35, 37}$,
        $\rho_{12, 59, 116}$,
        $\rho_{1, 4, 5, 9, 15, 19, 32, 37}$,
        $\rho_{1, 4, 5, 9, 19, 32, 37, 115}$,
        $\rho_{1, 15, 24, 25, 35, 63}$,
        $\rho_{12, 15, 35, 44, 96, 102}$,
        $\rho_{15, 35, 37, 102, 110}$,
        $\rho_{1, 5, 9, 15, 19, 32, 37}$

    \item[$\eta_{501} - \eta_{550}$]
        $\rho_{1, 2, 3, 7, 10, 22, 38, 57}$,
        $\rho_{11, 25, 37, 51, 54, 75}$,
        $\rho_{12, 32, 55, 59, 106}$,
        $\rho_{1, 15, 18, 25, 35, 102, 110}$,
        $\rho_{11, 12, 32, 55, 59, 106}$,
        $\rho_{13, 20, 55, 59}$,
        $\rho_{12, 24, 33, 70, 91, 92}$,
        $\rho_{5, 26, 28, 33, 55, 81}$,
        $\rho_{1, 19, 33, 100, 105}$,
        $\rho_{3, 14, 15, 18, 20, 39, 53}$,
        $\rho_{18, 23, 30, 38, 92, 106}$,
        $\rho_{16, 17, 18, 19, 64, 84}$,
        $\rho_{1, 11, 13, 35, 50, 62}$,
        $\rho_{4, 25, 28, 101, 112, 113}$,
        $\rho_{10, 11, 24, 84}$,
        $\rho_{23, 25, 33, 92, 106}$,
        $\rho_{1, 2, 3, 10, 43, 46, 63, 110}$,
        $\rho_{18, 25, 55, 62, 66, 93}$,
        $\rho_{1, 5, 19, 40, 57, 100, 114}$,
        $\rho_{16, 17, 18, 25, 33, 92, 106}$,
        $\rho_{1, 2, 3, 8, 63, 66, 78}$,
        $\rho_{3, 17, 18, 19, 33, 53}$,
        $\rho_{1, 6, 10, 11, 27, 40, 44, 95}$,
        $\rho_{4, 7, 13, 25, 31, 61}$,
        $\rho_{3, 13, 18, 25, 59, 86, 91}$,
        $\rho_{2, 12, 26, 36, 70, 81, 115}$,
        $\rho_{1, 2, 40, 71, 80, 88, 96, 97}$,
        $\rho_{11, 56, 59, 116}$,
        $\rho_{16, 17, 18, 59, 87, 97}$,
        $\rho_{16, 17, 18, 64, 70, 94}$,
        $\rho_{4, 15, 25, 37}$,
        $\rho_{2, 11, 19, 22, 35, 37, 58}$,
        $\rho_{1, 11, 13, 24, 55, 66}$,
        $\rho_{2, 12, 17, 34, 64, 91, 111}$,
        $\rho_{50, 62, 97, 105}$,
        $\rho_{19, 32, 50, 55, 84}$,
        $\rho_{6, 24, 25, 26, 40, 91, 109}$,
        $\rho_{1, 5, 10, 17, 70, 94}$,
        $\rho_{1, 2, 3, 16, 32, 40, 50, 51}$,
        $\rho_{1, 6, 9, 17, 19, 21, 22, 61}$,
        $\rho_{1, 5, 19, 40, 56, 57, 71, 100}$,
        $\rho_{3, 34, 41, 55, 57, 69}$,
        $\rho_{27, 38, 92, 106}$,
        $\rho_{4, 5, 9, 26, 75, 95}$,
        $\rho_{1, 5, 11, 13, 40, 56, 62, 109}$,
        $\rho_{13, 18, 25, 39, 55, 62, 109}$,
        $\rho_{10, 20, 25, 65, 66, 84}$,
        $\rho_{3, 6, 18, 21, 31, 53, 57}$,
        $\rho_{1, 2, 6, 44, 56, 65, 94, 102}$,
        $\rho_{1, 2, 28, 42, 56, 83, 103}$

    \item[$\eta_{551} - \eta_{600}$]
        $\rho_{2, 31, 44, 56, 94, 97, 99}$,
        $\rho_{1, 16, 18, 22, 28, 92, 97}$,
        $\rho_{9, 24, 44, 70, 94}$,
        $\rho_{2, 3, 12, 71, 97, 101}$,
        $\rho_{4, 5, 9, 19, 37, 115}$,
        $\rho_{1, 2, 3, 6, 19, 70, 94}$,
        $\rho_{1, 2, 9, 15, 17, 25, 37}$,
        $\rho_{5, 10, 17, 20, 70, 94}$,
        $\rho_{15, 24, 35, 62, 74}$,
        $\rho_{3, 4, 15, 18, 32, 37, 55}$,
        $\rho_{1, 2, 6, 19, 70, 94, 105}$,
        $\rho_{1, 18, 50, 57, 62, 70, 105}$,
        $\rho_{4, 13, 25, 59, 87}$,
        $\rho_{2, 21, 30, 33, 62, 109}$,
        $\rho_{1, 6, 10, 20, 56, 65, 91, 99}$,
        $\rho_{1, 2, 4, 22, 33, 38, 62, 81}$,
        $\rho_{2, 14, 42, 56, 66, 77}$,
        $\rho_{1, 9, 10, 34, 51, 81}$,
        $\rho_{16, 17, 18, 50, 62, 97, 99}$,
        $\rho_{2, 4, 33, 34, 55, 81, 102}$,
        $\rho_{9, 11, 26, 55, 70, 81}$,
        $\rho_{1, 8, 12, 38, 45, 95, 97, 109}$,
        $\rho_{2, 3, 16, 39, 50, 51}$,
        $\rho_{1, 2, 3, 6, 14, 55, 57, 59}$,
        $\rho_{10, 11, 50, 62, 97, 105}$,
        $\rho_{2, 5, 23, 40, 65, 92, 97}$,
        $\rho_{2, 12, 13, 34, 50, 83, 103}$,
        $\rho_{1, 6, 10, 11, 41, 75, 95}$,
        $\rho_{2, 13, 14, 15, 18, 44, 95}$,
        $\rho_{1, 3, 10, 20, 56, 92, 97, 106}$,
        $\rho_{2, 10, 44, 63, 109}$,
        $\rho_{1, 2, 7, 11, 19, 38, 57, 100}$,
        $\rho_{1, 17, 23, 28, 57, 81, 84}$,
        $\rho_{1, 2, 3, 7, 28, 56, 62, 81}$,
        $\rho_{1, 2, 31, 44, 56, 90}$,
        $\rho_{10, 37, 64, 82, 83}$,
        $\rho_{2, 4, 21, 26, 31, 33, 84}$,
        $\rho_{16, 17, 23, 32, 59, 77, 89}$,
        $\rho_{9, 16, 38, 55, 66, 85}$,
        $\rho_{6, 14, 24, 27, 48, 59, 84}$,
        $\rho_{6, 19, 35, 56, 78, 82, 109}$,
        $\rho_{5, 20, 50, 62, 71, 77}$,
        $\rho_{2, 14, 22, 35, 59, 91, 110}$,
        $\rho_{11, 23, 33, 71, 101, 106}$,
        $\rho_{2, 12, 17, 24, 64, 70, 117}$,
        $\rho_{1, 4, 12, 24, 64, 70, 94}$,
        $\rho_{2, 12, 17, 24, 64, 70, 94}$,
        $\rho_{4, 8, 9, 38, 45, 56, 102}$,
        $\rho_{1, 6, 19, 39, 101, 109}$,
        $\rho_{1, 11, 13, 20, 25, 59, 87}$

    \item[$\eta_{601} - \eta_{650}$]
        $\rho_{2, 6, 20, 70, 94}$,
        $\rho_{1, 2, 3, 4, 5, 20, 70, 94}$,
        $\rho_{4, 12, 59, 96, 97, 110, 117}$,
        $\rho_{2, 22, 26, 38, 81}$,
        $\rho_{1, 2, 11, 13, 31, 35, 50, 87}$,
        $\rho_{1, 18, 21, 62, 97, 110}$,
        $\rho_{10, 11, 18, 27, 59, 91, 110}$,
        $\rho_{3, 4, 14, 18, 55, 59, 82}$,
        $\rho_{1, 10, 12, 37, 41, 55, 62, 65}$,
        $\rho_{10, 20, 37, 41, 84}$,
        $\rho_{4, 25, 37, 62, 109}$,
        $\rho_{1, 3, 10, 20, 41, 62, 84}$,
        $\rho_{7, 14, 21, 26, 33, 56, 92}$,
        $\rho_{7, 19, 38, 56, 57, 82, 83}$,
        $\rho_{8, 16, 37, 44, 49}$,
        $\rho_{11, 24, 55, 63, 66}$,
        $\rho_{1, 8, 16, 32, 37, 44, 49}$,
        $\rho_{15, 35, 44, 96, 102}$,
        $\rho_{1, 5, 19, 40, 57, 71, 100}$,
        $\rho_{8, 38, 45, 49, 95, 97, 109}$,
        $\rho_{19, 59, 62, 83, 84}$,
        $\rho_{1, 3, 7, 21, 28, 57, 82, 96}$,
        $\rho_{1, 10, 17, 18, 64, 65, 91, 99}$,
        $\rho_{1, 5, 16, 19, 40, 64, 71, 100}$,
        $\rho_{11, 13, 55, 59, 106}$,
        $\rho_{1, 11, 13, 30, 59, 62, 83}$,
        $\rho_{16, 18, 20, 25, 70, 91, 92}$,
        $\rho_{11, 50, 62, 97, 105}$,
        $\rho_{1, 11, 13, 14, 15, 44, 95}$,
        $\rho_{1, 16, 19, 33, 100, 105}$,
        $\rho_{1, 9, 12, 17, 85, 96, 97}$,
        $\rho_{1, 16, 23, 27, 28, 90, 92}$,
        $\rho_{4, 24, 44, 56, 102}$,
        $\rho_{1, 16, 19, 39, 57, 63, 98, 108}$,
        $\rho_{10, 12, 20, 41, 62, 65, 84}$,
        $\rho_{18, 21, 25, 26, 66, 84}$,
        $\rho_{4, 44, 63, 109}$,
        $\rho_{1, 9, 20, 40, 53, 81}$,
        $\rho_{1, 9, 10, 19, 52, 57, 103, 110}$,
        $\rho_{3, 6, 16, 28, 89, 99, 118}$,
        $\rho_{1, 7, 17, 21, 28, 63, 93, 101}$,
        $\rho_{1, 2, 4, 20, 41, 65, 84, 114}$,
        $\rho_{1, 2, 19, 32, 38, 73, 85, 106}$,
        $\rho_{1, 3, 10, 25, 59, 77, 89}$,
        $\rho_{1, 9, 28, 33, 54, 63, 73, 82}$,
        $\rho_{4, 25, 31, 37, 82, 83}$,
        $\rho_{1, 11, 25, 26, 38, 74, 81, 91}$,
        $\rho_{18, 31, 37, 55, 83, 85}$,
        $\rho_{5, 10, 17, 32, 37, 49, 92}$,
        $\rho_{1, 5, 10, 17, 64, 70, 94}$

    \item[$\eta_{651} - \eta_{700}$]
        $\rho_{11, 12, 55, 59}$,
        $\rho_{8, 9, 16, 37, 55, 73}$,
        $\rho_{1, 4, 8, 16, 37, 44, 49}$,
        $\rho_{3, 10, 23, 25, 38, 49, 102}$,
        $\rho_{1, 4, 8, 16, 32, 37, 44, 49}$,
        $\rho_{1, 11, 13, 20, 55, 59}$,
        $\rho_{16, 18, 19, 33, 70, 91, 92}$,
        $\rho_{3, 13, 17, 41, 55, 90, 103}$,
        $\rho_{10, 11, 24, 59, 62, 83, 84}$,
        $\rho_{2, 11, 31, 50, 87, 97, 99}$,
        $\rho_{1, 10, 17, 18, 32, 65, 94, 99}$,
        $\rho_{4, 7, 20, 90, 93, 104}$,
        $\rho_{3, 24, 34, 41, 55, 57, 69}$,
        $\rho_{1, 6, 9, 18, 25, 57, 100, 105}$,
        $\rho_{1, 9, 10, 11, 18, 65, 91, 99}$,
        $\rho_{1, 3, 23, 38, 50, 91}$,
        $\rho_{23, 27, 38, 92, 106}$,
        $\rho_{1, 5, 32, 56, 62, 81, 92}$,
        $\rho_{2, 13, 18, 35, 50, 62}$,
        $\rho_{4, 21, 48, 56, 62, 66}$,
        $\rho_{9, 10, 26, 37, 62, 96, 110}$,
        $\rho_{4, 12, 30, 62, 85, 90}$,
        $\rho_{1, 2, 5, 9, 17, 39, 50, 51}$,
        $\rho_{1, 10, 12, 37, 41, 62, 65}$,
        $\rho_{1, 6, 10, 11, 20, 56, 92, 106}$,
        $\rho_{20, 41, 62, 84}$,
        $\rho_{10, 20, 84, 105}$,
        $\rho_{2, 12, 33, 52, 102, 110}$,
        $\rho_{1, 9, 14, 25, 37, 78, 102, 116}$,
        $\rho_{10, 11, 19, 27, 34, 37, 92}$,
        $\rho_{1, 18, 23, 32, 33, 59, 77, 89}$,
        $\rho_{17, 20, 21, 25, 35, 84}$,
        $\rho_{1, 7, 13, 17, 28, 56, 63, 92}$,
        $\rho_{1, 2, 8, 26, 30, 52, 81}$,
        $\rho_{13, 18, 23, 36, 50, 60, 115}$,
        $\rho_{25, 31, 66, 78, 82, 84}$,
        $\rho_{1, 10, 15, 26, 30, 37, 43, 77}$,
        $\rho_{2, 9, 35, 38, 87, 95, 97}$,
        $\rho_{12, 55, 59, 97}$,
        $\rho_{1, 3, 6, 16, 17, 18, 55, 59}$,
        $\rho_{1, 11, 13, 25, 59, 87}$,
        $\rho_{1, 4, 8, 16, 19, 37, 49, 73}$,
        $\rho_{16, 17, 18, 50, 62, 99}$,
        $\rho_{1, 3, 16, 17, 25, 37, 41, 62}$,
        $\rho_{9, 23, 27, 28, 90, 92}$,
        $\rho_{1, 2, 9, 25, 39, 58, 88, 90}$,
        $\rho_{1, 3, 32, 44, 63, 65, 94}$,
        $\rho_{10, 19, 38, 57, 91, 93}$,
        $\rho_{10, 25, 31, 37, 82, 83}$,
        $\rho_{1, 9, 10, 11, 32, 59, 77, 89}$

    \item[$\eta_{701} - \eta_{750}$]
        $\rho_{7, 21, 57, 71, 82, 96}$,
        $\rho_{1, 11, 13, 55, 59}$,
        $\rho_{11, 19, 59, 62, 83, 84}$,
        $\rho_{1, 10, 17, 18, 27, 65, 91, 99}$,
        $\rho_{1, 10, 17, 18, 65, 91, 99}$,
        $\rho_{4, 21, 56, 62, 66}$,
        $\rho_{10, 13, 37, 62, 96, 110}$,
        $\rho_{1, 6, 10, 20, 56, 92, 106}$,
        $\rho_{1, 10, 19, 27, 34, 37, 92}$,
        $\rho_{4, 25, 44, 63, 109}$,
        $\rho_{1, 7, 17, 28, 63, 93, 101}$,
        $\rho_{1, 25, 26, 38, 74, 81, 91}$,
        $\rho_{25, 31, 37, 82, 83}$,
        $\rho_{5, 10, 17, 32, 52, 99, 117}$,
        $\rho_{11, 25, 59, 87, 97}$,
        $\rho_{12, 20, 62, 66, 90}$,
        $\rho_{20, 25, 41, 62, 84}$,
        $\rho_{1, 10, 23, 32, 33, 71, 101}$,
        $\rho_{1, 9, 19, 27, 65, 94, 118}$,
        $\rho_{1, 3, 23, 27, 65, 94}$,
        $\rho_{13, 31, 71, 101, 119}$,
        $\rho_{7, 10, 19, 26, 34, 108, 113}$,
        $\rho_{1, 4, 9, 16, 33, 46, 56}$,
        $\rho_{1, 4, 5, 19, 33, 100, 105}$,
        $\rho_{9, 10, 11, 26, 72, 75, 95}$,
        $\rho_{2, 5, 9, 64, 77, 88}$,
        $\rho_{18, 25, 33, 92, 106}$,
        $\rho_{4, 7, 26, 66, 75, 96, 116}$,
        $\rho_{1, 9, 10, 24, 31, 40, 57, 114}$,
        $\rho_{5, 6, 11, 26, 71, 97, 101}$,
        $\rho_{1, 5, 7, 16, 35, 87, 103, 109}$,
        $\rho_{5, 16, 27, 64, 70, 94}$,
        $\rho_{1, 2, 7, 16, 19, 32, 59, 87}$,
        $\rho_{1, 5, 11, 37, 64, 105, 114}$,
        $\rho_{1, 2, 5, 11, 37, 64, 115}$,
        $\rho_{1, 2, 3, 12, 40, 43, 97}$,
        $\rho_{3, 7, 12, 26, 99, 100}$,
        $\rho_{12, 30, 33, 35, 90, 114}$,
        $\rho_{11, 12, 26, 55, 72, 90, 103}$,
        $\rho_{10, 14, 23, 83, 96, 103, 115}$,
        $\rho_{1, 9, 13, 31, 39, 90, 92}$,
        $\rho_{1, 2, 4, 23, 47, 50, 94, 102}$,
        $\rho_{1, 2, 4, 30, 62, 65, 81, 99}$,
        $\rho_{16, 17, 27, 65, 91, 99}$,
        $\rho_{2, 15, 22, 47, 50, 93, 102}$,
        $\rho_{1, 10, 18, 59, 97, 110, 117}$,
        $\rho_{1, 4, 16, 23, 33, 72, 101, 105}$,
        $\rho_{2, 3, 11, 56, 92}$,
        $\rho_{10, 17, 24, 84}$,
        $\rho_{1, 2, 6, 11, 12, 40, 62, 109}$

    \item[$\eta_{751} - \eta_{800}$]
        $\rho_{1, 5, 9, 32, 77, 88}$,
        $\rho_{18, 25, 33, 57, 71, 94, 106}$,
        $\rho_{2, 6, 17, 25, 40, 91, 109}$,
        $\rho_{16, 17, 35, 82, 96, 108}$,
        $\rho_{2, 4, 37, 62, 109}$,
        $\rho_{16, 23, 32, 33, 71, 101}$,
        $\rho_{2, 12, 21, 33, 39, 115}$,
        $\rho_{13, 15, 31, 101, 112}$,
        $\rho_{3, 4, 8, 19, 55, 57, 109}$,
        $\rho_{2, 24, 26, 30, 90, 103, 117}$,
        $\rho_{2, 16, 59, 87}$,
        $\rho_{3, 4, 5, 16, 31, 99, 118}$,
        $\rho_{16, 59, 96, 103, 110, 117}$,
        $\rho_{1, 3, 16, 18, 31, 55, 57, 66}$,
        $\rho_{2, 9, 10, 48, 56, 59}$,
        $\rho_{1, 3, 6, 30, 66, 90, 117}$,
        $\rho_{3, 10, 37, 51, 54, 75, 77}$,
        $\rho_{1, 3, 13, 18, 23, 38, 81, 115}$,
        $\rho_{19, 27, 59, 94, 110}$,
        $\rho_{2, 5, 10, 13, 30, 59, 87}$,
        $\rho_{1, 4, 5, 13, 31, 35, 50, 87}$,
        $\rho_{9, 13, 41, 50, 63, 110, 118}$,
        $\rho_{1, 2, 3, 16, 32, 34, 51, 66}$,
        $\rho_{1, 16, 41, 62, 90, 97}$,
        $\rho_{9, 10, 14, 23, 26, 59, 84}$,
        $\rho_{18, 23, 25, 33, 59, 106, 117}$,
        $\rho_{1, 3, 17, 84}$,
        $\rho_{1, 5, 7, 11, 13, 62, 80, 109}$,
        $\rho_{10, 25, 37, 62, 83, 85}$,
        $\rho_{13, 30, 41, 87, 103}$,
        $\rho_{7, 27, 28, 80, 95}$,
        $\rho_{4, 9, 13, 41, 50, 87, 110}$,
        $\rho_{3, 4, 16, 25, 71, 97, 101}$,
        $\rho_{1, 2, 3, 4, 25, 70, 94}$,
        $\rho_{2, 20, 40, 81, 114}$,
        $\rho_{2, 9, 14, 17, 27, 115}$,
        $\rho_{12, 27, 65, 94, 111}$,
        $\rho_{12, 27, 65, 94, 104}$,
        $\rho_{2, 4, 5, 32, 66, 101, 110}$,
        $\rho_{1, 3, 32, 33, 34, 58, 88, 105}$,
        $\rho_{6, 17, 23, 31, 40, 114}$,
        $\rho_{2, 7, 14, 27, 80, 95}$,
        $\rho_{12, 13, 65, 94, 111}$,
        $\rho_{23, 27, 65, 94, 99}$,
        $\rho_{3, 11, 13, 25, 41, 87, 110}$,
        $\rho_{3, 5, 12, 32, 41, 87, 110}$,
        $\rho_{1, 8, 16, 23, 32, 33, 34, 36}$,
        $\rho_{4, 19, 32, 66, 101, 110}$,
        $\rho_{2, 7, 15, 22, 32, 35, 56}$,
        $\rho_{17, 55, 66, 70}$

    \item[$\eta_{801} - \eta_{850}$]
        $\rho_{17, 55, 57, 66, 70}$,
        $\rho_{2, 5, 17, 33, 34, 91, 109}$,
        $\rho_{1, 16, 31, 41, 87, 90, 97}$,
        $\rho_{1, 3, 6, 10, 38, 94, 97, 109}$,
        $\rho_{9, 13, 41, 50, 101, 110}$,
        $\rho_{1, 7, 11, 27, 28, 80, 95}$,
        $\rho_{1, 3, 10, 12, 32, 65, 94, 111}$,
        $\rho_{1, 2, 9, 13, 21, 56, 64, 114}$,
        $\rho_{1, 2, 4, 9, 37, 115}$,
        $\rho_{4, 13, 20, 50, 87, 102}$,
        $\rho_{10, 23, 33, 71, 101, 106}$,
        $\rho_{4, 6, 15, 22, 34, 50, 97}$,
        $\rho_{1, 12, 26, 55, 72, 90, 103}$,
        $\rho_{1, 2, 9, 15, 35, 36, 102, 110}$,
        $\rho_{2, 8, 16, 32, 34, 51, 66}$,
        $\rho_{14, 21, 39, 55, 102}$,
        $\rho_{18, 64, 65, 91, 99}$,
        $\rho_{1, 2, 3, 4, 72, 90, 91}$,
        $\rho_{1, 9, 12, 34, 64, 91, 111}$,
        $\rho_{1, 2, 9, 34, 58, 88, 97, 105}$,
        $\rho_{16, 41, 57, 62, 71, 90}$,
        $\rho_{4, 6, 9, 21, 55, 102, 106}$,
        $\rho_{1, 3, 5, 31, 90, 91, 100}$,
        $\rho_{18, 25, 33, 71, 94, 106}$,
        $\rho_{1, 6, 10, 27, 40, 95, 117}$,
        $\rho_{1, 3, 10, 30, 81, 116}$,
        $\rho_{1, 7, 32, 33, 38, 56, 88, 116}$,
        $\rho_{1, 10, 17, 20, 40, 114}$,
        $\rho_{1, 4, 30, 81, 116}$,
        $\rho_{1, 2, 3, 10, 20, 40, 114}$,
        $\rho_{17, 18, 31, 65, 70, 94}$,
        $\rho_{1, 3, 16, 23, 32, 33, 71, 101}$,
        $\rho_{2, 5, 20, 81, 95, 102, 116}$,
        $\rho_{1, 4, 16, 23, 31, 57, 81, 116}$,
        $\rho_{3, 5, 11, 37, 115}$,
        $\rho_{3, 5, 11, 21, 37, 64, 114}$,
        $\rho_{6, 13, 20, 56, 65, 94, 102}$,
        $\rho_{5, 10, 23, 31, 55, 57, 66}$,
        $\rho_{1, 3, 16, 17, 35, 82, 96}$,
        $\rho_{2, 9, 12, 21, 35, 63, 114}$,
        $\rho_{9, 11, 26, 31, 33, 50, 87}$,
        $\rho_{1, 2, 5, 16, 19, 55, 59}$,
        $\rho_{4, 18, 19, 37, 115}$,
        $\rho_{4, 12, 39, 64, 101, 109}$,
        $\rho_{1, 6, 39, 57, 100, 105}$,
        $\rho_{1, 10, 11, 18, 59, 116}$,
        $\rho_{1, 2, 4, 8, 19, 38, 56, 102}$,
        $\rho_{9, 12, 17, 24, 59, 97, 116}$,
        $\rho_{2, 4, 56, 59, 87}$,
        $\rho_{9, 10, 13, 17, 25, 59, 87}$

    \item[$\eta_{851} - \eta_{900}$]
        $\rho_{3, 17, 20, 25, 50, 66, 87}$,
        $\rho_{18, 27, 30, 65, 91, 99}$,
        $\rho_{1, 2, 5, 16, 59, 62, 83, 84}$,
        $\rho_{10, 11, 18, 59, 97, 116}$,
        $\rho_{1, 4, 32, 48, 50, 57, 63, 105}$,
        $\rho_{2, 5, 22, 33, 35, 59}$,
        $\rho_{4, 6, 19, 30, 37, 55, 109}$,
        $\rho_{4, 7, 14, 39, 42, 56, 66}$,
        $\rho_{1, 3, 6, 16, 17, 85, 96}$,
        $\rho_{9, 14, 17, 19, 37, 55, 102}$,
        $\rho_{1, 2, 5, 15, 33, 66, 95}$,
        $\rho_{1, 9, 15, 35, 37, 102, 110}$,
        $\rho_{2, 15, 42, 66, 79, 95}$,
        $\rho_{1, 9, 11, 12, 13, 14, 15, 53}$,
        $\rho_{1, 3, 17, 23, 57, 70, 81, 87}$,
        $\rho_{6, 13, 34, 94, 102, 105}$,
        $\rho_{1, 13, 39, 81, 85, 96}$,
        $\rho_{3, 6, 11, 75, 88, 95}$,
        $\rho_{1, 2, 3, 32, 42, 78, 81, 95}$,
        $\rho_{1, 10, 17, 33, 70, 94, 105}$,
        $\rho_{1, 10, 55, 62, 65, 81, 97}$,
        $\rho_{2, 9, 11, 27, 65, 91, 99}$,
        $\rho_{1, 6, 16, 17, 85, 96, 106}$,
        $\rho_{1, 4, 16, 31, 41, 87, 90, 97}$,
        $\rho_{2, 9, 11, 65, 91, 99}$,
        $\rho_{1, 2, 5, 9, 77, 88}$,
        $\rho_{5, 12, 17, 32, 65, 94, 99}$,
        $\rho_{2, 10, 25, 26, 81, 103, 118}$,
        $\rho_{1, 5, 10, 31, 56, 90, 104, 118}$,
        $\rho_{1, 4, 6, 26, 27, 40, 95, 117}$,
        $\rho_{1, 3, 4, 20, 25, 40, 114}$,
        $\rho_{1, 2, 24, 25, 26, 63, 81, 116}$,
        $\rho_{1, 18, 38, 80, 88, 97}$,
        $\rho_{9, 16, 24, 31, 33, 70, 94}$,
        $\rho_{10, 12, 17, 31, 57, 81, 116}$,
        $\rho_{1, 3, 16, 17, 31, 90, 97, 117}$,
        $\rho_{1, 13, 24, 25, 40, 88, 102, 116}$,
        $\rho_{16, 17, 56, 59, 87}$,
        $\rho_{1, 3, 24, 31, 50, 57, 87}$,
        $\rho_{1, 10, 23, 31, 90, 99, 117}$,
        $\rho_{3, 4, 6, 13, 18, 37, 115}$,
        $\rho_{1, 18, 19, 32, 64, 90, 117}$,
        $\rho_{13, 18, 23, 50, 102, 116}$,
        $\rho_{1, 2, 11, 19, 32, 96, 99, 118}$,
        $\rho_{3, 4, 18, 21, 79, 95, 104}$,
        $\rho_{2, 7, 10, 24, 56, 83, 103}$,
        $\rho_{1, 2, 9, 10, 23, 41, 54, 102}$,
        $\rho_{1, 2, 3, 10, 20, 50, 87, 102}$,
        $\rho_{3, 12, 26, 72, 96, 99, 118}$,
        $\rho_{1, 11, 12, 33, 57, 59, 116}$

    \item[$\eta_{901} - \eta_{950}$]
        $\rho_{3, 4, 12, 24, 64, 70, 94}$,
        $\rho_{1, 2, 3, 10, 33, 70, 94, 105}$,
        $\rho_{1, 3, 5, 23, 64, 70, 81, 87}$,
        $\rho_{9, 13, 41, 50, 87, 103}$,
        $\rho_{2, 13, 20, 41, 87, 96, 102}$,
        $\rho_{4, 16, 26, 56, 65, 94, 103}$,
        $\rho_{10, 24, 30, 50, 66, 87}$,
        $\rho_{1, 3, 10, 11, 18, 71, 97, 101}$,
        $\rho_{2, 3, 4, 24, 70, 94}$,
        $\rho_{10, 11, 23, 33, 71, 101, 106}$,
        $\rho_{3, 5, 10, 33, 65, 94, 102}$,
        $\rho_{1, 16, 24, 48, 56, 66}$,
        $\rho_{1, 2, 3, 4, 25, 64, 70, 94}$,
        $\rho_{10, 24, 25, 55, 63, 66}$,
        $\rho_{1, 2, 6, 17, 34, 63, 91, 109}$,
        $\rho_{1, 32, 48, 50, 57, 63, 105}$,
        $\rho_{6, 26, 27, 40, 95, 117}$,
        $\rho_{1, 7, 16, 23, 27, 28, 115}$,
        $\rho_{1, 11, 24, 30, 41, 87}$,
        $\rho_{16, 23, 31, 57, 81, 116}$,
        $\rho_{2, 6, 9, 14, 20, 56, 115}$,
        $\rho_{4, 5, 7, 39, 41, 80, 102}$,
        $\rho_{5, 64, 65, 70, 94}$,
        $\rho_{1, 6, 13, 17, 20, 72, 99, 101}$,
        $\rho_{1, 10, 17, 18, 64, 70, 117}$,
        $\rho_{10, 11, 56, 59, 87}$,
        $\rho_{1, 10, 17, 18, 64, 70, 94}$,
        $\rho_{1, 18, 25, 64, 70, 91, 93}$,
        $\rho_{1, 2, 5, 17, 32, 63, 90, 117}$,
        $\rho_{1, 8, 16, 30, 33, 34, 36}$,
        $\rho_{1, 4, 5, 19, 39, 64, 101, 109}$,
        $\rho_{1, 10, 59, 87, 94}$,
        $\rho_{1, 4, 26, 50, 94, 103, 105}$,
        $\rho_{10, 59, 87, 94, 97}$,
        $\rho_{1, 14, 18, 20, 50, 97, 115}$,
        $\rho_{11, 24, 26, 50, 97, 116}$,
        $\rho_{4, 20, 50, 87, 102}$,
        $\rho_{1, 2, 9, 38, 49, 88}$,
        $\rho_{1, 2, 3, 5, 70, 94, 101}$,
        $\rho_{2, 3, 8, 16, 32, 37, 49}$,
        $\rho_{1, 21, 25, 38, 87, 102}$,
        $\rho_{18, 27, 65, 91, 99}$,
        $\rho_{1, 11, 21, 33, 55, 102, 106}$,
        $\rho_{1, 2, 9, 32, 38, 49, 88, 106}$,
        $\rho_{1, 10, 17, 31, 90, 97, 117}$,
        $\rho_{11, 14, 21, 27, 57, 115}$,
        $\rho_{1, 5, 10, 17, 27, 90, 102, 117}$,
        $\rho_{1, 3, 5, 50, 57, 70, 116}$,
        $\rho_{1, 3, 8, 16, 17, 35, 82, 95}$,
        $\rho_{9, 12, 17, 50, 63, 66, 116}$

    \item[$\eta_{951} - \eta_{962}$]
        $\rho_{1, 2, 5, 16, 19, 55, 59, 97}$,
        $\rho_{1, 3, 16, 17, 35, 82, 96, 109}$,
        $\rho_{1, 3, 56, 59, 87}$,
        $\rho_{1, 3, 11, 32, 39, 42, 81, 95}$,
        $\rho_{10, 11, 12, 55, 59, 97}$,
        $\rho_{1, 2, 9, 11, 32, 63, 65, 94}$,
        $\rho_{24, 26, 50, 63, 87}$,
        $\rho_{1, 4, 9, 13, 24, 50, 94, 103}$,
        $\rho_{17, 25, 50, 63, 66, 87}$,
        $\rho_{1, 2, 3, 4, 18, 64, 70, 94}$,
        $\rho_{11, 26, 50, 63, 87}$,
        $\rho_{1, 2, 9, 38, 49, 88, 97}$

\end{description}
\end{flushleft}

\section{Proofs of Theorems (\ref{equivalence classes, bound, E7}) and
(\ref{equivalence classes, bound, E8})}

\label{appendix B}

Recall, an element $(w, i, j) \in \Gamma(W)$ is \textit{self-dual} if $w =
w^{-1}$ and $i = j$. We first define specific elements
\[
    \Lambda_1,\dots, \Lambda_{12}, \Theta_1,\dots, \Theta_{23}, \Psi_1,\dots,
    \Psi_{23} \in \Gamma(W(E_7))
\]
and
\[
    \Xi_1,\dots, \Xi_{56}, \Omega_1,\dots, \Omega_{119}, \Upsilon_1,\dots,
    \Upsilon_{308} \in \Gamma(W(E_8)).
\]

They satisfy the following properties:
\begin{enumerate}

    \item $\Lambda_{1},\dots, \Lambda_{12}$ are of the form $\llbracket
        w\rrbracket$ with $w \in BiGr_\perp(E_7) \backslash
        BiGr_\perp^\circ(E_7)$,

    \item $\Xi_{1},\dots, \Xi_{56}$ are of the form $\llbracket w\rrbracket$
        with $w \in BiGr_\perp(E_8) \backslash BiGr_\perp^\circ(E_8)$,

    \item $\Theta_{1},\dots, \Theta_{23}$ are self-dual elements in
        $\Gamma(W(E_7))$,

    \item $\Omega_{1},\dots, \Omega_{119}$ are self-dual elements in
        $\Gamma(W(E_8))$.

\end{enumerate}
After defining these elements, we list some reductions involving them. These
reductions will show specific equivalences of the forms $\llbracket \zeta_i
\rrbracket \xleftrightarrow{\hspace{4mm}} \llbracket \zeta_j \rrbracket$,
$\llbracket \zeta_i \rrbracket \xleftrightarrow{\hspace{4mm}} \llbracket
\zeta_i^{-1} \rrbracket$, $\llbracket \eta_i \rrbracket
\xleftrightarrow{\hspace{4mm}} \llbracket \eta_j \rrbracket$, or $\llbracket
\eta_i \rrbracket \xleftrightarrow{\hspace{4mm}} \llbracket \eta_i^{-1}
\rrbracket$.  The equivalences that follow from these reductions prove Theorem
(\ref{equivalence classes, bound, E8}). For example, the reductions
\[
    \Upsilon_{165} \xlongrightarrow{L} \Xi_{29}, \hspace{8pt} \Upsilon_{165}
    \xlongrightarrow{R} \llbracket \eta_{428} \rrbracket,
    \hspace{8pt}
    \Upsilon_{166} \xlongrightarrow{L} \Xi_{29}, \hspace{8pt} \Upsilon_{166}
    \xlongrightarrow{R} \llbracket \eta_{420} \rrbracket, \hspace{8pt}
    \Omega_{69} \xlongrightarrow{L} \Xi_{29},
\]
prove $\Xi_{29} \xleftrightarrow{\hspace{4mm}} \Omega_{69}
\xleftrightarrow{\hspace{4mm}} \llbracket \eta_{420} \rrbracket
\xleftrightarrow{\hspace{4mm}} \llbracket \eta_{428} \rrbracket$. Since
$\Omega_{69}$ is self-dual, Corollary (\ref{corollary, a}) can be applied to
give us
\[
    \Xi_{29} \xleftrightarrow{\hspace{4mm}} \Xi_{29}^*
    \xleftrightarrow{\hspace{4mm}} \llbracket \eta_{420}
    \rrbracket\xleftrightarrow{\hspace{4mm}} \llbracket \eta_{420}^{-1}
    \rrbracket \xleftrightarrow{\hspace{4mm}} \llbracket \eta_{428} \rrbracket
    \xleftrightarrow{\hspace{4mm}} \llbracket \eta_{428}^{-1} \rrbracket.
\]
We also point out that since $\Xi_{29} \in \Gamma(W(E_8))$ is of the form
$\llbracket w \rrbracket$ with $w$ bigrassmannian and lacking full support, we
can naturally view $\Xi_{29}$ as belonging to $\Gamma (W(X_n))$ for some Lie
type $X_n$ associated to a proper subgraph of the type $E_8$ Dynkin diagram
(i.e. $X_n$ is a simply-laced Lie type with rank $n \leq 7$).  In this
situation, we have
\[
    \Xi_{29} = \llbracket s_4 s_3 s_2 s_4 s_5 s_6 s_1 s_3 s_4 s_5 s_3 s_2 s_4
    s_1 s_3 \rrbracket \in \Gamma(W(E_8)).
\]
Observe that only the simple reflections $s_1, \dots, s_6$ are involved in the
reduced expression above. These generate the type $E_6$ Weyl group, and hence
we can identify $\Xi_{29}$ with an element of $\Gamma(W(E_6))$. In this
particular case, we identify it with $\llbracket \nu_{13}^{-1} \rrbracket \in
\Gamma(W(E_6))$. From Table (\ref{table, E6}), ${\mathcal N}(\llbracket
\nu_{13}^{-1} \rrbracket) = 3$, and thus ${\mathcal N}(\Xi_{29}) = 3$ also.
Therefore,
\[
    {\mathcal N}(\llbracket \eta_{420} \rrbracket) = {\mathcal N}(\llbracket
    \eta_{420}^{-1} \rrbracket) = {\mathcal N}(\llbracket \eta_{428}
    \rrbracket) = {\mathcal N}(\llbracket \eta_{428}^{-1} \rrbracket) = 3.
\]
The main point we make with this example is that nilpotency indices can be
sometimes be found using results from a smaller rank setting.

The elements $\Lambda_1,\dots, \Lambda_{12}$, $\Theta_1,\dots, \Theta_{23}$,
$\Psi_1,\dots, \Psi_{23}$, $\Xi_1,\dots, \Xi_{56}$, $\Omega_1,\dots,
\Omega_{119}$, and $\Upsilon_1,\dots, \Upsilon_{308}$ are given by the
following:

\begin{flushleft}
\begin{description}

    \item[$\Lambda_{1} - \Lambda_{12}$]
        $\llbracket \rho_{50, 51, 62} \rrbracket$,
        $\llbracket \rho_{58, 59} \rrbracket$,
        $\llbracket \rho_{45, 61} \rrbracket$,
        $\llbracket \rho_{53} \rrbracket$,
        $\llbracket \rho_{26} \rrbracket$,
        $\llbracket \rho_{1, 2, 4, 20, 54} \rrbracket$,
        $\llbracket \rho_{9, 20, 54} \rrbracket$,
        $\llbracket \rho_{4, 16, 50} \rrbracket$,
        $\llbracket \rho_{2, 9, 15, 20, 54} \rrbracket$,
        $\llbracket \rho_{2, 8, 11, 15, 51} \rrbracket$,
        $\llbracket \rho_{4, 22} \rrbracket$,
        $\llbracket \rho_{22, 45} \rrbracket$,

    \item[$\Psi_{1} - \Psi_{23}$]
        $(\rho_{1, 6, 11, 15, 51, 52}, \text{\small 4}, \text{\small 6})$,
        $(\rho_{3, 4, 12, 16, 26, 52}, \text{\small 5}, \text{\small 6})$,
        $(\rho_{1, 4, 14, 20, 24, 44, 62}, \text{\small 5}, \text{\small 7})$,
        $(\rho_{1, 5, 12, 36, 38, 39, 61}, \text{\small 6}, \text{\small 7})$,
        $(\rho_{1, 3, 4, 5, 6, 18, 46}, \text{\small 4}, \text{\small 5})$,
        $(\rho_{5, 13, 23, 32, 55, 60}, \text{\small 5}, \text{\small 6})$,
        $(\rho_{2, 9, 11, 23, 41, 46}, \text{\small 5}, \text{\small 4})$,
        $(\rho_{15, 20, 37, 41, 46}, \text{\small 4}, \text{\small 5})$,
        $(\rho_{1, 22, 36, 45, 59}, \text{\small 4}, \text{\small 5})$,
        $(\rho_{14, 15, 26, 32, 57}, \text{\small 2}, \text{\small 4})$,
        $(\rho_{1, 12, 15, 16, 17, 60}, \text{\small 3}, \text{\small 4})$,
        $(\rho_{8, 24, 32, 42, 57}, \text{\small 4}, \text{\small 4})$,
        $(\rho_{12, 19, 42, 48, 50, 53}, \text{\small 3}, \text{\small 4})$,
        $(\rho_{1, 4, 6, 8, 12, 35, 39}, \text{\small 1}, \text{\small 4})$,
        $(\rho_{1, 4, 7, 22, 23, 41, 45}, \text{\small 5}, \text{\small 4})$,
        $(\rho_{1, 6, 12, 15, 43, 44, 54}, \text{\small 4}, \text{\small 5})$,
        $(\rho_{2, 3, 19, 23, 29}, \text{\small 4}, \text{\small 2})$,
        $(\rho_{3, 11, 24, 26, 34}, \text{\small 4}, \text{\small 3})$,
        $(\rho_{2, 5, 12, 17, 41, 55}, \text{\small 4}, \text{\small 6})$,
        $(\rho_{3, 4, 22, 24, 47, 54}, \text{\small 2}, \text{\small 5})$,
        $(\rho_{7, 16, 20, 33, 37}, \text{\small 6}, \text{\small 2})$,
        $(\rho_{4, 6, 12, 16, 27, 47}, \text{\small 4}, \text{\small 5})$,
        $(\rho_{2, 14, 26, 41, 47}, \text{\small 5}, \text{\small 4})$,

    \item[$\Theta_{1} - \Theta_{23}$]
        $(\rho_{25, 50, 51, 62}, \text{\small 6}, \text{\small 6})$,
        $(\rho_{25, 58, 59}, \text{\small 7}, \text{\small 7})$,
        $(\rho_{25, 45, 61}, \text{\small 5}, \text{\small 5})$,
        $(\rho_{25, 53}, \text{\small 6}, \text{\small 6})$,
        $(\rho_{13, 26, 44, 53}, \text{\small 4}, \text{\small 4})$,
        $(\rho_{14, 35, 39}, \text{\small 4}, \text{\small 4})$,
        $(\rho_{20, 35, 40, 44}, \text{\small 4}, \text{\small 4})$,
        $(\rho_{16, 31, 44, 59}, \text{\small 2}, \text{\small 2})$,
        $(\rho_{2, 13, 39, 61}, \text{\small 4}, \text{\small 4})$,
        $(\rho_{4, 32, 57, 58}, \text{\small 3}, \text{\small 3})$,
        $(\rho_{3, 17, 33, 40, 63}, \text{\small 1}, \text{\small 1})$,
        $(\rho_{18, 46, 58}, \text{\small 5}, \text{\small 5})$,
        $(\rho_{1, 13, 18, 58}, \text{\small 2}, \text{\small 2})$,
        $(\rho_{4, 28, 37, 38}, \text{\small 4}, \text{\small 4})$,
        $(\rho_{2, 11, 39, 63}, \text{\small 4}, \text{\small 4})$,
        $(\rho_{11, 21, 46, 63}, \text{\small 5}, \text{\small 5})$,
        $(\rho_{7, 18, 28, 53}, \text{\small 6}, \text{\small 6})$,
        $(\rho_{11, 28, 44, 45}, \text{\small 5}, \text{\small 5})$,
        $(\rho_{11, 15, 40, 48, 63}, \text{\small 4}, \text{\small 4})$,
        $(\rho_{1, 13, 24}, \text{\small 3}, \text{\small 3})$,
        $(\rho_{3, 10, 26, 36, 56}, \text{\small 4}, \text{\small 4})$,
        $(\rho_{10, 17, 33, 43}, \text{\small 5}, \text{\small 5})$,
        $(\rho_{8, 26, 41, 56}, \text{\small 4}, \text{\small 4})$,

    \item[$\Omega_{1} - \Omega_{50}$]
        $(\rho_{1, 53}, \text{\small 3}, \text{\small 3})$,
        $(\rho_{3, 37, 69}, \text{\small 4}, \text{\small 4})$,
        $(\rho_{3, 16, 20, 47, 77}, \text{\small 3}, \text{\small 3})$,
        $(\rho_{79, 83, 120}, \text{\small 7}, \text{\small 7})$,
        $(\rho_{17, 30, 86, 92}, \text{\small 4}, \text{\small 4})$,
        $(\rho_{16, 34, 61, 73}, \text{\small 3}, \text{\small 3})$,
        $(\rho_{27, 54, 57, 92}, \text{\small 5}, \text{\small 5})$,
        $(\rho_{56, 88, 92}, \text{\small 5}, \text{\small 5})$,
        $(\rho_{9, 84}, \text{\small 4}, \text{\small 4})$,
        $(\rho_{2, 3, 44, 71, 89}, \text{\small 4}, \text{\small 4})$,
        $(\rho_{71, 77, 89, 118}, \text{\small 5}, \text{\small 5})$,
        $(\rho_{10, 20, 47, 67}, \text{\small 2}, \text{\small 2})$,
        $(\rho_{13, 52, 79, 103}, \text{\small 5}, \text{\small 5})$,
        $(\rho_{6, 79, 85, 97}, \text{\small 6}, \text{\small 6})$,
        $(\rho_{5, 23, 47, 60}, \text{\small 6}, \text{\small 6})$,
        $(\rho_{2, 37, 41, 62, 108}, \text{\small 4}, \text{\small 4})$,
        $(\rho_{9, 30, 61, 86}, \text{\small 4}, \text{\small 4})$,
        $(\rho_{11, 53, 88}, \text{\small 3}, \text{\small 3})$,
        $(\rho_{25, 37, 88, 92}, \text{\small 5}, \text{\small 5})$,
        $(\rho_{4, 23, 25, 38}, \text{\small 5}, \text{\small 5})$,
        $(\rho_{10, 31, 34, 84}, \text{\small 5}, \text{\small 5})$,
        $(\rho_{53, 92}, \text{\small 5}, \text{\small 5})$,
        $(\rho_{28, 61, 100}, \text{\small 6}, \text{\small 6})$,
        $(\rho_{61, 100}, \text{\small 6}, \text{\small 6})$,
        $(\rho_{20, 53, 92}, \text{\small 5}, \text{\small 5})$,
        $(\rho_{21, 29, 72, 84}, \text{\small 4}, \text{\small 4})$,
        $(\rho_{20, 47, 53}, \text{\small 3}, \text{\small 3})$,
        $(\rho_{4,  53, 61}, \text{\small 3}, \text{\small 3})$,
        $(\rho_{37, 92, 115}, \text{\small 5}, \text{\small 5})$,
        $(\rho_{28, 84, 113}, \text{\small 4}, \text{\small 4})$,
        $(\rho_{28, 48, 49, 61}, \text{\small 2}, \text{\small 2})$,
        $(\rho_{14, 29, 65, 84}, \text{\small 4}, \text{\small 4})$,
        $(\rho_{36, 59, 100, 114}, \text{\small 6}, \text{\small 6})$,
        $(\rho_{22, 35, 53, 63}, \text{\small 3}, \text{\small 3})$,
        $(\rho_{38, 72, 91, 108}, \text{\small 5}, \text{\small 5})$,
        $(\rho_{3, 44, 65, 98}, \text{\small 4}, \text{\small 4})$,
        $(\rho_{2, 44, 94, 99}, \text{\small 4}, \text{\small 4})$,
        $(\rho_{27, 79, 92}, \text{\small 5}, \text{\small 5})$,
        $(\rho_{2, 44, 92}, \text{\small 4}, \text{\small 4})$,
        $(\rho_{20, 53, 83, 102}, \text{\small 4}, \text{\small 4})$,
        $(\rho_{11, 53, 94, 99}, \text{\small 3}, \text{\small 3})$,
        $(\rho_{4, 32, 52, 64, 107}, \text{\small 5}, \text{\small 5})$,
        $(\rho_{8, 39, 85, 115}, \text{\small 6}, \text{\small 6})$,
        $(\rho_{11, 53, 63}, \text{\small 3}, \text{\small 3})$,
        $(\rho_{4, 23, 25, 66}, \text{\small 4}, \text{\small 4})$,
        $(\rho_{9, 20, 84}, \text{\small 4}, \text{\small 4})$,
        $(\rho_{12, 60, 79}, \text{\small 6}, \text{\small 6})$,
        $(\rho_{1, 20, 53}, \text{\small 3}, \text{\small 3})$,
        $(\rho_{4, 30, 61, 67}, \text{\small 5}, \text{\small 5})$,
        $(\rho_{1, 20, 47, 53}, \text{\small 3}, \text{\small 3})$,

    \item[$\Omega_{51} - \Omega_{100}$]
        $(\rho_{4, 31, 61}, \text{\small 2}, \text{\small 2})$,
        $(\rho_{22, 35, 92}, \text{\small 2}, \text{\small 2})$,
        $(\rho_{13, 38, 79, 85, 91}, \text{\small 6}, \text{\small 6})$,
        $(\rho_{21, 25, 46, 58}, \text{\small 1}, \text{\small 1})$,
        $(\rho_{37, 45, 91, 110}, \text{\small 5}, \text{\small 5})$,
        $(\rho_{72, 108}, \text{\small 7}, \text{\small 7})$,
        $(\rho_{12, 24, 70}, \text{\small 5}, \text{\small 5})$,
        $(\rho_{32, 78, 84}, \text{\small 6}, \text{\small 6})$,
        $(\rho_{20, 54, 97}, \text{\small 3}, \text{\small 3})$,
        $(\rho_{20, 32, 43}, \text{\small 1}, \text{\small 1})$,
        $(\rho_{61, 70, 117}, \text{\small 5}, \text{\small 5})$,
        $(\rho_{2, 12, 59}, \text{\small 4}, \text{\small 4})$,
        $(\rho_{31, 61, 93, 97, 120}, \text{\small 5}, \text{\small 5})$,
        $(\rho_{4, 32, 52, 64}, \text{\small 5}, \text{\small 5})$,
        $(\rho_{1, 53, 59, 116}, \text{\small 4}, \text{\small 4})$,
        $(\rho_{37, 71, 101}, \text{\small 6}, \text{\small 6})$,
        $(\rho_{7, 20, 32, 99}, \text{\small 6}, \text{\small 6})$,
        $(\rho_{31, 51, 90}, \text{\small 5}, \text{\small 5})$,
        $(\rho_{4, 53, 61, 107}, \text{\small 3}, \text{\small 3})$,
        $(\rho_{19, 32, 64, 84}, \text{\small 4}, \text{\small 4})$,
        $(\rho_{27, 40, 72, 92}, \text{\small 5}, \text{\small 5})$,
        $(\rho_{2, 31, 44, 94, 99}, \text{\small 4}, \text{\small 4})$,
        $(\rho_{3, 31, 44, 93, 97}, \text{\small 4}, \text{\small 4})$,
        $(\rho_{13, 25, 54, 63}, \text{\small 3}, \text{\small 3})$,
        $(\rho_{2, 44, 63, 65}, \text{\small 4}, \text{\small 4})$,
        $(\rho_{25, 37, 92, 115}, \text{\small 5}, \text{\small 5})$,
        $(\rho_{52, 64, 88, 100}, \text{\small 5}, \text{\small 5})$,
        $(\rho_{20, 70, 94, 105}, \text{\small 5}, \text{\small 5})$,
        $(\rho_{22, 31, 35, 90}, \text{\small 2}, \text{\small 2})$,
        $(\rho_{3, 31, 44, 56}, \text{\small 4}, \text{\small 4})$,
        $(\rho_{14, 51, 70}, \text{\small 3}, \text{\small 3})$,
        $(\rho_{19, 71, 84}, \text{\small 4}, \text{\small 4})$,
        $(\rho_{30, 76, 81}, \text{\small 4}, \text{\small 4})$,
        $(\rho_{31, 76, 90, 97}, \text{\small 5}, \text{\small 5})$,
        $(\rho_{13, 23, 34, 76}, \text{\small 3}, \text{\small 3})$,
        $(\rho_{32, 50, 62, 99}, \text{\small 4}, \text{\small 4})$,
        $(\rho_{9, 44, 70, 94, 112}, \text{\small 4}, \text{\small 4})$,
        $(\rho_{5, 50, 62, 71, 77}, \text{\small 5}, \text{\small 5})$,
        $(\rho_{24, 30, 76, 81}, \text{\small 4}, \text{\small 4})$,
        $(\rho_{19, 71, 84, 96}, \text{\small 4}, \text{\small 4})$,
        $(\rho_{10, 17, 31, 84}, \text{\small 4}, \text{\small 4})$,
        $(\rho_{38, 91, 104}, \text{\small 5}, \text{\small 5})$,
        $(\rho_{5, 39, 56, 92}, \text{\small 5}, \text{\small 5})$,
        $(\rho_{9, 10, 20, 84}, \text{\small 4}, \text{\small 4})$,
        $(\rho_{27, 92, 104}, \text{\small 5}, \text{\small 5})$,
        $(\rho_{19, 84, 96}, \text{\small 4}, \text{\small 4})$,
        $(\rho_{2, 59, 87}, \text{\small 4}, \text{\small 4})$,
        $(\rho_{16, 20, 83, 102}, \text{\small 4}, \text{\small 4})$,
        $(\rho_{2, 12, 59, 87}, \text{\small 4}, \text{\small 4})$,
        $(\rho_{35, 100, 112}, \text{\small 6}, \text{\small 6})$,

    \item[$\Omega_{101} - \Omega_{119}$]
        $(\rho_{3, 20, 54, 97}, \text{\small 3}, \text{\small 3})$,
        $(\rho_{30, 48, 61, 109}, \text{\small 4}, \text{\small 4})$,
        $(\rho_{38, 50, 57, 91}, \text{\small 5}, \text{\small 5})$,
        $(\rho_{18, 55, 59, 75}, \text{\small 4}, \text{\small 4})$,
        $(\rho_{30, 64, 83, 96}, \text{\small 4}, \text{\small 4})$,
        $(\rho_{26, 38, 83, 91}, \text{\small 5}, \text{\small 5})$,
        $(\rho_{27, 65, 84, 117}, \text{\small 5}, \text{\small 5})$,
        $(\rho_{2, 37, 41, 62}, \text{\small 4}, \text{\small 4})$,
        $(\rho_{25, 38, 84, 117}, \text{\small 5}, \text{\small 5})$,
        $(\rho_{57, 71, 77, 118}, \text{\small 6}, \text{\small 6})$,
        $(\rho_{64, 70, 88, 94}, \text{\small 5}, \text{\small 5})$,
        $(\rho_{38, 91, 104, 109}, \text{\small 5}, \text{\small 5})$,
        $(\rho_{55, 59, 96}, \text{\small 4}, \text{\small 4})$,
        $(\rho_{17, 31, 55, 66, 90}, \text{\small 4}, \text{\small 4})$,
        $(\rho_{10, 62, 70, 81}, \text{\small 4}, \text{\small 4})$,
        $(\rho_{16, 27, 59, 93, 101}, \text{\small 5}, \text{\small 5})$,
        $(\rho_{8, 19, 51, 85, 93}, \text{\small 4}, \text{\small 4})$,
        $(\rho_{13, 18, 19, 84}, \text{\small 4}, \text{\small 4})$,
        $(\rho_{55, 59, 96, 97}, \text{\small 4}, \text{\small 4})$,

    \item[$\Xi_{1} - \Xi_{56}$]
        $\llbracket \rho_{28} \rrbracket$, $\llbracket \rho_{37} \rrbracket$,
        $\llbracket \rho_{59} \rrbracket$, $\llbracket \rho_{108} \rrbracket$,
        $\llbracket \rho_{70} \rrbracket$, $\llbracket \rho_{30} \rrbracket$,
        $\llbracket \rho_{16, 17, 18, 64, 70} \rrbracket$, $\llbracket
        \rho_{11, 56, 59} \rrbracket$, $\llbracket \rho_{114, 115} \rrbracket$,
        $\llbracket \rho_{20, 21, 35, 38, 59} \rrbracket$, $\llbracket \rho_{1,
        5, 16, 26, 81, 103} \rrbracket$, $\llbracket \rho_{1, 7, 16, 27, 28}
        \rrbracket$, $\llbracket \rho_{31, 90} \rrbracket$, $\llbracket
        \rho_{16, 59} \rrbracket$, $\llbracket \rho_{14, 27} \rrbracket$,
        $\llbracket \rho_{3, 37} \rrbracket$, $\llbracket \rho_{99, 118}
        \rrbracket$, $\llbracket \rho_{1, 7, 32, 33, 38, 56, 88} \rrbracket$,
        $\llbracket \rho_{1, 6, 10, 27, 40, 95} \rrbracket$, $\llbracket
        \rho_{24, 70} \rrbracket$, $\llbracket \rho_{30, 109} \rrbracket$,
        $\llbracket \rho_{1, 4, 7, 16, 23, 27, 28} \rrbracket$, $\llbracket
        \rho_{20, 105} \rrbracket$, $\llbracket \rho_{32, 50, 57} \rrbracket$,
        $\llbracket \rho_{32, 99} \rrbracket$, $\llbracket \rho_{11, 31, 56,
        90} \rrbracket$, $\llbracket \rho_{1, 5, 10, 31, 56, 90, 104}
        \rrbracket$, $\llbracket \rho_{16, 17, 18, 31, 90, 97} \rrbracket$,
        $\llbracket \rho_{2, 9, 12, 17, 97} \rrbracket$, $\llbracket \rho_{11,
        24, 30, 109} \rrbracket$, $\llbracket \rho_{6, 26, 27, 40, 95}
        \rrbracket$, $\llbracket \rho_{16, 23, 31, 57, 81} \rrbracket$,
        $\llbracket \rho_{1, 7, 16, 23, 27, 28} \rrbracket$, $\llbracket
        \rho_{1, 2, 5, 17, 32, 63, 90} \rrbracket$, $\llbracket \rho_{30, 81}
        \rrbracket$, $\llbracket \rho_{31, 90, 97} \rrbracket$, $\llbracket
        \rho_{14, 23, 27} \rrbracket$, $\llbracket \rho_{24, 30, 81}
        \rrbracket$, $\llbracket \rho_{13, 18, 25, 106} \rrbracket$,
        $\llbracket \rho_{105, 106, 119} \rrbracket$, $\llbracket \rho_{1, 3,
        12, 33, 106} \rrbracket$, $\llbracket \rho_{2, 5, 16, 33, 63, 106}
        \rrbracket$, $\llbracket \rho_{4, 18, 64} \rrbracket$, $\llbracket
        \rho_{6, 19, 72} \rrbracket$, $\llbracket \rho_{1, 6, 10, 11, 14, 20,
        56} \rrbracket$, $\llbracket \rho_{64, 97, 111} \rrbracket$,
        $\llbracket \rho_{3, 11, 56} \rrbracket$, $\llbracket \rho_{10, 23,
        102} \rrbracket$, $\llbracket \rho_{19, 57, 109} \rrbracket$,
        $\llbracket \rho_{1, 6, 16, 18, 20, 25, 106} \rrbracket$, $\llbracket
        \rho_{17, 30, 109} \rrbracket$, $\llbracket \rho_{25, 63, 110}
        \rrbracket$, $\llbracket \rho_{12, 33, 106} \rrbracket$, $\llbracket
        \rho_{5, 16, 19, 33, 106} \rrbracket$, $\llbracket \rho_{2, 10, 19, 57,
        96, 110} \rrbracket$, $\llbracket \rho_{19, 56, 57, 109} \rrbracket$,

    \item[$\Upsilon_{1} - \Upsilon_{50}$]
        $(\rho_{25, 61}, \text{\small 5}, \text{\small 2})$,
        $(\rho_{1, 4, 24, 53}, \text{\small 5}, \text{\small 3})$,
        $(\rho_{6, 19, 53, 68}, \text{\small 7}, \text{\small 3})$,
        $(\rho_{5, 10, 53, 60}, \text{\small 6}, \text{\small 3})$,
        $(\rho_{53, 59, 85, 92}, \text{\small 5}, \text{\small 4})$,
        $(\rho_{6, 17, 57, 78, 86, 95}, \text{\small 6}, \text{\small 4})$,
        $(\rho_{47, 50, 93, 102}, \text{\small 7}, \text{\small 4})$,
        $(\rho_{16, 19, 79, 98, 99}, \text{\small 4}, \text{\small 6})$,
        $(\rho_{10, 17, 70, 78, 94}, \text{\small 4}, \text{\small 5})$,
        $(\rho_{19, 40, 71, 85, 97}, \text{\small 4}, \text{\small 6})$,
        $(\rho_{1, 17, 23, 53, 94, 99}, \text{\small 4}, \text{\small 3})$,
        $(\rho_{1, 11, 55, 59, 96, 100}, \text{\small 4}, \text{\small 4})$,
        $(\rho_{50, 70, 74, 118}, \text{\small 5}, \text{\small 7})$,
        $(\rho_{2, 3, 30, 86, 92}, \text{\small 2}, \text{\small 4})$,
        $(\rho_{4, 63, 74, 100}, \text{\small 2}, \text{\small 5})$,
        $(\rho_{5, 33, 55, 66, 77}, \text{\small 5}, \text{\small 3})$,
        $(\rho_{5, 9, 64, 71, 84, 89}, \text{\small 5}, \text{\small 4})$,
        $(\rho_{4, 9, 37, 48, 52, 71}, \text{\small 5}, \text{\small 4})$,
        $(\rho_{6, 16, 25, 39, 49, 61}, \text{\small 5}, \text{\small 2})$,
        $(\rho_{5, 9, 32, 71, 84, 89}, \text{\small 5}, \text{\small 4})$,
        $(\rho_{33, 34, 74, 75}, \text{\small 7}, \text{\small 3})$,
        $(\rho_{3, 18, 27, 74, 75}, \text{\small 6}, \text{\small 3})$,
        $(\rho_{21, 35, 93, 118}, \text{\small 6}, \text{\small 7})$,
        $(\rho_{4, 7, 35, 63, 91}, \text{\small 6}, \text{\small 5})$,
        $(\rho_{13, 17, 40, 70, 92}, \text{\small 6}, \text{\small 5})$,
        $(\rho_{13, 17, 40, 92, 115}, \text{\small 6}, \text{\small 5})$,
        $(\rho_{4, 7, 42, 54, 63, 68}, \text{\small 6}, \text{\small 2})$,
        $(\rho_{2, 12, 13, 83, 91, 99}, \text{\small 6}, \text{\small 4})$,
        $(\rho_{9, 14, 32, 51, 70}, \text{\small 4}, \text{\small 3})$,
        $(\rho_{23, 27, 65, 84, 117}, \text{\small 5}, \text{\small 5})$,
        $(\rho_{2, 14, 32, 51, 70, 88}, \text{\small 3}, \text{\small 3})$,
        $(\rho_{13, 19, 28, 81, 84}, \text{\small 5}, \text{\small 4})$,
        $(\rho_{4, 33, 44, 90, 102}, \text{\small 4}, \text{\small 4})$,
        $(\rho_{11, 30, 48, 81, 92}, \text{\small 4}, \text{\small 4})$,
        $(\rho_{2, 16, 55, 64, 69, 90}, \text{\small 2}, \text{\small 4})$,
        $(\rho_{11, 22, 34, 61, 97}, \text{\small 5}, \text{\small 3})$,
        $(\rho_{10, 17, 24, 62, 71, 84}, \text{\small 4}, \text{\small 4})$,
        $(\rho_{31, 35, 44, 70, 73}, \text{\small 3}, \text{\small 6})$,
        $(\rho_{5, 9, 64, 84}, \text{\small 5}, \text{\small 4})$,
        $(\rho_{1, 17, 25, 44}, \text{\small 3}, \text{\small 4})$,
        $(\rho_{6, 14, 18, 60}, \text{\small 4}, \text{\small 6})$,
        $(\rho_{4, 9, 19, 30, 38, 94, 109}, \text{\small 2}, \text{\small 5})$,
        $(\rho_{16, 21, 30, 85, 96}, \text{\small 4}, \text{\small 5})$,
        $(\rho_{7, 10, 28, 64, 101, 109}, \text{\small 5}, \text{\small 6})$,
        $(\rho_{5, 7, 28, 90, 94, 102}, \text{\small 4}, \text{\small 5})$,
        $(\rho_{2, 6, 13, 25, 52, 102, 105}, \text{\small 4}, \text{\small 5})$,
        $(\rho_{3, 11, 28, 94, 99, 100}, \text{\small 4}, \text{\small 5})$,
        $(\rho_{13, 18, 35, 77, 87}, \text{\small 6}, \text{\small 3})$,
        $(\rho_{11, 34, 47, 51, 99}, \text{\small 5}, \text{\small 3})$,
        $(\rho_{5, 16, 17, 37, 92}, \text{\small 5}, \text{\small 5})$,

    \item[$\Upsilon_{51} - \Upsilon_{100}$]
        $(\rho_{10, 24, 25, 37, 62, 85}, \text{\small 5}, \text{\small 4})$,
        $(\rho_{7, 23, 26, 38, 42, 84}, \text{\small 6}, \text{\small 4})$,
        $(\rho_{25, 38, 84, 104, 118}, \text{\small 5}, \text{\small 6})$,
        $(\rho_{8, 11, 53, 63, 88}, \text{\small 3}, \text{\small 3})$,
        $(\rho_{7, 12, 84, 103, 114}, \text{\small 7}, \text{\small 4})$,
        $(\rho_{4, 9, 52, 71, 78, 88}, \text{\small 5}, \text{\small 5})$,
        $(\rho_{8, 12, 34, 59, 75, 94}, \text{\small 2}, \text{\small 4})$,
        $(\rho_{24, 59, 69, 77, 87}, \text{\small 5}, \text{\small 5})$,
        $(\rho_{4, 6, 30, 53, 61}, \text{\small 3}, \text{\small 4})$,
        $(\rho_{40, 61, 115}, \text{\small 8}, \text{\small 3})$,
        $(\rho_{37, 61}, \text{\small 2}, \text{\small 3})$,
        $(\rho_{29, 37, 114}, \text{\small 3}, \text{\small 2})$,
        $(\rho_{3, 8, 15, 18, 53, 78}, \text{\small 5}, \text{\small 3})$,
        $(\rho_{6, 40, 66, 89}, \text{\small 6}, \text{\small 1})$,
        $(\rho_{3, 21, 31, 66, 73}, \text{\small 4}, \text{\small 6})$,
        $(\rho_{3, 37, 69, 75, 77}, \text{\small 5}, \text{\small 4})$,
        $(\rho_{4, 23, 34, 60, 89, 111}, \text{\small 5}, \text{\small 6})$,
        $(\rho_{11, 59, 77, 104, 118}, \text{\small 5}, \text{\small 6})$,
        $(\rho_{3, 8, 53, 59}, \text{\small 4}, \text{\small 4})$,
        $(\rho_{4, 53, 58, 59, 78, 83}, \text{\small 7}, \text{\small 5})$,
        $(\rho_{7, 15, 33, 56, 59, 67, 75}, \text{\small 4}, \text{\small 5})$,
        $(\rho_{10, 20, 22, 25, 60, 108}, \text{\small 5}, \text{\small 6})$,
        $(\rho_{1, 37, 45, 53}, \text{\small 3}, \text{\small 2})$,
        $(\rho_{1, 36, 69, 114}, \text{\small 3}, \text{\small 1})$,
        $(\rho_{11, 13, 15, 28, 52, 75}, \text{\small 5}, \text{\small 3})$,
        $(\rho_{12, 17, 29, 38, 54, 66}, \text{\small 6}, \text{\small 3})$,
        $(\rho_{4, 12, 15, 52, 75}, \text{\small 2}, \text{\small 3})$,
        $(\rho_{4, 18, 28, 64, 84, 117}, \text{\small 4}, \text{\small 5})$,
        $(\rho_{3, 8, 11, 24, 52, 66}, \text{\small 4}, \text{\small 4})$,
        $(\rho_{13, 19, 61, 100}, \text{\small 4}, \text{\small 6})$,
        $(\rho_{3, 4, 5, 52, 100}, \text{\small 4}, \text{\small 5})$,
        $(\rho_{11, 20, 25, 54, 97}, \text{\small 5}, \text{\small 3})$,
        $(\rho_{2, 3, 14, 29, 33, 65}, \text{\small 4}, \text{\small 1})$,
        $(\rho_{14, 32, 51, 90}, \text{\small 3}, \text{\small 3})$,
        $(\rho_{2, 4, 5, 19, 61, 71}, \text{\small 4}, \text{\small 2})$,
        $(\rho_{3, 5, 12, 33, 92, 106}, \text{\small 3}, \text{\small 5})$,
        $(\rho_{19, 32, 71, 84}, \text{\small 4}, \text{\small 4})$,
        $(\rho_{18, 34, 54, 57, 77}, \text{\small 6}, \text{\small 3})$,
        $(\rho_{16, 20, 32, 56, 100}, \text{\small 2}, \text{\small 6})$,
        $(\rho_{20, 25, 70, 93}, \text{\small 2}, \text{\small 5})$,
        $(\rho_{11, 24, 35, 56, 75}, \text{\small 5}, \text{\small 3})$,
        $(\rho_{3, 18, 19, 28, 46}, \text{\small 2}, \text{\small 1})$,
        $(\rho_{1, 4, 22, 32, 34, 45}, \text{\small 5}, \text{\small 1})$,
        $(\rho_{3, 15, 31, 36, 41}, \text{\small 2}, \text{\small 1})$,
        $(\rho_{5, 12, 29, 38, 46, 69}, \text{\small 4}, \text{\small 1})$,
        $(\rho_{1, 5, 9, 17, 23, 35, 53}, \text{\small 4}, \text{\small 3})$,
        $(\rho_{4, 15, 24, 35, 53}, \text{\small 5}, \text{\small 3})$,
        $(\rho_{1, 11, 24, 46, 56}, \text{\small 2}, \text{\small 3})$,
        $(\rho_{6, 71, 77, 88, 89, 118}, \text{\small 5}, \text{\small 4})$,
        $(\rho_{6, 14, 79, 85, 96, 97}, \text{\small 6}, \text{\small 5})$,

    \item[$\Upsilon_{101} - \Upsilon_{150}$]
        $(\rho_{2, 5, 9, 22, 35, 84}, \text{\small 4}, \text{\small 4})$,
        $(\rho_{9, 17, 25, 57, 86, 92}, \text{\small 3}, \text{\small 4})$,
        $(\rho_{6, 13, 18, 65, 94, 100}, \text{\small 3}, \text{\small 5})$,
        $(\rho_{4, 6, 19, 70, 77, 105}, \text{\small 5}, \text{\small 5})$,
        $(\rho_{30, 34, 89, 99, 118}, \text{\small 5}, \text{\small 6})$,
        $(\rho_{1, 9, 18, 53, 57, 103}, \text{\small 4}, \text{\small 3})$,
        $(\rho_{23, 25, 61, 66, 76}, \text{\small 4}, \text{\small 4})$,
        $(\rho_{3, 10, 33, 34, 63, 66, 98}, \text{\small 5}, \text{\small 6})$,
        $(\rho_{6, 9, 37, 72, 98}, \text{\small 4}, \text{\small 6})$,
        $(\rho_{9, 13, 14, 18, 71, 75, 91}, \text{\small 5}, \text{\small 6})$,
        $(\rho_{6, 11, 21, 42, 58, 84}, \text{\small 5}, \text{\small 4})$,
        $(\rho_{3, 30, 61, 86}, \text{\small 3}, \text{\small 4})$,
        $(\rho_{2, 5, 26, 61, 103}, \text{\small 4}, \text{\small 2})$,
        $(\rho_{4, 16, 23, 79, 98, 99}, \text{\small 2}, \text{\small 6})$,
        $(\rho_{3, 4, 63, 78, 93}, \text{\small 2}, \text{\small 5})$,
        $(\rho_{1, 5, 13, 41, 42, 67}, \text{\small 6}, \text{\small 1})$,
        $(\rho_{4, 12, 26, 47, 67}, \text{\small 5}, \text{\small 1})$,
        $(\rho_{4, 16, 33, 48, 52, 66}, \text{\small 5}, \text{\small 4})$,
        $(\rho_{3, 17, 30, 37, 41, 42, 52}, \text{\small 5}, \text{\small 4})$,
        $(\rho_{3, 8, 15, 53, 59}, \text{\small 4}, \text{\small 4})$,
        $(\rho_{1, 3, 21, 42, 74, 84}, \text{\small 3}, \text{\small 4})$,
        $(\rho_{9, 10, 59, 61, 76}, \text{\small 3}, \text{\small 4})$,
        $(\rho_{4, 23, 47, 60, 72, 74}, \text{\small 5}, \text{\small 6})$,
        $(\rho_{22, 51, 66, 74, 116}, \text{\small 4}, \text{\small 7})$,
        $(\rho_{2, 14, 30, 74, 85}, \text{\small 4}, \text{\small 7})$,
        $(\rho_{14, 21, 39, 60, 109}, \text{\small 4}, \text{\small 6})$,
        $(\rho_{6, 30, 50, 71, 84}, \text{\small 6}, \text{\small 4})$,
        $(\rho_{1, 4, 13, 24, 35, 53}, \text{\small 5}, \text{\small 3})$,
        $(\rho_{11, 12, 31, 35, 42, 61}, \text{\small 5}, \text{\small 2})$,
        $(\rho_{11, 12, 37, 92}, \text{\small 5}, \text{\small 5})$,
        $(\rho_{13, 15, 21, 42, 53, 63}, \text{\small 5}, \text{\small 3})$,
        $(\rho_{5, 9, 10, 11, 28, 84}, \text{\small 5}, \text{\small 4})$,
        $(\rho_{9, 13, 21, 25, 36, 64, 84}, \text{\small 5}, \text{\small 4})$,
        $(\rho_{1, 3, 4, 6, 8, 44, 63}, \text{\small 3}, \text{\small 4})$,
        $(\rho_{16, 25, 32, 59, 77, 83}, \text{\small 6}, \text{\small 5})$,
        $(\rho_{16, 25, 64, 69, 77, 119}, \text{\small 6}, \text{\small 5})$,
        $(\rho_{1, 11, 24, 32, 52}, \text{\small 3}, \text{\small 5})$,
        $(\rho_{5, 9, 11, 44, 95}, \text{\small 3}, \text{\small 4})$,
        $(\rho_{6, 19, 53, 80, 105}, \text{\small 3}, \text{\small 7})$,
        $(\rho_{28, 49, 81, 82, 103}, \text{\small 3}, \text{\small 6})$,
        $(\rho_{1, 4, 31, 93, 97}, \text{\small 3}, \text{\small 5})$,
        $(\rho_{1, 24, 27, 54, 57}, \text{\small 1}, \text{\small 3})$,
        $(\rho_{2, 16, 20, 35, 38, 84}, \text{\small 5}, \text{\small 4})$,
        $(\rho_{3, 30, 36, 66, 70, 116}, \text{\small 5}, \text{\small 4})$,
        $(\rho_{1, 17, 21, 28, 91, 99, 108}, \text{\small 4}, \text{\small 5})$,
        $(\rho_{9, 19, 28, 36, 84}, \text{\small 4}, \text{\small 4})$,
        $(\rho_{1, 5, 31, 35, 86, 90}, \text{\small 4}, \text{\small 7})$,
        $(\rho_{22, 28, 51, 78, 88, 96}, \text{\small 6}, \text{\small 5})$,
        $(\rho_{1, 24, 28, 63, 90, 94}, \text{\small 3}, \text{\small 5})$,
        $(\rho_{2, 3, 32, 36, 72, 88, 96}, \text{\small 4}, \text{\small 5})$,

    \item[$\Upsilon_{151} - \Upsilon_{200}$]
        $(\rho_{3, 10, 20, 72, 84}, \text{\small 3}, \text{\small 4})$,
        $(\rho_{29, 37, 61, 72}, \text{\small 2}, \text{\small 3})$,
        $(\rho_{1, 4, 15, 37, 46, 52}, \text{\small 5}, \text{\small 3})$,
        $(\rho_{3, 14, 51, 67, 88, 90}, \text{\small 5}, \text{\small 4})$,
        $(\rho_{1, 2, 8, 19, 56, 61, 66}, \text{\small 4}, \text{\small 4})$,
        $(\rho_{11, 30, 38, 50, 66, 94}, \text{\small 6}, \text{\small 5})$,
        $(\rho_{4, 28, 63, 94}, \text{\small 5}, \text{\small 5})$,
        $(\rho_{5, 13, 52, 88, 103}, \text{\small 4}, \text{\small 5})$,
        $(\rho_{4, 21, 31, 37, 91}, \text{\small 3}, \text{\small 5})$,
        $(\rho_{12, 15, 17, 43, 51, 59}, \text{\small 5}, \text{\small 4})$,
        $(\rho_{23, 40, 64, 80, 84}, \text{\small 6}, \text{\small 4})$,
        $(\rho_{13, 15, 25, 52, 64}, \text{\small 2}, \text{\small 5})$,
        $(\rho_{9, 17, 18, 80, 84}, \text{\small 4}, \text{\small 4})$,
        $(\rho_{5, 10, 17, 32, 37, 52}, \text{\small 4}, \text{\small 3})$,
        $(\rho_{10, 20, 79, 84}, \text{\small 4}, \text{\small 3})$,
        $(\rho_{6, 11, 19, 37, 54, 76, 111}, \text{\small 5}, \text{\small 3})$,
        $(\rho_{5, 6, 17, 20, 50, 91, 110}, \text{\small 4}, \text{\small 5})$,
        $(\rho_{13, 18, 25, 37, 62, 110}, \text{\small 4}, \text{\small 5})$,
        $(\rho_{9, 10, 34, 51, 70, 81}, \text{\small 4}, \text{\small 3})$,
        $(\rho_{2, 4, 22, 38, 62, 81}, \text{\small 4}, \text{\small 2})$,
        $(\rho_{1, 9, 25, 44, 94, 99}, \text{\small 4}, \text{\small 4})$,
        $(\rho_{3, 6, 19, 28, 63, 91, 93}, \text{\small 5}, \text{\small 5})$,
        $(\rho_{6, 17, 19, 22, 61, 65}, \text{\small 5}, \text{\small 2})$,
        $(\rho_{2, 25, 31, 44, 94, 97, 99}, \text{\small 4}, \text{\small 4})$,
        $(\rho_{1, 2, 18, 44, 71, 80, 95}, \text{\small 4}, \text{\small 4})$,
        $(\rho_{3, 7, 24, 36, 55, 59, 103}, \text{\small 4}, \text{\small 5})$,
        $(\rho_{14, 39, 47, 54, 60, 109}, \text{\small 4}, \text{\small 6})$,
        $(\rho_{26, 34, 55, 61, 103, 109}, \text{\small 4}, \text{\small 5})$,
        $(\rho_{2, 10, 18, 43, 44, 98, 99}, \text{\small 4}, \text{\small 6})$,
        $(\rho_{5, 10, 21, 32, 51, 92, 106}, \text{\small 4}, \text{\small 5})$,
        $(\rho_{1, 12, 24, 32, 37, 54}, \text{\small 5}, \text{\small 3})$,
        $(\rho_{2, 11, 25, 26, 38, 94, 109}, \text{\small 4}, \text{\small 5})$,
        $(\rho_{1, 3, 11, 31, 44, 56}, \text{\small 4}, \text{\small 4})$,
        $(\rho_{2, 5, 10, 33, 44, 90, 102}, \text{\small 4}, \text{\small 4})$,
        $(\rho_{14, 22, 25, 51, 64, 90}, \text{\small 3}, \text{\small 2})$,
        $(\rho_{4, 20, 22, 43, 56, 97}, \text{\small 5}, \text{\small 3})$,
        $(\rho_{18, 19, 32, 65, 66, 84}, \text{\small 5}, \text{\small 4})$,
        $(\rho_{2, 5, 33, 44, 95, 99}, \text{\small 4}, \text{\small 4})$,
        $(\rho_{2, 13, 27, 30, 37, 54, 61}, \text{\small 4}, \text{\small 3})$,
        $(\rho_{4, 16, 18, 27, 28, 57, 92}, \text{\small 4}, \text{\small 5})$,
        $(\rho_{6, 17, 24, 39, 52, 102}, \text{\small 4}, \text{\small 5})$,
        $(\rho_{3, 7, 20, 25, 61, 100}, \text{\small 5}, \text{\small 6})$,
        $(\rho_{12, 71, 88, 95, 101, 106}, \text{\small 3}, \text{\small 6})$,
        $(\rho_{13, 14, 54, 95, 103, 105}, \text{\small 3}, \text{\small 5})$,
        $(\rho_{1, 10, 11, 20, 25, 92, 97, 106}, \text{\small 4}, \text{\small 5})$,
        $(\rho_{1, 2, 6, 9, 19, 37, 116}, \text{\small 4}, \text{\small 4})$,
        $(\rho_{5, 7, 29, 50, 57, 61}, \text{\small 4}, \text{\small 4})$,
        $(\rho_{3, 5, 7, 8, 15, 53, 59}, \text{\small 4}, \text{\small 4})$,
        $(\rho_{24, 32, 50, 57, 61, 76}, \text{\small 4}, \text{\small 4})$,
        $(\rho_{5, 7, 16, 29, 50, 59, 61}, \text{\small 4}, \text{\small 4})$,

    \item[$\Upsilon_{201} - \Upsilon_{250}$]
        $(\rho_{23, 62, 69, 99}, \text{\small 4}, \text{\small 4})$,
        $(\rho_{5, 26, 51, 52, 103, 106}, \text{\small 4}, \text{\small 3})$,
        $(\rho_{5, 6, 17, 38, 50, 91}, \text{\small 4}, \text{\small 5})$,
        $(\rho_{23, 30, 39, 98, 109}, \text{\small 4}, \text{\small 6})$,
        $(\rho_{6, 10, 11, 60, 89, 90}, \text{\small 4}, \text{\small 5})$,
        $(\rho_{15, 35, 44, 102, 110}, \text{\small 4}, \text{\small 6})$,
        $(\rho_{42, 62, 66, 110}, \text{\small 4}, \text{\small 6})$,
        $(\rho_{27, 38, 92, 104, 106}, \text{\small 5}, \text{\small 5})$,
        $(\rho_{1, 3, 5, 17, 55, 66}, \text{\small 4}, \text{\small 4})$,
        $(\rho_{12, 30, 55, 83, 103}, \text{\small 4}, \text{\small 5})$,
        $(\rho_{3, 5, 17, 20, 55, 66}, \text{\small 4}, \text{\small 4})$,
        $(\rho_{2, 3, 4, 12, 70, 94}, \text{\small 4}, \text{\small 5})$,
        $(\rho_{2, 3, 22, 25, 35}, \text{\small 4}, \text{\small 2})$,
        $(\rho_{21, 33, 42, 51, 59}, \text{\small 4}, \text{\small 2})$,
        $(\rho_{1, 2, 5, 10, 59, 87}, \text{\small 4}, \text{\small 4})$,
        $(\rho_{50, 62, 96, 97, 105}, \text{\small 4}, \text{\small 4})$,
        $(\rho_{11, 34, 38, 51, 81}, \text{\small 5}, \text{\small 3})$,
        $(\rho_{9, 11, 13, 15, 23, 53}, \text{\small 4}, \text{\small 3})$,
        $(\rho_{2, 10, 24, 28, 55, 81, 82}, \text{\small 4}, \text{\small 4})$,
        $(\rho_{5, 9, 21, 37, 72, 91}, \text{\small 4}, \text{\small 5})$,
        $(\rho_{10, 55, 62, 70, 81}, \text{\small 4}, \text{\small 4})$,
        $(\rho_{2, 5, 9, 22, 35, 51, 90}, \text{\small 4}, \text{\small 2})$,
        $(\rho_{2, 7, 10, 28, 62, 81}, \text{\small 4}, \text{\small 4})$,
        $(\rho_{2, 3, 4, 40, 71, 90, 91}, \text{\small 4}, \text{\small 5})$,
        $(\rho_{10, 30, 62, 81, 92}, \text{\small 4}, \text{\small 4})$,
        $(\rho_{2, 9, 14, 27, 51, 61}, \text{\small 4}, \text{\small 3})$,
        $(\rho_{6, 16, 29, 41, 63, 110}, \text{\small 4}, \text{\small 5})$,
        $(\rho_{2, 11, 34, 63, 71, 91}, \text{\small 3}, \text{\small 5})$,
        $(\rho_{20, 38, 91, 111}, \text{\small 5}, \text{\small 6})$,
        $(\rho_{2, 9, 12, 24, 37, 75}, \text{\small 4}, \text{\small 3})$,
        $(\rho_{1, 4, 6, 24, 26, 50, 82}, \text{\small 5}, \text{\small 4})$,
        $(\rho_{1, 13, 17, 18, 25, 61}, \text{\small 4}, \text{\small 2})$,
        $(\rho_{1, 2, 8, 28, 62, 81}, \text{\small 4}, \text{\small 4})$,
        $(\rho_{1, 10, 17, 33, 56, 92}, \text{\small 4}, \text{\small 5})$,
        $(\rho_{11, 19, 32, 84, 96}, \text{\small 4}, \text{\small 4})$,
        $(\rho_{5, 11, 32, 52, 64, 109, 114}, \text{\small 4}, \text{\small 5})$,
        $(\rho_{3, 4, 22, 29, 35, 38}, \text{\small 5}, \text{\small 2})$,
        $(\rho_{4, 13, 18, 22, 34, 45}, \text{\small 5}, \text{\small 2})$,
        $(\rho_{5, 22, 33, 34, 45}, \text{\small 6}, \text{\small 2})$,
        $(\rho_{9, 17, 59, 87, 94}, \text{\small 5}, \text{\small 4})$,
        $(\rho_{15, 18, 66, 103, 105}, \text{\small 3}, \text{\small 6})$,
        $(\rho_{5, 9, 17, 52, 72, 97}, \text{\small 4}, \text{\small 5})$,
        $(\rho_{1, 5, 11, 32, 44, 95}, \text{\small 4}, \text{\small 4})$,
        $(\rho_{3, 4, 23, 53}, \text{\small 4}, \text{\small 3})$,
        $(\rho_{9, 10, 11, 24, 84}, \text{\small 4}, \text{\small 4})$,
        $(\rho_{6, 10, 12, 60, 80, 105}, \text{\small 4}, \text{\small 6})$,
        $(\rho_{1, 13, 24, 28, 81, 84, 114}, \text{\small 5}, \text{\small 4})$,
        $(\rho_{2, 16, 33, 41, 62, 90}, \text{\small 4}, \text{\small 4})$,
        $(\rho_{13, 17, 41, 56, 64, 83}, \text{\small 3}, \text{\small 4})$,
        $(\rho_{14, 18, 19, 64, 83, 84}, \text{\small 4}, \text{\small 4})$,

    \item[$\Upsilon_{251} - \Upsilon_{300}$]
        $(\rho_{3, 5, 13, 50, 80, 101}, \text{\small 4}, \text{\small 3})$,
        $(\rho_{3, 6, 49, 50, 98, 99}, \text{\small 5}, \text{\small 6})$,
        $(\rho_{1, 5, 15, 44, 103, 109}, \text{\small 4}, \text{\small 6})$,
        $(\rho_{9, 18, 27, 50, 64, 100, 114}, \text{\small 5}, \text{\small 6})$,
        $(\rho_{5, 7, 11, 28, 87, 103, 109}, \text{\small 4}, \text{\small 6})$,
        $(\rho_{12, 71, 95, 101, 106}, \text{\small 3}, \text{\small 6})$,
        $(\rho_{1, 6, 19, 39, 96, 101, 109}, \text{\small 4}, \text{\small 6})$,
        $(\rho_{20, 50, 102, 117}, \text{\small 4}, \text{\small 5})$,
        $(\rho_{2, 11, 23, 33, 71, 101}, \text{\small 4}, \text{\small 6})$,
        $(\rho_{2, 16, 26, 71, 75, 95}, \text{\small 6}, \text{\small 4})$,
        $(\rho_{5, 9, 31, 77, 88}, \text{\small 5}, \text{\small 4})$,
        $(\rho_{3, 4, 53, 63, 81}, \text{\small 4}, \text{\small 3})$,
        $(\rho_{1, 9, 14, 51, 70}, \text{\small 4}, \text{\small 3})$,
        $(\rho_{13, 17, 39, 85, 96}, \text{\small 6}, \text{\small 5})$,
        $(\rho_{2, 5, 35, 87, 95}, \text{\small 4}, \text{\small 4})$,
        $(\rho_{1, 10, 25, 64, 70, 94}, \text{\small 4}, \text{\small 5})$,
        $(\rho_{3, 5, 23, 59, 83, 84}, \text{\small 5}, \text{\small 4})$,
        $(\rho_{6, 16, 20, 25, 84}, \text{\small 5}, \text{\small 4})$,
        $(\rho_{2, 4, 9, 20, 54, 97}, \text{\small 4}, \text{\small 3})$,
        $(\rho_{3, 5, 11, 32, 39, 50, 51}, \text{\small 5}, \text{\small 3})$,
        $(\rho_{1, 12, 14, 28, 57, 84}, \text{\small 5}, \text{\small 4})$,
        $(\rho_{12, 14, 15, 55, 59, 96}, \text{\small 4}, \text{\small 4})$,
        $(\rho_{18, 25, 33, 57, 92, 106}, \text{\small 5}, \text{\small 5})$,
        $(\rho_{2, 4, 30, 40, 62, 109}, \text{\small 4}, \text{\small 4})$,
        $(\rho_{4, 8, 41, 42, 55, 96, 99}, \text{\small 4}, \text{\small 5})$,
        $(\rho_{1, 16, 20, 25, 35, 84}, \text{\small 5}, \text{\small 4})$,
        $(\rho_{6, 8, 13, 17, 50, 63, 69}, \text{\small 4}, \text{\small 4})$,
        $(\rho_{9, 23, 27, 34, 59, 77}, \text{\small 6}, \text{\small 5})$,
        $(\rho_{1, 4, 5, 23, 37, 41, 62}, \text{\small 6}, \text{\small 4})$,
        $(\rho_{24, 30, 37, 73, 92, 119}, \text{\small 6}, \text{\small 5})$,
        $(\rho_{3, 7, 12, 62, 81, 99}, \text{\small 2}, \text{\small 4})$,
        $(\rho_{3, 6, 10, 27, 62, 81, 92}, \text{\small 4}, \text{\small 4})$,
        $(\rho_{10, 12, 26, 50, 70, 80}, \text{\small 5}, \text{\small 3})$,
        $(\rho_{2, 5, 23, 38, 57, 100}, \text{\small 5}, \text{\small 6})$,
        $(\rho_{10, 17, 25, 37, 41, 55, 62}, \text{\small 5}, \text{\small 4})$,
        $(\rho_{16, 26, 30, 38, 86, 94}, \text{\small 5}, \text{\small 4})$,
        $(\rho_{9, 14, 17, 37, 55, 102}, \text{\small 4}, \text{\small 4})$,
        $(\rho_{21, 55, 59, 103, 106}, \text{\small 4}, \text{\small 5})$,
        $(\rho_{2, 9, 15, 22, 37}, \text{\small 4}, \text{\small 3})$,
        $(\rho_{19, 30, 55, 84, 96}, \text{\small 4}, \text{\small 4})$,
        $(\rho_{1, 4, 6, 55, 57, 59, 106}, \text{\small 5}, \text{\small 4})$,
        $(\rho_{9, 15, 32, 37, 55}, \text{\small 4}, \text{\small 3})$,
        $(\rho_{1, 2, 3, 4, 19, 64, 70, 94}, \text{\small 4}, \text{\small 5})$,
        $(\rho_{21, 38, 55, 102}, \text{\small 5}, \text{\small 4})$,
        $(\rho_{11, 15, 37, 42, 49}, \text{\small 5}, \text{\small 3})$,
        $(\rho_{10, 17, 32, 39, 52, 101, 109}, \text{\small 4}, \text{\small 5})$,
        $(\rho_{1, 5, 11, 23, 37, 69, 115}, \text{\small 3}, \text{\small 4})$,
        $(\rho_{12, 60, 79, 86, 93, 105}, \text{\small 5}, \text{\small 6})$,
        $(\rho_{1, 7, 16, 34, 41, 51, 70}, \text{\small 4}, \text{\small 3})$,
        $(\rho_{6, 9, 24, 25, 71, 77, 88}, \text{\small 5}, \text{\small 4})$,

    \item[$\Upsilon_{301} - \Upsilon_{308}$]
        $(\rho_{1, 7, 28, 62, 81, 115}, \text{\small 4}, \text{\small 4})$,
        $(\rho_{4, 25, 37, 62, 109, 117}, \text{\small 4}, \text{\small 5})$,
        $(\rho_{10, 48, 49, 52, 57, 79}, \text{\small 4}, \text{\small 3})$,
        $(\rho_{1, 7, 24, 28, 81, 82, 94}, \text{\small 4}, \text{\small 4})$,
        $(\rho_{2, 9, 22, 40, 50, 52, 102}, \text{\small 5}, \text{\small 4})$,
        $(\rho_{2, 7, 9, 21, 73, 95, 99}, \text{\small 4}, \text{\small 5})$,
        $(\rho_{9, 11, 24, 37, 48, 91, 108}, \text{\small 5}, \text{\small 4})$,
        $(\rho_{3, 12, 17, 30, 50, 55, 84}, \text{\small 3}, \text{\small 4})$.

\end{description}
\end{flushleft}

\noindent Finally, we list some reductions involving these elements.

\vspace{5mm}

\noindent 1) We have $\Omega_i \xlongrightarrow{L} \llbracket \eta_j
\rrbracket$ for $(i, j)$ among the following:

\vspace{2pt}

\begin{adjustwidth}{15pt}{0pt}

    \begin{flushleft}
        \noindent
        $(\text{\small 1}, \text{\small 321})$, $(\text{\small 2}, \text{\small
        325})$, $(\text{\small 3}, \text{\small 757})$, $(\text{\small 4},
        \text{\small 758})$, $(\text{\small 5}, \text{\small 759})$,
        $(\text{\small 6}, \text{\small 768})$, $(\text{\small 7}, \text{\small
        776})$, $(\text{\small 8}, \text{\small 763})$, $(\text{\small 23},
        \text{\small 224})$, $(\text{\small 24}, \text{\small 225})$,
        $(\text{\small 25}, \text{\small 271})$, $(\text{\small 26},
        \text{\small 269})$, $(\text{\small 27}, \text{\small 272})$,
        $(\text{\small 28}, \text{\small 262})$, $(\text{\small 29},
        \text{\small 274})$, $(\text{\small 30}, \text{\small 264})$,
        $(\text{\small 31}, \text{\small 275})$, $(\text{\small 32},
        \text{\small 268})$, $(\text{\small 33}, \text{\small 265})$,
        $(\text{\small 34}, \text{\small 311})$, $(\text{\small 36},
        \text{\small 365})$, $(\text{\small 37}, \text{\small 376})$,
        $(\text{\small 38}, \text{\small 378})$, $(\text{\small 39},
        \text{\small 395})$, $(\text{\small 44}, \text{\small 342})$,
        $(\text{\small 47}, \text{\small 358})$, $(\text{\small 51},
        \text{\small 379})$, $(\text{\small 54}, \text{\small 410})$,
        $(\text{\small 64}, \text{\small 416})$, $(\text{\small 65},
        \text{\small 413})$, $(\text{\small 70}, \text{\small 544})$,
        $(\text{\small 71}, \text{\small 577})$, $(\text{\small 72},
        \text{\small 614})$, $(\text{\small 73}, \text{\small 589})$,
        $(\text{\small 74}, \text{\small 568})$, $(\text{\small 75},
        \text{\small 608})$, $(\text{\small 76}, \text{\small 580})$,
        $(\text{\small 77}, \text{\small 593})$, $(\text{\small 78},
        \text{\small 597})$, $(\text{\small 79}, \text{\small 604})$,
        $(\text{\small 80}, \text{\small 605})$, $(\text{\small 81},
        \text{\small 573})$, $(\text{\small 82}, \text{\small 578})$,
        $(\text{\small 86}, \text{\small 696})$, $(\text{\small 87},
        \text{\small 699})$, $(\text{\small 88}, \text{\small 700})$,
        $(\text{\small 90}, \text{\small 897})$, $(\text{\small 91},
        \text{\small 868})$, $(\text{\small 92}, \text{\small 839})$,
        $(\text{\small 93}, \text{\small 896})$, $(\text{\small 94},
        \text{\small 848})$, $(\text{\small 95}, \text{\small 859})$,
        $(\text{\small 96}, \text{\small 876})$, $(\text{\small 97},
        \text{\small 882})$, $(\text{\small 98}, \text{\small 887})$,
        $(\text{\small 99}, \text{\small 889})$, $(\text{\small 100},
        \text{\small 895})$, $(\text{\small 101}, \text{\small 843})$,
        $(\text{\small 102}, \text{\small 900})$, $(\text{\small 111},
        \text{\small 945})$, $(\text{\small 112}, \text{\small 952})$,
        $(\text{\small 113}, \text{\small 938})$, $(\text{\small 114},
        \text{\small 944})$, $(\text{\small 115}, \text{\small 954})$,
        $(\text{\small 118}, \text{\small 950})$, $(\text{\small 119},
        \text{\small 962})$.

    \end{flushleft}

\end{adjustwidth}

\vspace{5mm}

\noindent 2) We have $\Omega_i \xlongrightarrow{L} \Upsilon_j
\xlongrightarrow{R} \llbracket \eta_k \rrbracket$ for $(i, j, k)$ among the
following:

\vspace{2pt}

\begin{adjustwidth}{15pt}{0pt}

    \begin{flushleft}
        \noindent
        $(\text{\small 25}, \text{\small 296}, \text{\small 39})$,
        $(\text{\small 32}, \text{\small 297}, \text{\small 40})$,
        $(\text{\small 33}, \text{\small 298}, \text{\small 276})$,
        $(\text{\small 81}, \text{\small 299}, \text{\small 132})$,
        $(\text{\small 82}, \text{\small 300}, \text{\small 131})$,
        $(\text{\small 83}, \text{\small 301}, \text{\small 133})$,
        $(\text{\small 84}, \text{\small 302}, \text{\small 607})$,
        $(\text{\small 85}, \text{\small 303}, \text{\small 134})$,
        $(\text{\small 89}, \text{\small 304}, \text{\small 163})$,
        $(\text{\small 115}, \text{\small 305}, \text{\small 218})$,
        $(\text{\small 116}, \text{\small 306}, \text{\small 955})$,
        $(\text{\small 117}, \text{\small 307}, \text{\small 951})$,
        $(\text{\small 118}, \text{\small 308}, \text{\small 953})$.

    \end{flushleft}

\end{adjustwidth}

\vspace{5mm}

\noindent 3) We have $\Omega_i \xlongrightarrow{R} \llbracket \eta_j
\rrbracket$ for $(i, j)$ among the following:

\vspace{2pt}

\begin{adjustwidth}{15pt}{0pt}

    \begin{flushleft}
        \noindent
        $(\text{\small 9}, \text{\small 319})$, $(\text{\small 10},
        \text{\small 718})$, $(\text{\small 11}, \text{\small 719})$,
        $(\text{\small 12}, \text{\small 730})$, $(\text{\small 13},
        \text{\small 720})$, $(\text{\small 14}, \text{\small 721})$,
        $(\text{\small 15}, \text{\small 729})$, $(\text{\small 16},
        \text{\small 744})$, $(\text{\small 17}, \text{\small 737})$,
        $(\text{\small 18}, \text{\small 777})$, $(\text{\small 19},
        \text{\small 779})$, $(\text{\small 22}, \text{\small 223})$,
        $(\text{\small 35}, \text{\small 326})$, $(\text{\small 40},
        \text{\small 334})$, $(\text{\small 41}, \text{\small 335})$,
        $(\text{\small 42}, \text{\small 333})$, $(\text{\small 43},
        \text{\small 340})$, $(\text{\small 46}, \text{\small 357})$,
        $(\text{\small 48}, \text{\small 361})$, $(\text{\small 49},
        \text{\small 362})$, $(\text{\small 50}, \text{\small 363})$,
        $(\text{\small 52}, \text{\small 370})$, $(\text{\small 53},
        \text{\small 396})$, $(\text{\small 55}, \text{\small 429})$,
        $(\text{\small 56}, \text{\small 93})$, $(\text{\small 58},
        \text{\small 391})$, $(\text{\small 59}, \text{\small 402})$,
        $(\text{\small 60}, \text{\small 404})$, $(\text{\small 61},
        \text{\small 403})$, $(\text{\small 63}, \text{\small 432})$,
        $(\text{\small 66}, \text{\small 401})$, $(\text{\small 103},
        \text{\small 903})$, $(\text{\small 104}, \text{\small 811})$,
        $(\text{\small 105}, \text{\small 818})$, $(\text{\small 106},
        \text{\small 838})$, $(\text{\small 107}, \text{\small 842})$,
        $(\text{\small 108}, \text{\small 877})$, $(\text{\small 109},
        \text{\small 857})$, $(\text{\small 110}, \text{\small 883})$,
        $(\text{\small 116}, \text{\small 951})$.

    \end{flushleft}

\end{adjustwidth}

\vspace{5mm}

\noindent 4) We have $\Upsilon_i \xlongrightarrow{L} \llbracket \eta_j
\rrbracket$ and $\Upsilon_i \xlongrightarrow{R} \llbracket \eta_k \rrbracket$
for $(i, j, k)$ among the following:

\vspace{2pt}

\begin{adjustwidth}{15pt}{0pt}

    \begin{flushleft}
        \noindent
        $(\text{\small 1}, \text{\small 324}, \text{\small 322})$,
        $(\text{\small 2}, \text{\small 321}, \text{\small 322})$,
        $(\text{\small 3}, \text{\small 321}, \text{\small 318})$,
        $(\text{\small 4}, \text{\small 321}, \text{\small 320})$,
        $(\text{\small 5}, \text{\small 325}, \text{\small 323})$,
        $(\text{\small 6}, \text{\small 759}, \text{\small 178})$,
        $(\text{\small 7}, \text{\small 759}, \text{\small 740})$,
        $(\text{\small 8}, \text{\small 724}, \text{\small 718})$,
        $(\text{\small 9}, \text{\small 727}, \text{\small 718})$,
        $(\text{\small 10}, \text{\small 731}, \text{\small 718})$,
        $(\text{\small 11}, \text{\small 736}, \text{\small 718})$,
        $(\text{\small 12}, \text{\small 725}, \text{\small 718})$,
        $(\text{\small 13}, \text{\small 722}, \text{\small 719})$,
        $(\text{\small 14}, \text{\small 745}, \text{\small 730})$,
        $(\text{\small 15}, \text{\small 754}, \text{\small 730})$,
        $(\text{\small 16}, \text{\small 723}, \text{\small 720})$,
        $(\text{\small 17}, \text{\small 726}, \text{\small 720})$,
        $(\text{\small 18}, \text{\small 733}, \text{\small 720})$,
        $(\text{\small 19}, \text{\small 734}, \text{\small 720})$,
        $(\text{\small 20}, \text{\small 751}, \text{\small 720})$,
        $(\text{\small 21}, \text{\small 757}, \text{\small 742})$,
        $(\text{\small 22}, \text{\small 757}, \text{\small 728})$,
        $(\text{\small 23}, \text{\small 179}, \text{\small 721})$,
        $(\text{\small 24}, \text{\small 732}, \text{\small 729})$,
        $(\text{\small 25}, \text{\small 739}, \text{\small 729})$,
        $(\text{\small 26}, \text{\small 748}, \text{\small 729})$,
        $(\text{\small 27}, \text{\small 738}, \text{\small 729})$,
        $(\text{\small 28}, \text{\small 749}, \text{\small 729})$,
        $(\text{\small 29}, \text{\small 773}, \text{\small 774})$,
        $(\text{\small 30}, \text{\small 766}, \text{\small 773})$,
        $(\text{\small 31}, \text{\small 773}, \text{\small 765})$,
        $(\text{\small 32}, \text{\small 775}, \text{\small 773})$,
        $(\text{\small 33}, \text{\small 775}, \text{\small 744})$,
        $(\text{\small 34}, \text{\small 750}, \text{\small 737})$,
        $(\text{\small 35}, \text{\small 750}, \text{\small 747})$,
        $(\text{\small 36}, \text{\small 735}, \text{\small 750})$,
        $(\text{\small 37}, \text{\small 750}, \text{\small 753})$,
        $(\text{\small 38}, \text{\small 756}, \text{\small 777})$,
        $(\text{\small 39}, \text{\small 777}, \text{\small 764})$,
        $(\text{\small 40}, \text{\small 771}, \text{\small 777})$,
        $(\text{\small 41}, \text{\small 756}, \text{\small 767})$,
        $(\text{\small 42}, \text{\small 769}, \text{\small 760})$,
        $(\text{\small 43}, \text{\small 769}, \text{\small 761})$,
        $(\text{\small 44}, \text{\small 762}, \text{\small 769})$,
        $(\text{\small 45}, \text{\small 769}, \text{\small 770})$,
        $(\text{\small 46}, \text{\small 769}, \text{\small 743})$,
        $(\text{\small 47}, \text{\small 776}, \text{\small 181})$,
        $(\text{\small 48}, \text{\small 768}, \text{\small 778})$,
        $(\text{\small 49}, \text{\small 768}, \text{\small 180})$,
        $(\text{\small 50}, \text{\small 746}, \text{\small 752})$,
        $(\text{\small 51}, \text{\small 755}, \text{\small 752})$,
        $(\text{\small 52}, \text{\small 755}, \text{\small 741})$,
        $(\text{\small 53}, \text{\small 772}, \text{\small 779})$,
        $(\text{\small 66}, \text{\small 12}, \text{\small 223})$,
        $(\text{\small 67}, \text{\small 224}, \text{\small 15})$,
        $(\text{\small 68}, \text{\small 225}, \text{\small 11})$,
        $(\text{\small 75}, \text{\small 260}, \text{\small 261})$,
        $(\text{\small 76}, \text{\small 266}, \text{\small 263})$,
        $(\text{\small 77}, \text{\small 273}, \text{\small 270})$,
        $(\text{\small 80}, \text{\small 386}, \text{\small 328})$,
        $(\text{\small 81}, \text{\small 393}, \text{\small 329})$,
        $(\text{\small 82}, \text{\small 332}, \text{\small 330})$,
        $(\text{\small 83}, \text{\small 331}, \text{\small 336})$,
        $(\text{\small 84}, \text{\small 336}, \text{\small 339})$,
        $(\text{\small 85}, \text{\small 338}, \text{\small 348})$,
        $(\text{\small 86}, \text{\small 348}, \text{\small 341})$,
        $(\text{\small 87}, \text{\small 347}, \text{\small 348})$,
        $(\text{\small 88}, \text{\small 352}, \text{\small 351})$,
        $(\text{\small 89}, \text{\small 353}, \text{\small 366})$,
        $(\text{\small 90}, \text{\small 359}, \text{\small 366})$,
        $(\text{\small 91}, \text{\small 354}, \text{\small 355})$,
        $(\text{\small 92}, \text{\small 356}, \text{\small 367})$,
        $(\text{\small 93}, \text{\small 360}, \text{\small 381})$,
        $(\text{\small 94}, \text{\small 360}, \text{\small 383})$,
        $(\text{\small 95}, \text{\small 360}, \text{\small 385})$,
        $(\text{\small 96}, \text{\small 371}, \text{\small 344})$,
        $(\text{\small 97}, \text{\small 371}, \text{\small 345})$,
        $(\text{\small 98}, \text{\small 382}, \text{\small 380})$,
        $(\text{\small 99}, \text{\small 375}, \text{\small 326})$,
        $(\text{\small 100}, \text{\small 378}, \text{\small 83})$,
        $(\text{\small 101}, \text{\small 89}, \text{\small 334})$,
        $(\text{\small 102}, \text{\small 346}, \text{\small 335})$,
        $(\text{\small 103}, \text{\small 84}, \text{\small 335})$,
        $(\text{\small 104}, \text{\small 337}, \text{\small 333})$,
        $(\text{\small 105}, \text{\small 343}, \text{\small 333})$,
        $(\text{\small 106}, \text{\small 342}, \text{\small 82})$,
        $(\text{\small 108}, \text{\small 358}, \text{\small 369})$,
        $(\text{\small 109}, \text{\small 358}, \text{\small 374})$,
        $(\text{\small 110}, \text{\small 358}, \text{\small 384})$,
        $(\text{\small 111}, \text{\small 394}, \text{\small 362})$,
        $(\text{\small 112}, \text{\small 424}, \text{\small 363})$,
        $(\text{\small 113}, \text{\small 379}, \text{\small 86})$,
        $(\text{\small 114}, \text{\small 387}, \text{\small 370})$,
        $(\text{\small 115}, \text{\small 87}, \text{\small 370})$,
        $(\text{\small 116}, \text{\small 410}, \text{\small 422})$,
        $(\text{\small 117}, \text{\small 410}, \text{\small 426})$,
        $(\text{\small 118}, \text{\small 99}, \text{\small 429})$,
        $(\text{\small 124}, \text{\small 93}, \text{\small 327})$,
        $(\text{\small 125}, \text{\small 397}, \text{\small 350})$,
        $(\text{\small 126}, \text{\small 400}, \text{\small 350})$,
        $(\text{\small 127}, \text{\small 368}, \text{\small 372})$,
        $(\text{\small 128}, \text{\small 373}, \text{\small 368})$,
        $(\text{\small 129}, \text{\small 364}, \text{\small 368})$,
        $(\text{\small 130}, \text{\small 392}, \text{\small 390})$,
        $(\text{\small 131}, \text{\small 388}, \text{\small 390})$,
        $(\text{\small 132}, \text{\small 377}, \text{\small 390})$,
        $(\text{\small 135}, \text{\small 88}, \text{\small 391})$,
        $(\text{\small 137}, \text{\small 409}, \text{\small 411})$,
        $(\text{\small 138}, \text{\small 418}, \text{\small 411})$,
        $(\text{\small 139}, \text{\small 398}, \text{\small 411})$,
        $(\text{\small 140}, \text{\small 406}, \text{\small 411})$,
        $(\text{\small 141}, \text{\small 94}, \text{\small 402})$,
        $(\text{\small 142}, \text{\small 96}, \text{\small 404})$,
        $(\text{\small 143}, \text{\small 97}, \text{\small 403})$,
        $(\text{\small 145}, \text{\small 419}, \text{\small 431})$,
        $(\text{\small 147}, \text{\small 399}, \text{\small 416})$,
        $(\text{\small 148}, \text{\small 416}, \text{\small 408})$,
        $(\text{\small 149}, \text{\small 416}, \text{\small 414})$,
        $(\text{\small 150}, \text{\small 416}, \text{\small 423})$,
        $(\text{\small 151}, \text{\small 413}, \text{\small 98})$,
        $(\text{\small 153}, \text{\small 430}, \text{\small 432})$,
        $(\text{\small 154}, \text{\small 421}, \text{\small 432})$,
        $(\text{\small 156}, \text{\small 95}, \text{\small 401})$,
        $(\text{\small 157}, \text{\small 412}, \text{\small 405})$,
        $(\text{\small 158}, \text{\small 412}, \text{\small 415})$,
        $(\text{\small 159}, \text{\small 412}, \text{\small 425})$,
        $(\text{\small 160}, \text{\small 427}, \text{\small 417})$,
        $(\text{\small 167}, \text{\small 538}, \text{\small 533})$,
        $(\text{\small 168}, \text{\small 543}, \text{\small 534})$,
        $(\text{\small 169}, \text{\small 539}, \text{\small 535})$,
        $(\text{\small 170}, \text{\small 540}, \text{\small 536})$,
        $(\text{\small 171}, \text{\small 586}, \text{\small 541})$,
        $(\text{\small 172}, \text{\small 556}, \text{\small 542})$,
        $(\text{\small 173}, \text{\small 559}, \text{\small 546})$,
        $(\text{\small 174}, \text{\small 612}, \text{\small 547})$,
        $(\text{\small 175}, \text{\small 590}, \text{\small 548})$,
        $(\text{\small 176}, \text{\small 601}, \text{\small 549})$,
        $(\text{\small 177}, \text{\small 594}, \text{\small 550})$,
        $(\text{\small 178}, \text{\small 613}, \text{\small 551})$,
        $(\text{\small 179}, \text{\small 582}, \text{\small 552})$,
        $(\text{\small 180}, \text{\small 588}, \text{\small 553})$,
        $(\text{\small 181}, \text{\small 557}, \text{\small 562})$,
        $(\text{\small 182}, \text{\small 561}, \text{\small 563})$,
        $(\text{\small 183}, \text{\small 600}, \text{\small 564})$,
        $(\text{\small 184}, \text{\small 610}, \text{\small 565})$,
        $(\text{\small 185}, \text{\small 566}, \text{\small 570})$,
        $(\text{\small 186}, \text{\small 560}, \text{\small 569})$,
        $(\text{\small 187}, \text{\small 571}, \text{\small 576})$,
        $(\text{\small 188}, \text{\small 581}, \text{\small 572})$,
        $(\text{\small 189}, \text{\small 555}, \text{\small 575})$,
        $(\text{\small 190}, \text{\small 595}, \text{\small 584})$,
        $(\text{\small 191}, \text{\small 602}, \text{\small 585})$,
        $(\text{\small 192}, \text{\small 591}, \text{\small 592})$,
        $(\text{\small 193}, \text{\small 599}, \text{\small 596})$,
        $(\text{\small 194}, \text{\small 606}, \text{\small 598})$,
        $(\text{\small 195}, \text{\small 603}, \text{\small 611})$,
        $(\text{\small 201}, \text{\small 545}, \text{\small 537})$,
        $(\text{\small 202}, \text{\small 567}, \text{\small 537})$,
        $(\text{\small 203}, \text{\small 558}, \text{\small 574})$,
        $(\text{\small 204}, \text{\small 554}, \text{\small 558})$,
        $(\text{\small 205}, \text{\small 698}, \text{\small 697})$,
        $(\text{\small 206}, \text{\small 814}, \text{\small 795})$,
        $(\text{\small 207}, \text{\small 819}, \text{\small 796})$,
        $(\text{\small 208}, \text{\small 867}, \text{\small 819})$,
        $(\text{\small 209}, \text{\small 826}, \text{\small 797})$,
        $(\text{\small 210}, \text{\small 817}, \text{\small 798})$,
        $(\text{\small 211}, \text{\small 810}, \text{\small 799})$,
        $(\text{\small 212}, \text{\small 825}, \text{\small 810})$,
        $(\text{\small 213}, \text{\small 828}, \text{\small 800})$,
        $(\text{\small 214}, \text{\small 808}, \text{\small 801})$,
        $(\text{\small 215}, \text{\small 834}, \text{\small 808})$,
        $(\text{\small 216}, \text{\small 820}, \text{\small 802})$,
        $(\text{\small 217}, \text{\small 835}, \text{\small 820})$,
        $(\text{\small 218}, \text{\small 815}, \text{\small 803})$,
        $(\text{\small 219}, \text{\small 841}, \text{\small 803})$,
        $(\text{\small 220}, \text{\small 880}, \text{\small 803})$,
        $(\text{\small 221}, \text{\small 822}, \text{\small 804})$,
        $(\text{\small 222}, \text{\small 836}, \text{\small 804})$,
        $(\text{\small 223}, \text{\small 885}, \text{\small 804})$,
        $(\text{\small 224}, \text{\small 823}, \text{\small 805})$,
        $(\text{\small 225}, \text{\small 869}, \text{\small 805})$,
        $(\text{\small 226}, \text{\small 891}, \text{\small 805})$,
        $(\text{\small 227}, \text{\small 872}, \text{\small 806})$,
        $(\text{\small 228}, \text{\small 872}, \text{\small 844})$,
        $(\text{\small 229}, \text{\small 845}, \text{\small 807})$,
        $(\text{\small 230}, \text{\small 809}, \text{\small 821})$,
        $(\text{\small 231}, \text{\small 829}, \text{\small 812})$,
        $(\text{\small 232}, \text{\small 840}, \text{\small 813})$,
        $(\text{\small 233}, \text{\small 850}, \text{\small 813})$,
        $(\text{\small 234}, \text{\small 866}, \text{\small 813})$,
        $(\text{\small 235}, \text{\small 871}, \text{\small 813})$,
        $(\text{\small 236}, \text{\small 892}, \text{\small 813})$,
        $(\text{\small 237}, \text{\small 830}, \text{\small 816})$,
        $(\text{\small 238}, \text{\small 881}, \text{\small 824})$,
        $(\text{\small 239}, \text{\small 881}, \text{\small 863})$,
        $(\text{\small 240}, \text{\small 833}, \text{\small 827})$,
        $(\text{\small 241}, \text{\small 862}, \text{\small 832})$,
        $(\text{\small 242}, \text{\small 837}, \text{\small 855})$,
        $(\text{\small 243}, \text{\small 851}, \text{\small 855})$,
        $(\text{\small 244}, \text{\small 864}, \text{\small 855})$,
        $(\text{\small 245}, \text{\small 865}, \text{\small 855})$,
        $(\text{\small 246}, \text{\small 879}, \text{\small 855})$,
        $(\text{\small 247}, \text{\small 846}, \text{\small 847})$,
        $(\text{\small 248}, \text{\small 846}, \text{\small 856})$,
        $(\text{\small 249}, \text{\small 846}, \text{\small 861})$,
        $(\text{\small 250}, \text{\small 846}, \text{\small 870})$,
        $(\text{\small 252}, \text{\small 894}, \text{\small 853})$,
        $(\text{\small 253}, \text{\small 894}, \text{\small 874})$,
        $(\text{\small 254}, \text{\small 894}, \text{\small 888})$,
        $(\text{\small 255}, \text{\small 894}, \text{\small 890})$,
        $(\text{\small 256}, \text{\small 894}, \text{\small 898})$,
        $(\text{\small 257}, \text{\small 894}, \text{\small 901})$,
        $(\text{\small 259}, \text{\small 878}, \text{\small 831})$,
        $(\text{\small 260}, \text{\small 897}, \text{\small 873})$,
        $(\text{\small 261}, \text{\small 868}, \text{\small 198})$,
        $(\text{\small 262}, \text{\small 858}, \text{\small 811})$,
        $(\text{\small 263}, \text{\small 858}, \text{\small 875})$,
        $(\text{\small 264}, \text{\small 896}, \text{\small 200})$,
        $(\text{\small 265}, \text{\small 887}, \text{\small 203})$,
        $(\text{\small 266}, \text{\small 886}, \text{\small 889})$,
        $(\text{\small 267}, \text{\small 849}, \text{\small 838})$,
        $(\text{\small 268}, \text{\small 893}, \text{\small 838})$,
        $(\text{\small 269}, \text{\small 843}, \text{\small 201})$,
        $(\text{\small 270}, \text{\small 199}, \text{\small 842})$,
        $(\text{\small 271}, \text{\small 860}, \text{\small 842})$,
        $(\text{\small 273}, \text{\small 877}, \text{\small 852})$,
        $(\text{\small 274}, \text{\small 854}, \text{\small 877})$,
        $(\text{\small 276}, \text{\small 202}, \text{\small 857})$,
        $(\text{\small 278}, \text{\small 884}, \text{\small 883})$,
        $(\text{\small 279}, \text{\small 204}, \text{\small 883})$,
        $(\text{\small 281}, \text{\small 900}, \text{\small 902})$,
        $(\text{\small 282}, \text{\small 900}, \text{\small 205})$,
        $(\text{\small 284}, \text{\small 899}, \text{\small 903})$,
        $(\text{\small 285}, \text{\small 206}, \text{\small 903})$,
        $(\text{\small 287}, \text{\small 936}, \text{\small 930})$,
        $(\text{\small 288}, \text{\small 942}, \text{\small 931})$,
        $(\text{\small 289}, \text{\small 935}, \text{\small 932})$,
        $(\text{\small 290}, \text{\small 943}, \text{\small 933})$,
        $(\text{\small 291}, \text{\small 948}, \text{\small 940})$,
        $(\text{\small 292}, \text{\small 940}, \text{\small 934})$,
        $(\text{\small 293}, \text{\small 947}, \text{\small 937})$,
        $(\text{\small 294}, \text{\small 949}, \text{\small 939})$,
        $(\text{\small 295}, \text{\small 946}, \text{\small 941})$.

    \end{flushleft}

\end{adjustwidth}

\vspace{5mm}

\noindent 5) We have $\Upsilon_i \xlongrightarrow{L} \llbracket \eta_j
\rrbracket$ and $\Upsilon_i \xlongrightarrow{R} \Xi_k$ for $(i, j, k)$ among
the following:

\vspace{2pt}

\begin{adjustwidth}{15pt}{0pt}

    \begin{flushleft}
        \noindent
        $(\text{\small 54}, \text{\small 773}, \text{\small 39})$,
        $(\text{\small 55}, \text{\small 777}, \text{\small 40})$,
        $(\text{\small 56}, \text{\small 769}, \text{\small 42})$,
        $(\text{\small 57}, \text{\small 181}, \text{\small 43})$,
        $(\text{\small 58}, \text{\small 746}, \text{\small 45})$,
        $(\text{\small 59}, \text{\small 779}, \text{\small 47})$,
        $(\text{\small 60}, \text{\small 318}, \text{\small 9})$,
        $(\text{\small 61}, \text{\small 319}, \text{\small 11})$,
        $(\text{\small 69}, \text{\small 12}, \text{\small 3})$, $(\text{\small
        70}, \text{\small 15}, \text{\small 4})$, $(\text{\small 71},
        \text{\small 14}, \text{\small 6})$, $(\text{\small 72}, \text{\small
        225}, \text{\small 5})$, $(\text{\small 73}, \text{\small 13},
        \text{\small 2})$, $(\text{\small 79}, \text{\small 41}, \text{\small
        8})$, $(\text{\small 107}, \text{\small 349}, \text{\small 12})$,
        $(\text{\small 119}, \text{\small 85}, \text{\small 13})$,
        $(\text{\small 120}, \text{\small 90}, \text{\small 14})$,
        $(\text{\small 121}, \text{\small 91}, \text{\small 15})$,
        $(\text{\small 122}, \text{\small 92}, \text{\small 16})$,
        $(\text{\small 133}, \text{\small 390}, \text{\small 18})$,
        $(\text{\small 134}, \text{\small 389}, \text{\small 19})$,
        $(\text{\small 146}, \text{\small 431}, \text{\small 22})$,
        $(\text{\small 152}, \text{\small 98}, \text{\small 23})$,
        $(\text{\small 155}, \text{\small 432}, \text{\small 24})$,
        $(\text{\small 161}, \text{\small 401}, \text{\small 25})$,
        $(\text{\small 162}, \text{\small 412}, \text{\small 26})$,
        $(\text{\small 163}, \text{\small 407}, \text{\small 27})$,
        $(\text{\small 164}, \text{\small 417}, \text{\small 28})$,
        $(\text{\small 197}, \text{\small 579}, \text{\small 31})$,
        $(\text{\small 198}, \text{\small 583}, \text{\small 32})$,
        $(\text{\small 199}, \text{\small 587}, \text{\small 33})$,
        $(\text{\small 200}, \text{\small 609}, \text{\small 34})$,
        $(\text{\small 272}, \text{\small 842}, \text{\small 49})$,
        $(\text{\small 275}, \text{\small 877}, \text{\small 50})$,
        $(\text{\small 277}, \text{\small 857}, \text{\small 51})$.

    \end{flushleft}

\end{adjustwidth}

\vspace{5mm}

\noindent 6) We have $\Upsilon_i \xlongrightarrow{L} \Xi_j$ and $\Upsilon_i
\xlongrightarrow{R} \llbracket \eta_k \rrbracket$ for $(i, j, k)$ among the
following:

\vspace{2pt}

\begin{adjustwidth}{15pt}{0pt}

    \begin{flushleft}
        \noindent
        $(\text{\small 62}, \text{\small 10}, \text{\small 325})$,
        $(\text{\small 63}, \text{\small 41}, \text{\small 750})$,
        $(\text{\small 64}, \text{\small 44}, \text{\small 768})$,
        $(\text{\small 65}, \text{\small 46}, \text{\small 763})$,
        $(\text{\small 74}, \text{\small 1}, \text{\small 16})$, $(\text{\small
        78}, \text{\small 7}, \text{\small 267})$, $(\text{\small 123},
        \text{\small 17}, \text{\small 429})$, $(\text{\small 136},
        \text{\small 20}, \text{\small 391})$, $(\text{\small 144},
        \text{\small 21}, \text{\small 403})$, $(\text{\small 165},
        \text{\small 29}, \text{\small 428})$, $(\text{\small 166},
        \text{\small 29}, \text{\small 420})$, $(\text{\small 196},
        \text{\small 30}, \text{\small 603})$, $(\text{\small 251},
        \text{\small 54}, \text{\small 846})$, $(\text{\small 258},
        \text{\small 55}, \text{\small 894})$, $(\text{\small 280},
        \text{\small 52}, \text{\small 883})$, $(\text{\small 283},
        \text{\small 53}, \text{\small 900})$, $(\text{\small 286},
        \text{\small 48}, \text{\small 903})$.

    \end{flushleft}

\end{adjustwidth}

\vspace{5mm}

\noindent 7) We have $\Omega_i \xlongrightarrow{L} \Xi_j$ for $(i, j)$ among
the following:

\vspace{2pt}

\begin{adjustwidth}{15pt}{0pt}

    \begin{flushleft}
        \noindent
        $(\text{\small 57}, \text{\small 19})$, $(\text{\small 67},
        \text{\small 27})$, $(\text{\small 68}, \text{\small 28})$,
        $(\text{\small 69}, \text{\small 29})$, $(\text{\small 83},
        \text{\small 35})$, $(\text{\small 84}, \text{\small 36})$,
        $(\text{\small 85}, \text{\small 37})$, $(\text{\small 89},
        \text{\small 38})$, $(\text{\small 117}, \text{\small 56})$.

    \end{flushleft}

\end{adjustwidth}

\vspace{5mm}

\noindent 8) We have $\Omega_i \xlongrightarrow{R} \Xi_j$ for $(i, j)$ among
the following:

\vspace{2pt}

\begin{adjustwidth}{15pt}{0pt}

    \begin{flushleft}
        \noindent
        $(\text{\small 20}, \text{\small 42})$, $(\text{\small 21},
        \text{\small 45})$, $(\text{\small 45}, \text{\small 12})$,
        $(\text{\small 62}, \text{\small 22})$.

    \end{flushleft}

\end{adjustwidth}

\newpage

\end{document}